\numberwithin{equation}{section}
\newtheorem{theorem}{Theorem}[section]
\newtheorem{proposition}{Proposition}[section]
\newtheorem{corollary}{Corollary}[section]
\newtheorem{lemma}{Lemma}[section]
\newtheorem{conj}{Conjecture}[section]
\newtheorem{definition}{Definition}[section]
\newtheorem{remark}{Remark}[section]
\newtheorem{fact}{Fact}[section]
\theoremstyle{definition}
\newtheorem{example}{Example}[section]
\newcommand{\Pd}{\mathcal{P}^d}
\newcommand{\crit}{\operatorname{Crit}}
\newcommand{\Tp}{|T|_\mathrm{p}}
\newcommand{\Tn}{|T|_\mathrm{np}}
\newcommand{\Tfp}{|T(f_0)|_\mathrm{p}}
\newcommand{\Tfn}{|T(f_0)|_\mathrm{np}}
\newcommand{\RZ}{\mathbb{R}/\mathbb{Z}}
\newcommand{\QZ}{\mathbb{Q}/\mathbb{Z}}
\newcommand{\modZ}{\pmod{\mathbb{Z}}}
\newcommand{\Hbar}{\overline{\mathcal{H}}}
\newcommand{\model}{\mathcal{C}\big(\widehat{\mathcal{P}}_0^T\big)}
\newcommand{\modelf}{\mathcal{C}\big(\widehat{\mathcal{P}}_0^{T(f_0)}\big)}
\newcommand{\dPoin}{\operatorname{\delta_{Poin}}}
\newcommand{\dconf}{\operatorname{\delta_{conf}}}
\newcommand{\Hdim}{\operatorname{H{.}dim}}
\newcommand{\hypdim}{\operatorname{hyp-dim}}
\newcommand{\Cdim}{\dim_\mathbb{C}}
\newcommand{\topdim}{\dim_\mathrm{top}}
\newcommand{\diam}{\operatorname{diam}}
\newcommand{\dist}{\operatorname{dist}}
\newcommand{\area}{\operatorname{area}}
\newcommand{\id}{\operatorname{id}}
\newcommand{\Comp}{\operatorname{Comp}}
\newcommand{\Con}{\operatorname{Con}}
\newcommand{\CK}{C_\mathrm{K}}
\title{Boundaries of capture hyperbolic components} 
\author{Jie Cao}
\author{Xiaoguang Wang}
\author{Yongcheng Yin}
\address{School of Mathematical Sciences, Zhejiang University, Hangzhou, 310027, China}
\email{mathcj@foxmail.com}
\email{wxg688@163.com}
\email{yin@zju.edu.cn}
\date{\today}
\subjclass[2020]{Primary 37F46;
Secondary 37F10, 37F15, 37F35}
\keywords{Hyperbolic component, semi-hyperbolicity, straightening map, 
Hausdorff dimension, transversality}
\begin{document}

\begin{abstract}
In complex dynamics, the boundaries of higher dimensional hyperbolic components in 
holomorphic families of polynomials or rational maps are mysterious objects, 
whose topological and analytic properties are fundamental problems.

In this paper,
we show that in some typical families of polynomials (i.e. algebraic varieties defined by periodic critical relations), the boundary of a capture hyperbolic 
component $\mathcal H$ is homeomorphic to the sphere $S^{2\Cdim(\mathcal{H})-1}$. 
Furthermore, we establish an unexpected identity for 
the Hausdorff dimension of $\partial \mathcal H$: 
$$\Hdim(\partial\mathcal{H}) = 2 \Cdim(\mathcal{H})-2+\max_{f\in\partial\mathcal{H}}
\Hdim(\partial A^J(f)),$$ where $A^J(f)$ is the union of the bounded attracting Fatou components of $f$ 
associated with the free critical points in the Julia set $J(f)$. 

In the proof, some new results with independent interests are discovered.

\end{abstract}

\maketitle 

\setcounter{tocdepth}{1}
\tableofcontents

\section{Introduction}
Rational maps $f:  \widehat{\mathbb{C}}\rightarrow \widehat{\mathbb{C}}$, viewed as dynamical systems,
yield complicated behaviors under iterations.
Their bifurcations, both in the dynamical plane and in the parameter space,
are the major attractions in the field of complex dynamical systems in recent forty years.

One of the interesting themes in recent years is to study the bifurcation loci in the parameter spaces of typical families of rational maps. It is known from the fundamental work of Shishikura \cite{Shishikura-Hdim}, Tan Lei \cite{Tan}, McMullen \cite{McM-Univ} and Gauthier \cite{Gauthier}, that in the space of polynomials or general holomorphic families of rational maps, the bifurcation loci (if nonempty) have full Hausdorff dimension.
Recently, Astorg, Gauthier, Mihalache and Vigny \cite{AGMV} have shown that in the moduli space of degree $d\geq 2$ rational maps, the support of the bifurcation measure (a notable subset of the bifurcation locus) has positive Lebesgue measure.

The bifurcation loci have  distinguished subsets: the boundaries of the hyperbolic components. These subsets contribute considerable complexity to the bifurcation loci from the topological and analytic viewpoints.
Recall that, a rational map $f$ is \emph{hyperbolic}, if all the critical orbits are attracted by the attracting cycles.
In a holomorphic family of rational maps, it is known that the hyperbolic maps form an open subset, each component is called a \emph{hyperbolic component}. The topology of a hyperbolic component $\mathcal H$ may be easy to understand, however its boundary $\partial \mathcal H$, in general, exhibits quite complicated topological features, making the exploration of $\partial \mathcal H$ intractable.




Some progress 
has been made 
when the parameter space has complex dimension one, for example the McMullen maps \cite{QRWY}, the cubic Newton maps \cite{RWY}, and some one-parameter families of polynomials \cite{Roesch,Wang21}. 
For the families considered therein, it is shown that the boundary of a hyperbolic component (except the unbounded one, in the polynomial case) is a Jordan curve. 

These results are obtained based on two reasons. First, since the parameter space has dimension one,  the  computer picture is helpful to formulate a conjectural description of the boundaries of the hyperbolic components. These  descriptions indicate what to prove in mathematics. 
 Secondly, it is feasible to  study the dynamics of all maps on the boundaries of these hyperbolic components, to the extent that it suffices to understand the boundary topology. 
The  basic philosophy behind is, it transfers the boundary regularity of the Fatou components in the dynamical plane to that of the hyperbolic components in the parameter space.
These are the reasons why previous work mainly focused on dimension one.

If a hyperbolic component has higher dimension, things are different.
It is difficult to visualize a hyperbolic component whose complex dimension is at least two. Hence it seems an  impossible task to make an a priori description of its boundary. 

We would like to mention that there is some progress on understanding the boundary of the central hyperbolic component $\mathcal H_d$ (i.e. the hyperbolic component containing the monomial $z^d$) of the monic and centered polynomials of degree $d\geq 3$. 
Petersen and Tan Lei \cite{PT09} studied the tame part of $\partial \mathcal H_3$.  
Blokh, Oversteegen, Ptacek and Timorin \cite{BOPT} studied the combinatorial model of $\partial \mathcal H_3$. 
Luo \cite{Luo} gave a combinatorial classification of the geometrically finite polynomials
on $\partial \mathcal H_d$. 
 
Let $\mathcal F$ be an algebraic family  in the space  ${\rm Rat}_d$ of rational maps of degree $d\geq 2$, and let $\mathcal H$ be a hyperbolic component in $\mathcal F$. Milnor studied the case that all maps in $\mathcal H$ have the
property that the basin of every attracting cycle
contains exactly one grand orbit equivalence class of critical
points. This kind of hyperbolic component is called UGO (i.e. uni-grand orbit) type.
Milnor showed that if $\mathcal H$ is  UGO, then $\partial \mathcal H$ is semi-algebraic, hence locally connected. For the general case, he posed the following conjecture \cite[Conjecture 1]{MilnorPPT}: 

\begin{conj} 
\label{nlc}
If  the maps in $\mathcal H$ have an attracting cycle with
two distinct free critical points in its attracting basin, then $\partial \mathcal H$ is not locally connected.
\end{conj}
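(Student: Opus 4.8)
The plan is to exhibit inside $\partial\mathcal{H}$ a ``comb-like'' subcontinuum at one of whose points $\partial\mathcal{H}$ fails to be \emph{connected im kleinen}; by the standard characterization of Peano continua this forces $\partial\mathcal{H}$ to be non-locally-connected. What makes such a configuration available --- and what distinguishes this case from the UGO/capture setting, where $\partial\mathcal{H}$ is a topological sphere and hence locally connected --- is that the immediate basin $B$ of the attracting cycle of the maps in $\mathcal{H}$ carries two free critical points $c_1,c_2$ with disjoint grand orbits. Hence the proper model of $f|_B$ is a Blaschke-type map of degree $\ge 3$ with two independent critical points, and one may drive the dynamics towards the boundary of hyperbolicity ``in the $c_1$ direction'' and ``in the $c_2$ direction'' semi-independently, the two being coupled only through the combinatorics of the Julia-set-like continuum $\partial B$. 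It is exactly this coupling, fed by the self-similar structure of $\partial B$, that will produce infinitely many boundary pieces of definite size accumulating at a single point.

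First I would construct the ``spine''. Using the quasiconformal-surgery and straightening machinery developed earlier in the paper, fix a parameter $f_0\in\partial\mathcal{H}$ at which $c_1$ is strictly preperiodic, landing on a repelling cycle contained in $\partial B$ (a ``free Misiurewicz'' boundary point), while $c_2$ still lies in the interior of a Fatou component of $B$; then set up local holomorphic coordinates near $f_0$ in which a neighborhood in $\mathcal{F}$ splits as a product $U_1\times U_2$, with $U_1$ recording the critical value carried by $c_1$ and $U_2$ that carried by $c_2$. In these coordinates $\partial\mathcal{H}$ is governed by the ``$c_1$-non-escape locus'' $M_1\subset U_1$ together with a fiberwise condition on $c_2$ over $M_1$, and the slice $\{c_1\ \mathrm{Misiurewicz}\}\times U_2$ meets $\partial\mathcal{H}$ in a cell $\gamma$ (an arc when $\Cdim\mathcal{H}=2$): along $\gamma$ the combinatorics of $c_1$ is frozen and only $c_2$ moves, its regularity being inherited from the one-dimensional theory, just as in \cite{Roesch,Wang21,QRWY,RWY}.

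Next, the ``teeth''. Near the $c_1$-coordinate of $f_0$ the locus $M_1$ carries a sequence of satellite sub-wakes $V_n$ accumulating at that coordinate --- the classical self-similar decoration structure around a Misiurewicz point, which can equivalently be read off from the infinitely branched structure of $\partial B$. When the $c_1$-parameter enters $V_n$ a new combinatorial possibility for $c_2$ is unlocked, and the locus in $\partial\mathcal{H}$ that realizes it is a ``limb'' $L_n$ lying over $V_n$; its extent in the $c_2$-direction is a definite range of positions of $c_2$, so $\diam L_n\ge\varepsilon_0>0$ uniformly in $n$ --- here one uses that the transfer between the dynamical and parameter pictures is quasiconformal with controlled dilatation, together with a normality/compactness argument ruling out degeneration of the limbs. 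Since the attaching parameters of the $L_n$ converge to the $c_1$-coordinate of $f_0$ while each $L_n$ has diameter $\ge\varepsilon_0$ and, within a fixed ball $\overline{B(x_\infty,\varepsilon_0/2)}$ around a suitable $x_\infty\in\gamma$, the $L_n$ lie in pairwise distinct connected components of $\overline{B(x_\infty,\varepsilon_0/2)}\cap\partial\mathcal{H}$, every sufficiently small connected neighborhood of $x_\infty$ in $\partial\mathcal{H}$ meets infinitely many $L_n$ but contains none of them whole; hence $\partial\mathcal{H}$ is not connected im kleinen at $x_\infty$, and therefore not locally connected.

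The crux is the construction and control of the limbs. Producing the maps $f\in\partial\mathcal{H}$ with the prescribed coupled combinatorics of $(c_1,c_2)$ and certifying that they lie on $\partial\mathcal{H}$ rather than deeper in the bifurcation locus is exactly where the semi-hyperbolicity and straightening results of the paper are needed. Showing that distinct combinatorial data yield distinct, separated pieces of $\partial\mathcal{H}$ --- so that the $L_n$ do not merge --- is a transversality statement, for which I would expect the transversality theorems established in the paper to suffice, perhaps after upgrading them from the ``one active critical point'' situation to the present ``two coupled critical points'' one. The remaining quantitative point, the uniform bound $\diam L_n\ge\varepsilon_0$, should then follow from the uniform quasiconformality of the surgeries and compactness of $\overline{\mathcal{H}}$. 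I regard the transversality step as the principal obstacle.
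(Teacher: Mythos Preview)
The statement you are attempting to prove is a \emph{conjecture} in the paper (attributed to Milnor), not a theorem: the paper provides no proof of it. In fact, the paper's main topological result (Theorem~\ref{top-boundary}) points somewhat in the opposite direction: for capture hyperbolic components $\mathcal{H}$, the boundary $\partial\mathcal{H}$ is homeomorphic to a sphere and hence locally connected, and the authors explicitly remark afterward that this shows the boundary can be locally connected even when the maps in $\mathcal{H}$ have an attracting cycle with two or more free critical points in its attracting basin, provided those free critical points sit in strictly preperiodic Fatou components. So there is no proof in the paper to compare your proposal against.

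As for the strategy itself: the heuristic picture you sketch --- a comb built from a Misiurewicz spine with infinitely many limbs of definite size accumulating at a point --- is a plausible mechanism, but several of the steps you describe as drawing on results ``developed earlier in the paper'' are not in fact available in the generality you need. The semi-hyperbolicity (Proposition~\ref{behavior of critical orbits}, Corollary~\ref{semi-hyperbolicity}), straightening (\S\ref{section lemniscate maps}--\ref{section straightening map}), and transversality (Proposition~\ref{transversality-prop}) results are all proved specifically for \emph{capture} components, where the free critical points live in strictly preperiodic Fatou components and the periodic part of the mapping scheme is rigid; none of this machinery is established when two free critical points share the immediate basin of a single attracting cycle, which is exactly the regime Conjecture~\ref{nlc} concerns. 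In particular, your product splitting $U_1\times U_2$ near $f_0$ and the claimed separation and uniform size of the limbs $L_n$ are not supported by anything in the paper. Your sketch should be read as a heuristic outline for an open problem rather than a proof, and the ``transversality step'' you identify as the principal obstacle is indeed not something the paper's tools address.
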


Besides this conjectural topological picture, the analytic complexity of $\partial \mathcal H$ makes a fundamental problem. 
Among all kinds of hyperbolic components, there is  an exceptional  case: that is, all maps on $\partial\mathcal H$ have at least one indifferent cycle.  The UGO type hyperbolic components are natural examples of these exceptional ones. 
From the analytic viewpoint, Milnor posed the following conjecture  on the Hausdorff dimension $\Hdim(\partial \mathcal H)$ of $\partial \mathcal H$  \cite[Conjecture 2(b)]{MilnorPPT}:

\begin{conj} \label{hdc}  If $\mathcal H$ is not exceptional, then $\partial \mathcal H$ is a fractal set in the sense that
$$\Hdim(\partial \mathcal H)>\topdim(\partial \mathcal H),$$
where $\topdim(\partial \mathcal H)$ is the topological dimension of $\partial \mathcal H$.
\end{conj}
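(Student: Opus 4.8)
We intend to obtain Conjecture~\ref{hdc}, for a capture hyperbolic component $\mathcal{H}$ of a typical family, as a formal consequence of the two main theorems announced above together with one purely dynamical inequality. Set $n=\Cdim(\mathcal{H})$. Since $\partial\mathcal{H}$ is homeomorphic to $S^{2n-1}$, its covering dimension is $\topdim(\partial\mathcal{H})=2n-1$; in particular it is finite. Feeding this into the identity $\Hdim(\partial\mathcal{H})=2n-2+\max_{f\in\partial\mathcal{H}}\Hdim(\partial A^J(f))$, the assertion $\Hdim(\partial\mathcal{H})>\topdim(\partial\mathcal{H})$ becomes equivalent to the single inequality $\max_{f\in\partial\mathcal{H}}\Hdim(\partial A^J(f))>1$. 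So everything reduces to producing one boundary map $f_\ast\in\partial\mathcal{H}$ with $\Hdim(\partial A^J(f_\ast))>1$.

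For this we would take $f_\ast\in\partial\mathcal{H}$ semi-hyperbolic: such maps occur on $\partial\mathcal{H}$, corresponding to the (iterated) free critical value of the capture relation landing at a repelling pre-periodic point on the boundary of the superattracting basin, rather than at a parabolic point. For such $f_\ast$, the set $\partial A^J(f_\ast)$ is a closed connected subset of the Julia set $J(f_\ast)$, invariant under a suitable iterate of $f_\ast$, and it carries, near any repelling periodic point it contains, the structure of a (semi-hyperbolic) conformal repeller. By the rigidity dichotomy for such repellers and for boundaries of invariant Fatou components --- the thermodynamic formalism of Bowen--Ruelle, Zdunik's theorem, and its semi-hyperbolic refinements due to Przytycki and Urba\'nski --- either $\Hdim(\partial A^J(f_\ast))>1$, or $\partial A^J(f_\ast)$ is contained in a real-analytic curve on which $f_\ast$ is conformally linearizable. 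The latter alternative, by Fatou-type rigidity, would force $f_\ast$ to be affinely conjugate to a power map or to a Chebyshev polynomial; but a power map has no free critical point in its Julia set, and a Chebyshev polynomial has no periodic critical point at all, so neither can arise as a boundary map of a capture component of a family cut out by a periodic critical relation. Hence $\Hdim(\partial A^J(f_\ast))>1$, and Conjecture~\ref{hdc} follows.

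The real difficulty lies not in this deduction but in the two theorems it invokes, and of these the dimension identity is the deep one. We expect its proof to run as follows: (i) prove transversality of the periodic critical relations cutting out $\mathcal{F}$, so that the ``capture coordinate'' on $\mathcal{H}$ --- recording the position of the iterated free critical value in a B\"ottcher-type chart of the superattracting basin --- extends continuously to $\partial\mathcal{H}$ and conjugates the parameter geometry along $\partial\mathcal{H}$ to the dynamical-plane geometry of $\partial A^J(f)$; (ii) exhibit a product-like neighbourhood of $\partial\mathcal{H}$ in $\mathcal{F}$ that separates the $n-1$ complex ``internal'' directions, contributing their full real dimension $2n-2$, from the single remaining ``radial'' direction carrying the fractal structure; and (iii) upgrade this splitting to an \emph{exact} additivity of Hausdorff dimension. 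Step (iii) is the crux: fibered sets are only super-additive in Hausdorff dimension in general, and turning this into an equality requires a Frostman-type measure on $\partial A^J(f)$ whose local dimensions are controlled uniformly --- with uniformly bounded conformal distortion of the capture charts --- as $f$ ranges over all of $\partial\mathcal{H}$, including near the parabolic parameters where the semi-hyperbolic estimates degenerate. Securing that uniformity through the parabolic limit is, we expect, where the bulk of the technical work will lie.
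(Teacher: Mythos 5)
Your reduction is exactly the paper's own route for capture components: Theorem \ref{top-boundary} gives $\topdim(\partial\mathcal H)=2\Cdim(\mathcal H)-1$, Theorem \ref{hd-boundary} gives the dimension identity, and the needed inequality $\max_{f\in\partial\mathcal H}\Hdim(\partial A^J(f))>1$ is supplied in the paper by citing Przytycki \cite{Przy06} (your Zdunik-type rigidity dichotomy excluding power and Chebyshev maps is essentially that citation unpacked, and is valid since every $f\in\partial\mathcal H$ is semi-hyperbolic with a free critical point in $J(f)$). So the proposal is correct and takes essentially the same approach as the paper.
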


Exploring these conjectures is the main theme of the current paper.

\subsection{Main results}

In the paper, we study a typical high-dimensional (also including one-dimensional) case: the capture hyperbolic components. 
Let $\mathcal P^d$ be the space of monic and centered polynomials of degree $d\geq 3$. 
For the complex dynamics community,  it is convenient to work with the space $\widehat{\mathcal{P}}^d$ of critically marked polynomials $(f, \mathbf{c})$, where $f\in\mathcal{P}^d$ and $\mathbf{c} = (c_1,\dots,c_{d-1})$ is an ordered list of its critical points in $\mathbb{C}$.

We identify $\widehat{\mathcal{P}}^d$ with 
the complex $(d-1)$-dimensional affine space 
$$\big\{(\mathbf{c},a)=(c_1,\dots,c_{d-1},a)\in \mathbb{C}^d{;~} c_1+\cdots+c_{d-1}=0\big\}$$
through $(f,\mathbf{c})\mapsto (\mathbf{c},f(0))$ and $(\mathbf{c},a)\mapsto (f_{\mathbf{c},a}, \mathbf{c})$, where 
$$f_{\mathbf{c},a}(z)=d\int_{0}^{z}(\zeta-c_1)\cdots(\zeta-c_{d-1})d\zeta +a.$$
For simplicity, a critically marked polynomial in $\widehat{\mathcal{P}}^d$ is written as $f$, 
which can be viewed as the pair of the polynomial $f(z)$ and its critical marking 
$\mathbf{c}(f) = (c_1(f),\dots,c_{d-1}(f))$. 




Let $n_1,\dots,n_\ell$ be $\ell$ positive integers with $1\leq \ell \leq d-2$. 
Consider an algebraic family $\mathcal F\subset\widehat{\mathcal{P}}^d$ defined by 
$\ell$ periodic critical relations: 
\begin{equation}
\label{superattracting relations}
\mathcal{F}=\big\{f\in \widehat{\mathcal{P}}^d{;~} f^{n_j}(c_j(f))=c_j(f), ~\forall~ 1\leq j\leq \ell\big\}.
\end{equation}
The algebraic set $\mathcal F$ may have many irreducible components.
Taking an irreducible one, we may assume that $\mathcal F$ is irreducible, hence is an algebraic variety.
A \emph{capture hyperbolic component} in $\mathcal F$ is a hyperbolic component consisting of polynomials $f$ for which 
all \emph{free critical points} $(c_j(f))_{\ell+1\leq j\leq d-1}$ are contained in the strictly preperiodic Fatou components of $f$.  
It is known from \cite{MilHyperCompo} that each capture hyperbolic component $\mathcal{H}\subset 
\mathcal F$ is a complex manifold with dimension $\Cdim(\mathcal{H}) = d-1-\ell$, 
and a topological cell of dimension $2\Cdim(\mathcal{H})$ as well (see Lemma \ref{basic facts of H}). 

Our first main result characterizes the boundary topology of a capture hyperbolic component:

\begin{theorem} 
\label{top-boundary} 
The boundary $\partial \mathcal H$  of a capture hyperbolic component $\mathcal H\subset \mathcal F$ is homeomorphic to the sphere $S^{2\Cdim(\mathcal H)-1}$.
\end{theorem}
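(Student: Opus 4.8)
The plan is to construct an explicit homeomorphism from $\overline{\mathcal H}$ onto a closed ball $\overline{\mathbb{B}}^{2m}$, $m=\Cdim(\mathcal H)$, restricting to the boundary sphere. The natural parametrizing tool is a \emph{straightening map}: a capture hyperbolic component in $\mathcal F$ should be modelled on a product of the ``capture part'' of a Blaschke-type family with the hyperbolic components governing the $\ell$ superattracting relations. More concretely, I would first fix a base point $f_0\in\mathcal H$ and use Milnor's result (Lemma \ref{basic facts of H}) that $\mathcal H$ itself is a $2m$-cell; the real content is the boundary extension. To get it, I would analyze the positions of the free critical points $c_j(f)$, $\ell+1\le j\le d-1$, in the Fatou set: on $\mathcal H$ each $c_j(f)$ lies in a strictly preperiodic Fatou component, and the ``capture coordinate'' records where $c_j(f)$ sits relative to the attracting basin, via a B\"ottcher-type or linearizing coordinate pulled back under iteration. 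Assembling these coordinates over all free critical points yields a candidate homeomorphism $\Phi:\mathcal H\to \mathbb{B}^{2m}$.

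The key steps, in order, would be: (1) build the model space $\model$ (presumably a product of punctured/closed disks or a cell complex, as the notation $\model$ in the preamble suggests) and define the straightening/parametrization map $\Phi$ on the open component $\mathcal H$, checking it is a biholomorphism onto the open model; (2) prove that $\Phi$ extends continuously to $\overline{\mathcal H}$ --- this is where one must understand the dynamics of \emph{all} maps $f\in\partial\mathcal H$, showing that as $f\to\partial\mathcal H$ the capture coordinates converge, with the limit encoding either a free critical point hitting the boundary of its Fatou component, or a parabolic degeneration of one of the periodic cycles; (3) prove the extension is injective on $\overline{\mathcal H}$ and surjective onto $\overline{\mathbb{B}}^{2m}$, so that by compactness it is a homeomorphism of pairs $(\overline{\mathcal H},\partial\mathcal H)\to(\overline{\mathbb{B}}^{2m},S^{2m-1})$; (4) conclude $\partial\mathcal H\cong S^{2m-1}$ where $2m-1 = 2\Cdim(\mathcal H)-1$. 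For step (2) the standard mechanism is to use quasiconformal surgery to realize every boundary configuration, together with a compactness/normality argument (the family $\mathcal F$ being algebraic, degenerations are controlled) and rigidity of the relevant geometrically finite or parabolic maps to get uniqueness of the limit.

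The main obstacle, I expect, is step (2)–(3): the continuous, injective boundary extension of the parametrization. In dimension one this is exactly the place where one transfers boundary regularity of Fatou components to Jordan-curve-ness of $\partial\mathcal H$, and it relies on landing theorems, local connectivity, and parabolic-implosion control. In higher dimension the free critical points interact, the Fatou component boundaries need not be locally connected (this is the content of Milnor's Conjecture \ref{nlc}), and yet $\partial\mathcal H$ must still be a sphere --- so the homeomorphism cannot come from naive coordinate-wise landing. The resolution should be that the capture coordinates land on the \emph{closed} model not through boundary values of individual B\"ottcher coordinates but through a more robust mechanism (e.g. holomorphic motions of the relevant hyperbolic sets, or a direct surgery-based bijection with geometrically finite representatives on $\partial\mathcal H$), and one must prove the induced map on closures is a homeomorphism despite the wild geometry. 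Establishing continuity and injectivity there --- controlling simultaneous degenerations of several critical orbits and of the cycles carrying the superattracting relations --- is the technical heart, and I would expect it to consume the bulk of the argument, drawing on semi-hyperbolicity estimates (flagged in the keywords) to get the needed uniform modulus-of-continuity bounds.
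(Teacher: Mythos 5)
Your high-level plan (build a model space, define a straightening map $\chi:\overline{\mathcal H}\to\model$ on the closure, prove it is continuous, injective and surjective, and conclude by compactness) is exactly the skeleton of the paper's argument. But several of the specific mechanisms you propose would not go through, and the paper's actual resolution hinges on points you explicitly steer away from.

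First, you anticipate "parabolic degeneration of one of the periodic cycles" at $\partial\mathcal H$ and plan to control it via parabolic implosion. In fact there are no parabolic (or other indifferent) cycles on $\partial\mathcal H$: this is exactly Proposition \ref{behavior of critical orbits}, proved via the allowable-set machinery of Section \ref{section external rays and allowable arcs} and Lemma \ref{key lemma}, and it is what makes the whole argument tractable. All maps on $\partial\mathcal H$ are semi-hyperbolic (Corollary \ref{semi-hyperbolicity}), so every boundary Fatou component is a quasicircle and the Julia set is locally connected and conformally removable. Relatedly, your remark that "the Fatou component boundaries need not be locally connected (this is the content of Milnor's Conjecture \ref{nlc})" confuses dynamical-plane and parameter-space objects: Conjecture \ref{nlc} concerns $\partial\mathcal H$ in parameter space, whereas the relevant Fatou component boundaries in the dynamical plane are in fact quasicircles here.

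Second, you propose quasiconformal surgery to realize the boundary configurations, in the style of Inou--Kiwi. The paper explicitly explains why this fails in the degenerate case that actually occurs on $\partial\mathcal H$: when a free critical point lands on $\partial U$ for a bounded periodic attracting Fatou component $U$ (see Example \ref{example F2} and Figure \ref{figure basilica}), there is no polynomial-like restriction of $f$ with filled Julia set $\overline U$, so there is nothing to straighten in the Douady--Hubbard sense. The paper circumvents this with a purely topological notion of "marked lemniscate map" (Definition \ref{definition marked LM}) and a topological straightening theorem (Proposition \ref{straightening of marked LM}); the straightening map $\chi$ is then $\chi(f)=(\mathbf{P}_f,\ldots)$ built from the induced lemniscate map, not from a qc conjugacy. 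Continuity of $\chi$ is obtained not from "holomorphic motions of hyperbolic sets" but from a new, non-dynamical result on the convergence of external angles for Hausdorff-convergent pairs $(K_n,\gamma_n)$ (Proposition \ref{prop-convergence of external angles}), combined with the convergence theory for marked lemniscate maps (Proposition \ref{continuity-straightening-LM}). Finally, surjectivity of $\chi$ is not attacked head-on by realizing boundary configurations via surgery; instead, injectivity plus openness (invariance of domain) shows $\chi(\mathcal H)$ is both open and closed in the connected set $\model^\circ\cong\mathbb D^{\Cdim\mathcal H}$, hence $\chi(\mathcal H)=\model^\circ$, and one passes to closures. If you followed your route as written — anticipating parabolics, relying on qc surgery for the critical-on-boundary case, and inferring continuity from generic landing — the argument would stall precisely on the degenerations that the paper shows must be handled topologically.
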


Theorem \ref{top-boundary} implies that the following situation may happen: if the maps in a hyperbolic component  have an attracting cycle with
at least two distinct free critical points in its attracting basin, and if these free critical points are in the strictly preperiodic Fatou components,  the boundary of the hyperbolic component may be locally connected.
To the best of the authors' knowledge, 
Theorem \ref{top-boundary}  is the first complete topological description of a hyperbolic component boundary in the higher dimensional case. 



Our second main result studies the  boundary $\partial \mathcal H$ from the analytic viewpoint. 
We establish an unexpected relationship between the Hausdorff dimension of $\partial \mathcal H$ and that of 
the boundaries of Fatou components:

\begin{theorem} 
\label{hd-boundary} 
The boundary $\partial \mathcal H$ of a capture hyperbolic component $\mathcal H \subset \mathcal F$ has Hausdorff dimension
$${\Hdim}(\partial \mathcal H)= 2 \Cdim(\mathcal H)-2+\max_{f \in \partial \mathcal H}{\Hdim}(\partial A^J(f)),$$
where $A^J(f)$ is the union of the bounded attracting Fatou components of $f$ associated with the free critical points in the Julia set $J(f)$. 
\footnote{
See \S\ref{section the hd-formula} for the precise definition of $A^J(f)$.
}
\end{theorem}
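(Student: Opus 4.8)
The plan is to reduce the Hausdorff dimension computation for the parameter set $\partial\mathcal H$ to a dynamical one by exhibiting a locally bi-Lipschitz (or at least dimension-preserving up to the stated affine correction) correspondence between $\partial\mathcal H$ and a product-like object built from the dynamical planes. The starting point is Theorem \ref{top-boundary}: the sphere homeomorphism $\partial\mathcal H\cong S^{2\Cdim(\mathcal H)-1}$ is presumably constructed via a straightening-type map that records, for each $f\in\partial\mathcal H$, the position of the free critical values relative to the (now possibly degenerating) Fatou components. I would first upgrade the \emph{qualitative} parametrization of $\mathcal H$ and its closure to a \emph{quantitative} one: since $\mathcal H$ is a capture component, each $f\in\mathcal H$ is determined by the finitely many ``capture coordinates'' — roughly, the landing positions in Böttcher/linearizing coordinates of the free critical values within their host Fatou components — and this map is biholomorphic onto a polydisk-type model $\model$. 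The boundary behavior is governed by what happens as a free critical value is pushed to the boundary of its host Fatou component, i.e. into the Julia set.

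The key geometric input should be \emph{transversality}: the capture coordinates extend continuously (and quasiconformally, or Hölder with controlled exponents) to $\overline{\mathcal H}$, and the derivative of the parameter-to-dynamics map is uniformly non-degenerate near $\partial\mathcal H$ — this is exactly where the ``new results with independent interest'' alluded to in the abstract, and the transversality keyword, enter. Granting such a uniform transversality / holomorphic motion statement, the local structure of $\partial\mathcal H$ near a map $f_0$ decomposes as follows: $2(\ell' )$ of the real directions (where $\ell'$ counts free critical points that have \emph{already} escaped into the Julia set at $f_0$) are ``frozen'' and locally model the corresponding pieces of $\partial A^J(f_0)$ via a holomorphic motion, while the remaining $2\Cdim(\mathcal H)-2\ell'$ (for $\ell'=1$, giving the $2\Cdim(\mathcal H)-2$ in the formula, which is the generic/maximizing stratum) real directions move freely inside open Fatou components and contribute their full Euclidean dimension. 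Because holomorphic motions are quasiconformal in the dynamical variable and depend holomorphically on the parameter (by the $\lambda$-lemma of Mañé–Sad–Sullivan and Słodkowski), the transverse slices are quasiconformal images of $\partial A^J(f_0)$, hence have the \emph{same} Hausdorff dimension; and a Fubini/Marstrand slicing argument then gives
$$
\Hdim(\partial\mathcal H)\;=\;\bigl(2\Cdim(\mathcal H)-2\bigr)\;+\;\max_{f_0\in\partial\mathcal H}\Hdim\bigl(\partial A^J(f_0)\bigr),
$$
with the ``$\max$'' arising because Hausdorff dimension of a set fibered over a base is the base dimension plus the supremal fiber dimension (the supremum being attained by lower semicontinuity of $f\mapsto\Hdim(\partial A^J(f))$, e.g. via Shishikura-type estimates using the hyperbolic dimension and the fact that the relevant Julia sets move holomorphically on a dense set of the boundary stratum).

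Concretely I would carry out the steps in this order: (1) from Lemma \ref{basic facts of H} and the capture structure, write down the explicit biholomorphic model $\mathcal H\cong\model$ and identify its natural closure; (2) prove a boundary extension + transversality lemma showing the model map is a bi-Lipschitz-on-slices homeomorphism $\overline{\mathcal H}\to\overline{\model}$, with the fibers over the ``escaped'' coordinates being exactly the boundaries $\partial A^J$ of the dynamical Fatou components, transported by a holomorphic motion; (3) for the upper bound, cover $\partial\mathcal H$ by the strata according to how many free critical points have escaped, note that the top stratum (one critical point on the Julia set, the rest still captured) has local dimension $(2\Cdim(\mathcal H)-2)+\Hdim(\partial A^J)$ and the deeper strata have strictly smaller dimension (each additional escape costs $2$ in the free directions but adds at most $\max\Hdim(\partial A^J)<2$, since these Fatou-component boundaries are never the whole plane), so the maximum is realized on the top stratum; (4) for the lower bound, fix $f_0$ realizing the maximum of $\Hdim(\partial A^J(\cdot))$, restrict to a small transverse product neighborhood, and apply the quasiconformal-invariance of Hausdorff dimension under the holomorphic motion together with the product inequality $\Hdim(E\times B)\ge\Hdim E+\dim B$ for $B$ a Euclidean ball. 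The main obstacle is unquestionably step (2): establishing the \emph{uniform} transversality of the capture critical relations along the entire boundary $\partial\mathcal H$ — including at its most degenerate points, where a free critical value lands on a parabolic or otherwise delicate point of the Julia set — and showing that the resulting holomorphic motion is genuinely quasiconformal (not merely a homeomorphism) so that it preserves Hausdorff dimension on the transverse slices; this is where I expect most of the technical weight, and where the ``independent interest'' results of the paper presumably lie.
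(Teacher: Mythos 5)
Your high-level outline is close in spirit to the paper's argument --- transversality is indeed the central technical engine (Proposition \ref{transversality-bd}), holomorphic motions of $\partial A(f)$ do appear, and the local dimension of $\partial\mathcal H$ is computed by fibering the free direction over a base of full Euclidean dimension $2\Cdim(\mathcal H)-2$. However there is one genuine gap in your step (4), and it is not a technicality: \emph{quasiconformal maps do not preserve Hausdorff dimension.} A $K$-quasiconformal self-map of $\mathbb{C}$ is only $1/K$-Hölder (Mori's theorem), so it distorts Hausdorff dimension by a multiplicative factor in $[1/K, K]$; the classical examples (e.g.\ $z\mapsto z|z|^{1/K-1}$) show the distortion is real. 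When you invoke a holomorphic motion $h(z,\cdot)$ over a parameter polydisk of radius $\delta$, the dilatation of $h(z,\cdot)$ is bounded only by $(1+\delta)/(1-\delta)$, so the fiber ``slices'' of $\partial\mathcal H$ are not dimension-preserving images of $\partial A^J(f_0)$ --- they are merely bi-Hölder images with an exponent that tends to $1$ as $\delta\to 0$. The paper's proof is organized precisely around this defect: the transfer equalities (Proposition \ref{HD-trans-e}) state
\begin{equation*}
\lim_{\delta\to 0}\Hdim(\mathcal X_\delta)\;=\;2(n-1)+\lim_{r\to 0}\Hdim(X_r),
\end{equation*}
where both sides involve a limit over shrinking neighborhoods, so that the Hölder exponent $K_\delta\to 1$ controls the error. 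There is no single ``product structure'' on a fixed neighborhood that gives the exact equality you claim; the equality only emerges in the limit, and turning it into a global statement about $\partial\mathcal H$ then requires a covering argument together with the continuity of $f\mapsto\Hdim(\partial U_{f,v})$ (Proposition \ref{contin-hdim}) --- which you do not account for, and which is itself nontrivial because the critical point can sit on the boundary of the Fatou component at a boundary map. Your ``Fubini/Marstrand slicing'' framing is also off: Marstrand's slicing theorem gives only an inequality, and the actual product formula used is Falconer's $\Hdim(B\times X)=\dim B+\Hdim X$, valid because one factor is a Euclidean ball, again applied only after the bi-Hölder distortion has been sent to $1$.

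A smaller point: you invoke \emph{lower} semicontinuity of $f\mapsto\Hdim(\partial A^J(f))$ to attain the maximum; the correct direction, and the one actually proved (and stated after Theorem \ref{hd-boundary} in the paper), is \emph{upper} semicontinuity. Lower semicontinuity would not give you a maximizer. Also, your concern about parabolic or ``otherwise delicate'' boundary points is not actually an issue here: Proposition \ref{boudary-sh} shows that all maps on $\partial\mathcal H$ are semi-hyperbolic, in particular have no parabolic cycles, which is exactly what lets the holomorphic motion of $\partial A(f)$ exist away from the small exceptional set $\Sigma(\mathcal F)\cup\{\crit\cap\partial A\neq\emptyset\}\cup\{\text{free critical relation}\}$. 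The paper's upper bound does not come from a stratum-by-stratum dimension count as in your step (3); instead it shows that this exceptional set has Hausdorff dimension at most $2\Cdim(\mathcal H)-2$ (Lemma \ref{partial-T-H}), uses the transfer equalities on its complement $\partial_T\mathcal H$, and uses the simplicial structure $\overline{\partial_{v,k}\mathcal H}$ (Lemma \ref{partial-vk-H}) together with Proposition \ref{capture-separation} for the lower bound.
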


It is worth noting  that $\Hdim(\partial A^J(f))$ is upper semi-continuous with respect to $f\in\partial \mathcal H$, hence the maximum in the identity can always be attained at some polynomial $f \in \partial \mathcal H$ (see  \S\ref{section the hd-formula}). 
 Moreover, for any $f \in \partial \mathcal H$ and any bounded attracting Fatou component $U$ of $f$, 
the boundary $\partial U$ is a quasicircle (see Corollary \ref{semi-hyperbolicity}), implying that ${\Hdim}(\partial U)<2$. On the other hand, by a result of  Przytycki \cite{Przy06}, we have ${\Hdim}(\partial U)>1$.
Then Theorem \ref{hd-boundary} implies the following estimate 
$$2\Cdim(\mathcal H)-1 = \topdim(\partial \mathcal H)<\Hdim(\partial \mathcal H)<2\Cdim(\mathcal H).$$

The Hausdorff dimension formula, even in the one-dimensional case,
is new:

\begin{corollary} \label{application-dim1} If $\mathcal F$ is an algebraic curve (i.e. $\Cdim(\mathcal F)=1$), then the boundary $\partial \mathcal H$  of a capture hyperbolic component $\mathcal H$ in $\mathcal F$ is a Jordan curve  with Hausdorff dimension
$${\Hdim}(\partial \mathcal H)=\max_{f \in \partial \mathcal H}{\Hdim}(\partial U_{f}), $$
where $U_f$ is the bounded attracting Fatou component of $f$ associated with the only free critical point (in the Julia set). 
\footnote{ In this case, it is shown in \S\ref{continuity-hd} that ${\Hdim}(\partial U_{f})$ is continuous with respect to  $f \in \partial \mathcal H$.}
\end{corollary}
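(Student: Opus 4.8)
The plan is to deduce Corollary~\ref{application-dim1} from Theorems~\ref{top-boundary} and~\ref{hd-boundary} by specializing to $\Cdim(\mathcal F)=1$, and then doing the small amount of extra work needed to (a) identify $A^J(f)$ with a single Fatou component $U_f$ and (b) upgrade upper semicontinuity to genuine continuity of $\Hdim(\partial U_f)$.

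First, the Jordan curve statement. Since $\mathcal F$ is an algebraic curve, a capture hyperbolic component $\mathcal H\subset\mathcal F$ has $\Cdim(\mathcal H)=1$, so Theorem~\ref{top-boundary} gives that $\partial\mathcal H$ is homeomorphic to $S^{2\cdot 1-1}=S^1$, i.e. a Jordan curve. This is immediate.

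Next, the dimension formula. Apply Theorem~\ref{hd-boundary} with $\Cdim(\mathcal H)=1$: the leading term $2\Cdim(\mathcal H)-2$ vanishes, leaving $\Hdim(\partial\mathcal H)=\max_{f\in\partial\mathcal H}\Hdim(\partial A^J(f))$. It remains to check that in the curve case $A^J(f)$ is exactly $\partial U_f$ for a single Fatou component. When $\Cdim(\mathcal F)=1$ we have $d-1-\ell=1$, so there is exactly one free critical point $c_d$ (in the indexing of the excerpt, $c_{\ell+1}=c_{d-1}$), and for $f\in\partial\mathcal H$ this point lies in the Julia set or in a bounded attracting Fatou component; in the generic boundary situation it lies in (the closure of) a single bounded attracting Fatou component $U_f$, and then $A^J(f)=\overline{U_f}$, so $\Hdim(\partial A^J(f))=\Hdim(\partial U_f)$. (If the free critical point escapes to infinity or lands in the Julia set, the component $U_f$ is understood via the conventions of \S\ref{section the hd-formula}; one should invoke Corollary~\ref{semi-hyperbolicity} to see that for $f\in\partial\mathcal H$ the relevant bounded Fatou component is always a quasidisk, so $\partial U_f$ is well defined as a quasicircle.) Substituting gives the displayed formula.

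Finally, the footnote claim: $\Hdim(\partial U_f)$ is \emph{continuous} on $\partial\mathcal H$, not merely upper semicontinuous. Upper semicontinuity is already noted in the text (via \S\ref{section the hd-formula}). For lower semicontinuity, the idea is that along $\partial\mathcal H$ in the curve case the dynamics on $\partial U_f$ varies in a controlled way: by Corollary~\ref{semi-hyperbolicity} each $f\in\partial\mathcal H$ is semi-hyperbolic (no recurrent critical point in $J(f)$, no parabolics interfering with $U_f$), and $\partial U_f$ carries a single, well-defined conformal-dynamical structure — essentially $f|_{\partial U_f}$ is, up to the standard uniformization, a fixed expanding-type model with holomorphically moving conjugacy, or at worst a geometrically finite degeneration. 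Bishop--Peres / Ruelle-type formulas then express $\Hdim(\partial U_f)$ as the zero of a pressure function that depends continuously (real-analytically on the open hyperbolic pieces) on $f$, and the semicontinuity matches up at the degenerate (geometrically finite) boundary parameters. So combining upper semicontinuity from the text with this lower-semicontinuity argument yields continuity. I expect this last point — proving lower semicontinuity of $\Hdim(\partial U_f)$ at the geometrically finite parameters on $\partial\mathcal H$ — to be the main obstacle, since it requires controlling how the Hausdorff dimension of the Fatou-component boundary behaves under parabolic implosion / Misiurewicz degeneration, rather than just the soft semicontinuity that comes for free; the rest of the corollary is a direct specialization of the two main theorems.
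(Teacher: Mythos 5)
Your two main steps are the right ones: the Jordan-curve statement follows immediately from Theorem~\ref{top-boundary} with $\Cdim(\mathcal H)=1$, and the dimension formula is indeed the specialization of Theorem~\ref{hd-boundary} with the leading term $2\Cdim(\mathcal H)-2=0$. But the middle step and the footnote both need repair.

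On the identification $A^J(f)=U_f$: you hedge with ``in the generic boundary situation'' and even write $A^J(f)=\overline{U_f}$, but neither the hedging nor the closure is needed or correct. In the curve case there is a single index $(v,k)$ in $I$, and for \emph{every} $f\in\partial\mathcal H$ one has $I^J(f)=\{(v,k)\}$: if the unique free critical point were not in $J(f)$ it would lie in a bounded Fatou component, and since Proposition~\ref{behavior of critical orbits} rules out indifferent cycles, every critical orbit would then be attracted to a (super)attracting cycle, forcing $f$ to be hyperbolic --- impossible, because $\partial\mathcal H$ contains no hyperbolic maps. So $A^J(f)=U_{f,\sigma^{r_v}(v)}=U_f$ for all $f\in\partial\mathcal H$, with no exceptional set to worry about, and the max over $\partial\mathcal H$ is exactly the max of $\Hdim(\partial U_f)$. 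Your version silently passes through a ``nongeneric'' subset without explaining why it doesn't matter; the clean argument is that there is no such subset.

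On the footnote, you have misread its role: it is a \emph{reference}, not a new claim to establish within the corollary. Continuity of $\Hdim(\partial U_{f,v})$ on all of $\overline{\mathcal H}$ is Proposition~\ref{contin-hdim}, proved in \S\ref{continuity-hd} via conformal measures and the identity in Proposition~\ref{equalities for Hdim of partial-U}, precisely to handle the case where a critical point sits on $\partial U_f$ and the holomorphic-motion/Ruelle argument breaks down. Your sketch --- Bishop--Peres/Ruelle pressure formulas plus control of ``parabolic implosion / Misiurewicz degeneration'' --- does not match the paper's mechanism, and the parabolic-implosion scenario in particular cannot occur here: Proposition~\ref{behavior of critical orbits} guarantees that maps on $\partial\mathcal H$ have no indifferent cycles, so parabolics never enter. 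The paper even flags that its continuity proof is deliberately different from the usual pressure/Ruelle approach. So the corollary needs no new analytic work at all; it is a direct substitution into Theorems~\ref{top-boundary} and~\ref{hd-boundary}, combined with the observation that $I^J(f)=I$ always, and the footnote is just pointing to Proposition~\ref{contin-hdim}.
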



\begin{example}
Corollary \ref{application-dim1} covers some known examples 
 of one-dimensional parameter space of polynomials  \cite{Roesch,Wang21}. Among these one-dimensional cases, a notable one is Milnor's \emph{superattracting curve} $\mathcal S_p$ (see \cite{MilCubicPoly}) in the space of cubic polynomials
$$f_{c,a}(z) = z^3 - 3 c^2 z + a, ~ (c,a)\in\mathbb{C}^2.$$
The set $\mathcal S_p$ with $p\geq 1$ is the collection of pairs $(c,a)\in \mathbb C^2$ for which the critical point $c$ has period $p$ under iterations of $f_{c,a}$:
$$\mathcal{S}_p=\big\{(c, a)\in \mathbb{C}^2{;~}
\text{$f_{c,a}^p(c)=c$ and $f^k_{c,a}(c)\neq c, ~\forall~ 1\leq k<p$}\big\}.$$

Applying Corollary \ref{application-dim1} to $\mathcal F=\mathcal{S}_p$, we see that for any
capture hyperbolic component $\mathcal H$ in $\mathcal S_p$,  the boundary $\partial \mathcal H$ is a Jordan curve with Hausdorff dimension
$${\Hdim}(\partial \mathcal H)=\max_{(c,a)\in \partial \mathcal H}{\Hdim}(\partial U_{c,a}) \in (1,2),$$
where $U_{c,a}$ is the Fatou component of $f_{c,a}$ containing the critical point $c$. 
See Figure \ref{figure S1}. 
It is known before that the boundaries of all bounded hyperbolic components in $\mathcal S_p$ are Jordan curves; see 
\cite{Roesch,Wang21}.

\begin{figure}[ht]
\centering
\includegraphics{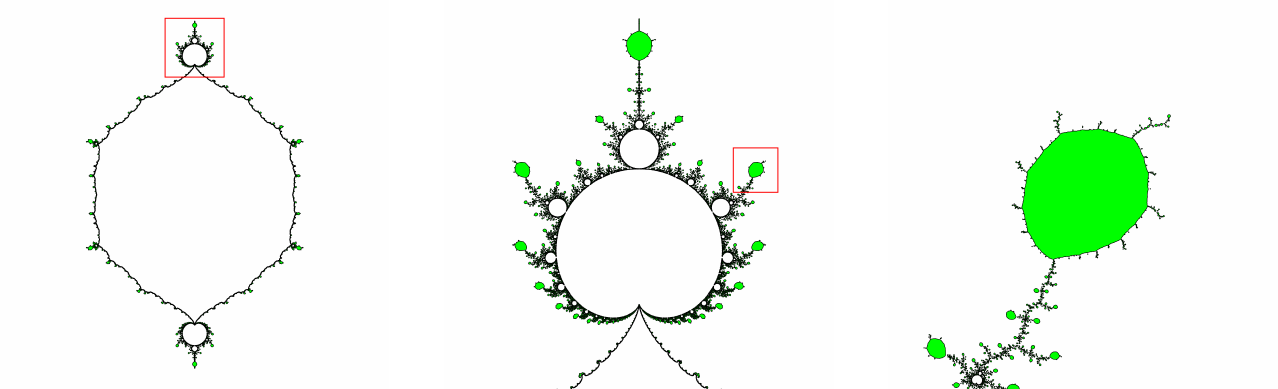}
\caption{The non-hyperbolic locus (colored black) and the capture hyperbolic components (colored green) 
in the curve $\mathcal{S}_1 = \{(c,c+2c^3){;~} c\in\mathbb{C}\}$, projected into the $c$-plane.}
\label{figure S1}
\end{figure}
\end{example}

\begin{example}
\label{example 2plus}
Consider $\mathcal F=\big\{f\in\widehat{\mathcal{P}}^4{;~} f(c_1(f)) = c_1(f)\big\}$. 
Let $f_0\in\mathcal{F}$ satisfy $f_0^2(c_2(f_0)) = f_0^2(c_3(f_0)) = c_1(f_0)$ as in Figure \ref{figure Peanuts1}. 
\footnote{
$(\mathbf{c}(f_0),f_0(0))
\approx (- 0.7232,~ 0.3616 + 0.7379i,~ 0.3616 - 0.7379i,~ 0.2565)$. 

$(\mathbf{c}(f),f(0))
\approx (-0.718700+0.004800i,~   0.362897+0.733624i,~   0.355803-0.738424i,~\\   0.240927-0.018759i)$. 
}
Let $\mathcal{H}\subset\mathcal{F}$ be the hyperbolic component containing $f_0$. 
Then $\mathcal{H}$ is a capture hyperbolic component. 

\begin{figure}[ht]
\centering
\includegraphics{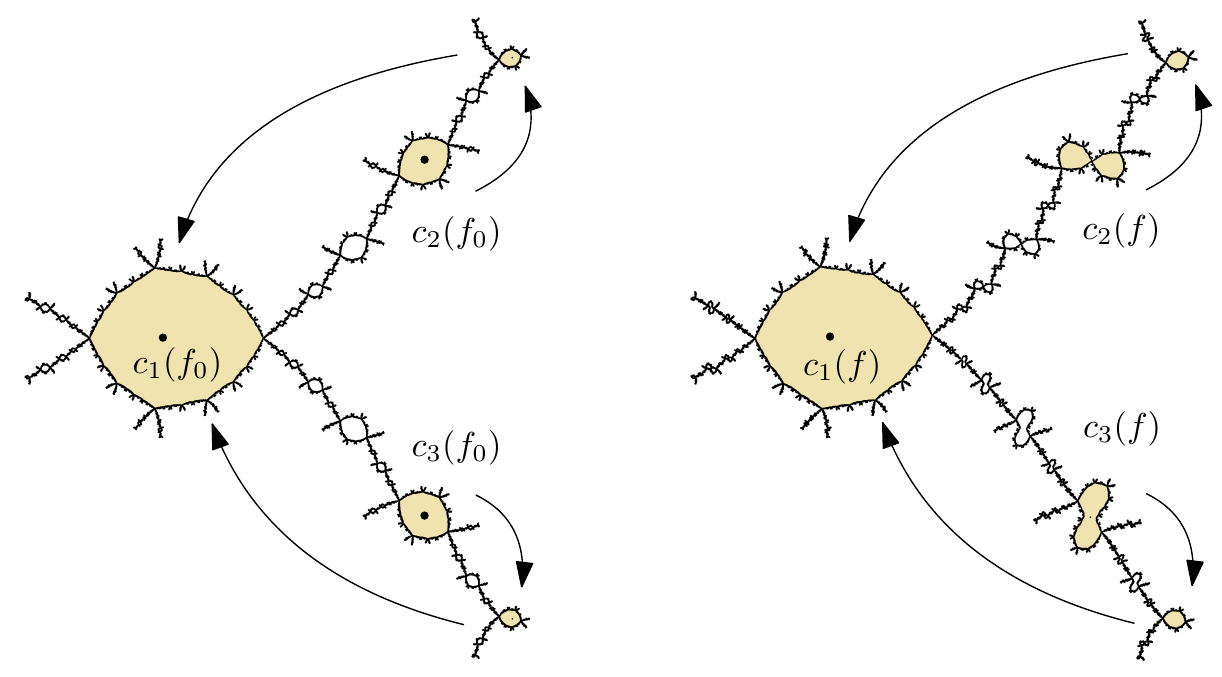}
\caption{Julia sets for $f_0\in\mathcal{H}$ and some  $f\in\partial \mathcal{H}$.
}
\label{figure Peanuts1}
\end{figure}

By Theorem \ref{top-boundary}, the boundary $\partial \mathcal H$ is homeomorphic to the sphere $S^3$. 
By Theorem \ref{hd-boundary}, we have 
$${\Hdim}(\partial \mathcal H)=2+\max_{f\in \partial \mathcal H}{\Hdim}(\partial U_f),$$
where $U_f$ is the Fatou component of $f$ containing the critical point $c_1(f)$. 
Of course these results hold for any capture hyperbolic component in $\mathcal F$. 
\end{example}

\subsection{Outline of the proof and novelty}
The paper consists of two parts.
From \S\ref{section external rays and allowable arcs} to \S\ref{section straightening map}, we prove Theorem \ref{top-boundary}, and from \S\ref{section transfer} to \S\ref{section the hd-formula}, we prove Theorem \ref{hd-boundary}.

The proof of the main theorems is very long and uses various ideas and methods from complex dynamics and complex analysis in one or several complex variables. In order to exhibit a panoramic view of the paper, in this part, we outline the proof and highlight several new results with independent interests.

To make the presentation clear and accessible, we will mention some necessary definitions herein.
More details of definitions and terminologies can be found in the corresponding sections (note that some notations might differ).

Our exploration of $\partial \mathcal H$ starts from the 
 characterization of the maps on $\partial \mathcal H$: 

\begin{proposition}  
\label{boudary-sh}
All maps on $\partial \mathcal H$ 
are semi-hyperbolic.
\end{proposition}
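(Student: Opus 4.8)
The plan is to verify, for each $f_0\in\partial\mathcal H$, the two defining properties of a semi-hyperbolic map: that $f_0$ has no parabolic cycle, and that every critical point of $f_0$ lying in $J(f_0)$ is non-recurrent. Since $\partial\mathcal H\subset\overline{\mathcal H}\subset\mathcal F$, the marked critical points $c_1(f_0),\dots,c_\ell(f_0)$ are still periodic and hence lie in super-attracting cycles $\mathcal O_1(f_0),\dots,\mathcal O_k(f_0)$ (with $k\le\ell$), which are in the Fatou set and irrelevant to both properties; so only the free critical points $c_{\ell+1}(f_0),\dots,c_{d-1}(f_0)$ and the non-super-attracting cycles of $f_0$ are at issue. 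I would also record two facts about $\mathcal H$ itself. First, every $f\in\mathcal H$ has \emph{exactly} the cycles $\mathcal O_1(f),\dots,\mathcal O_k(f)$ as its non-repelling cycles in $\mathbb C$: $f$ is hyperbolic, hence has no indifferent cycle, and any attracting cycle carries a critical point in its immediate basin --- but the only periodic critical points are $c_1(f),\dots,c_\ell(f)$ (inside the $\mathcal O_i(f)$), while the free critical points lie, by the definition of a capture component, in strictly preperiodic Fatou components and hence in no immediate basin. Second, since $\mathcal H$ is connected, the capture combinatorics is constant on it: there is a fixed $m\ge1$ and indices $i=i(j)$ so that the Fatou component $U_j(f)\ni c_j(f)$ satisfies $f^{\,m}(U_j(f))=V_i(f)$, a periodic component of the immediate basin of $\mathcal O_i(f)$, all of these moving holomorphically over $\mathcal H$ by Lemma \ref{basic facts of H}.

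The engine of the argument is the \emph{rigidity} of the super-attracting cycles: the relation $f^{\,n_i}(c_i(f))=c_i(f)$ pins the multiplier of $\mathcal O_i(f)$ at $0$ along all of $\mathcal F$, so for any $f_t\in\mathcal H$ with $f_t\to f_0$ one has $\mathcal O_i(f_t)\to\mathcal O_i(f_0)$ through super-attracting cycles with continuously varying B\"ottcher charts, and from this I would extract the key semicontinuity property: if $z_t\in B_i(f_t)$ and $z_t\to z_0$, then $z_0\in\overline{B_i(f_0)}$, where $B_i(f)$ denotes the (bounded) basin of $\mathcal O_i(f)$. This is exactly the step I expect to be the main obstacle, since for general holomorphic families attracting basins can blow up under degeneration; what should save it here is that, by the constancy of the combinatorics, $B_i(f_t)$ is built from the continuously varying immediate basin components $V_i(f_t)$ by a \emph{bounded} tower of pullbacks, which one can try to control uniformly in $t$ via normal-families and hyperbolic-metric estimates on the finitely many branches involved. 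Granting this property, non-recurrence of the free critical points follows: if $c_j(f_0)\in J(f_0)$ for some $j>\ell$, then $f_0^{\,m}(c_j(f_0))=\lim_t f_t^{\,m}(c_j(f_t))$ with $f_t^{\,m}(c_j(f_t))\in V_i(f_t)\subset B_i(f_t)$ (for $i=i(j)$), so $f_0^{\,m}(c_j(f_0))\in\overline{B_i(f_0)}\cap J(f_0)=\partial B_i(f_0)$, and as $\partial B_i(f_0)$ is compact and forward invariant the forward orbit of $c_j(f_0)$ stays there from time $m$ on; hence $\omega(c_j(f_0))\subset\partial B_i(f_0)$, which is at positive distance from $\mathcal O_i(f_0)$. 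A recurrent $c_j(f_0)$ would then have to lie on $\partial B_i(f_0)$ with an infinite orbit accumulating on itself, and a short analysis of the itinerary of this orbit among the Fatou components in the grand orbit of $\mathcal O_i$ --- using that $c_j(f_0)$ is the limit of critical points interior to the strictly preperiodic components $U_j(f_t)$ --- should rule this out.

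It remains to exclude parabolic cycles, which I would do by contradiction. If $f_0$ had a parabolic cycle $\mathcal O$, its immediate parabolic basin would contain a critical point of $f_0$ in an attracting petal, which must be a free one $c_j(f_0)$; moreover $\mathcal O\ne\mathcal O_i(f_0)$ for every $i$, since $\mathcal O$ has multiplier a root of unity while each $\mathcal O_i(f_0)$ has multiplier $0$. Pick $f_t\in\mathcal H$ with $f_t\to f_0$. The cycles of $f_t$ near $\mathcal O$ are not indifferent (by hyperbolicity) and not attracting (otherwise $f_t$ would have a $(k{+}1)$st non-repelling cycle in $\mathbb C$, contradicting the count above), hence repelling for all small $t$. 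On the other hand $c_j(f_0)$ lies in the parabolic basin, so some iterate $z_0:=f_0^{\,M}(c_j(f_0))$ with $M\ge m$ lies in an attracting petal of $\mathcal O$ --- an open connected subset of the Fatou set $F(f_0)$ that meets neither $B_i(f_0)$ nor $\partial B_i(f_0)\subset J(f_0)$, hence is disjoint from $\overline{B_i(f_0)}$. But $z_0=\lim_t f_t^{\,M}(c_j(f_t))$ with $f_t^{\,M}(c_j(f_t))\in B_i(f_t)$, so the semicontinuity property forces $z_0\in\overline{B_i(f_0)}$ --- a contradiction. Therefore $f_0$ is semi-hyperbolic.
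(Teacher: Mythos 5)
Your soft perturbation strategy funnels everything through the ``key semicontinuity property'' ($z_t\in B_i(f_t)$, $z_t\to z_0$ $\Rightarrow$ $z_0\in\overline{B_i(f_0)}$), and this is exactly where the proof breaks down: the property is essentially as strong as the proposition itself, and your proposed justification is circular. Carath\'eodory kernel convergence of the pointed immediate basins $(V_i(f_t),\mathcal O_i(f_t))$ is standard, but what your pullback/normal-families sketch actually needs is Hausdorff upper semicontinuity of the \emph{closures} $\overline{V_i(f_t)}$ as $f_t\to f_0\in\partial\mathcal H$, i.e.\ absence of boundary degeneration at $f_0$ --- and that is precisely what can fail if $\partial A(f_0)$ carries a parabolic (or otherwise non-repelling) periodic point, which is the very scenario you want to exclude. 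Indeed, in the hypothetical situation where $f_0$ has a parabolic cycle attracting the free critical orbit, one would have $f_t^{\,m}(c_j(f_t))\in V_i(f_t)$ converging to a point of the parabolic basin lying outside $\overline{B_i(f_0)}$, so the semicontinuity statement would simply be false; any proof of it must therefore already rule out such boundary degenerations. The paper supplies this missing input by a combinatorial rigidity argument anchored at the center $f_0$ of $\mathcal H$: the external-angle functions $\theta^{\pm}_{f,v}$ are constant on $\overline{\mathcal H}$ (Lemma \ref{invariant external angles}), and the analysis of minimal sectors and the allowable set $X(S)$ produces persistent pairs of rational rays co-landing for every $f\in\partial\mathcal H$, which forces every periodic point of $\partial A(f)$ to be repelling (Lemma \ref{key lemma}); only after that does the continuity of the closed immediate basins (Proposition \ref{continuity of Ufv}) --- the rigorous form of your semicontinuity --- become available, and with it the statement that some iterate of $\crit(f)$ lands in $\overline{A(f)}$ (Proposition \ref{behavior of critical orbits}).

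Even granting the semicontinuity, your non-recurrence step is not a proof. Reducing to a critical point $c_j(f_0)\in\partial B_i(f_0)$ whose forward orbit stays in $\partial B_i(f_0)$ does not by itself preclude recurrence: there is no general obstruction to a recurrent critical point on the boundary of a (super)attracting basin, and polynomials in $\mathcal F$ with exactly this feature exist --- the point to be proved is that they do not lie on $\partial\mathcal H$, so ``a short analysis of the itinerary'' cannot be soft. The paper proves the much stronger statement that any critical point on $\partial A(f)$ is \emph{preperiodic}: its internal angle carries a pair of external angles which are rational because the corresponding sector attaches to a bounded Fatou component of the hyperbolic center (Lemma \ref{sectors attaching to U}), and these angles transfer unchanged from $f_0$ to $f$ by Lemma \ref{invariant external angles}, which again rests on Lemma \ref{key lemma}. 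So both halves of your argument (no parabolics, no recurrence) depend on the same combinatorial input that the proposal never provides; the parts that do work (the count of non-repelling cycles on $\mathcal H$, the petal containing a free critical point, forward invariance of $\partial B_i$) are the easy ones.
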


To prove Proposition \ref{boudary-sh}, the key point is to show that all periodic points on the boundaries of
the bounded attracting Fatou components of $f\in \partial\mathcal H$ are repelling.
The idea is as follows. We denote the unique postcritically finite map in $\mathcal H$ by $f_0$, 
which is called the \emph{center} of $\mathcal H$. 
Replacing $f_0$ by a suitable iterate $f_0^m$, we may assume that any bounded periodic Fatou component has period one. For any bounded invariant Fatou component $U$ of $f_0$, we consider the minimal sectors $S$ 
bounded by two adjacent invariant external rays landing at the same point on $\partial U$.
We introduce an $f_0$-invariant \emph{allowable set} $X(S)$
associated with $S$ in \S\ref{section external rays and allowable arcs}. The set $X(S)$ is in fact a part of the Hubbard tree, and has two different kinds of relative positions with respect to $S$:  $X(S)\subset \overline{S}$ or  $X(S)\not\subset \overline{S}$ (see Lemma \ref{key lemma}). These two cases indicate  different combinatorial properties of the map $f_0$.
Based on a further discussion of these cases, we will give  the proof of Proposition  \ref{boudary-sh}  in \S\ref{section behavior of crit-orbits}.

Proposition \ref{boudary-sh} paves the way to
study the topology of $\partial \mathcal H$.
For this purpose, we need to construct a concrete model space for the abstract topological object $\overline{\mathcal{H}}$.
The model space is used to give a natural parameterization of $\overline{\mathcal{H}}$.

A suitable candidate of model space is the space of \emph{generalized polynomials over a mapping scheme}, introduced by Milnor \cite{MilHyperCompo}.
The definitions are given in \S\ref{section generalized polynomials}, and here is a brief introduction.
A \emph{mapping scheme} $T=(|T|,\sigma,\delta)$ associated with $\mathcal H$ is a triple of 
a finite set $|T|=\{f_0^{n}(c) {;~}  c\in\crit(f_0),n\geq0\}$ of critical orbits, 
a self-map $\sigma: |T|\rightarrow |T|$ defined by $\sigma(v)=f_0(v)$, 
and a degree map $\delta: |T|\rightarrow \mathbb N$ defined by $\delta(v)=\deg(f_0, v)$. 
The finite set $|T|$ admits a decomposition $|T|=\Tp \sqcup \Tn $, where
$\Tp $ consists of all $\sigma$-periodic points $v\in |T|$, and $\Tn $ is the residual part.

The mapping scheme $T$ induces the space $\widehat{\mathcal{P}}_0^T$ of generalized polynomials.
Each element $\mathbf{f}=(f_v:\mathbb C_v\rightarrow \mathbb C_{\sigma(v)})_{v\in |T|}\in\widehat{\mathcal{P}}_0^T$ 
is a finite collection of polynomial mappings between two copies of complex planes indexed by $|T|$, 
such that for each $v\in \Tp $, $f_v(z)=z^{\delta(v)}$ and for each $v\in \Tn $, 
$f_v$ is a monic centered critically marked polynomial of degree $\delta(v)$.

The fiberwise connectedness locus $\model$ serves as a model space for $\overline{\mathcal H}$. 
In \S\ref{section generalized polynomials}, we characterize the  boundary  topology of this model space:

\begin{proposition}
The boundary $\partial\model$ of the fiberwise connectedness locus of $\widehat{\mathcal{P}}_0^T$ 
is homeomorphic to the sphere $S^{2n-1}$, where $n=\sum_{v\in\Tn}(\delta(v)-1)=\Cdim(\mathcal H)$.
\end{proposition}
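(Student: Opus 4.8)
The plan is to analyze the fiberwise connectedness locus $\widehat{\mathcal{P}}_0^T$ coordinate-by-coordinate and then assemble the pieces. Writing $\mathbf{f}=(f_v)_{v\in|T|}$, the coordinates indexed by $v\in\Tp$ are rigid (each $f_v(z)=z^{\delta(v)}$ carries no parameter), so the whole space $\widehat{\mathcal{P}}_0^T$ is naturally identified with the product $\prod_{v\in\Tn}\mathcal{Q}^{\delta(v)}$, where $\mathcal{Q}^{\delta(v)}$ denotes the space of monic centered critically marked polynomials of degree $\delta(v)$; this is a complex affine space of dimension $\sum_{v\in\Tn}(\delta(v)-1)=n$. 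The first step is to show that for a generalized polynomial, fiberwise connectedness of the filled Julia set decouples: the filled Julia set of the coordinate $f_v$ is connected if and only if the orbit of every critical point of $f_v$ stays bounded under the induced dynamics. For $v\in\Tp$ the map $z\mapsto z^{\delta(v)}$ always has connected (indeed, closed disk) filled Julia set, so these coordinates impose no constraint; the connectedness locus $\mathcal{C}(\widehat{\mathcal{P}}_0^T)$ is cut out entirely by conditions on the $\Tn$-coordinates. The subtlety is that the dynamics on $\mathbb{C}_v$ for $v\in\Tn$ is genuinely coupled through $\sigma$ (the image of a critical value may live in a different plane), so one must verify that the escaping-set condition at $v$ is nonetheless equivalent to a condition purely on the forward $\sigma$-orbit of critical points, and that these conditions are jointly of "connectedness-locus type."

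The second step is to reduce to the classical connectedness locus. I would invoke (or re-derive in this setting) the fact that $\mathcal{C}(\widehat{\mathcal{P}}_0^T)$ is homeomorphic — via a fiberwise Böttcher/Green's-function construction — to the connectedness locus $\mathcal{C}_n$ of monic centered critically marked polynomials in a single variable of total critical dimension $n$, or more precisely to a model built from Milnor's analysis in \cite{MilHyperCompo}. The key structural input is that $\mathcal{C}(\widehat{\mathcal{P}}_0^T)$ is a compact, cellular (in fact a topological cell of real dimension $2n$) subset of $\mathbb{C}^n$, star-shaped or at least radially parameterizable with respect to the escape-rate function: the complement of $\mathcal{C}(\widehat{\mathcal{P}}_0^T)$ in the one-point compactification carries a proper plurisubharmonic escape-rate function $G$ (the maximum over $v$ of the fiberwise Green's functions evaluated at critical values) with no critical points outside $\mathcal{C}(\widehat{\mathcal{P}}_0^T)$, whose level sets foliate the exterior by topological $(2n-1)$-spheres shrinking down to $\partial\mathcal{C}(\widehat{\mathcal{P}}_0^T)$. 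Combining this with Milnor's result that $\mathcal{C}(\widehat{\mathcal{P}}_0^T)$ itself is a closed $2n$-cell (cited as Lemma \ref{basic facts of H} in the excerpt via the identification of $\mathcal{H}$'s model), the boundary is sandwiched: it is the frontier of a $2n$-cell that is also the nested intersection of $(2n-1)$-spheres from outside.

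The third step extracts the homeomorphism type from this sandwich. Here I would use the gradient flow of the escape-rate function $G$ (suitably modified near $\partial\mathcal{C}$) to build a homeomorphism from $\partial\mathcal{C}(\widehat{\mathcal{P}}_0^T)$ to a fixed level sphere $\{G=\epsilon\}\cong S^{2n-1}$: flowing outward along $-\nabla G$ or a continuous substitute gives a retraction, and one checks it is a homeomorphism by the properness and regularity of $G$ on the exterior together with the cellularity of $\mathcal{C}(\widehat{\mathcal{P}}_0^T)$, which guarantees that no collapsing happens on the boundary. Equivalently, and perhaps more robustly, since $\mathcal{C}(\widehat{\mathcal{P}}_0^T)$ is a compact contractible set whose complement (in $S^{2n}$) is an open $2n$-cell by the foliation argument, one can cite that a cellular subset of $S^{2n}$ whose complement is an open cell must have boundary homeomorphic to $S^{2n-1}$ — a consequence of the generalized Schoenflies theorem once one verifies the relevant collaredness. \textbf{Main obstacle.} The hard part will be establishing that the exterior escape-rate function $G$ has no critical points outside $\mathcal{C}(\widehat{\mathcal{P}}_0^T)$ and that its level sets are genuinely $(2n-1)$-spheres — equivalently, that the "escaping locus" of the family of generalized polynomials has the product-of-annuli structure one expects. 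In one complex variable this is classical (the multiplier/escape coordinate trivializes the exterior of the Mandelbrot-type set); in the several-variable generalized-polynomial setting the coupling via $\sigma$ means one is really tracking several Green's functions simultaneously, and proving the joint level sets are spheres requires a careful analytic argument — controlling how the fiberwise Böttcher coordinates at distinct vertices interact, and ruling out degenerations where two critical values escape at rates that are "locked" together. I expect this to be where the bulk of the technical work in \S\ref{section generalized polynomials} lies, with the topological extraction in the third step being comparatively soft once the analytic foliation is in hand.
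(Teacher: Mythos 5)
Your step (2) contains a genuine error: you propose identifying $\model$ with "the connectedness locus $\mathcal{C}_n$ of monic centered critically marked polynomials in a single variable," but that locus (the Mandelbrot set when $n=1$) is emphatically \emph{not} a topological ball, and its boundary is fractal and wildly non-spherical. The whole point of $\widehat{\mathcal{P}}_0^T$ is that the periodic vertices are frozen at the rigid maps $z\mapsto z^{\delta(v)}$, whose filled Julia set is exactly $\overline{\mathbb{D}}$; by Lemma \ref{connectedness-crit} the fiberwise connectedness condition then collapses to finitely many algebraic inequalities $|w_{v,k}(\mathbf{x})|\leq 1$, where $w_{v,k}$ is the first entry of the marked critical point $c_{v,k}$ into a periodic vertex. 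This rigidity is precisely what makes $\partial\model$ a sphere; any analogy to general one-variable connectedness loci points in the wrong direction.

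Your Morse-theoretic route in steps (2)--(3) has a second gap, which you yourself flag: the exterior escape-rate function is a maximum of several Green's-function-type quantities, so it is only continuous and piecewise plurisubharmonic, and the claim that its level sets are embedded $(2n-1)$-spheres with no degenerate "locking" between coordinates is exactly as hard as the proposition itself. The paper avoids this entirely. It observes that $\model = \{M(\mathbf{x})\leq 1\}$ where $M=\max_{(v,k)}|w_{v,k}|$, constructs an explicit one-parameter family of weighted rescalings $\Lambda(t,\cdot)$ (with exponent weights $\lambda(v)=\prod_{j<r_v}\delta(\sigma^j(v))^{-1}$ chosen so that the chain rule makes $M$ exactly homogeneous: $M(\Lambda(t,\mathbf{x}))=t\,M(\mathbf{x})$), and then writes down a direct homeomorphism $\Phi:\overline{\mathbb{D}}^n\to\model$ by replacing, along each $\Lambda$-orbit, the sup-norm radius with the $M$-radius. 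No Böttcher coordinates, no gradient flow, no smoothness, and no Schoenflies theorem is needed; continuity and injectivity of $\Phi$ plus invariance of domain suffice. The boundary then matches $\partial(\mathbb{D}^n)\cong S^{2n-1}$ by radial projection. You correctly anticipated that the proof must exhibit a radial/star-shaped structure on the escaping locus, but the mechanism is algebraic homogeneity of the first-entry map under weighted scaling, not pluripotential-theoretic foliation.
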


In order to get a natural correspondence between $\overline{\mathcal H}$ and $\model$, for each map
$f\in \overline{\mathcal H}$, we extract the mapping information of $f$ restricted to the critical 
and postcritical Fatou components. This gives finitely many pieces of restricted mappings.
However, the degenerate situation needs further considerations: when $f$ approaches the boundary 
$\partial \mathcal{H}$ from inside, a marked Fatou component might collapse at the boundary, 
therefore deform in a continuous fashion into finitely many Fatou components of the limit map.
To describe this situation, we introduce the notion of \emph{lemniscate maps} in 
\S\ref{section lemniscate maps}. We then prove a straightening theorem for these maps, 
by showing that each lemniscate map with a suitable normalization 
can be realized by a unique generalized polynomial mapping in $\model$. 
This gives the well-defined \emph{straightening map}
$$\chi:\overline{\mathcal{H}}\rightarrow\model.$$




The straightening map is inspired by Inou-Kiwi's work \cite{Inou-Kiwi}. Inou-Kiwi's 
straightening theorem makes essential use of the quasiconformal surgery technique, 
where the periodic renormalizable parts of the polynomial are exactly polynomial-like 
maps in the sense of Douady and Hubbard. For those $f\in\partial\mathcal{H}$ with 
critical points on the boundary of a bounded periodic attracting Fatou component $U$, 
there is no polynomial-like restriction of $f$ with filled Julia set $\overline{U}$, 
so Inou-Kiwi's straightening map can not be defined; see Figure \ref{figure basilica} 
for such an example. Our straightening procedure is topological and can deal with 
such situation with the help of the straightening theorem for lemniscate maps. 

The next task naturally becomes to show the continuity of $\chi$. To do this, we need to prove 
the convergence of external angles. This is a new ingredient in the proof, and we will state below.

Let $\mathcal{A}$ be the collection of all pairs $(K,\gamma)$ such that
\begin{itemize}
\item $K\subset\mathbb{C}$ is a full nondegenerate continuum;
\item $\gamma$ is a path in $\mathbb{C}\setminus K$ landing at $K$.
\end{itemize}

Let $(K,\gamma)\in\mathcal{A}$.
By the Riemann mapping theorem, there is a unique conformal isomorphism
$\phi_K:\mathbb{C}\setminus\overline{\mathbb{D}}\rightarrow\mathbb{C}\setminus K$
such that $\lim_{z\rightarrow\infty}\phi_K(z)/z>0$.
The \emph{external ray} with angle $\theta\in\RZ$ is defined as 
$R_K(\theta)=\phi_K((1,\infty)e^{2\pi i\theta})$.
An external ray $R_K(\theta)$ \emph{lands} at a point $x\in\partial K$ 
if $\overline{R_K(\theta)}\cap\partial K=\{x\}$. It is known from \cite[Corollary 6.4]{McM} that 
$\phi_K^{-1}(\gamma)$ lands at a point $e^{2\pi i t}\in \partial \mathbb D$. 
Moreover, the external ray $R_K(t)$ and $\gamma$ land at the same point in $\partial K$.
Since the angle $t$ is determined by $K$ and $\gamma$, it will be denoted by $\theta(K,\gamma)$.

\begin{proposition}
[Convergence of external angles, without dynamics] 
\label{cea} 
Let $(K, \gamma),$ $(K_n, \gamma_n)\in \mathcal A$ so that
$K_n\rightarrow K$ and $\gamma_n\rightarrow\gamma$, both in Hausdorff topology.
Then $\theta(K_n, \gamma_n) \rightarrow \theta(K, \gamma)$.
\end{proposition}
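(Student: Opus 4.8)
The plan is to reduce the statement to a convergence result about the boundary behavior of the normalized Riemann maps $\phi_{K_n}^{-1}$, combined with stability of landing along a fixed path. First I would recall Carathéodory kernel convergence: since $K_n \to K$ in the Hausdorff topology and all the $K_n, K$ are full nondegenerate continua, the complements $\widehat{\mathbb{C}}\setminus K_n$ converge to $\widehat{\mathbb{C}}\setminus K$ in the sense of kernels with respect to the base point $\infty$. By the Carathéodory kernel theorem the conformal isomorphisms $\phi_{K_n}\colon \mathbb{C}\setminus\overline{\mathbb{D}}\to\mathbb{C}\setminus K_n$, normalized by $\lim_{z\to\infty}\phi_{K_n}(z)/z>0$, converge locally uniformly on $\mathbb{C}\setminus\overline{\mathbb{D}}$ to $\phi_K$; moreover the inverse maps $\phi_{K_n}^{-1}$ converge locally uniformly on $\mathbb{C}\setminus K$ to $\phi_K^{-1}$ (on any compact set eventually contained in all $\mathbb{C}\setminus K_n$). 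This handles the interior; the delicate issue is what happens as we approach the common landing point of the $\gamma_n$.

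Next I would set up the landing data. Write $x_n$ for the landing point of $\gamma_n$ on $\partial K_n$ and $x$ for that of $\gamma$ on $\partial K$; after passing to a subsequence (it suffices to prove every subsequence has a further subsequence converging to $\theta(K,\gamma)$) we may assume $\theta(K_n,\gamma_n)\to \tau$ for some $\tau\in\RZ$, and we must show $\tau=\theta(K,\gamma)$. Since $\gamma_n\to\gamma$ in the Hausdorff topology, the endpoints $x_n$ accumulate on $\gamma$, so $x_n\to x$; in particular $x\in\partial K$ and $\gamma$ does land at $x$. Now fix a point $w$ in the interior of $\gamma$ (away from the endpoint), so $w\in\mathbb{C}\setminus K$; by Hausdorff convergence there are points $w_n\in\gamma_n$ with $w_n\to w$, and for $n$ large $w_n\in\mathbb{C}\setminus K_n$. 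By the interior convergence just established, $\phi_{K_n}^{-1}(w_n)\to\phi_K^{-1}(w)=:\zeta_0\in\mathbb{C}\setminus\overline{\mathbb{D}}$. The subarc of $\gamma_n$ from $w_n$ to $x_n$ is a path in $\mathbb{C}\setminus K_n$ whose $\phi_{K_n}^{-1}$-image is a path starting near $\zeta_0$ and, by the definition of $\theta(K_n,\gamma_n)$ (via McMullen's Corollary 6.4 cited above), landing at $e^{2\pi i\theta(K_n,\gamma_n)}\in\partial\mathbb{D}$. The task is to pass to the limit in these pulled-back arcs and conclude that $\phi_K^{-1}(\gamma|_{[w,x]})$ lands at $e^{2\pi i\tau}$, which by uniqueness of the landing angle (again Corollary 6.4) forces $\tau=\theta(K,\gamma)$.

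The main obstacle is exactly this last passage to the limit near the boundary: the maps $\phi_{K_n}^{-1}$ need not converge uniformly up to $\partial K_n$, and the pulled-back arcs $\Gamma_n:=\phi_{K_n}^{-1}(\gamma_n|_{[w_n,x_n]})$ could a priori wiggle wildly near $\partial\mathbb{D}$. I would control this with a modulus/extremal-length estimate: the key geometric input is that $\gamma$ actually lands (approaches a single point), so for every small $\rho>0$ the tail of $\gamma$ near $x$ has small diameter, and by Hausdorff convergence the tails of $\gamma_n$ near $x_n$ have uniformly small diameter for $n$ large. Using a Koebe-type distortion estimate for $\phi_{K_n}$ — valid uniformly because the $K_n$ have uniformly bounded diameter and uniformly bounded-below capacity (nondegeneracy plus Hausdorff convergence to a nondegenerate $K$ gives $\mathrm{diam}(K_n)$ bounded below) — a small-diameter tail of $\gamma_n$ accumulating at a single boundary point forces the corresponding tail of $\Gamma_n$ in $\mathbb{C}\setminus\overline{\mathbb D}$ to accumulate on a connected subset of $\partial\mathbb{D}$ of small diameter, hence (shrinking $\rho$) on a single point $e^{2\pi i\theta(K_n,\gamma_n)}$ that varies continuously with the limiting data. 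Concretely I would argue: fix a crosscut $\beta$ of $\mathbb{C}\setminus K$ near $x$ separating $x$ from $w$ with endpoints on $\partial K$ and with $\phi_K^{-1}(\beta)$ a crosscut of $\mathbb{C}\setminus\overline{\mathbb D}$ enclosing a small arc $I$ of $\partial\mathbb{D}$ around $e^{2\pi i\theta(K,\gamma)}$; perturb $\beta$ to nearby crosscuts $\beta_n$ of $\mathbb{C}\setminus K_n$ (possible by Hausdorff convergence), use interior uniform convergence to see $\phi_{K_n}^{-1}(\beta_n)$ is $C^0$-close to $\phi_K^{-1}(\beta)$, hence also encloses a small arc $I_n$ near $I$; then the tail of $\gamma_n$ past $\beta_n$ lies in the small crosscut neighborhood, so $e^{2\pi i\theta(K_n,\gamma_n)}\in\overline{I_n}\subset$ (a fixed small neighborhood of $e^{2\pi i\theta(K,\gamma)}$). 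Letting the neighborhoods shrink yields $\theta(K_n,\gamma_n)\to\theta(K,\gamma)$. The one point requiring care is the genuine \emph{uniformity} of the Koebe distortion and of the crosscut perturbation across the sequence; this is where the hypotheses that every $K_n$ is full (so $\mathbb{C}\setminus K_n$ is simply connected) and nondegenerate, together with Hausdorff convergence to a nondegenerate limit, are used decisively.
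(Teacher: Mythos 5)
Your setup is sound as far as it goes: the Carath\'eodory kernel convergence, the reduction to subsequences, and the observation that $x_n\to x$ are all correct. The gap is in the crosscut step.

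The assertion that $\phi_{K_n}^{-1}(\beta_n)$ is $C^0$-close to $\phi_K^{-1}(\beta)$ is not a consequence of interior convergence. The crosscuts $\beta$ and $\beta_n$ touch $\partial K$ and $\partial K_n$ at their endpoints, and the locally uniform convergence $\phi_{K_n}^{-1}\to\phi_K^{-1}$ on compact subsets of $\mathbb{C}\setminus K$ says nothing about the behavior of $\phi_{K_n}^{-1}$ near those endpoints. But it is exactly the ends of the crosscut, not its middle, that determine the arcs $I_n,I\subset\partial\mathbb{D}$ cut off. The Koebe distortion estimate cannot supply the missing boundary control: Koebe governs univalent functions on round discs compactly contained in the domain and gives no modulus of continuity up to $\partial K_n$. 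Consequently, your auxiliary claim that a small-diameter tail of $\gamma_n$ must pull back to a small connected subset of $\partial\mathbb{D}$ is not a Koebe consequence; even if you could establish it via a Beurling-type harmonic measure estimate with constants uniform in $n$, you would then still face the separate problem of showing that the small arc $I_n$ is \emph{near} $I$, rather than merely small, and the interior convergence alone does not locate $I_n$.

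The paper's proof sidesteps this by building the separating curve in the $z$-plane ($\mathbb{C}\setminus\overline{\mathbb{D}}$) rather than the $w$-plane. The key ingredient, established by a real-analysis argument about the length functions $\ell_\phi(\theta)=\int_1^2|\phi'(re^{2\pi i\theta})|\,dr$ (Lemmas \ref{converge in measure of length function}--\ref{converge uniformly on some external rays}), is that there is a full-measure set $E\subset\RZ$ and a subsequence along which $\phi_{K_n}\to\phi_K$ uniformly on the \emph{closed} radial segment $[1,2]e^{2\pi i\eta}$ for every $\eta\in E$. Choosing $\eta_1,\eta_2\in E$ on either side of $\theta$ (the F.\ and M.\ Riesz theorem ensures the landing points stay distinct) and forming the curve $V$ from the two radial segments and an arc of $\{|z|=r\}$, one gets $\phi_{K_n}(V)\to\phi_K(V)$ uniformly, including the endpoints on $\partial K_n$; the path $\gamma_n$ is then trapped in the sector cut off by $\phi_{K_n}(V)$ for $n$ large, forcing $\theta(K_n,\gamma_n)\in[\eta_1,\eta_2]\subset(\theta-\varepsilon,\theta+\varepsilon)$. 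This boundary-uniform convergence along rays at angles in $E$ is precisely the ingredient your $w$-plane crosscut argument lacks, and it is not obtainable by perturbing crosscuts in the dynamical plane.
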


Proposition \ref{cea} has an independent interest in complex analysis. It is a little bit surprising
that its proof uses real analysis rather than complex methods.  See Proposition \ref{prop-convergence of external angles}.

By Proposition \ref{cea} and the preparations of 
\S\ref{section lemniscate maps},\S\ref{section straightening map}, 
we will prove that $\chi$ is continuous, open and injective, hence a homeomorphism; 
see \S\ref{section straightening map}.
This completes the proof of Theorem \ref{top-boundary}.
See Figure \ref{figure HomeoThm}. 

\begin{figure}[ht]
\centering
\includegraphics{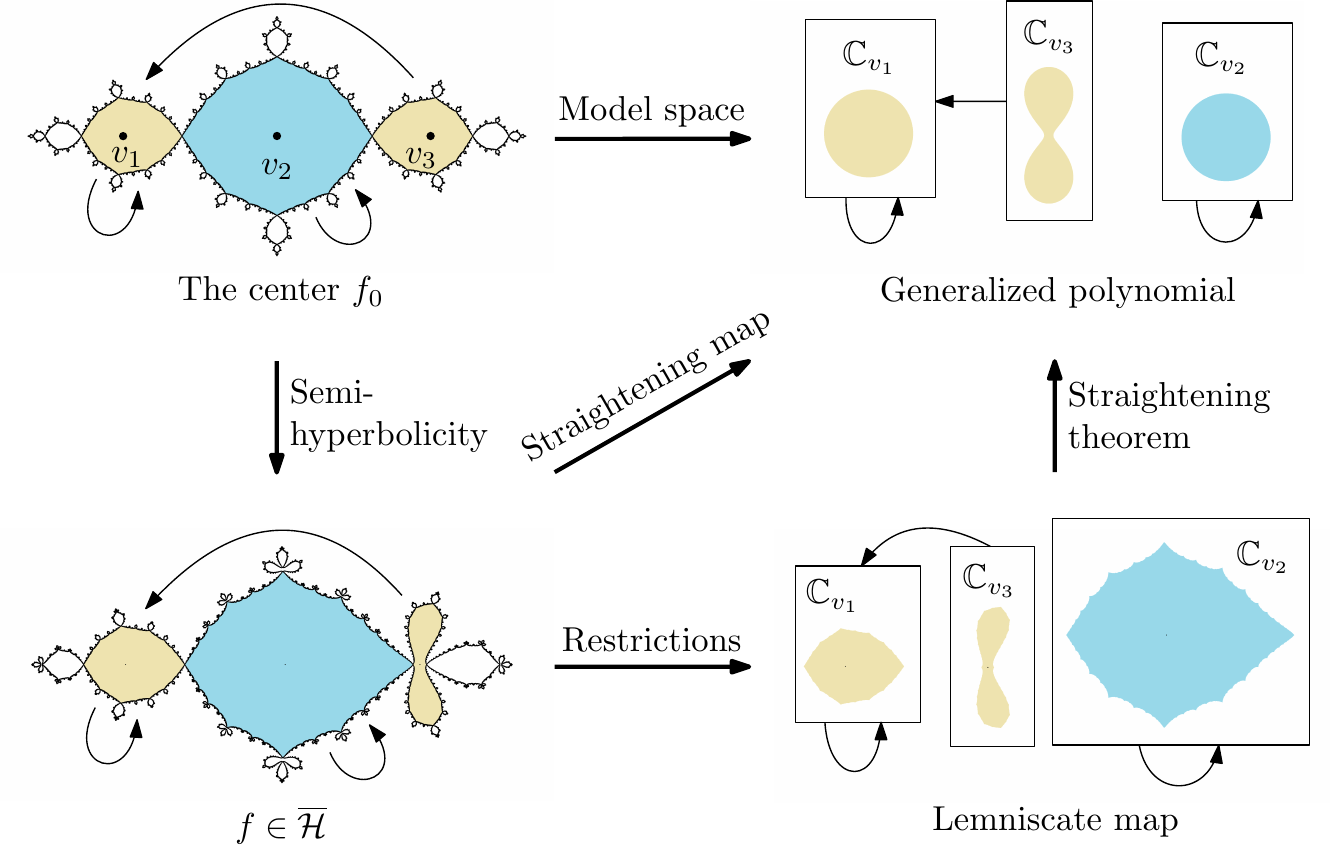}
\caption{Sketch illustrating the proof of Theorem \ref{top-boundary}.}
\label{figure HomeoThm}
\end{figure}

We are now halfway to our final goal. In the following, we will study the boundary $\partial \mathcal H$ 
from the analytic viewpoint, by proving the dimension formula: Theorem \ref{hd-boundary}.  This requires 
more efforts, and the proof mainly consists of three ingredients:
\begin{itemize}
\item the transfer equalities for Hausdorff dimensions;

\item a transversality result for most maps on $\partial \mathcal H$;

\item the continuity of Hausdorff dimensions for the boundaries of marked Fatou components.
\end{itemize}

In \S\ref{section transfer}, we establish the transfer equalities.

\begin{proposition}
[Transfer equalities]
\label{HD-trans-e}
Let $X$ be a subset of $\mathbb C$ with $x_0\in X$, and $\Omega$ be a domain in $\mathbb{C}^n$ with $z_0\in\Omega$.
Let $h: \Omega\times X\rightarrow \mathbb C$ be a holomorphic motion of $X$ with basepoint $z_0$ (i.e. $h(z_0,\cdot)=\id$),  and $v: \Omega\rightarrow\mathbb C$ be a holomorphic map such that $v(z_0)=x_0$.

Let $\mathcal X=\{z\in\Omega{;~} v(z)\in h(z, X)\}$. For $\delta,r>0$, let $\mathcal{X}_\delta=\mathcal{X}\cap\mathbb{D}^n(z_0,\delta)$  and $X_r=X\cap \mathbb{D}(x_0,r)$.
Assume the transversality $dv|_{z=z_0}\neq dh(z,x_0)|_{z=z_0}$ holds.
Then 
\begin{align*}
\lim_{\delta\rightarrow 0}{\Hdim}(\mathcal X_\delta) &= 2(n-1)+\lim_{r\rightarrow 0}{\Hdim}(X_r), \\
\lim_{\delta\rightarrow 0}{\Hdim}(\mathcal X_\delta^0) &= 2(n-1)+\lim_{r\rightarrow 0}{\Hdim}(X_r^0),
\end{align*}
where $\mathcal X_\delta^0$ is the component of $\mathcal X_\delta$ containing $z_0$, and
$X_r^0$ is the component of $X_r$ containing $x_0$.
\end{proposition}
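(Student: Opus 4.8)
The plan is to reduce the $n$-dimensional statement to a one-dimensional transfer statement by slicing with a generic complex line through $z_0$, and then to prove the one-dimensional version by a Koebe-type quasisymmetry argument for the map $z\mapsto$ (first return to $X$ along $v$). First I would set up coordinates: after an affine change on $\Omega$ we may assume $z_0=0$, $x_0=0$, $v(0)=0$, and $h(0,\cdot)=\id$. The transversality hypothesis $dv|_{0}\neq dh(z,0)|_{0}$ says precisely that the $\mathbb{C}$-linear functional $L(z):=dv|_0(z)-\partial_z h(z,0)|_0(z)$ on $\mathbb{C}^n$ is not identically zero; choose a complex line $z=t\,e$, $t\in\mathbb{C}$, $e\in\mathbb{C}^n$, with $L(e)\neq 0$, and supplement $e$ to a basis so that the remaining $n-1$ coordinates are $w\in\mathbb{C}^{n-1}$, $z=(t,w)$.

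The heart is the following one-variable claim: if $g:\mathbb{D}(0,\rho)\to\mathbb{C}$ is holomorphic with $g(0)=0$, $g'(0)\neq 0$ (this is the role of $L(e)\neq 0$, applied to the single holomorphic function $t\mapsto v(t e)-h(t e,0)$ whose derivative at $0$ is $L(e)$), and $X\subset\mathbb{C}$ with $0\in X$ is moved by a holomorphic motion, then the set $\{t{;~}v(te)\in h(te,X)\}$ is, near $0$, the $g$-preimage of a set quasi-symmetrically equivalent to $X$ near $0$, up to a controlled perturbation coming from the $w$-dependence and from the non-linearity of $h$. Concretely: the defining condition $v(z)\in h(z,X)$ is equivalent to $\widetilde v(z):=h(z,\cdot)^{-1}(v(z))\in X$, where $\widetilde v$ is well-defined and holomorphic on a neighborhood of $z_0$ (by the $\lambda$-lemma, $h(z,\cdot)$ extends to a quasiconformal, hence invertible, map of a fixed disc, holomorphically in $z$); and by construction $\widetilde v(0)=0$. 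Restricting to the line, $t\mapsto\widetilde v(te)$ has derivative at $0$ equal to $dv|_0(e)-\partial_z h(z,0)|_0(e)=L(e)\neq 0$, so it is a local biholomorphism. Hence $\mathcal X_\delta^0$, restricted to the line, is the image under a conformal (in particular bi-Lipschitz-on-compacta) map of $X$ near $0$, giving $\lim_{r\to 0}\Hdim(X_r^0)$ as the slice dimension; and Hausdorff dimension is a bi-Lipschitz invariant, so the slice contributes exactly this. Summing the remaining $2(n-1)$ real dimensions from the transverse $w$-directions — here one needs that near $z_0$ the set $\mathcal X$ is, after the conformal change $(t,w)\mapsto(\widetilde v(t,w),w)$, a product of $X$ with a full $(n-1)$-dimensional polydisc in $w$, because the condition $\widetilde v(z)\in X$ reads $\widetilde v_1\in X$ with $\widetilde v_1=\widetilde v$ the first new coordinate — yields $\Hdim = 2(n-1)+(\text{slice dim})$ by the standard product/Fubini bounds for Hausdorff dimension (the matching lower bound uses that $X$ contains $0$ as an accumulation point if it is non-trivial, and otherwise the statement is vacuous or trivial).

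So the key steps, in order, are: (1) normalize and introduce $\widetilde v(z)=h(z,\cdot)^{-1}(v(z))$, checking it is holomorphic near $z_0$ via the $\lambda$-lemma and that $\mathcal X=\{\widetilde v\in X\}$; (2) compute $d\widetilde v|_{z_0}$ and read off from transversality that it does not vanish, hence (shrinking $\Omega$) $z\mapsto(\widetilde v(z),\pi(z))$ is a biholomorphism onto a polydisc-like neighborhood, where $\pi$ picks out $n-1$ coordinates transverse to $\nabla\widetilde v(z_0)$; (3) in these new coordinates $\mathcal X$ becomes $X\times(\text{open set in }\mathbb{C}^{n-1})$, so $\Hdim(\mathcal X_\delta)\to 2(n-1)+\lim_{r\to0}\Hdim(X_r)$ by Hausdorff-dimension product formulas (upper bound: $\Hdim(A\times B)\le\Hdim(A)+\overline{\dim}_B(B)$ but here $B$ is a smooth manifold so $\overline{\dim}_B(B)=2(n-1)$; lower bound: $\Hdim(A\times B)\ge\Hdim(A)+\Hdim(B)$), invariance under the biholomorphism, plus the monotone-limit bookkeeping as $\delta\to0$; (4) observe the whole argument respects connected components — the biholomorphism carries $\mathcal X_\delta^0$ to (component of $X$ at $x_0$)$\,\times\,$(connected polydisc) — which gives the second identity simultaneously.

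The main obstacle I anticipate is step (1)–(2): making rigorous that $h(z,\cdot)$ admits a holomorphic-in-$z$ inverse on a fixed neighborhood of $x_0$ large enough to contain $v(z)$ for $z$ near $z_0$. The $\lambda$-lemma gives this when $\Omega$ is a polydisc, but $X$ need only be an arbitrary subset of $\mathbb{C}$, so $h$ is a priori only defined on $\Omega\times X$; one must first extend the holomorphic motion (Słodkowski's theorem, or Bers–Royden) to $\Omega\times\overline{\mathbb{D}(x_0,R)}$ for some $R>0$, then invert. A secondary technical point is the sharpness of the product formula: the clean identity $\Hdim(A\times\mathbb{C}^{n-1})=\Hdim(A)+2(n-1)$ is genuinely true (since $\mathbb{C}^{n-1}$ is Ahlfors $(2n-2)$-regular, the box dimension equals the Hausdorff dimension and the two product inequalities pinch), but this must be invoked carefully, and the passage from the germ at $z_0$ (the double limit in $\delta$ and $r$) to a statement about the actual sets $\mathcal X_\delta$ requires that the biholomorphism of step (2) is bi-Lipschitz on the relevant compact neighborhoods, which it is after shrinking. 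None of these are deep, but they are where the proof has to be written with care.
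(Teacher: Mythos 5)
There is a genuine gap in step (1)--(2), and it is the heart of the matter: the map $\widetilde v(z)=h(z,\cdot)^{-1}(v(z))$ is in general \emph{not} holomorphic in $z$. Holomorphic motions are only holomorphic in the base parameter for fixed $x$, and only quasiconformal in $x$; inverting in $x$ destroys the holomorphic dependence on the parameter. A concrete counterexample: on $\mathbb{D}\times\mathbb{C}$, take $h(z,x)=x+z\bar x$. This is a genuine holomorphic motion ($h(0,\cdot)=\id$, $h(\cdot,x)$ is affine in $z$, and $|z|<1$ forces injectivity in $x$), but one computes
\[
h(z,\cdot)^{-1}(y)=\frac{y-z\bar y}{1-|z|^{2}},
\]
which is manifestly non-holomorphic in $z$ for fixed $y\neq 0$ because of the $|z|^{2}$ in the denominator. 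Here $h(z,0)\equiv 0$, so $dh(z,x_0)|_{z=0}=0$ and the transversality hypothesis is easily arranged, yet $\widetilde v$ fails to be holomorphic. Consequently, the ``biholomorphism'' $z\mapsto(\widetilde v(z),\pi(z))$ you want to straighten $\mathcal X$ into a product $X\times(\text{polydisc})$ does not exist, and the whole conformal-change-of-coordinates strategy collapses.

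This matters beyond just fixing a lemma: the statement has limits in $\delta$ and $r$ precisely because the identification between $\mathcal X_\delta$ and $\mathbb{D}^{n-1}\times X_r$ is \emph{not} bi-Lipschitz but only bi-H\"older, with H\"older exponent $1/K_\delta^{2}$ where $K_\delta=(1+\delta)/(1-\delta)$. The paper's route avoids inverting $h(z,\cdot)$ entirely: after normalizing $v(z)=z_n$ and $h(z,0)\equiv 0$, one solves the equation $z_n=h((z',\lambda z_n),x)$ \emph{for $z_n$} via Rouch\'e's theorem (this is where holomorphic dependence on $z$ is actually available), obtaining a new holomorphic motion $w((z',\lambda),x)$ over the enlarged parameter space $\mathbb{D}^{n-1}\times\mathbb{D}(1/\delta)$, then specializes to $\lambda=1$ and shows the map $H(z',x)=(z',w((z',1),x))$ is bi-H\"older with exponent $1/K_\delta^{2}$ using S{\l}odkowski's theorem plus Mori-type distortion. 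H\"older maps distort Hausdorff dimension by a bounded multiplicative factor, which is squeezed to $1$ as $\delta\to 0$; combined with a two-sided inclusion $\mathcal X_\varepsilon\subset H(\mathbb{D}^{n-1}(\delta)\times X_r)\subset\mathcal X_\delta$ and the product formula, this yields the equalities. So the missing idea is both (i) recognizing that you must solve for the image-side coordinate $z_n$ rather than the preimage-side coordinate $x$, and (ii) replacing ``bi-Lipschitz/conformal'' with ``bi-H\"older with exponent $\to 1$'' as the correct regularity class, which is what forces the double limit in the conclusion rather than an equality at fixed scales.
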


In applications, the set $X_r$ plays a role of a dynamical set (e.g. a subset of the Julia set of 
some reference map), while $\mathcal{X}_\delta$ serves as a parameter set (e.g. a subset of the 
bifurcation locus). Proposition \ref{HD-trans-e} gives a way to transfer the Hausdorff dimension 
in the dynamical plane to that in the parameter space. These equalities have independent interests 
and are probably useful for other situations.

To make the transfer equalities work, we need to verify the transversality assumption in Proposition \ref{HD-trans-e}. 
We remark that the transversality  theory is an important topic with independent interests in complex dynamics, and it plays an important role in the study of the parameter space of rational maps. See \cite{Epstein, vStrien}.

In our situation,   unfortunately, this  transversality property may not hold for all maps on the boundary $\partial \mathcal H$. 
To guarantee the transversality, it is reasonable to consider the maps $f\in \partial \mathcal H$ for 
which a holomorphic motion naturally arises from dynamics, and $f$ is not contained in
the singular locus $\Sigma(\mathcal F)$ of the algebraic family $\mathcal F$.
Besides these requirements imposed on $f\in \partial \mathcal H$, we need to figure out the essential 
obstructions for transversality. We will search for clues in the model space 
$\widehat{\mathcal{P}}_0^{T}$ first.

In the model space $\widehat{\mathcal{P}}_0^{T}$, each element
 $\mathbf{f} = (f_v)_{v\in|T|}\in \widehat{\mathcal{P}}_0^{T}$ is uniquely
 determined by the factors $f_v$ with $v\in \Tn $ and $\delta(v)\geq 2$. 
 If $\delta(v)=2$, then $f_v(z)=z^2+a_v$ and it is determined by the constant term $a_v$; if $\delta(v)\geq 3$, then $f_v$ is determined  by the critical points $c_{v,1},\dots, c_{v, \delta(v)-2}$ and the constant term $a_v$.  Therefore $\mathbf{f}$ can be parameterized by the ordered list of parameters:
$$(a_v)_{v\in \Tn, \ \delta(v)= 2}\ \text{ and } \ (c_{v,1},\dots, c_{v, \delta(v)-2}, a_v)_{v\in \Tn, \ \delta(v)\geq 3}.$$



For each $v\in \Tn $, let $r_v\geq 1$ be the first integer so that $\sigma^{r_v}(v)\in \Tp $.
Our discussion of transversality is inspired by the following crucial observation:
\begin{proposition}
[Transversality in model space] 
\label{transver-ms} 
Let
\begin{align*}
W:  \begin{cases}
\widehat{\mathcal{P}}_0^{T}\rightarrow \mathbb{C}^{\Cdim (\mathcal H)}, \\
\mathbf{f} \mapsto   (f_{\sigma^{r_v-1}(v)}\circ\cdots\circ f_v(c_{v,k}))_{v\in \Tn,\  1\leq k<\delta(v)}
\end{cases}
\end{align*}
be the first entry map. Then its Jacobian determinant
is a constant multiple of
$$\prod_{v\in \Tn }\bigg[ \prod_{1\leq k_1< k_2<\delta(v)} {\left(c_{v,k_1}-c_{v,k_2}\right)}^2
\prod_{k, k', 1\leq j<r_v} (c_{\sigma^j(v),k'}-f_{\sigma^{j-1}(v)}\circ\cdots\circ f_v(c_{v,k}))\bigg].$$
This implies that $W$ is locally biholomorphic near $\mathbf{f}$ if and only if $\mathbf{f}$ has no critical relation of the form
$c_{v,k_1}=c_{v,k_2}$ or $c_{\sigma^j(v),k'}=f_{\sigma^{j-1}(v)}\circ\cdots\circ f_v(c_{v,k})$ for some $1\leq j<r_v$ and some $v\in \Tn $. 
\end{proposition}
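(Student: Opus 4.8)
\textbf{Proof proposal for Proposition \ref{transver-ms}.}

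The plan is to compute the Jacobian determinant of $W$ by a triangular-factorization argument that peels off the contributions of the free vertices $v\in\Tn$ one iterate at a time. First I would fix an ordering of the coordinates $(c_{v,k})_{1\leq k<\delta(v)}$ on the domain that is compatible with the filtration of $T$ by the quantity $r_v$: list first the vertices $v$ with $r_v=1$ (those with $\sigma(v)\in\Tp$), then those with $r_v=2$, and so on. With this ordering, the map $W$ becomes block-triangular, because the $v$-th block of components, $(f_{\sigma^{r_v-1}(v)}\circ\cdots\circ f_v(c_{v,k}))_k$, depends only on the coordinates attached to $v$ itself and to the vertices $\sigma(v),\dots,\sigma^{r_v-1}(v)$, all of which have strictly smaller ``$r$-distance'' to $\Tp$, hence appear earlier. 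Wait -- one must be careful: the coordinates of the domain are indexed by the free vertices $v\in\Tn$, but the intermediate maps $f_{\sigma^j(v)}$ are only genuine free polynomials when $\sigma^j(v)\in\Tn$; as soon as an iterate lands in $\Tp$ it becomes the fixed power map $z\mapsto z^{\delta}$. So the block-triangular structure is with respect to the partial order ``$v\preceq w$ iff $w$ lies on the forward $\sigma$-orbit of $v$ before hitting $\Tp$,'' and the diagonal blocks are indexed by $v\in\Tn$.

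The determinant then factors as a product over $v\in\Tn$ of the determinant of the diagonal block, which is the Jacobian, with respect to the variables $(c_{v,1},\dots,c_{v,\delta(v)-2},a_v)$, of the map
$$
(c_{v,k})_{1\leq k<\delta(v)}\ \longmapsto\ \big(f_{\sigma^{r_v-1}(v)}\circ\cdots\circ f_{\sigma(v)}\circ f_v(c_{v,k})\big)_{1\leq k<\delta(v)},
$$
where in computing this block the coordinates attached to the later vertices $\sigma(v),\dots,\sigma^{r_v-1}(v)$ are held fixed. I would compute this block determinant by the chain rule: it splits as the determinant of $(c_{v,k})_k\mapsto (f_v(c_{v,k}))_k$ times a product of the linear maps $Df_{\sigma^{j}(v)}$ evaluated along the orbit, i.e.
$$
\det\Big[\tfrac{\partial (f_v(c_{v,k}))_k}{\partial(c_{v,1},\dots,c_{v,\delta(v)-2},a_v)}\Big]\cdot\prod_{k=1}^{\delta(v)-1}\ \prod_{j=1}^{r_v-1} f_{\sigma^{j}(v)}'\big(f_{\sigma^{j-1}(v)}\circ\cdots\circ f_v(c_{v,k})\big).
$$
The first factor is essentially a Vandermonde-type determinant: since $f_v(z)=\delta(v)\int_0^z\prod_i(\zeta-c_{v,i})\,d\zeta+a_v$, the partial derivatives $\partial f_v(c_{v,k})/\partial c_{v,i}$ are integrals of $\prod_{i'\ne i}(\zeta-c_{v,i'})$ along $[0,c_{v,k}]$, and $\partial f_v(c_{v,k})/\partial a_v=1$. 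A direct expansion (or an argument identifying this with the known discriminant computations for the map sending critical points to critical values, as in the cubic case treated by Milnor) shows this determinant is a nonzero constant times $\prod_{1\leq k_1<k_2<\delta(v)}(c_{v,k_1}-c_{v,k_2})^2$. For the second factor, I would use that $f_{w}'(z)=\delta(w)\prod_{k'}(z-c_{w,k'})$ for each intermediate free vertex $w=\sigma^j(v)\in\Tn$ (and that $f_w'$ is a nonzero constant times a power of $z$ when $w\in\Tp$, but by definition of $r_v$ no such $w$ occurs for $1\le j<r_v$); substituting $z=f_{\sigma^{j-1}(v)}\circ\cdots\circ f_v(c_{v,k})$ gives exactly the product $\prod_{k,k',1\le j<r_v}\big(c_{\sigma^j(v),k'}-f_{\sigma^{j-1}(v)}\circ\cdots\circ f_v(c_{v,k})\big)$ up to a constant. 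Multiplying the per-$v$ factors yields the stated formula, and the ``locally biholomorphic iff no critical relation'' statement is then immediate from the inverse function theorem, since the displayed product vanishes precisely when one of the factors $c_{v,k_1}-c_{v,k_2}$ or $c_{\sigma^j(v),k'}-f_{\sigma^{j-1}(v)}\circ\cdots\circ f_v(c_{v,k})$ vanishes.

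The main obstacle I expect is the bookkeeping rather than any deep input: one has to set up the coordinate ordering and the partial order on $\Tn$ carefully enough that the block-triangularity is genuinely correct (in particular handling shared tails of orbits among different free vertices, and vertices $v$ with $\delta(v)=2$ where the ``critical point'' list is empty and the block is just the $1\times1$ derivative in $a_v$), and then to pin down the two constant prefactors -- especially identifying the first block determinant with $\prod(c_{v,k_1}-c_{v,k_2})^2$ rather than some other symmetric expression. The cleanest route for that identification is to note that the block determinant is a polynomial in the $c_{v,k}$ that is alternating under transpositions $c_{v,k_1}\leftrightarrow c_{v,k_2}$ (since swapping two critical values that coincide forces the differential to drop rank) hence divisible by the full Vandermonde $\prod(c_{v,k_1}-c_{v,k_2})$, and a degree count together with the symmetry of $W$ under relabeling critical points forces the exponent $2$ and constancy of the quotient; I would carry out this degree/symmetry argument in place of a brute-force expansion.
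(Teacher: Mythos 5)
Your proposal follows essentially the same route as the paper's proof of Proposition~\ref{prop-JW}: order $\Tn$ by $r_v$ to exhibit $J_W$ as block lower triangular, factor each diagonal block by the chain rule into the critical-value-map determinant (Lemma~\ref{lemma-JV}) times the derivative of the post-$v$ composition $F=f_{\sigma^{r_v-1}(v)}\circ\cdots\circ f_{\sigma(v)}$, and then expand $F'$ to obtain the orbit factors. The only slip is your description of the Vandermonde block as ``alternating under transpositions'': it is in fact \emph{symmetric} under $c_{v,k_1}\leftrightarrow c_{v,k_2}$ (exchanging two critical points swaps both the corresponding rows and columns, leaving the determinant unchanged), and it is symmetry plus vanishing on the diagonal $\{c_{v,k_1}=c_{v,k_2}\}$ plus the degree count that forces the exponent $2$, exactly as the paper argues; alternation would give the wrong parity.
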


Proposition \ref{transver-ms} is actually the transversality in the model space $\widehat{\mathcal{P}}_0^{T}$; see Proposition \ref{prop-JW}. 
It implies that the maps with critical relations are exactly those for which the transversality fails.
This sheds light on a natural formulation of a transversality result for the maps on $\partial \mathcal H$.

For each  $v\in \Tp $, note that $v$ is a superattracting point of $f_0$. 
This $v$ induces a continuous map $v: \overline{\mathcal H}\rightarrow \mathbb C$ 
so that $v(f)$ is a superattracting point for $f\in \overline{\mathcal H}$, with normalization $v(f_0)=v$; 
see \S\ref{section behavior of crit-orbits}.
For $f\in \overline{\mathcal H}$, let $U_{f,v}$ be the Fatou component containing $v(f)$, and $A(f)=\bigcup_{v\in \Tp }U_{f,v}$ be the union of all bounded immediate attracting basins.
For $f\in\partial\mathcal H\setminus \Sigma(\mathcal{F})$ satisfying $\crit(f)\cap \partial A(f)=\emptyset$, 
there arises a holomorphic motion $h: \mathcal N\times \overline{A(f)}\rightarrow \mathbb C$
of $\overline{A(f)}$ compatible with dynamics.

Our transversality result is stated as follows:

\begin{proposition}
[Transversality]
\label{transversality-bd}
Let $f\in\partial\mathcal H\setminus\Sigma(\mathcal{F})$ and assume
$\crit(f)\cap \partial A(f)=\emptyset$.
Let $\mathcal N\subset\mathcal{F}$ be a neighborhood of $f$ so that the holomorphic motion $h: \mathcal N\times \overline{A(f)}\rightarrow \mathbb C$ is well-defined.

If $f$ has no free critical relation, then the map
$G = (G_{v,k})_{(v,k)\in I}:\mathcal N\rightarrow \mathbb{C}^{\Cdim(\mathcal{H})}$ defined by
\begin{align*}
G_{v,k}(g) = g^{r_v}(c_{v,k}(g))-h(g,f^{r_v}(c_{v,k}(f)))
\end{align*}
is locally biholomorphic near $f$, where $I=\{(v,k){;~} v\in \Tn , 1\leq k<\delta(v)\}$.
\end{proposition}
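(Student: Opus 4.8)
The plan is to reduce the transversality statement on $\partial\mathcal{H}$ to the model-space transversality of Proposition \ref{transver-ms} via the straightening map $\chi:\overline{\mathcal{H}}\to\model$ constructed in \S\ref{section straightening map}, using the fact (Theorem \ref{top-boundary}) that $\chi$ is a homeomorphism onto its image, together with its holomorphic dependence on the internal parameters. More precisely, I would first identify a neighborhood $\mathcal{N}$ of $f$ inside $\mathcal{F}$ on which all relevant structure is holomorphic: since $f\notin\Sigma(\mathcal{F})$, $\mathcal{F}$ is a smooth complex manifold of dimension $d-1-\ell=\Cdim(\mathcal{H})$ near $f$, and since $\crit(f)\cap\partial A(f)=\emptyset$ the immediate basins $U_{f,v}$, $v\in\Tp$, together with the superattracting points $v(g)$ and the Böttcher-type coordinates, move holomorphically for $g\in\mathcal{N}$; this is exactly what produces the holomorphic motion $h$. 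I would then recognize $G$ as (up to the holomorphic change of coordinates given by $h$ and by the Riemann maps onto the $U_{f,v}$) the composition of the restriction map $g\mapsto(g|_{U_{g,v}},\ \text{free critical data of }g)$ with the first-entry map $W$ of Proposition \ref{transver-ms}. The key point is that $dG|_f$ factors as $dW|_{\chi(f)}$ composed with the derivative of the parametrization of $\mathcal{N}$ by the internal coordinates of the model, and the latter is an isomorphism because $\chi$ is a local homeomorphism that is holomorphic in those coordinates (hence biholomorphic by the inverse function theorem for several complex variables).

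The main steps, in order, are: (1) set up the holomorphic motion $h$ and the holomorphic families of linearizing/Böttcher coordinates on the $U_{g,v}$ for $g\in\mathcal{N}$, using $\crit(f)\cap\partial A(f)=\emptyset$ and semi-hyperbolicity (Proposition \ref{boudary-sh}, Corollary \ref{semi-hyperbolicity}); (2) verify that $g\mapsto(a_v(g))_{\delta(v)=2}$, $(c_{v,k}(g),a_v(g))_{\delta(v)\ge 3}$ — the internal model parameters read off from $g$ restricted to the critical/postcritical Fatou components — gives a holomorphic local chart on $\mathcal{N}$, i.e. $\chi$ is holomorphic and nonsingular in these coordinates; this uses that $\chi$ is a homeomorphism (Theorem \ref{top-boundary}) plus holomorphicity on the interior and a removable-singularity/openness argument at $f\in\partial\mathcal{H}$; (3) express $G_{v,k}(g)=g^{r_v}(c_{v,k}(g))-h(g,f^{r_v}(c_{v,k}(f)))$ and observe that the term $g^{r_v}(c_{v,k}(g))$ is, after conjugating the dynamics along the orbit by the moving coordinates, precisely the $(v,k)$-component of $W$ evaluated at $\chi(g)$, while $h(g,f^{r_v}(c_{v,k}(f)))$ is a holomorphic function whose differential at $f$ lies (by the holomorphic-motion property $h(z_0,\cdot)=\id$ and $\lambda$-lemma estimates) in the directions already accounted for, so it does not affect the rank of $dG|_f$ beyond a harmless translation; (4) conclude that $\det dG|_f$ is, up to a nonzero constant, $\det dW|_{\chi(f)}$ times the (nonzero) Jacobian of the chart from (2), and invoke Proposition \ref{transver-ms}: since $f$ has no free critical relation, $\chi(f)$ has no critical relation of the forbidden forms, so $\det dW|_{\chi(f)}\ne 0$, whence $G$ is locally biholomorphic near $f$.

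The hard part will be step (2)–(3), namely making rigorous the claim that the parameter derivative $dG|_f$ really is the model-space Jacobian $dW|_{\chi(f)}$ transported by a holomorphic isomorphism — equivalently, that $\chi$ is biholomorphic near $f$ in the internal coordinates. One must be careful that at $f\in\partial\mathcal{H}$ some critical point may lie on $\partial U_{f,v}$ for a \emph{non}-periodic $v$ (a lemniscate degeneration), so the identification of the restricted dynamics with a genuine polynomial factor $f_v$ of the model uses the straightening theorem for lemniscate maps of \S\ref{section lemniscate maps} rather than a naive polynomial-like restriction; one then needs holomorphic dependence of that straightening on $g\in\mathcal{N}$, which follows from the measurable Riemann mapping theorem with holomorphic parameter dependence once the defining Beltrami coefficients are seen to vary holomorphically. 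A secondary technical point is the subtraction term: one should check, via the $\lambda$-lemma, that $g\mapsto h(g,w)$ is holomorphic for each fixed $w$ and that its first-order behavior at $f$ is exactly the ``expected motion'' of the target point, so that $G$ is literally the difference between the actual orbit endpoint and its holomorphically-moved reference position — this is the precise analogue of the transversality hypothesis $dv\ne dh$ in Proposition \ref{HD-trans-e}, and the nonvanishing of $\det dW$ is what certifies it holds componentwise. Once these are in place the determinant computation is immediate from Proposition \ref{transver-ms}.
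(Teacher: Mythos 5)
Your plan reduces the transversality on $\partial\mathcal H$ to the model-space transversality of Proposition \ref{transver-ms} by asserting, in your step (2), that the straightening map $\chi$ is \emph{biholomorphic} near the boundary point $f$. That assertion is where the argument breaks down, and it is exactly the difficulty the paper is designed to avoid. The straightening map $\chi:\overline{\mathcal H}\to\modelf$ of \S\ref{section straightening map} is defined via a purely \emph{topological} conjugacy of lemniscate maps (Proposition \ref{straightening of marked LM}), not via quasiconformal surgery. There are no Beltrami coefficients in sight, so the ``measurable Riemann mapping theorem with holomorphic parameter dependence'' you appeal to has no footing here. The proof that $\chi$ is a homeomorphism (Lemmas \ref{chi is injective} and \ref{chi is continuous}, together with invariance of domain for the openness of $\chi|_{\mathcal H}$) is topological throughout; the paper never claims, and never uses, holomorphicity of $\chi$. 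Moreover, $\chi$ is defined only on the closed set $\overline{\mathcal H}$, so ``holomorphicity on the interior plus a removable-singularity argument at $f\in\partial\mathcal H$'' would require first extending $\chi$ to an open neighborhood of $f$ in $\mathcal F$, which is not provided and is not at all obvious in the presence of the lemniscate degenerations --- this is precisely the situation (critical point on $\partial A(f)$, or $f$ not polynomial-like renormalizable) for which Inou--Kiwi's analytic straightening is unavailable, as the paper emphasizes after introducing lemniscate maps.

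The paper's actual route (Proposition \ref{transversality-prop}) sidesteps holomorphicity of $\chi$ entirely. One first shows $f$ is an isolated zero of $G$ (using properness of $W$, Lemma \ref{proper-holo-W}, and the capture separation, Proposition \ref{capture-separation}), picks for each $(v,k)$ a univalent curve $\phi_{v,k}:\mathbb D\to\bigcap_{(v',k')\neq(v,k)}G_{v',k'}^{-1}(0)$ through $f$ (Lemma \ref{alg-uni}), and reduces the problem to showing $(G_{v,k}\circ\phi_{v,k})'(0)\neq 0$ (Lemma \ref{local diffeomorphism}). Now the key step, following van Strien: if that zero had multiplicity $m\geq 2$, the quasiconformal distortion estimate Lemma \ref{simple zeros} would produce, for each nearby target $z$, exactly $m$ distinct parameters $\lambda_1,\dots,\lambda_m$ on the curve solving $w_{v,k}(g_\lambda)=h(g_\lambda,z)$; Proposition \ref{capture-separation} puts all $g_{\lambda_j}$ in $\overline{\mathcal H}$; and then all $g_{\lambda_j}$ share the same value of $\widetilde W=W\circ\chi$, which is locally injective near $f$ by Lemma \ref{tilde-W-local-homeo} (this is where the absence of free critical relations and the model-space transversality enter, purely through \emph{local injectivity}, not differential rank of $\chi$). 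Since $\phi_{v,k}$ is univalent, this forces $m=1$. So what certifies the transversality is the combination of topological local injectivity of $\widetilde W$ with the qc estimate of Lemma \ref{simple zeros}, rather than a chain rule through an (unestablished) biholomorphism $\chi$. Your proposal as written cannot be completed along the lines you describe; if you want to stay close to your plan, the missing ingredient to supply is exactly this simple-zero argument.
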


The proof of Proposition \ref{transversality-bd} is given in \S\ref{transver}. 
The strategy of the proof uses an idea of van Strien \cite{vStrien}, based on quasiconformal 
distortion estimates. 
There is a new difficulty arising here: we need to show that the maps with 
prescribed critical orbit behavior are indeed on the boundary $\partial \mathcal H$. This will 
be overcome by Proposition \ref{capture-separation}.

To get the dimension formula, we need to march one step further: prove the continuity of the 
Hausdorff dimension for Fatou component boundaries. This is the main theme of \S\ref{continuity-hd}.

\begin{proposition}
\label{contin-hdim}
For any $v\in \Tp $, the Hausdorff dimension $\Hdim(\partial U_{f,v})$ is continuous with respect to $f\in\overline{\mathcal H}$.
\end{proposition}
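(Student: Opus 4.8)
The plan is to split the statement into an upper semi-continuity part and a lower semi-continuity part, and to prove each separately using the quasiconformal stability of the dynamics on $\overline{A(f)}$ and the transfer machinery already developed.

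For \emph{upper semi-continuity}, I would exploit the fact that for $f\in\overline{\mathcal H}$ the immediate basin boundary $\partial U_{f,v}$ is a quasicircle with a uniform dilatation bound (Corollary \ref{semi-hyperbolicity}), and, more importantly, that the boundary dynamics is hyperbolic: each map $f\in\overline{\mathcal{H}}$ is semi-hyperbolic (Proposition \ref{boudary-sh}), so the forward orbit of $\partial U_{f,v}$ avoids a neighborhood of the (free) critical set, and the first-return map of a suitable iterate of $f$ to a neighborhood of $\partial U_{f,v}$ is expanding. Thus $\partial U_{f,v}$, together with its finitely many periodic cycles under $f$, carries the structure of a conformal expanding repeller, and $\Hdim(\partial U_{f,v})$ is the unique zero of a pressure function $P_f(s)$ built from this repeller. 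I would then invoke the standard fact (Bowen's formula together with continuity of pressure) that for a continuous family of conformal expanding repellers the zero of the pressure function varies continuously; the only subtlety is that the number of periodic Fatou components and the local degrees at their critical points could jump as $f\to\partial\mathcal H$. Because the center $f_0$ and the mapping scheme $T$ are fixed along $\overline{\mathcal H}$, the combinatorics of the critical/postcritical Fatou components is locally constant on $\mathcal H$; at a boundary parameter a marked component may collapse, in which case the limiting repeller is a \emph{sub}-repeller of a nearby one, which can only \emph{decrease} the dimension in the limit — this gives upper semi-continuity at boundary points and full continuity at interior points.

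For \emph{lower semi-continuity} at a boundary parameter $f\in\partial\mathcal H$, the key input is that the holomorphic motion $h:\mathcal N\times\overline{A(f)}\to\mathbb{C}$ compatible with dynamics (available on a dense open subset of $\partial\mathcal H$, namely off $\Sigma(\mathcal F)$ and where $\crit(f)\cap\partial A(f)=\emptyset$) restricts to a holomorphic motion of $\partial U_{f,v}$; by the $\lambda$-lemma / Mañé–Sad–Sullivan this motion is quasiconformal with dilatation tending to $1$ as $g\to f$, so $\Hdim(\partial U_{g,v})\to\Hdim(\partial U_{f,v})$ along parameters where the motion is defined. To upgrade from "a dense set of parameters" to "every parameter", I would combine this with the upper semi-continuity above and a monotonicity/approximation argument: given an arbitrary $f\in\partial\mathcal H$, approximate it by parameters $g_n$ where the holomorphic motion exists (using that $\Sigma(\mathcal F)$ is a proper subvariety and that $\crit(g)\cap\partial A(g)=\emptyset$ holds generically by Proposition \ref{capture-separation}), transport $\partial U_{g_n,v}$ back to $\partial U_{f,v}$ by the motion with dilatation $\to1$, and conclude $\liminf_n \Hdim(\partial U_{g_n,v})\ge\Hdim(\partial U_{f,v})$; together with upper semi-continuity this pins down the value and yields continuity on all of $\overline{\mathcal H}$.

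The main obstacle I anticipate is \emph{the collapsing of marked Fatou components at $\partial\mathcal H$}: a component $U_{f,v}$ that is well defined and carries a definite piece of the critical orbit for interior $f$ may, at a boundary parameter, have a critical point land on its boundary, so that the naive holomorphic motion of $\overline{A(f)}$ breaks down exactly at the parameters we care about. Handling this requires separating two mechanisms — (i) a critical point in $\partial A(f)$, where $\partial U_{f,v}$ ceases to be a quasicircle in a motion-compatible way, and (ii) the parameter lying in $\Sigma(\mathcal F)$, a purely algebraic degeneracy — and showing that in case (i) one can still realize $\partial U_{f,v}$ as a limit (in Hausdorff topology, with controlled hyperbolicity constants via semi-hyperbolicity) of quasicircles $\partial U_{g,v}$ for nearby $g$, so that the pressure-zero characterization of the dimension passes to the limit. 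This is where the lemniscate-map technology and the semi-hyperbolic expansion estimates from the first half of the paper do the real work; the expanding-repeller and pressure-continuity arguments are by comparison routine.
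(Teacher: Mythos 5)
Your overall framing (semi-hyperbolicity gives an expanding-type structure, so a pressure/Bowen characterization of $\Hdim(\partial U_{f,v})$ should be available, and continuity should follow by combining upper and lower semi-continuity) is a reasonable starting point, but the argument has a real gap at exactly the place you flag as "the real work": boundary parameters $f$ with $\crit(f)\cap\partial A(f)\neq\emptyset$.

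Two concrete problems. First, when a critical point lies on $\partial U_{f,v}$, the restriction $f|_{\partial U_{f,v}}$ is not a conformal expanding repeller in the standard sense — it fails to be locally injective at the critical point and there is no uniform lower bound on $|f'|$ on $\partial U_{f,v}$. Bowen's formula and pressure-continuity for expanding conformal repellers therefore do not apply as stated, and the "sub-repeller implies smaller dimension" heuristic has no containment to back it up (the periodic components $U_{f,v}$, $v\in\Tp$, do not collapse; the collapsing you mention concerns nonperiodic marked components, which are not the objects at issue). Second, your lower-semi-continuity step relies on transporting $\partial U_{g_n,v}$ to $\partial U_{f,v}$ by a holomorphic motion with dilatation $\to1$; but the motion you invoke is centered at the good parameter $g_n$ and defined on a neighborhood of $g_n$ — it does not extend to $f$ precisely because the critical point on $\partial A(f)$ obstructs the equivariant motion there. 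Hausdorff convergence of $\partial U_{g_n,v}$ to $\partial U_{f,v}$ gives no control on Hausdorff dimension, and "pressure passes to the limit" is exactly what needs proof, not a tool.

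The paper takes a different route that sidesteps both problems. It first establishes the identity $\Hdim(\partial U)=\dconf(\partial U)=\dPoin(\partial U)=\hypdim(\partial U)$ for semi-hyperbolic polynomials (Proposition \ref{equalities for Hdim of partial-U}), using an admissible-point Poincaré sequence that avoids the critical orbit rather than a global expanding-repeller structure. Then, given $f_n\to h$ in $\overline{\mathcal H}$, it takes $s_n$-conformal measures $\mu_n$ with $s_n=\dconf(\partial U_{f_n,v})$, passes to a weak limit $\mu$, and shows $\mu$ is an $s$-conformal measure for $\partial U_{h,v}$ with $s=\lim s_n$. The crucial step — which replaces your holomorphic-motion/transport argument — is to show $\mu$ gives zero mass to each critical point $c\in\crit(h)\cap\partial U_{h,v}$, so that $\mu$ is carried by the conical set and hence $s=\dconf(\partial U_{h,v})=\Hdim(\partial U_{h,v})$ by Corollary \ref{mu supported on conical set}. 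Ruling out an atom at $c$ is where the genuinely new ingredient enters: Proposition \ref{expanding on external rays} (uniform expansion on external rays, proved via quasicircle and log-B\"ottcher distortion estimates) gives a uniform iterate $m$ and factor $2$ of expansion for the lift $g_f$ along $\overline{R_f(\theta)}$ over all $f\in\overline{\mathcal H}$ near $h$, which yields $\mu_n(\mathbb{D}(c,r_k))\lesssim 2^{-k}$ uniformly in $n$ and hence $\mu(\{c\})=0$. Your proposal correctly identifies where the difficulty is, but the tool you reach for (pressure continuity for expanding repellers, motion-transport with dilatation $\to1$) does not cross that gap; the paper's tool (uniform external-ray expansion controlling conformal-measure atoms) does.
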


Translating this continuity property to the parameter space, this implies that the boundary 
$\partial \mathcal H$ has no singular part.
To prove the continuity, in \S\ref{continuity-hd} we first establish an identity for various 
quantities --- conformal dimension, hyperbolic dimension and Poincar\'e exponent --- for a semi-hyperbolic 
polynomial acting on the boundary of a bounded periodic Fatou component.
Then by various distortion estimates, and also with the help of log-B\"ottcher techniques, 
we prove a uniform expansion property along external rays under perturbation:

\begin{proposition}
\label{expansion-e-r}
Let $d\geq2$ be an integer, and let $\theta\in\QZ$ have preperiod $q\geq1$ and period $p\geq 1$ 
under $m_d: \RZ\rightarrow \RZ$, $t\mapsto dt$. That is, $d^{p+q} \theta\equiv d^q\theta \modZ$.
For $f\in\mathcal{C}(\mathcal{P}^d)$,
let $$g_f=f^{-q}\circ f^p\circ f^q: \overline{R_{f}(\theta)}\rightarrow \overline{R_{f}(\theta)}$$
be the lift of $f^p:\overline{R_{f}(d^q\theta)}\rightarrow \overline{R_{f}(d^q\theta)}$.
Given $h\in\mathcal{C}(\mathcal{P}^d)$ and
suppose $R_h(\theta)$ lands at a pre-repelling point.
Then there exists a neighborhood $\mathcal{N}\subset \mathcal{P}^d$ of $h$ and a constant $n>0$ such that
$$\big|(g_f^n)'(z)\big|\geq 2$$
for every $f\in\mathcal{C}(\mathcal{P}^d)\cap\mathcal{N}$ and every $z\in\overline{R_{f}(\theta)}$.
\end{proposition}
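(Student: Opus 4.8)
The strategy is to cut the arc $\overline{R_f(\theta)}$ into three pieces according to the value of the Green's function $G_f$ of $f$, to produce on each piece a lower bound on $|g_f'|$ that is uniform for $f\in\mathcal C(\mathcal P^d)$ near $h$, and then to assemble these by the chain rule. The structural fact driving everything is that $g_f$ dilates the potential by the fixed factor $d^p$ along the ray: since $G_f\circ f=d\,G_f$, one gets $G_f(g_f(z))=d^p\,G_f(z)$ for every $z\in\overline{R_f(\theta)}$. Hence the landing point $x_f$ (the unique point of $\overline{R_f(\theta)}$ with $G_f=0$) is the only fixed point of $g_f$ on the ray, and every other $g_f$-orbit has strictly increasing potential, drifting monotonically from a neighbourhood of $x_f$ towards the infinite end of the ray. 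Fixing $R>0$ large and $\epsilon\in(0,R)$ small (to be chosen), set
\[
I^A_f=\overline{R_f(\theta)}\cap\{G_f\le\epsilon\},\qquad I^B_f=\overline{R_f(\theta)}\cap\{\epsilon\le G_f\le R\},\qquad I^C_f=\overline{R_f(\theta)}\cap\{G_f\ge R\}.
\]
Every $g_f$-orbit meets these in the order $A,B,C$, and its $B$-block has at most $N:=\lceil\log(R/\epsilon)/\log(d^p)\rceil+1$ steps.

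The infinite end $I^C_f$ is governed by the log-B\"ottcher coordinate. On $\overline{R_f(\theta)}$ one has $\phi_f(g_f(z))=\zeta\,\phi_f(z)^{d^p}$ for a root of unity $\zeta=\zeta(\theta)$, so $g_f(z)=\phi_f^{-1}(\zeta\,\phi_f(z)^{d^p})$ with $|g_f'(z)|$ comparable to $d^p|\phi_f(z)|^{d^p-1}$; using $\phi_f(z)=z+O(1)$ with locally uniform remainder, there is $R>0$ (depending only on a neighbourhood $\mathcal N_0$ of $h$) with $|g_f'(z)|\ge 2$ for all $f\in\mathcal C(\mathcal P^d)\cap\mathcal N_0$ and all $z\in I^C_f$. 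The middle piece is handled by compactness: the set $\bigcup_f(\{f\}\times I^B_f)$ over $f\in\mathcal C(\mathcal P^d)\cap\overline{\mathcal N_0}$ is compact and lies in the basin of infinity, where $f^{p+q}$ has no critical point, so $g_f=f^{-q}\circ f^{p+q}$ is a local biholomorphism there; combined with the continuous dependence of the truncated rays on $f$ (rays landing at repelling points move continuously), this yields $c_B\in(0,1]$ with $|g_f'|\ge c_B$ on $I^B_f$. Thus the at most $N$ steps in $I^B_f$ contribute at least the fixed factor $c_B^{N}$ to $|(g_f^n)'|$.

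The piece $I^A_f$ near the pre-repelling landing point is the heart of the matter, and the main obstacle. The point $y_f:=f^q(x_f)$ is a repelling periodic point whose multiplier $\mu_f=(f^p)'(y_f)$ depends holomorphically on $f$, so $|\mu_f|\ge\mu_0>1$ near $h$; moreover $g_f=f^{-q}\circ f^{p+q}$ is conformal across $x_f$ (the ramification of $f^{p+q}$ at $x_f$ equals that of $f^q$, which is cancelled by the ramification of the branch $f^{-q}$ at $y_f$) with $|g_f'(x_f)|=|\mu_f|^{1/k_f}$, $k_f=\deg(f^q,x_f)\le d^q$, hence $|g_f'(x_f)|\ge\mu_0^{1/d^q}=:\lambda_0>1$ for all $f$ near $h$. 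The trouble is upgrading this pointwise bound at $x_f$ to $|g_f'|\ge\lambda_A:=\tfrac12(1+\lambda_0)$ on all of $I^A_f$, uniformly in $f$: when $x_h$ is precritical, critical points of $f^q$ may accumulate on $x_h$ as $f\to h$, so the domain of conformality of $g_f$ around $x_f$ can shrink and a naive distortion argument fails. The plan to circumvent this is to use the Koenigs linearization $\psi_f$ of $f^p$ at $y_f$, which is defined on a disc $D(y_f,r_0)$ with $r_0>0$ uniform for $f$ near $h$ and with uniformly bounded distortion of $\psi_f^{\pm1}$: for $\epsilon$ small the arc $f^q(I^A_f)\subset\overline{R_f(d^q\theta)}$ lies in $D(y_f,r_0)$, where $f^p$ is multiplication by $\mu_f$ in the coordinate $\psi_f$ and hence expands the arc by at least $|\mu_f|/C_0\ge\lambda_1>1$; pulling back along the ray by the conformal branch $f^{-q}$ — whose distortion on the relevant sub-arc is controlled precisely because the ramifications cancel — gives the required uniform expansion on $I^A_f$ once $\epsilon\le\epsilon_0$. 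A more robust variant, which I expect to be easier to make uniform, is to drop the pointwise statement on $I^A_f$ and instead show directly, via the linearization, that every $g_f$-orbit segment inside $I^A_f$ is expanded by a factor $\ge\lambda_A$ at each step.

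With $\epsilon\le\epsilon_0$ fixed (so $N$ and $c_B$ are fixed) and $\mathcal N\subset\mathcal N_0$ chosen so that all of the above hold on $\mathcal C(\mathcal P^d)\cap\mathcal N$, we finish by bookkeeping. For $z\in\overline{R_f(\theta)}$, write its $g_f$-orbit as $a$ steps in $I^A_f$, then $b\le N$ steps in $I^B_f$, then the rest in $I^C_f$; the chain rule gives
\[
|(g_f^n)'(z)|\ \ge\ \lambda_A^{\,\min(a,n)}\cdot c_B^{\,b}\cdot 2^{\,\max(n-a-b,0)}\ \ge\ \lambda_A^{\,\min(a,n)}\cdot c_B^{N}\cdot 2^{\,\max(n-a-N,0)},
\]
using $\lambda_A>1$, $0<c_B\le1$, $b\le N$. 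Taking $n$ so large that $\lambda_A^{\,n}\ge2$ and $c_B^{N}\,2^{\,n-N-m_0}\ge2$ with $m_0:=\lceil\log(2\,c_B^{-N})/\log\lambda_A\rceil$, one checks in the two cases $a\ge m_0$ and $a<m_0$ that the right side is $\ge2$; this $n$ depends only on $h$ and $\mathcal N$. The entire difficulty, to reiterate, is the uniform expansion on $I^A_f$: controlling the dynamics near a pre-repelling landing point that may be (possibly unstably) precritical.
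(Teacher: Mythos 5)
Your three-range decomposition of $\overline{R_f(\theta)}$ by potential, the treatment of the middle and far pieces, and the final chain-rule bookkeeping all match the structure of the paper's proof (which fixes thresholds $s_1\ll s_2$ and exponents $n_1,n_2,n_3$ in essentially the same way). The gap is exactly where you locate it, and neither of your proposed remedies closes it. First, the per-step bound $|g_f'(z)|\ge\lambda_A>1$ on $I^A_f$, which your bookkeeping uses, is not only unproved but is in general false in the degenerate situation you describe: writing $|g_f'(z)|=|(f^p)'(f^q(z))|\cdot|(f^q)'(z)|/|(f^q)'(g_f(z))|$, if a critical point of $f^q$ lies at distance $\delta$ from the ray near $x_f$ (with $\delta\to0$ as $f\to h$ when $x_h$ is precritical), then at points $z$ of the ray adjacent to that critical point the ratio $|(f^q)'(z)|/|(f^q)'(g_f(z))|$ can be arbitrarily small while $|(f^p)'(f^q(z))|$ stays bounded, so $|g_f'(z)|<1$ there. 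Second, in your first variant the phrase ``the distortion of the branch $f^{-q}$ on the relevant sub-arc is controlled precisely because the ramifications cancel'' is the difficulty restated rather than an argument: the cancellation gives the pointwise value $|g_f'(x_f)|=|\mu_f|^{1/k_f}$, but away from $x_f$ the univalence domains of the inverse branch shrink as $f\to h$, so no Koebe-type control is available, and the Koenigs coordinate at $y_f$ only governs $f^p$ on $f^q(I^A_f)$, not the transfer back through $f^{-q}$.

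What is actually needed, and what the paper supplies, is a cumulative (not per-step) expansion estimate near the landing point, obtained by a different mechanism. One builds a log-B\"ottcher sector $S_f(r)$ with vertex at the repelling point $y_f$ whose boundary is a quasicircle with turning uniform in $f$ (via a holomorphic motion together with Lemma \ref{criterion for quasiarcs}), pulls it back by $f^q$ to the domain $V_f(r)$ around the tail of $R_f(\theta)$, and observes that $\partial V_f(r)$ is again a quasicircle with uniform turning by the turning-distortion argument (Corollary \ref{preimage of quasidisk}); crucially, this step needs only the degree bound and is insensitive to critical points approaching the ray. Since $f\in\mathcal{C}(\mathcal{P}^d)$ has no critical values in the basin of infinity, $\log\circ B_f\circ f^q$ maps $V_f(r)$ conformally onto a straight sector, conjugating $g_f$ to $z\mapsto d^p z$, and the derivative-distortion estimate for conformal maps of a sector onto a quasidisk (Lemma \ref{derivative distortion of quasidisks}) yields $|(g_f^k)'(z)|\ge C_4\, d^{kC_4}$ for all ray points of small potential, uniformly in $f$ near $h$; together with the uniform pointwise bound at $x_f$ this replaces your $\lambda_A$-step, after which the assembly runs as you propose, with $g_f^{n_1}$ as the basic step. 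Without a substitute of this kind, the estimate on $I^A_f$ — and hence the proof — is missing.
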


Proposition \ref{expansion-e-r} is new and has an independent interest in polynomial dynamics.
It is the essence in the proof of the continuity of Hausdorff dimension; see \S\ref{distortion-thm} and
\S\ref{continuity-h-d}. Our method is different from that in the previous proofs  \cite{McM2000, Juan, GS09} of
the continuity of the Hausdorff dimension of Julia sets, for which the main ideas are to  control the critical orbits behavior in a uniform way.

We will prove the dimension formula in \S\ref{section the hd-formula}. The proof relies on the key 
observation that the following subset of $\partial \mathcal H$:
$$\big\{f\in \partial \mathcal H{;~} \text{$f\in\Sigma(\mathcal{F})$, 
or $\crit(f)\cap \partial A(f)\neq\emptyset$, or $f$ has a free critical relation}\big\}$$
for which the transversality may fail,
takes a smaller part in $\partial \mathcal H$, in the sense of Hausdorff dimension.
Then the simplicial structure of $\partial \mathcal H$ allows us to give a decomposition of it into 
finitely many open subsets together with a residual part, based on the dynamical behavior of critical 
orbits. By Propositions \ref{HD-trans-e}, \ref{transversality-bd} and \ref{contin-hdim},
we will complete the proof of Theorem \ref{hd-boundary} in \S\ref{proof-hdf}.

In appendices, we provide some supplements for the paper, including a discussion of the dimension 
of the singular locus in \S\ref{appendix smooth points}, a brief introduction of weak convergence 
on conformal measures in \S\ref{appendix weak topology on measures}, and an equivalent condition 
for uniform convergence of univalent maps in \S\ref{appendix uniform-convergence} (compare also 
\cite[Corollary 2.4]{Pom}). In \S\ref{appendix u-qc}, we prove a uniform property for the maps in 
$\overline{\mathcal{H}}$: \emph{the boundaries of all bounded periodic Fatou components 
for all maps in $\overline{\mathcal{H}}$ are uniform quasicircles.} This is a new result. Since it 
is a dynamical result, we prefer to put it in the appendix. The detailed proof is given therein for 
the interested readers.

\section{Preliminaries}
\label{section external rays and allowable arcs}
We introduce external rays and allowable arcs in this section, as a preparation for further discussions. These materials are standard in polynomial dynamics. 

For a polynomial $f:\mathbb{C}\rightarrow\mathbb{C}$ with $\deg(f)\geq2$, we denote the Julia set 
by $J(f)$, the filled Julia set by $K(f)$ and the set of critical points by $\crit(f)\subset\mathbb{C}$. 
If $z\in K(f)\setminus J(f)$, let $U_f(z)$ denote the (bounded) Fatou component of $f$ containing $z$. 
Let $U_{f,\infty}=\mathbb{C}\setminus K(f)$ be the \emph{basin of infinity} for $f$.

\subsection{External rays}
\label{subsection external rays}
Let $\mathcal P^d$ be the space of monic and centered polynomials of degree $d\geq 2$. 
Let $f\in\Pd$. 
There exists a \emph{B\"ottcher coordinate} $B_f$, which is tangent to the identity at $\infty$, 
and satisfies $B_f(f(z))=B_f(z)^d$ whenever $|z|$ is large enough.
If $K(f)$ is connected, $B_f$ can be extended to a conformal map from $U_{f,\infty}$ 
to $\mathbb{C}\setminus \overline{\mathbb{D}}$.
The \emph{Green function} $G_f:\mathbb{C}\rightarrow[0,\infty)$ of $f$ is defined by 
$$G_f(z)=\lim_{n\rightarrow\infty}\frac{1}{d^n}\log^+|f^{n}(z)|,$$
where $\log^+(x)=\log(\max\{1,x\})$ for $x\in\mathbb{R}$.
Then $G_f(f(z))=d\cdot G_f(z)$.
Whenever $|z|$ is large enough, we have $G_f(z)=\log|B_f(z)|$.
Each locus $G_f^{-1}(s)$ with $s>0$ is called an \emph{equipotential curve}.
For an \emph{angle} $\theta\in\mathbb{R/Z}$, the \emph{external ray} $R_f(\theta)\subset U_{f,\infty}$ 
contains $B_f^{-1}((r,\infty)e^{2\pi i\theta})$ for some $r>1$,
and extends along the decreasing gradient line of $G_f$ (possibly bifurcates).
If $\overline{R_f(\theta)}\cap K(f)=\{x\}$,
we say that $R_f(\theta)$ \emph{lands} at $x\in K(f)$, and $\theta$ is an \emph{external angle} of $x$. 

Let $\mathcal{C}(\Pd)$ denote the \emph{connectedness locus} of $\Pd$. 
It is known by \cite[Theorem 18.10]{Mil} that 
if $f\in\mathcal{C}(\Pd)$ and $\theta\in\QZ$, then $R_f(\theta)$ lands at a preperiodic point. 
For more details about external rays, we refer to \cite{Mil}.

Let $f\in\Pd$ and $\theta\in\QZ$.
Suppose $R_f(\theta)$ lands. 
Then we can extend $B_{f}$ holomorphically along the ray $R_f(\theta)$. 
Let $\gamma_\theta(f,s) = B_f^{-1}(e^{s+2\pi i\theta})$,  $s\in[0,\infty)$ be a parameterization of $\overline{R_f(\theta)}$, where $B_f^{-1}(e^{2\pi i\theta})$ takes the landing point of $R_f(\theta)$. 
The following lemmas describe the stability of external rays. 

\begin{lemma}
[Stability of external rays {\cite[Proposition 8.5]{Orsay}}]
\label{stability of external rays 0}
Let $f\in\Pd$ and $\theta\in\QZ$.
Suppose $R_f(\theta)$ lands at a pre-repelling point $x\in J(f)$. 
Moreover, suppose $\{f^n(x){;~}n\in\mathbb{N}\}\cap\crit(f)=\emptyset$. 
Then there is a neighborhood $\mathcal{N}\subset\Pd$ of $f$ 
with the following properties. 
\begin{enumerate}
\item For any $g\in\mathcal{N}$, the ray $R_{g}(\theta)$ lands at a pre-repelling point. 
\item The function $\gamma_\theta:\mathcal{N}\times[0,\infty)\rightarrow\mathbb{C}$ is continuous. 
\item For any $s\in[0,\infty)$, the function $\gamma_\theta(\cdot,s):\mathcal{N}\rightarrow\mathbb{C}$ is holomorphic. 
\end{enumerate}
\end{lemma}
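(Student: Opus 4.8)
\textbf{Proof proposal for Lemma \ref{stability of external rays 0}.}

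The plan is to reduce the statement to a standard argument about pulling back a stable piece of an external ray under the iterated map, using repelling-periodicity to guarantee a well-defined inverse branch near the landing point. First I would replace the problem by a periodic one: since $R_f(\theta)$ lands at a pre-repelling point $x$, there are integers $q\geq 0$ and $p\geq 1$ with $d^{p+q}\theta\equiv d^q\theta\modZ$, and $y=f^q(x)$ is a repelling periodic point of period $p$ on which $f^p$ has multiplier of modulus $>1$. Because the forward orbit of $x$ avoids $\crit(f)$, the map $f$ is a local homeomorphism at each point of that orbit, so $q$ successive inverse branches of $f$ are well-defined and holomorphic in a neighborhood of $y$ and of the relevant external rays. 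Thus it suffices to treat the case $q=0$, i.e. $\theta$ periodic of period $p$ and $x=y$ a repelling fixed point of $F:=f^p$; the general case follows by composing with the $q$ fixed inverse branches, which are holomorphic in both the dynamical and the parameter variable.

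Next I would set up the linearizing picture at $y$. Choose a small round disc $D$ around $y$ on which $F$ is a homeomorphism onto its image and is uniformly expanding, $|F'|\geq\lambda>1$ on $D$ after shrinking; this persists for $g$ in a small neighborhood $\mathcal N_0\subset\Pd$ of $f$, with a repelling fixed point $y(g)$ of $F_g:=g^p$ depending holomorphically on $g$ (implicit function theorem, since $F_f'(y)\neq 1$). In $D$, the branch of $F_g^{-1}$ fixing $y(g)$ is a uniform contraction, holomorphic in $(g,z)$. Now the external ray $R_f(\theta)$ near its landing point: the portion $\gamma_\theta(f,[0,s_0])$ of the closed ray from the landing point up to potential $s_0$ lies in $\overline D$ for $s_0$ small, because the ray lands at $y$ and has locally bounded geometry. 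For the outer part, the B\"ottcher coordinate $B_g$ is defined and depends holomorphically on $g$ on any fixed compact subset of the basin of infinity containing no critical values of $G_g$ below the relevant potential; so on $[s_1,\infty)$ the parameterization $\gamma_\theta(g,\cdot)$ is manifestly continuous in $g$ and holomorphic in $g$ for each fixed $s$, simply by the formula $\gamma_\theta(g,s)=B_g^{-1}(e^{s+2\pi i\theta})$. The middle range $[s_0,s_1]$ is bridged by a finite number of pullbacks: $F$ maps $\overline{R_f(\theta)}$ to $\overline{R_f(d^p\theta)}=\overline{R_f(\theta)}$ multiplying the potential by $d^p$, so $\gamma_\theta(g,s)=(F_g|_D)^{-n}\big(\gamma_\theta(g,d^{pn}s)\big)$ for $s$ small and $n$ large, which expresses the inner part as the uniform limit of compositions of the holomorphic contraction with the already-controlled outer data.

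Assembling these pieces gives all three conclusions at once: (1) for $g\in\mathcal N:=\mathcal N_0$ (possibly shrunk once more) the ray $R_g(\theta)$ lands, and it lands at the repelling periodic point $y(g)$ after $q$ further pullbacks, which is still pre-repelling; (2) the function $(g,s)\mapsto\gamma_\theta(g,s)$ is continuous, being continuous on $[s_1,\infty)$ by holomorphic dependence of $B_g^{-1}$ and on $[0,s_1]$ as a locally uniform limit (in $(g,s)$) of continuous functions, namely $(F_g|_D)^{-n}\circ\gamma_\theta(\cdot,d^{pn}\cdot)$, the uniform convergence coming from the strict contraction of $(F_g|_D)^{-1}$; (3) for each fixed $s$, $g\mapsto\gamma_\theta(g,s)$ is holomorphic, since it is either directly $B_g^{-1}(e^{s+2\pi i\theta})$ (holomorphic) or a locally uniform limit of holomorphic functions $g\mapsto(F_g|_D)^{-n}(\gamma_\theta(g,d^{pn}s))$, hence holomorphic by Weierstrass. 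The main obstacle I anticipate is the bookkeeping in the middle potential range: one must verify that the relevant arcs of $R_g(\theta)$ stay inside the linearizing disc $D$ uniformly in $g$ and that no equipotential through $[s_0,s_1]$ passes through a critical point of $G_g$ for $g$ near $f$ (so that $B_g^{-1}$ extends holomorphically there) — this is where one uses that the forward orbit of $x$ misses $\crit(f)$ and an open-condition argument to keep it missing $\crit(g)$ for $g$ close to $f$. Everything else is the standard contraction-mapping and normal-families machinery.
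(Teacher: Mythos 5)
Your proposal is correct and is essentially the argument of the cited reference [Douady--Hubbard, Orsay Prop.~8.5], which the paper itself does not reprove: reduce to the periodic angle at the repelling periodic point, glue the inner contraction picture with the outer B\"ottcher coordinate by finitely many pullbacks, and deduce joint continuity and holomorphy in the parameter by locally uniform limits. One small imprecision to flag: the extension of $B_g^{-1}$ along $R_g(\theta)$ over the middle potential range $[s_0,s_1]$ is not a consequence of the hypothesis that the $J(f)$-orbit of $x$ avoids $\crit(f)$, but rather follows because the landing ray $R_f(\theta)$ avoids the finitely many precritical points of $f$ in the basin of infinity (otherwise it would bifurcate rather than land), an avoidance which is an open condition in $g$; the orbit hypothesis is instead what you need to define the $q$ inverse branches in your reduction to the periodic case.
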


\begin{lemma}
[Stability of external rays]
\label{stability of external rays}
Let $f\in\mathcal{C}(\Pd)$ and $\theta\in\QZ$.
Suppose $R_f(\theta)$ lands at a pre-repelling point. 
Then there is a neighborhood $\mathcal{N}\subset\Pd$ of $f$ such that $\gamma_\theta:(\mathcal{N}\cap\mathcal{C}(\Pd))\times[0,\infty)\rightarrow\mathbb{C}$ is continuous. 
\end{lemma}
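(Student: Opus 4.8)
Let $q\ge0$ and $p\ge1$ be the preperiod and period of $\theta$ under $m_d:t\mapsto dt$, and let $x$ be the landing point of $R_f(\theta)$, so that $y:=f^q(x)$ is a repelling periodic point of $f$. The starting observation is that, although the forward orbit of $x$ may meet $\crit(f)$ --- this is precisely the situation left open by Lemma \ref{stability of external rays 0} --- the forward orbit of $y$ is the cycle of $y$, which is disjoint from $\crit(f)$ because a cycle through a critical point would be superattracting, not repelling. Hence Lemma \ref{stability of external rays 0} does apply to the periodic ray $R_f(d^q\theta)$: there is a neighborhood $\mathcal{N}_0\subset\Pd$ of $f$ such that $\gamma_{d^q\theta}:\mathcal{N}_0\times[0,\infty)\to\mathbb{C}$ is continuous and, for every $g\in\mathcal{N}_0$, the ray $R_g(d^q\theta)$ lands at a repelling periodic point $y(g)$, with $y(f)=y$. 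I would take $\mathcal{N}=\mathcal{N}_0$ in the statement and then pull this stability back $q$ times.

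The range of positive potentials is easy. For $g\in\mathcal{C}(\Pd)$ every critical point of $g$ lies in $K(g)$, so $G_g$ has no critical point in the basin $U_{g,\infty}$ and $B_g:U_{g,\infty}\to\mathbb{C}\setminus\overline{\mathbb{D}}$ is a conformal isomorphism. Since $(g,z)\mapsto G_g(z)$ is continuous and the univalent maps $B_g^{-1}$, tangent to the identity at $\infty$, form a normal family, one identifies each subsequential limit of a sequence $B_{g_n}^{-1}$ with $B_g^{-1}$ by passing to the limit in the functional equation $B_g(g(z))=B_g(z)^d$, exactly as in \S\ref{appendix uniform-convergence}; hence $(g,w)\mapsto B_g^{-1}(w)$ is continuous on $\mathcal{C}(\Pd)\times(\mathbb{C}\setminus\overline{\mathbb{D}})$, and in particular $\gamma_\theta$ is continuous on $\mathcal{C}(\Pd)\times(0,\infty)$. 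Moreover, by \cite[Theorem 18.10]{Mil}, the ray $R_g(\theta)$ lands at a preperiodic point for every $g\in\mathcal{C}(\Pd)$, so $\gamma_\theta(g,\cdot)$ is a continuous parametrization of $\overline{R_g(\theta)}$. It remains to prove that $\gamma_\theta$ is continuous at each point $(g_0,0)$ with $g_0\in\mathcal{N}_0\cap\mathcal{C}(\Pd)$.

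Fix such a $g_0$, set $x_0=\gamma_\theta(g_0,0)$ and $y_0=y(g_0)=g_0^q(x_0)$, and fix $\epsilon>0$ so small that $x_0$ is the only $g_0^q$-preimage of $y_0$ in the closed disk $\overline{D}$, where $D:=\mathbb{D}(x_0,\epsilon)$; then $\rho:=\dist\big(y_0,g_0^q(\partial D)\big)>0$. The conjugacy $B_g(g(z))=B_g(z)^d$, valid for all $g\in\mathcal{C}(\Pd)$, gives $g^q(\gamma_\theta(g,s))=\gamma_{d^q\theta}(g,d^qs)$ for every $s\ge0$ (for $s>0$ from the conjugacy, at $s=0$ because $g^q$ carries the landing point of $R_g(\theta)$ to that of $R_g(d^q\theta)$). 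Choosing $t_0>0$ small and then a neighborhood $\mathcal{W}\subset\mathcal{N}_0$ of $g_0$ small --- using the uniform convergence $g^q\to g_0^q$ on $\overline{D}$, the continuity of $\gamma_{d^q\theta}$ from the first paragraph, the landing of $R_{g_0}(\theta)$ at $x_0$, and the continuity of $\gamma_\theta$ at the positive potential $t_0/d^q$ from the second paragraph --- I can arrange, with $D':=\mathbb{D}(y_0,\rho/2)$, that for all $g\in\mathcal{W}\cap\mathcal{C}(\Pd)$: (i) no connected component of the open set $g^{-q}(D')$ meets $\partial D$; (ii) $\gamma_{d^q\theta}(g,[0,t_0])\subset D'$; and (iii) $|\gamma_\theta(g,t_0/d^q)-x_0|<\epsilon$. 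Then for such $g$ the set $\gamma_\theta(g,[0,t_0/d^q])$ is connected, contained in $g^{-q}(D')$ by (ii) and the relation $g^q(\gamma_\theta(g,s))=\gamma_{d^q\theta}(g,d^qs)$, and meets $D$ by (iii); by (i) it lies in a single component of $g^{-q}(D')$, which is therefore contained in $D$, so $\gamma_\theta(g,s)\in\mathbb{D}(x_0,\epsilon)$ for all $s\in[0,t_0/d^q]$. Combined with the second paragraph, this gives the continuity of $\gamma_\theta$ at $(g_0,0)$, and the lemma follows.

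I expect the only genuinely delicate ingredient to be the continuity of $g\mapsto B_g^{-1}$ over the connectedness locus used in the second paragraph: this is where the hypothesis $g\in\mathcal{C}(\Pd)$ is really needed (so that $B_g$ is a global isomorphism of $U_{g,\infty}$ and no external ray bifurcates), and it rests on the normal-families and functional-equation argument recorded in \S\ref{appendix uniform-convergence}. The pull-back in the third paragraph, by contrast, is soft --- note in particular that it is irrelevant whether the landing point $x$ is itself a critical point of $f^q$, since the relevant pieces of $R_g(\theta)$ are localized by being trapped inside the small disk $D$ rather than by any distortion estimate.
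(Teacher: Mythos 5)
Your proof is correct and follows essentially the same route as the paper: reduce to the periodic angle $d^q\theta$, where Lemma \ref{stability of external rays 0} applies because a repelling cycle contains no critical point, and then transfer back to $\theta$ via the relation $g^q(\gamma_\theta(g,s))=\gamma_{d^q\theta}(g,d^q s)$. The paper compresses this transfer into the phrase ``it follows by lifting,'' and your second and third paragraphs (continuity of $(g,w)\mapsto B_g^{-1}(w)$ over $\mathcal{C}(\Pd)$ at positive potentials, plus the disk-trapping argument at the landing point) are a valid explicit justification of exactly that step.
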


\begin{proof}
If the landing point of $R_f(\theta)$ is periodic, the result follows from Lemma \ref{stability of external rays 0}. 
Otherwise, choose $m\geq 1$ such that $R_f(d^m\theta)$ lands at a periodic point. 
Then by Lemma \ref{stability of external rays 0}, there is a neighborhood $\mathcal{N}\subset\Pd$ of $f$ such that $\gamma_{d^m\theta}:(\mathcal{N}\cap\mathcal{C}(\Pd))\times[0,\infty)\rightarrow\mathbb{C}$ is continuous. 
It follows by lifting that there is a continuous map $\gamma_{\theta}:(\mathcal{N}\cap\mathcal{C}(\Pd))\times[0,\infty)\rightarrow\mathbb{C}$ satisfying  $g^m(\gamma_\theta(g,s))=\gamma_{d^m\theta}(g,d^m s)$. 
\end{proof}

\begin{lemma}
\label{limit-land-rela}
Let $f\in\mathcal{C}(\Pd)$ and $\theta,\theta'\in\QZ$ with $\theta\neq\theta'$. 
Suppose $R_f(\theta), R_f(\theta')$ land at pre-repelling points $x,x'$ respectively. 

\begin{enumerate}
\item \label{limit-land-rela-1}
Let $\{f_n\}_{n\geq1}\subset \mathcal{C}(\Pd)$ satisfy $\lim_{n\rightarrow\infty} f_n = f$. 
If $R_{f_n}(\theta), R_{f_n}(\theta')$ land at the same point $x_n$ for any $n\geq1$, then $x=x'$. 

\item \label{limit-land-rela-2}
If $x=x'$ and $\{f^n(x){;~}n\in\mathbb{N}\}\cap\crit(f)=\emptyset$, 
then there is a neighborhood $\mathcal{N}\subset\Pd$ of $f$ such that $R_g(\theta)$ and $R_g(\theta')$ land at a common pre-repelling point of $g$ for any $g\in\mathcal{N}$. 
\end{enumerate}
\end{lemma}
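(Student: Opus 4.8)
The plan is to prove each part by lifting to the case of periodic landing points and then using Lemma \ref{stability of external rays 0} together with the stability of the landing relation for repelling periodic points.

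For part \eqref{limit-land-rela-1}, suppose first that the common landing point of all rays is periodic for each $f_n$ (this is not assumed, but we reduce to it). In general, choose $m\geq1$ large enough that $R_f(d^m\theta)$ and $R_f(d^m\theta')$ land at repelling periodic points of $f$. Note that $d^m\theta\neq d^m\theta'$ is automatic unless $f^m(x)=f^m(x')$ with $x\neq x'$; but if $d^m\theta = d^m\theta'$ we would already be forced to track where the two rays go under $f^m$, so I would separately handle the subcase $d^m\theta=d^m\theta'$ by replacing $m$ with the smallest index where the iterated angles coincide and arguing that $R_{f_n}(\theta),R_{f_n}(\theta')$ then have the same $f_n^m$-image, hence $x_n$ and $f_n^m$ applied to it give a relation that passes to the limit. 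In the main case $d^m\theta\neq d^m\theta'$: by Lemma \ref{stability of external rays 0}, for all $n$ large the rays $R_{f_n}(d^m\theta)$ and $R_{f_n}(d^m\theta')$ land, and their landing points converge (via continuity of $\gamma_{d^m\theta}, \gamma_{d^m\theta'}$ at $s=0$) to the landing points of $R_f(d^m\theta)$ and $R_f(d^m\theta')$ respectively. But $R_{f_n}(d^m\theta)$ is the $f_n^m$-image of $R_{f_n}(\theta)$, landing at $f_n^m(x_n)$; similarly $R_{f_n}(d^m\theta')$ lands at $f_n^m(x_n)$ (same point $x_n$). Hence the landing points of $R_f(d^m\theta)$ and $R_f(d^m\theta')$ coincide, i.e. $f^m(x)=f^m(x')$. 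Now one descends: $x,x'$ are among finitely many $f^m$-preimages of this common point, and by Lemma \ref{stability of external rays 0}(2) the rays $R_{f_n}(\theta)\to R_f(\theta)$ and $R_{f_n}(\theta')\to R_f(\theta')$ uniformly on $[0,\infty)$, so $x_n\to x$ and $x_n\to x'$, forcing $x=x'$.

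For part \eqref{limit-land-rela-2}, again pass to $m\geq1$ with $R_f(d^m\theta)$, $R_f(d^m\theta')$ landing at periodic points; since $x=x'$ these two rays share the common periodic landing point $f^m(x)$. By Lemma \ref{stability of external rays 0}(1)-(2), there is a neighborhood $\mathcal{N}_0$ of $f$ in $\Pd$ on which $\gamma_{d^m\theta}$ and $\gamma_{d^m\theta'}$ are continuous and their landing points are repelling periodic. Landing points of rays at a \emph{repelling periodic orbit} are combinatorially rigid: the landing point of $R_g(d^m\theta)$ is the repelling periodic point obtained by continuation of $f^m(x)$, and the same continuation argument applied to $d^m\theta'$ gives the same repelling periodic point, because the continuation of a repelling periodic point is unique and locally holomorphic. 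Hence for $g\in\mathcal{N}_0$ the rays $R_g(d^m\theta)$ and $R_g(d^m\theta')$ land at a common repelling periodic point. Finally one lifts this coincidence: using the hypothesis $\{f^n(x){;~}n\in\mathbb{N}\}\cap\crit(f)=\emptyset$, one may shrink $\mathcal{N}_0$ to a neighborhood $\mathcal{N}$ so that $f^m$ (for nearby $g$) remains a local homeomorphism along the orbit, and the rays $R_g(\theta), R_g(\theta')$ both land at the unique $g^m$-preimage of the common repelling periodic point lying near $x$; the absence of critical points on the forward orbit is exactly what makes this preimage selection continuous and single-valued.

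The main obstacle I anticipate is the bookkeeping around the possibility $d^k\theta = d^k\theta'$ for small $k$, i.e. the two rays merging under iteration before reaching a periodic regime; one must choose the iterate $m$ carefully (taking $m$ minimal with the iterated angles periodic, while also respecting the first collision index) and check that the descent/lifting step does not lose the distinctness needed to separate $x$ from $x'$ in part \eqref{limit-land-rela-1}. A secondary technical point is ensuring the continuation of the \emph{preperiodic} (not periodic) landing point is still well-defined and continuous when the landing point is merely pre-repelling; this is handled precisely by the stated hypothesis ruling out critical points on the forward orbit, which lets the implicit function theorem propagate the periodic-case conclusion backward along the orbit.
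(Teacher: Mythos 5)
Your overall strategy of pushing forward to the periodic regime and then descending is sound in spirit, but in part~(\ref{limit-land-rela-1}) you cite the wrong lemma at the decisive step, and this creates a genuine gap. You invoke Lemma~\ref{stability of external rays 0}(2) to conclude that $\gamma_\theta(f_n,\cdot)\to\gamma_\theta(f,\cdot)$ uniformly on $[0,\infty)$ (so that $x_n\to x$ and $x_n\to x'$), but Lemma~\ref{stability of external rays 0} carries the hypothesis $\{f^n(x){;~}n\in\mathbb{N}\}\cap\crit(f)=\emptyset$, which is \emph{not} assumed in part~(\ref{limit-land-rela-1}); the pre-repelling landing point $x$ (or $x'$) is allowed to have a critical point on its forward orbit. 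Your preliminary detour to $d^m\theta$ and the conclusion $f^m(x)=f^m(x')$ does nothing to remove this obstruction, since the final descent step applies the lemma directly to $\theta$ and $\theta'$. What you want is Lemma~\ref{stability of external rays}, which is exactly the lift of Lemma~\ref{stability of external rays 0} through iterates: it applies to any $f\in\mathcal{C}(\Pd)$ with $R_f(\theta)$ landing at a pre-repelling point, with no condition on critical points in the orbit, at the cost of restricting continuity of $\gamma_\theta$ to parameters in $\mathcal{N}\cap\mathcal{C}(\Pd)$ — which is precisely satisfied here since $\{f_n\}\subset\mathcal{C}(\Pd)$. Once you cite the right lemma, the entire detour through $m$ and the case split on $d^k\theta=d^k\theta'$ become unnecessary: the paper's proof is simply $x=\lim_n\gamma_\theta(f_n,0)=\lim_n x_n=\lim_n\gamma_{\theta'}(f_n,0)=x'$.

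Your argument for part~(\ref{limit-land-rela-2}) is essentially correct and tracks the paper's intent (here the hypothesis $\{f^n(x){;~}n\in\mathbb{N}\}\cap\crit(f)=\emptyset$ \emph{is} present, so Lemma~\ref{stability of external rays 0} applies to both $\theta$ and $\theta'$, and the repelling periodic continuation plus the local-homeomorphism lift-back give the common landing point for nearby $g$). But you should repair part~(\ref{limit-land-rela-1}) by replacing the citation of Lemma~\ref{stability of external rays 0} with Lemma~\ref{stability of external rays}, and then the whole iterate-bookkeeping can be deleted.
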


\begin{proof}
By Lemma \ref{stability of external rays}, we have $x = \lim_{n\rightarrow\infty} x_n =x'$, so the first reuslt holds. 
The second result follows from Lemma \ref{stability of external rays 0} or Lemma \ref{stability of external rays}. 
\end{proof}

\begin{remark}
With the same notations and assumptions as Lemma \ref{limit-land-rela}, 
if $x=x'$ but $\{f^n(x){;~} n\in\mathbb{N}\}\cap\crit(f)\neq\emptyset$, 
there may be $g\in\mathcal{C}(\Pd)$ arbitrarily close to $f$ such that $R_g(\theta)$ and $R_g(\theta')$ land at two different pre-repelling points. 
For example, consider $\theta = 1/12$ and $\theta' = 1/6$ in Figure \ref{figure basilica}.
\end{remark}

\subsection{Sectors and limbs}
Angles $\theta_1,\dots,\theta_n\in\RZ$ are in \emph{positive cyclic order} 
if it is possible to choose representatives $\hat\theta_k\in\mathbb{R}$ 
so that $\hat\theta_1<\cdots<\hat\theta_n<\hat\theta_1+1$.
Let $\theta_1,\theta_2\in\RZ$ with $\theta_1\neq\theta_2$. 
The \emph{open interval} $(\theta_1,\theta_2)$ 
will mean the set of all angles $\theta\in\RZ$ for which $\theta_1,\theta,\theta_2$ are 
in positive cyclic order. 
The \emph{closed interval} is defined to be $[\theta_1,\theta_2]=\overline{(\theta_1,\theta_2)}$. 
Let $\operatorname{frac}:\RZ\rightarrow[0,1)$ be the \emph{fractional part function} that 
maps each point of $\RZ$ to its unique representative in $[0,1)$. 

\begin{definition}
[Sector]
Let $f\in\Pd$.
Suppose $R_f(\theta_1)$ and $R_f(\theta_2)$ are different external rays landing at the same point $x$.
The \emph{sector} ${{S}}_f(\theta_1,\theta_2)$ is defined to be the connected component of 
$\mathbb{C}\setminus (R_f(\theta_1)\cup R_f(\theta_2)\cup\{x\})$
containing the external rays with angles in the interval $(\theta_1,\theta_2)$. 
\end{definition} 

We call $x$ the \emph{root point} (or \emph{vertex}) of $S:={{S}}_f(\theta_1,\theta_2)$, 
and denote it by $r(S)$. 

We say $S$ is \emph{minimal} (resp. \emph{maximal}), 
if $S$ (resp. $S_f(\theta_2,\theta_1)$) includes no external rays landing at its root point. 

We say $S$ \emph{attaches to} the set $A\subset\mathbb{C}$, 
if $\overline{S}\cap\overline{A}=\{{{{r}}}(S)\}$.

\begin{lemma}
\label{sector with no critical point}
Let $f\in\Pd$. 
If $\overline{{{S}}_f(\theta_1,\theta_2)}\cap\crit(f)=\emptyset$, then 
$f:{{S}}_f(\theta_1,\theta_2)\rightarrow{{S}}_f(d\theta_1,d\theta_2)$ is conformal.
\end{lemma}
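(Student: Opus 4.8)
The plan is to verify that $f$ restricts to a proper holomorphic map of degree one on the sector, which forces it to be a conformal isomorphism onto its image, and then to identify the image as the stated sector. First I would recall the basic structure: the external rays $R_f(\theta_1)$ and $R_f(\theta_2)$ are smooth arcs in $U_{f,\infty}$ (no bifurcations occur along them since they land, and bifurcations of rays happen only at critical points of the Green function, i.e. at preimages of critical points of $f$, which by hypothesis are not in $\overline{S}$ where $S = S_f(\theta_1,\theta_2)$). Since $f(R_f(\theta)) = R_f(d\theta)$ for every angle $\theta$ and $f$ maps equipotentials $G_f^{-1}(s)$ to equipotentials $G_f^{-1}(ds)$, the map $f$ sends the boundary pieces $R_f(\theta_1)\cup R_f(\theta_2)\cup\{x\}$ of $S$ into $R_f(d\theta_1)\cup R_f(d\theta_2)\cup\{f(x)\}$, the boundary of $S_f(d\theta_1,d\theta_2)$.

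Next I would argue properness and count the degree. The key point is that $f$ has no critical points in $\overline{S}$, so $f|_S$ is a local homeomorphism; moreover, since $S$ is disjoint from $K(f)$ and $f^{-1}$ of a compact subset of $U_{f,\infty}$ stays in $U_{f,\infty}$, the map $f|_S : S \to S_f(d\theta_1, d\theta_2)$ is proper. A proper local homeomorphism between domains is a covering map of some finite degree $k$; to show $k=1$ I would use the action on external angles. An external ray $R_f(\theta)$ with $\theta\in(\theta_1,\theta_2)$ lies in $S$, and $f$ carries it to $R_f(d\theta)$; the relevant observation is that because $S$ is bounded by the two rays at angles $\theta_1,\theta_2$ and contains exactly the rays with angles in the open arc $(\theta_1,\theta_2)$, and similarly for the image sector, the multiplication-by-$d$ map must send this arc bijectively onto $(d\theta_1, d\theta_2)$ — equivalently, the length of the arc $(\theta_1,\theta_2)$ is less than $1/d$, which is exactly the condition guaranteeing no angle is overcounted. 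Counting preimages of a single ray inside $S$ then gives degree one, hence $f|_S$ is a conformal isomorphism onto $S_f(d\theta_1,d\theta_2)$.

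The main obstacle I anticipate is making the arc-length / angle-counting argument fully rigorous: a priori one must rule out that $\overline{S}$ contains preimages of $x$ other than $x$ itself, or that the two bounding rays $R_f(\theta_1), R_f(\theta_2)$ fail to have distinct images — but $f$ being injective near $x$ (no critical point there) handles the latter, and properness plus the monotonicity of $f$ on the ray structure handles the former. The cleanest route is probably to push everything to the Böttcher coordinate: since $\overline{S}$ avoids $\crit(f)$, one can extend $B_f$ along all of $\overline{S}\setminus\{x\}$ and, via the relation $B_f(f(z)) = B_f(z)^d$, reduce the statement to the elementary fact that $w\mapsto w^d$ is conformal from a sector of opening less than $2\pi/d$ in $\mathbb{C}\setminus\overline{\mathbb{D}}$ onto its image; the condition $\overline{S}\cap\crit(f)=\emptyset$ is what ensures $B_f$ extends and the sector opening is small enough. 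I would then only need to check the image of this Böttcher sector is precisely the Böttcher model of $S_f(d\theta_1, d\theta_2)$, which is immediate from the definition of sectors in terms of external rays.
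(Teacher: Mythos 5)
There are two real problems with the proposal. First, a misconception: $S := S_f(\theta_1,\theta_2)$ is \emph{not} disjoint from $K(f)$. In this paper a sector is a component of $\mathbb{C}\setminus(R_f(\theta_1)\cup R_f(\theta_2)\cup\{x\})$, an unbounded region whose closure contains the root $x\in K(f)$, and removing the cut point $x$ from $K(f)$ generally leaves pieces on both sides of the two rays, so $S\cap K(f)\neq\emptyset$. (You may be thinking of the variant in which a sector is also truncated by an equipotential, so that it is a bounded subset of $U_{f,\infty}$; that is not the definition here.) This breaks your properness argument, which invokes $S\subset U_{f,\infty}$, and it also invalidates the closing B\"ottcher remark: one cannot ``extend $B_f$ along all of $\overline{S}\setminus\{x\}$'' because that set meets $K(f)$, where $B_f$ is undefined.

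Second, and more fundamentally, the crucial step is assumed rather than proved. Both of your routes silently take for granted that the arc $(\theta_1,\theta_2)$ has length strictly less than $1/d$ --- you write that ``the multiplication-by-$d$ map must send this arc bijectively onto $(d\theta_1,d\theta_2)$,'' but that bijectivity is precisely what has to be established, and it is not a formal consequence of how sectors are defined. The hypothesis $\overline{S}\cap\crit(f)=\emptyset$ directly gives only $d\theta_1\not\equiv d\theta_2 \pmod{\mathbb{Z}}$ (from $f$ being a local homeomorphism at $x$); it does not by itself bound the arc length. Without that bound you cannot even assert $f(S)\subset S_f(d\theta_1,d\theta_2)$: if the opening exceeded $1/d$, the image of $S$ near $\infty$ would wrap under $z\mapsto z^d$ and spill into the complementary sector. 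The paper sidesteps this by never assuming the inclusion: it lets $D$ be the component of $f^{-1}(S_f(d\theta_1,d\theta_2))$ whose boundary contains $\partial S$, shows $D\subset S$ by an orientation argument near $\partial S$, observes that $f|_D$ is a covering of a simply connected domain with no critical points --- hence conformal, so $D$ is simply connected with connected boundary --- and then concludes $D=S$. The bound on the arc length then falls out as a \emph{consequence}, not an input. Your instinct to use ``proper local homeomorphism onto a simply connected target has degree one'' is the right one; the fix is to apply it to the pullback component $D$ rather than to $S$ itself, and to derive the arc-length bound from the conclusion instead of assuming it.
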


\begin{proof}
Since $r(S_f(\theta_1,\theta_2))$ is not a critical point, the polynomial $f$ is a local homeomorphism near it, hence $d\theta_1\not\equiv d\theta_2\modZ$. 
Furthermore, the restriction $f:\partial S_f(\theta_1,\theta_2)\rightarrow \partial S_f(d\theta_1,d\theta_2)$ is a homeomorphism. 
Let $D$ be the connected component of $f^{-1}(S_f(d\theta_1,d\theta_2))$ such that $\partial S_f(\theta_1,\theta_2)\subset\partial D$. 
Then $D\subset S_f(\theta_1,\theta_2)$. 
Since $D$ contains no critical points, the restriction $f:D\rightarrow S_f(d\theta_1,d\theta_2)$ is conformal,  
so $D$ has only one boundary component. 
Thus $D=S_f(\theta_1,\theta_2)$, and $f:{{S}}_f(\theta_1,\theta_2)\rightarrow{{S}}_f(d\theta_1,d\theta_2)$ is conformal. 
\end{proof}

\begin{lemma}
\label{sectors attaching to U}
Let $f\in\mathcal{C}(\Pd)$ be a hyperbolic polynomial, and let $U$ be a bounded Fatou component of $f$.
Then the root point of any sector attaching to $U$ is preperiodic.
\end{lemma}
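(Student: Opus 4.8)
The plan is to exploit hyperbolicity to reduce to the periodic case and then to a purely combinatorial statement about angle-doubling. Let $S$ be a sector attaching to $U$, with root point $x = r(S)$, and write $S = S_f(\theta_1,\theta_2)$. Since $f$ is hyperbolic, every bounded Fatou component is eventually periodic, and after replacing $U$ by its periodic cycle representative we may assume $U$ is periodic; attaching sectors are carried to attaching sectors under $f$, so it suffices to prove $x$ is preperiodic under the hypothesis that $U$ is periodic of some period $p$, and then further (iterating) that $U$ is fixed, $f(U)=U$.

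The key observation is that the landing point of an external ray is \emph{repelling or parabolic} when it is periodic, and in a hyperbolic map there are no parabolic cycles, so a periodic landing point is repelling; the issue is only to show $x$ is \emph{pre}periodic at all. First I would argue that $x \in J(f)$ (it lies on $\partial U$, and also on the closures of the two rays). Consider the forward orbit of the pair of angles $(\theta_1,\theta_2)$ under $t\mapsto dt$. Since $\theta_1,\theta_2$ are external angles of $x$, the angles $d^n\theta_1$ and $d^n\theta_2$ are external angles of $f^n(x)$. If at some stage $d^n\theta_1 \equiv d^n\theta_2 \pmod{\mathbb{Z}}$, then the two rays $R_f(d^n\theta_1)$, $R_f(d^n\theta_2)$ coincide, which forces the two original rays to have been separated only by a preimage structure that collapses — more precisely it forces $f^n(x)$ to be an iterated preimage along which the sector degenerates, and by pulling back one sees $x$ lands in a finite set. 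So I may assume the two orbits stay distinct: $d^n\theta_1 \not\equiv d^n\theta_2$ for all $n$. In that case, for each $n$ the point $f^n(x)$ has at least two distinct external rays landing on it, i.e.\ $f^n(x)$ is a \emph{cut point} of $K(f)$; moreover $f^n(x) \in \overline{f^n(U)} = \overline{U}$ (using $f(U)=U$), so $f^n(x) \in \partial U$ for all $n\geq 0$.

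Now I would use that $\partial U$ admits only finitely many cut points of $K(f)$ that lie on it, or rather the sharper dynamical fact: the external rays landing at $\partial U$ are permuted by $f$ according to an irrational-free (i.e.\ rational) rotation number, because $U$ is a fixed Fatou component whose B\"ottcher/linearizing coordinate conjugates $f|_U$ to $z\mapsto z^{\delta}$ on $\mathbb{D}$ for some $\delta\geq 1$ (when $\delta = 1$, $U$ is an attracting basin of a fixed point, and one argues via the Riemann map and the finitely many accesses determined by critical points). The set of angles $\theta$ for which $R_f(\theta)$ lands at a point of $\partial U$ is closed and forward-invariant under $t\mapsto dt$; the set of those $\theta$ that are external angles of a cut point (a point with $\geq 2$ rays) is likewise forward-invariant, and I claim it is finite — here is where hyperbolicity and the structure of $\partial U$ enter: cut points on $\partial U$ separate $\overline U$ from pieces of $K(f)$ attached along $\partial U$, and there are only finitely many such attachment points at the "critical-value" level, with all others being iterated preimages, so the collection of distinct cut points on $\partial U$ is finite (this can be made precise via the puzzle/Yoccoz-type partition for the hyperbolic map, or via Lemma~\ref{sector with no critical point}: a sector attaching to $U$ whose closure avoids $\crit(f)$ maps conformally onto another such, and there are finitely many critical points, so all but finitely many generations are preimages of a bounded set of "primitive" sectors). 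Consequently $\{f^n(x)\}_{n\geq 0}$ is a subset of this finite set, so $x$ is preperiodic, as desired.

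The main obstacle I anticipate is the finiteness claim for cut points on $\partial U$ — precisely, ruling out that infinitely many distinct points of $\partial U$ carry two or more external rays. The clean way is: if $\overline{S_f(\theta_1,\theta_2)} \cap \crit(f) = \emptyset$ then by Lemma~\ref{sector with no critical point} $f$ maps $S_f(\theta_1,\theta_2)$ conformally to $S_f(d\theta_1,d\theta_2)$, which is again a sector attaching to $U$; iterating, the backward orbit of any "large" attaching sector eventually hits one whose closure meets $\crit(f)$ (otherwise we produce an infinite nested conformal tower forcing the root to be non-repelling, contradicting hyperbolicity via the Fatou–Shishikura count or a normal-families/Zalcman argument). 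Since $f$ has finitely many critical points, only finitely many attaching sectors (up to the dynamics) meet $\crit(f)$ in their closure, so every attaching sector is an iterated $f$-preimage of one of these finitely many; tracking root points, $r(S)$ is an iterated preimage of one of finitely many root points, each of which — lying on the forward-invariant finite set — is periodic. Hence $r(S)$ is preperiodic. Making this tower/finiteness argument airtight, and handling the case $\delta=1$ (attracting basin of a fixed point, where one works with the Riemann map and the finitely many critical accesses) uniformly with the $\delta\geq 2$ case, is where the real work lies; everything else is the standard combinatorics of external angles under $t\mapsto dt$.
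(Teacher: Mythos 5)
Your plan diverges from the paper's proof and contains a couple of genuine gaps.

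First, the claim that $\partial U$ carries only finitely many cut points of $K(f)$ is false in general, even for hyperbolic maps. Take the basilica $z\mapsto z^2-1$: every bounded Fatou component $V$ has a dense set of points on $\partial V$ at which other Fatou components are attached, hence a dense set of cut points of $K(f)$ on $\partial V$. So the line of reasoning via ``finitely many cut points on $\partial U$'' cannot be made to work, and the parenthetical remarks you offer to shore it up (``puzzle/Yoccoz-type partition,'' ``finitely many attachment points at the critical-value level'') do not rescue it.

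Second, your fallback argument has the direction of the orbit backwards and the finiteness step unjustified. You pull $S$ back and say the backward orbit eventually hits a sector meeting $\crit(f)$, claiming otherwise you get ``an infinite nested conformal tower forcing the root to be non-repelling.'' But pulling back a sector through conformal branches of $f^{-1}$ only \emph{shrinks} the aperture by a factor $1/d$ each step; nothing blows up, no non-repelling point is produced, and an infinite backward tower avoiding critical points is perfectly consistent. And the concluding claim that the finitely many ``primitive'' roots are automatically periodic because they ``lie on the forward-invariant finite set'' isn't established — the set of roots of sectors whose closures meet $\crit(f)$ is not a priori forward-invariant.

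The argument the paper uses is simpler and goes forward. Assume $r(S)$ is \emph{not} preperiodic. Then the forward images $S_f(d^n\theta_1,d^n\theta_2)$ are sectors attaching to $U$ with pairwise distinct roots, hence pairwise disjoint. A finite critical set can therefore only meet finitely many of their closures, so for $n$ past some $N$ each $\overline{S_f(d^n\theta_1,d^n\theta_2)}$ avoids $\crit(f)$. By Lemma~\ref{sector with no critical point}, $f$ is then conformal from $S_f(d^n\theta_1,d^n\theta_2)$ onto $S_f(d^{n+1}\theta_1,d^{n+1}\theta_2)$, which forces $\operatorname{frac}(d^{n+1}\theta_2-d^{n+1}\theta_1)=d\cdot\operatorname{frac}(d^n\theta_2-d^n\theta_1)$ for all $n>N$. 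The aperture thus grows geometrically and must exceed $1$, a contradiction. No finiteness of cut points and no backward pullback is needed; the aperture growth does all the work, and it requires tracking forward, not backward.
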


\begin{proof}
Without loss of generality, we assume $f(U)=U$. 
Let $S_f(\theta_1,\theta_2)$ be a sector attaching to $U$.
Assume $r(S_f(\theta_1,\theta_2))$ is not preperiodic; we will find a contradiction.
For any $n\in\mathbb{N}$, the sector $S_f(d^n\theta_1,d^n\theta_2)$ attaches to $U$. 
Since they are pairwise disjoint, there exists an $N>0$ such that  $\overline{S_f(d^n\theta_1,d^n\theta_2)}\cap \crit(f)=\emptyset$ whenever $n>N$.
By Lemma \ref{sector with no critical point},
the restriction $f:S_f(d^n\theta_1,d^n\theta_2)\rightarrow S_f(d^{n+1}\theta_1,d^{n+1}\theta_2)$ is conformal when $n>N$. 
It follows that $\operatorname{frac}(d^{n+1}\theta_2-d^{n+1}\theta_1) = 
d \operatorname{frac}(d^n\theta_2-d^n\theta_1) $ for any $n>N$. 
When $n$ tends to infinity, $\operatorname{frac}(d^n\theta_2-d^n\theta_1)$ will exceed $1$.
This is a contradiction. 
So $r(S_f(\theta_1,\theta_2))$ is preperiodic. 
\end{proof}

For an integer $\delta>0$, define 
$$m_\delta:\begin{cases}
\RZ\rightarrow\RZ,\\
\theta\mapsto \delta\theta. 
\end{cases}$$

Let $f\in\Pd$ and let $U$ be a bounded periodic Fatou component which is not a Siegel disk.
Then $U$ is a Jordan domain, and $f^{p}|_{\partial U}$ is topologically conjugate to $m_\delta$, 
where $p$ is the period of $U$ and $\delta = \deg(f^{p}|_U)$ (see \cite[Theorem A]{RY2022}).
There are $\delta-1$ choices of such topological conjugacy. 

\begin{proposition}
[Limb {\cite[Theorem B]{RY2022}}]
\label{limb}
Let $f\in\mathcal{C}(\Pd)$ and let $U$ be a bounded periodic Fatou component which is not a Siegel disk. 
Then $K(f)$ has a partition 
$$%
K(f)=U\bigsqcup\bigsqcup_{x\in\partial U} L_{U,x}
$$
such that 
\begin{enumerate}
\item $L_{U,x}\cap\overline{U}=\{x\}$; 
\item when $L_{U,x}\neq\{x\}$, there are two different external angles $\theta_U^-(x),\theta_U^+(x)$ of $x$
satisfying $L_{U,x}=\overline{{{S}}_f(\theta_U^-(x),\theta_U^+(x))}\cap K(f)$; and 
\item when $L_{U,x}=\{x\}$, there is only one external angle $\theta_U^-(x)=\theta_U^+(x)$ of $x$.
\end{enumerate}
\end{proposition}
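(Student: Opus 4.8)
The plan is to build the partition by combining two ingredients: (i) the known fact that $f^p|_{\partial U}$ is topologically conjugate to $m_\delta$, so $\partial U$ is a Jordan curve parametrized by $\RZ$, and (ii) the structure of external rays landing on $\partial U$. First I would fix the period $p$ and the local degree $\delta=\deg(f^p|_U)$, choose one of the $\delta-1$ conjugacies $\psi:\partial U\to\RZ$ intertwining $f^p$ with $m_\delta$, and observe that since $U$ is a Jordan domain with $K(f)$ connected, every point $x\in\partial U$ is the landing point of at least one external ray of $f$. For $x\in\partial U$, let $\Theta(x)\subset\RZ$ be the (nonempty, finite) set of external angles of $x$. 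The two distinguished angles $\theta_U^\pm(x)$ will be the two extreme elements of $\Theta(x)$ in the cyclic order ``adjacent to $U$'', i.e. the first rays one meets when rotating away from $U$ on either side; when $|\Theta(x)|=1$ these coincide. I would then set $L_{U,x}:=\overline{S_f(\theta_U^-(x),\theta_U^+(x))}\cap K(f)$ when $\Theta(x)$ has $\ge 2$ elements lying on the side away from $U$, and $L_{U,x}:=\{x\}$ otherwise, where $S_f(\theta_U^-(x),\theta_U^+(x))$ is chosen to be the sector \emph{not} containing $U$.

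The core of the argument is then to verify the three listed properties and that the $L_{U,x}$ together with $U$ exhaust $K(f)$ disjointly. For disjointness and $L_{U,x}\cap\overline U=\{x\}$: two distinct points $x,x'\in\partial U$ are separated in $K(f)$ by the pair of external rays landing at $x$ on the two sides, because these rays together with the arc of $\partial U$ between them bound a Jordan domain; this uses that no external ray landing at $\partial U$ can cross $U$ and that rays are disjoint. The sector $S_f(\theta_U^-(x),\theta_U^+(x))$ on the far side meets $\overline U$ only at $x$ by construction (no other ray landing at $\partial U$ has an angle in the open interval, since those would land at points of $\partial U$ strictly between, contradicting that $\theta_U^\pm(x)$ are the extreme angles of $x$). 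For the dichotomy (2)--(3): when $L_{U,x}\ne\{x\}$ there is genuinely a nondegenerate wake, which forces $\Theta(x)$ to contain $\ge 2$ angles (a single ray cannot bound a nondegenerate piece of $K(f)$ off $U$), giving $\theta_U^-(x)\ne\theta_U^+(x)$; conversely a single external angle forces the local piece to reduce to the point. Exhaustion: any $z\in K(f)\setminus U$ lies in some complementary component of $\partial U\cup(\text{all rays landing on }\partial U)$; tracing which two adjacent rays bound it identifies the unique $x\in\partial U$ with $z\in L_{U,x}$.

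The main obstacle I expect is controlling the \emph{adjacency} of external angles along $\partial U$ — making precise, and proving, that for each $x\in\partial U$ there genuinely are ``first'' rays $\theta_U^\pm(x)$ on each side and that the intervals $(\theta_U^-(x),\theta_U^+(x))$ (taken on the far side) over all $x$ are pairwise disjoint and cover $\RZ$ minus the countable set of angles of rays landing on $\partial U$. This is essentially a statement that the impression/prime-end structure of $\partial U$ matches the angular order of rays hitting it, which requires either a compactness argument (finitely many rays can land at a given point; the landing map is semicontinuous) or an appeal to the Jordan-domain structure of $U$ together with the fact that $f^p|_{\partial U}\sim m_\delta$. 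Once this combinatorial bookkeeping is in place, the topological verifications above are routine, and Lemma \ref{sectors attaching to U} guarantees the relevant root points are preperiodic, which is consistent with (but not needed for) the bare partition statement. I would present the adjacency lemma as a separate claim, prove it via the Jordan curve theorem applied to $\partial U$ together with two rays, and then assemble the partition.
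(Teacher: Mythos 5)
The paper does not prove Proposition~\ref{limb} at all: it is imported wholesale as a citation to \cite[Theorem B]{RY2022}, so there is no in-text argument to compare yours against. Your outline, judged on its own merits, contains a genuine gap at the very first step.

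You write that ``since $U$ is a Jordan domain with $K(f)$ connected, every point $x\in\partial U$ is the landing point of at least one external ray of $f$,'' and you call this an observation. It is not: the fact that $\partial U$ is a Jordan curve does \emph{not} topologically force accessibility of $x$ from $\mathbb{C}\setminus K(f)$. Accessibility of $x$ from $U$ (the inside) is immediate once $U$ is a Jordan domain, but $U\subset K(f)$, and what you need is access from the basin of infinity. Topologically nothing prevents $K(f)\setminus\overline{U}$ from accumulating onto a full arc of $\partial U$ in a spiral-like way that shields those boundary points from the outside; for a general full continuum with a Jordan interior component this does happen. Ruling it out for filled Julia sets requires dynamics, and establishing accessibility of \emph{every} point of $\partial U$ (not just the dense set of repelling periodic points, for which Douady's theorem gives landing rays) is precisely one of the main technical burdens of \cite{RY2022}, typically done via puzzle/para-puzzle machinery and the expansion of $f^p|_{\partial U}$. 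Once that is in hand, the combinatorial bookkeeping you describe (choosing $\theta_U^\pm(x)$ as the first angles of $\Theta(x)$ on either side of $U$ and defining the wake to be the far sector) is sensible, and you correctly identify the adjacency lemma as needing a careful argument --- but that is secondary to the unproven accessibility claim. Your exhaustion step also implicitly assumes that every component of $K(f)\setminus\overline{U}$ touches $\partial U$ at a single point; you gesture at ``tracing adjacent rays,'' but with uncountably many rays landing densely on $\partial U$ the complementary-component argument needs the same machinery. In short: the architecture of the proposal is reasonable, but the step you present as obvious is the hardest part of the theorem, and without it the rest does not close.
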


We call $L_{U,x}$ the \emph{limb} for $U$ with root $x$.

When $L_{U,x}\neq\{x\}$, the sector ${{S}}_f(\theta_U^-(x),\theta_U^+(x))$ is a maximal sector attaching to $U$. 
By the partition of $K(f)$, we have a partition of the plane:
\begin{equation}
\label{partition of the plane}
\mathbb{C}=U\bigsqcup\bigsqcup_{L_{U,x}\neq\{x\}} \overline{{{S}}_f(\theta_U^-(x),\theta_U^+(x))}
\bigsqcup\bigsqcup_{L_{U,x}=\{x\}} \overline{R_f(\theta_U^-(x))}.
\end{equation}
Taking counterclockwise as positive orientation on $\partial U$, we have: 
\begin{equation}
\label{limb1}
\begin{split}
\lim_{x\rightarrow x_0^+} \theta_U^+(x)&=\lim_{x\rightarrow x_0^+} \theta_U^-(x) =\theta_U^+(x_0);\\
\lim_{x\rightarrow x_0^-} \theta_U^+(x)&=\lim_{x\rightarrow x_0^-} \theta_U^-(x) =\theta_U^-(x_0).
\end{split}
\end{equation}

\subsection{Rational lamination and real lamination}
Let $f\in\mathcal{C}(\Pd)$. 
The \emph{rational lamination} $\lambda_\mathbb{Q}(f)\subset(\QZ)^2$ of $f$ is defined to be the landing relation of rational angles. 
That is, for $(\theta_1,\theta_2)\in(\QZ)^2$, we have  $(\theta_1,\theta_2)\in\lambda_\mathbb{Q}(f)$ 
if and only if $R_f(\theta_1)$ and $R_f(\theta_2)$ land at the same point. 

\begin{lemma}
[{\cite[\S3.2]{Kiwi01}}]
\label{rat-lam is closed}
Let $f\in\mathcal{C}(\Pd)$. Then $\lambda_\mathbb{Q}(f)$ is closed in $(\QZ)^2$. 
\end{lemma}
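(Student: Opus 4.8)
The plan is to deduce the lemma from the continuity of the landing-point map on rational angles, and then to prove that continuity by a soft prime-end argument, the only genuinely dynamical input being a classical fact about impressions of rational rays.

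Since $f\in\mathcal{C}(\Pd)$, every rational ray of $f$ lands (by \cite[Theorem 18.10]{Mil}), so there is a well-defined map $\ell\colon\QZ\to K(f)$ sending $\theta$ to the landing point of $R_f(\theta)$. Directly from the definitions, $\lambda_\mathbb{Q}(f)=\{(\theta_1,\theta_2)\in(\QZ)^2{;~}\ell(\theta_1)=\ell(\theta_2)\}=(\ell\times\ell)^{-1}(\Delta)$, where $\Delta\subset K(f)\times K(f)$ is the diagonal; as $\Delta$ is closed, it suffices to prove that $\ell$ is continuous. To do this I would fix $\theta\in\QZ$ and a sequence $\psi_n\to\theta$ in $\QZ$. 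Since $K(f)$ is connected, $B_f$ extends to a conformal isomorphism $U_{f,\infty}\to\mathbb{C}\setminus\overline{\mathbb{D}}$ and $\ell(\psi_n)=\lim_{t\to0^+}B_f^{-1}(e^{t+2\pi i\psi_n})$; hence for every $\varepsilon>0$ and all large $n$ the point $\ell(\psi_n)$ lies in the closed set $E_\varepsilon:=\overline{B_f^{-1}\big(\{e^{t+2\pi i\psi}{;~}0<t<\varepsilon,\ |\psi-\theta|<\varepsilon\}\big)}$. Consequently every accumulation point of $(\ell(\psi_n))_n$ lies in $\bigcap_{\varepsilon>0}E_\varepsilon=I(\theta)$, the prime-end impression of $B_f$ at the angle $\theta$. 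Now I would invoke the classical fact that $I(\theta)$ is a single point when $\theta$ is rational, and is then equal to $\ell(\theta)$: this holds because $R_f(\theta)$ lands at a preperiodic point which, lying on $J(f)$, is a repelling or parabolic periodic point or a preimage of one, and the impression of such a ray is trivial (see \cite{Mil} or \cite[\S3.2]{Kiwi01}; it follows from linearization, respectively a Fatou-coordinate analysis, at the landing point, followed by pulling back). Granting this, every accumulation point of $(\ell(\psi_n))_n$ equals $\ell(\theta)$, and by compactness of $K(f)$ we conclude $\ell(\psi_n)\to\ell(\theta)$, so $\ell$ is continuous and the lemma follows.

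The step I expect to be the main obstacle is exactly the triviality of the impression $I(\theta)$ at a rational angle: everything else is formal, whereas here the repelling/parabolic dynamics at the landing point is genuinely used, and this is essentially the content of \cite[\S3.2]{Kiwi01}. If one prefers to avoid prime ends altogether, an alternative route is to argue in the dynamical plane: given $(\alpha_n,\beta_n)\in\lambda_\mathbb{Q}(f)$ with $(\alpha_n,\beta_n)\to(\theta,\theta')$ and $\theta\neq\theta'$, pass to a subsequence fixing the cyclic order of $\alpha_n,\theta,\theta',\beta_n$; if the ray pairs $\overline{R_f(\alpha_n)}\cup\overline{R_f(\beta_n)}$ eventually separate $\ell(\theta)$ from $\ell(\theta')$, or if their common root $x_n=\ell(\alpha_n)=\ell(\beta_n)$ equals $\ell(\theta)$ or $\ell(\theta')$ infinitely often, then one forces $x_n$ to be eventually constant and contradicts the finiteness of the set of rays landing at a preperiodic point. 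The one remaining configuration, in which the sectors $S_f(\alpha_n,\beta_n)$ shrink onto a nondegenerate wake containing both $\theta$ and $\theta'$, is the delicate case; it is treated by refining through the wake structure at the successive roots $x_n$ and, in the terminal ``single wake'' case, by passing to an iterate that maps the wake to itself and expands in external angle — which is again the substance of \cite[\S3.2]{Kiwi01}.
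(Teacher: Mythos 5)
Your formal reductions are fine: $\lambda_\mathbb{Q}(f)=(\ell\times\ell)^{-1}(\Delta)$, and every accumulation point of $\ell(\psi_n)$ does lie in the prime-end impression $I(\theta)$. The genuine gap is exactly the step you flag and then wave through: the assertion that $I(\theta)$ is a singleton for every rational $\theta$ is not a classical fact, and it is not contained in \cite{Mil} or in \cite[\S3.2]{Kiwi01}. What is classical is that rational rays land at repelling or parabolic (pre)periodic points and that only finitely many rays land at such a point; degeneracy of the impression is a much stronger, local-connectivity-type statement at the landing point, and the lemma must cover all $f\in\mathcal{C}(\Pd)$, including Cremer, Siegel and infinitely renormalizable maps with non-locally-connected Julia sets, where such degeneracy is precisely the kind of statement that is open (for a Cremer quadratic it is not even known that the impression of the fixed ray reduces to the $\beta$-fixed point). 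Your suggested justification ``linearization at the landing point, followed by pulling back'' does not produce it: if $z\in R_f(\psi)$ has potential $t\ll|\psi-\theta|$, pushing forward until the potential reaches a definite level multiplies the angular deviation by the same factor $d^n$, so the image lies on a fixed equipotential at an essentially arbitrary angle and gives no control forcing $z$ back near $\ell(\theta)$; landing plus a linearizing coordinate simply does not constrain nearby rays at tiny potential. Note also that the lemma only asserts closedness of the co-landing relation on $(\QZ)^2$, which is strictly weaker than continuity of $\ell$ on $\QZ$, which in turn you derive from the still stronger impression statement — so the route replaces the lemma by a harder claim and leaves precisely that claim unproved. (Symptomatically, in the Cremer case with no co-landing rational pairs the lemma is trivially true, while your impression claim remains unknown.)

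For comparison, the paper offers no argument of its own — it quotes \cite[\S3.2]{Kiwi01} — and Kiwi's proof is of the second kind you sketch: a combinatorial/dynamical argument about co-landing rational rays (reduce to periodic angles by taking iterates, use finiteness of the rays landing at the repelling or parabolic landing points, and an angle-expansion/separation argument in the sectors they bound), with no recourse to impressions. Your fallback sketch heads in that direction, but it disposes of the one delicate configuration by saying it ``is again the substance of \cite[\S3.2]{Kiwi01}'', i.e.\ in both routes the essential step is deferred to the citation. To make this self-contained you would need to actually carry out that sector/expansion argument (or simply quote Kiwi's lemma, as the paper does); the impression-triviality shortcut is not available.
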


Let $f\in\Pd$. 
Assume its Julia set $J(f)$ is connected and locally connected. 
By Carath\'eodory's theorem, the inverse $B_f^{-1}$ of B\"ottcher coordinate can be extended continuously to $\mathbb{C}\setminus\mathbb{D}$. 
Then for any $\theta\in\RZ$, the external ray $R_f(\theta)$ lands at $B_f^{-1}(e^{2\pi i\theta})$. 
The \emph{real lamination} $\lambda_\mathbb{R}(f)\subset(\RZ)^2$ of $f$ is defined to be the landing relation of external angles. 
That is, for $(\theta_1,\theta_2)\in(\RZ)^2$, we have  $(\theta_1,\theta_2)\in\lambda_\mathbb{R}(f)$ 
if and only if $B_f^{-1}(e^{2\pi i\theta_1})=B_f^{-1}(e^{2\pi i\theta_2})$. 
In particular, we have $\lambda_\mathbb{Q}(f)=(\QZ)^2\cap\lambda_\mathbb{R}(f)$. 
By the continuous extension of $B_f^{-1}$ on $\mathbb{C}\setminus\mathbb{D}$, the real lamination $\lambda_\mathbb{R}(f)$ is closed in $(\RZ)^2$. 
For more details of laminations, see, e.g. \cite{DCompa}, \cite{Kiwi01}. 

Let $\lambda$ be a subset of $(\RZ)^2$ (resp. $(\QZ)^2$). 
The smallest closed equivalence relation in $\RZ$ (resp. $\QZ$) 
that contains $\lambda$ is denoted by $\langle\lambda\rangle_\mathbb{R}$ (resp. $\langle\lambda\rangle_\mathbb{Q}$). 

\begin{lemma}
[{\cite[Lemma 4.17]{Kiwi01}}]
\label{connection between rational lamination and real lamination}
Let $f\in\Pd$. 
Assume $J(f)$ is connected and locally connected. 
Then $\langle\lambda_\mathbb{Q}(f)\rangle_\mathbb{R}=\lambda_\mathbb{R}(f)$ if and only if
every critical point on the boundary of any bounded Fatou component is preperiodic. 
\end{lemma}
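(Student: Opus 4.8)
The plan is to prove the two implications separately, with the inclusion $\langle\lambda_\mathbb{Q}(f)\rangle_\mathbb{R}\subseteq\lambda_\mathbb{R}(f)$ obtained for free: $\lambda_\mathbb{R}(f)$ is a \emph{closed} equivalence relation in $\RZ$ (as recalled above) containing $\lambda_\mathbb{Q}(f)=(\QZ)^2\cap\lambda_\mathbb{R}(f)$, hence it contains the smallest such relation. So the substance is (i) $\lambda_\mathbb{R}(f)\subseteq\langle\lambda_\mathbb{Q}(f)\rangle_\mathbb{R}$ under the preperiodicity hypothesis, and (ii) strictness of the inclusion when some critical point on a bounded Fatou-component boundary is not preperiodic. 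Since $J(f)$ is connected and locally connected, the inverse B\"ottcher map $B_f^{-1}$ extends continuously to $\mathbb{C}\setminus\mathbb{D}$ (Carath\'eodory) and induces a homeomorphism $\RZ/\lambda_\mathbb{R}(f)\cong J(f)$ conjugating $m_d$ to $f$; thus a nontrivial class of $\lambda_\mathbb{R}(f)$ is exactly the set of external angles of a single point $x\in J(f)$ carrying at least two rays. For (i) it therefore suffices to show that every such class lies in $\langle\lambda_\mathbb{Q}(f)\rangle_\mathbb{R}$, and I distinguish the points $x$ lying on the boundary of a bounded Fatou component from the ``buried'' ones.

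Suppose first $x\in\partial U$ for a bounded Fatou component $U$. The hypothesis precludes bounded Siegel disks (the Jordan boundary of one would carry a critical point, which could never be preperiodic), so every bounded Fatou component eventually maps to an attracting or parabolic one, and after replacing $f$ by an iterate I may assume $U$ is fixed. By Proposition~\ref{limb} the two external angles of $x$ adjacent to $U$ form a limb pair $\{\theta_U^-(x),\theta_U^+(x)\}$ with $L_{U,x}\neq\{x\}$; I claim $x$ is preperiodic. If not, the points $f^{n}(x)\in\partial U$ are pairwise distinct, so the continua $L_{U,f^{n}(x)}=f^{n}(L_{U,x})$ are pairwise disjoint and nondegenerate in the compact set $K(f)$, whence for all large $n$ they contain no critical point; then $\overline{S_f(\theta_U^-(f^{n}(x)),\theta_U^+(f^{n}(x)))}$ contains no critical point either, since critical points of $f$ lie in $K(f)$ and the two bounding rays lie in the basin of infinity. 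Moreover $f^{n}(x)$ is not a critical point (such a point on $\partial U$ would be preperiodic by hypothesis, forcing $x$ preperiodic, which we excluded). By Lemma~\ref{sector with no critical point}, $f$ is then conformal on each of these sectors; writing $w_n=\operatorname{frac}(\theta_U^+(f^{n}(x))-\theta_U^-(f^{n}(x)))\in(0,1)$, this forces $w_{n+1}=d\,w_n$ for all large $n$, which is absurd --- exactly the contradiction used in the proof of Lemma~\ref{sectors attaching to U}. Hence $x$ is preperiodic, all its external angles are rational, and its $\lambda_\mathbb{R}(f)$-class is a $\lambda_\mathbb{Q}(f)$-class.

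For a buried cut point $x$ (off every bounded Fatou boundary) I instead approximate. The valence of $f^{n}(x)$ is non-increasing and the rays at $f^{n}(x)$ are permuted cyclic-order-preservingly by $m_d$ once the valence has stabilized, so --- descending finitely many times along the forward orbit, each critical point encountered being buried and treated the same way --- I reduce to $x$ having a ``stable'' combinatorial type and forward orbit missing $\crit(f)$. Using density of repelling cycles in $J(f)$, continuity of the angle-to-landing-point map (local connectivity), and the combinatorial rigidity of cut-point types, one produces preperiodic cut points $y_n\to x$ whose rays have angles converging, one by one, to the angles of $x$; taking the limit inside the closed relation $\langle\lambda_\mathbb{Q}(f)\rangle_\mathbb{R}$ recovers the class of $x$. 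Together with the previous paragraph this yields $\lambda_\mathbb{R}(f)\subseteq\langle\lambda_\mathbb{Q}(f)\rangle_\mathbb{R}$, proving (i). For (ii), let $c$ be a critical point on $\partial U$ ($U$ a bounded Fatou component) with all $f^{n}(c)$ distinct, and reduce to $U$ fixed. The branching of $f$ at $c$ turns the gap of $\lambda_\mathbb{R}(f)$ bounding $U$ into a wandering \emph{critical} gap: the identity $w_{n+1}=d\,w_n$ of the preceding paragraph fails precisely along the infinite $m_d$-orbit of the angle at $c$. Following Kiwi~\cite{Kiwi01}, such a gap still carries a nontrivial identification of $\lambda_\mathbb{R}(f)$ --- for instance a limb pair $\{\theta_U^-(z),\theta_U^+(z)\}$ at some non-preperiodic $z$ in the orbit closure of $c$, with both angles irrational --- whereas (running the width argument in reverse) every rational identification relevant to $\partial U$, and every $\langle\cdot\rangle_\mathbb{R}$-limit of such, accumulates only on preperiodic points of $\partial U$. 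Hence this pair lies outside $\langle\lambda_\mathbb{Q}(f)\rangle_\mathbb{R}$, so the inclusion is strict.

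The main obstacle is the $(\Leftarrow)$ direction for the buried and ``deep limit'' cut points: showing that the closed equivalence relation generated by $\lambda_\mathbb{Q}(f)$ already absorbs \emph{every} identification of $\lambda_\mathbb{R}(f)$ once the boundary dynamics of each Fatou component is postcritically finite. This is where the preperiodicity hypothesis enters essentially --- through the rigidity of postcritically finite circle-type boundary dynamics and a careful limiting argument over nested families of rational limbs --- and in Kiwi's treatment it is carried out via the theory of fibers of $K(f)$.
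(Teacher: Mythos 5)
First, a point of comparison: the paper does not prove this lemma at all --- it is imported verbatim from Kiwi \cite[Lemma 4.17]{Kiwi01}, so there is no in-paper argument to measure your proposal against; it has to stand on its own, and as written it does not. The parts you do carry out are fine: the inclusion $\langle\lambda_\mathbb{Q}(f)\rangle_\mathbb{R}\subseteq\lambda_\mathbb{R}(f)$ is indeed free, and your width-doubling argument showing that (under the hypothesis) every cut point on the boundary of a bounded Fatou component is preperiodic is correct and is essentially the argument of Lemma~\ref{sectors attaching to U}, so those classes are rational classes.

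The genuine gaps are exactly at the two decisive steps. (a) For the forward implication, the entire content of the lemma is that identifications of $\lambda_\mathbb{R}(f)$ at buried cut points, and at points whose forward orbits pass through buried critical points, are already captured by the closed equivalence relation generated by the rational pairs. Your treatment of this is a single sentence invoking ``combinatorial rigidity of cut-point types'' and asserting the existence of preperiodic cut points $y_n\to x$ whose co-landing angles converge one by one to those of $x$; no construction is given, and your reduction along the forward orbit also tacitly assumes that membership in $\langle\lambda_\mathbb{Q}(f)\rangle_\mathbb{R}$ can be pulled back under $m_d$, which is not established. Note that the present paper, in the far more restrictive postcritically finite setting, needs Propositions~\ref{two sequences of external angles} and \ref{two sequences of external angles 2} just to manufacture such approximating rational pairs at particular roots --- this step is real work, not bookkeeping. (b) For the converse, you must exhibit a specific identification of $\lambda_\mathbb{R}(f)$ outside the generated relation; you assert the existence of a non-preperiodic limb pair in the orbit closure of $c$ and claim that every $\langle\cdot\rangle_\mathbb{R}$-limit of rational identifications ``accumulates only on preperiodic points of $\partial U$'' --- but that claim is essentially the statement to be proved and receives no argument. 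In addition, your dismissal of Siegel disks rests on the assertion that local connectivity of $J(f)$ forces a critical point onto every Siegel boundary, which is itself a nontrivial claim needing justification (and if a Siegel disk without boundary critical points occurred, the hypothesis would hold vacuously for it and your argument would have to handle its boundary classes, which it does not). You concede at the end that the hard direction ``in Kiwi's treatment is carried out via the theory of fibers'' --- which is precisely what the paper does by citing the lemma; as it stands, the proposal is an outline that defers to the cited source exactly where the mathematical content lies.
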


The inclusion relation of real laminations will induce the following property. 

\begin{lemma}
\label{continuous extension on the closure of infinity basin}
Let $f_1,f_2\in\Pd$. 
Assume their Julia sets are connected and locally connected. 
If $\lambda_\mathbb{R}(f_1)\subset\lambda_\mathbb{R}(f_2)$, 
then $B_{f_2}^{-1}\circ B_{f_1}:U_{f_1,\infty}\rightarrow U_{f_2,\infty}$ 
extends continuously to a map $\varphi:\overline{U_{f_1,\infty}}\rightarrow\overline{U_{f_2,\infty}}$, and $\varphi\circ f_1(z)=f_2\circ\varphi(z)$ for $z\in\overline{U_{f_1,\infty}}$. 
\end{lemma}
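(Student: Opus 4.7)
The plan is to extend $B_{f_2}^{-1}\circ B_{f_1}$ to the boundary by going through the unit circle: I would first extend the two inverse B\"ottcher maps continuously via Carath\'eodory's theorem, and then define $\varphi$ as the map induced by the extension of $B_{f_2}^{-1}$ on the quotient $\overline{U_{f_1,\infty}}=\overline{\mathbb{C}\setminus\mathbb{D}}/{\sim_{\bar B_1}}$, using the inclusion of real laminations to guarantee well-definedness.

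Concretely, since both Julia sets are locally connected, Carath\'eodory's theorem provides continuous surjective extensions $\bar B_i:\overline{\mathbb{C}\setminus\mathbb{D}}\rightarrow \overline{U_{f_i,\infty}}$ of $B_{f_i}^{-1}$ for $i=1,2$; restricted to $\partial\mathbb{D}$, the map $\bar B_i$ sends $e^{2\pi i\theta}$ to the landing point of the ray $R_{f_i}(\theta)$. I would then define $\varphi:\overline{U_{f_1,\infty}}\rightarrow \overline{U_{f_2,\infty}}$ by setting $\varphi(z)=\bar B_2(w)$ for any $w\in \bar B_1^{-1}(z)$. On $U_{f_1,\infty}$ the map $\bar B_1$ is injective, so this agrees with $B_{f_2}^{-1}\circ B_{f_1}$ there.

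The key (and really the only nontrivial) point is well-definedness of $\varphi$ on $J(f_1)=\partial U_{f_1,\infty}$; this is exactly where the hypothesis $\lambda_{\mathbb{R}}(f_1)\subset \lambda_{\mathbb{R}}(f_2)$ enters. Indeed, if $\bar B_1(e^{2\pi i\theta_1})=\bar B_1(e^{2\pi i\theta_2})$, then by construction $(\theta_1,\theta_2)\in \lambda_{\mathbb{R}}(f_1)\subset \lambda_{\mathbb{R}}(f_2)$, which forces $\bar B_2(e^{2\pi i\theta_1})=\bar B_2(e^{2\pi i\theta_2})$. Continuity of $\varphi$ then comes for free: a continuous surjection from a compact Hausdorff space to a Hausdorff space is closed, hence a topological quotient, so $\bar B_1$ is a quotient map, and $\varphi$ is continuous iff $\varphi\circ \bar B_1=\bar B_2$ is, which holds by construction.

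Finally, the semi-conjugacy relation $\varphi\circ f_1=f_2\circ \varphi$ follows on the dense open set $U_{f_1,\infty}$ from the B\"ottcher functional equation $B_{f_i}\circ f_i=(B_{f_i})^d$, and extends to the closure by continuity. The ``main obstacle'' is thus only the well-definedness step on $J(f_1)$, but it is dispensed with by a direct application of the lamination inclusion; no distortion estimates, quasiconformal surgery, or further dynamical input are needed.
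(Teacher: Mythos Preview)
Your proposal is correct and follows essentially the same approach as the paper: extend both $B_{f_i}^{-1}$ via Carath\'eodory, define $\varphi$ as $\bar B_2$ on fibers of $\bar B_1$, and use the lamination inclusion for well-definedness on $J(f_1)$. The only minor difference is the continuity step: the paper proves it by a direct subsequence argument (pick $w_n\in\bar B_1^{-1}(z_n)$, pass to a convergent subsequence, use continuity of $\bar B_1$ and $\bar B_2$), whereas you invoke the fact that $\bar B_1$ is a quotient map; both are standard and your version is slightly cleaner, but just note that for compactness you should regard $\overline{\mathbb{C}\setminus\mathbb{D}}$ and $\overline{U_{f_1,\infty}}$ as subsets of $\widehat{\mathbb{C}}$.
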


\begin{proof}
For $k=1,2$, the conformal map $B_{f_k}^{-1}:\mathbb{C}\setminus\overline{\mathbb{D}}\rightarrow U_{f_k,\infty}$
extends continuously to $\phi_k:\mathbb{C}\setminus\mathbb{D}\rightarrow\overline{U_{f_k,\infty}}$ 
by the local connectivity of $J(f_k)$. 
Since $\lambda_\mathbb{R}(f_1)\subset\lambda_\mathbb{R}(f_2)$, 
we see that $\phi_2(\phi_1^{-1}(z))$ is a singleton for any $z\in\overline{U_{f_1,\infty}}$.
We define $\varphi(z)$ to be this point. 

Let $\{z_n\}_{n\in\mathbb{N}}\subset\overline{U_{f_1,\infty}}$ and $z\in J(f_1)$ satisfy 
$\lim_{n\rightarrow\infty} z_n = z$.  
For each $n\in\mathbb{N}$, choose $w_n\in\phi_1^{-1}(z_n)$. 
Given a subsequence $\{z_{n_k}\}_{k\in\mathbb{N}}$ of $\{z_n\}_{n\in\mathbb{N}}$. 
Then there is a convergent subsequence $\{w_{n_{k_j}}\}_{j\in\mathbb{N}}$ of $\{w_{n_k}\}_{k\in\mathbb{N}}$, whose limit is denoted by $w$. 
By the continuity of $\phi_1$, we have $w\in\phi_1^{-1}(z)$. 
Then the continuity of $\phi_2$ gives 
$\varphi(z_{n_{k_j}}) - \varphi(z) = \phi_2(w_{n_{k_j}}) - \phi_2(w)\rightarrow0$ as $j\rightarrow\infty$. 
Since $\{z_{n_k}\}_{k\in\mathbb{N}}$ was an arbitrary  subsequence of $\{z_n\}_{n\in\mathbb{N}}$, we have $\varphi(z_{n})-\varphi(z)\rightarrow0$ as $n\rightarrow\infty$. 
This shows $\varphi$ is continuous on $\overline{U_{f_1,\infty}}$.
It follows that $\varphi\circ f_1(z)=f_2\circ\varphi(z)$ for $z\in\overline{U_{f_1,\infty}}$. 
\end{proof}

\subsection{Allowable arcs}
\label{subsection allowable arcs}

Let $f_0$ be a postcritically finite polynomial in $\Pd$. 
Following Douady and Hubbard \cite{Orsay}, we will define allowable arcs in $K(f_0)$. 

A compact set $K\subset\mathbb{C}$ is \emph{full} if $\mathbb{C}\setminus K$ is connected. 
A \emph{continuum} is a nonempty compact connected metric space. 
A continuum that contains more than one point is called \emph{nondegenerate}. 
The following is a basic property of local connectivity. 

\begin{lemma}
[{\cite[Proposition 2.3]{Orsay}}]
\label{local connectivity}
Let $K\subset\mathbb{C}$ be a full nondegenerate continuum. 
Denote by $\{U_j {;~}  j\in I\}$ the family of connected components of $K^\circ$. 
If $K$ is locally connected, then 
\begin{enumerate}
\item \label{local connectivity-1}
for all $j$, the component $U_j$ is a Jordan domain; 

\item \label{local connectivity-2}
for any $\varepsilon>0$, the set of $j$ such that ${\diam}(U_j)>\varepsilon$ is finite. 
\end{enumerate}
\end{lemma}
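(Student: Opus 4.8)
The plan is to read both assertions off from Carath\'eodory's theorem applied to the complementary domain $\Omega:=\widehat{\mathbb C}\setminus K$. Since $K$ is full, $\Omega$ is connected; since $\widehat{\mathbb C}\setminus\Omega=K$ is connected, $\Omega$ is simply connected; and $\partial\Omega=\partial K$. Because $K$ is a locally connected continuum, so is $\partial K$ (the boundary of a complementary component of a Peano continuum in $S^2$ is a Peano continuum), so by Carath\'eodory's theorem the Riemann isomorphism $\phi\colon\widehat{\mathbb C}\setminus\overline{\mathbb D}\to\Omega$ with $\phi(\infty)=\infty$ extends to a continuous surjection $\overline\phi\colon\widehat{\mathbb C}\setminus\mathbb D\to\overline\Omega$. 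I write $\gamma:=\overline\phi|_{\partial\mathbb D}\colon\partial\mathbb D\to\partial K$ for the resulting Carath\'eodory loop; it is continuous, surjective, and, being continuous on a compact set, uniformly continuous. Everything below uses $\gamma$ together with the position of the components $U_j$ inside $K$.

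For (1), fix a component $U=U_j$. First I would verify that $U$ is simply connected: a Jordan curve in $U$ bounds a Jordan domain $D$, and since $\Omega$ is connected and contains $\infty$ it lies in the unbounded complementary component of that curve, so $D\subset\widehat{\mathbb C}\setminus\Omega=K$; as $D$ is connected and adheres to $U$, necessarily $D\subset U$. Next I would record two structural facts: $\partial U\subset\partial K$ (an adherent point of $U$ lying in $K^\circ$ would lie in some component $U_k$, which, being open, would then meet $U$, forcing $U_k=U$), and $U$ is \emph{unshielded}, i.e.\ every $x\in\partial U=\partial\Omega$ is a limit of points of $\Omega\subset\widehat{\mathbb C}\setminus\overline U$; in particular $(\overline U)^\circ=U$ and $\partial U=\partial\overline U=\partial(\widehat{\mathbb C}\setminus\overline U)$. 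Using fullness, one checks that $\widehat{\mathbb C}\setminus\overline U$ is connected: a complementary component of $\overline U$ other than the one containing $\Omega$ would avoid $\Omega$, hence be open in $K^\circ$, hence be a single component $U_\ell$ with $\partial U_\ell\subset\partial U$, and the combination of fullness and local connectedness of $K$ rules out such a nested configuration (this is the mechanism behind Proposition \ref{limb}, whose argument I would reuse). Therefore $\partial U$ is a locally connected continuum (connected because $U$ is simply connected; locally connected because this is inherited from $K$) that is the common boundary of each of its two --- and only two --- complementary components $U$ and $\widehat{\mathbb C}\setminus\overline U$; by the classical characterization of the circle among planar continua (equivalently, by Carath\'eodory's theorem applied to both $U$ and $\widehat{\mathbb C}\setminus\overline U$) this forces $\partial U$ to be a Jordan curve, i.e.\ $U$ is a Jordan domain.

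For (2), I would use the limb partition together with the uniform continuity of $\gamma$. By (1) each $\overline{U_j}$ is a closed Jordan domain, and by Proposition \ref{limb} the $U_j$ are organized in a rooted tree: fixing one component $U_0$, every other $U_k$ lies in exactly one limb $L_{U_0,x}$ of $U_0$, recursively, and the \emph{depth} of $U_k$ is the number of nested limbs needed to reach it. Along any branch, the angular interval $(\theta^-,\theta^+)\subset\RZ$ cut out by the limb at depth $m$ shrinks as $m$ grows --- the argument is that of Lemma \ref{sectors attaching to U}: if it did not shrink, iterating $\gamma$-preimages would eventually require an interval of length exceeding $1$ --- so by uniform continuity of $\gamma$ the diameter of the limb, hence of every component inside it, tends to $0$ with the depth. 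Thus for each $\varepsilon>0$ there is a uniform depth $m(\varepsilon)$ beyond which all components have diameter $\le\varepsilon$, and, proceeding by induction on depth (each level being finite since a component has only finitely many limbs of diameter exceeding a given size), only finitely many $j$ satisfy $\diam(U_j)>\varepsilon$.

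The routine part is the bookkeeping with Carath\'eodory's theorem and the soft separation arguments on the sphere; the hard part, which I would organize carefully following \cite{Orsay}, is the plane-topology input underlying both claims --- that fullness together with local connectedness of $K$ excludes slits, pinchings and pathological nesting of the components, so that each $\overline{U_j}$ is genuinely a closed disk and the tree of components behaves well. Making the depth/shrinking argument for (2) non-circular --- so that ``finitely many limbs of size $>\varepsilon$ per component'' is established level by level rather than assumed --- is the main technical point.
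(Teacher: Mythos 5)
First, a point of comparison: the paper gives no proof of this lemma at all --- it is quoted from \cite[Proposition 2.3]{Orsay} --- so your argument has to stand on its own, and as written it does not. In part (1) the pivotal step of your route is the claim that $\widehat{\mathbb C}\setminus\overline{U}$ is connected (equivalently, that no component $U_\ell$ of $K^\circ$ fills a complementary component of $\overline U$), which you need in order to say that $\partial U$ has exactly two complementary domains. You only assert this, pointing to ``the mechanism behind Proposition \ref{limb}''; but Proposition \ref{limb} and Lemma \ref{sectors attaching to U} are dynamical statements about a polynomial acting on its filled Julia set (external rays, multiplication of angles by $d$), and there is no polynomial acting on a general full continuum, so that mechanism is simply unavailable. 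Moreover this is exactly where local connectivity must enter: without it the claim is false (take $K=\widehat{\mathbb C}\setminus D_1$ for a lakes-of-Wada configuration $D_1,D_2,D_3$ with common boundary; then $U=D_2$ and $U_\ell=D_3$ is a complementary component of $\overline U$ avoiding $\Omega$), and the natural proof of it is as a corollary of (1) itself, so taking it as an input courts circularity. The repair stays entirely inside what you already set up: $\partial U$ is a Peano continuum, so the Riemann map $\psi\colon\mathbb D\to U$ extends continuously to $\overline{\mathbb D}$; if $\psi(a)=\psi(b)=x$ with $a\ne b$, the two radii give a Jordan curve $C\subset U\cup\{x\}$, the two complementary sides of $C$ meet $U$ in the images of the two lenses of $\mathbb D$, the boundary extension is nonconstant on each circle arc, so each side contains a point of $\partial U\setminus\{x\}$; but the connected set $\mathbb C\setminus K$ misses $C$ and hence lies in one side only, while every point of $\partial U\subset\partial K$ is a limit of $\mathbb C\setminus K$ --- contradiction, so $\psi$ is injective on $\partial\mathbb D$ and $U$ is a Jordan domain. (Alternatively, the circle characterization you invoke is true in the stronger form ``a Peano continuum which is the common boundary of two of its complementary domains is a Jordan curve'', which would make your connectivity claim unnecessary; in the form you quoted it, that claim is load-bearing and unproved.) Also note $\partial U=\partial\Omega$ is false in general; what you need, and what is true, is $\partial U\subset\partial K=\partial\Omega$.

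Part (2) is built on machinery that does not exist here: the limbs $L_{U_0,x}$, the rooted tree of components, the angular intervals and their shrinking under ``$\gamma$-preimages'' all presuppose Proposition \ref{limb} and the angle-multiplication argument of Lemma \ref{sectors attaching to U}, i.e.\ a polynomial; and, as you yourself concede, the induction threatens to assume the levelwise finiteness it is supposed to prove. The statement has a short non-dynamical proof: $\partial K=\partial\Omega$ is a locally connected continuum (the Torhorst fact you already invoked), the complementary components of $\partial K$ in $\widehat{\mathbb C}$ are exactly $\Omega$ and the $U_j$ (since $\widehat{\mathbb C}\setminus\partial K=\Omega\sqcup K^\circ$ with $\Omega$ and $K^\circ$ disjoint open sets), and by the classical theorem that the complementary domains of a locally connected continuum in the sphere form a null family --- equivalently, a direct argument with uniform local connectivity of $K$ --- only finitely many of them have diameter exceeding any given $\varepsilon$. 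So (1) needs the unshieldedness/Jordan-curve argument actually carried out rather than deferred, and (2) needs to be decoupled from the dynamical limb machinery altogether.
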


Now let $\{U_j {;~}  j\in I\}$ denote the family of bounded Fatou components of $f_0$. 
Since $f_0$ is postcritically finite, by Lemma \ref{local connectivity}, each $U_j$ is a Jordan domain. 
By Sullivan's no wandering domains theorem, 
each $U_j$ contains a unique preperiodic point, which will be denoted by $w_j$. 
This determines, up to multiplication by $\lambda\in\partial\mathbb{D}$, a Riemann mapping 
$\varphi_j:U_j\rightarrow\mathbb{D}$ such that $\varphi_j(w_j)=0$. 
By Carath\'eodory's theorem, the map $\varphi_j$ can be extended to a homeomorphism 
between $\overline{U_j}$ and $\overline{\mathbb{D}}$.

\begin{definition}
[Allowable arc]
An arc $\Gamma\subset K(f_0)$ will be called \emph{allowable} if $\varphi_j(\Gamma\cap\overline{U_j})$ is contained in the union of two radial rays of $\overline{\mathbb{D}}$ for every $j\in I$. 
\end{definition}

By \cite[Proposition 2.6]{Orsay}, 
two distinct points $x,y$ in $K(f_0)$ can be joined by a unique allowable arc,
which is denoted by $[x,y]=[x,y]_{K(f_0)}$. 
Let $(x,y]=[x,y]\setminus \{x\}, [x,y)=[x,y]\setminus \{y\}$ be the \emph{allowable half-open arcs},
and $(x,y)=[x,y]\setminus \{x,y\}$ be the \emph{allowable open arc}. 
It is clear that every subarc of an allowable arc is allowable.

\begin{definition}
\label{def-path-K}
Let $K\subset\mathbb{C}$ be a full nondegenerate continuum. 
A \emph{path to $K$} is a continuous map $\gamma:[0,1]\rightarrow\mathbb{C}$ 
such that $\gamma((0,1])\subset\mathbb{C}\setminus K$ and $\gamma(0)\in\partial K$. 
We also say $\gamma$ is a \emph{path to $\gamma(0)$ relative to $K$}. 

We say two paths $\gamma_0,\gamma_1$ to $K$ are \emph{homotopic} ($\gamma_0\simeq_K\gamma_1$) if
there exists a continuous map $h:[0,1]\times[0,1]\rightarrow\mathbb{C}$ so that
\begin{itemize}
\item $t\mapsto h(s,t)$ is a path to $K$ for all $s\in[0,1]$;
\item $h(s,0)=\gamma_0(0)$ for all $s\in[0,1]$;
\item $h(0,t)=\gamma_0(t)$ and $h(1,t)=\gamma_1(t)$ for all $t\in[0,1]$.
\end{itemize}
\end{definition}

The following lemma describes the restriction of $f_0$ to an allowable arc, 
and provides a way to find external rays landing at a common point. 

\begin{lemma}
[{\cite[Lemma 4.2, Proposition 7.1]{Orsay}}]
\label{allowable arcs in K(f0)}
Let $f_0\in\Pd$ be a postcritically finite polynomial, and $[x,y]$ be an allowable arc in $K(f_0)$. 
\begin{enumerate}
\item \label{allowable arcs in K(f0)-1}
If $(x,y)\cap\crit(f_0)=\emptyset$, 
then $f_0|_{[x,y]}$ is injective and $f_0([x,y])=[f_0(x),f_0(y)]$ is an allowable arc. 

\item \label{allowable arcs in K(f0)-2}
For $z\in [x,y]\cap J(f_0)$, 
every homotopy class of paths to $z$ relative to $[x,y]$ can be represented by at least one external ray of $f_0$ landing at $z$.
\end{enumerate}
\end{lemma}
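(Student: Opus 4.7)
The plan is to exploit the compatibility between the allowable structure of $[x,y]$ and the dynamics of $f_0$, which is locally modeled by $z\mapsto z^k$ on each Fatou component and by the standard repelling/parabolic linearization on $J(f_0)$.

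For part \eqref{allowable arcs in K(f0)-1}, my approach is to decompose $[x,y]$ according to the Fatou components it crosses. For each bounded Fatou component $U_j$ of $f_0$, the intersection $[x,y]\cap\overline{U_j}$ is by definition contained in $\varphi_j^{-1}$ of a union of two radii; in fact, a short local-connectedness argument shows it is a union of at most two closed radial arcs $\varphi_j^{-1}([0,r_1]e^{i\theta_1})$ and $\varphi_j^{-1}([0,r_2]e^{i\theta_2})$ meeting at the center $w_j$. Because $f_0:\overline{U_j}\to \overline{f_0(U_j)}$ conjugates (via the Riemann maps $\varphi_j$, $\varphi_{f_0(U_j)}$) to $z\mapsto z^{\delta_j}$, where $\delta_j=\deg(f_0|_{U_j})$, the image $f_0([x,y]\cap\overline{U_j})$ is again a union of at most two closed radial arcs in $\overline{f_0(U_j)}$. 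Piecing these together along the Julia-set part of $[x,y]$, the image $f_0([x,y])$ has the allowable form; since allowable arcs between two prescribed endpoints are unique by \cite[Proposition~2.6]{Orsay}, it must equal $[f_0(x),f_0(y)]$. Injectivity is then the point where the hypothesis $(x,y)\cap\crit(f_0)=\emptyset$ enters: on each $\overline{U_j}$ crossed by $[x,y]$, the absence of the critical point $w_j$ from the interior of the allowable piece rules out folding of the two radii under $z\mapsto z^{\delta_j}$; away from the Fatou components, local injectivity of $f_0$ off its critical set combined with the fact that $[x,y]\cap J(f_0)$ is totally disconnected (interior of $K(f_0)$ is the union of the $U_j$) yields global injectivity of $f_0|_{[x,y]}$.

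For part \eqref{allowable arcs in K(f0)-2}, fix $z\in[x,y]\cap J(f_0)$. Since $f_0$ is postcritically finite, $J(f_0)$ is locally connected and $z$ is the landing point of finitely many external rays $R_{f_0}(\alpha_1),\dots,R_{f_0}(\alpha_m)$, cyclically ordered, which together with $z$ divide a small disk $D$ around $z$ into $m$ open sectors $\Sigma_1,\dots,\Sigma_m$. On the other hand, inside $D$ the arc $[x,y]$ consists of finitely many radial pieces of the (at most two) Fatou components whose boundary contains $z$, together with $\{z\}$ itself; these pieces split $D\setminus[x,y]$ into finitely many connected components, and each determines a homotopy class of paths to $z$ relative to $[x,y]$. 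The key step is to check that each of these components meets at least one of the sectors $\Sigma_i$: indeed, the radial pieces of $[x,y]$ near $z$ lie in the closures of at most two Fatou components $U_{j_1},U_{j_2}$, and each $U_{j_i}$ occupies a single one of the $\Sigma_k$'s (because $\partial U_{j_i}$ is locally connected and meets each external-ray sector in an arc ending at $z$), so the complement in $D$ of $[x,y]$ redistributes the remaining $\Sigma_k$'s into homotopy classes bijectively. An external ray inside the appropriate $\Sigma_k$ then gives a representative of the prescribed homotopy class.

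The main obstacle is the bookkeeping in part \eqref{allowable arcs in K(f0)-2}: one must rigorously match the sectoral partition at $z$ coming from the external rays landing there with the partition coming from $[x,y]$, and for this it is essential to know that each Fatou component whose boundary meets $z$ fits into a single external-ray sector and that $[x,y]\cap\overline{U_j}$ consists of genuine radii abutting $z$ from a well-defined direction. Once this local picture is set up, both parts of the lemma follow from the standard conjugacy of $f_0$ to $z\mapsto z^{\delta_j}$ on Fatou components and the finite landing structure at $z$; the remaining work is a careful verification of uniqueness of the allowable image, which reduces to \cite[Proposition~2.6]{Orsay}.
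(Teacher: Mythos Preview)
The paper does not give its own proof of this lemma; it is quoted directly from \cite[Lemma~4.2, Proposition~7.1]{Orsay}. So there is nothing to compare against, and the question is simply whether your sketch is correct.

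Your argument for part~\eqref{allowable arcs in K(f0)-1} has a real gap. The claim that $[x,y]\cap J(f_0)$ is totally disconnected is false for general postcritically finite $f_0$: when $K(f_0)=J(f_0)$ is a dendrite (e.g.\ $f_0(z)=z^2+i$) the entire arc lies in $J(f_0)$. Your parenthetical justification (``interior of $K(f_0)$ is the union of the $U_j$'') does not imply this; indeed the paper's Lemma~\ref{total disconnectivity} proves total disconnectedness only under the extra hypothesis that $f_0$ is \emph{hyperbolic}. More importantly, even where the Julia part is totally disconnected, gluing injective Fatou pieces does not yield global injectivity: two distinct components $U_1,U_2$ crossed by $[x,y]$, each with $\deg(f_0|_{U_i})=1$, may satisfy $f_0(U_1)=f_0(U_2)$, and then the images of the two radial pieces coincide. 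What is missing is the tree structure: the image $f_0([x,y])$ is an allowable connected set, hence uniquely allowable-arc-connected, and a locally injective continuous map from an interval into such a set must be an embedding (otherwise one produces a Jordan curve inside a tree). This is how the Orsay argument proceeds.

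For part~\eqref{allowable arcs in K(f0)-2} your sectoral picture is the right idea, but it again rests on the premise that near $z$ the arc consists of radial pieces in Fatou components, which fails in the dendrite case. The argument that works uniformly is: the two germs of $[x,y]\setminus\{z\}$ at $z$ lie in distinct components of $K(f_0)\setminus\{z\}$ (by uniqueness of allowable arcs), hence in distinct sectors of the partition by the external rays landing at $z$, since the number of such rays equals the number of components of $K(f_0)\setminus\{z\}$ (cf.\ \cite[Corollary~7.1]{McM}). Each of the two accesses to $z$ in $\mathbb{C}\setminus[x,y]$ therefore contains at least one external ray. The bookkeeping you describe with Fatou components is then unnecessary.
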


\begin{lemma}
\label{non-repelling point on arc}
Let $f$ be a polynomial of degree $d\geq2$, and let $\Gamma\subset\mathbb{C}$ be an arc with end points $x_0,x_1$. 
Suppose $f:\Gamma\rightarrow\Gamma$ is a homeomorphism with $f(x_0)=x_0$. 
Then there exists a non-repelling fixed point of $f$ in $\Gamma$. 
\footnote{
Similarly, there exists a non-attracting  fixed point of $f$ in $\Gamma$. 
Using the snail lemma (see \cite[Lemma 16.2]{Mil}), we can deduce that every fixed point of $f$ in $\Gamma$ is attracting, parabolic or repelling.
} 
\end{lemma}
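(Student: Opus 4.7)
The plan is to reduce the statement to a question about an orientation-preserving homeomorphism of $[0,1]$, and to argue by contradiction assuming every fixed point of $f$ in $\Gamma$ is repelling. First I would parameterize the arc: fix a homeomorphism $\gamma:[0,1]\to\Gamma$ with $\gamma(0)=x_0$, $\gamma(1)=x_1$, and set $\varphi=\gamma^{-1}\circ f\circ\gamma$. Since $\varphi$ is a homeomorphism of $[0,1]$ fixing $0$, it is strictly increasing, hence also satisfies $\varphi(1)=1$. The fixed points of $\varphi$ are in bijection with the fixed points of $f$ lying in $\Gamma$; denote this set by $Z$.

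Suppose for a contradiction that every fixed point of $f$ in $\Gamma$ is repelling, i.e., has multiplier with $|f'(x)|>1$. A repelling fixed point of a polynomial admits a neighborhood on which the local inverse branch fixing it is a strict contraction, so every such point is isolated in $\mathbb{C}$. Pulling back via $\gamma$, each $t^*\in Z$ is isolated in $[0,1]$; by compactness $Z$ is finite. Moreover $Z\neq [0,1]$, since $f|_\Gamma=\id$ would make $f(z)-z$ vanish on a whole arc, contradicting $\deg f\geq 2$.

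The heart of the argument is to extract a sign rule for $\varphi-\id$ near each $t^*\in Z$. Using the isolation of $Z$, on a sufficiently small one-sided neighborhood of $t^*$ the function $\varphi-\id$ has constant sign. I would show that this sign is $+$ immediately to the right of $t^*$ (when $t^*<1$) and $-$ immediately to the left (when $t^*>0$): otherwise, since $\varphi$ is monotone increasing, the orbit $\{\varphi^n(t)\}$ of any nearby $t$ would be monotone and bounded, hence convergent to a fixed point of $\varphi$; by isolation this limit must be $t^*$ itself, whence $f^n(\gamma(t))\to\gamma(t^*)$, contradicting the repelling nature of $\gamma(t^*)$. I expect this translation between the complex-dynamical \emph{repelling} condition and the one-sided sign of $\varphi-\id$ to be the main technical step, since it is the only place where the analytic hypothesis $|f'|>1$ is used.

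Granted this sign rule, the contradiction is immediate. Choose any component $(a,b)$ of $[0,1]\setminus Z$, which exists because $Z$ is finite and a proper subset of $[0,1]$. Near $a$ we have $\varphi>\id$ and near $b$ we have $\varphi<\id$, yet $\varphi-\id$ is continuous and nowhere zero on $(a,b)$; this contradicts the intermediate value theorem. Hence some fixed point of $f$ in $\Gamma$ must be non-repelling. The parallel statement for a non-attracting fixed point (mentioned in the footnote) follows by running the same argument with all inequalities reversed.
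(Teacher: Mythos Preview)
Your proof is correct and follows essentially the same approach as the paper: parameterize $\Gamma$ by $[0,1]$, use the monotonicity of the induced interval homeomorphism, and observe that an orbit converging along the arc to a fixed point forces that fixed point to be non-repelling. The paper streamlines the argument by first reducing (without loss of generality) to the case where $x_0,x_1$ are the only fixed points in $\Gamma$ and then directly exhibiting a non-repelling endpoint, rather than arguing by contradiction via the sign of $\varphi-\id$ on a component of $[0,1]\setminus Z$.
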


\begin{proof}
Note that there are finitely many fixed points of $f$ in $\Gamma$. 
Without loss of generality, we assume that $f$ has no fixed point in $\Gamma\setminus\{x_0,x_1\}$. 
Choose a parameterization $\gamma:[0,1]\rightarrow \Gamma$ of $\Gamma$ with $\gamma(0)=x_0$, and let $F=\gamma^{-1}\circ f|_{\Gamma}\circ \gamma:[0,1]\rightarrow[0,1]$. 
Let $x\in \Gamma\setminus\{x_0,x_1\}$ and let $x'=\gamma(x)$. 
Then $F(x')\neq x'$. 
Without loss of generality, we may assume $F(x')> x'$. 
Inductively, for any $n\in\mathbb{N}$, we have $F^{n+1}(x')>F^n(x')$. 
Because $F$ has no fixed points in $(0,1)$, we have  $\lim_{n\rightarrow\infty}F^n(x')=1$. 
That is $\lim_{n\rightarrow\infty}f^n(x)=x_1$. 
Since $f^n(x)\neq x_1$ for any $n\in\mathbb{N}$, 
the fixed point $x_1$ is non-repelling. 
\end{proof}

\begin{lemma}
\label{total disconnectivity}
Let $f_0\in\Pd$ be a postcritically finite hyperbolic polynomial, and $[x,y]$ be an allowable arc in $K(f_0)$.
Then $[x,y]\cap J(f_0)$ is totally disconnected (or empty).
\end{lemma}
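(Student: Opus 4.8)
\emph{Proof proposal.}
The plan is to argue by contradiction and to reduce to a statement about allowable arcs only. If $[x,y]\cap J(f_0)$ were not totally disconnected, it would have a nondegenerate connected component $C$; since $[x,y]\cap J(f_0)$ is closed, $C$ is a closed connected subset of the arc $[x,y]$, hence a nondegenerate subarc $[p,q]\subseteq[x,y]$ with $[p,q]\subseteq J(f_0)$, and $[p,q]$ is allowable because every subarc of an allowable arc is allowable. So it suffices to show: no nondegenerate allowable arc is contained in $J(f_0)$.

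Assume $[p,q]\subseteq J(f_0)$ is a nondegenerate allowable arc. Since $f_0$ is postcritically finite and hyperbolic, each critical point lies in an attracting Fatou component, so $\crit(f_0)\cap J(f_0)=\emptyset$; in particular $(p,q)\cap\crit(f_0)=\emptyset$. Applying the first part of Lemma \ref{allowable arcs in K(f0)} repeatedly — the hypothesis remains valid because the images stay inside $J(f_0)$, which avoids $\crit(f_0)$ — we get that for every $n\geq 0$ the restriction $f_0^n|_{[p,q]}$ is injective and $f_0^n([p,q])=[f_0^n(p),f_0^n(q)]$ is again a nondegenerate allowable arc contained in $J(f_0)$. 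Next I would bring in hyperbolicity: a hyperbolic polynomial is uniformly expanding on its Julia set, so there are $\varepsilon_0>0$ and $\mu>1$ with $\diam f_0(E)\geq\mu\diam E$ for every connected $E\subseteq J(f_0)$ with $\diam E\leq\varepsilon_0$; replacing $f_0$ by a suitable iterate we may take $\mu\geq 2$ (this changes neither the Fatou components nor the notion of allowable arc). Then the diameters $\diam f_0^n([p,q])$ cannot all stay below $\varepsilon_0$ (they would more than double at each step, yet $J(f_0)$ is bounded); and once $\diam f_0^n([p,q])\geq\varepsilon_0$ one checks, by applying the expansion estimate to a subarc of diameter $\varepsilon_0/2$, that $\diam f_0^{n+1}([p,q])\geq\varepsilon_0$ as well. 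Hence, after replacing $[p,q]$ by a forward image, we may assume $\diam f_0^n([p,q])\geq\varepsilon_0$ for all $n\geq 0$, while $f_0^n|_{[p,q]}$ is still injective with allowable image in $J(f_0)$.

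To finish I would exploit the combinatorial rigidity of this configuration. By the second part of Lemma \ref{allowable arcs in K(f0)}, every interior point $z$ of each arc $f_0^n([p,q])\subseteq J(f_0)$ carries two external rays, one for each of the two homotopy classes of paths to $z$ relative to the arc; selecting one ray from each class yields two ``side angle'' functions along the arc. A nested-sector argument (an external ray landing at an interior point of a sector is contained in that sector, since distinct external rays are disjoint) shows that these angle functions are monotone along $[p,q]$ and along each $f_0^n([p,q])$. Since $f_0$ acts as $t\mapsto dt$ on external angles, the angular interval subtended by $f_0^n([p,q])$ from a given side is the $d^n$-fold blow-up of the one subtended by $[p,q]$, and for large $n$ this would exceed the total available angle $1$ — a contradiction. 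Equivalently, one can extract a Hausdorff limit of the arcs $f_0^n([p,q])$, use recurrence together with Lemma \ref{non-repelling point on arc} to produce an iterate $f_0^k$ and a subarc $\Gamma\subseteq J(f_0)$ mapped over itself by $f_0^k$ with a fixed endpoint, hence a non-repelling periodic point in $J(f_0)$ — impossible, since all periodic points of a hyperbolic polynomial in its Julia set are repelling.

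The main obstacle is precisely this last step. The side angle functions are in general \emph{discontinuous}: they jump across every nontrivial limb attached to the arc, exactly as in \eqref{limb1}, so their ranges need not contain an interval and the naive ``multiply the angular width by $d$'' argument does not apply verbatim. The careful argument must therefore control these limbs — for instance by working with \emph{minimal} sectors and using that a sufficiently small sector $\overline{S_{f_0}(\cdot,\cdot)}$ avoids the finitely many critical points, so that Lemma \ref{sector with no critical point} applies and the angular width genuinely gets multiplied by $d$ under $f_0$; Lemma \ref{local connectivity}(2), bounding the number of large Fatou components, is useful here. Alternatively, routing everything through Lemma \ref{non-repelling point on arc} moves the difficulty to extracting a genuine $f_0^k$-invariant subarc from the forward orbit of $[p,q]$, which again requires the expansion estimate and the finiteness of the postcritical combinatorics.
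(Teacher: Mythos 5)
Your reduction (pass to a nondegenerate allowable subarc $[p,q]\subset J(f_0)$) and your use of Lemma \ref{allowable arcs in K(f0)}(1) to get that every $f_0^n$ is injective on the arc with allowable image agree with the paper. But the proof is not complete where it matters, and you say so yourself: neither of your two proposed finishes is carried out. The angular--width argument is conceded to fail as stated (the side--angle functions jump across limbs, so nothing forces the subtended angle set to be an interval whose length is multiplied by $d$), and the fallback is only a plan. Worse, the fallback as written would not close the argument even if executed: an iterate mapping a subarc $\Gamma\subset J(f_0)$ \emph{over} itself with a fixed endpoint is perfectly compatible with all fixed points on $\Gamma$ being repelling (this is exactly the expanding picture, and it is how Proposition \ref{two sequences of external angles} \emph{produces} repelling points). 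To invoke Lemma \ref{non-repelling point on arc} you need an iterate mapping the arc homeomorphically \emph{onto} (or into) itself, i.e.\ you need two points of the arc that are preperiodic, and your expansion estimate gives no mechanism for finding them. That is the genuine gap.

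The paper's key idea, which is absent from your proposal, is combinatorial rather than metric: for $z$ in the interior of the arc the incidence $\nu(z)$ (number of external rays landing at $z$) is finite and at least $2$ by Lemma \ref{allowable arcs in K(f0)}(2); using that $K(f_0)\setminus J(f_0)$ is dense in $K(f_0)$ (hyperbolicity), one shows no subarc can have constant incidence $2$ --- a Fatou point chosen inside the two maximal sectors at the endpoints forces, via Lemma \ref{allowable arcs in K(f0)}(2) again, a third ray at some interior point. Hence the arc contains two points of incidence $\geq 3$; such branch points are preperiodic by \cite[Proposition 3.6]{PoirierHubbardTree}. Passing to an iterate $g=f_0^n$ fixing their images $w_1,w_2$, injectivity gives a homeomorphism $g:[w_1,w_2]\rightarrow[w_1,w_2]$ with $g(w_1)=w_1$, and Lemma \ref{non-repelling point on arc} (or the shrinking lemma) yields a non-repelling periodic point in $J(f_0)$, contradicting hyperbolicity. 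Note also that the uniform-expansion detour you develop is unnecessary once this is in place; if you want to salvage your route, the step you must supply is precisely a replacement for the branch-point/preperiodicity argument.
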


\begin{proof}
Suppose $[x,y]$ has a nontrivial subarc included in $J(f_0)$; we will find a contradiction.
Without loss of generality we may assume $[x,y]\subset J(f_0)$.

For $z\in J(f_0)$, the \emph{incidence} $\nu(z)$ of $K(f_0)$ at $z$ is the number of external rays landing at $z$.
Then $\nu(z)$ is finite (see \cite{Kiwi02}), 
and counts the number of connected components of $K(f_0)\setminus\{z\}$ (see \cite[Corollary 7.1]{McM}). 
By Lemma \ref{allowable arcs in K(f0)}(\ref{allowable arcs in K(f0)-2}),  we have $\nu(z)\geq 2$ for any $z\in(x,y)$. 
We claim that there is no subarc of $[x,y]$ with constant incidence $2$. 

Assume $[z_1,z_2]$ is a subarc of $[x,y]$ with constant incidence $2$. 
For $k=1,2$, let $S_k$ denote the sector with $r(S_k)=z_k$ and $(z_1,z_2)\subset S_k$. 
Since $f_0$ is hyperbolic, the set $K(f_0)\setminus J(f_0)$ is dense in $K(f_0)$. 
Choose $z_3\in (K(f_0)\setminus J(f_0))\cap S_1\cap S_2$. 
Then $[z_1,z_2]\cap[z_1,z_3]=[z_1,z_4]$ for some $z_4\in [z_1,z_2]$. 
Without loss of generality, we assume $z_4\in(z_1,z_2)$. 
Let $S=S_{f_0}(\theta,\theta')$ be a sector with $r(S) = z_4$. 
Then there is at least one connected component of $\mathbb{C}\setminus\partial S$ containing at least two points in $\{z_1,z_2,z_3\}$. 
Without loss of generality, we assume $z_1,z_2\in S$. 
Then $R_{f_0}(\theta)$ and $R_{f_0}(\theta')$ are homotopic paths to $z_4$ relative to $[z_1,z_2]$. 
By Lemma \ref{allowable arcs in K(f0)}(\ref{allowable arcs in K(f0)-2}), 
there exists another external ray of $f_0$ landing at $z_4$ in $S$, which contradicts $\nu(z_4)=2$. 
This shows the claim. 

Now we can choose $z_1\neq z_2$ in $[x,y]$ with incidences at least $3$.
By \cite[Proposition 3.6]{PoirierHubbardTree}, the points $z_1$ and $z_2$ are preperiodic. 
Note that $J(f_0)\cap\crit(f_0)=\emptyset$. 
By Lemma \ref{allowable arcs in K(f0)}(\ref{allowable arcs in K(f0)-1}), the restriction 
$f_0^n|_{[x,y]}$ is injective and its image $f_0^n([x,y])$ is an allowable arc for any $n\in\mathbb{N}$. 
Choose an $n\in\mathbb{N}$ so that $w_1 := f_0^n(z_1)$ and $w_2 := f_0^n(z_2)$ are fixed points of $g := f_0^n$.
Then $g:[w_1,w_2] \rightarrow [w_1,w_2]$ is a homeomorphism with $g(w_1)=w_1$. 
This contradicts Lemma \ref{non-repelling point on arc}, 
or the shrinking lemma (see \cite{LM} or \cite[Lemma 2.9]{CT}). 
\end{proof}

\subsection{Minimal sectors with fixed angles}
Let $f_0\in\Pd$ be a postcritically finite polynomial. 
In this subsection, we will find proper allowable arcs in $K(f_0)$, 
and then consider the restriction of $f_0$ to these arcs. 

\begin{lemma}
\label{common part}
Let $f_0\in\Pd$ be a postcritically finite polynomial, and
$S={{S}}_{f_0}(\theta,\theta')$ be a minimal sector with root $x$. 
Then for any $y_1,y_2\in S\cap K(f_0)$, we have 
$[x,y_1]\cap[x,y_2]=[x,y]$ for some $y\in S$. 
\end{lemma}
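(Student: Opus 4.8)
The plan is to argue that the allowable arcs $[x,y_1]$ and $[x,y_2]$, which both start at the root $x$ of the minimal sector $S$, must coincide along an initial segment; the endpoint $y$ of that common segment is the point we seek, and it lies in $S$ because both arcs (except for their initial point $x$) are contained in $S$. First I would record the basic fact that since $[x,y_i]$ is an allowable arc in $K(f_0)$ emanating from $x$, and $x$ is the root of a \emph{minimal} sector, the arc must enter $S$ immediately: there are no external rays landing at $x$ between angles $\theta$ and $\theta'$, so the portion of $K(f_0)$ accessible from $x$ on the ``$S$-side'' lies entirely in $\overline{S}$, and in fact $(x,y_i]\subset S$. (If $y_i$ itself were a boundary point of $S$ — lying on $R_{f_0}(\theta)$ or $R_{f_0}(\theta')$ — that would force $y_i\notin K(f_0)$ since external rays are in the basin of infinity, contradicting $y_i\in S\cap K(f_0)$; so actually $y_i$ is in the open sector.)

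Next I would invoke the uniqueness of allowable arcs: by \cite[Proposition 2.6]{Orsay}, any two points of $K(f_0)$ are joined by a \emph{unique} allowable arc, and every subarc of an allowable arc is again allowable. Consider the intersection $[x,y_1]\cap[x,y_2]$. This is a closed subset of each arc containing $x$. The key step is to show it is connected, hence a subarc $[x,y]$ for some $y$. Suppose $z\in [x,y_1]\cap[x,y_2]$. Then the subarc of $[x,y_1]$ from $x$ to $z$ and the subarc of $[x,y_2]$ from $x$ to $z$ are both allowable arcs joining $x$ to $z$, so by uniqueness they are equal. Thus every point $z$ in the intersection has the property that the \emph{entire} initial segment $[x,z]$ (taken in either arc — they agree) lies in the intersection. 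This exactly says $[x,y_1]\cap[x,y_2]$ is an initial segment of $[x,y_1]$, i.e. of the form $\{x\}\cup\{z: z\text{ precedes or equals some sup}\}$; being closed, it equals $[x,y]$ where $y$ is the point of the intersection farthest from $x$ (well-defined by compactness, parameterizing $[x,y_1]$ by $[0,1]$ from $x$ and taking $y$ corresponding to the supremum of the parameter values realized in the intersection — one checks this supremum is attained since the intersection is closed).

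Finally $y\in S$: we have $y\in [x,y_1]$, and $y\neq x$ would give $y\in (x,y_1]\subset S$; if instead $y=x$ then trivially $[x,y]=\{x\}$ and we take $y=x$, noting $x=r(S)$ which we may regard as lying in $\overline S$ (if the statement intends $y\in S$ strictly, one observes that when $y_1,y_2$ are both in the open sector the intersection is a nondegenerate arc unless $[x,y_1]$ and $[x,y_2]$ diverge immediately at $x$, which can still happen — so the honest reading is $y\in \overline S$, or $y\in S\cup\{x\}$; I would state it as $y\in\overline S$ to be safe, or match whatever convention the paper uses for $S$ at its root). The main obstacle I anticipate is purely bookkeeping: making the ``initial segment'' argument rigorous requires choosing compatible parameterizations of the two arcs and verifying that the common parameter-domain is a closed initial interval $[0,s_0]$; this is where the uniqueness-of-allowable-arcs lemma does all the real work, since it forces the two parameterizations to literally agree on that interval rather than merely trace overlapping point sets. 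No deep dynamics is needed beyond the facts already assembled in \S\ref{subsection allowable arcs}.
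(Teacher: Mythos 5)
Your proof correctly establishes the structural part of the statement: using uniqueness of allowable arcs you show $[x,y_1]\cap[x,y_2]$ is an initial segment $[x,y]$ of both arcs, and you observe $(x,y_i]\subset S$ (since the arcs can only meet $\partial S\subset R_{f_0}(\theta)\cup R_{f_0}(\theta')\cup\{x\}$ at $x$). But you stop short of proving the actual conclusion. The lemma asserts $y\in S$, and since $S$ is an \emph{open} sector with $x=r(S)\in\partial S$, this forces $y\neq x$, i.e., the intersection is a nondegenerate arc. You explicitly concede the degenerate case might occur (``which can still happen'') and propose weakening the conclusion to $y\in\overline{S}$, but the nondegenerate version is exactly what is claimed and what later arguments rely on (e.g.\ the construction in Lemma~\ref{growing arc} needs a nontrivial arc $[x,y]$).

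What you are missing is that the \emph{minimality} of $S$ is a substantive hypothesis, not scenery, and it is precisely what rules out the degenerate case. The paper argues by contradiction: if $[x,y_1]\cap[x,y_2]=\{x\}$, then $[y_1,y_2]=[x,y_1]\cup[x,y_2]$ is an allowable arc through $x$, and since $(x,y_1],(x,y_2]\subset S$ while $R_{f_0}(\theta)$ and $R_{f_0}(\theta')$ bound $S$, the two rays approach $x$ in the \emph{same} local component of $\mathbb{C}\setminus[y_1,y_2]$, hence are homotopic as paths to $x$ relative to $[y_1,y_2]$. By Lemma~\ref{allowable arcs in K(f0)}(\ref{allowable arcs in K(f0)-2}), the \emph{other} homotopy class (the one between the two prongs, inside $S$) is also represented by an external ray landing at $x$, whose angle then lies in $(\theta,\theta')$ — contradicting minimality. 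Without this step your argument would equally well ``prove'' the lemma for a non-minimal sector, where the conclusion genuinely fails: if a ray $R_{f_0}(\tau)$ with $\tau\in(\theta,\theta')$ lands at $x$, choosing $y_1,y_2$ on opposite sides of it gives $[x,y_1]\cap[x,y_2]=\{x\}$.
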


\begin{proof}
Assume $[x,y_1]\cap[x,y_2]=\{x\}$; we will find a contradiction.  
Then $R_{f_0}(\theta)$ and $R_{f_0}(\theta')$ are homotopic paths to $x$ relative to $[y_1,y_2]=[x,y_1]\cup[x,y_2]$. 
By Lemma \ref{allowable arcs in K(f0)}(\ref{allowable arcs in K(f0)-2}), 
there exists another external ray of $f_0$ landing at $x$ in $S$.
This contradicts the minimality of $S$. 
So $[x,y_1]\cap[x,y_2]$ is an allowable arc. 
\end{proof}

\begin{lemma}
\label{growing arc}
Let $f_0\in\Pd$ be a postcritically finite polynomial, and
$S={{S}}_{f_0}(\theta,\theta')$ be a minimal sector with root $x$. 
Suppose $d\cdot\theta\equiv\theta\modZ$. 
Then there exists a point $y\in S\cap K(f_0)$ such that $[x,y]\cap\crit(f_0)=\emptyset$, $[x,y]\subset [x,f_0(y))$, 
and $f_0$ has no fixed point in $(x,y]$. 
\end{lemma}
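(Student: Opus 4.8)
The plan is to realize $x:=r(S)$ as a repelling fixed point, to understand the local picture of $f_0$ there, to pick a short allowable arc from $x$ into $S$ avoiding critical points, fixed points and preimages of $x$, and finally to run an iteration argument that either shows this arc already works or produces a contradiction with $x$ being repelling.

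\smallskip
\noindent\emph{First} I would identify the root. Since $d\cdot\theta\equiv\theta\modZ$, the external ray $R_{f_0}(\theta)$ is invariant and lands at $x$, so $f_0(x)=x$ and $x\in J(f_0)$. As $f_0$ is postcritically finite it has no parabolic cycles and no periodic critical points, hence $x$ is a repelling fixed point with $x\notin\crit(f_0)$. Being a repelling periodic point, only finitely many external rays land at $x$, and $f_0$ permutes these rays cyclically in an orientation-preserving fashion; since $R_{f_0}(\theta)$ is fixed by this permutation, the permutation is the identity. Thus every external ray at $x$ is invariant, in particular the other boundary ray $R_{f_0}(\theta')$ of $S$, and $f_0$ sends the germ of $S$ at $x$ into $S$. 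I would fix a neighbourhood $N$ of $x$ on which $f_0$ is injective and with $f_0(S\cap N)\subseteq S$. Because $S$ is minimal, its two boundary angles are consecutive among the angles landing at $x$, so by the cut-point structure of $K(f_0)$ the sector $S$ contains a connected component $B$ of $K(f_0)\setminus\{x\}$ with $\overline B=B\cup\{x\}$; in particular $S\cap K(f_0)\neq\emptyset$, and every allowable arc $[x,z]$ with $z\in S\cap K(f_0)$ satisfies $(x,z]\subseteq S\cap K(f_0)$ since such an arc cannot meet $\partial S$ away from $x$.

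\smallskip
\noindent\emph{Next} I would choose the initial arc. Let $Z$ be the finite set of all critical points of $f_0$, all fixed points of $f_0$, and all points of $f_0^{-1}(x)$ lying in $S\cap K(f_0)$. Applying Lemma~\ref{common part} finitely many times, $\bigcap_{z\in Z}[x,z]$ is a nondegenerate allowable arc $[x,y']$ (take $y'\in B$ arbitrary if $Z=\emptyset$). I would then pick $y_0\in(x,y')\cap N$, which is nonempty because $(x,y')$ accumulates at $x$. By the defining property of $y'$, the half-open arc $(x,y_0]$ meets no point of $Z$; hence $[x,y_0]\cap\crit(f_0)=\emptyset$, $f_0$ has no fixed point in $(x,y_0]$, and $(x,y_0]\cap f_0^{-1}(x)=\emptyset$. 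Moreover $f_0(y_0)\in S\cap K(f_0)$, since $y_0\in S\cap N$ and $f_0$ preserves $K(f_0)$.

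\smallskip
\noindent\emph{Finally} I would iterate. By Lemma~\ref{common part}, $[x,y_0]\cap[x,f_0(y_0)]=[x,m]$ for some $m\in(x,y_0]$. If $m=y_0$ then $y_0\in[x,f_0(y_0))$ (note $f_0(y_0)\neq y_0$) and $y=y_0$ works. If $m\in(x,y_0)$, then $[x,y_0]\cap\crit(f_0)=\emptyset$ together with Lemma~\ref{allowable arcs in K(f0)}(\ref{allowable arcs in K(f0)-1}) shows that $f_0$ maps $[x,y_0]$ homeomorphically onto the allowable arc $[x,f_0(y_0)]$, so $f_0(m)\in(x,f_0(y_0))$; comparing $m$ and $f_0(m)$ on the arc $[x,f_0(y_0)]$, either $f_0(m)\in(m,f_0(y_0))$ — in which case $[x,m]\subseteq[x,f_0(m))$ and $y=m$ works — or $f_0(m)\in(x,m)$ (the case $f_0(m)=m$ being excluded in the previous step). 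In the remaining case $f_0$ carries the allowable arc $[x,m]$ into its proper subarc $[x,f_0(m)]$ while fixing $x$; parametrising $[x,m]$ by $[0,1]$ this becomes a strictly increasing continuous injection $F:[0,1]\to[0,1)$ with $F(0)=0$. If $F(t)>t$ for some $t\in(0,1)$ the intermediate value theorem gives a fixed point of $f_0$ in $(x,m)\subseteq(x,y_0]$, a contradiction; otherwise $F\le\id$ on $(0,1)$, so the iterates $F^n$ decrease to a fixed point of $F$, which is either positive — again a forbidden fixed point of $f_0$ — or $0$, forcing $f_0^n(z)\to x$ for some $z\in(x,m)$ with $f_0^n(z)\neq x$ for all $n$ (the latter because $(x,m]$ is forward-invariant here and disjoint from $f_0^{-1}(x)$), contradicting that $x$ is repelling. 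Hence this case does not occur, and a point $y$ with the required properties exists.

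\smallskip
The main obstacle is this last case: a priori $f_0$ need not expand $[x,y_0]$ "outward", so one cannot just take $y=y_0$, and the delicate point is to convert the alternative into a genuine contradiction with the repulsivity of $x$ — done by exploiting that $f_0$ is monotone along an allowable arc and running a one-dimensional fixed-point argument. This is also exactly where the hypothesis $d\cdot\theta\equiv\theta\modZ$ enters twice: it makes $x$ a repelling fixed point on $\partial K(f_0)$, and it forces $f_0$ not to push the germ of $S$ off $S$.
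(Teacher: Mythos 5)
Your proposal is correct and follows essentially the same route as the paper: choose a short allowable arc from $x$ into $S$ avoiding critical points, define the desired point (your $m$) as the endpoint of $[x,y_0]\cap[x,f_0(y_0)]$ via Lemma \ref{common part} together with Lemma \ref{allowable arcs in K(f0)}, and rule out the contracting alternative $[x,f_0(y)]\subset[x,y)$ because it would force the fixed point $x$ to be non-repelling. The only minor difference is that you exclude fixed points in $(x,y]$ beforehand, by keeping the initial arc away from the finitely many fixed points and preimages of $x$, whereas the paper derives this afterwards from Lemma \ref{non-repelling point on arc} and postcritical finiteness.
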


\begin{proof}
Let $y_1\in S\cap K(f_0)$. 
Choose $y_2\in (x,y_1]$ near $x$ such that $[x,y_2]\cap\crit(f_0)=\emptyset$. 
By Lemma \ref{allowable arcs in K(f0)}(\ref{allowable arcs in K(f0)-1}), 
the restriction $f_0:[x,y_2]\rightarrow [x,f_0(y_2)]$ is a homeomorphism.  
It follows from the assumption $d\cdot\theta\equiv\theta\modZ$ that $f_0(y_2)\in S\cap K(f_0)$. 
By Lemma \ref{common part}, there is a point $y\in S$ such that 
$[x,y_2]\cap[x,f_0(y_2)]=[x,y]$.
It follows that $y,f_0(y)\in[x,f_0(y_2)]$ and 
$[x,y]\cap\crit(f_0)=\emptyset$. 

Assume there is a fixed point $z\in(x,y]$. 
Then $f_0:[x,z]\rightarrow[x,z]$ is a homeomorphism with $f_0(x)=x$.  
By Lemma \ref{non-repelling point on arc}, there is a non-repelling fixed point $w\in[x,z]$. 
Since $f_0$ is postcritically finite, we have $w\in\crit(f_0)$, which  contradicts $[x,y]\cap\crit(f_0)=\emptyset$. 
Therefore $f_0$ has no fixed point in $(x,y]$. 
This implies that either $[x,y]\subset[x,f_0(y))$ or $[x,f_0(y)]\subset[x,y)$. 

Assume $[x,f_0(y)]\subset[x,y)$. 
Since $f_0$ has no fixed point in $(x,y]$, similar to the proof of Lemma \ref{non-repelling point on arc}, we see that $x$ is non-repelling. 
This contradicts that $x$ is repelling. 
Therefore $[x,y]\subset[x,f_0(y))$. 
The proof is completed. 
\end{proof}

With the notations and assumptions as Lemma \ref{growing arc}, 
let $$X(S)=\bigcup_{n\in\mathbb{N}} f_0^{n}([x,y]).$$
Set $g= f_0|_{[x,y]}:[x,y]\rightarrow [x,f_0(y)]$ and $y_n=g^{-n}(y)$ for each $n\geq 1$. 
Since there is no fixed point of $f_0$ in $(x,y]$, we have $y_n\rightarrow x$ as $n\rightarrow\infty$. 
This implies $X(S)$ is independent of $y$. 

A subset $X\subset K(f_0)$ is \emph{allowable} if $x_1,x_2\in X$ implies $[x_1,x_2]\subset X$. 
The following lemma shows that $X(S)$ is allowable. 
We call $X(S)$ the \emph{allowable set induced by $S$}. 

\begin{lemma}
[Allowable set]
\label{X(S) is allowable}
Let $f_0\in\Pd$ be a postcritically finite polynomial, and
$S={{S}}_{f_0}(\theta,\theta')$ be a minimal sector with $d\cdot\theta\equiv\theta\modZ$. 
Then $f_0(X(S))=X(S)$ and $X(S)$ is allowable. 
\end{lemma}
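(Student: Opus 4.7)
The plan separates the two assertions. For the invariance $f_0(X(S))=X(S)$, I would first observe that $\{f_0^n([x,y])\}_{n\geq 0}$ is nested increasingly. Indeed, Lemma \ref{growing arc} supplies $(x,y)\cap\crit(f_0)=\emptyset$ together with $[x,y]\subset[x,f_0(y))$, so Lemma \ref{allowable arcs in K(f0)}(\ref{allowable arcs in K(f0)-1}) identifies $f_0([x,y])$ with the allowable arc $[x,f_0(y)]$ and yields $[x,y]\subsetneq f_0([x,y])$. Iterating $f_0$ preserves this containment, so $f_0^n([x,y])\subset f_0^{n+1}([x,y])$ for every $n\geq 0$, and consequently
\[
f_0(X(S))=\bigcup_{n\geq 0}f_0^{n+1}([x,y])=\bigcup_{n\geq 0}f_0^n([x,y])=X(S).
\]

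For the allowability, my proposal is to prove by induction on $n\geq 0$ that each compactum $Y_n:=f_0^n([x,y])$ is allowable as a subset of $K(f_0)$. The nested inclusions $Y_n\subset Y_{n+1}$ then promote this to allowability of the union $X(S)=\bigcup_n Y_n$ automatically: for any $z_1,z_2\in X(S)$ some $Y_N$ contains both of them, whence $[z_1,z_2]\subset Y_N\subset X(S)$. The base case $Y_0=[x,y]$ is an allowable arc, hence allowable as a set. For the inductive step from $Y_n$ to $Y_{n+1}=f_0(Y_n)$, I would fix $z_1,z_2\in Y_{n+1}$, choose preimages $z_1',z_2'\in Y_n$, invoke the inductive hypothesis to place $[z_1',z_2']\subset Y_n$, and then verify the critical inclusion $[z_1,z_2]\subset f_0([z_1',z_2'])\subset Y_{n+1}$. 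The clean subcase $(z_1',z_2')\cap\crit(f_0)=\emptyset$ is immediate from Lemma \ref{allowable arcs in K(f0)}(\ref{allowable arcs in K(f0)-1}), which identifies $f_0([z_1',z_2'])=[z_1,z_2]$.

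The main obstacle is the general subcase in which $[z_1',z_2']$ passes through critical points of $f_0$, so $f_0|_{[z_1',z_2']}$ fails to be injective and the image $f_0([z_1',z_2'])$ is a ``tree of allowable arcs'' rather than a single arc. To argue that this image tree still contains the allowable arc $[z_1,z_2]$, my plan is to partition $[z_1',z_2']$ at its finitely many critical points $c_1,\dots,c_k$ into critical-point-free allowable subarcs, map each piecewise via Lemma \ref{allowable arcs in K(f0)}(\ref{allowable arcs in K(f0)-1}) onto an allowable arc, and then appeal to the local connectivity of $K(f_0)$ (Lemma \ref{local connectivity}) together with the uniqueness of the allowable arc between any two points of $K(f_0)$. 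The Böttcher-coordinate compatibility of $f_0$ with the radial foliation on bounded Fatou components should be the decisive tool: it forces the image tree to enter and exit each Fatou component only along radial rays, so the tree meets each Fatou component in at most a finite union of radial segments, and uniqueness of allowable arcs then confines $[z_1,z_2]$ inside the tree.
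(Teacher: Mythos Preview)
Your overall architecture matches the paper's: both show that the sets $Y_n=f_0^n([x,y])$ are nested and allowable, then pass to the union. The invariance argument is the same. The difference is in how the inductive step ``$Y_n$ allowable $\Rightarrow$ $Y_{n+1}=f_0(Y_n)$ allowable'' is handled. The paper dispatches it in one sentence by citing \cite[Corollary~2.8]{PoirierHubbardTree}, which asserts exactly that the $f_0$-image of an allowable set is allowable; you instead try to prove this by hand.

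Your hand-proof is on the right track but the final step is not yet an argument. After partitioning $[z_1',z_2']$ at critical points and mapping each piece to an allowable arc via Lemma~\ref{allowable arcs in K(f0)}(\ref{allowable arcs in K(f0)-1}), you obtain $f_0([z_1',z_2'])$ as a connected finite union of allowable arcs containing $z_1,z_2$. The assertion ``uniqueness of allowable arcs then confines $[z_1,z_2]$ inside the tree'' is precisely what needs proof, and your B\"ottcher-coordinate remark about radial rays does not supply it. What is actually needed is the median (or $\mathbb{R}$-tree) property of allowable arcs: for any $a,b,c\in K(f_0)$ one has $[a,b]\subset [a,c]\cup[c,b]$. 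Granting this, a straightforward induction on the number of arcs shows that any connected finite union of allowable arcs is allowable, and your step goes through. But establishing that median property from scratch is essentially the content of Poirier's lemma, so you would be reproving the cited result rather than bypassing it.
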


\begin{proof}
Choose $[x,y]$ as in Lemma \ref{growing arc}, and let $X_n=f_0^{n}([x,y])$ for $n\in\mathbb{N}$. 
Because the image of an allowable set is allowable (see \cite[Corollary 2.8]{PoirierHubbardTree}), 
the set $X_n$ is allowable for any $n\in\mathbb{N}$. 
Since $X_0\subset X_1$, we have $X_0\subset X_1\subset X_2\subset\cdots$. 
Therefore $f_0(X(S))=X(S)$ and $X(S)$ is allowable. 
\end{proof}

\begin{example}
\label{example of f0}
Let $(f_0,\mathbf{c})\in\widehat{\mathcal{P}}^5$ satisfy  $f_0(c_1)=c_1$, $f_0^2(c_2)=c_2$, 
$f_0(c_3)=c_3$ and $f_0^2(c_4)=c_3$ as in Figure \ref{figure some minimal sectors of f0}. 
\footnote{
$(\mathbf{c},f_0(0))\approx(0.5559 + 0.4849i,~ -0.4877 + 0.5181i,~ 0.5351 - 0.4268i,~ 
-0.6033 - 0.5762i,\\ -0.1050 - 0.0158i)$.
}
Then $f_0$ is a postcritically finite hyperbolic polynomial. 
Let $x$ denote the landing point of $R_{f_0}(0)$. 
Then we have
\begin{align*}
&X({{S}}_{f_0}(0,1/4))=[x,c_1),\\
&X({{S}}_{f_0}(1/4,1/2))=[x,f_0(c_2)],\\
&X({{S}}_{f_0}(1/2,3/4))=[f_0(c_4),c_3],\\
&X({{S}}_{f_0}(3/4,0))=[x,c_3).
\end{align*}

\begin{figure}[ht]
\centering
\includegraphics{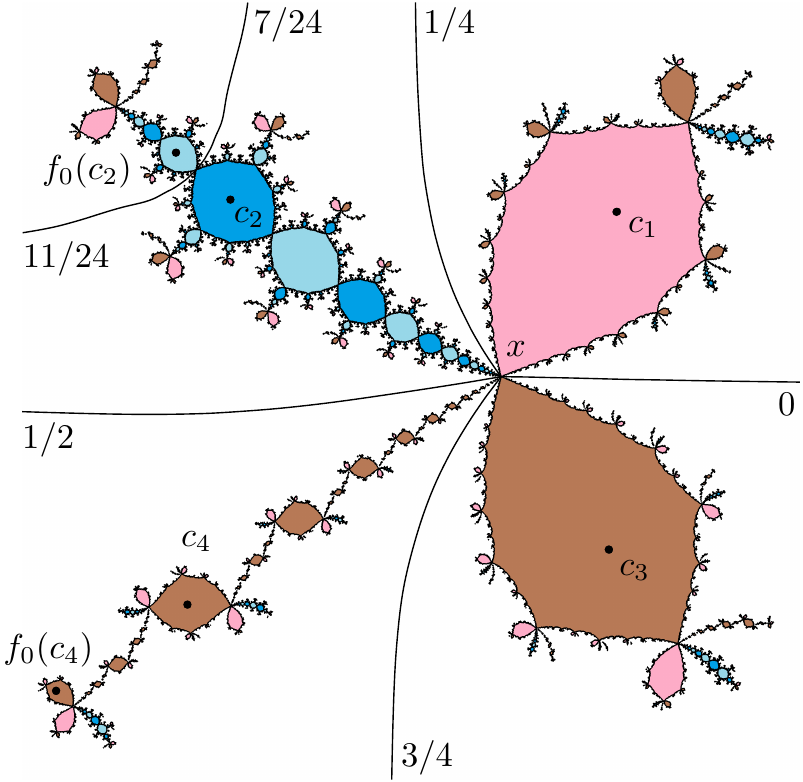}
\caption{Some minimal sectors of $f_0$.}
\label{figure some minimal sectors of f0}
\end{figure}
\end{example}

\begin{proposition}
\label{two sequences of external angles}
Let $f_0\in\Pd$ be a postcritically finite polynomial, and let 
$S={{S}}_{f_0}(\theta,\theta')$ be a minimal sector with $d\cdot\theta\equiv\theta\modZ$. 
Suppose $X(S)\not\subset\overline{S}$. 
Then there exist two sequences $\{\theta_n\}_{n\in\mathbb{N}},\{\theta'_n\}_{n\in\mathbb{N}}$ of angles such that 
\begin{enumerate}
\item $R_{f_0}(\theta_n)$ and $R_{f_0}(\theta'_n)$ land at a common periodic point;

\item $\theta,\theta_{n+1},\theta_{n},\theta_{n}',\theta_{n+1}',\theta'$ are in positive cyclic order;

\item $\theta_n\rightarrow\theta$ and $\theta'_n\rightarrow\theta'$ as $n\rightarrow\infty$. 
\end{enumerate}
\end{proposition}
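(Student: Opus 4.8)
The plan is to exploit the hypothesis $X(S)\not\subset\overline{S}$ to locate a point of the Hubbard-tree-like set $X(S)$ that lies outside $\overline{S}$, then pull it back along the dynamics so as to produce the required approach to the root $x$ from both boundary rays of $S$. Recall from Lemma \ref{growing arc} that we may fix $y\in S\cap K(f_0)$ with $[x,y]\cap\crit(f_0)=\emptyset$, $[x,y]\subset[x,f_0(y))$, and $f_0$ having no fixed point in $(x,y]$, and that $X(S)=\bigcup_n f_0^n([x,y])$ with $y_n:=g^{-n}(y)\to x$, where $g=f_0|_{[x,y]}$. Since $X(S)\not\subset\overline{S}$, some iterate $f_0^{N}([x,y])$ contains a point $z^\ast\notin\overline{S}$; because $[x,y]$ is an allowable arc and $f_0^N|_{[x,y]}$ is injective with allowable image (Lemma \ref{allowable arcs in K(f0)}(\ref{allowable arcs in K(f0)-1}), noting $J(f_0)\cap\crit(f_0)=\emptyset$ along the relevant arc), the arc $f_0^N([x,y])=[x',z^\ast]$ with $x'=f_0^N(x)=x$ (as $x$ is fixed) is an allowable arc from $x$ to $z^\ast$ leaving $\overline{S}$. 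So there is a first point $p$ on $[x,z^\ast]$ lying on $\partial S=R_{f_0}(\theta)\cup R_{f_0}(\theta')\cup\{x\}$; since $p\in K(f_0)$ and the open rays miss $K(f_0)$, we get $p=x$, which is absurd unless the arc $[x,z^\ast]$ meets $\partial S$ only at $x$ — hence $[x,z^\ast]$ exits $\overline S$ immediately at $x$, i.e. $z^\ast$ and the points of $(x,z^\ast)$ near $x$ lie outside $\overline S$. This forces the allowable arc $[x,y_n]$ (inside $\overline S$) and $[x,z^\ast]$ to share only $\{x\}$ for $n$ large, so by Lemma \ref{allowable arcs in K(f0)}(\ref{allowable arcs in K(f0)-2}) applied to the arc $[y_n,z^\ast]=[x,y_n]\cup[x,z^\ast]$ and the homotopy class of paths to $x$ between them, both $R_{f_0}(\theta)$ and $R_{f_0}(\theta')$ represent that class — giving no new ray at $x$ directly, but showing the key separation.

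The substantive step is the construction of the sequences. Set $g$ as above and $y_n=g^{-n}(y)\in(x,y]$; then $[x,y_n]\subset\overline S$ shrinks to $x$. For each large $n$ consider the allowable arc $[y_n,z^\ast]$. By Lemma \ref{allowable arcs in K(f0)}(\ref{allowable arcs in K(f0)-2}), there is an external ray landing at the branch point of the arcs $[x,y_n]$ and $[x,z^\ast]$; since these meet only at $x$, there are at least three rays at $x$, among them $R_{f_0}(\theta), R_{f_0}(\theta')$ and at least one ray $R_{f_0}(\alpha)$ with $\alpha$ separating $y_n$-side from $z^\ast$-side, i.e. $\alpha\notin(\theta,\theta')$ — but this is just the maximality data of $S$, not what we want. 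The correct route is to pull back: pick a point $q\in X(S)\setminus\overline S$ close to $x$ as above, and a periodic point $w$ in the piece of $X(S)$ on the far side (such $w$ exists because $X(S)$, being a finite subtree of the Hubbard tree, contains periodic points in each of its branches at $x$ — alternatively by \cite[Proposition 3.6]{PoirierHubbardTree} every branch point is preperiodic, and we can pass to an iterate to make it periodic). Let $\{\beta^-,\beta^+\}=\{\theta_U^-(w),\theta_U^+(w)\}$ be the two external angles of $w$ bounding its limb (Proposition \ref{limb}); these land at the common periodic point $w$. Now apply $g^{-n}$: the homeomorphism $g=f_0|_{[x,y]}$ extends to the action on the larger invariant arc $X(S)$ near $x$, and since $d\cdot\theta\equiv\theta$, pulling back the pair $(\beta^-,\beta^+)$ by the branch of $f_0^{-n}$ fixing $R_{f_0}(\theta)$ yields angles $\theta_n\searrow\theta$ and, by the same construction on the $\theta'$ side (using that $S$ is minimal so $R_{f_0}(\theta')$ is the adjacent ray), $\theta'_n\nearrow\theta'$; the pulled-back rays land at $g^{-n}(w)$, a common periodic point (preperiodic points pull back to preperiodic, and one passes to a further iterate to restore periodicity, or simply observes $g^{-n}(w)$ is periodic if $w$ was chosen in a $g$-invariant branch). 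The cyclic order $\theta,\theta_{n+1},\theta_n,\theta'_n,\theta'_{n+1},\theta'$ follows because the arcs $[x,g^{-n}(q)]$ are nested and contained in $\overline S$ while separating the later ones from $\partial S$, and $\theta_n\to\theta$, $\theta'_n\to\theta'$ because $g^{-n}(q)\to x$ together with the continuity of external angles along $\partial U$ from \eqref{limb1}.

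The main obstacle I anticipate is making precise the claim that $g^{-n}$ of a periodic point lying in $X(S)\setminus\overline S$ is again periodic and that its external angles converge monotonically to $\theta$ and $\theta'$ respectively. The clean way around this is: instead of pulling back a single periodic point, use that $X(S)$ is a finite $f_0$-invariant subtree whose branch point at $x$ has $X(S)$-valence $\geq 2$ (this is exactly what $X(S)\not\subset\overline S$ encodes, since $[x,y]\subset\overline S$ already gives one branch); each branch carries a periodic point $w$ with $f_0^k(w)=w$; then for $j\geq 0$ the angles of $f_0^{-jk}(w)$ along the branch entering $\overline S$ (resp. the one on the other side of $R_{f_0}(\theta)$ or $R_{f_0}(\theta')$) give the two sequences, periodicity being automatic since $f_0^{k}$ maps the branch into itself homeomorphically with $x$ a repelling fixed point, so $f_0^{-jk}(w)$ are all periodic of period dividing $k\cdot(\text{something})$ — more carefully, they are preperiodic points accumulating at $x$, and one replaces the original period by a common multiple so that all the relevant $R_{f_0}(\theta_n)$, $R_{f_0}(\theta'_n)$ land at points that are periodic after finitely many steps; since the statement only requires each pair to land at a common \emph{periodic} point, we may first replace $f_0$ throughout by a suitable iterate to guarantee this, or simply select the $w$'s to be periodic of one fixed period in the first place by taking them to be landing points of periodic external rays inside the branches. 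I would present the argument in the second form, via finite subtrees and periodic rays, to avoid the bookkeeping of preperiods.
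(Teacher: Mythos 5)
Your proposal circles the right ideas but never supplies the ingredient that makes the paper's argument work: an intermediate-value (first-return) construction of periodic points on the arc $(x,y)$ \emph{inside} $S$. The paper's proof proceeds by observing that, since $X(S)\not\subset\overline{S}$, there is a minimal $m\geq 2$ with $x\in f_0^m((x,y])$, which gives $z_0\in(x,y]$ with $f_0^m(z_0)=x$. Setting $z_n=g^{-n}(z_0)$ and $w_n=g^{-n}(w_0)$ for a suitable $w_0\in(z_1,z_0)$, the paper shows $f_0^{n+m}:[w_n,z_n]\to[f_0^m(w_0),x]$ is a homeomorphism covering its domain, and the intermediate value theorem produces a fixed point $a_n\in(w_n,z_n)$ of $f_0^{n+m}$. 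These $a_n$ are periodic points on $(x,y)\subset S$ converging to $x$, and Lemma~\ref{allowable arcs in K(f0)}(\ref{allowable arcs in K(f0)-2}) applied to the two homotopy classes of paths to $a_n$ relative to $[x,y]$ yields $\theta_n,\theta'_n\in(\theta,\theta')$; the nesting and convergence follow from $a_n\to x$ together with minimality and local connectivity.

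Your approach instead tries to locate a periodic point $w$ in the branch of $X(S)$ at $x$ lying \emph{outside} $\overline{S}$ and then pull it back. This runs into several unaddressed gaps. First, you do not establish that such a $w$ exists; the claim that $X(S)$ is "a finite subtree of the Hubbard tree containing periodic points in each branch" is not justified (and $X(S)$ need not be a finite subtree, nor a subset of the Hubbard tree — cf.\ Example~\ref{example of f0}, where $X(S)=[x,c_1)$ is a half-open arc). Second, even if $w$ were periodic, the pullbacks $g^{-n}(w)$ are generally only \emph{preperiodic}, and your various attempts to restore periodicity (passing to an iterate, choosing $w$ "in a $g$-invariant branch", taking a common multiple of periods) are hand-waved rather than carried out; note the period of the paper's $a_n$ grows with $n$, so there is no single iterate that fixes the bookkeeping. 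Third, and most importantly, for condition (2) you need $\theta_n,\theta'_n$ to lie strictly between $\theta$ and $\theta'$, which requires the periodic point to sit \emph{inside} $S$, not outside $\overline{S}$; your $w$ is in the wrong complementary region. In the final paragraph you gesture at "selecting the $w$'s to be landing points of periodic external rays inside the branches", which, if the branch is taken to be $(x,y)\subset S$, is exactly the content that needs to be proved — and that is precisely what the first-return/IVT construction delivers.
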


\begin{proof}
Let $x={{{r}}}(S)$ and choose $[x,y]$ as in Lemma \ref{growing arc}: 
$[x,y]\cap\crit(f_0)=\emptyset$ and $[x,y]\subset [x,f_0(y))$. 

Since $X(S)\not\subset\overline{S}$, we can take a minimal integer $m\geq2$ such that $x\in f_0^{m}((x,y])$. 
Then there is a unique $z_0\in(x,y]$ satisfying $f_0^{m}((x,z_0))\subset S$ and $f_0^{m}(z_0)=x$. 
Let $g= f_0|_{[x,y]}:[x,y]\rightarrow [x,f_0(y)]$ and $z_1=g^{-1}(z_0)$. 
By Lemma \ref{common part}, we can choose $w_0\in(z_1,z_0)$ near $z_0$ such that 
$\crit(f_0)\cap f_0^{k}([w_0,z_0))=\emptyset$ for any $0\leq k\leq m$, and $f_0^{m}(w_0)\in (x,z_0]$. 

Set $z_n=g^{-n}(z_0)$ and $w_n=g^{-n}(w_0)$ for each $n\in\mathbb{N}$. 
Then $\{[w_n,z_n]\}_{n\in\mathbb{N}}$ are pairwise disjoint. 
Because $f_0$ has no fixed point in $(x,y]$ (see Lemma \ref{growing arc}), the points $w_n$ and $z_n$ tend to $x$ as $n\rightarrow\infty$. 
Thus when $n$ is large enough, we have $[w_n,z_n]\subset (x,f_0^{m}(w_0))$. 
By Lemma \ref{allowable arcs in K(f0)}(\ref{allowable arcs in K(f0)-1}), 
the restriction $f_0^{n+m}: [w_n,z_n] \rightarrow [f_0^{m}(w_0),x]$ is a homeomorphism. 
Parameterizing $[x,f_0^{m}(w_0)]$ by $[0,1]$, and according to the intermediate value theorem, 
the restriction $f_0^{n+m}|_{[w_n,z_n]}$ has a unique fixed point $a_n$. 
Now $\{a_n\}_{n>N}$ is a sequence of periodic points in $(x,y)$ tending to $x$. 

When $n$ is large enough, $a_n$ is repelling, hence $a_n\in J(f_0)$.
By Lemma \ref{allowable arcs in K(f0)}(\ref{allowable arcs in K(f0)-2}), 
there exists a pair of external angles $\theta_n,\theta'_n$ of $a_n$, 
corresponding to the two homotopy classes of paths to $a_n$ relative to $[x,y]$, 
such that $\theta,\theta_n,\theta'_n,\theta'$ are in positive cyclic order.
Then $\theta,\theta_{n+1},\theta_{n},\theta_{n}',\theta_{n+1}',\theta'$ are in positive cyclic order.
Because $K(f_0)$ is locally connected and $S$ is minimal, 
we have $\theta_n\rightarrow\theta$ and $\theta'_n\rightarrow\theta'$ as $n\rightarrow\infty$. 
\end{proof}

\begin{definition}
[Mapping scheme]
\label{mapping scheme}
Following Milnor \cite{MilHyperCompo}, a \emph{mapping scheme} $T$ is a triple $(|T|,\sigma,\delta)$ where $|T|$ is a nonempty finite set,
$\sigma$ is a self-mapping of  $|T|$, and $\delta$ is a map from $|T|$ into the positive integers 
such that for any periodic point $v\in|T|$ of $\sigma$, we have
$\prod_{k=0}^{p-1}\delta(\sigma^{k}(v))\geq2$, 
where $p$ is the period of $v$.
The map $\delta$ is called the \emph{degree function}. 
Let 
\begin{align*}
\Tp &= \{v\in|T|{;~} \text{$v$ is periodic under $\sigma$}\}, \\
\Tn &= \{v\in|T|{;~} \text{$v$ is nonperiodic under $\sigma$}\}. 
\end{align*}

For each $v\in|T|$, let $r_v\geq 0$ be the minimal integer such that $\sigma^{r_v}(v)\in\Tp$. 

Let $f_0\in\Pd$ be a postcritically finite hyperbolic polynomial. 
The \emph{mapping scheme $T(f_0)=(|T(f_0)|,\sigma,\delta)$ associated with $f_0$} can be defined as follows.
Let $|T(f_0)|=\{f_0^{n}(c) {;~}  c\in\crit(f_0),n\geq0\}$. 
For each $v\in|T(f_0)|$, define $\sigma(v)=f_0(v)$ and $\delta(v)=\deg(f_0,v)$. 
\end{definition}

For $f_0$ in Example \ref{example of f0}, we have $\Tfp=\{c_1,c_2,f_0(c_2),c_3\}$ and $\Tfn=\{c_4,f_0(c_4)\}$.

\begin{proposition}
\label{two sequences of external angles 2}
Let $f_0\in\Pd$ be a postcritically finite hyperbolic polynomial, and let 
$S={{S}}_{f_0}(\theta,\theta')$ be a minimal sector with $d\cdot\theta\equiv\theta\modZ$. 
Then $X(S)$ is contained in the closure of a bounded Fatou component of $f_0$ 
if and only if $X(S)\cap\Tfp$ is empty. 

Suppose $X(S)\cap\Tfp\neq \emptyset$. 
Then for each $v\in X(S)\cap\Tfp$, there exist two sequences 
$\{\theta_n\}_{n\in\mathbb{N}},\{\theta'_n\}_{n\in\mathbb{N}}$ of angles  
and some integer $m\in\mathbb{N}$ such that 
\begin{enumerate}
\item $R_{f_0}(\theta_n)$ and $R_{f_0}(\theta'_n)$ land at a common preperiodic point; 

\item $\theta,\theta_{n+1},\theta_{n},\theta_{n}',\theta_{n+1}',\theta'$ are in positive cyclic order; 

\item $\theta_n\rightarrow\theta$ and $\theta'_n\rightarrow\theta'$ as $n\rightarrow\infty$; 

\item ${{S}}_{f_0}(d^{m+n}\theta_n,d^{m+n}\theta'_n)$ is a maximal sector attaching to $U_{f_0}(v)$. 
\end{enumerate}
\end{proposition}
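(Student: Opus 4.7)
The statement has two parts: an iff characterization, and the construction of two sequences of angles in the case $X(S) \cap \Tfp \neq \emptyset$. The framework is the representation of $X(S)$ as an increasing union $\bigcup_n X_n$ with $X_n = f_0^n([x,y])$ of allowable arcs (Lemmas \ref{growing arc}, \ref{X(S) is allowable}), together with the fact that every orbit in $K(f_0)$ is attracted to the superattracting cycle because $f_0$ is postcritically finite and hyperbolic.

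\emph{The iff.} Suppose $X(S) \subset \overline{U}$ for some bounded Fatou component $U$. Since $x = r(S) \in X(S) \cap \partial U$ is a repelling fixed point and $X(S)$ is forward-invariant, $U$ must be fixed by $f_0$ with center a fixed critical point $v$. The allowable structure then forces $X(S)$ to be a radial segment $[x, v)$ in $\overline U$; the only periodic postcritical point in $\overline U$ is $v$, and it is never reached by any $X_n$ (since $f_0^n(y) \to v$ asymptotically while $y \neq v$ by Lemma \ref{growing arc}), so $X(S) \cap \Tfp = \emptyset$. Conversely, if the iterates $f_0^n(y)$ converge to a single fixed critical point $v$, the argument just given shows $X(S) = [x, v) \subset \overline{U_{f_0}(v)}$; otherwise the iterates visit infinitely often a Fatou component other than the one containing $x$ on its boundary, and by forward-invariance of $X(S)$ this forces some periodic postcritical point to lie in $X(S)$, i.e., $X(S) \cap \Tfp \neq \emptyset$.

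\emph{The sequences.} Given $v \in X(S) \cap \Tfp$, first verify $X(S) \not\subset \overline{S}$: if $v \in X(S) \subset \overline S$ then $U_{f_0}(v)$ would be confined to $\overline S$, contradicting the minimality of $S$. Since $v \in X(S)$, there is a minimal $N \geq 1$ with $v \in X_N$, and the arc $X_N = [x, f_0^N(y)]$ enters $\overline{U_{f_0}(v)}$ at a preperiodic boundary point $z^\ast$ which is the root of a maximal sector $S^\ast$ attaching to $U_{f_0}(v)$ with bounding external angles $\theta^-, \theta^+$ (Proposition \ref{limb}). Choose $m \geq N$ so that the backward branch of $f_0^{-m}$ along $X_N$ sends $z^\ast$ to a point in $[x, y]$. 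For each $n \geq 1$, iterate this branch $n$ more times along $[x, y]$ via $g = f_0|_{[x,y]}$ to obtain $a_n \in (x, y)$, and simultaneously pull back $\theta^-, \theta^+$ along the same branch of $m_d^{-(m+n)}$ to obtain $\theta_n, \theta'_n$. Then $R_{f_0}(\theta_n)$ and $R_{f_0}(\theta'_n)$ both land at the preperiodic point $a_n$, yielding (1). The cyclic order (2) holds because the pulled-back rays lie in $S$, and (3) follows from $a_n \to x$ (as $x$ is repelling and $g$ has no fixed point on $(x, y]$) together with local connectivity of $K(f_0)$. For (4): by Lemma \ref{sector with no critical point}, for $n$ large the closure of $S_{f_0}(\theta_n, \theta'_n)$ avoids $\crit(f_0)$, so $f_0^{m+n}$ maps $S_{f_0}(\theta_n, \theta'_n)$ conformally onto $S^\ast = S_{f_0}(d^{m+n}\theta_n, d^{m+n}\theta'_n)$, a maximal sector attaching to $U_{f_0}(v)$.

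\emph{Main obstacle.} The hardest step is arranging the backward branch of $f_0^{-(m+n)}$ near $z^\ast$ to land in $[x, y]$ while coherently pulling back both external angles $\theta^\pm$ to yield $\theta_n, \theta'_n$ landing at the common preperiodic point $a_n$. This demands avoiding critical collisions along the entire backward orbit, which is achieved by imitating the careful choice of the pieces $[w_n, z_n]$ in the proof of Proposition \ref{two sequences of external angles}: one picks $w_0 \in (g^{-1}(z_0^\ast), z_0^\ast)$ so that all iterates $f_0^k([w_0, z_0^\ast))$ for $0 \leq k \leq m$ avoid $\crit(f_0)$, ensuring that the required branch of $f_0^{-(m+n)}$ exists and acts injectively on both the planar preimages and the angular preimages.
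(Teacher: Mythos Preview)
Your overall strategy for the second assertion---pick a preimage of $v$ in $(x,y]$, iterate backward under $g=f_0|_{[x,y]}$, and read off the angles of the maximal sector at each stage---is essentially the paper's argument. But there are two concrete errors and one over-complication worth fixing.

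\textbf{The claim $X(S)\not\subset\overline S$ is false.} Your reasoning is that $v\in\overline S$ forces $U_{f_0}(v)\subset\overline S$, contradicting minimality of $S$. But minimality only says no external ray landing at $r(S)$ lies in $S$; it does not forbid Fatou components inside $S$. In fact Case~2.2 of the proof of Lemma~\ref{key lemma} treats precisely the situation $X(S)\subset\overline S$ with $X(S)\cap\Tfp\neq\emptyset$ (see the example with $S_0=S_{g_0}(1/4,1/2)$, where $X(S_0)=[x,f_0(c_2)]\subset\overline{S_0}$ and $c_2,f_0(c_2)\in\Tfp$). Fortunately you never use this claim afterward, so simply delete it.

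\textbf{The ``main obstacle'' is illusory here.} You worry about critical collisions when pulling back $\theta^\pm$ along $f_0^{-(m+n)}$, and you propose to control this by mimicking the delicate choice of $[w_0,z_0]$ from Proposition~\ref{two sequences of external angles}. This machinery is unnecessary: since $f_0$ is hyperbolic, $\crit(f_0)\cap J(f_0)=\emptyset$, and every boundary point of a Fatou component lies in $J(f_0)$. The paper avoids pulling back angles altogether: it takes $w_0\in(x,y]$ with $f_0^m(w_0)=v$, sets $w_n=g^{-n}(w_0)$, and for each $n\geq 1$ defines $\{z_n\}=[w_n,w_{n-1}]\cap\partial U_{f_0}(w_n)$ and lets $S_n=S_{f_0}(\theta_n,\theta'_n)$ be the maximal sector attaching to $U_{f_0}(w_n)$ at $z_n$. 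Properties (1)--(3) are then immediate from Lemma~\ref{allowable arcs in K(f0)}(\ref{allowable arcs in K(f0)-2}), Lemma~\ref{sectors attaching to U}, and local connectivity; property (4) holds because $f_0^{m+n}$ is a local homeomorphism at $z_n\in J(f_0)$ and carries $(U_{f_0}(w_n),z_n)$ to $(U_{f_0}(v),f_0^{m+n}(z_n))$, so the maximal-sector condition is preserved. No careful avoidance of critical points is needed. (Incidentally, your assertion $X_N=[x,f_0^N(y)]$ can fail once $[x,f_0(y)]$ meets $\crit(f_0)$; see Figure~\ref{figure allowable set}, where $X(S)$ is a genuine tree. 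The paper sidesteps this by working only with the arc $[x,y]$ and its preimages under $g$.)

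\textbf{The converse in the iff.} Your dichotomy on whether $f_0^n(y)$ converges to a single fixed critical point is vague (for instance $y$ might lie in $J(f_0)$). The paper's argument is cleaner: if $X(S)$ is not in a single $\overline U$ then $(x,y]$ meets $J(f_0)$; pick $z\in(x,y]\cap J(f_0)$ and use Lemma~\ref{total disconnectivity} to find a Fatou component $U$ meeting $[x,z]$; its unique preperiodic center $w$ then lies on $[x,z]$, and the forward orbit $\{f_0^n(w)\}\subset X(S)$ eventually lands in $\Tfp$.
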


\begin{proof}
Let $x={{{r}}}(S)$ and choose $[x,y]$ as in Lemma \ref{growing arc}: 
$[x,y]\cap\crit(f_0)=\emptyset$ and $[x,y]\subset [x,f_0(y))$. 

If $X(S)$ is contained in the closure of a bounded Fatou component $U$ of $f_0$, 
then $f_0(U)=U$ and there is a fixed point $w\in U$. 
Thus $X(S)=[x,w)$ is disjoint with $\Tfp$. 

On the other hand, if $X(S)$ is not contained in the closure of any bounded Fatou component of $f_0$, 
then $(x,y]\cap J(f_0)$ is nonempty. 
Choose $z\in(x,y]\cap J(f_0)$. 
Since $f_0$ is hyperbolic, by Lemma \ref{total disconnectivity}, there exists a bounded Fatou component $U$ of $f_0$ such that $[x,z]\cap U$ is nonempty. 
Let $w$ be the only preperiodic point in $U$. 
Then we have $w\in[x,z]$ and $\{f_0^{n}(w)\}_{n\geq0}\subset X(S)$. 
This shows $X(S)\cap\Tfp$ is nonempty. 

Now let us prove the second assertion. 
Given $v\in X(S)\cap\Tfp$. 
Choose $w_0\in(x,y]$ and $m\in\mathbb{N}$ such that $f_0^{m}(w_0)=v$. 
For $n\in\mathbb{N}$, define $w_n=g^{-n}(w_0)$, where $g=f_0|_{[x,y]}:[x,y]\rightarrow[x,f_0(y)]$. 
Because $f_0$ has no fixed point in $(x,y]$ (see Lemma \ref{growing arc}), we have $w_n\rightarrow x$ as $n\rightarrow\infty$. 
When $n\geq 1$, the set $[w_n,w_{n-1}]\cap\partial U_{f_0}(w_n)$ is a singleton, which will be denoted by $\{z_n\}$. 
Let $S_n={{S}}_{f_0}(\theta_n,\theta_n')$ be the maximal sector attaching to $U_{f_0}(w_n)$ with root point $z_n$.  
By Lemma \ref{allowable arcs in K(f0)}(\ref{allowable arcs in K(f0)-2}), the sector $S_n$ is well-defined and $w_{n-1}\in S_n$. 
Then $\theta,\theta_{n+1},\theta_{n},\theta_{n}',\theta_{n+1}',\theta'$ are in positive cyclic order. 
By Lemma \ref{sectors attaching to U}, the root ${{{r}}}(S_n)$ is preperiodic.  
Because $K(f_0)$ is locally connected and $S$ is minimal, we have $\theta_n\rightarrow\theta$ and $\theta'_n\rightarrow\theta'$ as $n\rightarrow\infty$. 
Finally, the sector ${{S}}_{f_0}(d^{m+n}\theta_n,d^{m+n}\theta'_n)$ is a maximal sector attaching to 
$U_{f_0}(f_0^{m+n}(w_n))=U_{f_0}(f_0^{m}(w_0))=U_{f_0}(v)$. 
\end{proof}

\section{Behavior of critical orbits}
\label{section behavior of crit-orbits}
This section is the first step to prove Theorem \ref{top-boundary}. Let $\mathcal{H}$ be a capture 
hyperbolic component in $\mathcal F$. Then $\mathcal{H}$ contains a unique postcritically finite 
(critically marked) polynomial $f_0$, which is called the \emph{center} of $\mathcal{H}$ (see 
\cite{MilHyperCompo}). Recall that $T(f_0)=(|T(f_0)|,\sigma,\delta)$ is the mapping scheme associated 
with $f_0$. 

Replacing $\mathcal{F}$ by the irreducible component of $\mathcal{F}$ containing $\mathcal{H}$ 
(see Lemma \ref{basic facts of H}), 
we may assume $\mathcal{F}$ is irreducible. 
Let $v\in\Tfp$. Choose $1\leq j\leq \ell$ and $n\geq 0$ so that $v = f_0^n(c_j(f_0))$ 
(see (\ref{superattracting relations})). 
Define a continuous function $\zeta_v: \mathcal{F} \rightarrow \mathbb{C}$ 
by $\zeta_v(f) = f^n(c_j(f))$. Then $\zeta_v(f)$ is a superattracting periodic point of $f\in \mathcal{F}$ and 
$\zeta_v(f_0)=v$. 
Clearly $\zeta_v$ is independent of the choices of $j$ and $n$. 
For $f\in \mathcal{F}$, let $U_{f,v} = U_f(\zeta_v(f))$ be the Fatou component containing $\zeta_v(f)$, and let $$A(f) = 
\bigcup_{v\in\Tfp} U_{f,v}.$$ 

The aim of this section is to prove the following: 

\begin{proposition}
[Behavior of critical orbits]
\label{behavior of critical orbits}
Let $\mathcal{H}\subset\mathcal{F}$ be a capture hyperbolic component, and let 
$f\in\partial\mathcal{H}$. Then: 
\begin{enumerate}
\item there exists an $n\geq0$ such that $f^{n}(\crit(f))\subset \overline{A(f)}$;  
\item $f$ has no indifferent cycle; 
\item every critical point in $\partial A(f)$ (if any) is preperiodic. 
\end{enumerate}
\end{proposition}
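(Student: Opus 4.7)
The plan is to reduce Proposition~\ref{behavior of critical orbits} to a central sub-claim: for every $f \in \partial\mathcal{H}$ and every $v \in \Tfp$, every periodic point on $\partial U_{f,v}$ is repelling. Granted this sub-claim, the three statements (1)--(3) follow from short arguments. As a preliminary normalization, I would replace $f_0$ by an iterate $f_0^m$ so that every bounded periodic Fatou component of $f_0$ is invariant; this does not alter the validity of (1)--(3).

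\textbf{The sub-claim.} I would fix $v \in \Tfp$ and a fixed point $x$ of $f_0$ on $\partial U_{f_0, v}$, and take a minimal sector $S = S_{f_0}(\theta,\theta')$ at $x$ bounded by two adjacent fixed external rays, so that $d\theta \equiv \theta$ and $d\theta' \equiv \theta' \pmod{\mathbb{Z}}$. Applying Proposition~\ref{two sequences of external angles 2} splits the analysis into two cases. If $X(S) \cap \Tfp = \emptyset$, then $X(S)$ lies in the closure of a single bounded Fatou component of $f_0$ and the forward $f_0$-orbit of $x$ avoids the critical set, so Lemma~\ref{stability of external rays 0} transfers the landing pattern of $R_{f_0}(\theta), R_{f_0}(\theta')$ stably across $\mathcal{H}$ (using connectedness of $\mathcal{H}$) and then to $\overline{\mathcal{H}}$ by continuity. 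If $X(S) \cap \Tfp \neq \emptyset$, Proposition~\ref{two sequences of external angles 2} supplies sequences $\theta_n \to \theta$, $\theta_n' \to \theta'$ with $R_{f_0}(\theta_n), R_{f_0}(\theta_n')$ landing at common preperiodic points; these common landings are preserved along $\mathcal{H}$ by stability, so passing to a limit $f_k \to f$ with $f_k \in \mathcal{H}$ and applying Lemma~\ref{limit-land-rela}(\ref{limit-land-rela-1}) yields that $R_f(\theta), R_f(\theta')$ land at a common fixed point $x_f$ of $f$. In both cases, the landing of two distinct fixed external rays at $x_f$, combined with the combinatorial structure inherited from $\overline{\mathcal{H}}$, forces $x_f$ to be repelling. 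Iterating under $f^p$ extends the claim to every period.

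\textbf{The three consequences.} For (1), I would choose $N$ with $f_0^N(\crit(f_0)) \subset \Tfp \subset A(f_0)$; on $\mathcal{H}$ the combinatorial type of the critical orbits is locally constant, and for $f \in \partial \mathcal{H}$ approximated by $f_k \in \mathcal{H}$, the uniform B\"ottcher neighborhoods at the superattracting points $\zeta_{v_j}(f_k) \to \zeta_{v_j}(f)$ ensure $f_k^N(c_j(f_k)) \to f^N(c_j(f)) \in \overline{U_{f, v_j}} \subset \overline{A(f)}$. For (2), any indifferent cycle of $f$ would attract a critical orbit by Fatou's theorem; that orbit enters $\overline{A(f)}$ by (1), and $f$-invariance of $\overline{A(f)}$ places the indifferent cycle in $\overline{A(f)} \setminus A(f) = \partial A(f)$, contradicting the sub-claim. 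For (3), a critical point $c \in \partial A(f)$ must be a free critical point $c_j(f)$ with $j > \ell$ (the periodic ones remain interior to $A(f)$), and its $f_0$-orbit lands at some $\zeta_v(f_0) \in \Tfp$ via specific preperiodic boundary points; the rational-ray transfer established in the sub-claim forces $c_j(f)$ to be the landing point of rays at preperiodic angles, hence preperiodic.

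\textbf{Main obstacle.} The most delicate step is excluding the parabolic possibility for $x_f$ in the second case of the sub-claim: two distinct fixed rays landing at $x_f$ are a priori compatible with a parabolic multiplier, so the exclusion must exploit the structure of $\overline{\mathcal{H}}$ --- specifically, that a parabolic degeneration on $\partial U_{f, v}$ would introduce a new attracting basin or petal structure incompatible with $f$ being the limit of maps in the capture hyperbolic component $\mathcal{H}$. The continuity step in (1) likewise requires care, since Fatou components are not generally upper semicontinuous across $\partial \mathcal{H}$; the use of B\"ottcher uniformization at the continuously moving superattracting points $\zeta_v(f)$ sidesteps this issue.
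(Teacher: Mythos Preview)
Your overall strategy—reduce everything to the sub-claim that periodic points on $\partial U_{f,v}$ are repelling, then derive (1)--(3)—matches the paper's. But your proof of the sub-claim has a genuine circularity that the paper works hard to break.

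\textbf{The circularity.} In both of your cases you want to transport co-landing of rays from $g\in\mathcal{H}$ to $f\in\partial\mathcal{H}$, either ``by continuity'' (Case 1) or via Lemma~\ref{limit-land-rela}(\ref{limit-land-rela-1}) (Case 2). Both routes require the landing points of the relevant rays for $f$ to be pre-repelling: Lemma~\ref{stability of external rays} and Lemma~\ref{limit-land-rela}(\ref{limit-land-rela-1}) explicitly assume this. But those landing points sit on $\partial A(f)$, and whether they are repelling or parabolic is exactly the sub-claim. In particular, in your Case 2 the sequences $\theta_n,\theta_n'$ from Proposition~\ref{two sequences of external angles 2} land at \emph{preperiodic} points whose periodic part lies on some $\partial U_{f,v}$; you cannot assert they are pre-repelling without already knowing the sub-claim for that $v$.

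The paper breaks this loop in two ways. First, it introduces a different primary dichotomy, $X(S)\not\subset\overline{S}$ versus $X(S)\subset\overline{S}$. In the former case Proposition~\ref{two sequences of external angles} (not Proposition~\ref{two sequences of external angles 2}) supplies sequences landing at \emph{periodic} points; since $f$ has only finitely many parabolic cycles, for large $n$ these are automatically repelling and Lemma~\ref{limit-land-rela}(\ref{limit-land-rela-1}) applies cleanly. Second, when $X(S)\subset\overline{S}$ and $X(S)\cap\Tfp\neq\emptyset$, the paper runs an induction on $N(\theta,\theta')=\#(S_{f_0}(\theta,\theta')\cap\Tfp)$: sectors with smaller $N$ are handled first, this establishes repelling-ness on $\partial U_{f,v}$ for a carefully chosen $v$, and only then are the preperiodic landing points of Proposition~\ref{two sequences of external angles 2} known to be pre-repelling. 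Your proposal has neither the alternative dichotomy nor the inductive structure. For your Case 1 the paper takes yet another route (its Case 2.1): rather than stability of external rays, it uses the invariance of the functions $\theta_v^\pm$ across $\Hbar$ (Lemma~\ref{invariant external angles}), which is established independently and does not presuppose the sub-claim.

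\textbf{Parabolic exclusion.} You flag this as the main obstacle but offer only a heuristic. The paper formulates it as an explicit property (P2)—no parabolic basin in the sector rooted at $x_f$—and verifies it case by case, using the co-landing sequences $(\theta_n,\theta_n')$ together with the partition~(\ref{partition of the plane}) to rule out any parabolic petal.

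\textbf{Gap in (1).} From $f_k^N(c_j(f_k))\in U_{f_k,v_j}$ and $f_k\to f$ you conclude $f^N(c_j(f))\in\overline{U_{f,v_j}}$, but $\overline{U_{\cdot,v}}$ is not a priori upper-semicontinuous across $\partial\mathcal{H}$. The paper fills this with Proposition~\ref{continuity of Ufv} (continuity of $B_{f,v}^{-1}$ on $\Hbar\times\overline{\mathbb{D}}$), whose proof on the boundary circle again relies on the sub-claim.
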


\subsection{Internal rays}
\label{subsection internal rays}

In the following, we will introduce the notations $B_{f,v}$, $R_{f,v}(t)$, 
$\theta^-_{f,v}(t)$ and $\theta^+_{f,v}(t)$ for $f\in\Hbar$, $v\in\Tfp$ and $t\in\RZ$.  

Fix $v\in\Tfp$. 
Without loss of generality, we assume $v$ is a fixed point under $\sigma$; otherwise, we consider 
the iterate $f^p$ for every $f\in\Hbar$, where $p$ is the period of $v$ under $\sigma$. 
For each $f\in\Hbar$, we have $f(\zeta_v(f))=\zeta_v(f)$ and $\deg(f,\zeta_v(f)) = \deg(f|_{U_{f,v}}) = \delta(v)\geq2$.  
A \emph{B\"ottcher coordinate} $B$ of $U_{f,v}$ is a Riemann mapping $B:U_{f,v} \rightarrow 
\mathbb{D}$ satisfying $B(f(z))=B(z)^{\delta(v)}$.
Define $$
\mathcal{X} = \mathcal{X}(v) = {\left\{(f,\lambda)\in \Hbar\times \partial\mathbb{D}{;~} \text{$\lambda\cdot\phi_{f,v}$ is a 
B\"ottcher coordinate of $U_{f,v}$}\right\}},$$
where $\phi_{f,v}:U_{f,v}\rightarrow \mathbb{D}$ 
is the Riemann mapping normalized by $\phi_{f,v}(\zeta_v(f))=0$ and $\phi'_{f,v}(\zeta_v(f))>0$. 

It follows from $\deg(f,\zeta_v(f)) = \delta(v)$ that 
$$f'(\zeta_v(f)) =\cdots= f^{(\delta(v)-1)}(\zeta_v(f))=0, \quad 
f^{(\delta(v))}(\zeta_v(f))\neq 0.$$ 
Let $B$ be a B\"ottcher coordinate of $U_{f,v}$. 
Clearly $B(\zeta_v(f))=0$. 
Using the Taylor expansions of $f$ and $B$ at $\zeta_v(f)$ in $B(f(z))=B(z)^{\delta(v)}$, 
we have $$B'(\zeta_v(f))^{\delta(v)-1}=\frac{f^{(\delta(v))}(\zeta_v(f))}{\delta(v)!}.$$
Therefore $$
\mathcal{X} ={\left\{(f,\lambda)\in \Hbar\times \partial\mathbb{D}{;~} 
\lambda^{\delta(v)-1} = \frac{ f^{(\delta(v))}(\zeta_v(f)) } {| f^{(\delta(v))}(\zeta_v(f)) |}
\right\}}.$$
This implies the following fact. 

\begin{fact}
\label{covering map pi}
The projection $\pi:\mathcal{X}\rightarrow\Hbar$, $(f,\lambda)\mapsto f$ is a covering map of degree 
$\delta(v)-1$. 
\end{fact}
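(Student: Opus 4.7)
The plan is to recognize $\pi:\mathcal{X}\rightarrow\Hbar$ as the pullback of the standard $(\delta(v)-1)$-fold covering of the circle. First, I introduce the auxiliary map
$$\Psi: \Hbar \to \partial\mathbb{D}, \qquad f \mapsto \frac{f^{(\delta(v))}(\zeta_v(f))}{|f^{(\delta(v))}(\zeta_v(f))|},$$
and check it is well defined and continuous. Well-definedness amounts to the non-vanishing $f^{(\delta(v))}(\zeta_v(f))\neq 0$, which holds for every $f\in\Hbar$ because $\zeta_v(f)$ is a periodic point carrying the attracting cycle of $U_{f,v}$ with local degree exactly $\delta(v)\geq 2$ (this local degree is constant along $\Hbar$: on $\mathcal{H}$ by the capture hyperbolic condition, and on $\partial\mathcal{H}$ by continuity of the critical marking and the combinatorial rigidity of the mapping scheme). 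Continuity of $\Psi$ then follows from the continuity of $\zeta_v:\mathcal{F}\to\mathbb{C}$ noted above and the polynomial dependence of $f^{(\delta(v))}$ on the coefficients of $f$.

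Next, I rewrite the definition of $\mathcal{X}$ obtained at the end of the excerpt as
$$\mathcal{X} = \{(f,\lambda)\in \Hbar\times\partial\mathbb{D} : \lambda^{\delta(v)-1} = \Psi(f)\},$$
which exhibits $\mathcal{X}$ precisely as the fibered product $\Hbar\times_{\partial\mathbb{D}}\partial\mathbb{D}$ with respect to $\Psi$ and the standard power map $\mu:\partial\mathbb{D}\to\partial\mathbb{D}$, $\mu(\lambda)=\lambda^{\delta(v)-1}$. Since $\mu$ is a covering map of degree $\delta(v)-1$, and covering maps are stable under pullback by continuous maps into the base, I conclude that $\pi:\mathcal{X}\rightarrow\Hbar$ is itself a covering map of degree $\delta(v)-1$. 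Concretely, for any $f_\ast\in\Hbar$ one picks a small arc $A\subset\partial\mathbb{D}$ containing $\Psi(f_\ast)$ on which $\mu$ splits into $\delta(v)-1$ local inverses $\nu_1,\dots,\nu_{\delta(v)-1}$; then $\mathcal{N}:=\Psi^{-1}(A)$ is an evenly covered neighborhood of $f_\ast$, with sheets $f\mapsto(f,\nu_j(\Psi(f)))$.

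The main (minor) obstacle is verifying the non-vanishing of $f^{(\delta(v))}(\zeta_v(f))$ uniformly in $f\in\Hbar$; once this is established the rest is a formal consequence of the pullback description. No deeper dynamical input is needed beyond the continuity of $\zeta_v$ and the constancy of the local degree on $\overline{\mathcal{H}}$.
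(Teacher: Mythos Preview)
Your proof is correct and is essentially the approach implicit in the paper: immediately before stating the Fact, the paper derives the explicit description
\[
\mathcal{X}=\Big\{(f,\lambda)\in\Hbar\times\partial\mathbb{D}\;;\;\lambda^{\delta(v)-1}=\tfrac{f^{(\delta(v))}(\zeta_v(f))}{|f^{(\delta(v))}(\zeta_v(f))|}\Big\}
\]
and simply says ``This implies the following fact,'' leaving the pullback argument you wrote out as understood. One small remark: the non-vanishing $f^{(\delta(v))}(\zeta_v(f))\neq 0$ for all $f\in\Hbar$ is already established in the paper directly from $\deg(f,\zeta_v(f))=\deg(f|_{U_{f,v}})=\delta(v)$, so your appeal to ``combinatorial rigidity of the mapping scheme'' on $\partial\mathcal{H}$ is unnecessary (though not incorrect).
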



For $(f,\lambda)\in\mathcal{X}$ and $t\in\RZ$, the \emph{internal ray} of angle $t$ in $U_{f,v}$ is 
defined to be $R_{f,\lambda}(t) = \phi_{f,v}^{-1}( \lambda^{-1} \cdot [0,1)e^{2\pi i t} )$. Since 
$\phi_{f,v}$ can be extended to a homeomorphism from $\overline{U_{f,v}}$ to $\overline{\mathbb{D}}$ 
(see \cite{RY2022}), the internal ray $R_{f,\lambda}(t)$ lands. Define $$\Psi: \begin{cases} \mathcal{X} 
\times \overline{\mathbb{D}} \rightarrow \mathbb{C}, \\ (f,\lambda,z) \mapsto \phi_{f,v}^{-1}(\lambda^{-1}
\cdot z). \end{cases}$$ 

\begin{lemma}
\label{continuity on D-tilde}
The function $\Psi$ is continuous on $\mathcal{X}\times\mathbb{D}$. 
\end{lemma}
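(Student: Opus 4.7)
The plan is to prove the stronger assertion that if $(f_n,\lambda_n)\to(f,\lambda)$ in $\mathcal{X}$, then the univalent maps $g_n := \Psi(f_n,\lambda_n,\cdot)$ converge locally uniformly on $\mathbb{D}$ to $g := \Psi(f,\lambda,\cdot)$; continuity of $\Psi$ at $(f,\lambda,z)\in\mathcal{X}\times\mathbb{D}$ then follows by letting $z_n\to z$ and using that $z$ sits inside a compact subdisk of $\mathbb{D}$.

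First I would verify normality of $\{g_n\}$. Each $g_n$ is univalent on $\mathbb{D}$ with image $U_{f_n,v}\subset K(f_n)$, and the filled Julia set depends upper semi-continuously on a monic centered polynomial (via a uniform escape-radius estimate on a small neighbourhood of $f$), so the $g_n$ are uniformly bounded and hence form a normal family. For the derivatives at the origin, the B\"ottcher equation for $B_{f_n,v}=\lambda_n\phi_{f_n,v}$ gives $(B_{f_n,v})'(\zeta_v(f_n))^{\delta(v)-1}=f_n^{(\delta(v))}(\zeta_v(f_n))/\delta(v)!$; since $\zeta_v$ is continuous on $\overline{\mathcal{H}}$ and $\lambda_n\to\lambda$ pins down the same $(\delta(v)-1)$-th root in the limit, one gets $g_n'(0) = (B_{f_n,v})'(\zeta_v(f_n))^{-1}\to g'(0)$, and in particular $|g_n'(0)|$ stays bounded away from zero.

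Next I would identify every subsequential locally uniform limit $g_\ast$ of $\{g_n\}$ with $g$. By Hurwitz and the derivative lower bound, $g_\ast$ is univalent. The semiconjugacy $f_n\circ g_n = g_n\circ m_{\delta(v)}$, which is a direct consequence of the B\"ottcher equation, passes to the limit to give $f\circ g_\ast = g_\ast\circ m_{\delta(v)}$. Upper semi-continuity of the filled Julia set forces $g_\ast(\mathbb{D})\subset K(f)$, and as $g_\ast(\mathbb{D})$ is open, connected and contains $\zeta_v(f)$, it is contained in $U_{f,v}$. The data $g_\ast(0)=\zeta_v(f)$, $g_\ast'(0)=g'(0)$ together with the conjugation then say that $g_\ast$ is the inverse of a B\"ottcher coordinate of $f$ at $\zeta_v(f)$ with the same leading coefficient as $B_{f,v}^{-1}$; uniqueness of such a B\"ottcher coordinate, first near $0$ and then on all of $\mathbb{D}$ by pulling back through the functional equation using that $\zeta_v(f)$ is the unique critical point of $f|_{U_{f,v}}$, forces $g_\ast = B_{f,v}^{-1}|_{\mathbb{D}} = g$. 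Every subsequential limit therefore equals $g$, so $g_n\to g$ locally uniformly on $\mathbb{D}$.

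The main obstacle is the identification step, which requires three ingredients to cooperate: the normal-family compactness (a uniform escape radius near $f$), non-degeneracy of the limit (the lower bound on $|g_n'(0)|$, itself resting on the continuity of $\zeta_v$ and of the nonvanishing quantity $f^{(\delta(v))}(\zeta_v(f))$ on $\overline{\mathcal{H}}$), and the topological fact that $U_{f,v}$ contains no critical points of $f|_{U_{f,v}}$ other than $\zeta_v(f)$, so that the B\"ottcher coordinate extends unambiguously from a germ at $\zeta_v(f)$ to all of $U_{f,v}$. Once this is settled, passing from locally uniform convergence of $\{g_n\}$ to $g_n(z_n)\to g(z)$ is routine since $z\in\mathbb{D}$ lies in a compact subdisk.
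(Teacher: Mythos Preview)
Your argument is correct, but it takes a more hands-on route than the paper. The paper's proof is a two-line application of the Carath\'eodory kernel convergence theorem: it notes that the pointed domains $(U_{f,v},\zeta_v(f))$ vary continuously in the Carath\'eodory sense as $f$ ranges over $\overline{\mathcal H}$ (this uses only upper semi-continuity of filled Julia sets and the fact that $\zeta_v(f)$ is a superattracting fixed point varying continuously), and then invokes the kernel theorem to get $\phi_{f_n,v}^{-1}\to\phi_{f,v}^{-1}$ locally uniformly on $\mathbb D$; continuity of $\Psi$ follows immediately.

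What you do is essentially reprove that instance of the kernel theorem from scratch: uniform boundedness gives normality, the B\"ottcher relation $(B_{f_n,v})'(\zeta_v(f_n))^{\delta(v)-1}=f_n^{(\delta(v))}(\zeta_v(f_n))/\delta(v)!$ together with $\lambda_n\to\lambda$ pins down $g_n'(0)\to g'(0)\ne 0$, and then the functional equation plus Schwarz's lemma identifies every subsequential limit with $g$. Your approach is more self-contained and has the pleasant feature that the B\"ottcher equation directly supplies the derivative information, so you never have to verify kernel convergence of the domains as a separate step. The paper's version is shorter if one is happy to cite the Carath\'eodory theorem as a black box; yours makes the mechanism explicit. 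Both rely on the same background facts (upper semi-continuity of $K(f)$, continuity of $\zeta_v$, and $\deg(f|_{U_{f,v}})=\delta(v)$ so that $\zeta_v(f)$ is the only critical point in $U_{f,v}$).
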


\begin{proof}
The pointed Jordan domain $(U_{f,v},\zeta_v(f))$ is continuous with respect to $f\in\Hbar$ in the sense 
of \emph{Carath\'eodory kernel convergence} (see, e.g. \cite[\S5.1]{McM}). By the Carath\'eodory kernel 
convergence theorem, for any $f_n\rightarrow f$ in $\Hbar$, the map $\phi_{f_n,v}^{-1}$ converges to 
$\phi_{f,v}^{-1}$ uniformly on compact subsets of $\mathbb{D}$. By the Arzel\`a-Ascoli theorem, this 
is equivalent to the continuity of the function $\Hbar\times\mathbb{D}\rightarrow\mathbb{C}$, $(f,z)
\mapsto \phi_{f,v}^{-1}(z)$. It follows that $\Psi$ is continuous on $\mathcal{X}\times\mathbb{D}$. 
\end{proof}

According to Lemma \ref{continuity on D-tilde}, similar to the stability of external rays (Lemma 
\ref{stability of external rays}), we have: 

\begin{lemma}
[Stability of internal rays]
\label{stability of internal rays-tilde}
Let $(f,\lambda)\in\mathcal{X}$ and $t\in\QZ$. Suppose $R_{f,\lambda}(t)$ lands at a pre-repelling 
point. Then there is a neighborhood $\mathcal{N}\subset \mathcal{X}$ of $(f,\lambda)$ such that the 
restriction $\Psi: \mathcal{N}\times[0,e^{2\pi i t}]\rightarrow\mathbb{C}$ is continuous. 
\end{lemma}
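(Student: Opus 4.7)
The plan is to mirror the proof of Lemma \ref{stability of external rays}, transposing the argument from the basin of infinity to the superattracting basin $U_{f,v}$ and using Koenigs linearization at the repelling landing point. Write $\tilde\psi_{f',\lambda'}(s) := \Psi(f',\lambda', se^{2\pi it})$ for the parameterization of the internal ray at angle $t$. Lemma \ref{continuity on D-tilde} already gives joint continuity for $s\in[0,1)$, so the only new point is continuity at $s=1$. I would first reduce to the case that $t$ is periodic under $m_{\delta(v)}$. If $t$ is strictly pre-periodic, take the minimal $q\geq 1$ with $t_0 := m_{\delta(v)}^q(t)$ periodic; the functional equation $f^q(\Psi(f',\lambda', z)) = \Psi(f',\lambda', z^{\delta(v)^q})$ lets continuity at $s=1$ for the $t$-ray be deduced from that for the $t_0$-ray by path-lifting: for $s_0<1$ close to $1$, the sub-arc of the $t_0$-ray from parameter $s_0^{\delta(v)^q}$ to $1$ is lifted through the branch of $(f'^q)^{-1}$ singled out by the requirement that the lift extend $\tilde\psi_{f',\lambda'}|_{[0,s_0]}$. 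Even if critical points of $f$ lie in the orbit $\{f^i(x_0)\}_{0\leq i<q}$, this branch is pinned down by the proximity of $\tilde\psi_{f',\lambda'}(s_0)$ to $\tilde\psi_f(s_0)$ ensured by Lemma \ref{continuity on D-tilde}.

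For the periodic case (say $t$ has period $p$), the endpoint $x_0 := \tilde\psi_{f,\lambda}(1) \in \partial U_{f,v}$ is a repelling fixed point of $f^p$. By the implicit function theorem, $x_0$ admits a holomorphic continuation $f'\mapsto x(f')$ with $f'^p(x(f')) = x(f')$ and multiplier of modulus strictly greater than $1$. Koenigs linearization then provides a holomorphic family of neighborhoods $W_{f'}\ni x(f')$ of uniform size on which the local inverse branch $g_{f'}$ of $f'^p$ fixing $x(f')$ is defined and maps $W_{f'}$ into itself as a contraction of uniform rate $\eta<1$. Choosing $s_0\in(0,1)$ with $\tilde\psi_f([s_0,1])\subset W_f$, Lemma \ref{continuity on D-tilde} gives $\tilde\psi_{f',\lambda'}(s_0)\in W_{f'}$ for $(f',\lambda')\in\mathcal{X}$ near $(f,\lambda)$, and the functional equation forces $\tilde\psi_{f',\lambda'}(s_0^{1/\delta(v)^{kp}}) = g_{f'}^{k}(\tilde\psi_{f',\lambda'}(s_0))$ for all $k\geq 0$. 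The uniform contraction of $g_{f'}$ traps the entire tail $\tilde\psi_{f',\lambda'}([s_0,1])$ in $W_{f'}$, identifies $\tilde\psi_{f',\lambda'}(1)$ with the unique fixed point $x(f')$ of $g_{f'}$, and yields uniform convergence of $\tilde\psi_{f',\lambda'}$ to $\tilde\psi_{f,\lambda}$ on $[s_0,1]$; combined with Lemma \ref{continuity on D-tilde} on $[0,s_0]$, this gives the required continuity of $\Psi$ on $\mathcal{N}\times[0,e^{2\pi it}]$.

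The main obstacle is the identification of the perturbed landing point with the holomorphic continuation $x(f')$: a priori, the ray $R_{f',\lambda'}(t)$ could terminate at some other fixed point of $f'^p$ on $\partial U_{f',v}$. This is precisely what the Koenigs confinement argument above rules out, using the uniform-size Koenigs neighborhood furnished by the standard holomorphic dependence of linearizations on parameters. The pre-periodic reduction has an analogous subtlety in the presence of critical points along the orbit of $x_0$, but there the correct inverse branch of $f'^q$ is likewise pinned down by the base-point proximity coming from Lemma \ref{continuity on D-tilde}.
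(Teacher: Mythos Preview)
Your proposal is correct and follows essentially the same approach as the paper. The paper does not give a standalone proof of this lemma: it simply remarks that, in view of Lemma~\ref{continuity on D-tilde}, the argument is analogous to Lemma~\ref{stability of external rays} (which in turn rests on the Douady--Hubbard stability result, Lemma~\ref{stability of external rays 0}). Your write-up is a faithful unpacking of that analogy---reduce the preperiodic case to the periodic one by lifting through $f^q$, and handle the periodic case via Koenigs linearization at the repelling landing point together with the interior continuity from Lemma~\ref{continuity on D-tilde}. One minor point worth tightening: to justify inductively that $\tilde\psi_{f',\lambda'}\big([s_0^{1/\delta(v)^{kp}},\,s_0^{1/\delta(v)^{(k+1)p}}]\big)\subset W_{f'}$, you should first use Lemma~\ref{continuity on D-tilde} to place the whole fundamental arc $\tilde\psi_{f',\lambda'}\big([s_0,\,s_0^{1/\delta(v)^{p}}]\big)$ (not just the single point $\tilde\psi_{f',\lambda'}(s_0)$) inside $W_{f'}$; then the identity $\tilde\psi_{f',\lambda'}(s)=g_{f'}\big(\tilde\psi_{f',\lambda'}(s^{\delta(v)^p})\big)$ and $g_{f'}(W_{f'})\subset W_{f'}$ propagate the containment.
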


By Proposition \ref{limb}, for $(f,\lambda)\in\mathcal{X}$ and $t\in\RZ$, we define $$
\theta^-_{f,\lambda}(t)=\theta^-_{U_{f,v}} (x_{f,\lambda}(t)), \quad
\theta^+_{f,\lambda}(t)=\theta^+_{U_{f,v}} (x_{f,\lambda}(t)), 
$$ where $x_{f,\lambda}(t) = \phi_{f,v}^{-1}(\lambda^{-1} \cdot e^{2\pi i t}) \in\partial U_{f,v}$ 
is the landing point of $R_{f,\lambda}(t)$. 

\begin{lemma}
\label{invariant external angles-tilde}
Let $\mathcal{Y}$ be a connected component of $\mathcal{X}$. Then the functions $\theta^-_{f,\lambda}$ 
and $\theta^+_{f,\lambda}$ are independent of $(f,\lambda)\in \mathcal{Y}$. 
\end{lemma}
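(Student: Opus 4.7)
The plan is to establish that for each fixed $t\in\RZ$, the pair-valued function $(f,\lambda)\mapsto(\theta^-_{f,\lambda}(t),\theta^+_{f,\lambda}(t))$ is locally constant on $\mathcal{X}$; since $\mathcal{Y}$ is connected, constancy on $\mathcal{Y}$ then follows. I would split the argument into the dense case $t\in\QZ$ and a passage-to-limit for general $t$.

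First, treat $t\in\QZ$. Because $f|_{\partial U_{f,v}}$ is topologically conjugate to $m_{\delta(v)}$ via the Carath\'eodory boundary extension of $\lambda\phi_{f,v}$, the landing point $x=x_{f,\lambda}(t)$ is preperiodic on $\partial U_{f,v}$, and the external angles $\theta^\pm:=\theta^\pm_{f,\lambda}(t)$ are rational. When $\theta^-=\theta^+$, the unique external ray $R_f(\theta^-)$ and the internal ray $R_{f,\lambda}(t)$ both depend continuously on parameters and co-land at $x$ by Lemma \ref{stability of external rays} and Lemma \ref{stability of internal rays-tilde}, forcing $\theta^-_{g,\mu}(t)=\theta^-$ in a neighborhood of $(f,\lambda)$. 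When $\theta^-\neq\theta^+$, apply Lemma \ref{limit-land-rela}(2) to the pair $R_f(\theta^\pm)$: the two rays continue to co-land at a pre-repelling point throughout a neighborhood, which by continuity of $\Psi$ must coincide with $x_{g,\mu}(t)$. The maximal sector bounded by $R_g(\theta^-)$ and $R_g(\theta^+)$ attaching to $U_{g,v}$ at $x_{g,\mu}(t)$ is the continuous deformation of the one at $(f,\lambda)$, so the labels in positive cyclic order are preserved: $\theta^\pm_{g,\mu}(t)=\theta^\pm$.

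For general $t\in\RZ$, the identities \eqref{limb1} translate, via the orientation-preserving parameterization $t\mapsto x_{f,\lambda}(t)=\phi_{f,v}^{-1}(\lambda^{-1}e^{2\pi it})$, into one-sided limits in $t$:
$$\theta^+_{f,\lambda}(t)=\lim_{t'\to t^+,\,t'\in\QZ}\theta^-_{f,\lambda}(t'),\qquad \theta^-_{f,\lambda}(t)=\lim_{t'\to t^-,\,t'\in\QZ}\theta^+_{f,\lambda}(t').$$
By the rational case, for each $t'$ the quantity on the right is constant in $(g,\mu)\in\mathcal{Y}$, so the pointwise limit on the left is also constant on $\mathcal{Y}$.

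The main obstacle is the avoidance hypothesis $\{f^n(x)\}\cap\crit(f)=\emptyset$ in Lemma \ref{limit-land-rela}(2), which is automatic for $f\in\mathcal{H}$ (critical points lie in attracting basins) but can fail at $f\in\partial\mathcal{H}$ if a critical point happens to lie on $\partial U_{f,v}$. I plan to circumvent this by first establishing local constancy on the open dense subset $\mathcal{Y}\cap\pi^{-1}(\mathcal{H})$, where the hypothesis holds throughout, and then extending to $\mathcal{Y}$ by picking approximating sequences $(f_n,\lambda_n)\in\pi^{-1}(\mathcal{H})$ and combining Lemma \ref{limit-land-rela}(1) with the continuity of $\Psi$ to match the common external and internal landings in the limit.
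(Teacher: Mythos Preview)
Your overall architecture matches the paper's: establish invariance on the hyperbolic interior $\mathcal{Y}\cap\pi^{-1}(\mathcal{H})$ (where holomorphic motion of $J(f)$ makes this immediate), then extend to the closure. The genuine gap is in your boundary extension. You propose to invoke Lemma~\ref{limit-land-rela}(1) along $(f_n,\lambda_n)\to(f,\lambda)\in\pi^{-1}(\partial\mathcal{H})$, but that lemma requires the rays at the \emph{limit} map $f$ to land at pre-repelling points. At this point in the paper the ``no parabolic cycles'' result (Lemma~\ref{key lemma}) has not yet been proved---indeed its proof uses the present lemma---so for a given $t\in\QZ$ the landing point $x_{f,\lambda}(t)$ may be pre-parabolic, and neither Lemma~\ref{limit-land-rela}(1) nor Lemma~\ref{stability of internal rays-tilde} applies. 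The same obstruction blocks the ``continuity of $\Psi$'' you invoke: $\Psi$ is known continuous only on $\mathcal{X}\times\mathbb{D}$ (Lemma~\ref{continuity on D-tilde}), and its radial boundary extension again needs pre-repelling landing.

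The paper's fix reverses the direction of the stability argument and avoids the bad angles. Fixing the boundary point $(f,\lambda)$ and an arbitrary $t\in\RZ$, one chooses $t_k\nearrow t$ in $\QZ$ with $R_{f,\lambda}(t_k)$ landing at \emph{repelling} periodic points---possible since $f$ has only finitely many parabolic cycles. Stability is then applied \emph{based at} $(f,\lambda)$: the co-landing of $R_{f,\lambda}(t_k)$ and $R_f(\theta^-_{f,\lambda}(t_k))$ persists for nearby $(f_n,\lambda_n)\in\mathcal{Y}_j$, yielding the sandwich $\theta^-_j(t_k)\le\theta^-_{f,\lambda}(t_k)\le\theta^+_j(t_k)$, and letting $k\to\infty$ with \eqref{limb1} gives $\theta^-_{f,\lambda}(t)=\theta^-_j(t)$. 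Your limit-from-rationals step is then fine once restricted to this dense repelling subset of angles; the issue is only that your plan, as written, attempts to treat every rational $t$ directly at the boundary.
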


\begin{proof}
By (\ref{limb1}), we just need to check the assertion for  $\theta^-_{f,\lambda}$. 

Let $\mathcal{Y}_1,\dots,\mathcal{Y}_m$ denote the connected components of $\mathcal{Y}\cap 
\pi^{-1}(\mathcal{H})$. Then $\mathcal{Y} = \bigcup_{j=1}^m \overline{\mathcal{Y}_j}$. Given $1\leq j 
\leq m$. Then $\theta^-_j :=\theta^-_{f,\lambda}$ is independent of $(f,\lambda)\in \mathcal{Y}_j$, 
since $f$ is hyperbolic. Let $\{(f_n,\lambda_n)\}_{n\geq1}\subset \mathcal{Y}_j$ and $(f,\lambda)
\in\partial \mathcal{Y}_j$ satisfy $(f_n,\lambda_n) \rightarrow (f,\lambda)$ as $n\rightarrow\infty$. 
We claim that $\theta^-_{f,\lambda}=\theta^-_j$. 

Given $t\in\RZ$. 
Choose a sequence $\{t_k\}_{k\geq1}\subset \QZ$ so that 
\begin{itemize}
\item $t-1<t_1<t_2<\cdots<t$ and $\lim_{k\rightarrow \infty} t_k=t$, where we choose representatives 
for these angles in $\mathbb{R}$ properly; 
\item $R_{f,\lambda}(t_k)$ lands at a repelling periodic point $x_k$ for any $k\geq 1$. 
\end{itemize} 
By choosing representatives for external angles in $\mathbb{R}$ properly, we have $$\theta_j^+(t)-1 < 
\theta_j^-(t_1)\leq\theta_j^+(t_1) < \theta_j^-(t_2)\leq\theta_j^+(t_2) < \cdots < \theta_j^-(t)\leq
\theta_j^+(t).$$ Now fix $k\geq1$. Note that $R_{f,\lambda}(t_k)$ and $R_f(\theta^-_{f,\lambda}(t_k))$ 
land at the same repelling periodic point $x_k$. By the stability of dynamical rays (Lemmas 
\ref{stability of external rays} and \ref{stability of internal rays-tilde}), when $n$ is large enough, 
the rays $R_{f_n,\lambda_n}(t_k)$ and $R_{f_n}( \theta^-_{f,\lambda}(t_k) )$ land at a common repelling 
periodic point of $f_n$, which is close to $x_k$ (compare Lemma \ref{limit-land-rela}(\ref{limit-land-rela-2})). 
This gives $$\theta^-_j(t_k) \leq \theta^-_{f,\lambda}(t_k) \leq\theta^+_j(t_k),$$ where we choose 
a representative for $\theta^-_{f,\lambda}(t_k)$ in $\mathbb{R}$ properly. Then by (\ref{limb1}), 
$$\theta^-_j(t) = \lim_{k\rightarrow\infty} \theta^-_j(t_k)\leq \lim_{k\rightarrow\infty} 
\theta^-_{f,\lambda}(t_k) = \theta^-_{f,\lambda}(t)\leq \lim_{k\rightarrow\infty} \theta^+_j(t_k) 
= \theta^-_j(t).$$ Therefore $\theta^-_{f,\lambda}(t)=\theta^-_j(t)$. Since $t$ was arbitrary, we 
have $\theta^-_{f,\lambda} = \theta^-_j$. 

The claim implies that $\theta^-_{f,\lambda}$ is independent of $(f,\lambda)\in \overline{\mathcal{Y}_j}$. 
Furthermore, it is independent of $(f,\lambda)\in \mathcal{Y}$ since $\mathcal{Y}$ is connected. The 
proof is completed. 
\end{proof}

Note that the $\delta(v)-1$ functions in $(\theta^-_{f_0,\lambda})_{(f_0,\lambda)\in\pi^{-1}(f_0)}$ 
are pairwise different. By Fact \ref{covering map pi} and Lemma \ref{invariant external angles-tilde}, 
the number of connected components of $\mathcal{X}$ is $\delta(v)-1$, so the restriction of $\pi$ 
on each connected component is a homeomorphism. Choose $\lambda_v\in\partial\mathbb{D}$ so that 
$(f_0,\lambda_v)\in\mathcal{X}$. Then $\pi^{-1}$ has a unique continuous single-valued branch $\Hbar 
\rightarrow \mathcal{X}$, $f\mapsto (f,\lambda_{f,v})$ satisfying $\lambda_{f_0,v} = \lambda_v$. 

For $f\in\Hbar$, $v\in\Tfp$ and $t\in\RZ$, define 
\begin{itemize}
\item the B\"ottcher coordinate $B_{f,v} = \lambda_{f,v}\cdot\phi_{f,v}: \overline{U_{f,v}}\rightarrow 
\overline{\mathbb{D}}$; 

\item the internal ray $R_{f,v}(t) = B_{f,v}^{-1}([0,1)e^{2\pi i t}) \subset U_{f,v}$;

\item the external angles $\theta^-_{f,v}(t) = \theta^-_{U_{f,v}}(B_{f,v}^{-1}( e^{2\pi i t}))$, 
$\theta^+_{f,v}(t) = \theta^+_{U_{f,v}}(B_{f,v}^{-1}( e^{2\pi i t}))$, where $B_{f,v}^{-1}
( e^{2\pi i t})$ is the landing point of $R_{f,v}(t)$. 
\end{itemize}
For each $v\in\Tfp$, we can further require that the diagram $$\begin{CD}
\overline{U_{f,v}} @>f>> \overline{U_{f,\sigma(v)}}\\
@V B_{f,v} VV          @VV B_{f,\sigma(v)} V\\
\overline{\mathbb{D}} @>>z^{\delta(v)}> \overline{\mathbb{D}}
\end{CD}$$ commutes. Then Lemmas \ref{continuity on D-tilde}, \ref{stability of internal rays-tilde} 
and \ref{invariant external angles-tilde} become the following lemmas. 

\begin{lemma}
\label{continuity on D}
For each $v\in\Tfp$, the function $$\Psi_v: \begin{cases}\Hbar\times\overline{\mathbb{D}} \rightarrow 
\mathbb{C}, \\ (f,z)\mapsto B^{-1}_{f,v}(z) \end{cases}$$ is continuous on $\Hbar\times\mathbb{D}$. 
\end{lemma}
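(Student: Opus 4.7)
The plan is to express $\Psi_v$ as a composition of two already-continuous pieces: the map $\Psi:\mathcal{X}\times\mathbb{D}\to\mathbb{C}$ from Lemma \ref{continuity on D-tilde}, and the continuous section $f\mapsto(f,\lambda_{f,v})$ of the covering $\pi:\mathcal{X}\to\Hbar$ constructed in the paragraph preceding the lemma.

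First I invoke the WLOG reduction made at the start of this subsection to assume $v$ is fixed by $\sigma$; the general case of $\sigma$-periodic $v$ of period $p>1$ reduces to this by passing to the iterate $f^p$ for each $f\in\Hbar$, for which $U_{f,v}$ is invariant and $\zeta_v(f)$ is a fixed superattracting point. Under this assumption, $B_{f,v}=\lambda_{f,v}\phi_{f,v}$ by construction, so for every $(f,z)\in\Hbar\times\overline{\mathbb{D}}$,
$$\Psi_v(f,z)=B^{-1}_{f,v}(z)=\phi^{-1}_{f,v}(\lambda^{-1}_{f,v}z)=\Psi(f,\lambda_{f,v},z).$$

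Next, I combine Fact \ref{covering map pi} with Lemma \ref{invariant external angles-tilde}: $\pi:\mathcal{X}\to\Hbar$ is a covering of degree $\delta(v)-1$, and since the $\delta(v)-1$ external-angle functions $\theta^-_{f,\lambda}$ for $(f,\lambda)\in\pi^{-1}(f_0)$ are pairwise distinct and constant on each connected component of $\mathcal{X}$, the covering has exactly $\delta(v)-1$ components, each mapped homeomorphically onto $\Hbar$ by $\pi$. The unique component containing $(f_0,\lambda_v)$ thus supplies a continuous section $f\mapsto(f,\lambda_{f,v})$ over all of $\Hbar$. Composing with the continuous $\Psi$ from Lemma \ref{continuity on D-tilde}, I conclude that $\Psi_v$ is continuous on $\Hbar\times\mathbb{D}$.

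The main point requiring care is that the covering $\pi$ remains unbranched over the boundary $\partial\mathcal{H}$ as well as over $\mathcal{H}$: the defining relation
$$\lambda^{\delta(v)-1}=\frac{f^{(\delta(v))}(\zeta_v(f))}{|f^{(\delta(v))}(\zeta_v(f))|}$$
for $(f,\lambda)\in\mathcal{X}$ depends continuously on $f\in\Hbar$, which in turn uses that $\zeta_v(f)$ is a superattracting periodic point of constant multiplicity $\delta(v)-1$ for every $f\in\Hbar$ --- a direct consequence of the periodic critical relations (\ref{superattracting relations}) defining $\mathcal{F}$ together with the persistence of the superattracting character under limits from $\mathcal{H}$.
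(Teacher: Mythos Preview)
Your proposal is correct and follows exactly the route taken by the paper: the paper states that Lemmas \ref{continuity on D-tilde}, \ref{stability of internal rays-tilde} and \ref{invariant external angles-tilde} ``become'' Lemmas \ref{continuity on D}, \ref{stability of internal rays} and \ref{invariant external angles} via the continuous section $f\mapsto(f,\lambda_{f,v})$ constructed from Fact \ref{covering map pi} and Lemma \ref{invariant external angles-tilde}, and you have simply written out this implicit argument in detail. Your added remark about why $\deg(f,\zeta_v(f))=\delta(v)$ persists on $\partial\mathcal{H}$ is also handled by the paper, which records this fact at the start of \S\ref{subsection internal rays}.
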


\begin{lemma}
[Stability of internal rays]
\label{stability of internal rays}
Let $f\in\Hbar$, $v\in\Tfp$ and $t\in\QZ$. Suppose $R_{f,v}(t)$ lands at a pre-repelling point. 
Then there is a neighborhood $\mathcal{N}\subset \Hbar$ of $f$ such that the restriction $\Psi_v: 
\mathcal{N}\times[0,e^{2\pi i t}] \rightarrow \mathbb{C}$ is continuous. 
\end{lemma}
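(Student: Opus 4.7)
The plan is to reduce Lemma \ref{stability of internal rays} to its lifted counterpart Lemma \ref{stability of internal rays-tilde}, using the continuous section $s:\overline{\mathcal{H}}\to\mathcal{X}$, $f\mapsto(f,\lambda_{f,v})$ of the covering $\pi$ constructed in the paragraph preceding the definition of $B_{f,v}$.

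First, recall that we chose $\lambda_v\in\partial\mathbb{D}$ with $(f_0,\lambda_v)\in\mathcal{X}$, and that $\pi^{-1}$ admits a unique continuous single-valued branch $f\mapsto(f,\lambda_{f,v})$ with $\lambda_{f_0,v}=\lambda_v$. This is because Fact \ref{covering map pi} together with Lemma \ref{invariant external angles-tilde} shows that $\mathcal{X}$ has exactly $\delta(v)-1$ connected components, so $\pi$ restricts to a homeomorphism on each such component. Consequently $B_{f,v}=\lambda_{f,v}\cdot\phi_{f,v}$, and by definition $\Psi_v(f,z) = \Psi(f,\lambda_{f,v},z)$ for every $(f,z)\in\overline{\mathcal{H}}\times\overline{\mathbb{D}}$.

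For the given $f$ and $t$, the curves $R_{f,v}(t)$ and $R_{f,\lambda_{f,v}}(t)$ coincide, so by hypothesis $R_{f,\lambda_{f,v}}(t)$ lands at a pre-repelling point. Applying Lemma \ref{stability of internal rays-tilde} at $(f,\lambda_{f,v})\in\mathcal{X}$ produces a neighborhood $\widetilde{\mathcal{N}}\subset\mathcal{X}$ of $(f,\lambda_{f,v})$ on which $\Psi:\widetilde{\mathcal{N}}\times[0,e^{2\pi it}]\to\mathbb{C}$ is continuous. Set $\mathcal{N}=s^{-1}(\widetilde{\mathcal{N}})$; continuity of $s$ makes $\mathcal{N}$ an open neighborhood of $f$ in $\overline{\mathcal{H}}$.

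Finally, the restriction of $\Psi_v$ to $\mathcal{N}\times[0,e^{2\pi it}]$ factors as
$$(f',z)\;\longmapsto\;(f',\lambda_{f',v},z)\;\longmapsto\;\Psi(f',\lambda_{f',v},z),$$
and hence is continuous as a composition of continuous maps. The only point requiring care is the continuity of the section $f\mapsto\lambda_{f,v}$ at boundary parameters $f\in\partial\mathcal{H}$, but this is already built into the single-valued branch of $\pi^{-1}$ furnished by the covering structure, so no additional argument is needed. I expect no substantive obstacle beyond correctly invoking the already-established lifted statement.
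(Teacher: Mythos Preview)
Your proposal is correct and is exactly the argument the paper intends: the paper does not give a separate proof of this lemma but simply states that Lemmas \ref{continuity on D-tilde}, \ref{stability of internal rays-tilde} and \ref{invariant external angles-tilde} ``become'' Lemmas \ref{continuity on D}, \ref{stability of internal rays} and \ref{invariant external angles} once the continuous section $f\mapsto(f,\lambda_{f,v})$ of $\pi$ is in hand. Your write-up just spells out this reduction explicitly, and the factorization $\Psi_v(f',z)=\Psi(f',\lambda_{f',v},z)$ together with continuity of the section is all that is needed.
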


\begin{lemma}
\label{invariant external angles}
Given $v\in\Tfp$. Then the functions $\theta^-_{f,v}$ and $\theta^+_{f,v}$ are independent of 
$f\in\Hbar$.
\end{lemma}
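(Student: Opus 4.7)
The plan is to deduce Lemma \ref{invariant external angles} directly from Lemma \ref{invariant external angles-tilde} combined with the construction of the single-valued branch $f\mapsto (f,\lambda_{f,v})$ made immediately above the statement. In particular, essentially no new dynamical input is needed; the lemma is just the translation of Lemma \ref{invariant external angles-tilde} into the normalized notation $B_{f,v}$.

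First, I would unwind the definitions. For any $f\in\Hbar$ and $t\in\RZ$, the landing point of $R_{f,v}(t)$ is
\[
B_{f,v}^{-1}(e^{2\pi i t})
=(\lambda_{f,v}\cdot\phi_{f,v})^{-1}(e^{2\pi i t})
=\phi_{f,v}^{-1}(\lambda_{f,v}^{-1}\cdot e^{2\pi i t})
=x_{f,\lambda_{f,v}}(t),
\]
which is precisely the landing point of the internal ray $R_{f,\lambda_{f,v}}(t)$ appearing in Lemma \ref{invariant external angles-tilde}. Consequently,
\[
\theta^\pm_{f,v}(t)=\theta^\pm_{U_{f,v}}\bigl(x_{f,\lambda_{f,v}}(t)\bigr)=\theta^\pm_{f,\lambda_{f,v}}(t).
\]

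Next, I would observe that the graph $\mathcal{Y}:=\{(f,\lambda_{f,v}){;~}f\in\Hbar\}$ is exactly one connected component of $\mathcal{X}$. Indeed, by the construction made just before the lemma, $f\mapsto(f,\lambda_{f,v})$ is the continuous single-valued branch of $\pi^{-1}$ through $(f_0,\lambda_v)$; since $\Hbar$ is connected and this graph meets every fiber of $\pi$ in exactly one point, $\mathcal{Y}$ coincides with the component of $\mathcal{X}$ containing $(f_0,\lambda_v)$.

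Finally, I would apply Lemma \ref{invariant external angles-tilde} to this component $\mathcal{Y}$: the functions $\theta^\pm_{f,\lambda}$ are constant as $(f,\lambda)$ varies over $\mathcal{Y}$. Combined with the identity $\theta^\pm_{f,v}(t)=\theta^\pm_{f,\lambda_{f,v}}(t)$ above, this yields that $\theta^-_{f,v}$ and $\theta^+_{f,v}$ are independent of $f\in\Hbar$, completing the proof. Since every substantive step (the Carath\'eodory kernel convergence, stability of rays, and the limit argument at $\partial\mathcal{H}$) has already been absorbed into Lemma \ref{invariant external angles-tilde}, I do not anticipate any genuine obstacle; the only care needed is to make the identification of $\mathcal{Y}$ with a component of $\mathcal{X}$ explicit.
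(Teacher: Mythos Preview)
Your proposal is correct and is exactly the argument the paper intends: immediately before the lemma the paper constructs the single-valued branch $f\mapsto(f,\lambda_{f,v})$ and then simply says that Lemmas \ref{continuity on D-tilde}, \ref{stability of internal rays-tilde} and \ref{invariant external angles-tilde} ``become'' Lemmas \ref{continuity on D}, \ref{stability of internal rays} and \ref{invariant external angles} via this identification. Your write-up just makes that translation explicit.
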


Because of Lemma \ref{invariant external angles}, we denote $\theta^-_{f,v}, \theta^+_{f,v}$ by $\theta^-_v, \theta^+_v$ respectively.

\subsection{No parabolic cycles}

\begin{lemma}
\label{an iterate of f0}
Let $f_0\in\Pd$ be a postcritically finite hyperbolic polynomial. 
Then there exists an integer $n\geq1$ such that
\begin{enumerate}
\item every point in $\Tfp$ is fixed by $f_0^n$;  
\item for any $v\in\Tfp$ and any sector $S$ attaching to $U_{f_0}(v)$, if ${{{r}}}(S)$ is periodic under $f_0$, 
then it is fixed by $f_0^n$. 
\end{enumerate}
\end{lemma}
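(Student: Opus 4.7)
The plan is to split the proof into two parts. For (1), since $\Tfp$ is a finite $\sigma$-invariant set, I would let $N_1$ be the least common multiple of the $\sigma$-periods of the elements of $\Tfp$; then $f_0^{N_1}$ fixes every $v\in\Tfp$. For (2), the essential content is the following finiteness claim: for each $v\in\Tfp$, the set $R_v$ of periodic roots of sectors attaching to $U_{f_0}(v)$ is finite. Once this is in hand, taking $N_2$ to be the least common multiple of the periods of all $x\in\bigcup_{v\in\Tfp}R_v$ and setting $n=\mathrm{lcm}(N_1,N_2)$ will fulfill both requirements.

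To prove $R_v$ is finite, I would construct an injection $R_v\to\Tfp$ via periodic Fatou components inside limbs. Fix $x\in R_v$ of period $q$, write $U=U_{f_0}(v)$, let $p$ denote the period of $U$ under $f_0$, and let $L=L_{U,x}$ be the associated nontrivial limb. Choose a common multiple $q'$ of $q$ and $p$ large enough so that $m_{d^{q'}}$ fixes every external angle landing at $x$ (possible because this set is finite and permuted by $m_d$). Since $\crit(f_0)\cap J(f_0)=\emptyset$, the map $f_0^{q'}$ is a local homeomorphism at $x$, and by our choice of $q'$ it fixes each external ray landing at $x$ setwise. Because the number of connected components of $K(f_0)\setminus\{x\}$ equals the incidence $\nu(x)$, and each component is determined by the rays bounding it, $f_0^{q'}$ must fix each such component setwise. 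Since $L\setminus\{x\}$ is a union of these components, this yields $f_0^{q'}(L)\subseteq L$.

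Given this invariance, the limb $L$ must contain a bounded Fatou component: taking any two distinct points of $L$, the allowable arc joining them in $K(f_0)$ cannot escape $L$ (the only connection from $L$ to $U$ is through $x$ and allowable arcs are injective), and by Lemma \ref{total disconnectivity} this arc cannot lie entirely in $J(f_0)$; any Fatou component meeting the arc then lies entirely in $L$. Call this component $W\subseteq L$; its $f_0^{q'}$-orbit stays inside $L$ and, by postcritical finiteness, is eventually periodic, so $L$ contains a periodic Fatou component $U_{f_0}(v')$ with $v'\in\Tfp$. Distinct roots of sectors on $\partial U$ yield disjoint limbs, so the assignment $x\mapsto v'$ is injective and $|R_v|\leq|\Tfp|$. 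The main obstacle is the global invariance $f_0^{q'}(L)\subseteq L$, which forces us to enlarge $q'$ so that the $m_d$-permutation on the external angles at $x$ becomes trivial, and then to exploit the identity $\#\pi_0(K(f_0)\setminus\{x\})=\nu(x)$ to upgrade the local ray-by-ray fixation at $x$ into a global component-wise fixation.
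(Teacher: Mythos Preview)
Your overall strategy---handling (1) by an lcm of periods, and reducing (2) to the finiteness of each $R_v$---is the same as the paper's. The gap is the invariance claim $f_0^{q'}(L)\subseteq L$, which is false. Take the basilica $f_0(z)=z^2-1$, $U=U_{f_0}(0)$, and $x=\alpha=\frac{1-\sqrt5}{2}$. The external angles at $\alpha$ are $1/3,2/3$, swapped by $m_2$, so $q'=2$ works. The landing point $-\beta$ of $R_{f_0}(1/2)$ lies in $L=L_{U,\alpha}$ since $1/2\in(1/3,2/3)$; but $f_0^2(-\beta)=\beta$, the landing point of $R_{f_0}(0)$, which is not in $L$. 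Your step ``$f_0^{q'}$ must fix each such component setwise'' is where the error enters: fixing the bounding rays only pins down the \emph{germs} of the components of $K(f_0)\setminus\{x\}$ near $x$. Globally, $f_0^{q'}$ restricted to a sector of angular width exceeding $1/d^{q'}$ is not injective, and its image typically covers all of $\mathbb{C}$. Hence the $f_0^{q'}$-orbit of a Fatou component inside $L$ need not stay in $L$, and the proposed injection $R_v\to\Tfp$ collapses.

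The paper's route sidesteps this by working with cycles instead of individual limbs. After passing to $f_0^m$ so that $U$ is fixed, one shows $f_0^m(E_v)\subset E_v$ (this is a purely local computation at the root, so the globalization issue above does not arise), and decomposes $E_v$ into $f_0^m$-cycles. For each cycle one forms the union $S_j$ of the maximal sectors attaching to $U$ at the cycle points and argues $S_j\cap\crit(f_0^m)\neq\emptyset$: otherwise Lemma~\ref{sector with no critical point} makes each $f_0^m$ a conformal isomorphism from one maximal sector to the next, and going once around the cycle multiplies the angular width by $d^{mp}>1$ while returning to the starting sector, a contradiction. Since the $S_j$ for distinct cycles are pairwise disjoint, the number of cycles is bounded by $\#\crit(f_0^m)$, so $E_v$ is finite.
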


\begin{proof}
Choose $m\geq1$ such that for any $v\in\Tfp$, the period of $v$ under $f_0$ divides $m$. 
Then every point in $\Tfp$ is fixed by $f_0^{m}$. 

Given $v\in\Tfp$. 
Let $E_v$ be the set of all periodic points $x$ of $f_0$ 
such that there exists a sector attaching to $U_{f_0}(v)$ with root $x$. 
Then $E_v\subset \partial U_{f_0}(v)$. 
We claim that $f_0^{m}(E_v)\subset E_v$. 
To prove this, let $x\in E_v$, which means there is a sector $S_{f_0}(\theta,\theta')$ attaching to $U_{f_0}(v)$ with root $x$. 
Since $f_0^{m}$ is a local homeomorphism near $x$ and $f_0^{m}(U_{f_0}(v))=U_{f_0}(v)$, the sector $S_{f_0}(d^{m}\theta,d^{m}\theta')$ attaches to $U_{f_0}(v)$ with root $f_0^{m}(x)$, so $f_0^{m}(x)\in E_v$. 
This proves the claim.  

Now $E_v$ can be divided into a collection $\{F_j\}_{j\in I}$ of periodic cycles of $f_0^{m}$. 
We will show $I$ is a finite set.  
Let $x\in F_j$ for some $j\in I$, with period $p$ under $f_0^{m}$. 
For $0\leq k< p$, let $S_{j,k}$ denote the maximal sector attaching to $U_{f_0}(v)$ with root $f_0^{m k}(x)$. 
Let $S_j = \bigcup_{0\leq k< p} S_{j,k}$. 
Since $J(f_0) = J(f_0^{m})$, if $S_{f_0}(\theta,\theta')$ is a sector, then $S_{f_0^{m}}(\theta,\theta') = S_{f_0}(\theta,\theta')$. 
By Lemma \ref{sector with no critical point}, for any $j\in I$, we have  $\crit(f_0^{m})\cap S_j \neq\emptyset$. 
Because the sets in $\{S_j\}_{j\in I}$ are pairwise disjoint,  the set $I$ is finite, and so is $E_v$. 

Finally, choose $n\geq1$ such that for any $x\in\Tfp\cup\bigcup_{v\in\Tfp}E_v$, its period under $f_0$ divides $n$. 
Then every point in $\Tfp$ is fixed by $f_0^{n}$. 
For any $v\in\Tfp$ and any sector $S$ attaching to $U_{f_0}(v)$, if ${{{r}}}(S)$ is periodic under $f_0$, then $r(S)\in E_v$, so it is fixed by $f_0^n$. 
This completes the proof. 
\end{proof}

\begin{lemma}
\label{key lemma}
Let $\mathcal{H}\subset\mathcal{F}$ be a capture hyperbolic component. 
Then for any $f\in\partial\mathcal{H}$, every periodic point in $\partial A(f)$ is repelling. 
\end{lemma}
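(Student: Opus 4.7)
The plan is to argue by contradiction. Suppose $y\in\partial U_{f,v}$ is a non-repelling periodic point of $f$. Replacing $f_0$ (and $f$) by a common iterate via Lemma \ref{an iterate of f0} (which does not change the set of periodic points or their non-repelling character), we may assume $v\in\Tfp$ is $\sigma$-fixed, $y$ is $f$-fixed, and every $f_0$-periodic sector root attaching to $U_{f_0}(v)$, along with its external angles, is fixed by $f_0$ and $m_d$ respectively. Setting $t\in\RZ$ with $B_{f,v}(y)=e^{2\pi it}$, the point $x_0:=B_{f_0,v}^{-1}(e^{2\pi it})\in\partial U_{f_0,v}$ is $f_0$-fixed and repelling, and by Lemma \ref{invariant external angles} the rays $R_f(\theta^{\pm}_v(t))$ and $R_{f_0}(\theta^{\pm}_v(t))$ land at $y$ and $x_0$ respectively, with $m_d$-fixed angles. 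Since $y\in J(f)$ is the landing point of an $f$-invariant external ray, the Douady--Hubbard landing theorem (snail lemma, see \cite[Lemma 16.2]{Mil}) forces $y$ to be parabolic.

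Next I would construct an accumulation of repelling preperiodic points of $f$ at $y$ from within a prescribed sector. In the non-degenerate case where $\theta^-_v(t)\neq\theta^+_v(t)$, choose a minimal sector $S=S_{f_0}(\theta,\theta')$ at $x_0$ contained in the maximal sector attaching to $U_{f_0,v}$, with $\theta,\theta'$ fixed by $m_d$. By Lemma \ref{growing arc}, $X(S)\supsetneq\{x_0\}$; and since $\overline{S}\cap\overline{U_{f_0,v}}=\{x_0\}$, the allowable set $X(S)$ is not contained in the closure of any bounded Fatou component. So Proposition \ref{two sequences of external angles 2} produces some $v'\in X(S)\cap\Tfp$ and sequences $\theta_n\to\theta$, $\theta_n'\to\theta'$ (in positive cyclic order) together with $m\in\mathbb{N}$ such that $R_{f_0}(\theta_n)$ and $R_{f_0}(\theta_n')$ land at a common preperiodic point of $f_0$ whose $(m+n)$-th iterate lies on $\partial U_{f_0}(v')$. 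The degenerate case of trivial limb at $x_0$ is handled by a parallel but simpler argument using the single $m_d$-fixed external ray at $x_0$ together with the snail lemma.

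The third step transfers these landing relations to $f$. Since Lemma \ref{invariant external angles} applied to $v'$ ensures that the internal-to-external angle correspondence on $\partial U_{g,v'}$ is the same for every $g\in\Hbar$, the rays $R_f(d^{m+n}\theta_n)$ and $R_f(d^{m+n}\theta_n')$ also land at a common point on $\partial U_{f,v'}$; pulling back by $f^{m+n}$ (and using Lemma \ref{stability of external rays} locally along the orbit, together with the fact that the relevant preperiodic orbit in $f_0$ avoids $\crit(f_0)$) yields a common landing point $w_n\in J(f)$ of $R_f(\theta_n)$ and $R_f(\theta_n')$. Proposition \ref{cea} then gives $w_n\to y$, with the $w_n$ accumulating at $y$ from within the sector bounded by $R_f(\theta)$ and $R_f(\theta')$.

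Finally, the contradiction comes from the Leau--Fatou flower structure at the parabolic point $y$: each attracting petal of $y$ sits inside a Fatou component of $f$ whose forward iterates converge to $y$. No component of $A(f)$ (which cycles) or its strict preimages qualifies, so any attracting petal must lie in a ``new'' Fatou component; on the other hand, in $f_0$'s picture the sector $S$ contains the superattracting component $U_{f_0}(v')$, and the transferred configuration places $U_{f,v'}$ (still superattracting by the defining relations of $\mathcal{F}$) inside the sector at $y$. The resulting geometry — a superattracting cycle occupying the sector, with repelling preperiodic points $w_n$ accumulating at $y$ through the same sector — is incompatible with the Leau--Fatou flower model of a parabolic fixed point. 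I expect the \textbf{main obstacle} to be the transfer step: Lemma \ref{stability of external rays} is purely local near a base map, so the \emph{global} combinatorial rigidity encoded in Lemma \ref{invariant external angles} must be exploited to propagate the landing relations from $f_0$ to an arbitrary $f\in\Hbar$. A secondary difficulty is the final geometric comparison at $y$: the incompatibility between the Leau--Fatou petals and the transferred sector/superattracting configuration requires a careful case analysis distinguishing which petal of $y$ can hold the accumulation.
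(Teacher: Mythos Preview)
The main gap is the circularity in the transfer step, which you yourself flag as the obstacle. To conclude that $R_f(\theta_n)$ and $R_f(\theta_n')$ land at a common point, you need Lemma \ref{limit-land-rela}(\ref{limit-land-rela-1}) (taking $f_k\in\mathcal H$ with $f_k\to f$), and this requires the landing points for $f$ to be pre-repelling. But the eventually periodic part of this orbit lies on $\partial U_{f,v'}\subset\partial A(f)$, so you are assuming the very repellingness you are trying to prove. Lemma \ref{invariant external angles} only gives common landing for the \emph{forward} rays $R_f(d^{m+n}\theta_n)$, $R_f(d^{m+n}\theta_n')$ on $\partial U_{f,v'}$; the pullback does not follow from a local-homeomorphism argument, since two rays landing at a common image point can pull back to rays landing at different preimages. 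The paper breaks this circularity by an induction on $N(\theta,\theta')=\#(S_{f_0}(\theta,\theta')\cap\Tfp)$, with two base cases that avoid the problem entirely: Case~1 ($X(S_0)\not\subset\overline{S_0}$) uses Proposition \ref{two sequences of external angles}, whose approximating landing points are \emph{periodic}, so that all but finitely many are automatically repelling for $f$; Case~2.1 ($X(S_0)\cap\Tfp=\emptyset$) is handled directly by Lemma \ref{invariant external angles} with no pullback at all. The general Case~2.2 then reduces to these via the induction, first establishing repellingness on some $\partial U_{f,v'}$ before transferring. Your scheme invokes only the mechanism of Case~2.2 (Proposition \ref{two sequences of external angles 2}, preperiodic approximants) and has no inductive ordering of the $v$'s, so it cannot bootstrap.

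A secondary issue: your final contradiction via the Leau--Fatou flower is not sharp. Julia points (such as your $w_n$) \emph{do} accumulate at a parabolic fixed point along repelling directions, so the mere accumulation $w_n\to y$ within a sector is compatible with $y$ being parabolic. The paper's argument for property (P2) is a clean topological separation: once $(\theta_n,\theta_n')\in\lambda_{\mathbb Q}(f)$, the curve $R_f(\theta_n)\cup\{w_n\}\cup R_f(\theta_n')$ separates $y$ from any point of $S_f(\theta,\theta')$ deep inside, while a parabolic basin $U\subset S_f(\theta,\theta')$ with $y\in\partial U$ cannot meet this curve (it lies in $J(f)\cup U_{f,\infty}$), contradicting connectedness of $\overline U$. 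Also note that your claim ``$X(S)$ is not contained in the closure of any bounded Fatou component'' does not follow from $\overline S\cap\overline{U_{f_0,v}}=\{x_0\}$ alone: $X(S)$ could lie in $\overline{U_{f_0}(v'')}$ for some other $v''\in\Tfp$ with $x_0\in\partial U_{f_0}(v'')$, which is exactly the paper's Case~2.1.
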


\begin{proof}
{\bf Step 1.} 
Let $f_0$ be the center of $\mathcal{H}$. 
Note that for any integer $n\geq1$, we have $\Tfp = |T(f_0^n)|_{\mathrm{p}}$ and $J(f_0) = J(f_0^n)$. 
According to Lemma \ref{an iterate of f0}, replacing $f_0$ by an iterate of $f_0$ (every polynomial in $\Hbar$ need to be iterated the same number of times), we can make the following assumptions without loss of generality:  
\begin{enumerate}
\item every point in $\Tfp$ is fixed by $f_0$;  
\item for any $v\in\Tfp$ and any sector $S$ attaching to $U_{f_0}(v)$, if ${{{r}}}(S)$ is periodic, 
then it is fixed by $f_0$.
\end{enumerate}
Now let $$\Theta(f_0) ={\left\{ (\theta,\theta')\in(\RZ)^2{;~} 
\begin{array}{l}
\text{${{S}}_{f_0}(\theta,\theta')$ is a minimal sector with}\\
\text{a fixed root point in $\partial A(f_0)$}
\end{array}
\right\}}. $$
Since $f_0$ has finitely many fixed points and each fixed point in $J(f_0)$ is the landing point of finitely many external rays, the set $\Theta(f_0)$ is finite. 
By the first assumption of $f_0$, for any $(\theta,\theta')\in\Theta(f_0)$, we have $d\cdot\theta \equiv \theta$ and $d\cdot\theta'\equiv\theta'\modZ$, where $d=\deg(f_0)$. 


\vspace{6 pt}
{\bf Claim.} 
{\it 
Given $f\in\partial\mathcal{H}$. 
Then every pair $(\theta,\theta')\in\Theta(f_0)$ satisfies 
\begin{enumerate}[\indent (P1)]
\item $R_f(\theta)$ and $R_f(\theta')$ land at a common point; 
\item $f$ has no parabolic Fatou component $U$ such that $U\subset S$ and ${{{r}}}(S)\in\partial U$, 
where $S={{S}}_f(\theta,\theta')$. 
\end{enumerate} 
}

{\bf Step 2.} 
Assuming the claim holds, let us prove that every periodic point in $\partial A(f)$ is repelling. 
Let $v\in\Tfp$ and let $x$ be a periodic point of $f$ in $\partial U_{f,v}$. 
Then $x$ is either repelling or parabolic (see \cite{RY2022}). 
Let $t\in\QZ$ such that $R_{f,v}(t)$ lands at $x$. 
We have rational external angles $\theta_{f,v}^-(t)$ and $\theta_{f,v}^+(t)$ of $x$. 

First, consider $\theta_{f,v}^-(t)=\theta_{f,v}^+(t)$. 
By Proposition \ref{limb}, we have $L_{U_{f,v},x} = \{x\}$. 
Assume $x$ is parabolic; we will find a contradiction. 
Let $U$ be a parabolic Fatou component such that $x\in\partial U$. 
Then $\overline{U}\subset L_{U_{f,v},x}$, which contradicts $L_{U_{f,v},x} = \{x\}$. 
So $x$ is not parabolic. Hence it is repelling. 

Second, consider $\theta_{f,v}^-(t)\neq\theta_{f,v}^+(t)$. 
By Lemma \ref{invariant external angles}, we have $\theta_{f,v}^-(t)=\theta_{f_0,v}^-(t)$ and $\theta_{f,v}^+(t)=\theta_{f_0,v}^+(t)$. 
Let $x_0$ denote the root point of ${{S}}_{f_0}(\theta_{f_0,v}^-(t),\theta_{f_0,v}^+(t))$. 
Since $x$ is periodic under $f$, the external angle  $\theta_{f_0,v}^-(t)=\theta_{f,v}^-(t)$ is periodic under $m_d$, hence $x_0$ is periodic under $f_0$. 
By the assumptions of $f_0$, we have $f_0(v)=v$ and $f_0(x_0)=x_0$. 
For the polynomial $f_0$, let $\theta_1=\theta_{f_0,v}^-(t), \theta_2,\dots,\theta_n=\theta_{f_0,v}^+(t)$ be all external angles of $x_0$ in positive cyclic order. 
Then $\{(\theta_j,\theta_{j+1})\}_{1\leq j\leq n}\subset\Theta(f_0)$, where we set $\theta_{n+1}=\theta_1$. 
By the property (P1) in the claim, the external rays $R_f(\theta_j)$ $(1\leq j\leq n)$ land at the same point $x$, which is not parabolic by (P2). 
Hence $x$ is repelling. 
This shows the lemma. 

{\bf Step 3.} 
To complete the proof, we show the claim. 
Given $(\theta_0,\theta'_0)\in\Theta(f_0)$. 
We will show that $(\theta_0,\theta'_0)$ satisfies (P1) and (P2). 
Let $S_0={{S}}_{f_0}(\theta_0,\theta'_0)$.  
The proof will be broken into two cases: $X(S_0)\not\subset\overline{S_0}$ and 
$X(S_0)\subset\overline{S_0}$. 

\vspace{6 pt}
{\bf Case 1.} 
{\it $X(S_0)\not\subset\overline{S_0}$.}
\vspace{6 pt}

Choose $\theta_n\rightarrow\theta_0$ and $\theta'_n\rightarrow\theta'_0$ as Proposition \ref{two sequences of external angles}. 
Note that $f$ has at most finitely many parabolic periodic points. 
There exists an $N>0$ such that $R_f(\theta_n)$ and $R_f(\theta'_n)$ land at repelling periodic points of $f$ whenever $n>N$. 
Then by Lemma \ref{limit-land-rela}(\ref{limit-land-rela-1}), for each $n>N$, the external rays $R_f(\theta_n)$ and $R_f(\theta'_n)$ land at a common point. 
By Lemma \ref{rat-lam is closed}, the external rays  $R_f(\theta_0)$ and $R_f(\theta'_0)$ land at a common point, 
i.e. $(\theta_0,\theta_0')$ satisfies (P1). 

Let $S={{S}}_f(\theta_0,\theta'_0)$. 
If $f$ has a parabolic Fatou component $U$ such that $U\subset S$ and ${{{r}}}(S)\in\partial U$, 
then the partition (\ref{partition of the plane}) of the plane
contradicts $(\theta_n,\theta'_n)\in\lambda_\mathbb{Q}(f)$ for $n$ large enough. 
Thus $(\theta_0,\theta_0')$ satisfies (P2). 

\vspace{6 pt}
{\bf Case 2.} 
{\it $X(S_0)\subset\overline{S_0}$.}
\vspace{6 pt}

There are two subcases: 
$X(S_0)\cap\Tfp=\emptyset$ and $X(S_0)\cap\Tfp\neq\emptyset$. 

\vspace{6 pt}
{\bf Case 2.1.} 
{\it $X(S_0)\cap\Tfp=\emptyset$.}
\vspace{6 pt}

By Proposition \ref{two sequences of external angles 2}, 
there exists a unique $v\in S_0\cap\Tfp$ such that $X(S_0)\subset\overline{U_{f_0}(v)}$. 
By the minimality of $S_0$, there is a unique $t\in\RZ$ such that $\theta'_0=\theta_{f_0,v}^-(t)$ and $\theta_0=\theta_{f_0,v}^+(t)$. 
By Lemma \ref{invariant external angles}, we have $\theta'_0=\theta_{f,v}^-(t)$ and $\theta_0=\theta_{f,v}^+(t)$. 
Since $R_f(\theta_0)$, $R_f(\theta'_0)$ and $R_{f,v}(t)$ land at a common point, the pair $(\theta_0,\theta_0')$ satisfies (P1).  
Since ${{{S}}_f(\theta'_0,\theta_0)}$ is a maximal sector attaching to $U_{f,v}$, the pair $(\theta_0,\theta_0')$ satisfies (P2). 

\vspace{6 pt}
{\bf Case 2.2.} 
{\it $X(S_0)\cap\Tfp\neq\emptyset$.}
\vspace{6 pt}

For $(\theta,\theta')\in\Theta(f_0)$, 
define $$N(\theta,\theta') = \#({{S}}_{f_0}(\theta,\theta')\cap\Tfp).$$
Note that $\Theta(f_0)$ is a finite set. 
Assume $(\theta,\theta')$ satisfies (P1) and (P2) if $N(\theta,\theta')<N(\theta_0,\theta'_0)$. 
Otherwise, we replace $(\theta_0,\theta'_0)$ by some $(\theta,\theta')\in\Theta(f_0)$ with $N(\theta,\theta')<N(\theta_0,\theta'_0)$. 

Let $V_0=X(S_0)\cap\Tfp$ and let $x_0={{{r}}}(S_0)$. 
For each $v\in V_0$, denote the singleton $[x_0,v]\cap\partial U_{f_0}(v)$ by $\{x_v\}$.  
Then $x_v\neq x_0$. 
We will divide Case 2.2 into two subcases 
according to whether there is a point $v\in V_0$ such that $f_0(x_v)\neq x_v$. 

\vspace{6 pt}
{\bf Case 2.2.1.} 
{\it $f_0(x_v)\neq x_v$ for some $v\in V_0$.}
\vspace{6 pt}

Let $S = {{S}}_{f_0}(\theta,\theta')$ be a minimal sector with a fixed root point in  $\partial U_{f_0}(v)$. 
Then one of the following holds: 
\begin{itemize}
\item $U_{f_0}(v)\subset S$ (then $X(S)\cap\Tfp=\emptyset$, corresponding to Case 2.1); 
\item $U_{f_0}(v)\cap S=\emptyset$ (then $S\subset S_0$ by $r(S) \neq x_v$, so $N(\theta,\theta')<N(\theta_0,\theta'_0)$). 
\end{itemize}
By Case 2.1 (the former) or induction (the latter), 
the pair $(\theta,\theta')$ satisfies (P1) and (P2). 
Similar to Step 2, by Proposition \ref{limb} and Lemma \ref{invariant external angles}, 
every periodic point of $f$ in $\partial U_{f,v}$ is repelling. 

Choose $\theta_n\rightarrow\theta_0$, $\theta'_n\rightarrow\theta'_0$ and $m\in\mathbb{N}$ for $v$
as Proposition \ref{two sequences of external angles 2}. 
Fix $n\geq1$. 
There exists a unique $t_n\in\QZ$ such that 
$d^{m+n}\theta_n\equiv\theta^-_{f_0,v}(t_n)$ and $d^{m+n}\theta'_n\equiv\theta^+_{f_0,v}(t_n)\modZ$. 
By Lemma \ref{invariant external angles}, we have $d^{m+n}\theta_n\equiv\theta^-_{f,v}(t_n)$ and $d^{m+n}\theta'_n\equiv\theta^+_{f,v}(t_n)\modZ$. 
Since every periodic point of $f$ in $\partial U_{f,v}$ is repelling, 
the external rays $R_f(\theta_n)$ and $R_f(\theta'_n)$ land at pre-repelling points. 
By Lemma \ref{limit-land-rela}(\ref{limit-land-rela-1}), they land at the same point. 
As the proof of Case 1, 
the sequence $\{(R_f(\theta_n),R_f(\theta'_n))\}_{n\geq1}$ 
of pairs of external rays ensures that $(\theta_0,\theta_0')$ satisfies (P1) and (P2).

\vspace{6 pt}
{\bf Case 2.2.2.} 
{\it $f_0(x_v)= x_v$ for all $v\in V_0$.}
\vspace{6 pt}

Let $\{x_k\}_{k=1}^{{m}}$ be all different points in $\{x_v\}_{v\in V_0}$ such that $[x_0,x_v)\cap \{x_w {;~}  w\in V_0\}=\emptyset$. 
Fix $1\leq k\leq {{m}}$. 
Let $S_k={{S}}_{f_0}(\theta_k,\theta_k')$ be the minimal sector with root point $x_k$ such that $[x_0,x_k)\subset S_k$. 
By Lemma \ref{allowable arcs in K(f0)}(\ref{allowable arcs in K(f0)-2}), 
the sector $S_k$ is well-defined and $\theta_0,\theta'_k,\theta_k,\theta'_0$ are in positive cyclic order. 
By $f_0(x_k)=x_k$ and the first assumption of $f_0$, we have $d\cdot\theta_k\equiv\theta_k\modZ$. 
Thus $X(S_k)$ makes sense (see Lemma \ref{X(S) is allowable}) and $X(S_k)\subset X(S_0)$ according to  
$$[x_0,x_k]\subset \bigcup_{v\in V_0}[x_0,x_v] \subset \bigcup_{v\in V_0}[x_0,v]\subset X(S_0).$$
Let $V_k=X(S_k)\cap\Tfp$. Then $V_k\subset V_0$. 
By Proposition \ref{two sequences of external angles 2}, the set $V_k$ is nonempty. 

By the choice of $\{x_k\}_{k=1}^{{m}}$, 
the plane has a partition $$\mathbb{C}={\left(\bigcap_{k=0}^{{m}} S_k\right)} \bigsqcup{{\left(\bigsqcup_{k=0}^{{m}}(\mathbb{C}\setminus S_k)\right)}}.$$
For each $v\in V_0$, there exists a unique $1\leq k\leq {{m}}$ such that $x_k\in(x_0,x_v]$, so $v\in\mathbb{C}\setminus S_k$. 
This implies $V_0\subset\bigsqcup_{k=1}^{{m}}(\mathbb{C}\setminus S_k)$. 

We claim that there is at least one integer $1\leq k\leq{{m}}$ such that $X(S_k)\not\subset\overline{S_k}$ (corresponding to Case 1). 
Suppose $X(S_k)\subset\overline{S_k}$ for all $1\leq k\leq{{m}}$; we will find a contradiction.  
By $V_1\subset X(S_1)\subset \overline{S_1}$ and $V_1\subset V_0\subset\bigsqcup_{k=1}^{{m}}(\mathbb{C}\setminus S_k) $, we have $V_1\subset\bigsqcup_{k=2}^{{m}}(\mathbb{C}\setminus S_k)$. 
Without loss of generality, we assume $V_1\cap(\mathbb{C}\setminus S_2)\neq\emptyset$. 
Choose $v_1\in V_1\cap(\mathbb{C}\setminus S_2)$. 
By Lemma \ref{common part}, we can choose $y_2\in(x_2,x_1]$ such that $X(S_2)=\bigcup_{n\geq0}f_0^{n}([x_2,y_2])$. 
Then $X(S_2)\subset\bigcup_{n\geq0}f_0^{n}([x_1,v_1])\subset X(S_1)$ because $X(S_1)$ is allowable (Lemma \ref{X(S) is allowable}). 
Thus $V_2\subset V_1$. 
By $V_2\subset X(S_2)\subset \overline{S_2}$ and $V_2\subset V_1\subset\bigsqcup_{k=2}^{{m}}(\mathbb{C}\setminus S_k) $, we have $V_2\subset\bigsqcup_{k=3}^{{m}}(\mathbb{C}\setminus S_k)$. 
Repeating the above procedure several times, we have $V_{{m}}\subset\emptyset$, a contradiction.  

Now choose $1\leq k\leq{{m}}$ such that $X(S_k)\not\subset\overline{S_k}$ and choose $v\in V_0$ such that $x_v=x_k$. 
For any minimal sector $S = {{S}}_{f_0}(\theta,\theta')$ with a fixed root point in  $\partial U_{f_0}(v)$, 
at least one of the following holds: 
\begin{itemize}
\item $S=S_k$ (corresponding to Case 1); 
\item $U_{f_0}(v)\subset S$ (then $X(S)\cap\Tfp=\emptyset$, corresponding to Case 2.1); 
\item $S\subset S_0$ (then $N(\theta,\theta')<N(\theta_0,\theta'_0)$).  
\end{itemize}
By Case 1, Case 2.1 or induction, the pair $(\theta,\theta')$ satisfies (P1) and (P2). 
By the same discussion in Case 2.2.1, 
we can first deduce that every periodic point of $f$ in $\partial U_{f,v}$ is repelling, and then prove that $(\theta_0,\theta_0')$ satisfies (P1) and (P2). 
\end{proof}

\begin{example}
Consider $(f_0,\mathbf{c})$ in Example \ref{example of f0}. 
Let $g_0=f_0^2$. 
Then $g_0$ has the properties in Lemma \ref{an iterate of f0}. 
We have $|T(g_0)|_{\mathrm{p}} = \{c_1,c_2,f_0(c_2),c_3\}$, illustrated in Figure \ref{figure some minimal sectors of f0}.  
Recall that $x$ is the landing point of $R_{f_0}(0)$. 

Case 1. Let $S_0 = S_{g_0}(1/2,3/4)$. 
Then $X(S_0) = [f_0(c_4),c_3]$. 

Case 2.1. Let $S_0 = S_{g_0}(0,1/4)$. 
Then $X(S_0) = [x, c_1)$. 

Case 2.2.1. Let $S_0 = S_{g_0}(1/4,1/2)$. 
Then $X(S_0) = [x, f_0(c_2)]$. 
It follows that $V_0 = X(S_0)\cap|T(g_0)|_{\mathrm{p}} = \{c_2,f_0(c_2)\}$ and $x_0 = r(S_0) = x $. 
Furthermore, for $c_2\in V_0$, we have $g_0(x_{c_2})\neq x_{c_2}$. 
\end{example}

\begin{example}
Let $(f_0,\mathbf{c})\in\widehat{\mathcal{P}}^5$ satisfy  $f_0(c_1)=c_1$, $f_0(c_2)=c_2$, 
$f_0^2(c_3)=c_1$ and $c_4=c_3$ as in Figure \ref{figure allowable set}. 
\footnote{
$(\mathbf{c},f_0(0))\approx(-0.7723-0.9019i,~ 
0.9038+0.7137i,~ -0.0658+0.0941i,~ -0.0658+0.0941i,\\~  -0.4673+0.7881i)$.
}
Then $f_0$ is a postcritically finite hyperbolic polynomial. 
We have $\Tfp = \{c_1,c_2\}$. 

Case 2.2.2. Let $S_0 = S_{f_0}(1/4,0)$, and let $x_0 = r(S_0)$. 
Then $X(S_0) = [x_0,c_1]\cup [c_3,f_0(c_3)]$, illustrated in Figure \ref{figure allowable set}.  
It follows that $V_0 = X(S_0)\cap\Tfp = \{c_1\}$. 
For $c_1\in V_0$, we see that $x_{c_1}$ is the landing point of $R_{f_0}(1/2)$. 
Therefore $f_0(x_{c_1})= x_{c_1}$. 

\begin{figure}[ht]
\centering
\includegraphics{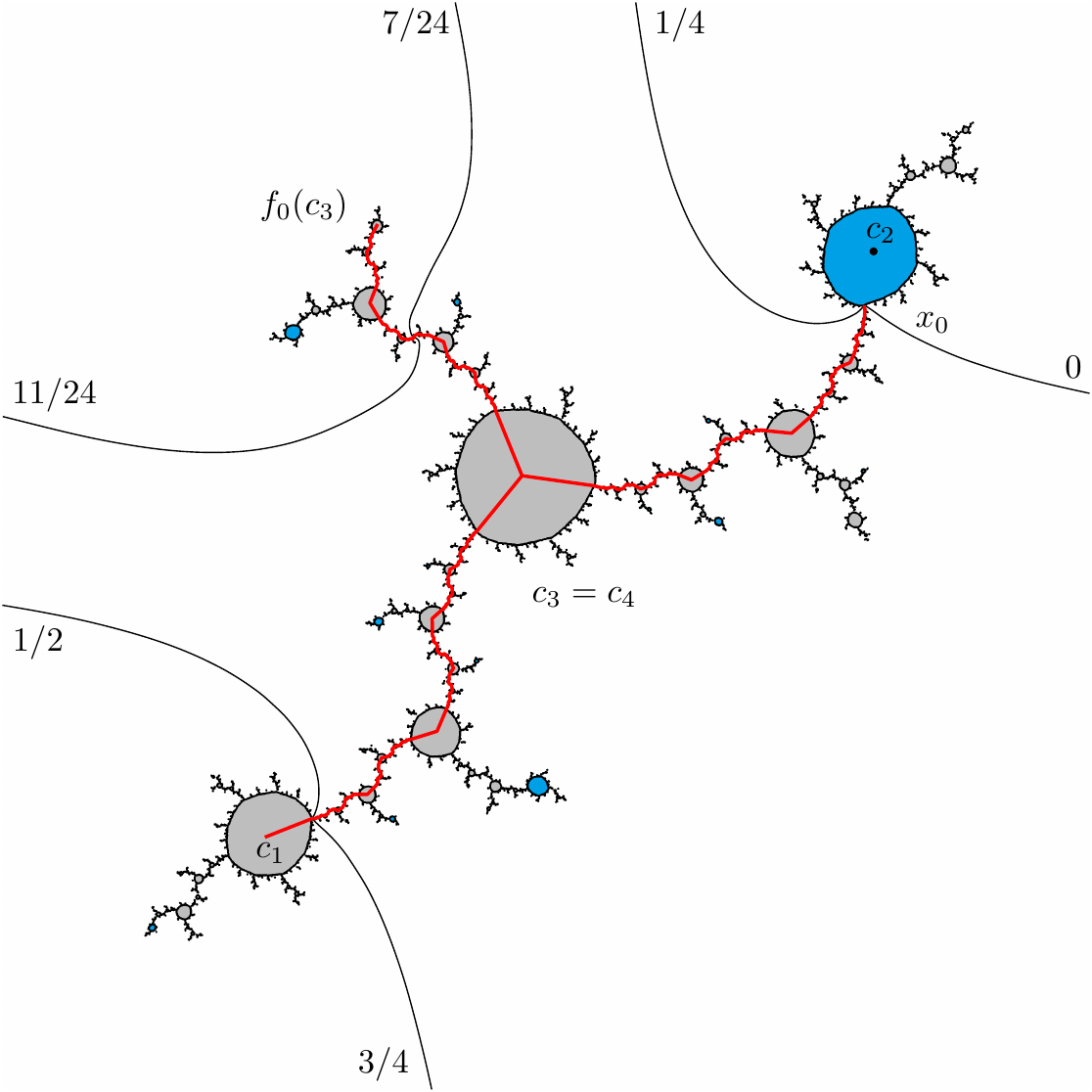}
\caption{The allowable set $X({{S}}_{f_0}(1/4,0))$.}
\label{figure allowable set}
\end{figure}
\end{example}

\subsection{Proof of Proposition \ref{behavior of critical orbits}}

Recall that for $f\in\Hbar$ and $v\in\Tfp$, we define the  B\"ottcher coordinate  $B_{f,v}:\overline{U_{f,v}}\rightarrow \overline{\mathbb{D}}$ in \S\ref{subsection internal rays}. 
Recall also that for each $v\in\Tfp$, we define $\Psi_v:\Hbar\times\overline{\mathbb{D}}\rightarrow \mathbb{C}$ by $\Psi_v(f,z) = B^{-1}_{f,v}(z)$. 

\begin{proposition}
\label{continuity of Ufv}
For each $v\in\Tfp$, the function $\Psi_v$ is continuous. 
\end{proposition}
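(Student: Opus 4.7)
The function $\Psi_v$ is already continuous on $\overline{\mathcal H}\times \mathbb D$ by Lemma \ref{continuity on D}, so the task is to establish joint continuity at each boundary point $(f_*, z_*)\in \overline{\mathcal H}\times \partial\mathbb D$. The main pointwise ingredient is the stability of internal rays at rational angles: by Proposition \ref{behavior of critical orbits} combined with Lemma \ref{key lemma}, every periodic point on $\partial U_{f,v}$ is repelling for every $f\in \overline{\mathcal H}$, so for each $t\in \QZ$ the internal ray $R_{f,v}(t)$ lands at a pre-repelling point, and Lemma \ref{stability of internal rays} gives the continuity of $f\mapsto \Psi_v(f, e^{2\pi it})$ on $\overline{\mathcal H}$. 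Combined with the continuity of $\Psi_v(f_*,\cdot)$ on $\overline{\mathbb D}$ (since $\partial U_{f_*,v}$ is a quasicircle by semi-hyperbolicity), this yields separate continuity on a dense subset of $\overline{\mathcal H}\times \overline{\mathbb D}$.

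Write $z_*=e^{2\pi i \tau}$ and fix $\varepsilon>0$. I would choose rationals $t_1<\tau<t_2$ close to $\tau$ and $r\in (0,1)$ close to $1$ so that the pie-slice
$$P_*=B_{f_*,v}^{-1}\bigl\{\rho e^{2\pi is}:\rho\in [r,1],\ s\in [t_1,t_2]\bigr\}$$
has diameter less than $\varepsilon/2$; this is possible by uniform continuity of $\Psi_v(f_*,\cdot)$ on $\overline{\mathbb D}$. The goal is then to show that the analogous region
$$P(f)=B_{f,v}^{-1}\bigl\{\rho e^{2\pi is}:\rho\in [r,1],\ s\in [t_1,t_2]\bigr\}$$
satisfies $\diam P(f)<\varepsilon$ for all $f$ in a neighborhood $\mathcal N\subset\overline{\mathcal H}$ of $f_*$; since $\Psi_v(f,z)\in P(f)$ whenever $|z|\geq r$ and $\arg z/(2\pi)\in [t_1,t_2]$, joint continuity will follow at once. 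The region $P(f)$ is bounded by four arcs: the two truncated internal rays $B_{f,v}^{-1}\{\rho e^{2\pi i t_j}:\rho\in [r,1]\}$ $(j=1,2)$ are Hausdorff-continuous in $f$ by Lemma \ref{stability of internal rays}, and the inner arc $B_{f,v}^{-1}\{re^{2\pi is}:s\in [t_1,t_2]\}$ is continuous in $f$ uniformly in $s$ by Lemma \ref{continuity on D}.

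The main obstacle is the outer arc $A(f)=B_{f,v}^{-1}\{e^{2\pi is}:s\in [t_1,t_2]\}\subset \partial U_{f,v}$: while its endpoints $\Psi_v(f, e^{2\pi it_j})$ vary continuously in $f$ by the pointwise result above, bounding $\diam A(f)$ uniformly in $f\in \mathcal N$ requires quantitative control of the Jordan curves $\partial U_{f,v}$. To provide this I would invoke the uniform quasicircle property of the family $\{\partial U_{f,v}:f\in \overline{\mathcal H}\}$ established in the appendix \S\ref{appendix u-qc}; equivalently, the boundary extensions $\Psi_v(f,\cdot)|_{\partial\mathbb D}$ are uniformly quasisymmetric, hence equicontinuous, for $f$ near $f_*$. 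This yields $\diam A(f)\leq \omega(|t_2-t_1|)$ for some uniform modulus $\omega$ vanishing at $0$, and combined with the Hausdorff continuity of the other three boundary arcs it forces $\diam P(f)<\varepsilon$ for $f$ near $f_*$, completing the proof of joint continuity.
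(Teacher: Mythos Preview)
Your approach has a circular dependency. To control the outer arc $A(f)\subset\partial U_{f,v}$ you invoke the uniform quasicircle property from Appendix~\ref{appendix u-qc} (Proposition~\ref{capture-qc}), but if you read that proof you will see that property (P1) in the case $t\in A$ is established precisely by citing Proposition~\ref{continuity of Ufv}. So the uniform quasicircle result is logically downstream of the statement you are trying to prove. (Similarly, your appeal to semi-hyperbolicity for $\partial U_{f_*,v}$ being a quasicircle goes through Corollary~\ref{semi-hyperbolicity}, which rests on Proposition~\ref{behavior of critical orbits}, whose first claim is deduced from Proposition~\ref{continuity of Ufv}; this one is not fatal since you only need that $B_{f_*,v}^{-1}$ extends continuously, which follows from \cite{RY2022} and Lemma~\ref{key lemma} alone.)

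The paper sidesteps the need for any uniform control of the boundary arcs $A(f)$ by going \emph{outside} $U_{f,v}$. Instead of a pie-slice bounded by a piece of $\partial U_{f,v}$, it builds the trapping region $X_\varepsilon$ using, in addition to the two internal rays $R_{f,v}(t_1),R_{f,v}(t_2)$ and an inner equipotential, the two \emph{external} rays $R_f(\theta_v^-(t_1)),R_f(\theta_v^-(t_2))$. Stability of these external rays (Lemmas~\ref{key lemma} and~\ref{stability of external rays}) and of the internal rays (Lemma~\ref{stability of internal rays}) shows $\Psi_v(f_n,z_n)\in X_\varepsilon$ for large $n$; upper semi-continuity of $K(f)$ then forces any subsequential limit $w$ into the limb $L_{U_{f,v},\Psi_v(f,z)}$ via~(\ref{limb1}). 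When the limb is nontrivial one uses stability of the external rays $R_f(\theta_v^\pm(t))$ once more to pin $w$ down to the root. The point is that external rays and the limb structure, both available before Proposition~\ref{continuity of Ufv}, substitute for any equicontinuity of $B_{f,v}^{-1}|_{\partial\mathbb D}$.
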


\begin{proof}
By Lemma \ref{continuity on D}, the function $\Psi_v$ is continuous on $\Hbar\times\mathbb{D}$. 
Now let $(f,z)\in \Hbar\times\partial\mathbb{D}$ 
and $\{(f_n,z_n)\}_{n\geq1}\subset \Hbar\times\overline{\mathbb{D}}$. 
Suppose $(f_n,z_n)\rightarrow(f,z)$ and $\Psi_v(f_n,z_n)\rightarrow w$ as $n\rightarrow\infty$. 
We will show $w = \Psi_v(f,z)$, which implies $\Psi_v$ is continuous at $(f,z)$.  

Choose $t\in\RZ$ so that $z=e^{2\pi i t}$. 
Given $\varepsilon>0$. Choose $t_1,t_2\in\QZ$ such that $t_1,t,t_2$ are in positive cyclic order and $\operatorname{frac}(t_2-t_1)<\varepsilon$. 
Let $X_\varepsilon$ be the closure of the connected component of 
$$\mathbb{C}\setminus\Big(  B^{-1}_{f,v}((1-\varepsilon)\partial\mathbb{D})
\cup \overline{R_{f,v}(t_1)\cup R_{f,v}(t_2)\cup R_f(\theta_v^-(t_1)) \cup R_f(\theta_v^-(t_2))}
\Big)$$ 
containing $\Psi_v(f,z)$;  
see Lemma \ref{invariant external angles} for $\theta^-_v$ and $\theta^+_v$. 
By the continuity of $\Psi_v$ on $\Hbar\times ((1-\varepsilon)\partial\mathbb{D})$ 
and the stability of dynamical rays (Lemmas \ref{key lemma}, \ref{stability of external rays} and \ref{stability of internal rays}), 
we have $\Psi_v(f_n,z_n)\in X_\varepsilon$ for $n$ large enough, so $w\in X_\varepsilon$. 
By the upper semi-continuity of filled Julia sets, we have $w\in K(f)$. 
Then (\ref{limb1}) gives 
$$w\in K(f)\cap\bigcap_{\varepsilon>0} X_\varepsilon = L_{U_{f,v},\Psi_v(f,z)} =:L.$$

If $L = \{\Psi_v(f,z)\}$, then $w = \Psi_v(f,z)$. 

If $L \neq \{\Psi_v(f,z)\}$, then $\theta^-_v(t)\neq\theta^+_v(t)$. 
Note that $S_{f_0}(\theta^-_v(t),\theta^+_v(t))$ is a sector attaching to $U_{f_0}(v)$. 
By Lemma \ref{sectors attaching to U}, the angles $t$, $\theta^-_v(t)$ and $\theta^+_v(t)$ are all rational. 
Since $\Psi_v(f_n,z_n)\in \overline{S_{f_n}(\theta^+_v(t),\theta^-_v(t))}$, 
by the stability of external rays (Lemmas  \ref{key lemma} and \ref{stability of external rays}), 
we have $w\in \overline{S_f(\theta^+_v(t),\theta^-_v(t))}$. 
Then $L\cap \overline{S_f(\theta^+_v(t),\theta^-_v(t))} = \{\Psi_v(f,z)\}$ gives $w = \Psi_v(f,z)$. 
\end{proof}

By Proposition \ref{continuity of Ufv}, 
we can describe the behavior of the critical orbits for polynomials in  $\partial\mathcal{H}$. 

\begin{proof}
[Proof of Proposition \ref{behavior of critical orbits}]
Let $f_0$ be the center of $\mathcal{H}$. 
Take $n$ to be an integer such that $f_0^{n}(\Tfn)\subset\Tfp$. 
Then $g^{n}(\crit(g))\subset A(g)$ for any $g\in\mathcal{H}$. 
The first claim follows from Proposition \ref{continuity of Ufv} and the continuity of critical sets.  

By the first claim, the polynomial $f$ has no parabolic cycle and no Siegel cycle in $\mathbb{C}$, 
and no Cremer point outside $\partial A(f)$. 
Combining this with Lemma \ref{key lemma}, the second claim holds. 

Let $c$ be a critical point of $f$ in $\partial U_{f,v}$ for some $v\in\Tfp$. 
Since the limb $L_{U_{f,v},c}$ is nontrivial, we have $\theta_{f,v}^-(t)\neq\theta_{f,v}^+(t)$, where $t$ is the internal angle of $c$ with respect to $U_{f,v}$. 
By Lemma \ref{invariant external angles}, we have $\theta_{f,v}^-(t)=\theta_{f_0,v}^-(t)$ and $\theta_{f,v}^+(t)=\theta_{f_0,v}^+(t)$. 
Note that $S_{f_0}(\theta^-_{f_0,v}(t),\theta^+_{f_0,v}(t))$ is a sector attaching to $U_{f_0}(v)$. 
By Lemma \ref{sectors attaching to U}, the angles $\theta^-_{f_0,v}(t)$ and $\theta^+_{f_0,v}(t)$ are rational. 
It follows that $c$ is preperiodic. 
This shows the third claim. 
\end{proof}

The following example shows there may be critical points in $\partial A(f)$ for some $f\in\partial\mathcal{H}$. 
\begin{example}
\label{example F2}
Let $$\mathcal{F}=\big\{(\mathbf{c},a)\in \mathbb{C}^4{;~}c_1+c_2+c_3=0, f_{\mathbf{c},a}(c_1)=c_1, f_{\mathbf{c},a}(c_2)=c_2\big\}.$$
Then $\mathcal{F}$ has two components: 
\begin{align*}
\mathcal{F}_1 & = \big\{(\mathbf{c},a)\in\mathcal{F}{;~} c_1=c_2\big\} \\
& = {\left\{{\left(-\frac{c}{2}, -\frac{c}{2}, c, -\frac{8c + 3 c^4}{16}\right)}{;~} c\in\mathbb{C}\right\}}\cong\mathbb{C}, \\
\mathcal{F}_2 & = \big\{(\mathbf{c}.a)\in\mathcal{F}{;~} c_1\neq c_2\big\} \\
& = {\left\{{\left(\frac{-1-b^3}{2b},\frac{1-b^3}{2b},b^2, \frac{(1+3b^6)(1-b^6)}{16b^4} \right)}{;~} b\in\mathbb{C}\setminus\{0\}\right\}}\cong  \mathbb{C}\setminus\{0\}.
\end{align*}
See Figure \ref{figure F2} for $\mathcal{F}_2$. 

\begin{figure}[ht]
\centering
\includegraphics{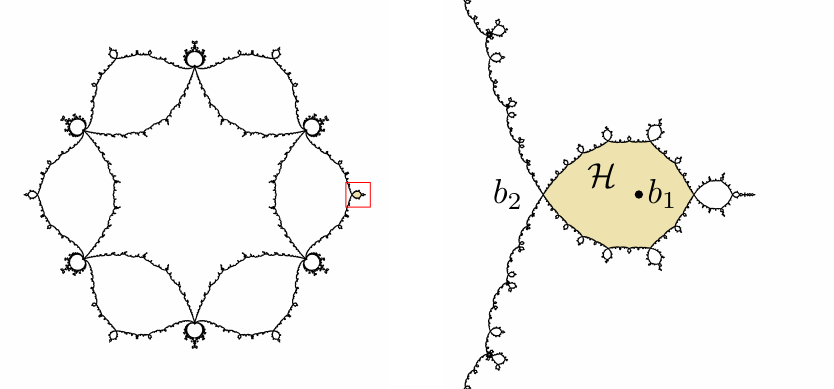}
\caption{The non-hyperbolic locus and a capture hyperbolic component $\mathcal{H}$
in $\mathcal{F}_2$, projected into the $b$-plane.}
\label{figure F2}
\end{figure}

Let $f_1(z)=(z^2-1)^2-1=z^4-2z^2$ and $\mathbf{c}_1=(-1,0,1)$. 
Then $J(f_1)=J(z^2-1)$ is the basilica, and $(f_1,\mathbf{c}_1)$ corresponds to $b_1=1$. 
Let $\mathcal{H}\subset\mathcal{F}_2$ be the capture hyperbolic component with center $(f_1,\mathbf{c}_1)$. 
For any $(f,\mathbf{c})\in\mathcal{H}$, we have $f(c_3)\in U_f(c_1)$. 
Let $(f_2,\mathbf{c}_2)\in\partial\mathcal{H}$ correspond to $b_2 = \sqrt[3]{2+\sqrt{7}} / \sqrt{3}$. 
Then $f_2(c_{2,3})$ is a repelling fixed point of $f_2$, and $c_{2,3}\in\partial U_{f_2}(c_{2,2})$. 
See Figure \ref{figure basilica}. 

\begin{figure}[ht]
\centering
\includegraphics{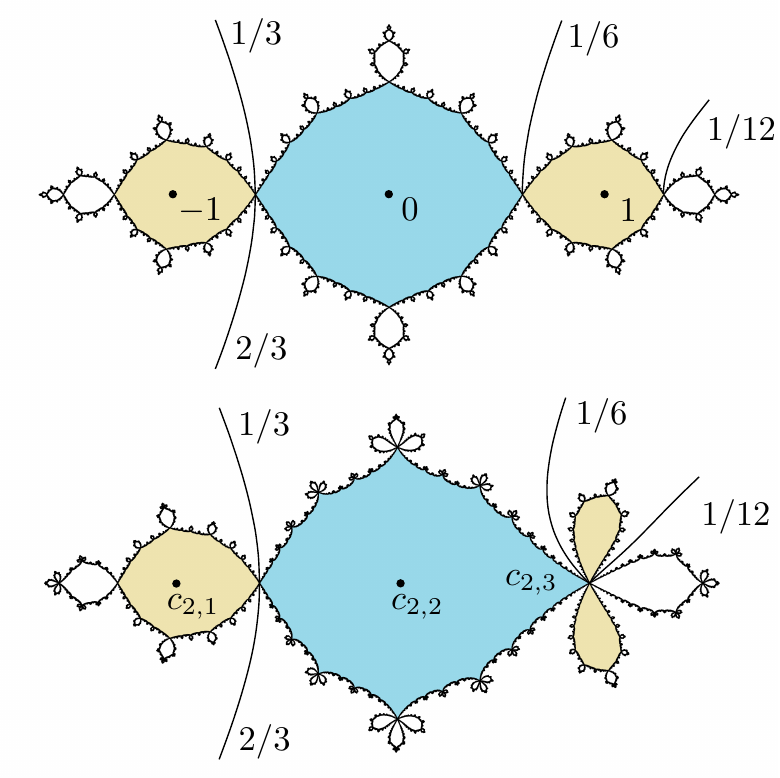}
\caption{Julia sets for $(f_1,\mathbf{c}_1)\in\mathcal{H}$ and $(f_2,\mathbf{c}_2)\in\partial\mathcal{H}$.} 
\label{figure basilica}
\end{figure}
\end{example}

\subsection{Semi-hyperbolicity}
In this subsection, we will investigate more properties for $f\in\Hbar$ based on the behavior of critical orbits. 

For $z\in\mathbb{C}$ and $r>0$, let $\mathbb{D}(z,r) = \{w\in\mathbb{C}{;~}|w-z|<r\}$. 
Let ${\diam}(A)=\sup\{|x-y| {;~}  x,y\in A\}$ be the diameter of $A\subset\mathbb{C}$. 

\begin{proposition}
[Characterizations of semi-hyperbolicity \cite{CJY}]
\label{equiv-condi semi-hyper}
Let $f$ be a polynomial of degree $d\geq2$. 
Then the following conditions are equivalent. 
\begin{enumerate}
\item $f$ has no parabolic cycle and $c\notin\overline{\{f^{n}(c) {;~}  n\geq 1\}}$ whenever $c\in\crit(f)\cap J(f)$. 
\item $U_{f,\infty}$ is a John domain. 
\footnote{
For the definition of John domain, we refer to \cite{CJY} or \cite[\S5.2]{Pom}. 
}
\item There is an $r>0$ and a $D<\infty$ such that for every $x\in J(f)$, every $n\in\mathbb{N}$  
and every connected component $V$ of $f^{-n}(\mathbb{D}(x,r))$, we have $\deg(f^n|_V)\leq D$. 
\end{enumerate}
\end{proposition}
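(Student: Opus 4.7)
The statement is a classical theorem of Carleson--Jones--Yoccoz, so my plan is to outline the cyclic chain of implications $(1)\Rightarrow(3)\Rightarrow(2)\Rightarrow(1)$, treating (1) as the working definition of semi-hyperbolicity.

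For $(1)\Rightarrow(3)$, the plan is to exploit non-recurrence of critical points to obtain a uniform separation. Let $C = \crit(f) \cap J(f)$. Under (1), for each $c \in C$ we have $c \notin \overline{\{f^n(c) {;~} n \geq 1\}}$, and combined with the absence of parabolic cycles, a classical compactness argument yields an $\varepsilon_0 > 0$ such that $\mathbb{D}(c, \varepsilon_0) \cap \overline{\{f^n(c') {;~} n \geq 1, c'\in C\}} = \emptyset$ for every $c \in C$. Choose $r \in (0, \varepsilon_0/2)$. For any $x \in J(f)$, any $n$, and any component $V$ of $f^{-n}(\mathbb{D}(x,r))$, I would count the number of indices $0 \leq k < n$ such that $f^k(V)$ contains a critical point. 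The key lemma to establish is that this count is uniformly bounded: if $f^k(V)$ hits a critical point $c \in C$, then $f^k(V) \subset \mathbb{D}(c, \varepsilon_0)$ (by choice of $r$, since diameters of pullbacks shrink once one stays in a non-postcritical region, at least along a well-chosen subsequence). A telescoping argument using the no-return property of $c$ then bounds the number of such $k$ by some universal constant $D_0$; since each such visit contributes a factor bounded by $\max_c \deg(f,c)$, one obtains the bound $D$ on $\deg(f^n|_V)$.

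For $(3)\Rightarrow(2)$, the idea is to use Koebe-type distortion with bounded multiplicity. Fix an equipotential $\Gamma$ around $K(f)$ and a base point $z_0 \in U_{f,\infty}$ far from $K(f)$. For any $w \in U_{f,\infty}$ close to $J(f)$, one selects the first pullback that brings $w$ into a controlled scale, then applies (3) to the inverse branch of $f^n$ sending a disk around $f^n(w) \in \Gamma$ to a neighborhood of $w$. Because the degree is at most $D$, the inverse branch factors as a composition of at most $D$ proper maps of bounded degree post-composed with a conformal map; standard distortion estimates for bounded-degree proper maps give a uniform Koebe constant, which translates into the John condition for $U_{f,\infty}$: every point of $U_{f,\infty}$ can be joined to $z_0$ by an arc whose distance to the boundary grows linearly with arc-length.

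For $(2)\Rightarrow(1)$, John domains have uniform boundary expansion in the following sense: there is no cusp-like access to the boundary. A parabolic periodic point would produce a Leau--Fatou petal protruding into $U_{f,\infty}$, violating the John carrot condition at the parabolic point. Similarly, a recurrent critical point in $J(f)$ would force the existence of inverse branches of arbitrarily high degree on a definite scale, contradicting the Koebe-type control implied by John. The main obstacle in the whole argument is the first implication $(1)\Rightarrow(3)$: the counting of critical visits requires a careful selection of scales and the use of Ma\~n\'e's theorem on expansion away from critical orbits, which is where CJY invest the bulk of their technical work; the remaining implications are, by comparison, relatively soft consequences of quasiconformal/Koebe distortion.
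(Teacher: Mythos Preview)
The paper does not prove this proposition: it is stated with the attribution \cite{CJY} and used as a black box, with no proof environment following it. So there is nothing to compare your argument against in the paper itself.

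Your outline is a reasonable summary of the Carleson--Jones--Yoccoz strategy, and you correctly identify $(1)\Rightarrow(3)$ as the substantive step. That said, your sketch of $(1)\Rightarrow(3)$ is too loose to stand as a proof: the assertion that ``if $f^k(V)$ hits a critical point $c\in C$, then $f^k(V)\subset\mathbb{D}(c,\varepsilon_0)$'' presupposes a diameter control on the intermediate pullbacks that you have not yet established at that stage of the argument --- this is precisely the kind of circularity the CJY paper has to break, and it does so by a more delicate inductive scheme on scales together with Ma\~n\'e-type expansion estimates, not by a single telescoping count. Your sketches of $(3)\Rightarrow(2)$ and $(2)\Rightarrow(1)$ are in the right spirit.
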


We say $f$ is \emph{semi-hyperbolic} if any of the equivalent conditions in Proposition \ref{equiv-condi semi-hyper} are satisfied.

\begin{lemma}
[{\cite{CJY}}]
\label{properties semi-hyper}
Let $f$ be a semi-hyperbolic polynomial of degree $d\geq2$. 
Then $f$ has the following properties. 
\begin{enumerate}
\item If $K(f)\neq J(f)$, then every bounded Fatou components of $f$ is a quasidisk. 
\item There exists an $r>0$, a $\lambda>1$ and an $M>0$ such that for every $x\in J(f)$, every $n\in\mathbb{N}$ and every connected component $V$ of $f^{-n}(\mathbb{D}(x,r))$, we have  ${\diam}(V)\leq M\lambda^{-n}$.
\end{enumerate}
\end{lemma}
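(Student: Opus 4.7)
The plan is to derive both conclusions from the bounded backward degree property in Proposition \ref{equiv-condi semi-hyper}(3), using the Koebe distortion theorem and modulus estimates as the two classical tools.

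For part (1), I would replace $f$ by an iterate so that the bounded Fatou component $U$ in question is fixed; semi-hyperbolicity rules out parabolic, Siegel and Herman cases, so $U$ contains an attracting or superattracting fixed point $z_0$. A Jordan curve is a quasicircle iff both sides are John domains, so it suffices to verify the interior and exterior John conditions at every point of $\partial U$. For the interior condition, I would take a small round disk $D_0\subset U$ centered at $z_0$ and, for each boundary point $z\in\partial U$, pull $D_0$ back along an orbit under $f^{-1}$ landing at $z$ in B\"ottcher or linearizing coordinates; Proposition \ref{equiv-condi semi-hyper}(3) bounds the degree of these pullback maps by a uniform $D$, and Koebe distortion then produces a chain of preimage disks of geometrically decreasing diameter whose centers form a John arc from $z_0$ to $z$. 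For the exterior condition, I would run the same pullback argument starting from a disk sitting in $\mathbb{C}\setminus\overline{U}$; the bounded backward degree keeps distortion uniformly controlled there as well, so the same chain construction works from the outside.

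For part (2), I would use the telescoping modulus argument. Cover $J(f)$ by finitely many disks $\mathbb{D}(x_j,r_0)$ satisfying the bounded backward degree property, and set $A_j=\mathbb{D}(x_j,r_0)\setminus\overline{\mathbb{D}(x_j,r_0/2)}$, each of modulus $\tfrac{\log 2}{2\pi}$. For a component $V$ of $f^{-n}(\mathbb{D}(x,r_0))$ and each intermediate index $0<k<n$, I would select $x_{j(k)}$ near a basepoint of $f^k(V)$ and pull $A_{j(k)}$ back along $f^{n-k}$; by the bounded backward degree each pullback is still an annulus of modulus at least $\tfrac{\log 2}{2\pi D}$, essential around the corresponding pullback of $\mathbb{D}(x_{j(k)},r_0/2)$ inside $V$. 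Iterating yields $\gtrsim n$ disjoint essential annuli nested inside a bounded Euclidean disk; the Gr\"otzsch inequality adds their moduli linearly in $n$, and the standard estimate $\diam\leq Ce^{-2\pi\,\mathrm{mod}}\cdot R$ converts this linear modulus growth into the exponential shrinking $\diam(V)\leq M\lambda^{-n}$.

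The main obstacle in part (2) is ensuring that the chosen annuli are genuinely essential at every intermediate level, equivalently that $f^{n-k}(V)$ sits well inside $\mathbb{D}(x_{j(k)},r_0/2)$ so that $A_{j(k)}$ really surrounds $f^{n-k}(V)$. Guaranteeing this requires a covering and recurrence argument along the orbit: shrink $r_0$ if necessary so that every $y\in J(f)$ lies in $\mathbb{D}(x_j,r_0/2)$ for some $j$, and choose $x_{j(k)}$ accordingly at each step. This is precisely where the \emph{uniform} bounded degree, rather than a merely pointwise version, is indispensable, and it is the same uniformity that makes the distortion control in part (1) work at every boundary point with a single John constant.
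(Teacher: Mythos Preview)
The paper does not prove this lemma: it is stated with the citation \cite{CJY} and used as a black box, so there is no ``paper's own proof'' to compare against. Your sketch is a reasonable outline of the standard Carleson--Jones--Yoccoz argument, but a couple of points deserve care if you want to turn it into a proof.

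In part~(1), you invoke ``a Jordan curve is a quasicircle iff both sides are John domains,'' but you have not yet established that $\partial U$ is a Jordan curve; you would need local connectivity of $J(f)$ (which does follow from the John property of $U_{f,\infty}$) plus the fact that $U$ is simply connected and the boundary has no cut points. The pullback chain you describe for the interior John condition is morally correct but vague: the precise construction in \cite{CJY} uses the internal rays in B\"ottcher/linearizing coordinates as John arcs and controls the carrot widths via the bounded-degree Koebe estimate.

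In part~(2), there is an indexing slip: if $A_{j(k)}$ is an annulus around (a neighborhood of) $f^k(V)$, you pull it back along $f^k$, not $f^{n-k}$, to get an annulus around $V$. More substantively, the pullback of a round annulus under a bounded-degree branched cover need not be a topological annulus when critical values land inside it, so ``still an annulus of modulus at least $\tfrac{\log 2}{2\pi D}$'' is not literally true; one either restricts to annuli avoiding the (finitely many) postcritical values at that scale, or uses the extremal-length estimate for the modulus of the separating component. You also need to ensure disjointness of the nested pullback annuli, which is usually handled by taking only every $m$-th step for a fixed $m$ depending on $D$ and $r_0$. These are all repairable, and your identification of the essentiality issue as the crux is exactly right.
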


We say that a compact set $K\subset\mathbb{C}$ is \emph{conformally removable}, 
if any homeomorphism of $\mathbb{C}$, which is conformal on $\mathbb{C}\setminus K$, 
is conformal on $\mathbb{C}$.

\begin{corollary}
[Semi-hyperbolicity]
\label{semi-hyperbolicity}
Let $\mathcal{H}\subset\mathcal{F}$ be a capture hyperbolic component, and let $f\in\Hbar$. 
Then $f$ is semi-hyperbolic. 
Moreover, the Julia set $J(f)$ is locally connected and conformally removable. 
\end{corollary}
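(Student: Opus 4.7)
The plan is to verify semi-hyperbolicity via the characterization in Proposition \ref{equiv-condi semi-hyper}(1), feeding it with the dynamical information already packaged in Proposition \ref{behavior of critical orbits}; the moreover part will then follow from standard results on John domains. For $f\in\mathcal{H}$, hyperbolicity is a direct consequence, so the substantive case is $f\in\partial\mathcal{H}$.

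The two conditions to check are that $f$ has no parabolic cycle (immediate from Proposition \ref{behavior of critical orbits}(2)), and that every $c\in\crit(f)\cap J(f)$ is non-recurrent in the sense $c\notin\overline{\{f^n(c){;~}n\geq 1\}}$. For the latter I would fix the uniform $N$ from Proposition \ref{behavior of critical orbits}(1) so that $f^N(\crit(f))\subset\overline{A(f)}$. For $c\in\crit(f)\cap J(f)$, forward invariance of $J(f)$ and of $\partial A(f)$ forces the tail $\{f^n(c){;~}n\geq N\}$ into $\partial A(f)$. If $c\in\partial A(f)$, then Proposition \ref{behavior of critical orbits}(3) makes $c$ preperiodic, and because periodic points in $\partial A(f)$ are repelling by Lemma \ref{key lemma}, $c$ itself cannot be periodic --- otherwise it would be a superattracting periodic critical point and could not lie in $J(f)$ --- so its orbit is a finite set missing $c$. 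If $c\notin\partial A(f)$, the same superattracting obstruction rules out $c=f^n(c)$ for $1\leq n<N$, while the tail already sits in $\partial A(f)$ and so avoids $c$. Either way, the closure of the forward orbit lies in the closed set $\{f(c),\ldots,f^{N-1}(c)\}\cup\partial A(f)$ and omits $c$, establishing non-recurrence.

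Once semi-hyperbolicity is in hand, the moreover part is short: closedness of $\mathcal{C}(\Pd)$ gives $\Hbar\subset\mathcal{C}(\Pd)$, so $J(f)$ is connected and coincides with $\partial U_{f,\infty}$. Proposition \ref{equiv-condi semi-hyper}(2) makes $U_{f,\infty}$ a John domain; its boundary is then locally connected, yielding local connectivity of $J(f)$, and the Jones--Smirnov theorem on conformal removability of John-domain boundaries gives removability of $J(f)$. The main obstacle is the case split in the non-recurrence step, where all three conclusions of Proposition \ref{behavior of critical orbits} together with the repelling property from Lemma \ref{key lemma} must be combined uniformly to rule out recurrence regardless of whether $c$ lies on $\partial A(f)$ or not.
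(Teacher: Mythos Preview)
Your proof is correct and follows essentially the same approach as the paper's: both verify condition (1) of Proposition \ref{equiv-condi semi-hyper} using Proposition \ref{behavior of critical orbits}, and then deduce local connectivity and conformal removability from the John-domain characterization together with the Jones--Smirnov theorem. The paper's proof is terser and leaves the non-recurrence deduction implicit, whereas you spell out the case split on whether $c\in\partial A(f)$; your invocation of Lemma \ref{key lemma} is unnecessary (the direct ``periodic critical point $\Rightarrow$ superattracting $\Rightarrow$ Fatou'' argument you give immediately after already suffices), but this does not affect correctness.
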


\begin{proof}
By Proposition \ref{behavior of critical orbits}, 
the polynomial $f$ has no parabolic cycle and $c\notin\overline{\{f^{n}(c) {;~}  n\geq 1\}}$ whenever $c\in\crit(f)\cap J(f)$, hence $f$ is semi-hyperbolic. 
By Proposition \ref{equiv-condi semi-hyper}, the basin $U_{f,\infty}$ is a John domain. 
By \cite[Corollary 1]{Jones} or \cite[Corollary 1]{JS}, the boundary of a John domain is conformally removable. 
So $J(f)=\partial U_{f,\infty}$ is conformally removable. 
Since $J(f)$ is connected, it is locally connected.
\end{proof}

\begin{corollary}
\label{inclusion relation of real laminations}
Let $\mathcal{H}\subset\mathcal{F}$ be a capture hyperbolic component with center $f_0$, and let $f\in\Hbar$. 
Then $\lambda_\mathbb{R}(f_0)\subset\lambda_\mathbb{R}(f)$. 
%
%
\end{corollary}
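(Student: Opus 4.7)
The plan is to reduce the claimed inclusion to a statement about rational laminations and then pass to the limit along a path in $\Hbar$. First, since $f_0$ is postcritically finite and hyperbolic, $\crit(f_0)$ lies in the Fatou set, so no critical point of $f_0$ sits on the boundary of any bounded Fatou component. Combined with the local connectivity of $J(f_0)$, Lemma \ref{connection between rational lamination and real lamination} then yields $\lambda_\mathbb{R}(f_0)=\langle\lambda_\mathbb{Q}(f_0)\rangle_\mathbb{R}$. On the other hand, by Corollary \ref{semi-hyperbolicity} the Julia set $J(f)$ is locally connected, so $\lambda_\mathbb{R}(f)$ is a closed equivalence relation on $\RZ$. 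Hence the inclusion $\lambda_\mathbb{R}(f_0)\subset\lambda_\mathbb{R}(f)$ will follow once I establish $\lambda_\mathbb{Q}(f_0)\subset\lambda_\mathbb{R}(f)$.

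Second, I would connect $f_0$ to $f$ by a path inside $\Hbar$. Since $\mathcal{H}$ is a topological cell (Lemma \ref{basic facts of H}), there is a continuous $\gamma:[0,1]\to\Hbar$ with $\gamma(0)=f_0$, $\gamma(1)=f$, and $\gamma([0,1))\subset\mathcal{H}$. For every $t\in[0,1)$, the map $\gamma(t)$ is hyperbolic. Rational external rays of $f_0$ land at repelling preperiodic points whose forward orbits avoid $\crit(f_0)$, so Lemma \ref{stability of external rays 0} together with Lemma \ref{limit-land-rela}(\ref{limit-land-rela-2}) show that any landing identification $(\theta,\theta')\in\lambda_\mathbb{Q}(f_0)$ with $\theta\neq\theta'$ persists to all hyperbolic parameters close to $f_0$. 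Connectedness of $\mathcal{H}$ then propagates this local constancy globally, so $\lambda_\mathbb{Q}(\gamma(t))=\lambda_\mathbb{Q}(f_0)$ for every $t\in[0,1)$; in particular, the rays $R_{\gamma(t)}(\theta)$ and $R_{\gamma(t)}(\theta')$ land at a common (pre-repelling) point for every such $t$.

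Third, I would pass to the limit. Since $f\in\Hbar$ is semi-hyperbolic (Corollary \ref{semi-hyperbolicity}), it has no parabolic cycle and every periodic point of $f$ in $J(f)$ is repelling. By \cite[Theorem 18.10]{Mil}, the rays $R_f(\theta)$ and $R_f(\theta')$ land at preperiodic, hence pre-repelling, points. Choosing $t_n\nearrow 1$ and applying Lemma \ref{limit-land-rela}(\ref{limit-land-rela-1}) to the sequence $\gamma(t_n)\to f$ in $\mathcal{C}(\Pd)$, for which the rays share a common landing at each stage, we conclude that $R_f(\theta)$ and $R_f(\theta')$ land at the same point. This gives $(\theta,\theta')\in\lambda_\mathbb{Q}(f)\subset\lambda_\mathbb{R}(f)$, completing the argument.

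The main obstacle is the second step, namely verifying that $\lambda_\mathbb{Q}$ is constant throughout $\mathcal{H}$. The stability statements already in the paper, Lemma \ref{stability of external rays 0} and Lemma \ref{limit-land-rela}(\ref{limit-land-rela-2}), make the local constancy essentially routine, but one must check that their hypotheses (pre-repelling landing with forward orbit disjoint from $\crit$) remain satisfied as one moves within $\mathcal{H}$; this is automatic from hyperbolicity of every $\gamma(t)$ with $t<1$. The pass-to-the-limit in the final step is exactly the content for which Lemma \ref{limit-land-rela}(\ref{limit-land-rela-1}) was designed, once semi-hyperbolicity of the boundary map $f$ rules out any parabolic landing point.
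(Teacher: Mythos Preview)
Your proof is correct and follows essentially the same route as the paper's: reduce to $\lambda_\mathbb{Q}(f_0)\subset\lambda_\mathbb{Q}(f)$ via Lemma~\ref{connection between rational lamination and real lamination}, then use Lemma~\ref{limit-land-rela}(\ref{limit-land-rela-1}) together with the absence of parabolic cycles for $f$. The paper compresses your path argument through $\mathcal{H}$ into a single invocation of Lemma~\ref{limit-land-rela}(\ref{limit-land-rela-1}), leaving implicit the standard fact that the rational lamination is constant on a hyperbolic component; you spell this out explicitly. One small imprecision: semi-hyperbolicity alone does not give ``every periodic point of $f$ in $J(f)$ is repelling'' (it only excludes parabolics), but you only need that rational rays land at pre-repelling points, which does follow from no parabolic cycles plus \cite[Theorem~18.10]{Mil}; alternatively, Proposition~\ref{behavior of critical orbits}(2) gives the full statement.
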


\begin{proof}
By Corollary \ref{semi-hyperbolicity}, 
the Julia set $J(f)$ is locally connected, hence the real lamination $\lambda_\mathbb{R}(f)$ of $f$ makes sense. 
By Proposition \ref{behavior of critical orbits}, the polynomial $f$ has no indifferent cycle. 
By Lemma \ref{limit-land-rela}(\ref{limit-land-rela-1}), we have $\lambda_\mathbb{Q}(f_0)\subset\lambda_\mathbb{Q}(f)$. 
It follows that $\lambda_\mathbb{Q}(f_0)\subset\lambda_\mathbb{R}(f)$. 
By Lemma \ref{connection between rational lamination and real lamination}, taking $\langle\cdot\rangle_\mathbb{R}$ on both sides of $\lambda_\mathbb{Q}(f_0)\subset\lambda_\mathbb{R}(f)$ gives $\lambda_\mathbb{R}(f_0)\subset\lambda_\mathbb{R}(f)$. 
\end{proof}

\section{Generalized polynomials}
\label{section generalized polynomials}
This section is the second step to prove Theorem \ref{top-boundary}. 
We will consider a model space for the closure of a capture hyperbolic component in the language of generalized polynomials, and show that the boundary of the model space is a topological sphere.

\begin{definition}
[Generalized polynomial]
Let $T = (|T|,\sigma,\delta)$ be a mapping scheme (see Definition \ref{mapping scheme}). 
A \emph{generalized polynomial} $\mathbf{f}$ over $T$ is a map $\mathbf{f}:|T|\times\mathbb{C}\rightarrow|T|\times\mathbb{C}$ 
such that $\mathbf{f}(v, z) = (\sigma(v), f_v(z))$, where $f_v\in\mathcal{P}^{\delta(v)}$, 
i.e. $f_v$ is a monic centered polynomial of degree $\delta(v)$. 
We may also write it as a collection $\mathbf{f}=(f_v)_{v\in |T|}$. 
\end{definition}

Let $\mathcal{P}^T$ be the family of all generalized polynomials over $T$.
Given $\mathbf{f}\in\mathcal{P}^T$.
The \emph{filled Julia set} $K(\mathbf{f})$ of $\mathbf{f}$ is the set of points in $|T|\times\mathbb{C}$ with
precompact forward orbits. The boundary $\partial K(\mathbf{f})$ is called the \emph{Julia set} $J(\mathbf{f})$ of $\mathbf{f}$. 
We say that a subset $A$ of $|T|\times\mathbb{C}$ is \emph{fiberwise connected} 
if $A\cap(\{v\}\times\mathbb{C})$ is connected for any $v\in|T|$.
The \emph{fiberwise connectedness locus} $\mathcal{C}(\mathcal{P}^T)$ of $\mathcal{P}^T$ consists of all $\mathbf{f}\in\mathcal{P}^T$ 
with fiberwise connected filled Julia sets.

The critical marked space $\widehat{\mathcal{P}}^T$ consists of 
$(f_v)_{v\in|T|}\in\mathcal{P}^T$ together with a collection $(\mathbf{c}_v = (c_{v,1},\dots,c_{v,\delta(v)-1}))_{v\in|T|,\ \delta(v)\geq2}$, where $\mathbf{c}_v$ is an ordered list of the critical points of $f_v$. 
Then we can identify $\widehat{\mathcal{P}}^T$ with the complex affine space 
$$\prod_{v\in|T|,\ \delta(v)\geq2}\big\{
(\mathbf{c}_v,a_v)\in\mathbb{C}^{\delta(v)}{;~}
c_{v,1}+\cdots+c_{v,\delta(v)-1}=0 \big\}. 
$$
Each $\mathbf{x} = (\mathbf{c}_v,a_v)_{v\in|T|}\in\widehat{\mathcal{P}}^T$  determines a generalized polynomial 
$\mathbf{f}_\mathbf{x}=(f_{\mathbf{c}_v,a_v})_{v\in|T|}$ in $\mathcal{P}^T$, where 
$$f_{\mathbf{c}_v,a_v}(z)=\delta(v)\int_{0}^{z}
(\zeta-c_{v,1})\cdots(\zeta-c_{v,\delta(v)-1})d\zeta +a_v.$$
If $\delta(v)=1$, then $\mathbf{c}_v$ disappears, $a_v=0$, and $f_{\mathbf{c}_v,a_v}(z) = z$. 
Define $$\widehat{\mathcal{P}}_0^T = 
\big\{(\mathbf{c}_v,a_v)_{v\in|T|}\in\widehat{\mathcal{P}}^T{;~} 
\text{$f_{\mathbf{c}_v,a_v}(z) = z^{\delta(v)}$ for any $v\in\Tp$}\big\}.$$ 
Each $\mathbf{x} = (\mathbf{c}_v,a_v)_{v\in|T|}\in\widehat{\mathcal{P}}_0^T$ can be written as $\mathbf{x} = (\mathbf{c}_v,a_v)_{v\in\Tn}$. 
Let $\model = \widehat{\mathcal{P}}_0^T \cap \mathcal{C}\big(\widehat{\mathcal{P}}^T\big)$ denote the \emph{fiberwise connectedness locus} of $\widehat{\mathcal{P}}_0^T$. 

By the Riemann-Hurwitz formula, 
we have 
\begin{lemma}
\label{connectedness-crit}
Let $f$ be a polynomial with $\deg(f)\geq2$, and let $K\subset\mathbb{C}$ be a full nondegenerate continuum. 
Then $f^{-1}(K)$ is connected if and only if $\crit(f)\subset f^{-1}(K)$. 
\end{lemma}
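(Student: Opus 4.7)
The plan is to extend $f$ to a degree-$d$ branched self-cover of $\widehat{\mathbb{C}}$ (with $f(\infty)=\infty$) and apply the Riemann--Hurwitz formula to the proper holomorphic map $f : W \to U$, where $U = \widehat{\mathbb{C}} \setminus K$ and $W = f^{-1}(U) = \widehat{\mathbb{C}} \setminus f^{-1}(K)$. Since $K$ is a full nondegenerate continuum in $\mathbb{C}$, the set $U$ is a simply connected domain in $\widehat{\mathbb{C}}$ containing $\infty$, so $\chi(U) = 1$.

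First I would show that $W$ is connected. Each connected component of $W$ is a proper holomorphic branched cover of the connected set $U$, hence surjects onto $U$. As $\infty \in U$ and $f^{-1}(\infty) = \{\infty\}$ (this is where we use that $f$ is a polynomial rather than a general rational map), every component of $W$ must contain the single point $\infty$, forcing $W$ itself to be connected. Next, denote by $N$ the number of connected components of $V := f^{-1}(K)$. Since $W$ is a connected open subset of $S^2$ whose complement in $\widehat{\mathbb{C}}$ has exactly $N$ components, a standard planar-topology argument (equivalently, Alexander duality, which gives $\widetilde{H}^1(W)\cong\widetilde{H}_0(V)=\mathbb{Z}^{N-1}$ and $\widetilde{H}^0(W)=0$) yields $\chi(W) = 2 - N$.

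On the other hand, Riemann--Hurwitz applied to $f : W \to U$ gives
$$\chi(W) \;=\; d\,\chi(U) \;-\; \sum_{c\, \in\, W \cap \crit(f|_{\widehat{\mathbb{C}}})} \bigl(\deg(f,c)-1\bigr) \;=\; d - (d-1) - k,$$
where the term $d-1$ is the contribution of the single critical point at $\infty$ (which has local degree $d$), and
$k := \sum_{c\in W \cap \crit(f)}\!(\deg(f,c)-1) \geq 0$, with equality precisely when $\crit(f) \subset V$. Equating the two expressions for $\chi(W)$ gives $N = 1 + k$, so $f^{-1}(K)$ is connected ($N=1$) if and only if $k=0$, i.e.\ if and only if $\crit(f) \subset f^{-1}(K)$.

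The subtle point to verify carefully is the connectedness of $W$: it genuinely uses that $f$ is a polynomial, since for a general rational map $f^{-1}(\infty)$ would be more than one point and $W$ could already be disconnected even when all finite critical points are captured. The other ingredient, the topological identity $\chi(W) = 2 - N$ for a connected planar open set with $N$ complementary components, is standard and will be justified via Alexander duality as above.
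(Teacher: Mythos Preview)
Your proof is correct and follows exactly the approach the paper indicates: the paper does not spell out a proof but simply prefaces the lemma with ``By the Riemann-Hurwitz formula, we have,'' and your argument is a careful implementation of precisely that idea. The only remark is that your computation already contains everything needed, so the closing paragraph about ``subtle points to verify'' is unnecessary caution---you have already fully justified both the connectedness of $W$ and the identity $\chi(W)=2-N$.
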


\begin{proposition}
\label{fiberwise connectedness locus C(P0T)}
Let $T=(|T|,\sigma,\delta)$ be a mapping scheme with 
$n:=\sum_{v\in\Tn}(\delta(v)$ $-$ $1)>0$. 
Then $\partial\model$ is homeomorphic to the sphere $S^{2n-1}$.  
\end{proposition}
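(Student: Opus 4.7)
The plan is to construct a homeomorphism $\Phi:\widehat{\mathcal{P}}_0^T\to\mathbb{C}^n$ carrying $\model$ onto the closed polydisk $\overline{\mathbb{D}}^n$; since $\overline{\mathbb{D}}^n$ is homeomorphic to the closed $2n$-ball, this forces $\partial\model\cong S^{2n-1}$.

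First I would characterize $\model$ in terms of critical orbit behavior. The fiberwise filled Julia sets satisfy $K_v=\overline{\mathbb{D}}$ for $v\in\Tp$ (since $f_v(z)=z^{\delta(v)}$) and $K_v=f_v^{-1}(K_{\sigma(v)})$ for $v\in\Tn$. By Lemma \ref{connectedness-crit}, $K_v$ is connected if and only if every critical point of $f_v$ lies in $K_v$. Iterating along the scheme, $\mathbf{f}\in\model$ if and only if for every $(v,k)$ with $v\in\Tn$ and $1\leq k<\delta(v)$ the first entry $W_{v,k}(\mathbf{f}):=f_{\sigma^{r_v-1}(v)}\circ\cdots\circ f_v(c_{v,k})$ lies in $\overline{\mathbb{D}}$, i.e.\ $\model=W^{-1}(\overline{\mathbb{D}}^n)$ with $W$ as in Proposition \ref{transver-ms}; in particular $\model$ is compact.

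Next I would construct $\Phi=(\Phi_{v,k})$ using fiberwise Böttcher coordinates together with the first-entry map. For $v\in\Tp$ set $B_v(z)=z$ on $\{|z|>1\}$, and for $v\in\Tn$ lift via the functional equation $B_{\sigma(v)}\circ f_v=B_v^{\delta(v)}$ to define $B_v$ near $\infty$, extending along the gradient lines of the fiberwise Green function; when $K_v$ is connected $B_v$ is a biholomorphism $\mathbb{C}\setminus K_v\to\mathbb{C}\setminus\overline{\mathbb{D}}$. I would then set
\[
\Phi_{v,k}(\mathbf{f})=\begin{cases}
W_{v,k}(\mathbf{f}), & c_{v,k}\in K_v,\\
B_v(c_{v,k})^{D_{v,k}}, & c_{v,k}\notin K_v,
\end{cases}
\]
with the exponent $D_{v,k}=\delta(v)\cdot\delta(\sigma v)\cdots\delta(\sigma^{r_v-1}v)$ chosen so that the two expressions match as $c_{v,k}$ crosses $\partial K_v$, via iterated application of the functional equation. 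Then $|\Phi_{v,k}(\mathbf{f})|\leq 1$ precisely when $c_{v,k}\in K_v$, so $\Phi(\model)\subset\overline{\mathbb{D}}^n$.

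Finally, I would prove $\Phi$ is a homeomorphism by induction on $|\Tn|$, peeling off a leaf nonperiodic vertex $v^*$ and using the product decomposition $\widehat{\mathcal{P}}_0^T\cong\widehat{\mathcal{P}}_0^{T'}\times\widehat{\mathcal{P}}^{\delta(v^*)}$. On the fiber over $\mathbf{f}'\in\widehat{\mathcal{P}}_0^{T'}$, the map $f_{v^*}$ is a polynomial-like branched cover onto $K_{\sigma(v^*)}(\mathbf{f}')$, and Douady--Hubbard straightening identifies the connected-fiber locus with a closed topological ball of complex dimension $\delta(v^*)-1$; continuity of the Böttcher coordinates in $\mathbf{f}'$ allows the fibers to be glued coherently. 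The hardest step will be ensuring continuity and injectivity of $\Phi$ across the critical-relation locus, where the Jacobian of $W$ vanishes (as given explicitly in Proposition \ref{transver-ms}) and the fiberwise Böttcher coordinates $B_v$ develop branching; I expect this to require a Branner--Hubbard-style analysis of parameter Böttcher coordinates, reconciling their boundary values through the explicit algebraic structure of $\widehat{\mathcal{P}}_0^T$.
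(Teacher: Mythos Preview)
Your characterization $\model=W^{-1}(\overline{\mathbb{D}}^n)$ is correct and matches the paper's starting point. However, the map $\Phi$ you propose cannot be a homeomorphism, because on $\model$ it coincides with the first-entry map $W$, and $W$ is \emph{not injective} on $\model$. Concretely, take the scheme of Example~\ref{example-W-1}: there $\widehat{\mathcal{P}}_0^T$ is parameterized by $(c_1,a)$ with $c_2=-c_1$, and $W(c_1,a)=(-2c_1^3+a,\,2c_1^3+a)$. For any $c_1\neq 0$ and any primitive cube root $\omega$, the points $(c_1,a)$ and $(\omega c_1,a)$ are distinct critically marked polynomials with the same $W$-image; choosing $(c_1,a)$ small they both lie in $\model$. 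In general $W$ is a proper holomorphic map of degree $>1$ (this is exactly what the Jacobian formula in Proposition~\ref{transver-ms} records), so no global inverse exists. Your proposed patching with B\"ottcher coordinates outside $\model$ does not address this: injectivity already fails in the interior, before any B\"ottcher extension enters.

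The paper's argument is entirely different and much more elementary: it never attempts to use $W$ as the homeomorphism. Instead it exploits a weighted-homogeneity property. Defining $\lambda(v)=\prod_{j<r_v}\delta(\sigma^j v)^{-1}$ and the scaling action
\[
\Lambda(t,(\mathbf{c}_v,a_v)_v)=(t^{\lambda(v)}\mathbf{c}_v,\,t^{\delta(v)\lambda(v)}a_v)_v,
\]
one checks directly that $W(\Lambda(t,\mathbf{x}))=t\cdot W(\mathbf{x})$, hence $M(\Lambda(t,\mathbf{x}))=t\cdot M(\mathbf{x})$ where $M=\max_{(v,k)}|W_{v,k}|$. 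The map $\Phi:\widehat{\mathcal{P}}_0^T\to\widehat{\mathcal{P}}_0^T$ sending $\mathbf{x}\neq 0$ to $\Lambda(\|\mathbf{x}\|/M(\hat{\mathbf{x}}),\,\hat{\mathbf{x}})$ (with $\hat{\mathbf{x}}=\mathbf{x}/\|\mathbf{x}\|$) is then a self-homeomorphism carrying the unit $\|\cdot\|$-ball onto $\model=\{M\leq 1\}$. This is pure algebra---no B\"ottcher coordinates, no straightening, no induction on $|\Tn|$---and the injectivity of $\Phi$ is immediate because each $\Lambda$-orbit is mapped bijectively to itself.
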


\begin{proof}
Let $$I = \big\{(v,k){;~} v\in\Tn, 1\leq k<\delta(v)\big\}.$$
Then $\#(I)=n>0$. 
Recall that for each $v\in|T|$,  $r_v\geq0$ is the minimal integer such that $\sigma^{r_v}(v)\in\Tp$. 
For $(v,k)\in I$ and $\mathbf{x} = (\mathbf{c}_v,a_v)_{v\in\Tn}\in \widehat{\mathcal{P}}_0^T$, let $$w_{v,k}(\mathbf{x})=f_{\mathbf{x}_{\sigma^{r_v-1}(v)}}\circ\cdots\circ f_{\mathbf{x}_{\sigma(v)}}\circ f_{\mathbf{x}_v}(c_{v,k}).$$ 
Define
$${{M}}:  \begin{cases} 
\widehat{\mathcal{P}}_0^T \rightarrow  [0,\infty), \\
\mathbf{x} \mapsto   \max_{(v,k)\in I}|w_{v,k}(\mathbf{x})|. 
\end{cases}$$
By Lemma \ref{connectedness-crit}, we have 
$$\model=\big\{\mathbf{x}\in\widehat{\mathcal{P}}_0^T {;~} 
{{M}}(\mathbf{x})\leq 1\big\}.$$

Let $\lambda(v)=\prod_{j=0}^{r_v-1}\frac{1}{\delta(\sigma^j(v))}$ 
for $v\in\Tn$ and $\lambda(v)=1$ for $v\in\Tp$.  
Define 
$$
\Lambda: \begin{cases}  
[0,\infty)\times\widehat{\mathcal{P}}_0^T\rightarrow 
 \widehat{\mathcal{P}}_0^T, \\
(t,(\mathbf{c}_v,a_v)_{v\in\Tn})  \mapsto
(t^{\lambda(v)}\mathbf{c}_v, t^{\delta(v)\lambda(v)}a_v)_{v\in\Tn}. 
\end{cases}
$$
For each $v\in\Tn$, we have $\lambda(\sigma(v)) = \delta(v)\lambda(v)$, hence 
$$f_{\Lambda(t,\mathbf{x})_v}(t^{\lambda(v)}z) 
= t^{\delta(v)\lambda(v)} f_{\mathbf{x}_v}(z)
= t^{\lambda(\sigma(v))} f_{\mathbf{x}_v}(z).$$
Then for each $(v,k)\in I$, we have 
\begin{align*}
w_{v,k}(\Lambda(t,\mathbf{x})) 
&= f_{\Lambda(t,\mathbf{x})_{\sigma^{r_v-1}(v)}}\circ\cdots\circ f_{\Lambda(t,\mathbf{x})_{\sigma(v)}}\circ f_{\Lambda(t,\mathbf{x})_v}(t^{\delta(v)}c_{v,k})\\
&= f_{\Lambda(t,\mathbf{x})_{\sigma^{r_v-1}(v)}}\circ\cdots\circ f_{\Lambda(t,\mathbf{x})_{\sigma(v)}}(t^{\lambda(\sigma(v))} f_{\mathbf{x}_v}(c_{v,k})) \\
&= \cdots\\
&= t^{\lambda(\sigma^{r_v}(v))} f_{\mathbf{x}_{\sigma^{r_v-1}(v)}}\circ\cdots\circ f_{\mathbf{x}_{\sigma(v)}}\circ f_{\mathbf{x}_v}(c_{v,k})\\
&= t\cdot w_{v,k}(\mathbf{x}). 
\end{align*}
It follows that 
\begin{equation}
\label{M and Lambda}
{{M}}(\Lambda(t,\mathbf{x}))=t\cdot{{M}}(\mathbf{x}).
\end{equation} 

For $\mathbf{x} = (\mathbf{c}_v,a_v)_{v\in\Tn}\in \widehat{\mathcal{P}}_0^T$, 
define $\|\mathbf{x}\|= \max_{v\in\Tn} \|\mathbf{x}\|_v$, where 
$$\|\mathbf{x}\|_v=\begin{cases} 
0, & \delta(v)=1, \\
|a_v|, &\delta(v)=2, \\
\max\{|c_{v,k}|,|a_v|{;~} 1\leq k\leq \delta(v)-2\}, & \delta(v)\geq3. 
\end{cases}$$
For each $v\in\Tn$ with $\delta(v)\geq3$, 
since $c_{v,\delta(v)-1} = -(c_{v,1}+\cdots+c_{v,\delta(v)-2})$, it is convenient to omit $c_{v,\delta(v)-1}$ in the definition of $\|\mathbf{x}\|_v$. 
Let $$B=\big\{\mathbf{x}\in\widehat{\mathcal{P}}_0^T {;~}  
\|\mathbf{x}\|\leq 1\big\}$$ 
be the unit (closed) ball of the normed vector space $(\widehat{\mathcal{P}}_0^T,\|\cdot\|)$. 
Then $B$ is homeomorphic to $\overline{\mathbb{D}}^n$. 
Let $\mathbf{\hat x}={\mathbf{x}}/{\|\mathbf{x}\|}$ be the normalized vector for $\mathbf{x}\neq 0$.
Define
$$\Phi: \begin{cases} 
\widehat{\mathcal{P}}_0^T   \rightarrow \widehat{\mathcal{P}}_0^T, \\
      0               \mapsto     0,\\
      \mathbf{x}        \mapsto     
\Lambda{{\left(\frac{\|\mathbf{x}\|}{{{M}}(\mathbf{\hat x})}, \mathbf{\hat x}\right)}}.
\end{cases}$$
It follows from (\ref{M and Lambda}) that ${{M}}(\Phi(\mathbf{x}))=\|\mathbf{x}\|$, hence $\model=\Phi(B)$. 
We claim that $\Phi$ is a homeomorphism. 

Since $\widehat{\mathcal{P}}_0^T$ is homeomorphic to $\mathbb{C}^n$, 
we just need to show that $\Phi$ is one-to-one and continuous.
First, we have $\Phi^{-1}(0)=\{0\}$.
Given $\mathbf{y}\in\widehat{\mathcal{P}}_0^T\setminus\{0\}$.
Since $\|\Lambda(\cdot,\mathbf{y})\|:[0,\infty)\rightarrow[0,\infty)$ is a homeomorphism, 
there exists a unique $t\in(0,\infty)$ such that $\|\Lambda(t,\mathbf{y})\|=1$. 
By (\ref{M and Lambda}) and the facts
$\Lambda(t_1t_2,\cdot)=\Lambda(t_1,\Lambda(t_2,\cdot))$, 
$\Lambda(1,\cdot)=\id$, we have 
\begin{align*}
\Phi({{M}}(\mathbf{y})\cdot\Lambda(t,\mathbf{y}))
=\Lambda{\left(\frac{{{M}}(\mathbf{y})}{{{M}}(\Lambda(t,\mathbf{y}))}, \Lambda(t,\mathbf{y})\right)}
=\Lambda{\left(\frac{1}{t},\Lambda(t,\mathbf{y})\right)}
=\Lambda(1,\mathbf{y})=\mathbf{y}.    
\end{align*}
So $\Phi$ is surjective.
On the other hand, suppose $\Phi(\mathbf{x})=\mathbf{y}$. 
We will check that $\mathbf{x}={{M}}(\mathbf{y})\cdot\Lambda(t,\mathbf{y})$. 
By $\Lambda(t_1t_2,\cdot)=\Lambda(t_1,\Lambda(t_2,\cdot))$ and 
$\Lambda(1,\cdot)=\id$ again, we have 
$$
\mathbf{\hat x}=\Lambda{\left(\frac{{{M}}(\mathbf{\hat x})}{\|\mathbf{x}\|}, 
\Lambda{\left(\frac{\|\mathbf{x}\|}{{{M}}(\mathbf{\hat x})},\mathbf{\hat x}\right)}\right)}
=\Lambda{{\left(\frac{{{M}}(\mathbf{\hat x})}{\|\mathbf{x}\|}, \mathbf{y}\right)}}, 
$$
so $\|\Lambda({{M}}(\mathbf{\hat x}) / \|\mathbf{x}\|,\mathbf{y} ) \|
=\|\mathbf{\hat x}\|=1$. 
By the uniqueness of $t$ above, we have 
${{M}}(\mathbf{\hat x}) / \|\mathbf{x}\|$ $=$ $t$.
Then $\mathbf{\hat x}=\Lambda(t,\mathbf{y})$ 
and $\|\mathbf{x}\|={{M}}(\mathbf{\hat x})/t={{M}}(\Lambda(t,\mathbf{y}))/t={{M}}(\mathbf{y})$ 
give $\mathbf{x}={{M}}(\mathbf{y})\cdot\Lambda(t,\mathbf{y})$. 
This shows $\Phi$ is injective.

Because ${{M}}$ and $\Lambda$ are continuous, 
the map $\Phi$ is continuous in $\widehat{\mathcal{P}}_0^T\setminus\{0\}$, 
hence we just need to show that $\Phi$ is continuous at $0$.
Let $m=\min\{{{M}}(\mathbf{x}) {;~}  \|\mathbf{x}\|=1\}$. 
By induction on the integers in $\{r_v{;~}v\in\Tn\}$, we have ${{M}}^{-1}(0)=0$, so $m>0$. 
Let $\alpha=\min\{\lambda(v) {;~} v\in|T|\}\in (0,1]$. 
If $t\in[0,1]$, then $\|\Lambda(t,\mathbf{x})\|\leq t^\alpha\|\mathbf{x}\|$.
For any $\varepsilon>0$, 
whenever $0<\|\mathbf{x}\|\leq \min\{m,\varepsilon\}$, we have 
\begin{align*}
\|\Phi(\mathbf{x})\|&\leq 
\left\|\Lambda{\left( \frac{\|\mathbf{x}\|}{m}, \mathbf{\hat x} \right)}\right\|
\leq {\left(\frac{\|\mathbf{x}\|}{m}\right)}^\alpha \|\mathbf{\hat x}\|
\leq \bigg(\frac{\varepsilon}{m}\bigg)^\alpha. 
\end{align*}
Thus $\Phi$ is continuous at $0$. 
This shows the claim. 

Now $\model=\Phi(B)$ is homeomorphic to $\overline{\mathbb{D}}^n$. 
To complete the proof, we just need to construct a homeomorphism from $\partial(\mathbb{D}^n)$ to $S^{2n-1}$. 
For $z\in\partial(\mathbb{D}^n)$, let $h(z)=z/\|z\|_2$, 
where $\|z\|_2=\sqrt{\sum_{k=1}^n|z_k|^2}$. 
Note that $\|tz\|_\infty=t\|z\|_\infty$ for $z\in\mathbb{C}^n$ and $t>0$, 
where $\|z\|_\infty=\max\{|z_k|{;~} 1\leq k\leq n\}$. 
We can verify that $h$ is a homeomorphism from $\partial(\mathbb{D}^n)$ to $S^{2n-1}$. 
\end{proof}

%

\begin{remark}
\label{remark on C(P0T)}
Proposition \ref{fiberwise connectedness locus C(P0T)} is also valid for the space 
\begin{equation}
\label{P0T}
\mathcal{P}_0^T=\big\{\mathbf{f}\in\mathcal{P}^T {;~} 
\text{$f_{v}(z)=z^{\delta(v)}$ for any $v\in\Tp$}\big\},
\end{equation}
i.e. the boundary $\partial\mathcal{C}(\mathcal{P}_0^T)$ of its fiberwise connectedness locus 
is homeomorphic to the sphere $S^{2n-1}$. 
To prove this, let $\mathbf{f}\in\mathcal{P}_0^T$. 
Then it takes the form 
$f_v(z)=z^{\delta(v)}+\sum_{j=0}^{\delta(v)-2}a_{v,j}z^j$
for each $v\in\Tn$. 
Since $n=\sum_{v\in\Tn}(\delta(v)-1)$, 
we can identify $\mathcal{P}_0^T$ with $\mathbb{C}^n$. 
For each $v\in\Tn$ with $\delta(v)\geq2$, let $[c_{v,k}(\mathbf{f})]_{k=1}^{\delta(v)-1}$ denote the unordered list of critical points of $f_v$. 
Through the collection $([c_{v,k}(\mathbf{f})]_{k=1}^{\delta(v)-1})_{v\in\Tn,\delta(v)\geq2}$,   
we can define ${{M}}(\mathbf{f})$ as before. 
Multiplying each coefficient $a_{v,j}$ with a proper constant 
depending on $t\in[0,\infty)$, we have a 
map $\Lambda:[0,\infty)\times\mathcal{P}_0^T\rightarrow \mathcal{P}_0^T$ 
satisfying ${{M}}(\Lambda(t,\mathbf{f}))=t\cdot{{M}}(\mathbf{f})$ (compare (\ref{M and Lambda})). 
The rest is the same as the proof of Proposition \ref{fiberwise connectedness locus C(P0T)}. 
\end{remark}

In the end of this section, we give more properties, which will be used later. 
With the same notations as in the proof of 
Proposition \ref{fiberwise connectedness locus C(P0T)}, define 
$$W: \begin{cases} 
\widehat{\mathcal{P}}_0^T \rightarrow  \mathbb{C}^n, \\
\mathbf{x} \mapsto   (w_{v,k}(\mathbf{x}))_{(v,k)\in I}. 
\end{cases}$$

\begin{lemma}
\label{proper-holo-W}
$W$ is a proper holomorphic map. 
\end{lemma}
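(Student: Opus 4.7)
The plan is to use directly the homeomorphism $\Phi$ and the identity $M(\Phi(\mathbf{x}))=\|\mathbf{x}\|$ established in the proof of Proposition \ref{fiberwise connectedness locus C(P0T)}.

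First, holomorphy is immediate. In the coordinates $\mathbf{x}=(\mathbf{c}_v,a_v)_{v\in\Tn}$ on $\widehat{\mathcal{P}}_0^T$, each coefficient of each $f_{\mathbf{x}_v}$ is a polynomial in the $c_{v,k}$'s (via Vi\`eta) and in $a_v$; and composition of polynomials together with evaluation at $c_{v,k}$ keeps us in the polynomial world. Hence every coordinate $w_{v,k}(\mathbf{x})$ of $W$ is a polynomial in the entries of $\mathbf{x}$, so $W$ is holomorphic on the affine space $\widehat{\mathcal{P}}_0^T\cong\mathbb{C}^n$.

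For properness, I observe that by definition $M(\mathbf{x})=\max_{(v,k)\in I}|w_{v,k}(\mathbf{x})|=\|W(\mathbf{x})\|_\infty$. Thus for any $R>0$,
\begin{equation*}
W^{-1}\bigl(\{z\in\mathbb{C}^n{;~}\|z\|_\infty\leq R\}\bigr) = \{\mathbf{x}\in\widehat{\mathcal{P}}_0^T{;~}M(\mathbf{x})\leq R\}.
\end{equation*}
Using the homeomorphism $\Phi:\widehat{\mathcal{P}}_0^T\to\widehat{\mathcal{P}}_0^T$ from the previous proof, which satisfies $M(\Phi(\mathbf{y}))=\|\mathbf{y}\|$ for all $\mathbf{y}$, I can rewrite this set as
\begin{equation*}
\{\mathbf{x}\in\widehat{\mathcal{P}}_0^T{;~}M(\mathbf{x})\leq R\} = \Phi\bigl(\{\mathbf{y}\in\widehat{\mathcal{P}}_0^T{;~}\|\mathbf{y}\|\leq R\}\bigr).
\end{equation*}
The right-hand side is the image of a closed ball in the finite-dimensional normed space $(\widehat{\mathcal{P}}_0^T,\|\cdot\|)$ under a continuous map, hence compact.

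Now for any compact $K\subset\mathbb{C}^n$ choose $R$ so that $K\subset\{\|z\|_\infty\leq R\}$. Then $W^{-1}(K)$ is a closed subset (by continuity of $W$) of the compact set $\{M\leq R\}$, hence $W^{-1}(K)$ is compact. This is the required properness. The argument contains no real obstacle; all the work was already carried out in constructing $\Phi$ and verifying (\ref{M and Lambda}).
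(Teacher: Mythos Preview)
Your proof is correct and essentially the same as the paper's. The paper observes $W(\Lambda(t,\mathbf{x}))=t\cdot W(\mathbf{x})$ and hence $W^{-1}(t\cdot\overline{\mathbb{D}}^n)=\Lambda(t,\model)$ is compact; you instead invoke the homeomorphism $\Phi$ and the identity $M(\Phi(\mathbf{y}))=\|\mathbf{y}\|$ to reach the same conclusion $\{M\leq R\}=\Phi(\{\|\mathbf{y}\|\leq R\})$---both arguments extract properness from the same machinery built in Proposition~\ref{fiberwise connectedness locus C(P0T)}, and the difference is purely cosmetic.
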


\begin{proof}
It is clear that $W$ is holomorphic. 
Since $w_{v,k}(\Lambda(t,\mathbf{x}))=t\cdot w_{v,k}(\mathbf{x})$, we have 
$W(\Lambda(t,\mathbf{x})) = t\cdot W(\mathbf{x})$. 
Then by $W^{-1}\big(\overline{\mathbb{D}}^n\big) = \model$, for every $t>0$, we have 
$W^{-1}\big(t\cdot\overline{\mathbb{D}}^n\big) = \Lambda\big(t,\model\big)$, so $W^{-1}\big(t\cdot\overline{\mathbb{D}}^n\big)$ is compact. 
It follows that $W$ is a proper holomorphic map. 
\end{proof}

For convenience, let $\mathcal{C} = \model$. 
For each $\mathbf{x}\in \partial\mathcal{C}$, define 
$$I^J(\mathbf{x}) = \big\{(v,k)\in I{;~} |w_{v,k}(\mathbf{x})|=1\big\} \subset I.$$
For each $(v,k)\in I$, define 
$$\partial_{v,k}\mathcal{C}=\big\{\mathbf{x}\in \partial\mathcal{C}{;~} I^J(\mathbf{x})=\{(v,k)\}\big\}.$$

\begin{lemma}
\label{partial-vk-model}
For any $\mathbf{x}\in\partial\mathcal{C}$, we have  $\mathbf{x}\in \bigcap_{(v,k)\in I^J(\mathbf{x})}\overline{\partial_{v,k} \mathcal{C}}$. 
\end{lemma}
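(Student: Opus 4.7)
The plan is to transfer the problem into $\mathbb{C}^n$ via the first-entry map $W$ and then push everything into the interior of the complementary facets by an explicit coordinate-wise shrinkage. By Lemma \ref{proper-holo-W}, $W:\widehat{\mathcal{P}}_0^T\rightarrow\mathbb{C}^n$ is proper and holomorphic with equidimensional source and target, so by the standard fact that a proper holomorphic map between equidimensional complex manifolds is a finite branched holomorphic covering, $W$ is in particular an open mapping. Combined with the identity $\mathcal{C}=W^{-1}(\overline{\mathbb{D}}^n)$ established in the proof of Proposition \ref{fiberwise connectedness locus C(P0T)}, openness yields $\text{int}(\mathcal{C})=W^{-1}(\mathbb{D}^n)$ and hence $\partial\mathcal{C}=W^{-1}(\partial\overline{\mathbb{D}}^n)$; from the definitions one then reads off
\[
\partial_{v,k}\mathcal{C}=W^{-1}(F_{v,k}), \qquad F_{v,k}=\big\{\zeta\in\overline{\mathbb{D}}^n{;~}|\zeta_{v,k}|=1,\ |\zeta_{v',k'}|<1 \ \forall\,(v',k')\neq(v,k)\big\}.
\]

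Fix $\mathbf{x}\in\partial\mathcal{C}$ and $(v,k)\in I^J(\mathbf{x})$, and set $\zeta:=W(\mathbf{x})\in\partial\overline{\mathbb{D}}^n$, so that $|\zeta_{v,k}|=1$ and $|\zeta_{v',k'}|\leq 1$ for every other index. In the target space, approach $\zeta$ from inside $F_{v,k}$ by the elementary sequence
\[
\zeta^{(m)}_{v,k}:=\zeta_{v,k}, \qquad \zeta^{(m)}_{v',k'}:=(1-1/m)\,\zeta_{v',k'} \ \text{ for }\ (v',k')\neq(v,k).
\]
One has $|\zeta^{(m)}_{v,k}|=1$ and $|\zeta^{(m)}_{v',k'}|=(1-1/m)|\zeta_{v',k'}|<1$, so $\zeta^{(m)}\in F_{v,k}$ and $\zeta^{(m)}\to\zeta$ in $\mathbb{C}^n$.

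Now lift these approximations using openness of $W$. For each open neighborhood $U$ of $\mathbf{x}$, the set $W(U)$ is open and contains $\zeta$, hence contains $\zeta^{(m)}$ for all $m$ sufficiently large; pick any point in $U\cap W^{-1}(\zeta^{(m)})\subset\partial_{v,k}\mathcal{C}$, and diagonalize over a shrinking basis of neighborhoods of $\mathbf{x}$ to produce a sequence in $\partial_{v,k}\mathcal{C}$ converging to $\mathbf{x}$. Thus $\mathbf{x}\in\overline{\partial_{v,k}\mathcal{C}}$, and since $(v,k)\in I^J(\mathbf{x})$ was arbitrary the lemma follows. The only nontrivial step is the openness of $W$; given the properness and equidimensionality this is a direct consequence of the standard structure theorem for proper holomorphic maps, and can alternatively be bypassed in the generic case via Proposition \ref{transver-ms} together with the implicit function theorem at smooth points of $W$.
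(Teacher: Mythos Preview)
Your proof is correct and follows essentially the same approach as the paper: both use that $W$ is proper holomorphic hence open (the paper cites Rudin for this), define the identical target sequence by shrinking all but the $(v,k)$-coordinate, and lift back via openness. The only cosmetic difference is in establishing convergence of the lifts: the paper first isolates $\mathbf{x}_0$ from the other points of the finite fiber $W^{-1}(W(\mathbf{x}_0))$ inside a neighborhood $\mathcal{U}$ and then argues any limit point of the lifts in $\overline{\mathcal{U}}$ must equal $\mathbf{x}_0$, whereas your diagonalization over a shrinking basis achieves the same thing without invoking fiber finiteness explicitly.
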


\begin{proof}
Given any $\mathbf{x}_0\in\partial\mathcal{C}$ and any  $(v_0,k_0)\in I^J(\mathbf{x}_0)$. 
We need to show $\mathbf{x}_0\in \overline{\partial_{v_0,k_0} \mathcal{C}}$. 
Because $W$ is a proper holomorphic map (see Lemma \ref{proper-holo-W}), the set $W^{-1}(W(\mathbf{x}_0))$ is finite (see, e.g. \cite[\S15.1]{Rudin}). 
Choose an open neighborhood $\mathcal{U}\subset \widehat{\mathcal{P}}_0^T$ of $\mathbf{x}_0$ so that  $W(\mathbf{x})\neq W(\mathbf{x}_0)$ for any $\mathbf{x}\in \overline{\mathcal{U}}\setminus\{\mathbf{x}_0\}$. 

For $j\geq 1$, define $\mathbf{w}_j=(w_{j,v,k})_{(v,k)\in I}$ by $w_{j,v_0,k_0} = w_{v_0,k_0}(\mathbf{x}_0)$ and $w_{j,v,k} = (1-1/j)w_{v,k}(\mathbf{x}_0)$ for $(v,k)\in I\setminus\{(v_0,k_0)\}$. 
It follows that $\lim_{j\rightarrow\infty}\mathbf{w}_j = W(\mathbf{x}_0)$. 
Because $W$ is a proper holomorphic map (see Lemma \ref{proper-holo-W}), it is an open map (see, e.g. \cite[\S15.1]{Rudin}). 
Hence there is an $N>0$ so that $\mathbf{w}_j\in W(\mathcal{U})$ for any $j>N$. 
For each $j>N$, choose $\mathbf{x}_j\in W^{-1}(\mathbf{w}_j)\cap\mathcal{U}$.  
Then $\mathbf{x}_j\in \partial_{v_0,k_0} \mathcal{C}$. 
By the choice of $\mathcal{U}$, we see that $\lim_{j\rightarrow\infty}\mathbf{x}_j = \mathbf{x}_0$. 
Therefore $\mathbf{x}_0\in \overline{\partial_{v_0,k_0} \mathcal{C}}$. 
This completes the proof. 
\end{proof}

\section{Lemniscate maps}
\label{section lemniscate maps}
This section is the third step to prove Theorem \ref{top-boundary}. 
Let $T=(|T|,\sigma,\delta)$ be a mapping scheme with $\Tn\neq\emptyset$. 
In \cite{Inou-Kiwi}, Inou and Kiwi proved the straightening theorem for generalized polynomial-like maps over $T$. 
In this section, we will first define the lemniscate maps over $T$, and then establish the straightening theorem for these maps. 
Finally, we consider the convergence of external angles and marked lemniscate maps. 


\subsection{Straightening of lemniscate maps}
\label{subsection straightening of lemniscate maps}
Let $S^2$ be the topological sphere determined by the Riemann sphere $\widehat{\mathbb{C}}$. 
An (orientation-preserving) \emph{branched covering} $f:S^2\rightarrow S^2$ is a continuous map whose behavior is locally modeled on that of $z\mapsto z^k$. 
A \emph{topological polynomial} is the restriction $f:\mathbb{C}\rightarrow\mathbb{C}$ of a branched covering $f:S^2\rightarrow S^2$ with $f^{-1}(\infty)=\{\infty\}$. 
Let $\crit(f)$ denote the set of critical points of a topological polynomial $f:\mathbb{C}\rightarrow\mathbb{C}$. 

\begin{lemma}
\label{pullback of complex structure}
Let $f:S^2\rightarrow S^2$ be a branched covering. 
Then there is a homeomorphism $\psi:S^2\rightarrow S^2$ such that $f \circ \psi^{-1}$ is a rational map. 
Moreover, if $f$ is a topological polynomial, we can choose $\psi$ so that $f \circ \psi^{-1}:\mathbb{C}\rightarrow\mathbb{C}$ is a monic and centered polynomial. 
\end{lemma}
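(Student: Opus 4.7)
My plan is to pull the standard conformal structure on the target $S^2\cong\widehat{\mathbb{C}}$ back through $f$ to define a Riemann surface structure on the source $S^2$, and then invoke the uniformization theorem. Concretely, on $S^2\setminus\crit(f)$ the map $f$ is a local homeomorphism, so composing local inverses of $f$ with the standard charts of $\widehat{\mathbb{C}}$ yields charts on the source with tautologically holomorphic transitions. At each critical point $p$, the hypothesis that $f$ is locally modeled on $z\mapsto z^k$ provides a topological chart $\phi_p:(U_p,p)\to(\mathbb{D},0)$ and a holomorphic chart $\eta_p$ at $f(p)$ such that $\eta_p\circ f\circ\phi_p^{-1}(z)=z^{k_p}$; I adopt $\phi_p$ as the chart at $p$.

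The delicate point is verifying holomorphy of the transition maps where a critical chart $\phi_p$ meets a non-critical chart on $U_p\setminus\{p\}$. An explicit computation shows that each such transition is a branch of $z\mapsto z^{1/k_p}$ composed with a holomorphic map on $\mathbb{D}\setminus\{0\}$; it is continuous (hence bounded) at $0$ and therefore extends holomorphically by Riemann's removable singularity theorem. This endows $S^2$ with the structure of a compact simply connected Riemann surface, and uniformization supplies a biholomorphism $\psi:S^2\to\widehat{\mathbb{C}}$ from this new structure to the standard one. By construction, $f\circ\psi^{-1}:\widehat{\mathbb{C}}\to\widehat{\mathbb{C}}$ is holomorphic, hence a rational map.

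For the second assertion, the topological-polynomial hypothesis $f^{-1}(\infty)=\{\infty\}$ permits me to require $\psi(\infty)=\infty$, so that $f\circ\psi^{-1}$ is a rational map with a single pole at $\infty$, namely a polynomial of degree $\deg(f)$. The residual ambiguity in $\psi$ is post-composition with an affine automorphism $z\mapsto az+b$ of $\mathbb{C}$; choosing $a$ to normalize the leading coefficient to $1$ and $b$ to kill the coefficient of $z^{\deg(f)-1}$ yields the monic centered normalization. The main obstacle is the removable-singularity verification at critical points that promotes the topological charts $\phi_p$ to a genuine holomorphic atlas; the remaining ingredients (uniformization of $S^2$ and affine normalization of a polynomial) are standard.
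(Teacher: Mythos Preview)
Your proposal is correct and follows the same approach as the paper: pull back the complex structure through $f$, invoke uniformization, and normalize affinely in the polynomial case. The only difference is that the paper takes as chart at a critical point $x$ a continuous branch of $(M_x\circ f)^{1/\deg(f,x)}$ (with $M_x$ a M\"obius map sending $f(x)$ to $0$), which makes holomorphy of $f$ and of the transitions automatic; your choice of the topological model chart $\phi_p$ works equally well, though the appeal to removable singularities is not actually needed, since the overlap of $\phi_p$ with any non-critical chart already avoids $p$ and the transition there is simply a holomorphic branch of $(\eta_p\circ\alpha^{-1})^{1/k_p}$.
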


\begin{proof}
Consider the pullback of the complex structure of the Riemann sphere $\widehat{\mathbb{C}}$ by $f:S^2\rightarrow \widehat{\mathbb{C}}$ as in \cite[\S10.6]{Teich-2-Hubbard}. 
For each $x\in S^2$, we first choose a M\"obius
transformation $M_x:\widehat{\mathbb{C}}\rightarrow \widehat{\mathbb{C}}$ so that $M_x\circ f(x)=0$, 
and then choose a continuous single-valued branch of $(M_x\circ f)^{1/\deg(f,x)}$ 
that is a local homeomorphism. 
With this complex structure, the sphere $S^2$ is a Riemann surface and $f:S^2\rightarrow \widehat{\mathbb{C}}$ is holomorphic. 
By the uniformization theorem for Riemann surfaces, there is a conformal map $\psi:S^2 \rightarrow \widehat{\mathbb{C}}$. 
Then $f \circ \psi^{-1}$ is holomorphic, hence a rational map. 

If $f^{-1}(\infty)=\{\infty\}$, there is a M\"obius
transformation $M:\widehat{\mathbb{C}}\rightarrow \widehat{\mathbb{C}}$ so that $f\circ\psi^{-1}\circ M|_\mathbb{C}\in\mathcal{P}^{\deg(f)}$. 
Let $\widetilde{\psi} = M^{-1}\circ\psi$. 
Then $f\circ{\widetilde\psi}^{-1}|_\mathbb{C}\in\mathcal{P}^{\deg(f)}$. 
\end{proof}

\begin{definition}
[Lemniscate map]
\label{definition lemniscate map}
We call $(\mathbf{g}, K) = ((g_v)_{v\in\Tn},(K_v)_{v\in|T|})$ a \emph{lemniscate map} over $T$ if 
\begin{itemize}
\item for each $v\in\Tp$, the set $K_v$ is a closed Jordan domain;
\item for each $v\in\Tn$, the map $g_v:\mathbb{C}\rightarrow\mathbb{C}$ is a topological polynomial of degree $\delta(v)$ so that $K_v = g_v^{-1}(K_{\sigma(v)})$ and the restriction $g_v|_{K_v^{\circ}}$ is holomorphic. 
\end{itemize}
We call $K$ the \emph{filled Julia set} of $(\mathbf{g}, K)$. 
\end{definition}

A (topological) \emph{lemniscate} is the preimage of $\overline{\mathbb{D}}$ under some topological polynomial. 
By Lemma \ref{pullback of complex structure}, if $K$ is the preimage of a closed Jordan domain under some topological polynomial, then it is a lemniscate. 
Furthermore, the preimage of a lemniscate under some topological polynomial is also a lemniscate. 
It follows that if $(\mathbf{g}, K)$ is a lemniscate map over $T$, then $K_v$ is a lemniscate for each $v\in|T|$. 

Recall that $\mathcal{P}_0^T$ is defined by (\ref{P0T}), 
and $K(\mathbf{f})\subset|T|\times\mathbb{C}$ is the filled Julia set of $\mathbf{f} \in \mathcal{P}_0^T$. 
For each $v\in|T|$, we let $K_{\mathbf{f},v}\subset \mathbb{C}$ so that $$\{v\}\times K_{\mathbf{f},v} = K(\mathbf{f})\cap(\{v\}\times\mathbb{C}).$$
Each $\mathbf{f} = (f_v)_{v\in|T|}\in \mathcal{P}_0^T$ can also be written as $\mathbf{f} = (f_v)_{v\in\Tn}$. 
Then $(\mathbf{f},K(\mathbf{f})) = ((f_v)_{v\in\Tn},(K_{\mathbf{f},v})_{v\in|T|})$ is a lemniscate map over $T$. 

For $j=1,2$, let 
$$(\mathbf{g}_j, K_j) = ((g_{j,v})_{v\in\Tn},(K_{j,v})_{v\in|T|})$$  
be a lemniscate map over $T$. 
We say $(\mathbf{g}_1, K_1)$ is \emph{conjugate} (resp. \emph{strongly conjugate}) to $(\mathbf{g}_2, K_2)$ if there is a collection $(\psi_v: \mathbb{C}\rightarrow \mathbb{C})_{v\in|T|}$ 
of homeomorphisms
\footnote{
By the conditions of a conjugacy, each $\psi_v$ must be an orientation-preserving homeomorphism. 
} 
such that 
\begin{itemize}
\item 
for each $v\in\Tp$, the restriction $\psi_v:K_{1,v}^\circ\rightarrow K_{2,v}^\circ$ is biholomorphic; 

\item 
for each $v\in\Tn$, we have $\psi_{\sigma(v)}\circ g_{1,v}|_{K_{1,v}} = g_{2,v}\circ \psi_v|_{K_{1,v}}$ (resp. $\psi_{\sigma(v)}\circ g_{1,v} = g_{2,v}\circ \psi_v$). 
\end{itemize}
Lemma \ref{conjugacy of LMs} gives more information of the conjugacy between two lemniscate maps,  
which will be used in the definition of the conjugacy between two marked lemniscate maps in \S\ref{subsection straightening of marked lemniscate maps}. 

\begin{lemma}
\label{conjugacy of LMs}
Let $(\psi_v)_{v\in|T|}$ be a conjugacy from $(\mathbf{g}_1, K_1)$ to $(\mathbf{g}_2, K_2)$. 
Then for each $v\in\Tn$, the restriction $\psi_v:K_{1,v}^\circ\rightarrow K_{2,v}^\circ$ is biholomorphic. 
In particular, we have the commutative diagram  
$$\xymatrix{
  K_{1,v}          \ar@<-1ex>[d]_{\psi_v} \ar[rr]^-{g_{1,v}}
& 
& K_{1,\sigma(v)}~  \ar@<-1ex>[d]_{\psi_{\sigma(v)}}  \\
  K_{2,v}  \ar[rr]_-{g_{2,v}}  \ar@<-1ex>[u]_{\psi_v^{-1}}
& 
& K_{2,\sigma(v)}. \ar@<-1ex>[u]_{\psi_{\sigma(v)}^{-1}}
}$$
\end{lemma}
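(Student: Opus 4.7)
The plan is to prove the biholomorphicity of $\psi_v|_{K_{1,v}^\circ}$ for $v\in\Tn$ by induction on $r_v$, the smallest integer with $\sigma^{r_v}(v)\in\Tp$. The base case $r_v=0$ (i.e., $v\in\Tp$) is built into the definition of a conjugacy. For the inductive step, I would fix $v\in\Tn$ and assume that $\psi_{\sigma(v)}$ restricts to a biholomorphism from $K_{1,\sigma(v)}^\circ$ onto $K_{2,\sigma(v)}^\circ$.

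First I would record the set-theoretic preparation. Using the conjugacy relation $\psi_{\sigma(v)}\circ g_{1,v}=g_{2,v}\circ\psi_v$ on $K_{1,v}$ together with the inductive hypothesis $\psi_{\sigma(v)}(K_{1,\sigma(v)})=K_{2,\sigma(v)}$ (and the symmetric inclusion obtained by running the argument for the conjugacy with $\psi_v^{-1}$), I would deduce $\psi_v(K_{1,v})=K_{2,v}$. Since $\psi_v:\mathbb{C}\to\mathbb{C}$ is a homeomorphism, it preserves interiors in $\mathbb{C}$, so $\psi_v(K_{1,v}^\circ)=K_{2,v}^\circ$. I would also note the identity $K_{j,v}^\circ=g_{j,v}^{-1}(K_{j,\sigma(v)}^\circ)$ for $j=1,2$, which follows because the topological polynomial $g_{j,v}$ is both continuous and open.

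The main analytic step is to show $\psi_v$ is holomorphic on the open set $K_{1,v}^\circ\setminus\crit(g_{1,v})$. For $z$ in this set, $g_{1,v}$ is locally biholomorphic near $z$ by the definition of a lemniscate map. Since $(\psi_v,\psi_{\sigma(v)})$ topologically conjugates $g_{1,v}|_{K_{1,v}}$ to $g_{2,v}|_{K_{2,v}}$ through homeomorphisms, local degree is preserved, so $\psi_v(z)\in K_{2,v}^\circ$ is also non-critical for $g_{2,v}$ and a local holomorphic inverse $g_{2,v}^{-1}$ of $g_{2,v}$ exists in a neighborhood of $\psi_{\sigma(v)}(g_{1,v}(z))$. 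The conjugacy equation can then be rewritten in a neighborhood of $z$ as
$$\psi_v=g_{2,v}^{-1}\circ\psi_{\sigma(v)}\circ g_{1,v},$$
a composition of holomorphic maps (using the inductive hypothesis on $\psi_{\sigma(v)}$), so $\psi_v$ is holomorphic at $z$.

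Finally, since $\crit(g_{1,v})\cap K_{1,v}^\circ$ is finite and $\psi_v$ is continuous and bounded on $K_{1,v}^\circ$ (its image lies in the bounded set $K_{2,v}^\circ$), Riemann's removable singularity theorem upgrades $\psi_v|_{K_{1,v}^\circ}$ to a holomorphic map on all of $K_{1,v}^\circ$; combined with the already-established bijection onto $K_{2,v}^\circ$, this yields biholomorphicity. The commutative diagram follows from the conjugacy equation together with the bijectivity of $\psi_v:K_{1,v}\to K_{2,v}$ and the surjectivity of $g_{j,v}:K_{j,v}\to K_{j,\sigma(v)}$ for $j=1,2$. The main technical subtlety I anticipate is the topological invariance of local degree under homeomorphism conjugacy, which is what guarantees that $\psi_v$ bijects $\crit(g_{1,v})\cap K_{1,v}^\circ$ onto $\crit(g_{2,v})\cap K_{2,v}^\circ$ and hence supplies the local holomorphic inverse of $g_{2,v}$ used above.
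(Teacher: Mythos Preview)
Your overall scheme (induction on $r_v$, local holomorphicity via $\psi_v=g_{2,v}^{-1}\circ\psi_{\sigma(v)}\circ g_{1,v}$, then removable singularities) matches the paper's, but there is a genuine gap in your first step, the equality $\psi_v(K_{1,v})=K_{2,v}$.

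The forward inclusion is fine: from $g_{2,v}\circ\psi_v|_{K_{1,v}}=\psi_{\sigma(v)}\circ g_{1,v}|_{K_{1,v}}$ and the inductive hypothesis $\psi_{\sigma(v)}(K_{1,\sigma(v)})=K_{2,\sigma(v)}$ you get $\psi_v(K_{1,v})\subset g_{2,v}^{-1}(K_{2,\sigma(v)})=K_{2,v}$. But your reverse inclusion, obtained by ``running the argument for the conjugacy with $\psi_v^{-1}$'', is circular. The definition of conjugacy only asserts the intertwining relation on $K_{1,v}$, not on all of $\mathbb{C}$; to apply the same reasoning to $\psi_v^{-1}$ you would need $\psi_{\sigma(v)}^{-1}\circ g_{2,v}|_{K_{2,v}}=g_{1,v}\circ\psi_v^{-1}|_{K_{2,v}}$, which for $w\in K_{2,v}$ requires $\psi_v^{-1}(w)\in K_{1,v}$ --- precisely the inclusion you are trying to prove. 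In other words, $(\psi_v^{-1})_{v\in|T|}$ is not known a priori to be a conjugacy from $(\mathbf{g}_2,K_2)$ to $(\mathbf{g}_1,K_1)$; that is part of the content of the lemma.

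The paper closes this gap with a counting argument: for $w\in K_{2,v}$ outside the finite set $g_{2,v}^{-1}\bigl(\psi_{\sigma(v)}(g_{1,v}(\crit(g_{1,v})))\bigr)$, the point $\psi_{\sigma(v)}^{-1}(g_{2,v}(w))\in K_{1,\sigma(v)}$ is a regular value of $g_{1,v}$, so $A:=g_{1,v}^{-1}(\psi_{\sigma(v)}^{-1}(g_{2,v}(w)))\subset K_{1,v}$ has exactly $\delta(v)$ points. Their images $\psi_v(A)$ are $\delta(v)$ distinct points of $K_{2,v}$ lying in $g_{2,v}^{-1}(g_{2,v}(w))$, which has at most $\delta(v)$ elements; hence $w\in\psi_v(A)\subset\psi_v(K_{1,v})$. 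Density of such $w$ gives $\psi_v(K_{1,v})=K_{2,v}$. Once this is in place, your remaining steps go through (and you can sidestep the local-degree issue simply by removing both $\crit(g_{1,v})$ and $\psi_v^{-1}(\crit(g_{2,v}))$ before applying removable singularities).
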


\begin{proof}
By induction, we can assume $\sigma(v)\in \Tp$ without loss of generality. 
Since $g_{2,v}(\psi_{v}(K_{1,v})) = \psi_{\sigma(v)}(g_{1,v}(K_{1,v})) = K_{2,\sigma(v)}$, 
we have $\psi_{v}(K_{1,v}) \subset g_{2,v}^{-1}(K_{2,\sigma(v)}) =  K_{2,v}$. 
To prove $\psi_{v}(K_{1,v}) = K_{2,v}$, let $w\in K_{2,v} \setminus g_{2,v}^{-1}(\psi_{\sigma(v)}(g_{1,v}(\crit(g_{1,v}))))$. 
Then $\psi_{\sigma(v)}^{-1}(g_{2,v}(w))\in K_{1,\sigma(v)}\setminus g_{1,v}(\crit(g_{1,v}))$. 
It follows that $A:=g_{1,v}^{-1}(\psi_{\sigma(v)}^{-1}(g_{2,v}(w)))$ consists of $\delta(v)$ different points in $K_{1,v}$.  
Then $g_{2,v}(\psi_v(A)) = \psi_{\sigma(v)}(g_{1,v}(A))
= \{g_{2,v}(w)\}$, 
and $\psi_v(A)$ consists of $\delta(v)$ different points in $K_{2,v}$. 
Therefore $w\in\psi_v(A)\subset \psi_v(K_{1,v})$. 
Since such $w$ are dense in $K_{2,v}$, we have $\psi_{v}(K_{1,v}) = K_{2,v}$. 

By the commutative diagram above, the restriction $\psi_v:K_{1,v}^\circ\setminus \crit(g_{1,v}) \rightarrow K_{2,v}^\circ\setminus \crit(g_{2,v})$ is biholomorphic. 
By the removable singularities theorem, the restriction $\psi_v:K_{1,v}^\circ\rightarrow K_{2,v}^\circ$ is biholomorphic. 
This completes the proof. 
\end{proof}

\begin{lemma}
\label{straightening of LM}
Every lemniscate map $(\mathbf{g}, K)$ over $T$ is strongly conjugate to $(\mathbf{f},K(\mathbf{f}))$ for some $\mathbf{f}\in\mathcal{P}_0^T$. 
\end{lemma}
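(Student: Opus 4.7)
The plan is to build the pair $(\psi_v, f_v)_{v\in|T|}$ by induction on $r_v$, working from the periodic vertices outward. For each $v\in\Tp$, set $f_v(z)=z^{\delta(v)}$, so the corresponding fiber of $K(\mathbf f)$ is $\overline{\mathbb{D}}$. Since $K_v$ is a closed Jordan domain, a Riemann map $K_v^\circ\rightarrow\mathbb{D}$ extends by Carath\'eodory's theorem to a homeomorphism $\overline{K_v}\rightarrow\overline{\mathbb{D}}$, and then by Schoenflies to a homeomorphism $\psi_v:\mathbb{C}\rightarrow\mathbb{C}$ with $\psi_v(K_v)=\overline{\mathbb{D}}$ and $\psi_v|_{K_v^\circ}$ biholomorphic.

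Next, I would process $v\in\Tn$ in order of increasing $r_v$, so that $\psi_{\sigma(v)}$ has already been constructed at each step. The composition $h_v:=\psi_{\sigma(v)}\circ g_v:\mathbb{C}\rightarrow\mathbb{C}$ extends to a branched covering of $S^2$ with $h_v^{-1}(\infty)=\{\infty\}$, hence is a topological polynomial of degree $\delta(v)$. Applying Lemma~\ref{pullback of complex structure}, I choose a homeomorphism $\psi_v:\mathbb{C}\rightarrow\mathbb{C}$ so that $f_v:=h_v\circ\psi_v^{-1}$ is a monic centered polynomial of degree $\delta(v)$; this yields the strong conjugacy identity $\psi_{\sigma(v)}\circ g_v=f_v\circ\psi_v$ on all of $\mathbb{C}$. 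Two routine verifications then close the argument: (i) $\psi_v|_{K_v^\circ}$ is biholomorphic, because by the induction hypothesis $h_v$ is already holomorphic on $K_v^\circ$ in the standard complex structure, so the pulled-back structure used in the proof of Lemma~\ref{pullback of complex structure} coincides with the standard one there, with the removable singularities theorem absorbing the finitely many critical points of $f_v$ inside $K_v^\circ$; (ii) by an immediate induction on $r_v$, $K_{\mathbf f,v}=\psi_v(K_v)$, since for $v\in\Tn$,
\[
\psi_v(K_v)=\psi_v(g_v^{-1}(K_{\sigma(v)}))=f_v^{-1}(\psi_{\sigma(v)}(K_{\sigma(v)}))=f_v^{-1}(K_{\mathbf f,\sigma(v)})=K_{\mathbf f,v}.
\]

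The main obstacle is packaged inside Lemma~\ref{pullback of complex structure}: the homeomorphism $\psi_v$ produced there must simultaneously normalize $f_v$ to be monic and centered and respect the holomorphic structure already present on $K_v^\circ$. The first is achieved by a final M\"obius adjustment in the proof of that lemma; the second is automatic because the pullback complex structure used in the uniformization step coincides with the standard one wherever $h_v$ was already holomorphic. Once both are in hand, the induction yields $\mathbf f=(f_v)_{v\in|T|}\in\mathcal{P}_0^T$ (with $f_v(z)=z^{\delta(v)}$ on $\Tp$ by construction), and the collection $(\psi_v)_{v\in|T|}$ furnishes the required strong conjugacy from $(\mathbf g, K)$ to $(\mathbf f, K(\mathbf f))$.
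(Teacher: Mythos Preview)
Your proof is correct and follows essentially the same approach as the paper: induct on $r_v$, use the Riemann mapping theorem for periodic vertices, and apply Lemma~\ref{pullback of complex structure} to the topological polynomial $\psi_{\sigma(v)}\circ g_v$ at nonperiodic vertices. Your explicit verifications (i) and (ii) are details the paper leaves implicit; note in particular that the definition of (strong) conjugacy only requires $\psi_v|_{K_v^\circ}$ to be biholomorphic for $v\in\Tp$, so your verification (i) for $v\in\Tn$ is not strictly needed here (it is recovered later in Lemma~\ref{conjugacy of LMs}).
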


\begin{proof}
Let $(\mathbf{g}, K) = ((g_v)_{v\in\Tn},(K_v)_{v\in|T|})$. 
For $v\in\Tp$, by the Riemann mapping theorem, there is a homeomorphism $\psi_v:\mathbb{C}\rightarrow\mathbb{C}$ so that 
its restriction $\psi_v:K_v^\circ\rightarrow\mathbb{D}$ is conformal. 
For $v\in\Tn$, we can construct $\psi_v$ and $f_v$ inductively as follows. 
Assume we have defined the homeomorphism $\psi_{\sigma(v)}:\mathbb{C}\rightarrow\mathbb{C}$. 
Then $\psi_{\sigma(v)}\circ g_v$ is a topological polynomial of degree $\deg(g_v)=\delta(v)$. 
By Lemma \ref{pullback of complex structure}, there is a homeomorphism $\psi_v:\mathbb{C}\rightarrow\mathbb{C}$ so that $f_v:=\psi_{\sigma(v)}\circ g_v\circ \psi_v^{-1}\in\mathcal{P}^{\delta(v)}$. 
Let $\mathbf{f}=(f_v)_{v\in\Tn}$. 
Then $\mathbf{f}\in\mathcal{P}_0^T$, and $(\mathbf{g},K)$ is strongly conjugate to $(\mathbf{f},K(\mathbf{f}))$ by $(\psi_v)_{v\in|T|}$. 
\end{proof}

\subsection{Straightening of marked lemniscate maps}
\label{subsection straightening of marked lemniscate maps}

Let $\mathcal{A}$ be the collection of all pairs $(K,\gamma)$ such that 
\begin{itemize}
\item $K\subset\mathbb{C}$ is a full nondegenerate continuum; 
\item $\gamma$ is a path to $K$ (see Definition \ref{def-path-K}). 
\end{itemize}

Now fix $(K,\gamma)\in\mathcal{A}$. 
By the Riemann mapping theorem, there is a unique conformal isomorphism 
$\phi_K:\mathbb{C}\setminus \overline{\mathbb{D}} \rightarrow \mathbb{C}\setminus K$ 
such that $\lim_{z\rightarrow\infty}\phi_K(z)/z>0$. 
The \emph{external ray} with angle $\theta\in\RZ$ is defined to be  
$R_K(\theta)=\phi_K((1,\infty)e^{2\pi i\theta})$. 
An external ray $R_K(\theta)$ \emph{lands} at a point $x\in\partial K$ if $\overline{R_K(\theta)}\cap\partial K=\{x\}$. 
It is known from \cite[Corollary 6.4]{McM} that 
the lifted path $\phi_K^{-1}\circ\gamma$ converges to a point $e^{2\pi i t}\in\partial\mathbb{D}$, 
and the external ray $R_K(t)$ lands at the same point as $\gamma$. 
The angle $t$ will be denoted by $\theta(K,\gamma)$. 
Furthermore, $\theta(K,\gamma_0)=\theta(K,\gamma_1)$ if and only if $\gamma_0\simeq_K\gamma_1$ for any $(K,\gamma_0),(K,\gamma_1)\in\mathcal{A}$ (see Definition \ref{def-path-K} for the homotopy relation $\simeq_K$). 

Recall that a lemniscate is the preimage of $\overline{\mathbb{D}}$ under some topological polynomial. 

\begin{lemma}
\label{the same extension on the ideal boundary}
Let $K,L\subset\mathbb{C}$ be connected lemniscate. 
Let $f_1,f_2$ be topological polynomials with $\deg(f_1)=\deg(f_2)$. 
Suppose $K=f_1^{-1}(L)=f_2^{-1}(L)$ and $f_1|_K=f_2|_K$. 
Then $f_1\circ \gamma\simeq_L f_2\circ\gamma$ for any path $\gamma$ to $K$. 
\end{lemma}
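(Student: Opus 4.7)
The plan is to reduce the homotopy assertion $f_1\circ\gamma\simeq_L f_2\circ\gamma$ to the equality of external angles $\theta(L, f_1\circ\gamma)=\theta(L, f_2\circ\gamma)$, via the equal-angle-iff-homotopic statement quoted just after the definition of $\mathcal{A}$. Both $f_1\circ\gamma$ and $f_2\circ\gamma$ are paths to $L$ starting at the common point $f_1(\gamma(0))=f_2(\gamma(0))\in\partial L$ (since $\gamma(0)\in\partial K$ and $f_1|_K=f_2|_K$), so this reduction is legitimate.

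To compute these angles, I would realize the universal covers of the annuli $\mathbb{C}\setminus K$ and $\mathbb{C}\setminus L$ as the right half-plane $\mathbb{H}=\{\operatorname{Re}(w)>0\}$, via the covering maps $\pi_K(w):=\phi_K(e^w)$ and $\pi_L(w):=\phi_L(e^w)$, using the normalized Riemann maps introduced in the excerpt. Since $\mathbb{H}$ is simply connected, each $f_j|_{\mathbb{C}\setminus K}$ admits a continuous lift $\tilde f_j\colon \mathbb{H}\to\mathbb{H}$ satisfying $\tilde f_j(w+2\pi i)=\tilde f_j(w)+2\pi i d$, where $d=\deg(f_j)$. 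Lifting $\gamma$ to $\tilde\gamma$ in $\mathbb{H}$, McMullen's Corollary 6.4 (invoked in the definition of $\theta(K,\gamma)$ above) gives $\tilde\gamma(t)\to i\tau_0\in\partial\mathbb{H}$ with $\tau_0=2\pi\theta(K,\gamma)$, and applying the same corollary to $f_j\circ\gamma$ yields $\theta(L,f_j\circ\gamma)=\operatorname{Im}(\tilde f_j(i\tau_0))/(2\pi)\bmod 1$, once we verify that $\tilde f_j$ extends continuously to $i\tau_0$.

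Because $K$ and $L$ are connected lemniscates, their boundaries are finite plane graphs and hence locally connected, so Carath\'eodory's theorem extends $\pi_K,\pi_L$ continuously to $\overline{\mathbb{H}}$. The continuous extension of each $\tilde f_j$ to $\partial\mathbb{H}$ then follows from homotopy invariance of external angles: any two approach paths $w^1(t),w^2(t)\to i\tau_0$ in $\mathbb{H}$ project under $\pi_K$ to paths-to-$K$ of common external angle $\tau_0/(2\pi)$, hence homotopic, so their $f_j$-images are homotopic in paths-to-$L$ and share an external angle. Finally, since $f_1|_{\partial K}=f_2|_{\partial K}$, the boundary extensions $\tilde f_1|_{\partial\mathbb{H}}$ and $\tilde f_2|_{\partial\mathbb{H}}$ project identically via $\pi_L|_{\partial\mathbb{H}}$, so their continuous difference $\delta(\tau):=\tilde f_2(i\tau)-\tilde f_1(i\tau)$ takes values in $2\pi\mathbb{Z}$ at every $\tau$ whose $\pi_L$-image is not a pinch of $\partial L$ (an open set whose complement per period is finite). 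Continuity and discreteness then force $\delta\equiv c$ for some $c\in 2\pi\mathbb{Z}$, whence $\tilde f_1(i\tau_0)\equiv \tilde f_2(i\tau_0)\pmod{2\pi i\mathbb{Z}}$, giving the desired equality $\theta(L,f_1\circ\gamma)=\theta(L,f_2\circ\gamma)$ in $\RZ$.

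The hardest part will be the continuous extension of $\tilde f_j$ across $\partial\mathbb{H}$ together with the integrality of the shift $\delta$, since $\partial K$ and $\partial L$ need not be Jordan curves at pinch points; both steps are controlled by homotopy invariance of external angles and by the finiteness and isolation of the pinch set of $\partial L$.
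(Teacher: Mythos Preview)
Your approach is correct and follows the same core idea as the paper's proof: both arguments show that the induced circle maps $t\mapsto\theta(L, f_j(R_K(t)))$ coincide by observing agreement at all but finitely many $t$ (namely those where the landing point is not a cut point of $K$, hence not of $L$) and then invoking continuity. The paper works directly on $\RZ$ without lifting to the half-plane, which makes the proof shorter; your half-plane formulation is equivalent but carries extra overhead, though it does make the continuity of the boundary extension more explicit.
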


\begin{proof}
For $j=1,2$, we define $f_j^*:\RZ\rightarrow\RZ$ by $f_j^* (t)= \theta(L, f_j(R_K(t)))$. 
For $t\in \RZ$, let $x(t)$ denote the landing point of $R_K(t)$, and let $y(t) = f_1(x(t))=f_2(x(t))$. 
Suppose $K\setminus x(t)$ is connected.
Then $L\setminus y(t)$ is connected, and there is only one external ray $R_L(t')$ landing at $y(t)$. 
It follows that $f_1^*(t) =f_2^*(t) = t'$. 
Note that there are only finitely many $t\in\RZ$ so that $K\setminus x(t)$ is not connected. 
By the continuity of $f_1^*$ and $f_2^*$, we have $f_1^* = f_2^*$. 
That is, $f_1(R_K(t)) \simeq_L f_2(R_K(t))$ for any $t\in\RZ$. 
Then $$f_1\circ \gamma\simeq_L f_1(R_K(\theta(K,\gamma))) \simeq_L f_2(R_K(\theta(K,\gamma))) \simeq_L f_2\circ\gamma$$
for any path $\gamma$ to $K$. 
\end{proof}



\begin{definition}
[Marked lemniscate map]
\label{definition marked LM}
A \emph{marked lemniscate map} $(\mathbf{g},K,Z,\Gamma)$ over $T$ consists of 
\begin{itemize}
\item a lemniscate map $(\mathbf{g}, K) = ((g_v)_{v\in\Tn},(K_v)_{v\in|T|})$ over $T$ with fiberwise connected filled Julia set; 

\item a \emph{central marking} $Z = (\zeta_v)_{v\in\Tp}$, where $\zeta_v\in K_v^\circ$; 

\item an \emph{external marking} $\Gamma = (\gamma_v)_{v\in|T|}$, 
where $\gamma_v$ is a path to $K_v$, and $g_v\circ\gamma_v\simeq_{K_{\sigma(v)}}\gamma_{\sigma(v)}$ if $v\in\Tn$.  
\end{itemize}
\end{definition}

Let $\mathbf{f}\in\mathcal{C}(\mathcal{P}_0^T)$. 
There exists a unique collection $(B_{\mathbf{f},v}:\mathbb{C}\setminus K_{\mathbf{f},v}\rightarrow 
\mathbb{C}\setminus \overline{\mathbb{D}})_{v\in|T|}$ of \emph{B\"ottcher coordinates} so that 
$B_{\mathbf{f},v}$ is tangent to the identity as $z\rightarrow\infty$, 
with form 
\begin{equation}
\label{Bfv}
B_{\mathbf{f},\sigma(v)}(f_v(z))=B_{\mathbf{f},v}(z)^{\delta(v)}.
\end{equation}
We remark that $B_{\mathbf{f},v}=\id$ for each $v\in\Tp$. 
The \emph{external ray} for $v\in|T|$ and $\theta\in\RZ$ of $\mathbf{f}$ is defined to be 
$$
R_{\mathbf{f},v}(\theta)=B_{\mathbf{f},v}^{-1}((1,\infty)e^{2\pi i\theta}).
$$
For each $v\in\Tp$, we choose $\zeta_v\in K_{\mathbf{f},v}^\circ =\mathbb{D}$. 
Let $(\theta_v)_{v\in|T|}$ be a collection of angles in $\RZ$ satisfying $\delta(v)\cdot\theta_v \equiv \theta_{\sigma(v)}\modZ$ for each $v\in\Tn$. 
Then $(Z,\Gamma) = ((\zeta_v)_{v\in\Tp},(R_{\mathbf{f},v}(\theta_v))_{v\in|T|})$ is a marking of $(\mathbf{f},K(\mathbf{f}))$. 
We call $(Z,\Gamma)$ the \emph{standard marking} of $(\mathbf{f},K(\mathbf{f}))$ if $\zeta_v=0$ for any $v\in\Tp$ and $\theta_v=0$ for any $v\in|T|$.

For $j=1,2$, let $$(\mathbf{g}_j, K_j, Z_j, \Gamma_j) = ((g_{j,v})_{v\in\Tn},(K_{j,v})_{v\in|T|}, (\zeta_{j,v})_{v\in\Tp}, (\gamma_{j,v})_{v\in|T|} )$$ 
be a marked lemniscate map over $T$. 
We say $(\mathbf{g}_1, K_1, Z_1, \Gamma_1)$ is \emph{conjugate} (resp. \emph{strongly conjugate}) to $(\mathbf{g}_2, K_2, Z_2, \Gamma_2)$ if there exists a conjugacy (resp. strong conjugacy) $(\psi_v)_{v\in|T|}$ from $(\mathbf{g}_1, K_1)$ to $(\mathbf{g}_2, K_2)$ such that 
\begin{itemize}
\item 
for each $v\in\Tp$, we have $\psi_v(\zeta_{1,v})= \zeta_{2,v}$; 

\item 
for each $v\in|T|$, 
we have $\psi_v\circ\gamma_{1,v} \simeq_{K_{2,v}} \gamma_{2,v}$ (it follows from Lemma \ref{conjugacy of LMs} that $\psi_v(K_{1,v})=K_{2,v}$). 
\end{itemize}

\begin{lemma}
\label{uniqueness-f-CP0T}
Let $\mathbf{f}_1,\mathbf{f}_2\in \mathcal{C}(\mathcal{P}_0^T)$. 
Let $(Z_j,\Gamma_j)$ be the standard marking of $(\mathbf{f}_j,K(\mathbf{f}_j))$ for $j=1,2$. 
If $(\mathbf{f}_1,K(\mathbf{f}_1),Z_1,\Gamma_1)$ is conjugate to $(\mathbf{f}_2,K(\mathbf{f}_2),Z_2,\Gamma_2)$, 
then $\mathbf{f}_1 = \mathbf{f}_2$. 

Furthermore, if $(\phi_v)_{v\in|T|}$ is a self-conjugacy of $(\mathbf{f}_1,K(\mathbf{f}_1),Z_1,\Gamma_1)$, 
then $\phi_v|_{K_{\mathbf{f}_1,v}}=\id$ for any $v\in|T|$. 
\end{lemma}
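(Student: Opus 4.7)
The plan is to prove both statements simultaneously by induction on the preperiod $r_v$, establishing at each stage that $K_{\mathbf{f}_1,v}=K_{\mathbf{f}_2,v}$, $f_{1,v}=f_{2,v}$, and $\psi_v|_{K_{\mathbf{f}_1,v}}=\id$. For the base case $v\in\Tp$ one has $f_{j,v}(z)=z^{\delta(v)}$ and $K_{\mathbf{f}_j,v}=\overline{\mathbb{D}}$; by Lemma~\ref{conjugacy of LMs} the restriction $\psi_v|_\mathbb{D}$ is a disk automorphism fixing $\zeta_v=0$, hence a rotation $z\mapsto\lambda z$. Under the standard marking $B_{\mathbf{f}_j,v}=\id$ and $R_{\mathbf{f}_j,v}(0)$ lands at $1$, so the external-marking condition $\psi_v\circ R_{\mathbf{f}_1,v}(0)\simeq_{\overline{\mathbb{D}}}R_{\mathbf{f}_2,v}(0)$ forces $\psi_v(1)=1$, giving $\lambda=1$ and $\psi_v|_{\overline{\mathbb{D}}}=\id$.

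For the inductive step I would complement $\psi_v$ with the B\"ottcher identification $\phi_v:=B_{\mathbf{f}_2,v}^{-1}\circ B_{\mathbf{f}_1,v}:\mathbb{C}\setminus K_{\mathbf{f}_1,v}\to\mathbb{C}\setminus K_{\mathbf{f}_2,v}$, a conformal isomorphism tangent to the identity at infinity. The inductive hypothesis $K_{\mathbf{f}_1,\sigma(v)}=K_{\mathbf{f}_2,\sigma(v)}$ combined with uniqueness of the normalized B\"ottcher coordinate gives $\phi_{\sigma(v)}=\id$; then (\ref{Bfv}) and the conjugacy relation for $\psi$ yield $f_{1,v}=f_{2,v}\circ\phi_v$ on $\mathbb{C}\setminus K_{\mathbf{f}_1,v}$ and $f_{1,v}=f_{2,v}\circ\psi_v$ on $K_{\mathbf{f}_1,v}$. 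Since $K_{\mathbf{f}_j,v}=F_{j,v}^{-1}(\overline{\mathbb{D}})$ for the polynomial $F_{j,v}:=f_{j,\sigma^{r_v-1}(v)}\circ\cdots\circ f_{j,v}$, the boundary $\partial K_{\mathbf{f}_j,v}$ is a connected real-algebraic curve, hence locally connected (so $\phi_v$ extends continuously to $\partial K_{\mathbf{f}_1,v}$ via Carath\'eodory) and of locally finite $1$-dimensional Hausdorff measure (hence conformally removable).

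The final step will be to match $\psi_v$ with the boundary extension of $\phi_v$ and then glue them. From $f_{2,v}\circ\psi_v=f_{2,v}\circ\phi_v$ on $\partial K_{\mathbf{f}_1,v}$ together with (\ref{Bfv}) one gets $B_{\mathbf{f}_2,v}\circ\psi_v=\zeta\cdot B_{\mathbf{f}_1,v}$ on that boundary, where $\zeta$ is a continuous function into the $\delta(v)$-th roots of unity; connectedness of $\partial K_{\mathbf{f}_1,v}$ makes $\zeta$ constant, and the external-marking condition --- which forces $\psi_v$ and $\phi_v$ to agree at the landing point of $R_{\mathbf{f}_1,v}(0)$ --- pins down $\zeta=1$. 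Thus $\psi_v$ and $\phi_v$ glue to a continuous $\Phi_v:\mathbb{C}\to\mathbb{C}$ that is holomorphic off the removable set $\partial K_{\mathbf{f}_1,v}$ and tangent to the identity at infinity; conformal removability then makes $\Phi_v$ entire, and Liouville applied to $\Phi_v(z)-z$ forces $\Phi_v=\id$, yielding $K_{\mathbf{f}_1,v}=K_{\mathbf{f}_2,v}$, $\psi_v|_{K_{\mathbf{f}_1,v}}=\id$, and $f_{1,v}=f_{2,v}$. The ``furthermore'' statement then follows by reading the induction with $\mathbf{f}_1=\mathbf{f}_2$. I expect the hardest part to be precisely this boundary-matching step, where one must control the rotational ambiguity $\zeta$ via the external marking and verify that the lemniscate boundary $\partial K_{\mathbf{f}_j,v}$ is tame enough for both Carath\'eodory extension and conformal removability.
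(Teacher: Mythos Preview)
Your overall architecture is the same as the paper's: induct on $r_v$, glue $\psi_v|_{K_{\mathbf{f}_1,v}}$ with the B\"ottcher identification $B_{\mathbf{f}_2,v}^{-1}\circ B_{\mathbf{f}_1,v}$ on the complement, and show the resulting $\Phi_v$ is the identity. Two points deserve comment.

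\textbf{The boundary-matching step is not quite rigorous as written.} Your equation $B_{\mathbf{f}_2,v}\circ\psi_v=\zeta\cdot B_{\mathbf{f}_1,v}$ on $\partial K_{\mathbf{f}_1,v}$ presupposes that $B_{\mathbf{f}_2,v}$ is a single-valued function on $\partial K_{\mathbf{f}_2,v}$. But for $\mathbf{f}_2\in\partial\mathcal{C}(\mathcal{P}_0^T)$ the lemniscate $K_{\mathbf{f}_2,v}$ may have pinch points (where a critical value of $F_{2,v}$ lies on $\partial\mathbb{D}$), and at such a point several external rays land, so $B_{\mathbf{f}_2,v}$ is genuinely multi-valued there. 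Your continuity argument for $\zeta$ then breaks down: removing finitely many pinch-point preimages can disconnect $\partial K_{\mathbf{f}_1,v}$ (e.g.\ a figure-eight), so ``locally constant'' no longer gives ``constant''. The paper avoids this by working one level up, on the circle of angles rather than on the boundary itself: it defines $\phi_v^*(\eta)=\theta\bigl(K_{\mathbf{f}_2,v},\psi_v(R_{\mathbf{f}_1,v}(\eta))\bigr)$, which is always single-valued because $\theta(K,\gamma)$ depends on the access $\gamma$, not on its landing point. One then shows $m_{\delta(v)}\circ\phi_v^*=m_{\delta(v)}$ (this is your $\zeta^{\delta(v)}=1$), and since $\phi_v^*$ is an orientation-preserving circle map with $\phi_v^*(0)=0$ (the marking), it must be the identity. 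This is the rigorous version of your $\zeta=1$ argument; your idea is right, but it has to be phrased in terms of angles rather than boundary points.

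\textbf{Holomorphicity of $\Phi_v$ comes for free.} Once $\Phi_v$ is a homeomorphism satisfying $f_{1,v}=f_{2,v}\circ\Phi_v$ globally, you can read off holomorphicity directly: locally $\Phi_v=f_{2,v}^{-1}\circ f_{1,v}$ away from the finitely many critical values of $f_{2,v}$, and isolated points are removable for bounded continuous functions. Then tangency at $\infty$ gives $\Phi_v(z)=z+b$, and comparing the centered polynomials $f_{1,v},f_{2,v}$ forces $b=0$. This is what the paper does, and it sidesteps your appeal to conformal removability of $\partial K_{\mathbf{f}_1,v}$ entirely (which, while true for a lemniscate, is an unnecessary detour here).
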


\begin{proof}
Suppose $(\phi_v)_{v\in|T|}$ is a conjugacy from $(\mathbf{f}_1,K(\mathbf{f}_1),Z_1,\Gamma_1)$ to $(\mathbf{f}_2,K(\mathbf{f}_2),Z_2,\Gamma_2)$. 
Let $v\in|T|$.
Define $\phi_v^*:\RZ\rightarrow\RZ$ by 
$\phi_v^*(\eta)=\theta(K_{\mathbf{f}_2,v}, \phi_{v}(R_{\mathbf{f}_1,v}(\eta)))$, 
i.e. $R_{\mathbf{f}_2,v}(\phi_v^*(\eta)) \simeq_{K_{\mathbf{f}_2,v}} \phi_{v}(R_{\mathbf{f}_1,v}(\eta))$.

\vspace{6pt}
{\bf Claim.}
{\it $\phi_v^*=\id$.}
\vspace{6pt}

If $v\in\Tp$, then $\phi_v|_{\overline{\mathbb{D}}}=\id$, the claim follows from Lemma \ref{the same extension on the ideal boundary}. 
Now assume $v\in\Tn$.  
By induction, we can assume $\phi_{\sigma(v)}^*=\id$. 
For any $\eta\in\RZ$, we have 
\begin{align*}
&\ \ \ \   R_{\mathbf{f}_2,\sigma(v)}(\delta(v)\cdot\phi_v^*(\eta))\\
&= f_{2,v}(R_{\mathbf{f}_2,v}(\phi_v^*(\eta)))
\quad(\text{by (\ref{Bfv})})\\
&\simeq_{K_{\mathbf{f}_2,\sigma(v)}} f_{2,v}\circ\phi_{v}(R_{\mathbf{f}_1,v}(\eta))
\quad(\text{by $R_{\mathbf{f}_2,v}(\phi_v^*(\eta)) \simeq_{K_{\mathbf{f}_2,v}} \phi_{v}(R_{\mathbf{f}_1,v}(\eta))$})\\
&\simeq_{K_{\mathbf{f}_2,\sigma(v)}}  
\phi_{\sigma(v)}\circ f_{1,v}(R_{\mathbf{f}_1,v}(\eta))
\quad(\text{by Lemma \ref{the same extension on the ideal boundary}})\\
&=
\phi_{\sigma(v)}(R_{\mathbf{f}_1,\sigma(v)}(\delta(v)\cdot\eta))
\quad(\text{by (\ref{Bfv}) again})\\
&\simeq_{K_{\mathbf{f}_2,\sigma(v)}} 
R_{\mathbf{f}_2,\sigma(v)}(\delta(v)\cdot\eta)
\quad(\text{by the assumption  $\phi_{\sigma(v)}^*=\id$}). 
\end{align*}
Thus $m_{\delta(v)}\circ \phi_v^* = m_{\delta(v)}$. 
Note that for any $\eta_1,\eta_2,\eta_3\in\RZ$ in positive cyclic order, the images $\phi_v^*(\eta_1),\phi_v^*(\eta_2),\phi_v^*(\eta_3)$ are in positive cyclic order. 
It follows from $\phi_v^*(0)=0$ that  $\phi_v^*=\id$. 
This shows the claim. 
\vspace{6pt}

Define $\Phi_v:\mathbb{C}\rightarrow \mathbb{C}$ by 
$$\Phi_v(z)=\begin{cases}
\phi_v(z), &z\in K_{\mathbf{f}_1,v}, \\
B_{\mathbf{f}_2,v}^{-1}\circ B_{\mathbf{f}_1,v}(z), &z\in \mathbb{C}\setminus K_{\mathbf{f}_1,v}. 
\end{cases}$$ 
Since $\phi_v^*=\id$, for any $\eta\in\RZ$, if  $R_{\mathbf{f}_1,v}(\eta)$ lands at $x\in\partial K_{\mathbf{f}_1,v}$, then $R_{\mathbf{f}_2,v}(\eta)$ lands at $\phi_v(x)\in\partial K_{\mathbf{f}_2,v}$. 
Therefore $B_{\mathbf{f}_2,v}^{-1}\circ B_{\mathbf{f}_1,v}$ can be extended to a homeomorphism $\varphi_v:\overline{\mathbb{C}\setminus K_{\mathbf{f}_1,v}}\rightarrow\overline{\mathbb{C}\setminus K_{\mathbf{f}_2,v}}$ (compare Lemma \ref{continuous extension on the closure of infinity basin}).  
Furthermore, we have $\varphi_v|_{\partial K_{\mathbf{f}_1,v}} = \phi_v|_{\partial K_{\mathbf{f}_1,v}}$, hence $\Phi_v$ is a homeomorphism.  


If $v\in\Tp$, it is clear that $\Phi_v = \id$. 
Now consider $v\in\Tn$ with $\sigma(v)\in\Tp$. 
It follows from $\Phi_{\sigma(v)}\circ f_{1,v} = f_{2,v}\circ \Phi_v$ that $f_{1,v} = f_{2,v}\circ \Phi_v$. 
This implies $\Phi_v$ is holomorphic. 
Since $\Phi_v$ is tangent to the identity as $z\rightarrow\infty$, it takes the form $\Phi_v(z)=z+b_v$ for some $b_v\in\mathbb{C}$. 
By $f_{1,v} = f_{2,v}\circ \Phi_v$ and $f_{1,v},f_{2,v}\in\mathcal{P}^{\delta(v)}$ (monic and centered), we have $b_v=0$, so $\Phi_v = \id$. 
Inductively, we have $\Phi_v = \id$ for any $v\in\Tn$. 
It follows that $\mathbf{f}_1=\mathbf{f}_2$,   
and that $\phi_v|_{K_{\mathbf{f}_1,v}}=\id$ for each $v\in|T|$. 
\end{proof}

\begin{proposition}
[Straightening theorem]
\label{straightening of marked LM}
Let 
$(\mathbf{g}, K, Z, \Gamma) $ 
be a marked lemniscate map over $T$. Then there exists a unique $\mathbf{f}\in\mathcal{C}(\mathcal{P}_0^T)$ such that $(\mathbf{g}, K,Z,\Gamma)$ is conjugate to $(\mathbf{f},K(\mathbf{f}),Z',\Gamma')$, where $(Z',\Gamma')$ is the standard marking of $(\mathbf{f},K(\mathbf{f}))$. 

Furthermore, if $(\psi_v)_{v\in|T|}$ is a conjugacy from $(\mathbf{g}, K, Z, \Gamma)$ to $(\mathbf{f},K(\mathbf{f}),Z',\Gamma')$, then $(\psi_v|_{K_v})_{v\in|T|}$ is unique. 
\end{proposition}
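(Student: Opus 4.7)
The plan is to construct the desired conjugacy by adapting the inductive procedure of Lemma \ref{straightening of LM} so that the available freedoms absorb the marking data at each step, and then to deduce both uniqueness claims from Lemma \ref{uniqueness-f-CP0T}.

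For existence, I would proceed by induction on $r_v$, starting from the periodic vertices. At $v\in\Tp$, I pick a Riemann map $K_v^\circ\to\mathbb{D}$ sending $\zeta_v$ to $0$, which is unique up to rotation; Carath\'eodory's theorem extends it to a homeomorphism of the closures, the rotational freedom is used to place the landing point of $\gamma_v$ at $1\in\partial\mathbb{D}$, and $\psi_v$ is then extended arbitrarily to a homeomorphism of $\mathbb{C}$. Since any two paths to a common boundary point of a simply connected domain are homotopic rel that domain, this automatically gives $\psi_v\circ\gamma_v\simeq_{\overline{\mathbb{D}}}R_{\mathbf{f},v}(0)$. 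At $v\in\Tn$, once $\psi_{\sigma(v)}$ is built, the composition $\psi_{\sigma(v)}\circ g_v$ is a topological polynomial of degree $\delta(v)$, and Lemma \ref{pullback of complex structure} produces $\psi_v:\mathbb{C}\to\mathbb{C}$ with $f_v:=\psi_{\sigma(v)}\circ g_v\circ\psi_v^{-1}\in\mathcal{P}^{\delta(v)}$. The restriction $\psi_v|_{K_v^\circ}$ is automatically biholomorphic because the pulled-back complex structure agrees with the standard one on $K_v^\circ$: $g_v$ is holomorphic there by hypothesis, and $\psi_{\sigma(v)}|_{K_{\sigma(v)}^\circ}$ is biholomorphic by the inductive hypothesis.

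To align the external marking at a nonperiodic $v$, I would exploit the residual ambiguity in the normalization step. Replacing $\psi_v$ by $M\circ\psi_v$ for an affine $M(z)=az+b$ changes $f_v$ to $f_v\circ M^{-1}$, and monicity and centeredness of the new $f_v$ force $b$ and pin down $a$ up to a $\delta(v)$-th root of unity. The external-marking compatibility $g_v\circ\gamma_v\simeq_{K_{\sigma(v)}}\gamma_{\sigma(v)}$, combined with the inductive normalization $\psi_{\sigma(v)}\circ\gamma_{\sigma(v)}\simeq_{K_{\mathbf{f},\sigma(v)}}R_{\mathbf{f},\sigma(v)}(0)$ and the relation $f_v\circ\psi_v=\psi_{\sigma(v)}\circ g_v$ on $K_v$, implies that $\psi_v\circ\gamma_v$ lifts $R_{\mathbf{f},\sigma(v)}(0)$ under $f_v$ and therefore falls into exactly one of the $\delta(v)$ homotopy classes $R_{\mathbf{f},v}(k/\delta(v))$ rel $K_{\mathbf{f},v}$; I select the normalization for which it lies in the class $k=0$. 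Fiberwise connectedness of $K(\mathbf{f})$ is transferred from that of $K$ through the conjugacy, so $\mathbf{f}\in\mathcal{C}(\mathcal{P}_0^T)$.

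Both uniqueness assertions are direct consequences of Lemma \ref{uniqueness-f-CP0T}. If $\mathbf{f},\mathbf{f}'\in\mathcal{C}(\mathcal{P}_0^T)$ both realize the conclusion via conjugacies $(\psi_v)$ and $(\psi_v')$, then $(\psi_v'\circ\psi_v^{-1})_{v\in|T|}$ is a conjugacy between the two standard-marked generalized polynomials, whence Lemma \ref{uniqueness-f-CP0T} forces $\mathbf{f}=\mathbf{f}'$. If $(\psi_v)$ and $(\psi_v')$ are two conjugacies onto the same target $(\mathbf{f},K(\mathbf{f}),Z',\Gamma')$, then $(\psi_v'\circ\psi_v^{-1})_{v\in|T|}$ is a self-conjugacy of $(\mathbf{f},K(\mathbf{f}),Z',\Gamma')$, which by the second clause of Lemma \ref{uniqueness-f-CP0T} restricts to the identity on each $K_{\mathbf{f},v}$, so $\psi_v|_{K_v}=\psi_v'|_{K_v}$.

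The main obstacle will be the normalization step at nonperiodic vertices: I need to verify that the $\delta(v)$ monic-centered normalizations act simply transitively on the $\delta(v)$ homotopy classes of lifts of the reference external ray, so that exactly one choice aligns $\psi_v\circ\gamma_v$ with the standard class $R_{\mathbf{f},v}(0)$. This is the pivotal point where the algebraic ambiguity in the uniformization from Lemma \ref{pullback of complex structure} meets the topological ambiguity of preimage sheets under the branched cover $f_v$, and careful bookkeeping is required to identify the two.
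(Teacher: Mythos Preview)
Your proposal is correct and follows essentially the same strategy as the paper. The only organizational difference is that the paper separates the construction into two stages: it first applies Lemma~\ref{straightening of LM} verbatim to obtain an unmarked straightening $\mathbf{h}\in\mathcal{P}_0^T$ via $(\phi_v)$ (with the periodic $\phi_v$ already normalized so that $\phi_v(\zeta_v)=0$ and $\phi_v(\gamma_v(0))=1$), and then in a single pass computes the angles $\theta_v=\theta(K_{\mathbf{h},v},\phi_v\circ\gamma_v)$ and post-composes with the rotations $\varphi_v(z)=e^{-2\pi i\theta_v}z$. The compatibility $\delta(v)\theta_v\equiv\theta_{\sigma(v)}\pmod{\mathbb{Z}}$, derived from $g_v\circ\gamma_v\simeq_{K_{\sigma(v)}}\gamma_{\sigma(v)}$ and the B\"ottcher relation~(\ref{Bfv}), is exactly what guarantees $f_v:=\varphi_{\sigma(v)}\circ h_v\circ\varphi_v^{-1}$ remains monic and centered; this is the same content as your ``simply transitive action of the $\delta(v)$-th roots of unity on the $\delta(v)$ lift classes,'' just packaged as an explicit angle computation rather than an existence argument. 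Your inductive one-stage version and the paper's two-stage version are equivalent, and the uniqueness argument via Lemma~\ref{uniqueness-f-CP0T} is identical in both.
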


\begin{proof}

By Lemma \ref{straightening of LM}, there exist $\mathbf{h}\in\mathcal{P}_0^T$ and $(\phi_v)_{v\in|T|}$ so that $(\mathbf{g}, K)$ is strongly conjugate to $(\mathbf{h},K(\mathbf{h}))$ by $(\phi_v)_{v\in|T|}$. 
Furthermore, we can require $\phi_v(\zeta_v)=0$ and $\phi_v(\gamma_v(0))=1$ for each $v\in\Tp$. 
By Lemma \ref{conjugacy of LMs}, we have  $\mathbf{h}\in\mathcal{C}(\mathcal{P}_0^T)$.  
For each $v\in|T|$,  let $\theta_v = \theta(K_{\mathbf{h},v},\phi_v\circ\gamma_v)$, 
$\varphi_v(z)=e^{-2\pi i \theta_v}z$ and $\psi_v = \varphi_v\circ\phi_v$. 
When $v\in\Tn$, we have 
\begin{align*}
    R_{\mathbf{h},\sigma(v)}(\delta(v)\cdot\theta_v)
&= h_v(R_{\mathbf{h},v}(\theta_v)) 
\quad(\text{by (\ref{Bfv})})\\
&\simeq_{K_{\mathbf{h},\sigma(v)}} h_v\circ\phi_v\circ\gamma_v
\quad(\text{by $R_{\mathbf{h},v}(\theta_v) \simeq_{K_{\mathbf{h},v}} \phi_v\circ\gamma_v)$}\\
&= \phi_{\sigma(v)}\circ g_v\circ \gamma_v \\
&\simeq_{K_{\mathbf{h},\sigma(v)}} \phi_{\sigma(v)}\circ \gamma_{\sigma(v)}
\quad(\text{by $g_v\circ\gamma_v \simeq_{K_{\sigma(v)}} \gamma_{\sigma(v)}$})\\
&\simeq_{K_{\mathbf{h},\sigma(v)}} R_{\mathbf{h},\sigma(v)}(\theta_{\sigma(v)}), 
\end{align*}
hence $\delta(v)\cdot\theta_v \equiv \theta_{\sigma(v)}\modZ$ and $f_v:=\varphi_{\sigma(v)}\circ h_v\circ \varphi_v^{-1}\in\mathcal{P}^{\delta(v)}$ (monic and centered). 
Let $\mathbf{f}=(f_v)_{v\in\Tn}\in\mathcal{C}(\mathcal{P}_0^T)$. 
When $v\in|T|$, we have 
\begin{align*}
\psi_v\circ\gamma_v
=\varphi_v\circ\phi_v\circ\gamma_v
\simeq_{K_{\mathbf{f},v}}\varphi_v(R_{\mathbf{h},v}(\theta_v))
=R_{\mathbf{f},v}(0). 
\end{align*}
It follows that $(\mathbf{g},K,Z,\Gamma)$ is (strongly) conjugate to $(\mathbf{f},K(\mathbf{f}),Z',\Gamma')$ by $(\psi_v)_{v\in|T|}$. 

The uniquenesses of $\mathbf{f}$ and $(\psi_v|_{K_v})_{v\in|T|}$ follow from Lemma \ref{uniqueness-f-CP0T}. 
\end{proof}

\begin{corollary}
\label{construction of strong conjugacy}
Suppose $(\mathbf{g}_1, K_1, Z_1, \Gamma_1)$ is conjugate to $(\mathbf{g}_2, K_2, Z_2, \Gamma_2)$. 
For each $v\in\Tp$, let $\phi_{v}:\mathbb{C}\rightarrow\mathbb{C}$ be a homeomorphism so that $\phi_v:K_{1,v}^\circ\rightarrow K_{2,v}^\circ$ is conformal, 
normalized by $\phi_v(\zeta_{1,v})=\zeta_{2,v}$ and $\phi_v(\gamma_{1,v}(0))=\gamma_{2,v}(0)$. 
Then $(\phi_v)_{v\in\Tp}$ determines a strong conjugacy $(\phi_v)_{v\in|T|}$ from $(\mathbf{g}_1, K_1, Z_1, \Gamma_1)$ to $(\mathbf{g}_2, K_2, Z_2, \Gamma_2)$. 
\end{corollary}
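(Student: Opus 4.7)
The plan is to combine two applications of the straightening theorem (Proposition \ref{straightening of marked LM}) with an inductive topological lifting. First I would invoke Proposition \ref{straightening of marked LM} for each marked lemniscate map $(\mathbf{g}_j, K_j, Z_j, \Gamma_j)$, $j = 1, 2$, producing a unique $\mathbf{f}_j \in \mathcal{C}(\mathcal{P}_0^T)$ together with a strong conjugacy $\Psi_j = (\psi_{j,v})_{v\in|T|}$ onto $(\mathbf{f}_j, K(\mathbf{f}_j), Z', \Gamma')$ equipped with its standard marking. Since the two marked lemniscate maps are conjugate by hypothesis, transitivity combined with the uniqueness clause of Proposition \ref{straightening of marked LM} forces $\mathbf{f}_1 = \mathbf{f}_2 =: \mathbf{f}$, so the composition
\[
\Phi' := \Psi_2^{-1}\circ\Psi_1 = (\psi_{2,v}^{-1}\circ\psi_{1,v})_{v\in|T|}
\]
is a strong conjugacy from $(\mathbf{g}_1, K_1, Z_1, \Gamma_1)$ to $(\mathbf{g}_2, K_2, Z_2, \Gamma_2)$. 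For $v\in\Tp$, the restriction $\phi'_v|_{K_{1,v}}$ is the unique conformal isomorphism $K_{1,v}^\circ \to K_{2,v}^\circ$ sending $(\zeta_{1,v}, \gamma_{1,v}(0))$ to $(\zeta_{2,v}, \gamma_{2,v}(0))$, and hence coincides with $\phi_v|_{K_{1,v}}$ of the given data.

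Next I would correct $\Phi'$ so that each of its components on $\Tp$ equals the prescribed $\phi_v$ on all of $\mathbb{C}$. Set $h_v := \phi_v \circ (\phi'_v)^{-1}$ for $v \in \Tp$; this is a homeomorphism of $\mathbb{C}$ that is the identity on $K_{2,v}$. Inductively, in order of increasing $r_v$, for $v \in \Tn$ I define $h_v$ as the unique homeomorphism of $\mathbb{C}$ satisfying
\[
g_{2,v} \circ h_v = h_{\sigma(v)} \circ g_{2,v}, \qquad h_v|_{K_{2,v}} = \id.
\]
The decisive topological input is that, because the filled Julia set $K_2$ is fiberwise connected, the Riemann--Hurwitz analog of Lemma \ref{connectedness-crit} applied to the topological polynomial $g_{2,v}$ places all of its critical points inside $K_{2,v}$; equivalently, $g_{2,v}:\mathbb{C}\setminus K_{2,v}\to\mathbb{C}\setminus K_{2,\sigma(v)}$ is an unramified covering of degree $\delta(v)$ between connected annular open surfaces. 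Since $h_{\sigma(v)}$ restricts to the identity on $K_{2,\sigma(v)}$ by the inductive hypothesis, it fixes every critical value of $g_{2,v}$; covering space theory then yields a unique homeomorphism lift on $\mathbb{C}\setminus K_{2,v}$ whose continuous extension to $\partial K_{2,v}$ is the identity, and gluing with $\id$ on $K_{2,v}$ produces the desired $h_v$.

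Finally I would set $\phi_v := h_v \circ \phi'_v$ for $v \in \Tn$ and take $\phi_v$ as given for $v \in \Tp$ (which is consistent, since $h_v\circ\phi'_v=\phi_v$ already on $\Tp$). The direct computation
\[
g_{2,v}\circ\phi_v = g_{2,v}\circ h_v\circ\phi'_v = h_{\sigma(v)}\circ g_{2,v}\circ\phi'_v = h_{\sigma(v)}\circ\phi'_{\sigma(v)}\circ g_{1,v} = \phi_{\sigma(v)}\circ g_{1,v}
\]
verifies the strong conjugacy relation, while the identity $h_v|_{K_{2,v}} = \id$ forces $\phi_v|_{K_{1,v}} = \phi'_v|_{K_{1,v}}$, so the central and external marking conditions on all of $|T|$ descend unchanged from $\Phi'$ to $(\phi_v)_{v\in|T|}$. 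The main obstacle in this scheme is the inductive lift construction: one must verify carefully, via Riemann--Hurwitz on the annular complements, that every critical value of the topological polynomial $g_{2,v}$ lies in $K_{2,\sigma(v)}$, since only then does the identity behavior of $h_{\sigma(v)}$ on $K_{2,\sigma(v)}$ guarantee both existence of a homeomorphism lift through $g_{2,v}$ and its continuous extension across $\partial K_{2,v}$.
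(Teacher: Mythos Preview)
Your approach is correct and genuinely different from the paper's. The paper works directly with the given (not necessarily strong) conjugacy $(\psi_v)_{v\in|T|}$: for $v\in\Tn$ it lifts $\phi_{\sigma(v)}\circ g_{1,v}:\mathbb{C}\setminus K_{1,v}\to\mathbb{C}\setminus K_{2,\sigma(v)}$ through the covering $g_{2,v}$, normalizing the lift by the homotopy condition $\varphi_v\circ\gamma_{1,v}\simeq_{K_{2,v}}\gamma_{2,v}$, and then must invoke Lemma~\ref{the same extension on the ideal boundary} twice to show that this lift matches $\psi_v$ on $\partial K_{1,v}$ (so that gluing is possible). Your route instead first manufactures a reference strong conjugacy $\Phi'$ via two applications of Proposition~\ref{straightening of marked LM}, and then corrects it by lifting $h_{\sigma(v)}$, which is the identity on $K_{2,\sigma(v)}$; this makes the boundary matching essentially automatic and bypasses the ideal-boundary lemma entirely, at the cost of leaning more heavily on the straightening theorem. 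One point you leave implicit: the word ``determines'' in the statement includes uniqueness (and this is used later, in Remark~\ref{continuity-conjugacy}). The paper proves it separately; in your framework it follows once you observe, via the uniqueness clause of Proposition~\ref{straightening of marked LM}, that any strong conjugacy extending $(\phi_v)_{v\in\Tp}$ agrees with $\Phi'$ on each $K_{1,v}$, hence produces a solution to your lifting problem for $h_v$, and that problem has a unique solution.
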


\begin{proof}
Let $(\psi_v)_{v\in|T|}$ be a conjugacy from $(\mathbf{g}_1, K_1, Z_1, \Gamma_1)$ to $(\mathbf{g}_2, K_2, Z_2, \Gamma_2)$. 
Then $\phi_v|_{K_{1,v}}=\psi_v|_{K_{1,v}}$ for each $v\in\Tp$. 
Now let $v\in\Tn$. 
We will construct $\phi_v$ inductively. 
Assume we have defined the homeomorphism $\phi_{\sigma(v)}:\mathbb{C}\rightarrow\mathbb{C}$ so that $\phi_{\sigma(v)}|_{K_{1,\sigma(v)}}=\psi_{\sigma(v)}|_{K_{1,\sigma(v)}}$. 
Note that 
\begin{align*}
\phi_{\sigma(v)}\circ g_{1,v}\circ \gamma_{1,v}
&\simeq_{K_{2,\sigma(v)}} 
\phi_{\sigma(v)}\circ \gamma_{1,\sigma(v)}
\quad(\text{by $g_{1,v}\circ \gamma_{1,v}\simeq_{K_{1,\sigma(v)}} \gamma_{1,\sigma(v)}$})\\
&\simeq_{K_{2,\sigma(v)}} 
\psi_{\sigma(v)}\circ \gamma_{1,\sigma(v)}
\quad(\text{by Lemma \ref{the same extension on the ideal boundary}})\\
&\simeq_{K_{2,\sigma(v)}} 
\gamma_{2,\sigma(v)}\\
&\simeq_{K_{2,\sigma(v)}} 
g_{2,v}\circ\gamma_{2,v}.
\end{align*}
By lifting, 
there is a unique homeomorphism $\varphi_v: \mathbb{C}\setminus K_{1,v}\rightarrow \mathbb{C}\setminus K_{2,v}$ so that
the diagram 
$$
\xymatrix{
 & &\mathbb{C}\setminus K_{2,v}  \ar[d]^{g_{2,v}} \\
\mathbb{C}\setminus K_{1,v} \ar[rr]_-{\phi_{\sigma(v)}\circ g_{1,v}} \ar[rru]^-{\varphi_v} & &\mathbb{C}\setminus K_{2,\sigma(v)}
}$$
commutes and $\varphi_v\circ\gamma_{1,v}\simeq_{K_{2,v}}\gamma_{2,v}$, 
where we set $\varphi_v\circ\gamma_{1,v}(0) = \lim_{t\rightarrow 0}\varphi_v\circ\gamma_{1,v}(t)$ (the same below). 
For any $\eta\in\RZ$, we have 
\begin{align*}
g_{2,v}\circ\varphi_v(R_{K_{1,v}}(\eta))
&\simeq_{K_{2,\sigma(v)}}
\phi_{\sigma(v)} \circ g_{1,v}(R_{K_{1,v}}(\eta))\\
&\simeq_{K_{2,\sigma(v)}}
\psi_{\sigma(v)} \circ g_{1,v}(R_{K_{1,v}}(\eta)) 
\quad(\text{by Lemma \ref{the same extension on the ideal boundary}})\\
&\simeq_{K_{2,\sigma(v)}}
g_{2,v}\circ \psi_v(R_{K_{1,v}}(\eta)) 
\quad(\text{by Lemma \ref{the same extension on the ideal boundary} again}). 
\end{align*}
Combining this with $\varphi_v\circ\gamma_{1,v}\simeq_{K_{2,v}} \gamma_{2,v}\simeq_{K_{2,v}} \psi_v\circ\gamma_{1,v}$,   
for any $\eta\in\RZ$, 
we have $$\varphi_v(R_{K_{1,v}}(\eta))\simeq_{K_{2,v}} \psi_v(R_{K_{1,v}}(\eta)).$$
Then for any $x\in\partial K_{1,v}$, we have $\lim_{z \rightarrow x } \varphi_v(z)=\psi_v(x)$. 
Define $\phi_v:\mathbb{C}\rightarrow \mathbb{C}$ by 
$$\phi_v(z)=\begin{cases}
\psi_v(z), &z\in K_{1,v}, \\
\varphi_v(z), &z\in \mathbb{C}\setminus K_{1,v}. 
\end{cases}$$ 
Then $\phi_v$ is a homeomorphism with $\phi_v|_{K_{1,v}}=\psi_v|_{K_{1,v}}$ and $\phi_{\sigma(v)} \circ g_{1,v}=g_{2,v}\circ \phi_v$. 
It follows that $(\mathbf{g}_1,K_1,Z_1,\Gamma_1)$ is strongly conjugate to $(\mathbf{g}_2,K_2,Z_2,\Gamma_2)$ by $(\phi_v)_{v\in|T|}$. 

To prove the uniqueness, let $(\widetilde\phi_v)_{v\in|T|}$ be another strong conjugacy from 
$(\mathbf{g}_1,K_1,$ $Z_1,\Gamma_1)$ to $(\mathbf{g}_2,K_2,Z_2,\Gamma_2)$, with $\widetilde\phi_v=\phi_v$ for each $v\in\Tp$. 
By Proposition \ref{straightening of marked LM}, 
we have $\widetilde\phi_v|_{K_{1,v}} = \phi_v|_{K_{1,v}}$ for each $v\in|T|$. 
Now let $v\in\Tn$.
By induction, we can assume $\widetilde\phi_{\sigma(v)}=\phi_{\sigma(v)}$. 
Note that $$g_{2,v}\circ (\widetilde\phi_v|_{\mathbb{C}\setminus K_{1,v}})
= \widetilde\phi_{\sigma(v)}\circ g_{1,v} |_{\mathbb{C}\setminus K_{1,v}}
= \phi_{\sigma(v)}\circ g_{1,v} |_{\mathbb{C}\setminus K_{1,v}}$$
and $\widetilde\phi_v\circ\gamma_{1,v} \simeq_{K_{2,v}} \gamma_{2,v}$. 
By the uniqueness of $\varphi_v$, we have $\widetilde\phi_v|_{\mathbb{C}\setminus K_{1,v}} =\varphi_v$. 
It follows that $\widetilde\phi_v=\phi_v$. 
This shows the uniqueness of the collection $(\phi_v)_{v\in\Tn}$. 
\end{proof}

\begin{example}
Consider the mapping scheme $T=(|T|,\sigma,\delta)$ defined by $|T|=\{v_0,v_1\}$, 
$\sigma(v_1)=\sigma(v_0)=v_0$, $\delta(v_0)=2$ and $\delta(v_1)=3$. 
We define a marked lemniscate map $(\mathbf{g}, K, Z, \Gamma) = (g_{v_1},(K_{v_0}, K_{v_1}), \zeta_{v_0}, (\gamma_{v_0},\gamma_{v_1}) )$ over $T$ as follows. 
Let $K_{v_0}=\overline{\mathbb{D}}$ and $K_{v_1} = \overline{\mathbb{D}} \cup (2+\overline{\mathbb{D}})$. 
Define $g_{v_1}$ by $g_{v_1}(z)=z$ for $z\in\overline{\mathbb{D}}$ 
and $g_{v_1}(z)=(z-2)^2$ for $z\in2+\overline{\mathbb{D}}$. 
Then we extend $g_{v_1}$ to a topological polynomial of degree $\delta(v_1)=3$. 
Set $\zeta_{v_0}=0$. 
Let $\gamma_{v_0}(s)=s+1$ and $\gamma_{v_1}(s)=s+3$ for $s\in[0,1]$. 
See Figure \ref{figure lemniscate maps}. 

Let $\mathbf{f}\in\mathcal{C}(\mathcal{P}_0^T)$ satisfy  $f_{v_1}(z)=z^3-\frac{3}{4}\sqrt[3]{4}z+\frac{1}{2}$. 
Then $\crit(f_{v_1}) = \{-\sqrt[3]{2}/2,\sqrt[3]{2}/2\}$,  $f_{v_1}^{-1}(0)=\{-\sqrt[3]{2},\sqrt[3]{2}/2\}$ and $f(-\sqrt[3]{2}/2)=1$. 
Moreover, $(\mathbf{g}, K, Z, \Gamma)$ is conjugate to $(\mathbf{f},K(\mathbf{f}),Z',\Gamma')$, where $(Z',\Gamma')$ is the standard marking of $(\mathbf{f},K(\mathbf{f}))$. 

\begin{figure}[ht]
\centering
\includegraphics{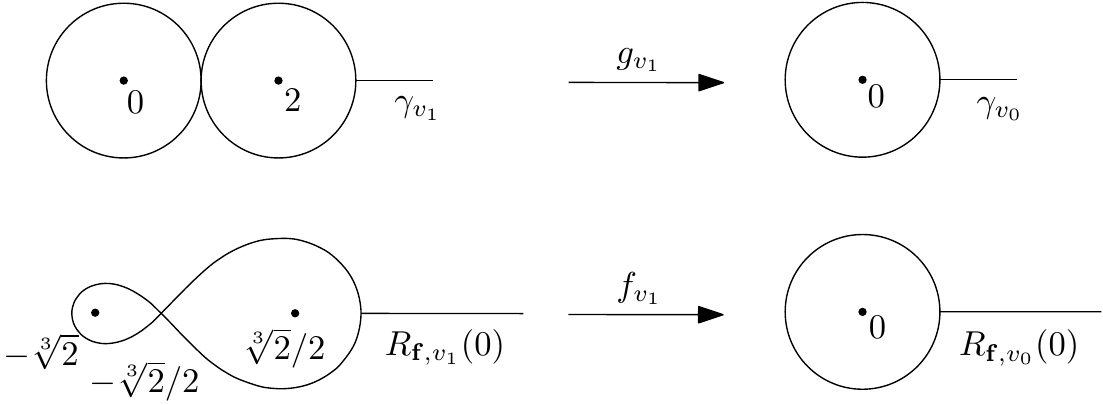}
\caption{Lemniscate maps.}
\label{figure lemniscate maps}
\end{figure}
\end{example}

\subsection{Convergence of external angles}
Recall from \S\ref{subsection straightening of marked lemniscate maps} that $\theta(K,\gamma)$ is the angle in $\RZ$ so that $\phi_K^{-1}\circ\gamma$ converges to the point $e^{2\pi i \theta(K,\gamma)}\in\partial\mathbb{D}$ for each $(K,\gamma)\in\mathcal{A}$. 
The aim of this subsection is to prove the following: 

\begin{proposition}
\label{prop-convergence of external angles}
Let $(K_n,\gamma_n),(K,\gamma)\in\mathcal{A}$ so that  $K_n\rightarrow K$ and  $\gamma_n([0,1])\rightarrow\gamma([0,1])$, both in Hausdorff topology. 
Then $\theta(K_n,\gamma_n)\rightarrow \theta(K,\gamma)$. 
\end{proposition}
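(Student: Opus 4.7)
The plan is to combine Carath\'eodory kernel convergence of inverse Riemann maps with a topological barrier argument, whose real-analytic novelty is the placement of the barrier close to $\partial K$ so as to avoid the entire curve $\gamma([0,1])$, not merely its tail. Since $K_n,K$ are full nondegenerate continua and $K_n\to K$ in Hausdorff topology, the complementary domains converge to $\widehat{\mathbb{C}}\setminus K$ at basepoint $\infty$ in the sense of Carath\'eodory kernel convergence; equivalently $\phi_{K_n}\to\phi_K$ locally uniformly on $\mathbb{C}\setminus\overline{\mathbb{D}}$, and $\phi_{K_n}^{-1}\to\phi_K^{-1}$ locally uniformly on $\mathbb{C}\setminus K$. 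Write $\alpha=\theta(K,\gamma)$ and $\alpha_n=\theta(K_n,\gamma_n)$. By compactness of $\RZ$ it suffices to show every subsequential limit of $(\alpha_n)$ equals $\alpha$, so we pass to a subsequence with $\alpha_n\to\alpha^*$ and aim to prove $\alpha^*=\alpha$.

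Fix $\varepsilon\in(0,1/4)$. Since $\phi_K^{-1}\circ\gamma(s)\to e^{2\pi i\alpha}$ as $s\to 0^+$, pick $s_0>0$ so that $\phi_K^{-1}(\gamma((0,s_0]))\subset\{\rho e^{2\pi i\theta}:\rho>1,\ |\theta-\alpha|<\varepsilon\}$. Set $\delta_0:=\dist(\gamma([s_0,1]),K)>0$. Because the equipotentials $\phi_K(\{|z|=R\})$ (level sets of the Green function of $K$) Hausdorff-approach $\partial K$ as $R\to 1^+$, choose $R>1$ close enough to $1$ that $\phi_K(\{1<|z|\le R\})$ lies in the $\delta_0/2$-neighborhood of $K$ while $\phi_K^{-1}(\gamma((0,s_0]))\subset\{1<|z|<R,\ |\theta-\alpha|<\varepsilon\}$. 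Form the barrier $B=R^-\cup E\cup R^+$, where $R^\pm:=\phi_K((1,R]e^{2\pi i(\alpha\pm 2\varepsilon)})$ are truncated rays and $E:=\phi_K(\{Re^{2\pi i\theta}:|\theta-\alpha|\le 2\varepsilon\})$ is the truncated equipotential arc. Then $B$ is a topological arc in $\mathbb{C}\setminus K$ that separates $\mathbb{C}\setminus K$ into two components, one being the open sector $\Sigma:=\phi_K(\{1<|z|<R,\ |\theta-\alpha|<2\varepsilon\})$. By construction $\gamma((0,s_0])\subset\Sigma$, and $B\subset\phi_K(\{1<|z|\le R\})$ is within $\delta_0/2$ of $K$, so $B\cap\gamma([s_0,1])=\emptyset$; together with $\gamma(0)\in\partial K$ and $B\cap\partial K=\emptyset$, we conclude $B\cap\gamma([0,1])=\emptyset$ and hence $\dist(B,\gamma([0,1]))>0$ by compactness.

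For $n$ large, the analogous barrier $B_n$ and sector $\Sigma_n$ for $K_n$ are Hausdorff-close to $B$ and $\Sigma$ on the relevant compact region, via uniform convergence of $\phi_{K_n}$ on compact subsets of $\mathbb{C}\setminus\overline{\mathbb{D}}$ together with the Hausdorff convergence $K_n\to K$. Combined with $\gamma_n([0,1])\to\gamma([0,1])$ in Hausdorff, this forces $B_n\cap\gamma_n([0,1])=\emptyset$ for $n$ large. Pick $t_n\in[0,1]$ with $\gamma_n(t_n)\to\gamma(s_0)\in\Sigma$; for $n$ large $\gamma_n(t_n)\in\Sigma_n$. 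The connected arc $\gamma_n((0,t_n])\subset\mathbb{C}\setminus K_n$ avoids $B_n$ and contains the point $\gamma_n(t_n)\in\Sigma_n$; since $B_n$ separates $\mathbb{C}\setminus K_n$ with $\Sigma_n$ as one component, $\gamma_n((0,t_n])\subset\Sigma_n$. Therefore $\phi_{K_n}^{-1}\circ\gamma_n(s)\in\{1<|z|<R,\ |\theta-\alpha|<2\varepsilon\}$ for all $s\in(0,t_n]$, and letting $s\to 0^+$ yields $\alpha_n\in[\alpha-2\varepsilon,\alpha+2\varepsilon]$. Since $\varepsilon>0$ was arbitrary, $\alpha^*=\alpha$.

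The main obstacle is the quantitative control of both $B$ and $B_n$ near the boundary: the barrier must be pressed close to $\partial K$ so that its entire bulk stays within $\delta_0/2$ of $K$ (and thus misses the far portion $\gamma([s_0,1])$ as well as the tail), and simultaneously the Hausdorff convergence $B_n\to B$ must include the landing points of the boundary rays well enough to transfer the disjointness to $\gamma_n$. The real-analytic ingredient is the quantitative Hausdorff-approach of equipotentials of the Green function to $\partial K$ as the level drops to zero; this is what permits the proof to bypass the (generally unavailable) complex-analytic continuity of $\phi_K^{-1}$ up to $\partial K$. A naive choice of large $R$ with long extended rays would fail, since $\gamma([s_0,1])$ could then cross the rays and destroy the crucial disjointness $B\cap\gamma([0,1])=\emptyset$.
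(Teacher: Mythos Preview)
There is a genuine gap; in fact your barrier construction is internally inconsistent. You require simultaneously (a) $\phi_K(\{1<|z|\le R\})\subset K_{\delta_0/2}$ and (b) $\phi_K^{-1}(\gamma((0,s_0]))\subset\{1<|z|<R\}$, with $\delta_0=\dist(\gamma([s_0,1]),K)$. But (b) places $\gamma(s_0)$ strictly inside the level-$R$ equipotential, so by (a) one gets $\dist(\gamma(s_0),K)\le\delta_0/2$; yet $\gamma(s_0)\in\gamma([s_0,1])$ forces $\dist(\gamma(s_0),K)\ge\delta_0$. Hence $\delta_0\le\delta_0/2$, which is impossible. Geometrically: your barrier cuts off a \emph{bounded} sector $\Sigma$ via the short equipotential arc, and you need the tail of $\gamma$ to sit inside $\Sigma$ while $\Sigma$ itself hugs $\partial K$ strictly below the level of $\gamma(s_0)$; these two demands conflict for every choice of $R$ and $s_0$.

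Even if the barrier is reconfigured (say, with the long equipotential arc so that $\gamma$ lies in the unbounded complementary region, as the paper does), a second gap remains: the assertion that $B_n\to B$ in Hausdorff is unjustified. The tails $\phi_{K_n}((1,r_0]\,e^{2\pi i(\alpha\pm 2\varepsilon)})$ are not controlled by locally uniform convergence of $\phi_{K_n}$, and knowing only that they lie in a thin neighborhood of $K$ does not prevent them from accumulating near $\gamma(0)$ and meeting $\gamma_n$. The paper resolves this by a genuinely different mechanism: a Cauchy--Schwarz/area estimate shows the ray-length functions $\ell_{\phi_n}$ converge \emph{in measure} to $\ell_\phi$, so along a subsequence they converge a.e.; the barrier angles $\eta_1,\eta_2$ are then chosen in this full-measure set, guaranteeing that $\phi_{n_k}\to\phi$ \emph{uniformly on the closed radial segments} $[1,2]e^{2\pi i\eta_j}$, with the F.\ and M.\ Riesz theorem ensuring $R_K(\eta_1),R_K(\eta_2),R_K(\theta)$ land at three distinct points. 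This uniform-on-rays convergence is the real-analytic input the paper refers to; the equipotential-approach heuristic you invoke does not substitute for it.
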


Let $\phi:\mathbb{C}\setminus\overline{\mathbb{D}} \rightarrow \mathbb{C}$ be a univalent function
with $\phi(z) \rightarrow \infty$ as $z\rightarrow\infty$. 
For $\theta\in[0,1)$ (or $\theta\in\RZ$),  
let $\ell_\phi(\theta)=\int_1^2|\phi'(re^{2\pi i\theta})|dr$ be the length of $\phi((1,2]e^{2\pi i\theta})$. 
For any $r>1$, let $A_r=\{z\in\mathbb{C}{;~} 1<|z|<r\}$. 
By the Cauchy-Schwarz inequality, 
\begin{align*}
{\left(\int_0^1\ell_\phi(\theta)d\theta\right)}^2 &={\left(\int_{A_2}\frac{|\phi'(z)|}{2\pi|z|}dS\right)}^2\\
&\leq{\left(\int_{A_2}|\phi'(z)|^2 dS\right)} {\left(\int_{A_2}\frac{1}{(2\pi|z|)^2}dS\right)}\\
&=\area(\phi(A_2))\frac{\log 2}{2\pi}<\infty.
\end{align*}
Therefore $\ell_\phi(\theta)$ is finite for almost every $\theta$ (see \cite[Theorem 6.1]{McM}). 
For $\rho\in(1,2)$, we define $\ell_\phi^\rho:[0,1)\rightarrow\mathbb{R}$ by 
$\ell_\phi^\rho(\theta)=\int_\rho^2|\phi'(re^{2\pi i\theta})|dr$. 

\begin{lemma}
\label{converge in measure of length function}
Let $\{\phi_n\}_{n\in\mathbb{N}}$ be a sequence of univalent functions on $\mathbb{C}\setminus\overline{\mathbb{D}}$ 
with $\phi_n(z)\rightarrow\infty$ as $z\rightarrow\infty$. 
Assume $\phi_n$ converges to a nonconstant function $\phi$ 
uniformly on compact subsets of $\mathbb{C}\setminus\overline{\mathbb{D}}$. 
Then $\ell_{\phi_n}$ converges in measure to $\ell_\phi$.
That is, for all $\varepsilon>0$, we have 
$$\lim_{n\rightarrow\infty}m\{\theta\in[0,1){;~} |\ell_{\phi_n}(\theta)-\ell_\phi(\theta)|\geq\varepsilon\}=0,$$
where $m$ is the Lebesgue measure on $\mathbb{R}$.
\end{lemma}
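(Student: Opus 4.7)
My plan is to split the length function $\ell_{\phi_n}(\theta)$ at a radius $\rho>1$ close to $1$, writing $\ell_{\phi_n}(\theta) = \ell_{\phi_n}^\rho(\theta) + T_n^\rho(\theta)$ where $T_n^\rho(\theta) = \int_1^\rho |\phi_n'(re^{2\pi i\theta})|\,dr$ is the ``tail'' near the unit circle. The outer piece $\ell_{\phi_n}^\rho$ will converge uniformly in $\theta$, while the tail, though possibly unbounded pointwise, will be negligible in measure uniformly in $n$. The triangle inequality then converts this into convergence in measure of $\ell_{\phi_n}$ to $\ell_\phi$.

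The first ingredient is easy: uniform convergence $\phi_n\to\phi$ on the compact annulus $\{\rho\le|z|\le 2\}$ upgrades via Cauchy's integral formula to uniform convergence $\phi_n'\to\phi'$ there, and hence $\ell_{\phi_n}^\rho(\theta)\to \ell_\phi^\rho(\theta)$ uniformly in $\theta\in[0,1)$. The second ingredient, and the heart of the matter, is a uniform area bound $\area(\phi_n(A_2))\le \pi R^2$ for large $n$. Since $\phi_n$ is univalent on $\mathbb{C}\setminus\overline{\mathbb{D}}$ with $\phi_n(z)\to\infty$ at infinity, its image omits a continuum, and $\phi_n(A_2)$ is the bounded region enclosed by the Jordan curve $\phi_n(\{|z|=2\})$. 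By uniform convergence on the compact circle $\{|z|=2\}$ this curve lies eventually in a common disk $\mathbb{D}(0,R)$, giving the required bound.

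Granting this, a polar-coordinate change and Cauchy-Schwarz yield
\[
\int_0^1 T_n^\rho(\theta)\,d\theta = \frac{1}{2\pi}\int_{A_\rho^*} \frac{|\phi_n'(z)|}{|z|}\,dS \le \frac{1}{2\pi}\sqrt{\pi(\rho^2-1)}\sqrt{\area(\phi_n(A_\rho^*))} \le \frac{R}{2}\sqrt{\rho^2-1},
\]
with $A_\rho^*=\{1<|z|<\rho\}$, and Chebyshev's inequality then gives $m\{T_n^\rho\ge \varepsilon\}\le \frac{R}{2\varepsilon}\sqrt{\rho^2-1}$, uniformly in $n$ sufficiently large. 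The same estimate applies to $T^\rho:=T_\phi^\rho$. Given $\varepsilon,\delta>0$, I first choose $\rho>1$ so close to $1$ that both $m\{T_n^\rho\ge \varepsilon/3\}$ and $m\{T^\rho\ge \varepsilon/3\}$ are below $\delta/2$, then choose $N$ with $|\ell_{\phi_n}^\rho-\ell_\phi^\rho|<\varepsilon/3$ uniformly for $n\ge N$. The inclusion $\{|\ell_{\phi_n}-\ell_\phi|\ge \varepsilon\}\subset \{T_n^\rho\ge \varepsilon/3\}\cup\{T^\rho\ge\varepsilon/3\}$ for $n\ge N$ completes the proof.

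The main obstacle I anticipate is exactly the uniform area bound: without some a priori control on $\phi_n$ near $\partial\mathbb{D}$, the Cauchy-Schwarz estimate on the tail could fail to be uniform in $n$. The resolution is that no such control is needed—boundedness of $\phi_n(A_2)$ is forced just by the boundedness of $\phi_n(\{|z|=2\})$, provided one uses the univalence of $\phi_n$ globally on $\mathbb{C}\setminus\overline{\mathbb{D}}$ together with the condition $\phi_n(z)\to\infty$, which together identify $\phi_n(A_2)$ as the bounded component of the complement of $\phi_n(\{|z|=2\})\cup K_n$.
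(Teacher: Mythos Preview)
Your proposal is correct and follows essentially the same approach as the paper: split at radius $\rho$, use uniform convergence of derivatives on $\{\rho\le|z|\le 2\}$ for the outer piece, and control the tail via Cauchy--Schwarz, a uniform area bound, and Chebyshev. The only differences are cosmetic---the paper obtains $\sqrt{\log\rho}$ rather than $\sqrt{\rho^2-1}$ by keeping the factor $1/|z|^2$ in the Cauchy--Schwarz pairing, and it asserts the uniform area bound without the justification you supply via the Jordan curve $\phi_n(\{|z|=2\})$.
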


\begin{proof}
Fix $\varepsilon>0$. 
Let $E_n=\{\theta\in[0,1){;~} |\ell_{\phi_n}(\theta)-\ell_\phi(\theta)|\geq 3\varepsilon\}$ for each $n\in\mathbb{N}$. 
We need to show $m(E_n)\rightarrow0$ as $n\rightarrow\infty$. 
Let $\varphi\in\{\phi, \phi_n{;~} n\in\mathbb{N}\}$, and let   $\rho\in(1,2)$.  
Define $$E_{\varphi,\rho}=\{\theta\in[0,1){;~} |\ell_\varphi(\theta)-\ell_\varphi^\rho(\theta)|\geq\varepsilon\}.$$ 
By the Cauchy-Schwarz inequality, 
\begin{align*}
(m(E_{\varphi,\rho})\cdot\varepsilon )^2
&\leq {\left(\int_0^1( \ell_\varphi(\theta)-\ell_\varphi^\rho(\theta) )d\theta\right)}^2 \\
&\leq{\left(\int_{A_\rho}|\varphi'(z)|^2 dS\right)} {\left(\int_{A_\rho}\frac{1}{(2\pi|z|)^2}dS\right)}\\
&=\area(\varphi(A_\rho))\frac{\log \rho}{2\pi}\leq C\log\rho, 
\end{align*}
where $C$ is a constant independent of $\varphi$ and $\rho$. 
Since $\ell_{\phi_n}^\rho$ converges uniformly to $\ell_\phi^\rho$, there exists an $N_{\rho}>0$ such that 
$\max_{\theta\in[0,1)} |\ell_{\phi_n}^\rho(\theta)-\ell_\phi^\rho(\theta)|\leq \varepsilon$ whenever $n>N_{\rho}$. 
For any $n>N_{\rho}$, it follows from 
$$|\ell_{\phi_n}(\theta)-\ell_\phi(\theta)|\leq
|\ell_{\phi_n}(\theta)-\ell_{\phi_n}^\rho(\theta)|+
|\ell_{\phi_n}^\rho(\theta)-\ell_\phi^\rho(\theta)|+
|\ell_\phi^\rho(\theta)-\ell_\phi(\theta)|$$
that $E_{n}\subset E_{\phi_n,\rho}\cup E_{\phi,\rho}$, hence 
$m(E_{n})\leq m(E_{\phi_n,\rho})+m(E_{\phi,\rho})
\leq \frac{2\sqrt{C\log\rho}}{\varepsilon}$. 
Letting $\rho\rightarrow1$ gives $m(E_n)\rightarrow 0$ as $n\rightarrow\infty$.
This completes the proof. 
\end{proof}

\begin{lemma}
\label{converge uniformly by convergence of length}
Let $\{\phi_n\}_{n\in\mathbb{N}}\rightarrow \phi$ as in Lemma \ref{converge in measure of length function}, and let $\theta\in[0,1)$. 
Suppose $\ell_\phi(\theta)<\infty$ and $\ell_{\phi_n}(\theta)\rightarrow\ell_\phi(\theta)$. 
Then $\phi_n$ converges to $\phi$ uniformly on $[1,2]e^{2\pi i\theta}$, 
where they take the radial limits at $e^{2\pi i\theta}$.
\end{lemma}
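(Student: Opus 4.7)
The plan is to show uniform convergence on the closed radial arc $[1,2]e^{2\pi i\theta}$ by splitting the arc into a small piece $[1,\rho]e^{2\pi i\theta}$ near the boundary circle (where compact-subset convergence is unavailable) and a compact piece $[\rho,2]e^{2\pi i\theta}$ (where it applies). On the near-boundary piece, control the oscillation of each $\phi_n$ and of $\phi$ by the arc length integrals $\int_1^\rho|\phi_n'(re^{2\pi i\theta})|\,dr$ and $\int_1^\rho|\phi'(re^{2\pi i\theta})|\,dr$; the crux is to make both simultaneously small for all sufficiently large $n$.

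First, I would extract derivative convergence. Since $\phi_n\to\phi$ uniformly on compact subsets of $\mathbb{C}\setminus\overline{\mathbb{D}}$, Cauchy's integral formula gives $\phi_n'\to\phi'$ uniformly on compact subsets, so for every $\rho\in(1,2)$,
\[
\int_\rho^2|\phi_n'(re^{2\pi i\theta})|\,dr\longrightarrow\int_\rho^2|\phi'(re^{2\pi i\theta})|\,dr
\]
as $n\to\infty$. Combined with the hypothesis $\ell_{\phi_n}(\theta)\to\ell_\phi(\theta)$, subtracting yields the crucial near-boundary convergence
\[
\int_1^\rho|\phi_n'(re^{2\pi i\theta})|\,dr\longrightarrow\int_1^\rho|\phi'(re^{2\pi i\theta})|\,dr
\]
for every $\rho\in(1,2)$. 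Given $\varepsilon>0$, choose $\rho$ close to $1$ so that $\int_1^\rho|\phi'(re^{2\pi i\theta})|\,dr<\varepsilon/4$; then for $n$ large, $\int_1^\rho|\phi_n'(re^{2\pi i\theta})|\,dr<\varepsilon/3$ as well.

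Next, I would verify that the radial limits behave correctly. Finite length of the image arc guarantees that $\phi(e^{2\pi i\theta}):=\lim_{r\to 1^+}\phi(re^{2\pi i\theta})$ and $\phi_n(e^{2\pi i\theta}):=\lim_{r\to 1^+}\phi_n(re^{2\pi i\theta})$ exist, since for $1<r_1<r_2\leq 2$ one has $|\phi(r_2e^{2\pi i\theta})-\phi(r_1e^{2\pi i\theta})|\leq\int_{r_1}^{r_2}|\phi'(re^{2\pi i\theta})|\,dr$, giving a Cauchy condition as $r\to1^+$, and analogously for each $\phi_n$. For $r\in[1,\rho]$ the triangle inequality then bounds
\[
|\phi_n(re^{2\pi i\theta})-\phi(re^{2\pi i\theta})|\leq |\phi_n(\rho e^{2\pi i\theta})-\phi(\rho e^{2\pi i\theta})|+\int_1^\rho|\phi_n'(se^{2\pi i\theta})|\,ds+\int_1^\rho|\phi'(se^{2\pi i\theta})|\,ds.
\]
The first term is below $\varepsilon/3$ for $n$ large by uniform convergence on compact subsets at $\rho e^{2\pi i\theta}$, and the remaining two integrals together contribute at most $\varepsilon/3+\varepsilon/4<2\varepsilon/3$. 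On $[\rho,2]e^{2\pi i\theta}$, uniform convergence on the compact segment gives the bound $\varepsilon$ directly. Consequently $\sup_{r\in[1,2]}|\phi_n(re^{2\pi i\theta})-\phi(re^{2\pi i\theta})|\to 0$.

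The main obstacle is the near-boundary interval $[1,\rho]e^{2\pi i\theta}$, where compact-subset convergence of $\phi_n$ to $\phi$ is not directly available and the maps could a priori oscillate wildly; this is overcome precisely because the hypothesis $\ell_{\phi_n}(\theta)\to\ell_\phi(\theta)$, together with the unconditional convergence $\int_\rho^2|\phi_n'|\,dr\to\int_\rho^2|\phi'|\,dr$ on the compact part, forces $\int_1^\rho|\phi_n'|\,dr\to\int_1^\rho|\phi'|\,dr$, turning the length hypothesis into an equicontinuity statement for the sequence of radial paths near the endpoint.
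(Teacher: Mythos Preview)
Your proposal is correct and follows essentially the same approach as the paper: split the radial segment at some $\rho\in(1,2)$, use uniform convergence on compact subsets for the piece $[\rho,2]e^{2\pi i\theta}$, and control the near-boundary piece $[1,\rho]e^{2\pi i\theta}$ by observing that $\int_\rho^2|\phi_n'|\,dr\to\int_\rho^2|\phi'|\,dr$ together with the hypothesis $\ell_{\phi_n}(\theta)\to\ell_\phi(\theta)$ forces $\int_1^\rho|\phi_n'|\,dr\to\int_1^\rho|\phi'|\,dr$, whence the triangle inequality through $\rho e^{2\pi i\theta}$ yields the uniform bound. The paper's write-up is slightly terser (and uses the notation $\ell_\phi^\rho$ for the truncated length), but the argument is the same; your explicit justification that finite arc length guarantees existence of the radial limits is a nice addition.
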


\begin{proof}
Given $\varepsilon>0$.
There is a $\rho\in(1,2)$ such that $\ell_{\phi}(\theta)-\ell_\phi^\rho(\theta)<\varepsilon$. 
Since $\phi_n$ converges to $\phi$ uniformly on $[\rho,2]e^{2\pi i \theta}$, 
there exists an $N_1>0$ such that for any $n>N_1$ and any $r\in[\rho,2]$, we have 
$|\phi_n(r e^{2\pi i\theta})-\phi(r e^{2\pi i\theta})|<\varepsilon$.
Since $\ell_{\phi_n}(\theta)-\ell_{\phi_n}^\rho(\theta)
\rightarrow\ell_{\phi}(\theta)-\ell_\phi^\rho(\theta)$, 
there exists an $N_2>0$ such that for any $n>N_2$, we have 
$$\ell_{\phi_n}(\theta)-\ell_{\phi_n}^\rho(\theta)  
<\ell_{\phi}(\theta)-\ell_\phi^\rho(\theta)+\varepsilon<2\varepsilon.$$
Let $N=\max\{N_1,N_2\}$.
Then for any $n>N$ and any $r\in[1,\rho]$, 
\begin{align*}
      &|\phi_n(r e^{2\pi i\theta})-\phi(r e^{2\pi i\theta})| \\
\leq~ &|\phi_n(r e^{2\pi i\theta})-\phi_n(\rho e^{2\pi i\theta})|+
|\phi_n(\rho e^{2\pi i\theta})-\phi(\rho e^{2\pi i\theta})|+
|\phi(\rho e^{2\pi i\theta})-\phi(r e^{2\pi i\theta})|\\
\leq~ &(\ell_{\phi_n}(\theta)-\ell_{\phi_n}^\rho(\theta))
      + \varepsilon + (\ell_{\phi}(\theta)-\ell_\phi^\rho(\theta))\\
<~    & 2\varepsilon +\varepsilon +\varepsilon=4\varepsilon.
\end{align*}
Therefore, for any $n>N$ and any $r\in[1,2]$, we have 
$|\phi_n(r e^{2\pi i\theta})-\phi(r e^{2\pi i\theta})|<4\varepsilon$.
This shows $\phi_n$ converges to $\phi$ uniformly on $[1,2]e^{2\pi i\theta}$.
\end{proof}

\begin{lemma}
\label{converge uniformly on some external rays}
Let $\{\phi_n\}_{n\in\mathbb{N}}\rightarrow \phi$ as in Lemma \ref{converge in measure of length function}.
Then there is a full-measure subset $E$ of $[0,1)$ (or $\RZ$) and a subsequence 
$\{\phi_{n_k}\}_{k\in\mathbb{N}}$ of $\{\phi_n\}_{n\in\mathbb{N}}$ with the following properties. 
\begin{enumerate}
\item For any $\varphi\in\{\phi, \phi_n{;~} n\in\mathbb{N}\}$ and any $\theta\in E$, 
the map $\varphi$ has a radial limit at $e^{2\pi i\theta}$.  
\item For any $\theta\in E$, the subsequence $\{\phi_{n_k}\}_{k\in\mathbb{N}}$ 
converges to $\phi$ uniformly on $[1,2]e^{2\pi i\theta}$.
\end{enumerate}
\end{lemma}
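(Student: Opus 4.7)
The plan is to extract the full-measure set $E$ and the subsequence $\{\phi_{n_k}\}$ by combining three standard ingredients: (i) univalent functions have radial limits almost everywhere, (ii) the length function $\ell_\varphi$ is finite almost everywhere, and (iii) convergence in measure admits a pointwise a.e.\ convergent subsequence. The conclusion will then follow by applying Lemma \ref{converge uniformly by convergence of length} pointwise on $E$.

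First, I would fix the set on which radial limits exist. By the Riesz uniqueness theorem (or just the fact that a univalent function on $\mathbb{C}\setminus\overline{\mathbb{D}}$ has radial limits at almost every boundary point), each of the countably many functions in $\{\phi,\phi_n{;~}n\in\mathbb{N}\}$ possesses a radial limit at $e^{2\pi i\theta}$ outside a null set. Intersecting these countably many full-measure sets yields a full-measure set $E_0\subset[0,1)$ on which radial limits exist simultaneously for $\phi$ and for every $\phi_n$. The same estimate already used before Lemma \ref{converge in measure of length function}, namely
\begin{equation*}
\int_0^1 \ell_\varphi(\theta)\,d\theta < \infty,
\end{equation*}
shows that $\ell_\varphi < \infty$ almost everywhere for each such $\varphi$; intersecting over the countable family with $E_0$ gives a full-measure set $E_1\subset E_0$ on which every $\ell_\varphi$ is finite.

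Next, I would use Lemma \ref{converge in measure of length function}: the sequence $\ell_{\phi_n}$ converges in measure to $\ell_\phi$ on $[0,1)$. By the classical fact that convergence in measure on a finite-measure space implies the existence of an almost-everywhere convergent subsequence, there is a subsequence $\{\phi_{n_k}\}_{k\in\mathbb{N}}$ and a full-measure set $E_2\subset[0,1)$ such that $\ell_{\phi_{n_k}}(\theta)\to\ell_\phi(\theta)$ for every $\theta\in E_2$. Setting $E=E_1\cap E_2$ gives a full-measure subset of $[0,1)$ on which (a) each $\varphi\in\{\phi,\phi_n{;~}n\in\mathbb{N}\}$ has a radial limit at $e^{2\pi i\theta}$, (b) $\ell_\phi(\theta)<\infty$, and (c) $\ell_{\phi_{n_k}}(\theta)\to\ell_\phi(\theta)$.

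Finally, for each $\theta\in E$, I would invoke Lemma \ref{converge uniformly by convergence of length} directly: the hypotheses $\ell_\phi(\theta)<\infty$ and $\ell_{\phi_{n_k}}(\theta)\to\ell_\phi(\theta)$ are exactly what is required to conclude that $\phi_{n_k}$ converges to $\phi$ uniformly on $[1,2]e^{2\pi i\theta}$, with the radial limits at $e^{2\pi i\theta}$. This yields both conclusions of the lemma simultaneously. I do not anticipate any serious obstacle here: the only point requiring care is the bookkeeping that guarantees a \emph{single} subsequence works for a \emph{common} full-measure set $\theta$, which is handled above by first passing to a subsequence that realizes the a.e.\ consequence of convergence in measure and then intersecting with the (already full-measure) sets where radial limits exist and $\ell_\phi$ is finite.
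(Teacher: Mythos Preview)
Your proposal is correct and follows essentially the same approach as the paper: intersect the countably many full-measure sets where $\ell_\varphi(\theta)<\infty$, extract an a.e.\ convergent subsequence from the convergence in measure of $\ell_{\phi_n}$, and then apply Lemma~\ref{converge uniformly by convergence of length} pointwise. The only cosmetic difference is that you invoke the existence of radial limits separately (via Riesz), whereas the paper obtains this directly from $\ell_\varphi(\theta)<\infty$, which already forces the radial limit to exist; your step is harmless but redundant.
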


\begin{proof}
For each $\varphi\in\{\phi, \phi_n{;~} n\in\mathbb{N}\}$, 
let $E_\varphi = \{\theta\in[0,1){;~}\ell_\varphi(\theta)<\infty\}$. 
Let $E_0=\bigcap_{\varphi\in\{\phi, \phi_n{;~} n\in\mathbb{N}\}} E_\varphi $.
Then $E_0$ is a full-measure subset of $[0,1)$ with the first property. 

By Lemma \ref{converge in measure of length function}, the function $\ell_{\phi_n}$ converges in measure to $\ell_\phi$.
According to the relation between convergence a.e. and convergence in measure (see, e.g. \cite[Theorem 2.30]{Folland}), 
there is a subsequence $\{\ell_{\phi_{n_k}}\}_{k\in\mathbb{N}}$ of $\{\ell_{\phi_n}\}_{n\in\mathbb{N}}$ 
such that $\ell_{\phi_{n_k}}$ converges a.e. to $\ell_\phi$. 
Choose a full-measure subset $E$ of $E_0$ such that 
$\lim_{k\rightarrow\infty} \ell_{\phi_{n_k}}(\theta) = \ell_\phi(\theta)$ for every $\theta\in E$.
Then the second property follows from Lemma \ref{converge uniformly by convergence of length}. 
\end{proof}

\begin{proof}
[Proof of Proposition \ref{prop-convergence of external angles}]
By the Carath\'eodory kernel convergence theorem, the map 
$\phi_n:=\phi_{K_n}$ converges to $\phi:=\phi_K$ uniformly on compact subsets of $\mathbb{C}\setminus\overline{\mathbb{D}}$. 
We need to show $\theta_n:=\theta(K_n,\gamma_n)\rightarrow \theta:=\theta(K,\gamma)$ as $n\rightarrow\infty$. 
Choose a full-measure subset $E\subset\RZ$ and a subsequence 
$\{\phi_{n_k}\}_{k\in\mathbb{N}}$ of $\{\phi_n\}_{n\in\mathbb{N}}$ 
as in Lemma \ref{converge uniformly on some external rays}. 

Given $\varepsilon\in(0,\frac{1}{2})$. 
By the F. and M. Riesz theorem \cite[Appendix A]{Mil}, there exist 
$\eta_1\in E\cap(\theta-\varepsilon,\theta)$ and $\eta_2\in E\cap(\theta,\theta+\varepsilon)$ such that 
$R_K(\theta)$, $R_K(\eta_1)$ and $R_K(\eta_2)$ land at three different points in $\partial K$. 
Choose $r>1$ so that $\phi(V)\cap \gamma([0,1])=\emptyset$, 
where $$V= [1,r] e^{2\pi i\eta_2} \cup re^{2\pi i[\eta_2,\eta_1]}\cup [1,r] e^{2\pi i\eta_1}.$$ 
Since $\phi_{n_k}\rightarrow\phi$ uniformly on $V$ 
and $\gamma_{n_k}([0,1])\rightarrow\gamma([0,1])$ in Hausdorff topology, we have 
$\phi_{n_k}(V)\cap\gamma_{n_k}([0,1]) =\emptyset$ for $k$ large enough. 
Let $D$ denote the unbounded connected component of $\mathbb{C}\setminus(\overline{\mathbb{D}}\cup V)$. 
Note that $\gamma((0,1])\subset \phi(D)$.  
When $k$ is large enough, we have $\gamma_{n_k}((0,1])\subset \phi_{n_k}(D)$, 
hence $\theta_{n_k}\in[\eta_1,\eta_2]\subset(\theta-\varepsilon,\theta+\varepsilon)$. 
This shows $\theta_{n_k}\rightarrow\theta$ as $k\rightarrow\infty$. 

The above discussion shows any subsequence of  $\{\theta_n\}_{n\in\mathbb{N}}$ has a subsequence tending to $\theta$, hence $\theta_n\rightarrow\theta$ as $n\rightarrow\infty$. 
This completes the proof. 
\end{proof}

\subsection{Convergence of marked lemniscate maps}
\label{subsection convergence of marked LM}
Let $T=(|T|,\sigma,\delta)$ be a mapping scheme with $\Tn\neq\emptyset$. 
For each $n\in\mathbb{N}$, let $$(\mathbf{g}_n, K_n, Z_n, \Gamma_n) = ((g_{n,v})_{v\in\Tn},(K_{n,v})_{v\in|T|}, (\zeta_{n,v})_{v\in\Tp}, (\gamma_{n,v})_{v\in|T|} )$$ be a marked lemniscate map over $T$. 

By Proposition \ref{straightening of marked LM}, 
there exists a unique $\mathbf{f}_n\in\mathcal{C}(\mathcal{P}_0^T)$ such that $(\mathbf{g}_n, K_n,Z_n,\Gamma_n)$ 
is conjugate to $(\mathbf{f}_n,K(\mathbf{f}_n),Z'_n,\Gamma'_n)$ by $(\psi_{n,v})_{v\in|T|}$, where $(Z'_n,\Gamma'_n)$ is the standard marking of $(\mathbf{f}_n,K(\mathbf{f}_n))$. 
For each $v\in\Tp$, the restriction $\psi_{n,v}: K_{n,v}\rightarrow\overline{\mathbb{D}}$ is 
the continuous extension of the Riemann mapping normalized by $\psi_{n,v}(\zeta_{n,v})=0$ 
and $\psi_{n,v}(\gamma_{n,v}(0)) =1$. 

\begin{definition}
\label{definition convergence of lemniscate maps}
We say $(\mathbf{g}_n, K_n, Z_n, \Gamma_n)$ converges to $(\mathbf{g}_0, K_0, Z_0, \Gamma_0)$  if 
\begin{itemize}
\item 
for each $v\in\Tp$, we have $\psi_{n,v}^{-1}\rightarrow\psi_{0,v}^{-1}$ uniformly on $\overline{\mathbb{D}}$; 

\item 
for each $v\in\Tn$, we have $g_{n,v}\rightarrow g_{0,v}$ uniformly on compact subsets of $\mathbb{C}$;

\item
for each $v\in|T|$, we have $\gamma_{n,v}([0,1])\rightarrow \gamma_{0,v}([0,1])$ in Hausdorff topology. 
\end{itemize}
\end{definition}

The aim of this subsection is to prove the following: 

\begin{proposition}
\label{continuity-straightening-LM}
If $(\mathbf{g}_n, K_n, Z_n, \Gamma_n)\rightarrow (\mathbf{g}_0, K_0, Z_0, \Gamma_0)$, then $\mathbf{f}_n\rightarrow\mathbf{f}_0$. 
\end{proposition}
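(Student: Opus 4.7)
The plan is a compactness-plus-uniqueness argument: I would extract a subsequential limit of $\{\mathbf{f}_n\}$ and identify it with $\mathbf{f}_0$ via the uniqueness assertion of the straightening theorem (Proposition \ref{straightening of marked LM}), so that the full sequence converges.

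First I would establish precompactness of $\{\mathbf{f}_n\}$ in $\mathcal{P}_0^T$. Since $\mathbf{f}_n\in \mathcal{C}(\mathcal{P}_0^T)$, each $f_{n,v}$ is a monic centered polynomial of fixed degree $\delta(v)$ with connected filled Julia set, so classical escape-radius estimates bound $K_{\mathbf{f}_n,v}$ in a disk of universal radius and hence bound all coefficients of $f_{n,v}$. Any subsequence therefore admits a further subsequence $\mathbf{f}_{n_k}\to \widetilde{\mathbf{f}}\in\mathcal{P}_0^T$, and upper semi-continuity of filled Julia sets puts $\widetilde{\mathbf{f}}\in\mathcal{C}(\mathcal{P}_0^T)$.

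Next I would build a conjugacy $(\psi_v)_{v\in|T|}$ from $(\mathbf{g}_0, K_0, Z_0, \Gamma_0)$ to $(\widetilde{\mathbf{f}}, K(\widetilde{\mathbf{f}}), \widetilde{Z}', \widetilde{\Gamma}')$ (with the standard marking) by inductively passing to the limit in $(\psi_{n_k,v})_{v\in|T|}$ along the preperiod $r_v$. For $v\in \Tp$ this is exactly the hypothesis $\psi_{n_k,v}^{-1}\to\psi_{0,v}^{-1}$. For $v\in \Tn$, once $\psi_{\sigma(v)}$ is in hand and $f_{n_k,v}\to \widetilde{f}_v$ uniformly on compact subsets of $\mathbb{C}$, I would define $\psi_v$ on each Jordan component of $K_{0,v}^\circ$ from the commutation relation $\psi_{\sigma(v)}\circ g_{0,v}=\widetilde{f}_v\circ \psi_v$, transferring it via Carathéodory kernel convergence of the matching components of $K_{n_k,v}^\circ$ onto $K_{\widetilde{\mathbf{f}},v}^\circ$ (compare Lemma \ref{continuity on D-tilde} and Corollary \ref{construction of strong conjugacy}), and then extend continuously to $\partial K_{0,v}$. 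The central marking condition $\psi_v(\zeta_{0,v})=0$ is immediate from the construction. The key nontrivial check is the external marking identity $\psi_v\circ \gamma_{0,v}\simeq_{K_{\widetilde{\mathbf{f}},v}} R_{\widetilde{\mathbf{f}},v}(0)$, which I would obtain by applying Proposition \ref{prop-convergence of external angles} to the sequence of paths $\psi_{n_k,v}\circ \gamma_{n_k,v}\simeq_{K_{\mathbf{f}_{n_k},v}} R_{\mathbf{f}_{n_k},v}(0)$: Hausdorff convergence $\gamma_{n_k,v}([0,1])\to\gamma_{0,v}([0,1])$ is hypothesized, and Hausdorff convergence $K_{\mathbf{f}_{n_k},v}\to K_{\widetilde{\mathbf{f}},v}$ is derived from coefficient convergence together with fiberwise connectedness. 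The uniqueness of Proposition \ref{straightening of marked LM} then forces $\widetilde{\mathbf{f}}=\mathbf{f}_0$, and the subsequence argument concludes $\mathbf{f}_n\to\mathbf{f}_0$.

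The hard part will be the inductive construction of the limiting conjugacy on $K_{0,v}$ for $v\in \Tn$: the interior Jordan components of $K_{n_k,v}$ may deform in subtle ways along the subsequence, and matching them with the corresponding components in $K_{\widetilde{\mathbf{f}},v}$ requires combinatorial stability of the branched coverings $g_{n_k,v}\colon K_{n_k,v}\to K_{n_k,\sigma(v)}$ for large $k$ (ensured by the uniform convergence $g_{n_k,v}\to g_{0,v}$) and a careful boundary identification. This boundary identification is precisely where Proposition \ref{prop-convergence of external angles} becomes indispensable: it propagates the external markings through the limit and pins down the right homotopy class on the $\widetilde{\mathbf{f}}$ side.
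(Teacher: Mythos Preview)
Your overall strategy---extract a subsequential limit $\widetilde{\mathbf{f}}$, build a conjugacy from $(\mathbf{g}_0,K_0,Z_0,\Gamma_0)$ to $(\widetilde{\mathbf{f}},K(\widetilde{\mathbf{f}}))$ with the standard marking, then invoke the uniqueness in Proposition~\ref{straightening of marked LM}---matches the paper's. The implementation differs: rather than matching Jordan components of $K_{0,v}^\circ$ and extending to the boundary, the paper first replaces each $(\psi_{n,v})$ by the specific \emph{strong} conjugacy $(\phi_{n,v}:\mathbb{C}\to\mathbb{C})$ supplied by Lemma~\ref{uniform convergence of extension} and Corollary~\ref{construction of strong conjugacy}, and then proves $\phi_{n,v}\to\phi_v$ locally uniformly on all of $\mathbb{C}$ by a normality argument (using the relation $f_{n,v}\circ\phi_{n,v}=\phi_{n,\sigma(v)}\circ g_{n,v}$ together with local invertibility of $f_{n,v}$ away from critical values via Lemma~\ref{uniform-injectivity}, followed by a separate removal of the finitely many critical points of $g_{0,v}$). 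Working on $\mathbb{C}$ rather than on $K_{0,v}$ dissolves the component-matching and boundary-extension bookkeeping you flag as ``the hard part''.

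There is also a real gap in your external-marking verification. To apply Proposition~\ref{prop-convergence of external angles} as you propose, you need Hausdorff convergence of $\psi_{n_k,v}\circ\gamma_{n_k,v}([0,1])$ to $\psi_v\circ\gamma_{0,v}([0,1])$. But $\gamma_{n_k,v}$ is a path in $\mathbb{C}\setminus K_{n_k,v}$, so this requires convergence of $\psi_{n_k,v}$ \emph{outside} $K_{n_k,v}$---precisely where your construction on $K_{0,v}$ says nothing. The homeomorphisms $\psi_{n_k,v}$ are unconstrained off $K_{n_k,v}$ and need not converge there. The paper's strong-conjugacy route is designed to close this gap: because each $\phi_{n,v}$ is built globally by lifting on $\mathbb{C}\setminus K_{n,v}$ (see the proof of Corollary~\ref{construction of strong conjugacy}), convergence on the external paths $\gamma_{n,v}$ follows from the locally uniform convergence on $\mathbb{C}$, and Proposition~\ref{prop-convergence of external angles} then applies cleanly.
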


Recall from \S\ref{subsection straightening of marked lemniscate maps} 
that for a full nondegenerate continuum $K\subset\mathbb{C}$, 
we let $\phi_K: \mathbb{C}\setminus \overline{\mathbb{D}}\rightarrow  \mathbb{C}\setminus K$ 
be the conformal map normalized by $\lim_{z\rightarrow\infty} \phi_K(z)/z>0$. 

\begin{lemma}
\label{uniform convergence of extension}
Let $U$ and $U_n$ ($n\in\mathbb{N}$) be Jordan domains in $\mathbb{C}$. 
Let $\phi$ and $\phi_n$ denote the continuous extensions of the conformal maps 
$\phi_{\overline{U}}$ and $\phi_{\overline{U_n}}$ on $\mathbb{C}\setminus\mathbb{D}$ respectively. 
Let $\alpha:\partial \mathbb{D}\rightarrow \partial U$ and $\alpha_n:\partial \mathbb{D}\rightarrow \partial U_n$ be orientation-preserving homeomorphisms. 
Suppose $\alpha_n\rightarrow\alpha$ uniformly. 
Then: 
\begin{enumerate}
\item 
$\phi_n\rightarrow \phi$ uniformly on compact subsets of $\mathbb{C}\setminus\mathbb{D}$; 

\item 
$\alpha$ and $\alpha_n$ can be extended to homeomorphisms $\varphi:\mathbb{C}\setminus\mathbb{D}\rightarrow\mathbb{C}\setminus U$ and $\varphi_n:\mathbb{C}\setminus\mathbb{D}\rightarrow\mathbb{C}\setminus U_n$ respectively so that $\varphi_n\rightarrow\varphi$ uniformly on compact subsets of $\mathbb{C}\setminus\mathbb{D}$. 
\end{enumerate}
\end{lemma}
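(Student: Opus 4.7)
The plan is to handle parts (1) and (2) in turn, with part (1) providing the analytic input and part (2) being a largely geometric construction.

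For part (1), I would first argue that $\alpha_n \to \alpha$ uniformly on $\partial\mathbb{D}$ implies $\partial U_n \to \partial U$ in Hausdorff distance on $\mathbb{C}$. Since $\phi_{\overline{U_n}}$ and $\phi_{\overline{U}}$ are normalized at infinity with positive derivative, the complementary domains $\mathbb{C}\setminus \overline{U_n}$ (as pointed domains at $\infty$) converge to $\mathbb{C}\setminus \overline{U}$ in the sense of Carath\'eodory kernel convergence. The Carath\'eodory kernel convergence theorem then yields $\phi_n \to \phi$ locally uniformly on $\mathbb{C}\setminus \overline{\mathbb{D}}$. The nontrivial upgrade is from local uniform convergence on the open exterior to uniform convergence on compact subsets of $\mathbb{C}\setminus\mathbb{D}$, i.e.\ including points on $\partial\mathbb{D}$. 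For this I would invoke the paper's Appendix \ref{appendix uniform-convergence} (the result referenced as comparable to \cite[Corollary 2.4]{Pom}), whose hypotheses are met here because each $U_n$ and $U$ is a Jordan domain and the boundary curves converge in Hausdorff distance. Concretely, one upgrades by showing the family $\{\phi_n\}$ is equicontinuous up to $\partial\mathbb{D}$: any subsequential limit of $\phi_n$ extended to $\partial\mathbb{D}$ must be a continuous extension of $\phi|_{\mathbb{C}\setminus\overline{\mathbb{D}}}$ with image $\partial U$, and by Carath\'eodory extension this limit is uniquely $\phi$.

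For part (2), with part (1) in hand I would construct $\varphi_n$ by composing $\phi_n$ with a radial change of coordinates that corrects for the discrepancy between the boundary parameterization $\phi_n|_{\partial \mathbb{D}}$ and the prescribed parameterization $\alpha_n$. Precisely, define $\tau_n,\tau:\partial\mathbb{D}\to\partial\mathbb{D}$ by
\[
\phi_n(\tau_n(\xi))=\alpha_n(\xi), \qquad \phi(\tau(\xi))=\alpha(\xi),
\]
which makes sense because $\phi_n$ and $\phi$ extend to homeomorphisms on $\partial\mathbb{D}$ onto $\partial U_n$ and $\partial U$ respectively (Carath\'eodory's theorem, using Jordan boundaries). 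Both $\tau_n$ and $\tau$ are orientation-preserving self-homeomorphisms of $\partial\mathbb{D}$. I then set
\[
\varphi_n(r\xi)=\phi_n(r\,\tau_n(\xi)), \qquad \varphi(r\xi)=\phi(r\,\tau(\xi)),
\]
for $r\geq 1$ and $\xi\in\partial\mathbb{D}$. Each $\varphi_n$ is a homeomorphism $\mathbb{C}\setminus\mathbb{D}\to \mathbb{C}\setminus U_n$ extending $\alpha_n$, because $(r,\xi)\mapsto r\tau_n(\xi)$ is a homeomorphism of $\mathbb{C}\setminus \mathbb{D}$ and $\phi_n$ is a homeomorphism $\mathbb{C}\setminus\mathbb{D}\to\mathbb{C}\setminus U_n$.

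Finally, I would check $\varphi_n\to \varphi$ uniformly on compact subsets. From part (1), $\phi_n\to \phi$ uniformly on compact subsets of $\mathbb{C}\setminus \mathbb{D}$, and in particular uniformly on $\partial\mathbb{D}$. Since $\phi|_{\partial\mathbb{D}}:\partial\mathbb{D}\to\partial U$ is a homeomorphism of compact metric spaces, its inverse is uniformly continuous; combined with $\alpha_n\to \alpha$ uniformly, this yields $\tau_n\to \tau$ uniformly on $\partial\mathbb{D}$. Then uniform continuity of $\phi$ on compact subsets of $\mathbb{C}\setminus\mathbb{D}$, together with $\tau_n\to\tau$ uniformly and $\phi_n\to\phi$ uniformly on compact subsets, gives $\varphi_n\to\varphi$ uniformly on compact subsets of $\mathbb{C}\setminus\mathbb{D}$.

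The main obstacle I expect is the boundary convergence step in part (1), namely passing from local uniform convergence on the open exterior to uniform convergence up to $\partial\mathbb{D}$. This is exactly the content of the paper's Appendix \ref{appendix uniform-convergence}, so invoking it is the cleanest route; if it were not available, one would need a direct equicontinuity-plus-Arzel\`a--Ascoli argument exploiting Jordan boundaries and Hausdorff convergence.
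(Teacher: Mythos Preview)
Your part~(2) is the same construction as the paper's: writing $\tau_n=\phi_n^{-1}\circ\alpha_n$ on $\partial\mathbb{D}$, your $\varphi_n(r\xi)=\phi_n(r\,\tau_n(\xi))$ is exactly the paper's Alexander-trick extension $\varphi_n(z)=\phi_n(|z|\cdot\phi_n^{-1}\circ\alpha_n(z/|z|))$. Your justification for $\tau_n\to\tau$ (``$\phi^{-1}$ is uniformly continuous'') is not quite literal, since $\tau_n$ involves $\phi_n^{-1}$, whose domain $\partial U_n$ varies with $n$; but this is easily repaired by a compactness argument once part~(1) gives $\phi_n\to\phi$ uniformly on $\partial\mathbb{D}$, and the paper carries out essentially that step via a direct $\varepsilon$-argument.

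The genuine gap is in part~(1). You propose to invoke the criterion of Appendix~\ref{appendix uniform-convergence}, claiming its hypothesis is met ``because each $U_n$ and $U$ is a Jordan domain and the boundary curves converge in Hausdorff distance.'' But Hausdorff convergence of boundaries is \emph{strictly weaker} than the hypothesis you were given ($\alpha_n\to\alpha$ uniformly), and by itself does not yield the non-oscillation condition~(2) of Proposition~\ref{non-oscillation condition}; the whole point of that appendix is that extra boundary control is needed. Your alternative sketch (equicontinuity up to $\partial\mathbb{D}$ plus Arzel\`a--Ascoli) is circular: equicontinuity at $\partial\mathbb{D}$ is precisely the uniform-convergence upgrade you are trying to prove. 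The paper does \emph{not} route through Appendix~\ref{appendix uniform-convergence}. It first applies Lemma~\ref{converge uniformly on some external rays} to extract a full-measure set $E\subset\RZ$ and a subsequence along which $\phi_{n_k}\to\phi$ uniformly on each radial segment $[1,2]e^{2\pi i\theta}$, $\theta\in E$; it then partitions the annulus by finitely many such radii $\theta_j\in E$ and uses the uniform convergence $\alpha_n\to\alpha$ to locate $\phi_{n_k}(e^{2\pi i\theta_j})$ and hence trap $\phi_{n_k}(e^{2\pi i[\theta_j,\theta_{j+1}]})$ in a set of small diameter, finishing via the maximum modulus principle on each sector. If you prefer a black box, part~(1) is exactly Rad\'o's theorem \cite{Rado1923} (also \cite[Theorem~2.11]{Pom}), and you may simply cite it; but the verification you outlined does not stand on its own.
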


See \cite{Rado1923} or \cite[Theorem 2.11]{Pom} for Lemma \ref{uniform convergence of extension}(1). 
Here we give an alternative proof based on Lemma \ref{converge uniformly on some external rays}.  

\begin{proof}
[Proof of Lemma \ref{uniform convergence of extension}]
(1) 
By the Carath\'eodory kernel convergence theorem, the function 
$\phi_n$ converges to $\phi$ uniformly on compact subsets of $\mathbb{C}\setminus\overline{\mathbb{D}}$. 
So we just need to show $\phi_n$ converges to $\phi$ uniformly on $\overline{A}$, where $A = \{z\in\mathbb{C}{;~} 1<|z|<2\}$. 
Choose a full-measure subset $E$ of $\RZ$ and a subsequence 
$\{\phi_{n_k}\}_{k\in\mathbb{N}}$ of $\{\phi_n\}_{n\in\mathbb{N}}$ 
as in Lemma \ref{converge uniformly on some external rays}. 

Given $\varepsilon>0$. 
Since $\phi|_{\partial\mathbb{D}}$ is uniformly continuous, there is a $\rho\in(0,1)$ so that for any $t_1,t_2\in \mathbb{R}$ with $|t_1-t_2|\leq\rho$, we have $|\phi(e^{2\pi i t_1})-\phi(e^{2\pi i t_2})|<\varepsilon$. 
Fix an integer $m>\frac{4}{\rho}$. 
Choose $\theta_j\in E\cap(\frac{j}{m},\frac{j+1}{m})$ for each $0\leq j\leq m-1$. 
If $j'\equiv j\pmod m$, we set $\theta_{j'} = \theta_j$. 
Let $V_j=\{re^{2\pi i t}{;~} r\in(1,2), t\in(\theta_j,\theta_{j+1})\}$. 
It follows from $\partial V_j\cap \partial\mathbb{D} = e^{2\pi i [\theta_j,\theta_{j+1}]}\subset e^{2\pi i [\frac{j}{m},\frac{j+2}{m}]}$ that ${\diam}(\phi(\partial V_j\cap \partial\mathbb{D}))<\varepsilon$.

Fix $0\leq j\leq m-1$. 
Choose $\eta_j\in E\cap (\frac{j+2}{m},\frac{j}{m})$. 
Since $\alpha_{n_k}\rightarrow\alpha$ uniformly and $\theta_j, \theta_{j+1}, \eta_j \in E$, 
there is an $N>0$ independent of $j$ so that for any $k>N$, 
\begin{align*}
&\phi_{n_k}(e^{2\pi i \theta_j}),
\phi_{n_k}(e^{2\pi i \theta_{j+1}})\notin\alpha_{n_k}\circ \alpha^{-1}\circ\phi(e^{2\pi i [\frac{j+2}{m},\frac{j}{m}]}),  \\
&\phi_{n_k}(e^{2\pi i \eta_j})\notin\alpha_{n_k}\circ \alpha^{-1}\circ\phi(e^{2\pi i [\frac{j}{m},\frac{j+2}{m}]}),  \\
&{\diam}(\alpha_{n_k}\circ \alpha^{-1}\circ\phi(e^{2\pi i [\frac{j}{m},\frac{j+2}{m}]}))
<{\diam}(\phi(e^{2\pi i [\frac{j}{m},\frac{j+2}{m}]}))+\varepsilon
<2\varepsilon. 
\end{align*}
It follows that $\phi_{n_k}(\partial V_j\cap \partial\mathbb{D})\subset \alpha_{n_k}\circ \alpha^{-1}\circ\phi(e^{2\pi i [\frac{j}{m},\frac{j+2}{m}]})$ and ${\diam}(\phi_{n_k}(\partial V_j\cap \partial\mathbb{D}))<2\varepsilon$ whenever $k>N$. 
Since $\theta_j,\theta_{j+1}\in E$, there is an $N'>0$ independent of $j$ so that for any $k>N'$ and any $z\in \overline{\partial V_j\setminus \partial\mathbb{D}}$, we have $|\phi_{n_k}(z)-\phi(z)|<\varepsilon$. 
For any $k\geq\max\{N,N'\}$ and $z\in \partial V_j\cap \partial\mathbb{D}$, 
we have 
\begin{align*}
&|\phi_{n_k}(z)-\phi(z)|\\
\leq~& |\phi_{n_k}(z)-\phi_{n_k}(e^{2\pi i \theta_j})|+|\phi_{n_k}(e^{2\pi i \theta_j})-\phi(e^{2\pi i \theta_j})|+|\phi(e^{2\pi i \theta_j})-\phi(z)|\\
<~& {\diam}(\phi_{n_k}(\partial V_j\cap \partial\mathbb{D})) + \varepsilon + {\diam}(\phi(\partial V_j\cap \partial\mathbb{D}))\\
<~& 2\varepsilon+\varepsilon+\varepsilon=4\varepsilon. 
\end{align*}
Then $|\phi_{n_k}(z)-\phi(z)|<4\varepsilon$ for any $k>\max\{N,N'\}$ and any $z\in \partial V_j$, hence for any $z\in\overline{V_j}$ by the maximum modulus principle.  
This implies $\phi_{n_k}\rightarrow \phi$ uniformly on $\overline{A}=\bigcup_{j=0}^{m-1} \overline{V_j}$. 

The above discussion shows any subsequence of $\{\phi_n\}_{n\in\mathbb{N}}$ has a subsequence converging to $\phi$ uniformly on $\overline{A}$, hence 
$\phi_n\rightarrow\phi$ uniformly on $\overline{A}$. 

(2) 
Define $\varphi(z)=\phi(|z|\cdot\phi^{-1}\circ\alpha(z/|z|))$ and $\varphi_n(z)=\phi_n(|z|\cdot\phi_n^{-1}\circ\alpha_n(z/|z|))$ 
(compare the Alexander trick \cite[Appendix C2]{Teich-2-Hubbard}). 
To prove $\varphi_n\rightarrow\varphi$ uniformly on compact subsets of $\mathbb{C}\setminus\mathbb{D}$, 
we just need to check that $\phi_n^{-1}\circ\alpha_n:\partial \mathbb{D}\rightarrow\partial \mathbb{D}$ converges to $\phi^{-1}\circ\alpha:\partial \mathbb{D}\rightarrow\partial \mathbb{D}$ uniformly. 

Given $\varepsilon>0$.  
Since $\phi^{-1}\circ\alpha$ is uniformly continuous, 
we can choose $m$ large enough so that ${\diam}(\phi^{-1}\circ\alpha(a_j))<\varepsilon$ for every $j\in\{0,\dots,m-1\}$, where $a_j = e^{2\pi i [\frac{j}{m},\frac{j+1}{m}]}$. 
Now fix $j\in\{0,\dots,m-1\}$. 
Since $\phi_n|_{\partial \mathbb{D}}\rightarrow \phi|_{\partial \mathbb{D}}$ uniformly and $\alpha_n\rightarrow\alpha$, 
there is an $N_j>0$ so that for any $n>N_j$, we have 
\begin{align*}
&\alpha_n(e^{2\pi i \frac{j}{m}}),
\alpha_n(e^{2\pi i \frac{j+1}{m}})\notin\phi_n\circ \phi^{-1}\circ\alpha(e^{2\pi i [\frac{j+2}{m},\frac{j-1}{m}]}),  \\
&\alpha_n(e^{2\pi i (\frac{j}{m}+\frac{1}{2})})\notin\phi_n\circ \phi^{-1}\circ\alpha(a_{j-1}\cup a_j\cup a_{j+1}), 
\end{align*}
hence $\alpha_n(a_j)\subset \phi_n\circ \phi^{-1}\circ\alpha(a_{j-1}\cup a_j\cup a_{j+1})$, i.e.  $\phi_n^{-1}\circ \alpha_n(a_j)\subset \phi^{-1}\circ\alpha(a_{j-1}\cup a_j\cup a_{j+1})$. 
For any $n>N_j$ and any $z\in a_j$, we have 
$|\phi_n^{-1}\circ \alpha_n(z)-\phi^{-1}\circ \alpha(z)|\leq {\diam}(\phi^{-1}\circ\alpha(a_{j-1}\cup a_j\cup a_{j+1}))<3\varepsilon$. 
For any $n>\max\{N_0, \dots, N_{m-1}\}$ and any $z\in\partial\mathbb{D}$, we have $|\phi_n^{-1}\circ \alpha_n(z)-\phi^{-1}\circ \alpha(z)|<3\varepsilon$.  
Therefore $\phi_n^{-1}\circ\alpha_n\rightarrow\phi^{-1}\circ\alpha$ uniformly. 
This completes the proof. 
\end{proof}

\begin{lemma}
\label{uniform convergence of inverse}
Let $U,\Omega,\Omega_n$ be nonempty open subsets of $\mathbb{C}$.  
Suppose $f:U\rightarrow \Omega$ and $f_n:U\rightarrow \Omega_n$ are orientation-preserving homeomorphisms. 
If $f_n\rightarrow f$ uniformly on compact subsets of $U$, then
$f_n^{-1}\rightarrow f^{-1}$ uniformly on compact subsets of $\Omega$. 
\end{lemma}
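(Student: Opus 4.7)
The plan is to first show that $f_n^{-1}(K)$ is trapped in a fixed compact subset of $U$ for all $n$ large, and then deduce uniform convergence by a compactness/subsequence argument. The orientation-preserving hypothesis will be used in exactly one place, via a topological degree argument.

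\textbf{Step 1 (confinement).} Let $K\subset\Omega$ be compact. I would choose an open $V$ with $f^{-1}(K)\subset V$ and $\overline V\subset U$ compact; for instance $V=\{w\in U{;~}\dist(w,f^{-1}(K))<\rho\}$ for $\rho>0$ small. Since $f:U\to\Omega$ is a homeomorphism, $f(V)$ is open in $\Omega$, $f(\partial V)=\partial f(V)$, and $K\subset f(V)$, so $d:=\dist(K,f(\partial V))>0$. Uniform convergence $f_n\to f$ on $\overline V$ gives $N$ with $\sup_{\overline V}|f_n-f|<d/2$ for $n\geq N$; in particular $f_n(\partial V)\cap K=\emptyset$. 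For each $y\in K$ and $n\geq N$, the straight-line homotopy $h_t=(1-t)f+tf_n$ on $\overline V$ stays at distance $\geq d/2$ from $y$ on $\partial V$, so by homotopy invariance of the Brouwer degree
\begin{equation*}
\deg(f_n-y,V,0)=\deg(f-y,V,0)=+1,
\end{equation*}
the last equality because $f$ is an orientation-preserving homeomorphism of $V$ onto $f(V)\ni y$. Hence there is $z\in V$ with $f_n(z)=y$, and by injectivity of $f_n$ this $z$ is unique. Therefore $K\subset\Omega_n$ and $f_n^{-1}(K)\subset V$ for all $n\geq N$.

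\textbf{Step 2 (uniform convergence).} Suppose, to derive a contradiction, that $f_n^{-1}\not\to f^{-1}$ uniformly on $K$. Then there exist $\varepsilon>0$, a subsequence $n_k\to\infty$, and points $y_k\in K$ with $|f_{n_k}^{-1}(y_k)-f^{-1}(y_k)|\geq\varepsilon$. Compactness of $K$ lets me assume $y_k\to y\in K$; compactness of $\overline V$ (from Step 1) lets me further assume $w_k:=f_{n_k}^{-1}(y_k)\to z\in\overline V$. Then $|f_{n_k}(w_k)-f(z)|\leq \sup_{\overline V}|f_{n_k}-f|+|f(w_k)-f(z)|\to 0$, so $y_k=f_{n_k}(w_k)\to f(z)$, forcing $f(z)=y$, i.e.\ $z=f^{-1}(y)$. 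Continuity of $f^{-1}$ gives $f^{-1}(y_k)\to f^{-1}(y)=z$, whence $|f_{n_k}^{-1}(y_k)-f^{-1}(y_k)|\to 0$, contradicting $\geq\varepsilon$.

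The main obstacle is Step 1: the orientation-preserving hypothesis is indispensable there, since without it the degree could vanish and $y$ need not lie in $f_n(V)$. Step 2, once the confinement is in place, is a routine diagonal/compactness argument that does not use orientation.
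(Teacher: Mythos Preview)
Your proof is correct. The degree argument in your Step~1 is the heart of the matter, and the paper uses essentially the same idea at the corresponding point, though it leaves the degree reasoning implicit.

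The overall organization differs, however. Rather than first confining $f_n^{-1}(K)$ to a fixed compact set and then running a subsequence/contradiction argument, the paper works directly and quantitatively: it reduces to $K=[0,1]^2$, subdivides $K$ into small squares $S_{j,k}$, and shows that for $n$ large one has $S_{j,k}\subset f_n\bigl(f^{-1}(\widehat S_{j,k})\bigr)$ where $\widehat S_{j,k}$ is the $3\times 3$ block of surrounding squares. This inclusion (which is where the orientation-preserving hypothesis enters, via the same Jordan-curve/degree reasoning you spell out) immediately gives $f_n^{-1}(S_{j,k})\subset f^{-1}(\widehat S_{j,k})$ and hence $|f_n^{-1}(z)-f^{-1}(z)|\le\diam f^{-1}(\widehat S_{j,k})<3\varepsilon$. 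Your approach is cleaner in that it makes the topological input explicit and avoids any special use of the planar grid structure; the paper's approach is slightly more constructive, yielding a direct $\varepsilon$--$N$ bound without passing to subsequences.
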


\begin{proof}
Let $K$ be a compact subset of $\Omega$. 
When $n$ is large enough, we have $K\subset \Omega_n$, hence $f_n^{-1}$ makes sense on $K$. 
Without loss of generality, we assume $K=[0,1]^2$. 
Given $\varepsilon>0$.  
Since $f^{-1}|_K$ is uniformly continuous, 
there is a positive integer $m$ so that 
$\left[-\frac{1}{m},1+\frac{1}{m}\right]^2\subset\Omega$ and 
${\diam}(f^{-1}(S_{j,k}))<\varepsilon$ for every $(j,k)\in\{-1,0,\dots,m\}^2$, where $S_{j,k}= \left[\frac{j}{m},\frac{j+1}{m}\right]\times \left[\frac{k}{m},\frac{k+1}{m}\right]$. 
Fix $(j,k)\in\{0,\dots,m-1\}^2$. 
Let $\widehat S_{j,k} = \left[\frac{j-1}{m},\frac{j+2}{m}\right]\times \left[\frac{k-1}{m},\frac{k+2}{m}\right]$. 
Since $f_n\rightarrow f$ uniformly on compact subsets of $U$, 
there is an $N_{j,k}>0$ so that for any $n>N_{j,k}$, we have 
$S_{j,k}\subset f_n(f^{-1}(\widehat S_{j,k}))$, hence $f_n^{-1}(S_{j,k})\subset f^{-1}(\widehat S_{j,k})$. 
For any $n>N_{j,k}$ and any $z\in S_{j,k}$, we have 
$|f_n^{-1}(z)-f^{-1}(z)|\leq {\diam}(f^{-1}(\widehat S_{j,k}))<3\varepsilon$. 
For any $n>\max_{(j,k)\in\{0,\dots,m-1\}^2}N_{j,k}$ and any $z\in K$, we have $|f_n^{-1}(z)-f^{-1}(z)|<3\varepsilon$.  
Therefore $f_n^{-1}\rightarrow f^{-1}$ uniformly on $K$. 
\end{proof}

\begin{lemma}
\label{uniform-injectivity}
Let $g$ and $g_n$ ($n\in\mathbb{N}$) be topological polynomials of degree $d\geq1$. 
Suppose $g_n\rightarrow g$ uniformly on compact subsets of $\mathbb{C}$. 
Let $z_0\in\mathbb{C}\setminus\crit(g)$. 
Then there is an $r>0$ and an $N>0$ so that $g$ and $g_n$ are injective on $\mathbb{D}(z_0,r) = \{z\in\mathbb{C}{;~}|z-z_0|<r\}$ for $n>N$. 
\end{lemma}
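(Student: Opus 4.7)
The plan is to reduce the problem to a winding-number / mapping-degree argument. First, I would shrink to the regular behaviour of $g$ at $z_0$: since $z_0\notin\crit(g)$ and $g$ is a branched covering, there exists $r_0>0$ such that $\overline{\mathbb{D}(z_0,r_0)}\cap\crit(g)=\emptyset$ and $g$ is injective on $\overline{\mathbb{D}(z_0,r_0)}$. Writing $w=g(z_0)$ and $V=g(\mathbb{D}(z_0,r_0))$, the image $V$ is a Jordan domain whose boundary $\partial V=g(\partial\mathbb{D}(z_0,r_0))$ winds exactly once around every point of $V$. This takes care of the injectivity of $g$ itself on any $\mathbb{D}(z_0,r)$ with $r\leq r_0$.

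Next, I would choose $\varepsilon>0$ small enough that $\overline{\mathbb{D}(w,2\varepsilon)}\subset V$ and then pick $r\in(0,r_0)$ with $g\bigl(\overline{\mathbb{D}(z_0,r)}\bigr)\subset\mathbb{D}(w,\varepsilon)$. By uniform convergence $g_n\to g$ on the compact set $\overline{\mathbb{D}(z_0,r_0)}$, for all $n>N$ we simultaneously have $g_n(\partial\mathbb{D}(z_0,r_0))\cap\overline{\mathbb{D}(w,2\varepsilon)}=\emptyset$ and $g_n\bigl(\overline{\mathbb{D}(z_0,r)}\bigr)\subset\mathbb{D}(w,2\varepsilon)$. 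The straight-line homotopy from $g|_{\partial\mathbb{D}(z_0,r_0)}$ to $g_n|_{\partial\mathbb{D}(z_0,r_0)}$ stays in $\mathbb{C}\setminus\overline{\mathbb{D}(w,2\varepsilon)}$ for such $n$, so for every $w'\in\mathbb{D}(w,2\varepsilon)$ the curve $g_n(\partial\mathbb{D}(z_0,r_0))$ has winding number $1$ about $w'$.

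Finally, I would apply the argument principle for branched coverings to the restriction $g_n\colon\overline{\mathbb{D}(z_0,r_0)}\to\mathbb{C}$: for each $w'\in\mathbb{D}(w,2\varepsilon)$ the identity
\begin{equation*}
\sum_{z\in g_n^{-1}(w')\cap\mathbb{D}(z_0,r_0)}\deg(g_n,z)=1
\end{equation*}
holds, the right-hand side being the winding number computed above. Since local degrees are positive integers summing to $1$, there is exactly one such preimage, and its local degree equals $1$. Hence $W_n:=g_n^{-1}(\mathbb{D}(w,2\varepsilon))\cap\mathbb{D}(z_0,r_0)$ maps homeomorphically onto $\mathbb{D}(w,2\varepsilon)$ under $g_n$; the containment $\mathbb{D}(z_0,r)\subset W_n$ from the previous paragraph then yields injectivity of $g_n$ on $\mathbb{D}(z_0,r)$.

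The main obstacle is the degree-theoretic step: one must rule out the a priori possibility that critical points of $g_n$ accumulate in a neighbourhood of $z_0$, since critical points of topological polynomials need not depend continuously on the map in any useful sense. The winding-number identity is precisely what prevents this, but it relies on the argument principle for branched covers, which I would justify by extending $g_n$ to a branched cover $\hat g_n\colon S^2\to S^2$ and invoking the standard fact that the sum of local degrees over preimages in a disk equals the degree of the boundary map — a purely topological statement that makes no appeal to holomorphicity.
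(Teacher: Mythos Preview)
Your proposal is correct and follows essentially the same route as the paper: both arguments pick a disk on which $g$ is injective, use uniform convergence to preserve the winding number of the boundary curve, and then invoke the argument principle for branched covers (the paper justifies this via Lemma~\ref{pullback of complex structure}, you via extension to $S^2$) to conclude that each relevant value has a unique preimage of local degree~$1$ in the disk. The only cosmetic difference is that the paper works directly with $\Delta(g_n\circ\gamma,g_n(z))$ for $z$ in the inner disk, whereas you route through a fixed target disk $\mathbb{D}(w,2\varepsilon)$; the content is the same.
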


\begin{proof} 
By Lemma \ref{pullback of complex structure}, the argument principle holds for topological polynomials. 
For a path $\gamma:[0,1]\rightarrow\mathbb{C}$ and a point $z\notin\gamma([0,1])$, let $\Delta(\gamma,z)$ denote the  angular increment along the path $\gamma$ around $z$. 

Choose $r>0$ so that $g$ is injective on $\overline{\mathbb{D}(z_0,2r)}$. 
Let $\gamma(t)=z_0+2re^{2\pi i t}$ for $t\in[0,1]$. 
Since $g_n\rightarrow g$ uniformly on $\overline{\mathbb{D}(z_0,2r)}$, 
there is an $N>0$ so that for any $n>N$ and any $z\in \mathbb{D}(z_0,r)$, 
we have $\Delta(g_n\circ \gamma, g_n(z))=\Delta(g\circ\gamma, g(z))=2\pi$, hence $g_n^{-1}(g_n(z))\cap \mathbb{D}(z_0,2r)=\{z\}$ by the argument principle. 
Thus $g_n|_{\mathbb{D}(z_0,r)}$ is injective for any $n>N$. 
This completes the proof. 
\end{proof}

Let $\Omega$ be a domain (nonempty, connected and open set) in $\mathbb{C}$, 
and let $\{h_n:\Omega\rightarrow\mathbb{C}\}_{n\in\mathbb{N}}$ be a sequence of continuous functions. 
We say $\{h_n\}_{n\in\mathbb{N}}$ is \emph{normal} if any subsequence of $\{h_n\}_{n\in\mathbb{N}}$ has a subsequence that  converges to a limit function $h:\Omega\rightarrow\mathbb{C}$ uniformly on compact subsets of $\Omega$. 
The sequence $\{h_n\}_{n\in\mathbb{N}}$ is said to be \emph{normal at a point} $z_0\in\Omega$ if it is normal in some neighborhood of $z_0$. 
Clearly, the sequence $\{h_n\}_{n\in\mathbb{N}}$ is normal if and only if it is normal at every point in $\Omega$. 

\begin{proof}
[Proof of Proposition \ref{continuity-straightening-LM}]
Assume $(\mathbf{g}_n, K_n, Z_n, \Gamma_n)\rightarrow (\mathbf{g}_0, K_0, Z_0, \Gamma_0)$. 
Let $v\in\Tp$. 
By Lemma \ref{uniform convergence of extension}, each $\psi_{n,v}^{-1}|_{\overline{\mathbb{D}}}$ can be extended to a homeomorphism $\varphi_{n,v}:\mathbb{C}\rightarrow\mathbb{C}$  so that $\varphi_{n,v}\rightarrow\varphi_{0,v}$ uniformly on compact subsets of $\mathbb{C}$. 
Let $\phi_{n,v} = \varphi_{n,v}^{-1}$. 
By Lemma \ref{uniform convergence of inverse}, we have 
$\phi_{n,v}\rightarrow\phi_{0,v}$ uniformly on compact subsets of $\mathbb{C}$. 
By Corollary \ref{construction of strong conjugacy}, 
the collection $(\phi_{n,v})_{v\in\Tp}$ determines a strong  conjugacy $(\phi_{n,v})_{v\in|T|}$ from $(\mathbf{g}_n, K_n, Z_n, \Gamma_n)$ to $(\mathbf{f}_n,K(\mathbf{f}_n),Z'_n,\Gamma'_n)$. 
After passing to a subsequence, we can assume $\mathbf{f}_n\rightarrow\mathbf{f}$. 

\vspace{6pt}
{\bf Claim.}
{\it 
After passing to a subsequence, for each $v\in|T|$, the homeomorphism $\phi_{n,v}$ converges to a homeomorphism $\phi_{v}:\mathbb{C}\rightarrow\mathbb{C}$ uniformly on compact subsets of $\mathbb{C}$. }
\vspace{6pt}

For each $v\in\Tp$, we let $\phi_v = \phi_{0,v}$. 
Now let $v\in\Tn$.
By induction, we can suppose $\phi_{\sigma(v)}$ has been constructed. 
Let $z_0\in \mathbb{C}\setminus\crit(g_{0,v})$. 
By Lemma \ref{uniform-injectivity}, 
there is an $r_1>0$ and an $N_1>0$ so that $g_{n,v}$ is  injective on $\mathbb{D}(z_0,r_1)$ for any $n>N_1$. 
Let $w_0 = \phi_{\sigma(v)}\circ g_{0,v}(z_0)$. 
Since $\phi_{n,\sigma(v)}\circ g_{n,v}\rightarrow \phi_{\sigma(v)}\circ g_{0,v}$ uniformly on $\mathbb{D}(z_0,r_1)$, 
we can choose $r_2>0$ and $N_2>N_1$ so that 
$\mathbb{D}(w_0,r_2)\subset \phi_{n,\sigma(v)}\circ g_{n,v}(\mathbb{D}(z_0,r_1))$ for any $n>N_2$. 
Note that $\phi_{n,v} = f_{n,v}^{-1}\circ \phi_{n,\sigma(v)}\circ g_{n,v}$, where we take $f_{n,v}^{-1}(z) = \phi_{n,v}\circ (g_{n,v}|_{\mathbb{D}(z_0,r_1)})^{-1}\circ \phi_{n,\sigma(v)}^{-1}(z)$ for $z\in \mathbb{D}(w_0,r_2)$. 
Since $\{f_{n,v}^{-1}\}_{n>N_2}$ is normal at $w_0$, the sequence $\{\phi_{n,v}\}_{n>N_2}$ is normal at $z_0$. 
Since $z_0$ was arbitrary, the sequence $\{\phi_{n,v}\}_{n\geq1}$ is normal on $\mathbb{C}\setminus\crit(g_{0,v})$. 
After passing to a subsequence, we assume $\phi_{n,v}\rightarrow \phi_v$ uniformly on compact subsets of $\mathbb{C}\setminus\crit(g_{0,v})$. 
It follows that $\phi_{\sigma(v)}\circ g_{0,v}(z) = f_v\circ \phi_v(z)$ for $z\in \mathbb{C}\setminus\crit(g_{0,v})$. 
$$
\xymatrix{
  \mathbb{C}  \ar[d]_{\phi_{n,v}}  \ar[r]^{g_{n,v}}   
& \mathbb{C}  \ar[d]^{\phi_{n,\sigma(v)}}  
& \ar @{}[dr]|{\ \ \ \ \xrightarrow{n\rightarrow\infty}}
& 
& \mathbb{C}  \ar @{.>}[d]_{\phi_{v}}  \ar[r]^{g_{0,v}} 
& \mathbb{C}  \ar[d]^{\phi_{\sigma(v)}}
\\
  \mathbb{C}  \ar[r]_{f_{n,v}}
& \mathbb{C}  
& 
& 
& \mathbb{C}  \ar[r]_{f_{v}}
& \mathbb{C}  
}$$

Let $c\in\crit(g_{0,v})$, and let $w_c = \phi_{\sigma(v)}\circ g_{0,v}(c)$. 
For $r>0$, let $D(c,r)$ denote the connected component of $(\phi_{\sigma(v)}\circ g_{0,v})^{-1}(\mathbb{D}(w_c,r))$ containing $c$, and let $E(c,r)$ denote the connected component of $f_v^{-1}(\mathbb{D}(w_c,r))$ containing $\phi_v(D(c,r)\setminus\{c\})$. 
We remark that $D(c,r)$ and $E(c,r)$ are Jordan domains. 
Fix an $r>0$ small enough so that $\deg(\phi_{\sigma(v)}\circ g_{0,v}|_{D(c,r)}) = \deg(\phi_{\sigma(v)}\circ g_{0,v},c)$ and $f_v^{-1}(w_c)\cap E(c,r)$ is a singleton, which will be denoted by $\{z_c\}$. 
It follows that $\deg(f_v|_{E(c,r)})=\deg(f_v,z_c)$. 
As the lifting in the commutative diagram 
$$
\xymatrix{
 & &E(c,r)\setminus\{z_c\}  \ar[d]^{f_v} \\
D(c,r)\setminus\{c\} \ar[rr]_-{\phi_{\sigma(v)}\circ g_{0,v}} \ar[rru]^{\phi_v} & &\mathbb{D}(w_c,r)\setminus\{w_c\}, 
}$$
$\phi_v: D(c,r)\setminus\{c\}\rightarrow E(c,r)\setminus\{z_c\}$ is a covering map with $\lim_{z\rightarrow c}\phi_v(z) = z_c$. 
Let $\phi_v(c) = z_c$. 
Then $\phi_v:\mathbb{C}\rightarrow\mathbb{C}$ is a topological polynomial. 
Because $\phi_{\sigma(v)}\circ g_{0,v}(z) = f_v\circ \phi_v(z)$ for $z$ near $\infty$, we have $\deg(\phi_v)=1$, hence $\phi_v:\mathbb{C}\rightarrow\mathbb{C}$ is a homeomorphism. 

For each $c\in\crit(g_{0,v})$, it follows from $\lim_{r\rightarrow0}{\diam}(E(c,r))=0$ 
that $\phi_{n,v}\rightarrow \phi_v$ uniformly on compact subsets of $\mathbb{C}$. 
This shows the claim. 

\vspace{6pt}
By the claim above, we see that $(\mathbf{g}_0, K_0)$ is strongly conjugate to $(\mathbf{f},K(\mathbf{f}))$ by $(\phi_v)_{v\in|T|}$. 
For each $v\in\Tp$, since $\phi_v = \phi_{0,v}$, we have  $\phi_v(\zeta_{0,v})=0$. 
For each $v\in|T|$, 
since $K_{\mathbf{f}_n,v}\rightarrow K_{\mathbf{f},v}$ and $\phi_{n,v}\circ \gamma_{n,v}([0,1])\rightarrow \phi_{v}\circ \gamma_{0,v}([0,1])$ in Hausdorff topology, 
we have 
$$\theta(K_{\mathbf{f},v},\phi_v\circ\gamma_{0,v}) = \lim_{n\rightarrow\infty} \theta(K_{\mathbf{f}_n,v},\phi_{n,v} \circ \gamma_{n,v}) =0$$
by Proposition \ref{prop-convergence of external angles}.  
This shows $(\mathbf{g}_0,K_0,Z_0,\Gamma_0)$ is strongly conjugate to $(\mathbf{f},K(\mathbf{f}),$ $Z',\Gamma')$ by $(\phi_v)_{v\in|T|}$, where $(Z',\Gamma')$ is the standard marking of $(\mathbf{f},K(\mathbf{f}))$. 
By the uniqueness in Proposition \ref{straightening of marked LM}, we have $\mathbf{f}=\mathbf{f}_0$. 

The above discussion shows any subsequence of  $\{\mathbf{f}_n\}_{n\geq1}$ has a subsequence tending to $\mathbf{f}_0$, hence $\mathbf{f}_n\rightarrow\mathbf{f}_0$ as $n\rightarrow\infty$. 
This completes the proof. 
\end{proof}

\begin{remark}
\label{continuity-conjugacy}
In the proof of Proposition \ref{continuity-straightening-LM},  
since $(\phi_v)_{v\in\Tp}=(\phi_{0,v})_{v\in\Tp}$, we have $(\phi_v)_{v\in|T|}=(\phi_{0,v})_{v\in|T|}$ 
by Corollary \ref{construction of strong conjugacy}. 
For each $v\in|T|$, the above discussion shows any subsequence of $\{\phi_{n,v}\}_{n\in\mathbb{N}}$ 
has a subsequence converging to $\phi_{0,v}$ uniformly on compact subsets of $\mathbb{C}$, hence $\phi_{n,v}\rightarrow \phi_{0,v}$ uniformly on compact subsets of $\mathbb{C}$. 
\end{remark}

\section{Straightening map}
\label{section straightening map}
Let $\mathcal{H}\subset\mathcal{F}$ be a capture hyperbolic component with center $f_0$. 
We will associate each polynomial in $\Hbar$ with a marked lemniscate map over $T(f_0)$,  
and define a straightening map $\chi:\Hbar\rightarrow \modelf$ 
by the straightening theorem of marked lemniscate maps. 
Finally, we show $\chi$ is a homeomorphism. 
This implies Theorem \ref{top-boundary}: $\partial \mathcal H$ is homeomorphic to the sphere $S^{2\Cdim(\mathcal H)-1}$. 


\subsection{Definition of the straightening map}
\label{subsection def-straightening-map}
\begin{definition}
[External marking for $f_0$]
By an \emph{external marking} for $f_0$, we will mean a collection $(\theta_v)_{v\in|T(f_0)|}$ of 
angles in $\RZ$ so that $R_{f_0}(\theta_v)$ lands at a point in $\partial U_{f_0}(v)$ and 
$\theta_{\sigma(v)} \equiv d\cdot\theta_v \modZ$ for each $v\in|T(f_0)|$, where $d=\deg(f_0)$. 
\end{definition}

We will construct an external marking for $f_0$ as follows. 
Let $v\in\Tfp$ have period $p$ under $\sigma$, and let $x$ be a fixed point of $f_0^p$ in $\partial U_{f_0}(v)$. 
Choose $\theta_v\in\RZ$ so that $R_{f_0}(\theta_v)$ lands at $x$. 
Since $v$ and $x$ are fixed points of $f_0^p$, we have $d^p\cdot\theta_v \equiv \theta_v \modZ$.  
If $p\geq2$, for each $1\leq j<p$, we let $\theta_{\sigma^j(v)} = d^j\cdot\theta_v$. 
Then $R_{f_0}(\theta_{\sigma^j(v)})$ lands at $f_0^j(x) \in \partial U_{f_0}(\sigma^j(v))$. 
Now let $v\in\Tfn$ with $\sigma(v)\in\Tfp$. 
Since $f_0(U_{f_0}(v)) = U_{f_0}(\sigma(v))$, there is an angle  $\theta_v\in m_d^{-1}(\theta_{\sigma(v)})$ so that $R_{f_0}(\theta_v)$ lands at a point in $\partial U_{f_0}(v)$. 
Inductively, we get an external marking $(\theta_v)_{v\in|T(f_0)|}$ for $f_0$. 
We remark that there are only finitely many choices of external markings for $f_0$.

In the following of this section, we always fix an external marking $(\theta_v)_{v\in|T(f_0)|}$ for 
$f_0$, with $\theta_v = \theta^-_{f_0,v}(0)$ for each $v\in\Tfp$ (see \S\ref{subsection internal rays} for $\theta^-_{f_0,v}(0)$). 

Recall from Lemma \ref{invariant external angles} that we have defined $\theta_v^-$ and $\theta_v^+$ for each $v\in\Tfp$. 
We will associate each $v\in|T(f_0)|$ with $\theta_v^-$ and $\theta_v^+$ as follows. 
Let $\phi_v:\overline{U_{f_0}(v)}\rightarrow \overline{\mathbb{D}}$ be the continuous extension of the Riemann mapping so that $\phi_v(v)=0$ and $\phi_v^{-1}(1)$ is the landing point of $R_{f_0}(\theta_v)$. 
Let $t\in \RZ$ and let $x = \phi_v^{-1}(e^{2\pi i t})$. 
If there is only one external ray $R_{f_0}(\theta)$ landing at $x$, we define $\theta^-_v(t)=\theta^+_v(t)=\theta$. 
If there are at least two external rays landing at $x$, 
we let $\theta_v^-(t)$ and $\theta_v^+(t)$ be two external angles of $x$ such that $S_{f_0}(\theta_v^-(t),\theta_v^+(t))$ is a maximal sector attaching to $U_{f_0}(v)$. 
It follows from $\phi_{\sigma(v)}(f_0(z))=\phi_{v}(z)^{\delta(v)}$ that 
\begin{equation}
\label{d-delta}
d\cdot\theta_v^-(t) \equiv \theta_{\sigma(v)}^-(\delta(v)\cdot t), \quad
d\cdot\theta_v^+(t) \equiv \theta_{\sigma(v)}^+(\delta(v)\cdot t)\modZ.
\end{equation}
Let $f\in\Hbar$. 
Then $\lambda_\mathbb{R}(f_0)\subset\lambda_\mathbb{R}(f)$ by Corollary \ref{inclusion relation of real laminations}. 
For each $v\in|T(f_0)|$, we define 
\begin{equation}
\label{Kfv}
K_{f,v} = K(f) \setminus \bigsqcup_{
\begin{subarray}{c}
t\in\RZ,\\
\theta_v^-(t) \neq \theta_v^+(t)
\end{subarray}
} S_{f}(\theta_v^-(t),\theta_v^+(t)). 
\end{equation}
Since the sectors appearing in (\ref{Kfv}) are pairwise disjoint, the set $K_{f,v}$ is connected. 
By Lemma \ref{invariant external angles}, if $v\in\Tfp$, then $K_{f,v}=\overline{U_{f,v}}$ is a closed Jordan domain. 

\begin{example}
Consider $f_0$ and $f$ as in Figure \ref{figure Peanuts1}. 
Let $v_1 = c_1(f_0)$, $v_2 = c_2(f_0)$, $v_3 = c_3(f_0)$, $v_4 = f_0(c_2(f_0))$ and $v_5 = f_0(c_3(f_0))$. 
See Figure \ref{figure peanuts} for $(K_{f,v})_{v\in|T(f_0)|}$. 


\begin{figure}[ht]
\centering
\includegraphics{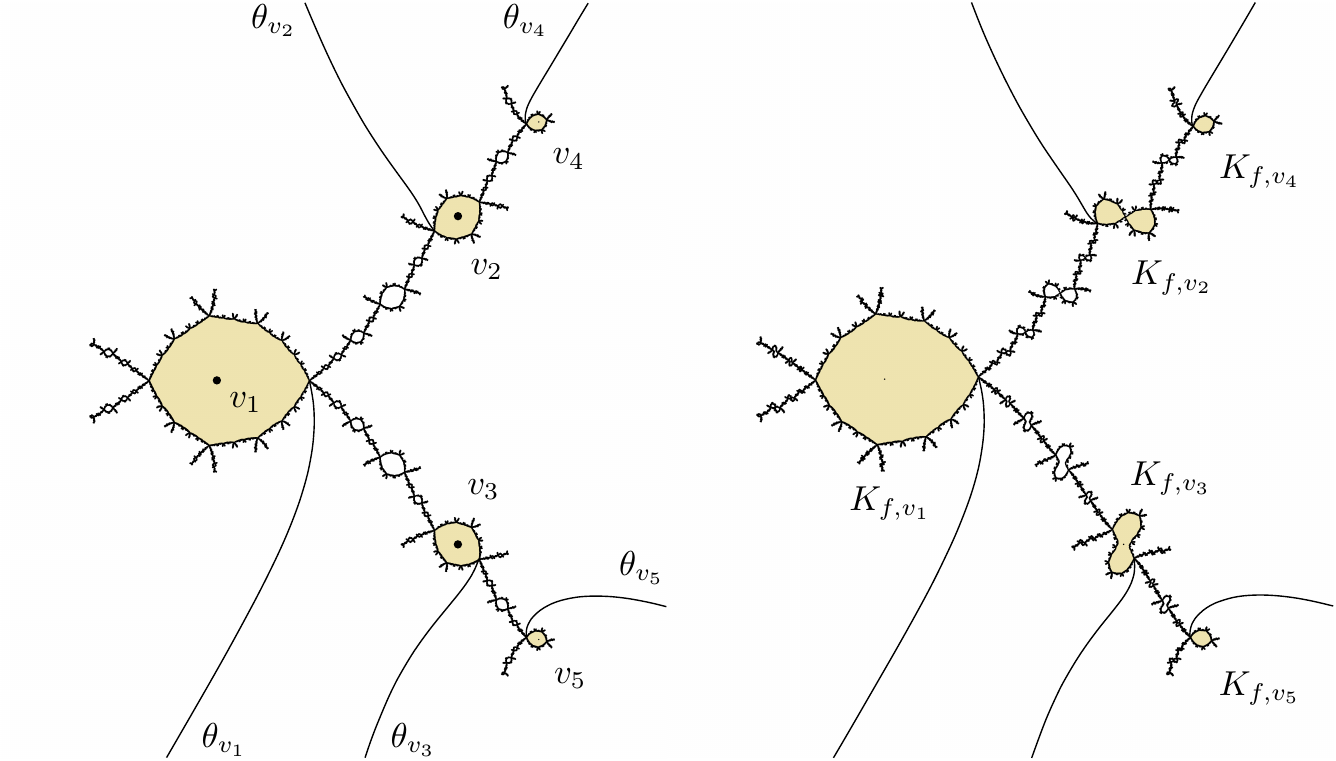}
\caption{The small filled Julia sets $(K_{f,v})_{v\in|T(f_0)|}$.} 
\label{figure peanuts}
\end{figure}
\end{example}

\begin{lemma}
\label{hfvt}
Let $f\in\Hbar$, $v\in\Tfn$, and let $t\in\RZ$ with $\theta_v^-(t)\neq\theta_v^+(t)$. 
Then there is a homeomorphism $$h_{f,v,t}: \overline{S_{f}(\theta_v^-(t),\theta_v^+(t))}\rightarrow \overline{S_{f}(d\cdot\theta_v^-(t),d\cdot\theta_v^+(t))}$$
such that $h_{f,v,t}(z)=f(z)$ for any $z\in\partial S_{f}(\theta_v^-(t),\theta_v^+(t))$. 
\end{lemma}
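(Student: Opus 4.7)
The plan is to construct $h_{f,v,t}$ topologically, by extending the restriction of $f$ on the boundary via the Schoenflies theorem. I would work in $\widehat{\mathbb{C}}$. By the local connectivity of $J(f)$ from Corollary \ref{semi-hyperbolicity}, each external ray $R_f(\theta)$ extends continuously to its landing point on $J(f)$; hence the union $R_f(\theta_v^-(t))\cup R_f(\theta_v^+(t))\cup\{x,\infty\}$, where $x$ is the common landing point of the two boundary rays, is a Jordan curve in $\widehat{\mathbb{C}}$. By the Jordan-Schoenflies theorem this curve bounds two closed topological disks in $\widehat{\mathbb{C}}$, one of which is $\overline{S_f(\theta_v^-(t),\theta_v^+(t))}\cup\{\infty\}$. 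The same argument applies to the image sector, so both closed sectors become closed topological disks after adding $\infty$.

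The key preliminary step would be to verify that $d\cdot\theta_v^-(t)\not\equiv d\cdot\theta_v^+(t)\modZ$, so that the target is a genuine sector. Via the identities (\ref{d-delta}), this is equivalent to showing that the limb $L_{U_{f_0}(\sigma(v)),\phi_{\sigma(v)}^{-1}(e^{2\pi i\delta(v)t})}$ of $f_0$ is nondegenerate. Since the source limb $L_{U_{f_0}(v),\phi_v^{-1}(e^{2\pi it})}$ is nontrivial by hypothesis, and $f_0$ sends it continuously into the image limb, and a nonconstant polynomial cannot collapse a nondegenerate continuum to a single point, the image limb must contain more than one point, as required.

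With these in place, $f$ restricts to a Jordan-curve homeomorphism from the source boundary to the target boundary: on each ray $R_f(\theta_v^\pm(t))$, the map $f$ is conjugated via the B\"ottcher coordinate of $U_{f,\infty}$ to $z\mapsto z^d$ and hence is a homeomorphism onto $R_f(d\cdot\theta_v^\pm(t))$; the endpoints $x$ and $\infty$ go to $f(x)$ and $\infty$; and the image rays are distinct by the preceding step. By the Schoenflies theorem this boundary homeomorphism extends to a homeomorphism of the two closed disks in $\widehat{\mathbb{C}}$, which I would normalize by $\infty\mapsto\infty$; its restriction to $\mathbb{C}$ then yields the desired $h_{f,v,t}$. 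The main subtlety will be the non-collapse verification above: since the source sector may have angular opening exceeding $1/d$, the map $f$ need not be injective on its interior, so the Schoenflies extension will generally differ from $f$ inside, and the argument must rely purely on the boundary behavior.
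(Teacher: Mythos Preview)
Your proposal is correct. The key verification that $d\cdot\theta_v^-(t)\not\equiv d\cdot\theta_v^+(t)\modZ$ via the non-collapse of the $f_0$-limb is sound, and once both closed sectors are seen as topological disks in $\widehat{\mathbb{C}}$ with $f$ a homeomorphism on their common boundary curve, the Schoenflies extension gives exactly the required $h_{f,v,t}$.

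The paper takes a different, more explicit route. Rather than invoking Schoenflies abstractly, it uniformizes each sector by a Riemann map $\phi_{f,v,t}:\overline{\mathbb{H}}\to\overline{S_{f,v,t}}$ (normalized by a choice of interior point and by sending $\infty$ to $\infty$), transfers the boundary map to a homeomorphism of $\mathbb{R}$, and extends it to $\overline{\mathbb{H}}$ by the explicit formula $H(z)=\widetilde\phi^{-1}\circ f\circ\phi(\operatorname{Re} z)+i\operatorname{Im} z$. The payoff is that this construction depends continuously on $f$: in the proof of Lemma~\ref{chi is continuous} the paper fixes the normalizing interior points $\zeta_{f,v,t},\widetilde\zeta_{f,v,t}$ and uses Carath\'eodory kernel convergence of the Riemann maps to obtain $h_{f_n,v,t}\to h_{f,v,t}$ locally uniformly, hence $g_{f_n,v}\to g_{f,v}$. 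Your Schoenflies extension, being non-canonical, would not immediately give this continuity in $f$; you would either have to revisit the construction at that stage or replace it by something explicit. For the lemma as stated, however, your argument is complete.
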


\begin{proof}
Let $\mathbb{H}=\{z\in\mathbb{C}{;~}\operatorname{Im}(z)>0\}$ denote the upper halfplane. 
Choose $\zeta_{f,v,t}\in S_{f,v,t} := S_{f}(\theta_v^-(t),\theta_v^+(t))$. 
By the Riemann mapping theorem, there is a unique homeomorphism $\phi_{f,v,t}:\overline{\mathbb{H}}\rightarrow \overline{S_{f,v,t}}$ 
that is holomorphic on $\mathbb{H}$, normalized by $\phi_{f,v,t}(i)=\zeta_{f,v,t}$ and $\lim_{z\rightarrow\infty} \phi_{f,v,t}(z) = \infty$. 
Choose $\widetilde\zeta_{f,v,t}\in \widetilde S_{f,v,t} := S_{f}(d\cdot\theta_v^-(t),d\cdot\theta_v^+(t))$ and define $\widetilde\phi_{f,v,t}: \overline{\mathbb{H}}\rightarrow\overline{\widetilde S_{f,v,t}}$ in the same way. 
Define $H_{f,v,t}:\overline{\mathbb{H}}\rightarrow \overline{\mathbb{H}}$ by 
$H_{f,v,t}(z) = \widetilde\phi^{-1}_{f,v,t}\circ f\circ \phi_{f,v,t}(\operatorname{Re}(z))+i\operatorname{Im}(z)$. 
Letting $h_{f,v,t}=\widetilde\phi_{f,v,t}\circ H_{f,v,t}\circ \phi^{-1}_{f,v,t}$ completes the proof. 
\end{proof}


Let $f\in\Hbar$.  
For each $v\in\Tfn$, let $$T_v = \{t\in\RZ{;~}
\text{$\theta_v^-(t)\neq\theta_v^+(t)$ and 
$\crit(f_0)\cap S_{f_0}(\theta_v^-(t),\theta_v^+(t))\neq\emptyset$}\},$$
and define $g_{f,v}:\mathbb{C}\rightarrow\mathbb{C}$ by 
\begin{equation*}
g_{f,v}(z) = \begin{cases}
h_{f,v,t}(z),& \text{$z\in S_{f}(\theta_v^-(t),\theta_v^+(t))$ for some $t\in T_v$}, \\
f(z),& z\in \mathbb{C}\setminus \bigsqcup_{t\in T_v} S_{f}(\theta_v^-(t),\theta_v^+(t)),
\end{cases}
\end{equation*}
where $h_{f,v,t}$ is given by Lemma \ref{hfvt}. 
By (\ref{d-delta}), the map $g_{f,v}$ is a topological polynomial of degree $\delta(v)$ with $K_{f,v} = g_{f,v}^{-1}(K_{f,\sigma(v)})$. 
Clearly $g_{f,v}|_{K_{f,v}^\circ} = f|_{K_{f,v}^\circ}$ is holomorphic. 
Recall from \S\ref{subsection internal rays} that $\zeta_v(f)$ is the superattracting periodic point in $U_{f,v}$ for each $v\in\Tfp$. 
Let 
\begin{equation}
\label{(gf,Kf,Zf,Gammaf)}
\begin{split}
(\mathbf{g}_f,K_f,Z_f,\Gamma_f)=
(&(g_{f,v})_{v\in\Tfn},(K_{f,v})_{v\in|T(f_0)|}, \\
&(\zeta_v(f))_{v\in\Tfp}, (R_f(\theta_v))_{v\in|T(f_0)|} ).
\end{split}
\end{equation}
Then it is a marked lemniscate map over $T(f_0)$. 
By Proposition \ref{straightening of marked LM}, 
there exists a unique $\mathbf{P}_f\in
\mathcal{C}\big(\mathcal{P}_0^{T(f_0)}\big)$ 
such that $(\mathbf{g}_f,K_f,Z_f,\Gamma_f)$ is conjugate to $(\mathbf{P}_f,$ $K(\mathbf{P}_f),$ $Z',\Gamma')$, where $(Z',\Gamma')$ is the standard marking of $(\mathbf{P}_f,K(\mathbf{P}_f))$. 
Furthermore, we fix a conjugacy $(\psi_{f,v})_{v\in|T(f_0)|}$ from $(\mathbf{g}_f,K_f,Z_f,\Gamma_f)$ to $(\mathbf{P}_f,K(\mathbf{P}_f),Z',\Gamma')$. 

Let $$I_0 = \{(v,k){;~} v\in|T(f_0)|, 1\leq k<\delta(v)\}.$$
For $f \in\widehat{\mathcal{P}}^d$, we can use the representative 
$$\mathbf{c}(f) = (c_{v,k}(f))_{(v,k)\in I_0}$$ so that $c_{v,k}(f_0) = v$ for every $(v,k)\in I_0$. 
Define the \emph{straightening map} 
\begin{equation}
\label{straightening map}
\chi:\begin{cases}\overline{\mathcal{H}}\rightarrow \modelf,\\
f \mapsto 
(\mathbf{P}_f, (\psi_{f,v}(c_{v,k}(f)))_{(v,k)\in I_0}).  
\end{cases}
\end{equation}
By Proposition \ref{straightening of marked LM}, 
the map $\mathbf{P}_f$ and the restriction $(\psi_{f,v}|_{K_{f,v}})_{v\in |T(f_0)|}$ 
are determined by $((g_{f,v}|_{K_{f,v}})_{v\in\Tfn},K_f,Z_f,\Gamma_f)$, hence by $f$ and $(\theta_v)_{v\in|T(f_0)|}$.  
Let $v\in\Tfn$ with $\delta(v)\geq2$. 
Similar to Proposition \ref{continuity of Ufv}, 
the compact set $K_{f,v}$ is continuous with respect to $f\in\Hbar$ in Hausdorff topology, 
hence $c_{v,1}(f), \dots, c_{v,\delta(v)-1}(f)\in K_{f,v}$. 
Moreover, the ordered list $(c_{v,k}(f))_{1\leq k<\delta(v)}$ is a critical marking of $g_{f,v}$. 
\footnote{
For $f\in\Hbar$, $v\in\Tfn$ and $x\in K_{f,v}$, 
since \begin{align*}
\deg(g_{f,v},x) &= 1+\#\{(v',k')\in I_0{;~}\text{$c_{v',k'}(f)=x$ and $v'=v$}\},\\
\deg(f,x)       &= 1+\#\{(v',k')\in I_0{;~}c_{v',k'}(f)=x\}, 
\end{align*}  
it can happen that $\deg(g_{f,v},x)<\deg(f,x)$.
} 
It follows that $(\psi_{f,v}(c_{v,k}(f)))_{(v,k)\in I_0}$ is a critical marking of $\mathbf{P}_f$. 
Therefore $\chi$ is well-defined.

\subsection{Proof of Theorem \ref{top-boundary}}
\begin{lemma}
\label{corresponding of external rays}
Let $f\in\Hbar$, $v\in|T(f_0)|$ and $t\in\RZ$. 
Then $\psi_{f,v}(R_f(\theta_v^-(t)))\simeq_{K_{\mathbf{P}_f,v}}R_{\mathbf{P}_f,v}(t)$. 
In particular, the external ray $R_{\mathbf{P}_f,v}(t)$ lands at $\psi_{f,v}(x)$, where $x$ is the landing point of $R_f(\theta_v^-(t))$. 
\end{lemma}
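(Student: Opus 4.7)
The plan is to define $\phi_{f,v}^*:\RZ\to\RZ$ by
\[
\phi_{f,v}^*(t) \;=\; \theta\bigl(K_{\mathbf{P}_f,v},\,\psi_{f,v}(R_f(\theta_v^-(t)))\bigr),\qquad v\in|T(f_0)|,\ t\in\RZ,
\]
and to prove $\phi_{f,v}^*=\id$ by an induction closely paralleling the proof of Lemma \ref{uniqueness-f-CP0T}. Once this is established, the desired homotopy is exactly the statement $\phi_{f,v}^*(t)=t$, and the assertion about landing points follows from the continuity of $\psi_{f,v}$.

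The initial condition $\phi_{f,v}^*(0)=0$ comes directly from the construction of $\chi$ in \S\ref{subsection def-straightening-map}: the external marking $(\theta_v)_{v\in|T(f_0)|}$ was chosen with $\theta_v=\theta_v^-(0)$, and by definition the conjugacy $(\psi_{f,v})_{v\in|T(f_0)|}$ from $(\mathbf{g}_f,K_f,Z_f,\Gamma_f)$ to $(\mathbf{P}_f,K(\mathbf{P}_f),Z',\Gamma')$ satisfies $\psi_{f,v}\circ \gamma_{f,v}\simeq_{K_{\mathbf{P}_f,v}} R_{\mathbf{P}_f,v}(0)$ with $\gamma_{f,v}=R_f(\theta_v)$.

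The crucial step is the functional equation
\[
\delta(v)\cdot\phi_{f,v}^*(t)\;\equiv\;\phi_{f,\sigma(v)}^*(\delta(v)\cdot t)\pmod{\mathbb{Z}}.
\]
On one hand, $R_f(\theta_v^-(t))$ either avoids every removed sector or lies on $\partial S_f(\theta_v^-(t),\theta_v^+(t))$, so $g_{f,v}$ coincides with $f$ on the ray, and by (\ref{d-delta}) we get $g_{f,v}(R_f(\theta_v^-(t)))=R_f(\theta_{\sigma(v)}^-(\delta(v)\cdot t))$. On the other hand, $\psi_{f,\sigma(v)}\circ g_{f,v}$ and $P_{f,v}\circ\psi_{f,v}$ are two topological polynomials of degree $\delta(v)$ agreeing on $K_{f,v}$, both pulling back the connected lemniscate $K_{\mathbf{P}_f,\sigma(v)}$ to $K_{f,v}$; hence Lemma \ref{the same extension on the ideal boundary} gives
\[
\psi_{f,\sigma(v)}\circ g_{f,v}\circ\gamma\;\simeq_{K_{\mathbf{P}_f,\sigma(v)}}\;P_{f,v}\circ\psi_{f,v}\circ\gamma
\]
for any path $\gamma$ to $K_{f,v}$. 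Specialising to $\gamma=R_f(\theta_v^-(t))$ and using the model identity $P_{f,v}(R_{\mathbf{P}_f,v}(s))=R_{\mathbf{P}_f,\sigma(v)}(\delta(v)s)$ yields the equation. The needed monotonicity of $\phi_{f,v}^*$ follows from the fact that as $t$ runs around $\RZ$, the landing point of $R_f(\theta_v^-(t))$ traces $\partial K_{f,v}$ monotonically, and $\psi_{f,v}$ is an orientation-preserving homeomorphism carrying $\partial K_{f,v}$ onto $\partial K_{\mathbf{P}_f,v}$.

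With these ingredients the induction mirrors Lemma \ref{uniqueness-f-CP0T}. For $v\in\Tfp$ of period $p$ under $\sigma$, iterating the functional equation $p$ times gives $m_{d_v^p}\circ\phi_{f,v}^*=m_{d_v^p}$ on $\RZ$, where $d_v^p=\prod_{j=0}^{p-1}\delta(\sigma^j(v))\geq 2$; combined with $\phi_{f,v}^*(0)=0$ and monotonicity, this forces $\phi_{f,v}^*=\id$. For $v\in\Tfn$, reverse induction on $r_v$ together with the functional equation and the already-known identity $\phi_{f,\sigma(v)}^*=\id$ yields $m_{\delta(v)}\circ\phi_{f,v}^*=m_{\delta(v)}$, and again monotonicity plus $\phi_{f,v}^*(0)=0$ gives $\phi_{f,v}^*=\id$. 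The main obstacle will be verifying the functional equation carefully, particularly checking that $g_{f,v}=f$ along $R_f(\theta_v^-(t))$ in the boundary case $\theta_v^-(t)\neq\theta_v^+(t)$, and confirming that the hypotheses of Lemma \ref{the same extension on the ideal boundary} are met (connected lemniscates $K_{f,v}$ and $K_{\mathbf{P}_f,\sigma(v)}$, equal-degree topological polynomials, and the common preimage structure).
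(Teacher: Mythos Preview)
Your proposal follows essentially the same route as the paper: define the map $\phi_{f,v}^*$ (the paper calls it $\psi_{f,v}^*$), establish the functional equation $m_{\delta(v)}\circ\phi_{f,v}^*=\phi_{f,\sigma(v)}^*\circ m_{\delta(v)}$ via the chain of homotopies using Lemma~\ref{the same extension on the ideal boundary}, and then conclude $\phi_{f,v}^*=\id$ from monotonicity and $\phi_{f,v}^*(0)=0$. The inductive step for $v\in\Tfn$ is correct and matches the paper.

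There is, however, a gap in your treatment of the base case $v\in\Tfp$. You propose to iterate the functional equation $p$ times around the cycle. But your derivation of the functional equation uses $g_{f,v}$, which is defined only for $v\in\Tfn$; and if you try to substitute $f$ for $g_{f,v}$ in the periodic case, then $\psi_{f,\sigma(v)}\circ f$ has degree $d$, not $\delta(v)$, so the equal-degree hypothesis of Lemma~\ref{the same extension on the ideal boundary} fails. The paper bypasses this entirely: for $v\in\Tfp$ one has $K_{f,v}=\overline{U_{f,v}}$, $K_{\mathbf{P}_f,v}=\overline{\mathbb{D}}$, and $\psi_{f,v}|_{K_{f,v}}=B_{f,v}$. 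By Lemma~\ref{invariant external angles}, $R_f(\theta_v^-(t))$ lands at $B_{f,v}^{-1}(e^{2\pi i t})$, so $\psi_{f,v}(R_f(\theta_v^-(t)))$ lands at $e^{2\pi i t}$; since $\overline{\mathbb{D}}$ has a unique external ray landing at each boundary point, $\phi_{f,v}^*(t)=t$ immediately. Replace your iterative argument for the periodic case with this direct one and the proof is complete.
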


\begin{proof}
Define $\psi_{f,v}^*:\RZ\rightarrow\RZ$ by $\psi_{f,v}^*(t)=\theta(K_{\mathbf{P}_f,v}, \psi_{f,v}(R_f(\theta_v^-(t))))$. 
Similar to the claim in the proof of Lemma \ref{uniqueness-f-CP0T},  
we will check that $\psi_{f,v}^*=\id$ for each $v\in|T(f_0)|$. 

By Lemma \ref{invariant external angles}, we have $\psi_{f,v}^*=\id$ if $v\in\Tfp$. 
Now assume $v\in\Tfn$. 
By induction, we can assume $\psi_{f,\sigma(v)}^*=\id$. 
For any $t\in\RZ$, we have 
\begin{align*}
&\ \ \ \ R_{\mathbf{P}_f,{\sigma(v)}}(\delta(v)\cdot\psi_{f,v}^*(t))\\
&=   P_{f,v}(R_{\mathbf{P}_f,v}(\psi_{f,v}^*(t)))
\quad(\text{by (\ref{Bfv})})\\
&\simeq_{K_{\mathbf{P}_f,\sigma(v)}}   
     P_{f,v}\circ\psi_{f,v}(R_f(\theta_v^-(t)))
\quad(\text{by $R_{\mathbf{P}_f,v}(\psi_{f,v}^*(t))\simeq_{K_{\mathbf{P}_f,v}}   
    \psi_{f,v}(R_f(\theta_v^-(t)))$})\\
&\simeq_{K_{\mathbf{P}_f,\sigma(v)}}   
   \psi_{f,\sigma(v)}\circ g_{f,v}(R_f(\theta_v^-(t)))
\quad(\text{by Lemma \ref{the same extension on the ideal boundary}})\\
&=   \psi_{f,\sigma(v)}\circ f(R_f(\theta_v^-(t)))
 =   \psi_{f,\sigma(v)}(R_f(d\cdot\theta_v^-(t)))\\
&=   \psi_{f,\sigma(v)}(R_f(\theta_{\sigma(v)}^-(\delta(v)\cdot t)))
\quad(\text{by (\ref{d-delta})})\\
&\simeq_{K_{\mathbf{P}_f,\sigma(v)}}  
   R_{\mathbf{P}_f,{\sigma(v)}}(\delta(v)\cdot t)
\quad(\text{by the assumption $\psi_{f,\sigma(v)}^*=\id$}). 
\end{align*}
Thus $m_{\delta(v)}\circ \psi_{f,v}^* = m_{\delta(v)}$. 
Note that for any $t_1,t_2,t_3\in\RZ$ in positive cyclic order, the images $\psi_{f,v}^*(t_1),\psi_{f,v}^*(t_2),\psi_{f,v}^*(t_3)$ are in positive cyclic order. 
It follows from $\psi_{f,v}^*(0)=0$ that  $\psi_{f,v}^*=\id$. 
This completes the proof. 
\end{proof}

Let $|T^\infty(f_0)|$ denote the grand orbit of $\crit(f_0)$ under $f_0$. 
We extend $\sigma$ and $\delta$ to $|T^\infty(f_0)|$ by $\sigma(v)=f_0(v)$ and $\delta(v)=\deg(f_0,v)$. 
Then we have an ``infinite mapping scheme" $T^\infty(f_0)=(|T^\infty(f_0)|,\sigma,\delta)$. 
All notations and properties for $v\in|T(f_0)|$ are still valid for $v\in|T^\infty(f_0)|$. 

\begin{lemma}
\label{chi is injective}
$\chi$ is injective. 
\end{lemma}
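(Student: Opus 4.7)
Given $\chi(f_1) = \chi(f_2)$, write $\mathbf{P} = \mathbf{P}_{f_1} = \mathbf{P}_{f_2}$. The plan is to build a homeomorphism $\varphi : \mathbb{C}\to\mathbb{C}$ with $\varphi\circ f_1 = f_2\circ \varphi$, conformal on $\mathbb{C}\setminus J(f_1)$, and then use the conformal removability of $J(f_1)$ (Corollary \ref{semi-hyperbolicity}) to upgrade $\varphi$ to a biholomorphism of $\mathbb{C}$; the B\"ottcher normalization on the basin of infinity, together with the centeredness of $f_1, f_2$, will force $\varphi = \id$, hence $f_1 = f_2$. The matching of critical markings will then follow from the uniqueness of $\psi_{f, v}|_{K_{f, v}}$ in Proposition \ref{straightening of marked LM}.

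The first step is to match the real laminations: $\lambda_\mathbb{R}(f_1) = \lambda_\mathbb{R}(f_2)$. By Corollary \ref{inclusion relation of real laminations} both contain $\lambda_\mathbb{R}(f_0)$, and the additional rational landings for $f_j$ are precisely those recorded inside each small filled Julia set $K_{f_j, v}$. Lemma \ref{corresponding of external rays} identifies these landings with the landings of external rays of the factor $\mathbf{P}_{f_j, v}$. Since $\mathbf{P}_{f_1} = \mathbf{P}_{f_2}$, the extra identifications coincide on each $K_{f_j, v}$ for $v \in |T(f_0)|$, and pulling back through iterates of $f_j$ extends this to all rational external rays (every such ray lands in the grand orbit of some $K_{f_j, v}$). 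Thus $\lambda_\mathbb{Q}(f_1) = \lambda_\mathbb{Q}(f_2)$. Proposition \ref{behavior of critical orbits}(3) combined with Lemma \ref{connection between rational lamination and real lamination} then gives $\lambda_\mathbb{R}(f_j) = \langle \lambda_\mathbb{Q}(f_j) \rangle_\mathbb{R}$, so $\lambda_\mathbb{R}(f_1) = \lambda_\mathbb{R}(f_2)$.

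Now Lemma \ref{continuous extension on the closure of infinity basin} yields a continuous extension $\varphi_\infty : \overline{U_{f_1, \infty}} \to \overline{U_{f_2, \infty}}$ of $B_{f_2}^{-1} \circ B_{f_1}$ conjugating $f_1$ to $f_2$. On each immediate periodic basin $U_{f_1, v}$ with $v \in \Tfp$, set $\varphi := B_{f_2, v}^{-1} \circ B_{f_1, v}$, which is conformal (these basins are Jordan by Lemma \ref{properties semi-hyper}) and, by Lemma \ref{invariant external angles}, agrees with $\varphi_\infty$ on $\partial U_{f_1, v}$. On every other bounded Fatou component of $f_1$, extend $\varphi$ by lifting through appropriate inverse branches of $f_1^n$ and $f_2^n$ from pieces already defined; the matching of real laminations ensures that all pieces glue continuously across $J(f_1)$ into a homeomorphism $\varphi : \mathbb{C}\to\mathbb{C}$, conformal on $\mathbb{C}\setminus J(f_1)$, with $\varphi\circ f_1 = f_2\circ \varphi$. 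Corollary \ref{semi-hyperbolicity} then promotes $\varphi$ to a conformal self-map of $\mathbb{C}$, hence an affine map; since $B_{f_j}(z) = z + O(1/z)$ at $\infty$ for monic centered $f_j$, one has $\varphi(z) = z + O(1/z)$ near $\infty$, forcing $\varphi = \id$. So $f_1 = f_2$ as polynomials, and then Proposition \ref{straightening of marked LM} gives $\psi_{f_1, v}|_{K_{f_1, v}} = \psi_{f_2, v}|_{K_{f_2, v}}$, so combined with the hypothesis $\psi_{f_1, v}(c_{v, k}(f_1)) = \psi_{f_2, v}(c_{v, k}(f_2))$ and $c_{v, k}(f_j) \in K_{f_j, v}$, this yields $c_{v, k}(f_1) = c_{v, k}(f_2)$ for every $(v, k) \in I_0$. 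I expect the main obstacle to lie in the lamination-matching step: carefully verifying that every rational landing relation of $f_j$ is determined by the restricted data $\mathbf{P}_{f_j}$, which requires tracing rational angles through iterates of $f_j$ and using Proposition \ref{behavior of critical orbits}(3) to guarantee that critical points on bounded Fatou component boundaries are preperiodic.
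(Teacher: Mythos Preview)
Your overall strategy matches the paper's: establish $\lambda_\mathbb{R}(f_1)=\lambda_\mathbb{R}(f_2)$, build a conjugacy conformal off $J(f_1)$, and invoke removability to conclude it is the identity. However, your lamination-matching step contains a genuine gap.

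You propose to show $\lambda_\mathbb{Q}(f_1)=\lambda_\mathbb{Q}(f_2)$ and then apply Lemma~\ref{connection between rational lamination and real lamination}, citing Proposition~\ref{behavior of critical orbits}(3) for the hypothesis. But Proposition~\ref{behavior of critical orbits}(3) only asserts that critical points on $\partial A(f)$ are preperiodic, whereas Lemma~\ref{connection between rational lamination and real lamination} requires this for critical points on the boundary of \emph{every} bounded Fatou component. For a generic $f\in\partial\mathcal{H}$ with $c_{v,k}(f)\in J(f)$ and $v\in\Tfn$, the critical point lies on $\partial K_{f,v}$ (the boundary of a strictly preperiodic Fatou component) and is \emph{not} preperiodic: via $\chi$ and $\widetilde W$, the image $f^{r_v}(c_{v,k}(f))\in\partial U_{f,\sigma^{r_v}(v)}$ corresponds to $w_{v,k}(\chi(f))\in\partial\mathbb{D}$, which can have irrational argument. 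In that case $\langle\lambda_\mathbb{Q}(f)\rangle_\mathbb{R}\subsetneq\lambda_\mathbb{R}(f)$, and the identification of the (irrational) external angles of $c_{v,k}(f)$ is exactly what is missing from the rational lamination.

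The paper sidesteps this by arguing directly with the real lamination. For each minimal sector $S_{f_1}(\theta,\theta')$, the allowable-arc structure of $K(f_0)$ (Lemma~\ref{allowable arcs in K(f0)}) shows that either $(\theta,\theta')\in\lambda_\mathbb{R}(f_0)$, or both $R_{f_0}(\theta)$ and $R_{f_0}(\theta')$ land on $\partial U_{f_0}(v)$ for a single $v\in|T^\infty(f_0)|$; then Lemma~\ref{corresponding of external rays} transports the relation through $\mathbf{P}_{f_1}=\mathbf{P}_{f_2}$. For the conjugacy itself, the paper also differs from your sketch: rather than using B\"ottcher coordinates on the periodic basins and lifting, it sets $\Phi=\psi_{f_2,v}^{-1}\circ\psi_{f_1,v}$ on every $K_{f_1,v}$ for $v\in|T^\infty(f_0)|$, which handles all bounded Fatou components at once and makes the well-definedness check on overlaps cleaner.
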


\begin{proof}
Given $f_1,f_2\in\overline{\mathcal{H}}$ with $\chi(f_1)=\chi(f_2)$. 
We need to show $f_1 = f_2$. 
By Corollary \ref{inclusion relation of real laminations}, we have $\lambda_\mathbb{R}(f_0)\subset\lambda_\mathbb{R}(f_1)$ and $\lambda_\mathbb{R}(f_0)\subset\lambda_\mathbb{R}(f_2)$. 

\vspace{6pt}
{\bf Claim.}
{\it $\lambda_\mathbb{R}(f_1)=\lambda_\mathbb{R}(f_2)$.}
\vspace{6pt}

Let $S_{f_1}(\theta,\theta')$ be a minimal sector with root point $x_1$. 
To prove $(\theta,\theta')\in\lambda_{\mathbb{R}}(f_2)$, let $x_0,x'_0,x_2,x'_2$ be the landing points of $R_{f_0}(\theta), R_{f_0}(\theta'), R_{f_2}(\theta),R_{f_2}(\theta')$ respectively. 
If $x_0=x'_0$, then $(\theta,\theta')\in\lambda_{\mathbb{R}}(f_0)\subset\lambda_{\mathbb{R}}(f_2)$. 
In the following, we assume $x_0\neq x'_0$. 
Consider the allowable arc $[x_0,x'_0]\subset K(f_0)$. 
Assume $(x_0,x'_0)\cap J(f_0)\neq \emptyset$; we will find a contradiction. 
Choose $y\in(x_0,x'_0)\cap J(f_0)$. 
By Lemma \ref{allowable arcs in K(f0)}(\ref{allowable arcs in K(f0)-2}), 
there exists a pair of external angles $\theta_1,\theta_2$ of $y$, 
corresponding to the two homotopy classes of paths to $y$ relative to $[x_0,x'_0]$, 
such that $\theta,\theta_1,\theta',\theta_2$ are in positive cyclic order. 
Since $(\theta_1,\theta_2)\in\lambda_{\mathbb{R}}(f_0)\subset\lambda_{\mathbb{R}}(f_1)$ and $(\theta,\theta')\in\lambda_{\mathbb{R}}(f_1)$, the external rays $R_{f_1}(\theta)$, $R_{f_1}(\theta_1)$, $R_{f_1}(\theta')$ and $R_{f_1}(\theta_2)$ land at a common point. 
Then $R_{f_1}(\theta_1)$ lands at $x_1$, which contradicts the minimality of $S_{f_1}(\theta,\theta')$. 
Therefore $(x_0,x'_0)\cap J(f_0)=\emptyset$. 

Now there exists a unique $v\in|T^\infty(f_0)|$ so that $x_0,x'_0\in\partial U_{f_0}(v)$. 
There is a unique $t\in\RZ$ so that $R_{f_0}(\theta_v^-(t))$ lands at $x_0$. 
Since $\lambda_\mathbb{R}(f_0)\subset\lambda_\mathbb{R}(f_1)$,  the external rays $R_{f_1}(\theta_v^-(t))$ and $R_{f_1}(\theta)$ land at the same point $x_1$. 
By Lemma \ref{corresponding of external rays}, 
the external ray $R_{\mathbf{P}_{f_1},v}(t)$ lands at $\psi_{f_1,v}(x_1)$. 
Similarly, the external ray $R_{\mathbf{P}_{f_2},v}(t)$ lands at $\psi_{f_2,v}(x_2)$.  
Since $\mathbf{P}_{f_1}=\mathbf{P}_{f_2}$, we have $\psi_{f_1,v}(x_1)=\psi_{f_2,v}(x_2)$. 
Similarly, we have $\psi_{f_1,v}(x_1)=\psi_{f_2,v}(x'_2)$. 
It follows that $x_2=x'_2$, hence $(\theta,\theta')\in\lambda_{\mathbb{R}}(f_2)$. 

Let $x\in J(f_1)$, and let $\theta_1,\dots,\theta_n$ be all external angles of $x$ with respect to $f_1$ in positive cyclic order. 
Suppose $n\geq2$. 
For any $1\leq k\leq n-1$, the above discussion shows $(\theta_k,\theta_{k+1})\in \lambda_{\mathbb{R}}(f_2)$. 
Therefore $\lambda_\mathbb{R}(f_1)\subset\lambda_\mathbb{R}(f_2)$. 
The same reasoning gives $\lambda_\mathbb{R}(f_2)\subset\lambda_\mathbb{R}(f_1)$. 
This shows the claim. 
\vspace{6pt}

Because $\lambda_\mathbb{R}(f_1)=\lambda_\mathbb{R}(f_2)$, 
the map $B_{f_2}^{-1}\circ B_{f_1}:U_{f_1,\infty}\rightarrow U_{f_2,\infty}$ extends to 
a homeomorphism $\varphi:\overline{U_{f_1,\infty}}\rightarrow\overline{U_{f_2,\infty}}$ by Lemma \ref{continuous extension on the closure of infinity basin}. 
Furthermore, we extend $\varphi$ to $\Phi:\mathbb{C}\rightarrow\mathbb{C}$ by 
$$\Phi(z) = \begin{cases}
\varphi(z), &z\in \overline{U_{f_1,\infty}}, \\
\psi_{f_2,v}^{-1}\circ\psi_{f_1,v}(z), &\text{$z\in K_{f_1,v}$ for some $v\in|T^\infty(f_0)|$.}
\end{cases}$$
Let $v\in|T^\infty(f_0)|$, and let $x\in \partial K_{f_1,v}$. 
Choose $t\in\RZ$ so that $R_{f_1}(\theta_v^-(t))$ lands at $x$. 
Then $R_{f_2}(\theta_v^-(t))$ lands at $\varphi(x)\in \partial K_{f_2,v}$. 
By Lemma \ref{corresponding of external rays}, 
the external ray $R_{\mathbf{P}_{f_1},v}(t)$ lands at $\psi_{f_1,v}(x)$ and the external ray $R_{\mathbf{P}_{f_2},v}(t)$ lands at $\psi_{f_2,v}(\varphi(x))$. 
Since $\mathbf{P}_{f_1}=\mathbf{P}_{f_2}$, we have $\psi_{f_1,v}(x) = \psi_{f_2,v}(\varphi(x))$. 
It follows that $\varphi(x) = \psi_{f_2,v}^{-1}\circ\psi_{f_1,v}(x)$. 
Now let $x\in K_{f_1,u}\cap K_{f_1,v}$ for some $u,v\in|T^\infty(f_0)|$ with $u\neq v$. 
Then $\psi_{f_2,u}^{-1}\circ\psi_{f_1,u}(x) = \varphi(x) = \psi_{f_2,v}^{-1}\circ\psi_{f_1,v}(x)$. 
Therefore $\Phi$ is well-defined. 

Clearly $\Phi$ is bijective and $\Phi\circ f_1 = f_2\circ\Phi$. 
For any Fatou component $U$ of $f_1$, the restriction  $\Phi:\overline{U}\rightarrow\Phi(\overline{U})$ is a homeomorphism 
and the restriction  $\Phi:U\rightarrow\Phi(U)$ is conformal. 
By Corollary \ref{semi-hyperbolicity}, the Julia sets $J(f_1)$ and $J(f_2)$ are locally connected. 
Then $\Phi$ is a homeomorphism by Lemma \ref{local connectivity}(\ref{local connectivity-2}). 
By Corollary \ref{semi-hyperbolicity} again, the Julia set $J(f_1)$ is conformally removable, 
hence $\Phi$ is conformal. 
Thus $\Phi$ has the form $\Phi(z)=az+b$ for some  $a,b\in\mathbb{C}$ with $a\neq0$. 
Since $B_{f_1}$ and $B_{f_2}$ are tangent to the identity at $\infty$, we have $a=1$. 
Because $\Phi\circ f_1 = f_2\circ\Phi$ and  $f_1,f_2\in\widehat{\mathcal{P}}^d$ (monic and centered), we have $b=0$. 
Therefore $\Phi=\id$. 
It follows that $(\mathbf{c}(f_1),f_1(0)) =  (\mathbf{c}(f_2),f_2(0))$, i.e. $f_1=f_2$. 
The proof is completed. 
\end{proof}

\begin{lemma}
\label{chi is continuous}
$\chi$ is continuous. 
\end{lemma}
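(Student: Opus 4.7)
The plan is to apply Proposition \ref{continuity-straightening-LM} together with the continuity of the critical markings $f\mapsto c_{v,k}(f)$. Given $f_n\to f_0$ in $\overline{\mathcal{H}}$, it suffices to show (i) the marked lemniscate maps $(\mathbf{g}_{f_n},K_{f_n},Z_{f_n},\Gamma_{f_n})$ converge to $(\mathbf{g}_{f_0},K_{f_0},Z_{f_0},\Gamma_{f_0})$ in the sense of Definition \ref{definition convergence of lemniscate maps}, and (ii) the marked critical points $\psi_{f_n,v}(c_{v,k}(f_n))$ converge to $\psi_{f_0,v}(c_{v,k}(f_0))$; then $\mathbf{P}_{f_n}\to\mathbf{P}_{f_0}$ by Proposition \ref{continuity-straightening-LM}, and the second coordinate of $\chi$ converges by continuity.

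For the periodic vertices $v\in|T(f_0)|_{\mathrm{p}}$, the normalization $\theta_v=\theta^-_{f_0,v}(0)$ together with Lemma \ref{invariant external angles} forces $\psi_{f,v}$ to agree with the B\"ottcher coordinate $B_{f,v}$ on $\overline{U_{f,v}}=K_{f,v}$. Proposition \ref{continuity of Ufv} then gives the required uniform convergence $B_{f_n,v}^{-1}\to B_{f_0,v}^{-1}$ on $\overline{\mathbb{D}}$. For the central marking, $f\mapsto\zeta_v(f)$ is continuous by construction in \S\ref{section behavior of crit-orbits}. For the external marking, every chosen $\theta_v$ is rational and its landing point under $f_0$ is repelling (hence pre-repelling under every $f\in\overline{\mathcal{H}}$ via Lemma \ref{key lemma} and Lemma \ref{invariant external angles}), so Lemma \ref{stability of external rays} yields the Hausdorff convergence of $\gamma_{f_n,v}([0,1])$ to $\gamma_{f_0,v}([0,1])$ after truncating at a fixed equipotential.

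For $v\in|T(f_0)|_{\mathrm{np}}$ we need $g_{f_n,v}\to g_{f_0,v}$ uniformly on compact subsets of $\mathbb{C}$. Outside the collapsed sectors $\overline{S_{f}(\theta_v^-(t),\theta_v^+(t))}$ ($t\in T_v$) we have $g_{f,v}=f$, so convergence is immediate from $f_n\to f_0$. Inside each collapsed sector, the topological extension $h_{f,v,t}$ from Lemma \ref{hfvt} is built from a normalized Riemann mapping $\phi_{f,v,t}$ of the Jordan-like sector. By Lemma \ref{sectors attaching to U} the angles $\theta_v^\pm(t)$ are rational with pre-repelling landing points, so Lemma \ref{stability of external rays} shows the sectors converge in Hausdorff topology, and pointed-domain Carath\'eodory convergence implies the $\phi_{f_n,v,t}$ converge uniformly on compact subsets of the limit sector; choosing the auxiliary basepoints $\zeta_{f,v,t}$ continuously in $f$ makes the whole construction depend continuously on $f$, yielding $g_{f_n,v}\to g_{f_0,v}$ uniformly on compact sets.

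Finally, Proposition \ref{continuity-straightening-LM} gives $\mathbf{P}_{f_n}\to\mathbf{P}_{f_0}$ and Remark \ref{continuity-conjugacy} gives $\psi_{f_n,v}\to\psi_{f_0,v}$ uniformly on compact subsets of $\mathbb{C}$. Combined with $c_{v,k}(f_n)\to c_{v,k}(f_0)$ this delivers $\psi_{f_n,v}(c_{v,k}(f_n))\to \psi_{f_0,v}(c_{v,k}(f_0))$, and continuity of $\chi$ follows. The main obstacle is the bookkeeping in the last step of the third paragraph: simultaneously controlling the continuously many sectors $\{S_f(\theta_v^-(t),\theta_v^+(t))\}_{t\in T_v}$ on which $g_{f,v}$ is redefined, and making sure the chosen extensions $h_{f,v,t}$ glue to a genuinely continuous family of topological polynomials on $\mathbb{C}$. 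An alternative bypass, should the direct construction prove unwieldy, is to use sequential compactness of $\model$ and Lemma \ref{uniqueness-f-CP0T}: any subsequential limit $\mathbf{Q}$ of $\mathbf{P}_{f_n}$ yields a conjugacy from $(\mathbf{g}_{f_0},K_{f_0},Z_{f_0},\Gamma_{f_0})$ to $(\mathbf{Q},K(\mathbf{Q}),Z',\Gamma')$ (the standard marking), forcing $\mathbf{Q}=\mathbf{P}_{f_0}$ by uniqueness in Proposition \ref{straightening of marked LM}.
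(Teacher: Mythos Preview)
Your proposal is correct and follows essentially the same route as the paper: verify Definition \ref{definition convergence of lemniscate maps} for the associated marked lemniscate maps (via Proposition \ref{continuity of Ufv} for periodic $v$, stability of external rays for the external marking, and continuity of the sector-modifications $h_{f,v,t}$ for nonperiodic $v$), then apply Proposition \ref{continuity-straightening-LM} and Remark \ref{continuity-conjugacy}. One reassurance on your ``main obstacle'': the index set $T_v$ is \emph{finite} (each $t\in T_v$ corresponds to a critical point of $f_0$ lying in the sector $S_{f_0}(\theta_v^-(t),\theta_v^+(t))$), so there are only finitely many sectors to control; the paper simply fixes the basepoints $\zeta_{f_n,v,t}=\zeta_{f,v,t}$ and $\widetilde\zeta_{f_n,v,t}=\widetilde\zeta_{f,v,t}$ constant for large $n$, and dispatches the boundary-gluing continuity by appeal to Lemma \ref{uniform convergence of extension}.
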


\begin{proof}
Let $\{f_n\}_{n\geq1}\subset\overline{\mathcal{H}}$ satisfy $f_n\rightarrow f$ as $n\rightarrow\infty$. 
We need to show $\chi(f_n)\rightarrow\chi(f)$ as $n\rightarrow\infty$.  

For each $v\in\Tfp$, we have $\psi_{f_n,v}^{-1}\rightarrow\psi_{f,v}^{-1}$ uniformly on $\overline{\mathbb{D}}$ by Proposition \ref{continuity of Ufv}. Now let $v\in\Tfn$.  
For each $t\in T_v$, when constructing $h_{f_n,v,t}$ and $h_{f,v,t}$ in the proof of Lemma \ref{hfvt}, 
we require that $\zeta_{f_n,v,t}=\zeta_{f,v,t}$ and $\widetilde\zeta_{f_n,v,t}=\widetilde\zeta_{f,v,t}$ for $n$ large enough. 
Similar to Lemma \ref{uniform convergence of extension}, we can check that $g_{f_n,v}\rightarrow g_{f,v}$ uniformly on compact subsets of $\mathbb{C}$.  
Applying Lemma \ref{stability of external rays} to the external markings $\Gamma_{f_n}$ and $\Gamma_f$, we see that 
$(\mathbf{g}_{f_n},K_{f_n},Z_{f_n},\Gamma_{f_n})$ converges to $(\mathbf{g}_f,K_f,Z_f,\Gamma_f)$. 
By Proposition \ref{continuity-straightening-LM}, we have $\mathbf{P}_{f_n}\rightarrow\mathbf{P}_f$. 

By Remark \ref{continuity-conjugacy}, we can modify the conjugacies $(\psi_{f_n,v})_{v\in|T(f_0)|}$ and  $(\psi_{f,v})_{v\in|T(f_0)|}$ so that $\psi_{f_n,v}\rightarrow \psi_{f,v}$ uniformly on compact subsets of $\mathbb{C}$ for each $v\in|T(f_0)|$. 
It follows that $\psi_{f_n,v}(c_{v,k}(f_n))\rightarrow\psi_{f,v}(c_{v,k}(f))$ for each $(v,k)\in I_0$. 
This completes the proof. 
\end{proof}

\begin{proof}
[Proof of Theorem \ref{top-boundary}]
Let $f_0$ be the center of $\mathcal{H}$. 
Given an external marking for $f_0$, 
we have a straightening map $\chi:\Hbar\rightarrow \modelf$. 
By Lemmas \ref{chi is injective} and \ref{chi is continuous}, the map $\chi$ is injective and continuous. 
Because $\Cdim(\mathcal H) = \Cdim(\widehat{\mathcal{P}}^{T(f_0)}_0)$ (see Lemma \ref{basic facts of H}), the restriction $\chi|_{\mathcal{H}}:\mathcal{H}\rightarrow \widehat{\mathcal{P}}^{T(f_0)}_0$ is an open map. 
In particular, $\chi(\mathcal{H})$ is an open subset of $\modelf^\circ$. 
On the other hand, $\chi(\mathcal{H})$ is closed in $\modelf^\circ$ by $\chi(\partial\mathcal{H}) \subset \partial\modelf$. 
By the proof of Proposition \ref{fiberwise connectedness locus C(P0T)}, we see that  $\modelf^\circ$ 
is homeomorphic to $\mathbb{D}^{\Cdim(\mathcal H)}$, hence connected. 
This implies $\chi(\mathcal{H})= \modelf^\circ$. 
It follows that $\chi(\Hbar) = \overline{\chi(\mathcal{H})} = \modelf$.
Thus $\chi$ is a homeomorphism. 
By Proposition \ref{fiberwise connectedness locus C(P0T)}, the boundary $\partial\mathcal{H}$ is homeomorphic to the sphere $S^{2\Cdim(\mathcal H)-1}$. 
\end{proof}

\section{Transfer equalities}
\label{section transfer}
This section is the first step to prove Theorem \ref{hd-boundary}.
In order to get a relation of the Hausdorff dimensions  between the parameter set and the dynamical set, we first need to establish some transfer equalities.

For $z\in \mathbb{C}^n$, let $\|z\|$ denote $\|z\|_\infty=\max\{|z_1|,\dots,|z_n|\}$. 
Let $\mathbb D^n(z,r)=\{\zeta\in \mathbb C^n{;~} \|\zeta-z\|<r\}$ be the \emph{polydisk} centered at $z$ of radius $r$. 
Set $\mathbb D^n(r)=\mathbb D^n(0,r)$ and $\mathbb D^n=\mathbb D^n(1)$. 
If $n=1$, we set $\mathbb{D}(\cdot)=\mathbb{D}^1(\cdot)$. 
For $z\in A\subset \mathbb{C}^n$, the connected component of $A$ containing $z$ will be denoted by $\Comp_z(A)$. 
For two subsets $A$ and $B$ of $\mathbb{C}^n$, the notation $A\Subset B$ means $\overline{A}\subset B^\circ$.

Our transfer equalities read as follows:

\begin{proposition} 
[Transfer equalities]
\label{HD-trans} 
Let $X$ be a subset of $\mathbb C$ with $x_0\in X$, and let $\Omega$ be a domain in $\mathbb{C}^n$ with $z_0\in\Omega$. 
Let $h: \Omega\times X\rightarrow \mathbb C$ be a holomorphic motion with basepoint $z_0$ (i.e. $h(z_0,\cdot)=\id$),  and $v: \Omega\rightarrow\mathbb C$ be a holomorphic function such that $v(z_0)=x_0$. 

Let $\mathcal X=\{z\in\Omega{;~} v(z)\in h(z, X)\}$. For $\delta,r>0$, let $\mathcal{X}_\delta=\mathcal{X}\cap\mathbb{D}^n(z_0,\delta)$  and $X_r=X\cap \mathbb{D}(x_0,r)$. 
Assume the transversality $dv|_{z=z_0}\neq dh(z,x_0)|_{z=z_0}$ holds. 
Then
\begin{align}
\label{HD-trans1}
\lim_{\delta\rightarrow 0}{\Hdim}(\mathcal X_\delta) &= 2(n-1)+\lim_{r\rightarrow 0}{\Hdim}(X_r), \\
\label{HD-trans2}
\lim_{\delta\rightarrow 0}{\Hdim}(\Comp_{z_0}(\mathcal X_\delta)) &= 2(n-1)+\lim_{r\rightarrow 0}{\Hdim}(\Comp_{x_0}(X_r)). 
\end{align}
\end{proposition}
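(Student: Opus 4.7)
The strategy is to exhibit a local homeomorphic parameterization of $\mathcal{X}$ near $z_0$ by a product $\mathbb{D}^{n-1} \times X_r$ and to argue that this parameterization transfers Hausdorff dimension with distortion tending to zero in the limit.

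First, I would normalize coordinates using the transversality. The holomorphic function $F_0(z) := v(z) - h(z, x_0)$ has nonvanishing differential at $z_0$, so one may choose holomorphic coordinates $(z_1, z') \in \mathbb{C} \times \mathbb{C}^{n-1}$ near $z_0$ in which $F_0(z) = z_1$ and $z_0$ corresponds to the origin. In these coordinates, $\mathcal{X}$ is locally the set of $z$ such that $z_1 = h(z, x) - h(z, x_0)$ for some $x \in X$ near $x_0$.

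Next I would extend $h$ via S\l odkowski's theorem to a holomorphic motion $\widetilde h: \Omega \times \mathbb{C} \to \mathbb{C}$, so that $\widetilde h(z, \cdot)$ is a $K(z)$-quasiconformal homeomorphism of $\mathbb{C}$ with $K(z) \to 1$ as $z \to z_0$ (by the $\lambda$-lemma, since $\widetilde h(z_0, \cdot) = \id$). For each $x$ near $x_0$, the equation $z_1 = \widetilde h(z, x) - \widetilde h(z, x_0)$ defines, by the implicit function theorem applied in $z_1$, a holomorphic graph $z_1 = \psi(z', x)$, with $\psi(\cdot, x_0) \equiv 0$. The map
\begin{equation*}
\Psi(z', x) = (\psi(z', x), z')
\end{equation*}
is then a homeomorphism from a polydisk times a disk onto an open neighborhood of $z_0$ in the ambient zero locus, and its restriction to $\mathbb{D}^{n-1}(r_1) \times (X \cap \mathbb{D}(x_0, r_2))$ is a homeomorphism onto a relative neighborhood of $z_0$ in $\mathcal{X}$; injectivity follows from injectivity of each $\widetilde h(z, \cdot)$.

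The final step is to show that $\Psi$ is $\alpha$-bi-H\"older on its domain with $\alpha = \alpha(r_1, r_2) \to 1$ as $r_1, r_2 \to 0$. In the $z'$-direction the map is holomorphic with $\psi(\cdot, x_0) \equiv 0$, so it is bi-Lipschitz with constants approaching those of the identity; in the $x$-direction $\Psi$ inherits the quasisymmetric behavior of $\widetilde h$, whose dilatation tends to $1$ at $z_0$, and this gives H\"older control with exponent tending to $1$. Since $\mathbb{D}^{n-1}$ has coinciding Hausdorff and upper box dimensions equal to $2(n-1)$, the standard product formula yields
\begin{equation*}
\Hdim\bigl(\mathbb{D}^{n-1}(r_1) \times X_{r_2}\bigr) = 2(n-1) + \Hdim(X_{r_2}),
\end{equation*}
and combining this with the bi-H\"older estimate and letting $r_1, r_2 \to 0$ yields identity (\ref{HD-trans1}). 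Identity (\ref{HD-trans2}) follows at once since $\Psi$ is a homeomorphism and so carries $\mathbb{D}^{n-1}(r_1) \times \Comp_{x_0}(X_{r_2})$ onto the component of $z_0$ in the local model of $\mathcal{X}_\delta$.

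The main technical obstacle is the bi-H\"older estimate for $\Psi$: the $z'$-Lipschitz constant of $\psi(\cdot, x)$ depends on $x$, while the dilatation of $\psi(z', \cdot)$ depends on $z'$, so a joint quantitative estimate requires a careful application of the quantitative $\lambda$-lemma (H\"older continuity of holomorphic motions in the basepoint, with exponent converging to $1$). This is the reason one must pass to the limit in both $\delta$ and $r$ rather than assert an equality at fixed scale.
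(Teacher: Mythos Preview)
Your approach is essentially the same as the paper's: normalize coordinates via transversality, parameterize $\mathcal{X}$ locally as the image of $\mathbb{D}^{n-1}\times X_r$ under a map $\Psi$ obtained by solving an implicit equation, show $\Psi$ is bi-H\"older with exponent tending to $1$, and apply the product dimension formula plus a two-sided inclusion $\mathcal{X}_\varepsilon\subset\Psi(\mathbb{D}^{n-1}\times X_r)\subset\mathcal{X}_\delta$.

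The one substantive difference is in how the bi-H\"older estimate---the step you correctly flag as the main obstacle---is actually carried out. The paper first normalizes so that $h(z,x_0)\equiv 0$ and $v(z)=z_n$, and then introduces an auxiliary parameter $\lambda$: for $(z',\lambda,x)\in\mathbb{D}^{n-1}\times\mathbb{D}(1/\delta)\times X_r$, it solves $z_n=h((z',\lambda z_n),x)$ by Rouch\'e. The point is that the resulting solution $w((z',\lambda),x)$ is itself a \emph{holomorphic motion} of $X_r$ with basepoint $(0',0)$, so S{\l}odkowski and Mori apply directly. The desired map is $H(z',x)=(z',w((z',1),x))$, and factoring through $\lambda=0\to\lambda=1$ at $z'=0$ and then $z'=0\to z'$ at $\lambda=1$ decomposes $H$ into two pieces, each bi-H\"older with exponent $1/K_\delta$ where $K_\delta=(1+\delta)/(1-\delta)$. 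This gives the quantitative joint estimate you were after without having to separately balance the $z'$-Lipschitz and $x$-quasisymmetric contributions.
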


Proposition \ref{HD-trans} is a sharp version of Tan Lei's transfer inequality \cite[Proposition 4.1]{Tan}: 
$${\Hdim}(\mathcal X)\geq 2(n-1)+\lim_{r\rightarrow 0}{\Hdim}(X_r).$$
This inequality is useful to estimate some parameter sets with large Hausdorff dimension, but it is not enough for our purpose.  Our transfer equalities allow us to find  explicit Hausdorff dimension formulas for some parameter sets, and are expected to find more applications in other situations. 

Before proving the transfer equalities, let us discuss 
the H\"older continuity. 




\subsection{H\"older continuity}

\begin{lemma}
[Schwarz's lemma]
\label{Hol-contin of holomorphic function}
Let $f:\mathbb{D}^n\rightarrow \mathbb{D}$ be a holomorphic function. 
Given $\delta\in(0,1)$. 
Then for any $z_1,z_2\in\mathbb{D}^n(\delta)$, 
we have $$|f(z_1)-f(z_2)|\leq \frac{2}{1-\delta}\|z_1-z_2\|.$$
\end{lemma}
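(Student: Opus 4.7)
The plan is to reduce to the one-variable Schwarz-Pick lemma by restricting $f$ to a complex line connecting $z_1$ and $z_2$. First I would dispose of the easy case $\|z_1-z_2\|\geq 1-\delta$: since $f(\mathbb{D}^n)\subset\mathbb{D}$, the triangle inequality gives $|f(z_1)-f(z_2)|<2\leq \tfrac{2}{1-\delta}\|z_1-z_2\|$, so the inequality holds trivially.

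Now assume $\|z_1-z_2\|<1-\delta$. Let $w=(z_2-z_1)/\|z_2-z_1\|\in\mathbb{C}^n$, so that $\|w\|=1$. I would define $g:\mathbb{D}\to\mathbb{D}$ by
$$g(\zeta)=f\big(z_1+(1-\delta)\zeta w\big).$$
This is well-defined and holomorphic: for each $|\zeta|<1$, every coordinate of $z_1+(1-\delta)\zeta w$ has modulus at most $\|z_1\|+(1-\delta)|\zeta|<\delta+(1-\delta)=1$, so the argument stays inside $\mathbb{D}^n$. The Schwarz-Pick inequality applied to $g$ then gives
$$\left|\frac{g(\zeta)-g(0)}{1-\overline{g(0)}g(\zeta)}\right|\leq|\zeta|\quad\text{for all }\zeta\in\mathbb{D}.$$
Choosing $\zeta_0=\|z_2-z_1\|/(1-\delta)$, which lies in $\mathbb{D}$ by our assumption and satisfies $z_1+(1-\delta)\zeta_0 w=z_2$, and using the trivial bound $|1-\overline{g(0)}g(\zeta_0)|\leq 1+|g(0)||g(\zeta_0)|<2$, I would conclude
$$|f(z_2)-f(z_1)|=|g(\zeta_0)-g(0)|\leq 2|\zeta_0|=\frac{2}{1-\delta}\|z_2-z_1\|.$$

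There is no real obstacle here; this is a textbook consequence of Schwarz-Pick in one variable. The only observation worth emphasizing is the choice of radius $(1-\delta)$, which is exactly what allows the complex line through $z_1$ in the direction $w$ to parameterize a full unit disk staying inside $\mathbb{D}^n$, so that the one-variable Schwarz lemma becomes applicable.
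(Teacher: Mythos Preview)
Your proof is correct and somewhat cleaner than the paper's. The paper first establishes the polydisk Schwarz lemma (if $g:\mathbb{D}^n\to\mathbb{D}$ is holomorphic with $g(0)=0$ then $|g(z)|\le\|z\|$) by the same restriction-to-a-line trick you use, but then, rather than invoking Schwarz--Pick, it handles arbitrary $z_1,z_2\in\mathbb{D}^n(\delta)$ by subdividing the segment $[z_1,z_2]$ into $m$ equal pieces with $m>\|z_1-z_2\|/(1-\delta)$, applying the basic Schwarz lemma on each polydisk $\mathbb{D}^n(w_k,1-\delta)\subset\mathbb{D}^n$, and telescoping. Your approach replaces this subdivision by a single application of Schwarz--Pick on the line through $z_1$ (after disposing of the large-distance case by the trivial bound $|f(z_1)-f(z_2)|<2$), which is more direct; the paper's version, on the other hand, only relies on the most elementary form of the Schwarz lemma and avoids the case split. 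Both arguments yield the same constant.
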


\begin{proof}
First, we show the Schwarz lemma for unit polydisk: for a holomorphic function $g:\mathbb{D}^n\rightarrow \mathbb{D}$ with $g(0)=0$, we have $|g(z)|\leq \|z\|$ for any $z\in \mathbb{D}^n$. 
Fix $z\in \mathbb{D}^n\setminus\{0\}$, we have $z\in\{a\frac{z}{\|z\|};a\in\mathbb{D}\}\subset\mathbb{D}^n$. 
Define $h:\mathbb{D}\rightarrow\mathbb{D}$ by $h(a)=g(a\frac{z}{\|z\|})$. 
By the classical Schwarz lemma, we have $|g(z)|=|h(\|z\|)|\leq \|z\|$. 

Fix $z_1,z_2\in\mathbb{D}^n(\delta)$. 
Choose an integer $m$ so that $m>\frac{\|z_1-z_2\|}{1-\delta}$. 
For $0\leq k\leq m$, let $w_k=z_1+\frac{k}{m}(z_2-z_1)$. 
For $0\leq k\leq m-1$, since $\|w_{k+1}-w_k\|=\frac{\|z_1-z_2\|}{m}<1-\delta$, applying the Schwarz lemma to $f:\mathbb{D}^n(w_k,1-\delta)\rightarrow \mathbb{D}(f(w_k),2)$, we have $\frac{|f(w_{k+1})-f(w_k)|}{2}\leq \frac{\|w_{k+1}-w_k\|}{1-\delta}$. 
Therefore $$|f(z_1)-f(z_2)|\leq \sum_{k=0}^{m-1}|f(w_{k+1})-f(w_k)|\leq \sum_{k=0}^{m-1} \frac{2\|w_{k+1}-w_k\|}{1-\delta}=\frac{2\|z_1-z_2\|}{1-\delta}.$$
This completes the proof. 
\end{proof}

\begin{lemma}
[Mori's theorem]
\label{Hol-contin of qcm}
Let $f:\mathbb{C}\rightarrow\mathbb{C}$ be a $K$-quasiconformal mapping normalized by $f(0)=0,f(1)=1$.  Let $r>0$. 
Then there are constants $C_1=C_1(K,r),C_2=C_2(K,r)>0$ 
so that \begin{equation}
\label{bi-Hol of qcm}
C_1|z_1-z_2|^K\leq |f(z_1)-f(z_2)|\leq C_2|z_1-z_2|^{1/K} 
\end{equation}
for any $|z_1|,|z_2|\leq r$. 
\footnote{
For examples, the map $f(z) = z|z|^{\frac{1}{K}-1}$ and its inverse $f^{-1}(z)=z|z|^{K-1}$. 
}
\end{lemma}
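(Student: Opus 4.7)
The plan is to reduce the two-sided inequality to the classical Mori theorem for normalized quasiconformal self-homeomorphisms of $\widehat{\mathbb{C}}$. First I would extend $f:\mathbb{C}\rightarrow\mathbb{C}$ to a $K$-quasiconformal self-homeomorphism $\widehat{f}:\widehat{\mathbb{C}}\rightarrow\widehat{\mathbb{C}}$ by declaring $\widehat{f}(\infty)=\infty$; the removal of an isolated point does not affect $K$-quasiconformality, so $\widehat{f}$ is a $K$-quasiconformal self-map of $\widehat{\mathbb{C}}$ fixing the three points $0$, $1$, $\infty$. The inverse $\widehat{f}^{-1}$ is also a $K$-quasiconformal self-homeomorphism of $\widehat{\mathbb{C}}$ fixing $0$, $1$, $\infty$.

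Next I would invoke the classical normalized Mori theorem: for every $K$-quasiconformal self-homeomorphism $g:\widehat{\mathbb{C}}\rightarrow\widehat{\mathbb{C}}$ fixing $0$, $1$, $\infty$, and every $R>0$, there is a constant $M=M(K,R)>0$ such that
$$|g(w_1)-g(w_2)| \leq M\,|w_1-w_2|^{1/K} \quad \text{for all } |w_1|,|w_2|\leq R.$$
This is a standard result from the quasiconformal theory (see, e.g., Ahlfors's or Lehto-Virtanen's monographs on quasiconformal mappings). Applying it to $\widehat{f}$ on $\overline{\mathbb{D}(0,r)}$ yields the upper bound $|f(z_1)-f(z_2)| \leq C_2 |z_1-z_2|^{1/K}$ with $C_2=C_2(K,r)$, and in particular provides a uniform radius $R=R(K,r) := 1+C_2 (2r)^{1/K}$ such that $|f(z)|\leq R$ for every $|z|\leq r$.

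To get the lower bound, I would apply the same theorem to $\widehat{f}^{-1}$ on $\overline{\mathbb{D}(0,R)}$. For any $|z_1|,|z_2|\leq r$, both images $f(z_1),f(z_2)$ lie in $\overline{\mathbb{D}(0,R)}$, so
$$|z_1-z_2| = \bigl|f^{-1}(f(z_1))-f^{-1}(f(z_2))\bigr| \leq M(K,R)\,|f(z_1)-f(z_2)|^{1/K}.$$
Rearranging gives $|f(z_1)-f(z_2)| \geq C_1 |z_1-z_2|^K$ with $C_1 := M(K,R)^{-K}$, which depends only on $K$ and $r$.

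There is no essential obstacle: the statement is the standard two-sided H\"older estimate for normalized planar quasiconformal maps, and the proof reduces to citing the classical Mori theorem once for $f$ (to get the upper bound and control the size of the image) and once for $f^{-1}$ (to get the lower bound). The only minor care needed is to verify that the radius $R$ governing the application to $f^{-1}$ can be chosen in terms of $K$ and $r$ alone, which is immediate from the first application.
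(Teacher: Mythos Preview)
Your proposal is correct and follows essentially the same route as the paper: cite the classical Mori upper bound (the paper refers to Ahlfors, Chapter III.C), use it to bound $|f(z)|$ on $\overline{\mathbb{D}(0,r)}$ by a radius depending only on $K$ and $r$, and then apply the same upper bound to the $K$-quasiconformal inverse $f^{-1}$ (which shares the normalization $f^{-1}(0)=0$, $f^{-1}(1)=1$) on that larger disk to obtain the lower bound. The only cosmetic differences are that the paper works directly on $\mathbb{C}$ without mentioning the extension to $\widehat{\mathbb{C}}$, and it uses the slightly sharper image radius $r'=C_2(K,r)\,r^{1/K}$ coming from $|f(z)|=|f(z)-f(0)|\leq C_2|z|^{1/K}$.
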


\begin{proof}
For the existence of $C_2(K,r)$, we refer to \cite[Chapter III, C]{Ahlfors-qcm06}. 

By the right inequality of (\ref{bi-Hol of qcm}), if $|z|\leq r$, then $|f(z)|\leq C_2(K,r) r^{1/K}=:r'$. 
Note that $f^{-1}$ is a $K$-quasiconformal mapping with $f^{-1}(0)=0,f^{-1}(1)=1$. 
For any $|z_1|,|z_2|\leq r$,  we have $|f(z_1)|,|f(z_2)|\leq r'$, so $$|z_1-z_2|=|f^{-1}(f(z_1))-f^{-1}(f(z_2))|\leq C_2(K,r')|f(z_1)-f(z_2)|^{1/K}.$$
Let $C_1 = 1/(C_2(K,r')^K)$. The left inequality of (\ref{bi-Hol of qcm}) follows. 
\end{proof}

\begin{lemma}
[S{\l}odkowski's theorem]
\label{dilatation of holomorphic motion}
Let $X$ be a nonempty subset of $\mathbb C$, and let $h: \mathbb{D}^n\times X\rightarrow \mathbb C$ be a holomorphic motion with basepoint $0\in \mathbb D^n$.  Given $z\in\mathbb{D}^n$. 
Then the map $h(z,\cdot):X\rightarrow \mathbb C$ can be extended to a $\frac{1+\|z\|}{1-\|z\|}$-quasiconformal mapping of $\mathbb{C}$. 
\end{lemma}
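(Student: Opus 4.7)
The plan is to reduce the multi-dimensional statement to the classical one-dimensional S{\l}odkowski extension theorem by slicing along a complex line through the basepoint. The classical theorem (which I would invoke as a black box, citing S{\l}odkowski) asserts: if $\widetilde h:\mathbb{D}\times X\to\mathbb{C}$ is a holomorphic motion with basepoint $0\in\mathbb{D}$, then for each $t\in\mathbb{D}$ the map $\widetilde h(t,\cdot):X\to\mathbb{C}$ extends to a $\frac{1+|t|}{1-|t|}$-quasiconformal homeomorphism of $\mathbb{C}$.

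Fix $z\in\mathbb{D}^n$. The case $z=0$ is trivial since $h(0,\cdot)=\mathrm{id}$, which extends as a $1$-quasiconformal map. If $z\neq 0$, I would set $w=z/\|z\|\in\mathbb{C}^n$, so that $\|w\|=1$ in the sup-norm, and introduce the holomorphic embedding
\[
\iota:\mathbb{D}\to\mathbb{D}^n,\qquad \iota(t)=tw.
\]
The inclusion $\iota(\mathbb{D})\subset\mathbb{D}^n$ holds because $\|\iota(t)\|=|t|\cdot\|w\|=|t|<1$, and note that $\iota(\|z\|)=z$. I would then define
\[
\widetilde h:\mathbb{D}\times X\to\mathbb{C},\qquad \widetilde h(t,x)=h(\iota(t),x),
\]
and check straightforwardly that $\widetilde h$ is a holomorphic motion of $X$ with basepoint $0\in\mathbb{D}$: injectivity of each $\widetilde h(t,\cdot)$ is inherited from $h(\iota(t),\cdot)$; for fixed $x$ the map $t\mapsto \widetilde h(t,x)$ is holomorphic as a composition; and $\widetilde h(0,x)=h(0,x)=x$.

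Applying the classical S{\l}odkowski theorem to $\widetilde h$ at parameter $t=\|z\|\in\mathbb{D}$ then yields a $\frac{1+\|z\|}{1-\|z\|}$-quasiconformal extension of $\widetilde h(\|z\|,\cdot)=h(z,\cdot)$ to all of $\mathbb{C}$, which is exactly the conclusion. The only real content is the classical one-dimensional S{\l}odkowski theorem, whose proof (via Beltrami equations and extremal quasiconformal extensions) is deep; the multi-dimensional reduction is essentially formal once the slicing disk has been identified, so there is no serious obstacle beyond citing the classical result.
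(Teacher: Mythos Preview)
Your proposal is correct and follows essentially the same approach as the paper: both handle $z=0$ trivially, and for $z\neq 0$ both restrict $h$ to the complex line $t\mapsto t\cdot z/\|z\|$ through the basepoint to obtain a one-dimensional holomorphic motion, then apply the classical S{\l}odkowski theorem at parameter $t=\|z\|$.
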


\begin{proof}
Since $h(0,\cdot)$ can be extended to $\id_{\mathbb{C}}$, we assume $z\neq0$ in the following. 
Then we have $z\in\{a\frac{z}{\|z\|};a\in\mathbb{D}\}\subset\mathbb{D}^n$. 
Consider the holomorphic motion $g:\mathbb{D}\times X\rightarrow \mathbb{C}$ defined by $g(a,x)=h(a\frac{z}{\|z\|},x)$. 
By S{\l}odkowski's theorem, the map $h(z,\cdot)=g(\|z\|,\cdot)$ can be extended to a $\frac{1+\|z\|}{1-\|z\|}$-quasiconformal mapping of $\mathbb{C}$. 
\end{proof}

\begin{lemma}
\label{Hol-contin of holomorphic motion}
Let $X$ be a nonempty bounded subset of $\mathbb{C}$, let $h: \mathbb{D}^n\times X\rightarrow \mathbb C$ be a holomorphic motion with basepoint $0\in \mathbb D^n$. Given $\delta\in(0,1)$, and let $K = \frac{1+\delta}{1-\delta}$. Define $H:\mathbb{D}^{n}(\delta)\times X \rightarrow \mathbb{D}^{n}(\delta)\times \mathbb{C}$ by $H(z,x)=(z,h(z,x))$. 
Then $H$ is bi-H\"older with exponent $1/K$: for any 
$w_1,w_2\in \mathbb{D}^{n}(\delta)\times X$, we have 
\begin{equation}
\label{bi-Hol of H}
C_1\|w_1-w_2\|^K\leq \|H(w_1)-H(w_2)\|\leq C_2\|w_1-w_2\|^{1/K}, 
\end{equation}
where $C_1$ and $C_2$ are positive constants dependent on $h$ and $\delta$. 
\end{lemma}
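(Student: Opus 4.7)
The plan is to reduce the bi-H\"older estimate to two one-variable inequalities: a Schwarz-type Lipschitz bound in the parameter direction $z$, and a Mori-type H\"older bound in the dynamical direction $x$, glued together by the triangle inequality along the intermediate point $h(z_1,x_2)$. First I would invoke the full S\l odkowski theorem to extend $h$ to a holomorphic motion $\tilde h:\mathbb D^n\times\mathbb C\to\mathbb C$, normalized (after composing with a linear holomorphic motion that moves two fixed points of $X$ to $0$ and $1$) so that $\tilde h(z,0)=0$ and $\tilde h(z,1)=1$ for every $z$. By Lemma \ref{dilatation of holomorphic motion}, each $\tilde h(z,\cdot)$ is $K$-quasiconformal with $K=(1+\delta)/(1-\delta)$ for $z\in\mathbb D^n(\delta)$, and for fixed $x\in\mathbb C$ the map $z\mapsto\tilde h(z,x)$ is holomorphic on $\mathbb D^n$.

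For the upper bound in (\ref{bi-Hol of H}), write
$$|h(z_1,x_1)-h(z_2,x_2)|\le|h(z_1,x_1)-h(z_1,x_2)|+|h(z_1,x_2)-h(z_2,x_2)|.$$
The first term is bounded by $C_2'|x_1-x_2|^{1/K}$ via Mori's theorem (Lemma \ref{Hol-contin of qcm}) applied to the normalized $K$-quasiconformal map $\tilde h(z_1,\cdot)$, with $C_2'$ depending only on $K$ and a bound $R$ for $X$. For the second term, since $\tilde h(z,x_2)$ is normalized $K$-quasiconformal and $|x_2|\le R$, Mori's theorem again bounds $|\tilde h(z,x_2)|$ on $\mathbb D^n$ uniformly in $x_2\in X$, so $z\mapsto\tilde h(z,x_2)$ is a holomorphic function with image in a common disk; Schwarz's lemma (Lemma \ref{Hol-contin of holomorphic function}) then yields $|\tilde h(z_1,x_2)-\tilde h(z_2,x_2)|\le C_2''\|z_1-z_2\|$. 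Since $\|w_1-w_2\|<2$, both $\|z_1-z_2\|$ and $|x_1-x_2|^{1/K}$ are dominated by $\|w_1-w_2\|^{1/K}$ up to a constant, giving the upper bound.

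For the lower bound, $\|z_1-z_2\|\le\|H(w_1)-H(w_2)\|$ is immediate. For $|x_1-x_2|$, apply Mori's theorem to the inverse $\tilde h(z_1,\cdot)^{-1}$, which is also $K$-quasiconformal, to obtain
$$|x_1-x_2|\le C_3\,|h(z_1,x_1)-h(z_1,x_2)|^{1/K}.$$
Bounding the right-hand side by the triangle inequality
$$|h(z_1,x_1)-h(z_1,x_2)|\le|h(z_1,x_1)-h(z_2,x_2)|+|h(z_2,x_2)-h(z_1,x_2)|\le(1+C_2'')\|H(w_1)-H(w_2)\|,$$
where the second term uses the Schwarz estimate already established, gives $|x_1-x_2|\le C_4\|H(w_1)-H(w_2)\|^{1/K}$, and hence $\|w_1-w_2\|\le C_5\|H(w_1)-H(w_2)\|^{1/K}$, i.e.\ the lower bound.

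The main obstacle is making the constants in Mori's and Schwarz's theorems uniform in $z_1\in\mathbb D^n(\delta)$ and $x_2\in X$. This is precisely where the normalization of $\tilde h$ by two reference points of $X$ (or equivalently, by passing to the compact family of normalized $K$-quasiconformal maps) is essential: it forces $\tilde h(z,\cdot)$ to lie in a compact family of quasiconformal maps, which in turn confines $z\mapsto\tilde h(z,x_2)$ to a bounded target needed for Schwarz's lemma, and furnishes the Mori constants uniformly on the bounded set $X$.
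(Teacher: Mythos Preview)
Your proposal is correct and follows essentially the same route as the paper: normalize by two points of $X$, use S\l odkowski (Lemma~\ref{dilatation of holomorphic motion}) plus Mori (Lemma~\ref{Hol-contin of qcm}) for the $x$-direction, Schwarz (Lemma~\ref{Hol-contin of holomorphic function}) for the $z$-direction, and combine via the triangle inequality. The only cosmetic differences are that the paper splits through the intermediate point $h(z_2,x_1)$ rather than your $h(z_1,x_2)$, and packages the final combination as a single chain of $\max$-inequalities rather than treating the upper and lower bounds separately; neither affects the substance.
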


\begin{proof}
If $X$ is a singleton, then $h$ represents a holomorphic function, 
so (\ref{bi-Hol of H}) follows from Lemma \ref{Hol-contin of holomorphic function}. 
Consider $X=\{0\}$ and $h\not\equiv0$. 
Multiplying $h$ with a large number, we know that $C_1$ and $C_2$ are dependent on $h$. 

Now we assume $X$ contains at least two points. 
By an affine transformation of $X$, we may further assume $0,1\in X$. 
By the $\lambda$-lemma \cite{McM}, we may assume $X$ is closed, thus compact. 
Fix $(z_1,x_1),(z_2,x_2)\in \mathbb{D}^{n}(\delta)\times X$. 
Then 
\begin{align*}
&|h(z_1,x_1)-h(z_2,x_2)|\geq-|h(z_1,x_1)-h(z_2,x_1)|+|h(z_2,x_1)-h(z_2,x_2)|,\\
&|h(z_1,x_1)-h(z_2,x_2)|\leq |h(z_1,x_1)-h(z_2,x_1)|+|h(z_2,x_1)-h(z_2,x_2)|.
\end{align*}
By Lemma \ref{Hol-contin of holomorphic function}, we have 
$$|h(z_1,x_1)-h(z_2,x_1)|\leq c_0\|z_1-z_2\|.$$
We remark that the positive constants $c_k$'s here and in the following discussion, can be chosen independent of $(z_1,x_1)$ and $(z_2,x_2)$. 
By Lemma \ref{dilatation of holomorphic motion}, the map $h(z_2,\cdot):X\rightarrow \mathbb C$ can be extended to a $K$-quasiconformal mapping of $\mathbb{C}$. 
Because 
\begin{align*}
0 <\inf\{|h(z,1)-h(z,0)|;z\in \mathbb{D}^{n}(\delta)\} 
  \leq \sup\{|h(z,1)-h(z,0)|;z\in \mathbb{D}^{n}(\delta)\}<\infty, 
\end{align*}
applying Lemma \ref{Hol-contin of qcm} to $\frac{h(z_2,x)-h(z_2,0)}{h(z_2,1)-h(z_2,0)}$, 
we have $$c_1|x_1-x_2|^K\leq |h(z_2,x_1)-h(z_2,x_2)| \leq c_2|x_1-x_2|^{1/K}.$$ 
It follows that 
$$-c_0\|z_1-z_2\|+c_1|x_1-x_2|^K\leq |h(z_1,x_1)-h(z_2,x_2)|\leq c_0\|z_1-z_2\|+c_2|x_1-x_2|^{1/K}.$$
By the facts 
\begin{align*}
& {\left(\frac{\|z_1-z_2\|}{2}\right)}^K\leq \frac{\|z_1-z_2\|}{2}\leq {\left(\frac{\|z_1-z_2\|}{2}\right)}^{1/K},\\
& \frac{1}{2}\max\{a,a+b\}\leq \max\{a,b\}\leq 2\max\{a,b-a\}, \  a,b\geq0, 
\end{align*}
we have 
\begin{align*}
            \|(z_1,x_1)-(z_2,x_2)\|^{K}
&=          \max\{\|z_1-z_2\|^K,|x_1-x_2|^K\}\\
&\leq   c_3 \max\{c_0\|z_1-z_2\|,c_1|x_1-x_2|^K\}\\
&\leq  2c_3 \max\{c_0\|z_1-z_2\|,-c_0\|z_1-z_2\|+c_1|x_1-x_2|^K\}\\
&\leq   c_4 \max\{\|z_1-z_2\|,|h(z_1,x_1)-h(z_2,x_2)|\}\\
&=      c_4 \|H(z_1,x_1)-H(z_2,x_2)\|\\
&\leq   c_5 \max\{\|z_1-z_2\|,\|z_1-z_2\|+|x_1-x_2|^{1/K}\}\\
&\leq  2c_5 \max\{\|z_1-z_2\|,|x_1-x_2|^{1/K}\}\\
&\leq   c_6 \max\{\|z_1-z_2\|^{1/K},|x_1-x_2|^{1/K}\}\\
&=      c_6 \|(z_1,x_1)-(z_2,x_2)\|^{1/K}. 
\end{align*}
This completes the proof. 
\end{proof}

\subsection{Proof of Proposition \ref{HD-trans}}
\begin{proof}
[Proof of Proposition \ref{HD-trans}]
{\bf Step 1.} 
By translations, we assume $z_0=0$ and $x_0=0$. 
Replacing $v$, $h$ by $v(z)-h(z,0)$, $h(z, x)-h(z,0)$ if necessary, we may assume $h(z,0)\equiv 0$. 
Then $v(0)=0$ and $dv|_{z=0}\neq 0$.

For $z\in\mathbb{C}^n$, we let $z=(z',z_n)\in\mathbb{C}^{n-1}\times\mathbb{C}$. 
Shrinking $\Omega$ and then making a linear change of coordinates for $v(\cdot)$ and $h(\cdot,x)$ simultaneously, we assume $v(z)=z_n+o(\|z\|)$. 
This will not affect the transfer equalities since we focus on $\mathcal{X}_\delta$ with $\delta$ small. 
Shrink $\Omega$ again if necessary and choose $\delta_0>0$ such that 
$\Omega\rightarrow \mathbb{D}^n(\delta_0)$, $z\mapsto (z',v(z))$ is biholomorphic.  
Define $\varphi:\Omega\rightarrow \mathbb{D}^n(2)$, $z\mapsto m(z',v(z))$, where $m=2/\delta_0$. 
Consider the coordinate transformation $\zeta=\varphi(z)$. 
Replacing $\Omega$, $X$, $h$, $v$ by 
$$\mathbb{D}^n(2), \quad m X, \quad 
m h(\varphi^{-1}(\zeta), y/m), \quad m v(\varphi^{-1}(\zeta))=\zeta_n,$$ 
we may further assume $v(z)=z_n$. 
All assumptions are stated as follows: 
\begin{align*}
&\Omega=\mathbb{D}^{n}(2), \quad 0\in X, \\
&h(0,\cdot)=\id, \quad h(z,0)\equiv 0, \quad v(z)=z_n. 
\end{align*}
We also assume $n\geq2$. 
If $n=1$, the proof can be simplified partly. 

{\bf Step 2.} 
For $r>0$, let
$$b(r)=\sup\big\{|h(z, x)|{;~} z\in \mathbb D^{n}, x\in  X_r \big\}.$$
By the $\lambda$-lemma, we can extend $h$ to a holomorphic motion on $\Omega\times \overline{X}$, so $h$ is uniformly continuous on $\mathbb D^{n}\times X_r$. 
Then by the assumption $h(z,0)\equiv 0$, we have $b(r)\rightarrow 0$ as $r\rightarrow 0$.

Given $\delta\in(0,1)$. 
Choose $r>0$ so that $b(r)<\delta$. 

Fix a triple $(z', \lambda,x)\in  \mathbb D^{n-1} \times \mathbb D(1/\delta)\times X_r$. 
Consider the equation in $z_n\in \mathbb D(\delta)$:
\begin{equation}
\label{equation of zn}
z_n-h((z', \lambda z_n), x)=0.
\end{equation}
Note that when $|z_n|=\delta$, we have $(z',\lambda z_n)\in\mathbb{D}^n$ and
$$|z_n|=\delta> b(r)\geq |h((z', \lambda z_n), x)|.$$
By Rouch\'e's theorem, the equation (\ref{equation of zn}) has exactly 
one solution for $z_n$ in the disk $\mathbb D(\delta)$, 
which will be denoted by $w((z', \lambda),x)$.

Let us check that $w:(\mathbb D^{n-1}\times \mathbb D(1/\delta))\times X_r
\rightarrow\mathbb{D}(\delta)$ is a holomorphic motion with basepoint $(0',0)$. 
\begin{itemize}
\item Let $(z',\lambda)=(0',0)$ and $x\in X_r$. Then  $w((0',0),x)=h((0',0),x)=x$. 

\item Fix $(z',\lambda)\in \mathbb D^{n-1}\times \mathbb D(1/\delta)$ and let $x_1,x_2\in X_r$. 
Suppose $w((z',\lambda),x_1)=w((z',\lambda),x_2)$, and denote it by $y$.  
Then $y=h((z',\lambda y),x_1)=h((z',\lambda y),x_2)$. 
Because $h((z',\lambda y),\cdot)$ is injective, we have $x_1=x_2$. 
Therefore $w((z',\lambda),\cdot)$ is injective. 

\item Fix $x\in X_r$.  Then $w(\cdot,x)$ is holomorphic by the following generalized Cauchy's integral formula: 
$$w((z',\lambda),x)=\frac{1}{2\pi i}\int_{|\zeta|=\delta} \zeta\frac{\partial(\zeta-h((z',\lambda\zeta),x))/\partial\zeta}{\zeta-h((z',\lambda\zeta),x)}d\zeta.$$ 
\end{itemize}

{\bf Step 3.} 
As the equation (\ref{equation of zn}) shows, we are concerned with $\lambda=1$. 
If $(z',x)\in  \mathbb D^{n-1}(\delta) \times X_r$, then $(z',w((z',1),x))\in \mathcal{X}_\delta$. 
Define 
$$H:\begin{cases} \mathbb D^{n-1}(\delta)\times X_r \rightarrow \mathcal{X}_\delta,\\
(z',x)\mapsto
(z',w((z',1),x)).
\end{cases}$$
Then $H$ is injective, so $H^{-1}$ is well-defined on $H(\mathbb D^{n-1}(\delta)\times X_r)$. 
We claim that $H$ is bi-H\"older with exponent $1/{K_\delta^2}$ (i.e. $H$ and $H^{-1}$ are H\"older continous with exponent $1/{K_\delta^2}$), where $K_{\delta}=(1+\delta)/(1-\delta)$.

This implies the distortion of the Hausdorff dimension
\begin{equation} 
\label{distortion of H-dim}
\frac{1}{K_\delta^2}\leq \frac{{\Hdim}(H(\mathbb D^{n-1}(\delta)\times X_r))}{{\Hdim}(\mathbb D^{n-1}(\delta)\times X_r)}\leq K_\delta^2.
\end{equation}

Now we show the claim. 
For $(z',\lambda)\in \mathbb D^{n-1}\times \mathbb D(1/\delta)$, let 
$X^{z',\lambda}_r=w((z', \lambda),X_r)$. 
In particular, we have $X^{0',0}_r = X_r$. 
Let $w_1:\mathbb{D}(1/\delta)\times X_r\rightarrow \mathbb{C}$ be the holomorphic motion defined by $w_1(\lambda,x)=w((0',\lambda),x)$. 
By S{\l}odkowski's theorem, the map $\psi:=w_1(1,\cdot):X_r\rightarrow X_r^{0',1}$ can be extended to a $K_\delta$-quasiconformal mapping of $\mathbb{C}$. 
Thus $\psi$ is bi-H\"older with exponent $1/K_\delta$ by Lemma \ref{Hol-contin of qcm}. 
Define $$H_1:\begin{cases}
\mathbb D^{n-1}(\delta)\times X_r\rightarrow \mathbb D^{n-1}(\delta)\times X_r^{0',1},\\
(z',x)\mapsto (z',\psi(x)). 
\end{cases}$$
Then $H_1$ is bi-H\"older with exponent $1/K_\delta$. 
Consider the holomorphic motion $w_2:\mathbb{D}^{n-1}\times X_r^{0',1}\rightarrow \mathbb{C}$ defined by $w_2(z',x)=w((z',1),\psi^{-1}(x))$, and then define 
$$H_2:\begin{cases}
\mathbb D^{n-1}(\delta)\times X_r^{0',1}\rightarrow \mathbb D^{n-1}(\delta)\times \mathbb{C},\\
(z',x)\mapsto (z',w_2(z',x)). 
\end{cases}$$
By Lemma \ref{Hol-contin of holomorphic motion}, the map $H_2$ is bi-H\"older with expoent $1/K_\delta$. 
Then the claim follows from $H=H_2\circ H_1$.

{\bf Step 4.} 
Recall that we have given $\delta\in(0,1)$ and chosen $r>0$ so that $b(r)<\delta$. 
Note that $H(\mathbb D^{n-1}(\delta)\times X_r)\subset\mathcal{X}_\delta$. 
In order to compare Hausdorff dimension, we need to find $\varepsilon>0$ so that $\mathcal{X}_\varepsilon\subset H(\mathbb D^{n-1}(\delta)\times X_r)$. 

For $\varepsilon\in(0,1)$, define 
$$\rho(\varepsilon)=\inf\big\{|h(z, x)|{;~} z\in \mathbb{D}^{n}(\varepsilon), x\in  X\setminus\mathbb{D}(r) \big\},$$
where we set $\inf\emptyset=+\infty$. 
Regarding $X$ as a subset of $\widehat{\mathbb{C}}$, 
by the $\lambda$-lemma again, we can extend $h: \Omega\times X\rightarrow \widehat{\mathbb C}$ to a holomorphic motion on $\Omega\times \overline{X}$, so $h$ is uniformly continuous on $\mathbb D^{n}\times X\Subset\Omega\times\widehat{\mathbb{C}}$. 
Since $h(0,\cdot)=\id$, we have $\liminf_{\varepsilon\rightarrow0}\rho(\varepsilon)\geq r$. 

Choose $\varepsilon\in(0,\min\{\delta,r/2\})$ so that $\rho(\varepsilon)>r/2$. 
We claim that $$\mathcal{X}_\varepsilon\subset H(\mathbb D^{n-1}(\delta)\times X_r).$$

To prove the claim, let $z=(z',z_n)\in\mathcal{X}_\varepsilon$. 
This means $z\in\mathbb{D}^n(\varepsilon)$ and there is an $x\in X$ such that $z_n=h(z,x)$. 
Then $|h(z,x)|=|z_n|<\varepsilon<r/2<\rho(\varepsilon)$. 
According to the definition of $\rho(\varepsilon)$, since  $z\in\mathbb{D}^n(\varepsilon)$, $x\in X$ and $|h(z,x)|<\rho(\varepsilon)$, we have $x\in X_r$. 
By $\|z'\|<\varepsilon<\delta<1$, $x\in X_r$ and $|z_n|<\varepsilon<\delta$, 
we have $z_n=w((z',1),x)$ (see the equation (\ref{equation of zn})).  
Since $\|z'\|<\delta$ and $x\in X_r$, 
we have $H(z',x)=(z',w((z',1),x))=(z',z_n)=z$, so $z\in H(\mathbb D^{n-1}(\delta)\times X_r)$. 
This shows the claim. 

Combining the claim with $H(\mathbb D^{n-1}(\delta)\times X_r)\subset\mathcal{X}_\delta$, we have 
\begin{equation}
\label{two-side inclusion}
\mathcal{X}_\varepsilon\subset H(\mathbb D^{n-1}(\delta)\times X_r)\subset\mathcal{X}_\delta.
\end{equation}
By a dimension formula for products (see \cite[Corollary 7.4]{Falconer}), we have 
\begin{equation}
\label{dimension formula for products}
\begin{split}
\Hdim(\mathbb D^{n-1}(\delta)\times X_r) 
&= \Hdim(\mathbb D^{n-1}(\delta))+ \Hdim(X_r)\\
&= 2(n-1) + \Hdim(X_r). 
\end{split}
\end{equation}
By (\ref{distortion of H-dim}), (\ref{two-side inclusion}) and (\ref{dimension formula for products}), we have 
$$\Hdim(\mathcal X_{\varepsilon})  /  {K_\delta^2}
\leq  2(n-1)+ \Hdim(X_r)
\leq \Hdim(\mathcal X_{\delta}) {K_\delta^2}.$$
Letting $\delta\rightarrow 0,r\rightarrow 0$, 
which forces $K_\delta\rightarrow1,\varepsilon\rightarrow 0$, 
gives (\ref{HD-trans1}).

{\bf Step 5.}  
Finally, we show the transfer equality (\ref{HD-trans2}). 
Because the restriction of $H$ on $\mathbb D^{n-1}(\delta)\times \Comp_0(X_r)$ is also bi-H\"older with exponent $1/{K_\delta^2}$, 
we have 
\begin{equation} 
\label{distortion of H-dim2}
\frac{1}{K_\delta^2}
\leq \frac{\Hdim(H(\mathbb D^{n-1}(\delta)\times \Comp_0(X_r)))}
{\Hdim(\mathbb D^{n-1}(\delta)\times \Comp_0(X_r))}
\leq K_\delta^2.
\end{equation}
Since $H(0)=0$ and $H:\mathbb{D}^{n-1}(\delta)\times X_r\rightarrow H(\mathbb{D}^{n-1}(\delta)\times X_r)$ is a homeomorphism, we have 
\begin{equation} 
\label{Comp0-Xr}
\begin{split}
  H(\mathbb D^{n-1}(\delta)\times \Comp_0(X_r))
&= H(\Comp_0( \mathbb{D}^{n-1}(\delta)\times X_r ))\\
&= \Comp_0( H(\mathbb{D}^{n-1}(\delta)\times X_r) ). 
\end{split}
\end{equation}
Taking connected components containing $0$ in (\ref{two-side inclusion}) gives 
\begin{equation}
\label{two-side inclusion 2}
        \Comp_0(  \mathcal{X}_\varepsilon  )
\subset \Comp_0(  H(\mathbb D^{n-1}(\delta)\times X_r)  )  
\subset \Comp_0(  \mathcal{X}_\delta  ).
\end{equation}
By \cite[Corollary 7.4]{Falconer} again, we have 
\begin{equation}
\label{dimension formula for products 2}
\Hdim(\mathbb D^{n-1}(\delta)\times \Comp_0(X_r)) = 2(n-1)+ \Hdim(\Comp_0(X_r)). 
\end{equation}
By (\ref{distortion of H-dim2}), (\ref{Comp0-Xr}), (\ref{two-side inclusion 2}) and (\ref{dimension formula for products 2}), 
we have 
\begin{align*}
            \Hdim(\Comp_0( \mathcal{X}_\varepsilon )) / {K_\delta^2}
&\leq 2(n-1)+\Hdim(\Comp_0( X_r ))\\
&\leq        \Hdim(\Comp_0( \mathcal{X}_\delta )) {K_\delta^2}.
\end{align*}
Letting $\delta\rightarrow 0,r\rightarrow 0$, 
which forces $K_\delta\rightarrow1,\varepsilon\rightarrow 0$, 
gives (\ref{HD-trans2}). 
The proof is completed.
\end{proof}

\section{Transversality}
\label{transver}

This section is the second step to prove Theorem \ref{hd-boundary}.
In order to use the transfer equality, we need to verify the transversality assumption in Proposition \ref{HD-trans}. 
The aim of this section is to prove the transversality result for some maps on the boundary of a capture hyperbolic component. 
The idea of the proof is inspired by van Strien \cite{vStrien}. 

Let $\mathcal{H}\subset\mathcal{F}$ be a capture hyperbolic component with center $f_0$. 
Recall from \S\ref{section behavior of crit-orbits} that each $v\in\Tfp$ induces a continuous function $\zeta_v: \mathcal{F}\rightarrow\mathbb{C}$ so that $\zeta_v(f)$ is a superattracting periodic point of $f\in\mathcal{F}$ and $\zeta_v(f_0)=v$. 
Recall also that $U_{f,v} = U_{f}(\zeta_v(f))$ and $A(f)=\bigcup_{v\in\Tfp} U_{f,v}$. 

Let $f\in\Hbar\setminus\Sigma(\mathcal{F})$.  
Here $\Sigma(\mathcal{F})$ is the singular locus of the algebraic set $\mathcal{F}$ (see Appendix \ref{appendix smooth points}). 
The assumption $f\notin\Sigma(\mathcal{F})$ means $f$ is a smooth point of $\mathcal{F}$,  
which allows us to consider holomorphic motions parameterized by a neighborhood $\mathcal{N}\subset\mathcal{F}$ of $f$. 
\footnote{
This is a technical assumption. 
For $\mathcal{F}$ defined by (\ref{superattracting relations}), 
we have no specific example of $\mathcal{F}$ so that $\Sigma(\mathcal{F})$ is nonempty.
}  

Furthermore, suppose $\crit(f)\cap \partial A(f)=\emptyset$. 
By Proposition \ref{behavior of critical orbits} and Ma\~n\'e's theorem \cite{Mane,Shishikura-Tan}, 
the restriction $f|_{\partial A(f)}$ is expanding. 
Thus there is a neighborhood $\mathcal{N}\subset \mathcal{F}$ of $f$ and a holomorphic motion $h:\mathcal{N}\times\partial A(f)\rightarrow \mathbb{C}$ with basepoint $f$ (i.e. $h(f,\cdot)=\id$) such that $h(g,\partial A(f)) = \partial A(g)$ and $h(g,f(z)) = g(h(g,z))$ for all $g\in\mathcal{N}$ and $z\in\partial A(f)$ 
 (compare \cite[\S4.1]{McM}). 
By the density of periodic cycles in $\partial A(f)$, such  holomorphic motion is unique. 
It follows that for $g\in \mathcal N$ and $v\in \Tfp$, there is a B\"ottcher coordinate $B_{g,v}:U_{g,v}\rightarrow\mathbb{D}$ such that 
$B_{g,\sigma(v)}(g(z))=B_{g,v}(z)^{\delta(v)}$, which is holomorphic with respect to $g$.  
When $z\in U_{f,v}$ for some $v\in\Tfp$, 
we extend $h$ by $h(g,z) = B_{g,v}^{-1}\circ B_{f,v}(z)$. 
Then we have a holomorphic motion $$h:\mathcal{N}\times\overline{A(f)}\rightarrow\mathbb{C}.$$ 

Recall that $I_0 = \{(v,k){;~} v\in|T(f_0)|, 1\leq k<\delta(v)\}$. 
As in \S\ref{section straightening map}, 
for $f\in\widehat{\mathcal{P}}^d$, we use the representative 
$$\mathbf{c}(f) = (c_{v,k}(f))_{(v,k)\in I_0}$$ so that $c_{v,k}(f_0) = v$ for every $(v,k)\in I_0$. 
Let $$I = \{(v,k){;~} v\in\Tfn, 1\leq k<\delta(v)\}\subset I_0$$ 
be the \emph{index set of free critical points}. 
Then $\#(I) =\Cdim (\mathcal H) = d-1-\ell$. 
Recall that $r_v\geq0$ is the minimal integer such that $\sigma^{r_v}(v)\in\Tfp$ whenever $v\in|T(f_0)|$. 

\begin{definition}
[Free critical relation]
\label{defi-free-crit-rela}
Let $f\in\overline{\mathcal{H}}$. 
We will say $f$ has a \emph{free critical relation} (with respect to $\mathcal{H}$) if $c_{v',k'}(f) = f^j(c_{v,k}(f))$ for some $(v,k)\neq(v',k')\in I$ and $0\leq j<r_{v}$, 
which is said to be \emph{internal} or \emph{external} according to whether $v'=\sigma^j(v)$ or $v'\neq\sigma^j(v)$. 
\end{definition} 

Since $c_{v',k'}(f)$ is not periodic, the integer $j$ is determined by $(v,k)$ and $(v',k')$. 
So all free critical relations of $f$ form a subset of $I\times I$. 
The kinds of a free critical relation of a polynomial may be different 
with respect to different capture hyperbolic components;  
see Example \ref{example-FreeCriticalRelation}. 

\begin{proposition} 
[Transversality]
\label{transversality-prop} 
Given $f\in\Hbar\setminus\Sigma(\mathcal{F})$ satisfying $\crit(f)\cap \partial A(f)=\emptyset$. 
Let $\mathcal N\subset\mathcal{F}$ be a neighborhood of $f$ so that the holomorphic motion $h: \mathcal N\times \overline{A(f)}\rightarrow \mathbb C$ is well-defined.  
Define a holomorphic map $G = (G_{v,k})_{(v,k)\in I}:\mathcal N\rightarrow \mathbb{C}^{\Cdim(\mathcal{H})}$ by 
\begin{align*}
G_{v,k}(g) = g^{r_v}(c_{v,k}(g))-h(g,f^{r_v}(c_{v,k}(f))). 
\end{align*}
\begin{enumerate}
\item If $f$ has no free critical relation, then $G$ is a local diffeomorphism near $f$. 
\item If $f$ has an internal free critical relation, then $G$ is not a local diffeomorphism near $f$. 
\end{enumerate}
\end{proposition}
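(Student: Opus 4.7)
The easier half, I would treat first. Assume $c_{v',k'}(f)=f^j(c_{v,k}(f))$ with $v'=\sigma^j(v)$ and $1\leq j<r_v$. Then $w:=f^{r_v}(c_{v,k}(f))=f^{r_{v'}}(c_{v',k'}(f))$, so the holomorphic motion terms in $dG_{v,k}|_f$ and $dG_{v',k'}|_f$ cancel. Writing $S(g):=g^{r_v}(c_{v,k}(g))-g^{r_{v'}}(c_{v',k'}(g))$, one has $dG_{v,k}|_f-dG_{v',k'}|_f=dS|_f$. A direct chain-rule computation, using that $c_{v,k}$ and $c_{v',k'}$ are critical points of $f$ (so the $(f^{r_v})'(c_{v,k}(f))$ and $(f^{r_{v'}})'(c_{v',k'}(f))$ boundary terms vanish), yields
\[
dS|_f(\xi)=\sum_{i=0}^{j-1}(f^{r_v-1-i})'\bigl(f^{i+1}(c_{v,k}(f))\bigr)\cdot\dot g_\xi\bigl(f^i(c_{v,k}(f))\bigr),
\]
where $\dot g_\xi$ is the infinitesimal variation of $g$ along $\xi\in T_f\mathcal F$. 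For each $0\leq i\leq j-1$ the product $(f^{r_v-1-i})'(f^{i+1}(c_{v,k}(f)))$ contains the factor $f'(f^j(c_{v,k}(f)))=f'(c_{v',k'}(f))=0$ (since $i+1\leq j\leq r_v-1$), so every term vanishes. Hence $dG_{v,k}|_f=dG_{v',k'}|_f$ as linear functionals, and $\det(dG|_f)=0$.

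\textbf{Part (1): no free critical relation gives a local biholomorphism.} Here the chain rule similarly decomposes $dG_{v,k}|_f(\xi)$ into a sum of values of $\dot g_\xi$ along the critical orbit of $c_{v,k}$, with cancellations coming from the cohomological equation
\[
\eta(f(z))(\xi)=\dot g_\xi(z)+f'(z)\cdot\eta(z)(\xi),\qquad z\in\overline{A(f)},
\]
for the infinitesimal motion $\eta(z)(\xi):=\partial_t h(g_t,z)|_{t=0}$. On the hyperbolic interior $\mathcal H\cap\mathcal N$, the iterate $g^{r_v}(c_{v,k}(g))$ lies in $U_{g,\sigma^{r_v}(v)}$ and applying $\lambda_{\sigma^{r_v}(v)}\cdot B_{g,\sigma^{r_v}(v)}$ identifies $G_{v,k}(g)$, up to multiplication by a nonvanishing holomorphic factor, with $w_{v,k}(\chi(g))-w_{v,k}(\chi(f))$, where $\chi$ is the straightening map of \S\ref{section straightening map} and $w_{v,k}$ is the first entry map of Proposition \ref{transver-ms}. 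Since $\chi$ restricts to a biholomorphism $\mathcal H\to(\model)^\circ$, the Jacobian of $G$ on $\mathcal H\cap\mathcal N$ is the model-space Jacobian computed in Proposition \ref{transver-ms} times a nonvanishing factor; under the no-free-critical-relation hypothesis none of the factors in that product formula vanishes, so $\det(dG|_g)\neq 0$ on $\mathcal H\cap \mathcal N$. Because $\det(dG|_g)$ is a holomorphic function on $\mathcal N$, its non-vanishing extends by continuity to $g=f\in\partial\mathcal H$.

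\textbf{Principal obstacle.} The hardest step is justifying that the no-free-critical-relation hypothesis genuinely translates, across the boundary $\partial\mathcal H$, into non-vanishing of the relevant product in Proposition \ref{transver-ms}: one must rule out the degenerate possibility that a kernel direction of $dG|_f$ comes from a holomorphic curve in $\mathcal F$ that leaves $\overline{\mathcal H}$ transversely rather than tangentially. This is the role of Proposition \ref{capture-separation}, which produces holomorphic curves in $\mathcal F\cap\overline{\mathcal H}$ through $f$ that realize any prescribed first-order variation of the critical orbit data. A secondary technical point, needed in order to define $\eta$ and to control qc distortion of holomorphic motions uniformly up to $\partial A(f)$, is the semi-hyperbolicity of $f$ established in Corollary \ref{semi-hyperbolicity}; without it, no holomorphic motion of $\overline{A(f)}$ would be available in the first place.
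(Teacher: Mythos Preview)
Your argument for Part~(2) is correct and more direct than the paper's: you show the two rows $dG_{v,k}|_f$ and $dG_{v',k'}|_f$ coincide by an explicit chain-rule computation, whereas the paper proceeds by contradiction using the same van Strien machinery as in Part~(1). (You should also cover the $j=0$ case, where $c_{v,k}(f)=c_{v',k'}(f)$ with $v=v'$; here the derivative of $g\mapsto g^{r_v}(c_{v,k}(g))$ at $g=f$ depends only on the \emph{value} $c_{v,k}(f)$, not the label $k$, so the two rows again coincide.)

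Part~(1), however, has a genuine gap. The step ``non-vanishing extends by continuity to $g=f$'' is invalid: $\det(dG|_g)$ is holomorphic on $\mathcal N$, but being nonzero on the open set $\mathcal H\cap\mathcal N$ does not preclude a zero at the boundary point $f$. More fundamentally, your factorization of $\det(dG|_g)$ on $\mathcal H$ is not justified. From $G_{v,k}(g)=B_{g,u}^{-1}(\widetilde W_{v,k}(g))-B_{g,u}^{-1}(\widetilde W_{v,k}(f))$ one obtains $dG_{v,k}|_g=(B_{g,u}^{-1})'(\widetilde W_{v,k}(g))\,d\widetilde W_{v,k}|_g$ \emph{plus} the term $\partial_g B_{g,u}^{-1}(\widetilde W_{v,k}(g))-\partial_g B_{g,u}^{-1}(\widetilde W_{v,k}(f))$, which does not vanish away from the locus $\widetilde W(g)=\widetilde W(f)$; so $\det(dG)$ is not simply a nonvanishing multiple of $\det(J_W)\circ\chi$. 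And at $g=f$ itself, for $(v,k)\in I^J(f)$ the point $\widetilde W_{v,k}(f)$ lies on $\partial\mathbb D$, where $(B_{f,u}^{-1})'$ need not exist (the boundary $\partial U_{f,u}$ is a fractal quasicircle) and $\widetilde W$ is only continuous in $g$, not holomorphic. The paper circumvents these boundary pathologies via van Strien's method: it first shows $f$ is an isolated zero of $G$ (using Proposition~\ref{capture-separation} and Lemma~\ref{tilde-W-local-homeo}), then for each $(v,k)$ picks a holomorphic curve $\phi_{v,k}$ into $\bigcap_{(v',k')\neq(v,k)}G_{v',k'}^{-1}(0)$ via Lemma~\ref{alg-uni}, and uses quasiconformal distortion estimates (Lemmas~\ref{distor-holo-motion} and~\ref{simple zeros}) together with the local injectivity of $\widetilde W$ to force $(G_{v,k}\circ\phi_{v,k})'(0)\neq0$; Lemma~\ref{local diffeomorphism} then yields the conclusion. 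Your intuition that Proposition~\ref{capture-separation} is the key input is right, but the way it is used is to trap the perturbed zeros inside $\overline{\mathcal H}$ so that $\widetilde W$ can be applied to them, not to produce curves realizing first-order data directly.
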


\subsection{Internal free critical relation} 
The internal free critical relation of polynomials in $\overline{\mathcal{H}}$ corresponds to the free critical relation of generalized polynomials in $\widehat{\mathcal{P}}_0^{T(f_0)}$ defined as follows. 

For $\mathbf{x} = (\mathbf{c}_v,a_v)_{v\in|T(f_0)|}\in \widehat{\mathcal{P}}_0^{T(f_0)}$, let 
$f_v=f_{\mathbf{c}_v,a_v}$ for each $v\in|T(f_0)|$. 
For each $(v,k)\in I$, let $$w^0_{v,k}=c_{v,k}, 
~w^j_{v,k}=f_{\sigma^{j-1}(v)}\circ\cdots\circ f_v(c_{v,k}),  ~1\leq j\leq r_v.$$
We will say that $\mathbf{x}$ has a \emph{free critical relation} if $c_{v',k'} = w^j_{v,k}$ and $v' = \sigma^j(v)$ for some $(v,k)\neq(v',k')\in I$ and $0\leq j<r_{v}$. 
Define 
\begin{equation}
\label{crit-val-Tf0}
W:  \begin{cases} 
\widehat{\mathcal{P}}_0^{T(f_0)}\rightarrow \mathbb{C}^{\Cdim (\mathcal H)}, \\
\mathbf{x} \mapsto   (w^{r_v}_{v,k})_{(v,k)\in I}.  
\end{cases}
\end{equation}
As shown in Lemma \ref{proper-holo-W}, the map $W$ is a proper holomorphic map.  

We regard $c_{v,\delta(v)-1} = -(c_{v,1}+\cdots+c_{v,\delta(v)-2})$ as a function of $(c_{v,1},\dots,c_{v,\delta(v)-2})$ for each $v\in\Tfn$ with $\delta(v)\geq3$. 
Let $J_W$ denote the Jacobian matrix of $W$ with respect to 
$$(c_{v,1},\dots,c_{v,\delta(v)-2},a_v)_{v\in\Tfn,\ \delta(v)\geq2},$$
where the vector $(c_{v,1},\dots,c_{v,\delta(v)-2},a_v)$ becomes $(a_v)$ if $\delta(v)=2$. 

\begin{proposition}
\label{prop-JW}
The Jacobian determinant of $W$ is given by 
\begin{equation}
\label{JW}
\begin{split}
\det(J_W) &= C 
\prod_{
\begin{subarray}{c}
(v',k')\neq(v,k)\in I,\\ 
0\leq j<r_v, v'=\sigma^j(v)
\end{subarray}
}
(c_{v',k'}-w^j_{v,k})\\
&= C' 
\prod_{
\begin{subarray}{c}
v\in\Tfn,\\
1\leq k_1< k_2\leq \delta(v)-1
\end{subarray}
} {\left(c_{v,k_1}-c_{v,k_2}\right)}^2
\prod_{
\begin{subarray}{c}
v\in\Tfn,\\
1\leq k<\delta(v),\\
1\leq j<r_v,\\
1\leq k'<\delta(\sigma^j(v))
\end{subarray}
} (c_{\sigma^j(v),k'}-w^{j}_{v,k}), 
\end{split}
\end{equation}
where $C$ and $C'$ are nonzero constants depending on $T(f_0)$. 
\footnote{
The empty product is $1$ by convention. 
}
In particular, the map $W$ is locally one-to-one near $\mathbf{x}$ if and only if $\mathbf{x}$ has no free critical relation. 
\end{proposition}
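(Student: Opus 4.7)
The strategy is to exploit the forest structure of $T(f_0)$ restricted to $\Tfn$. Group both the parameters and the outputs of $W$ into blocks indexed by $v\in\Tfn$ with $\delta(v)\geq 2$: the parameters at $v$ are $(c_{v,1},\ldots,c_{v,\delta(v)-2},a_v)$ and the outputs at $v$ are $(w^{r_v}_{v,k})_{1\leq k<\delta(v)}$, each of size $\delta(v)-1$. Crucially, $w^{r_v}_{v,k}$ depends on the parameters of $f_u$ only when $u\in\{v,\sigma(v),\ldots,\sigma^{r_v-1}(v)\}$, and any such $u\neq v$ satisfies $r_u<r_v$. Ordering $\Tfn$ by nonincreasing $r_v$ therefore makes $J_W$ block upper-triangular, so $\det(J_W)=\prod_{v\in\Tfn}\det(D_v)$, where $D_v$ is the diagonal block corresponding to $v$.

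Next, to each diagonal block I apply the chain rule. Since the intermediate maps $f_{\sigma^j(v)}$ for $1\leq j\leq r_v-1$ depend on parameters from blocks other than the one at $v$, differentiating $w^{r_v}_{v,k}=f_{\sigma^{r_v-1}(v)}\circ\cdots\circ f_{\sigma(v)}(w^1_{v,k})$ with respect to a parameter $p_{v,\ell}$ of $f_v$ gives
\[
\frac{\partial w^{r_v}_{v,k}}{\partial p_{v,\ell}}
=\left[\prod_{j=1}^{r_v-1}f'_{\sigma^j(v)}(w^j_{v,k})\right]\cdot\frac{\partial w^1_{v,k}}{\partial p_{v,\ell}}.
\]
The bracketed prefactor depends on $k$ but not on $\ell$, so $D_v$ equals the Jacobian matrix of the simpler map $\Phi_v:(c_{v,1},\ldots,c_{v,\delta(v)-2},a_v)\mapsto(f_v(c_{v,k}))_{1\leq k<\delta(v)}$ with its $k$-th row scaled by the prefactor. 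Substituting $f'_u(z)=\delta(u)\prod_{k'}(z-c_{u,k'})$ produces exactly the external factors $(w^j_{v,k}-c_{\sigma^j(v),k'})$ appearing in (\ref{JW}).

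It then remains to show $\det(J_{\Phi_v})=C_{\delta(v)}\prod_{1\leq k_1<k_2\leq\delta(v)-1}(c_{v,k_1}-c_{v,k_2})^2$ with $C_{\delta(v)}\neq 0$. I would argue by divisibility plus a degree count. For divisibility, substitute $\zeta=c_{v,k_2}+s(c_{v,k_1}-c_{v,k_2})$ in $f_v(c_{v,k_1})-f_v(c_{v,k_2})=\delta(v)\int_{c_{v,k_2}}^{c_{v,k_1}}\prod_k(\zeta-c_{v,k})\,d\zeta$: the integrand acquires the factors $s(s-1)$ from $(\zeta-c_{v,k_1})(\zeta-c_{v,k_2})$ and an extra $(c_{v,k_1}-c_{v,k_2})^2$ overall, while $d\zeta$ contributes one more power, so $f_v(c_{v,k_1})-f_v(c_{v,k_2})$ is divisible, as a polynomial in $\mathbf{c}_v,a_v$, by $(c_{v,k_1}-c_{v,k_2})^3$. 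Consequently, in the centered parameterization $c_{v,\delta(v)-1}=-(c_{v,1}+\cdots+c_{v,\delta(v)-2})$, every entry of the row-difference (row $k_1$ minus row $k_2$) of $J_{\Phi_v}$ is divisible by $(c_{v,k_1}-c_{v,k_2})^2$. Running over all pairs and using pairwise coprimality of the $(c_{v,k_1}-c_{v,k_2})^2$ gives divisibility by the full squared Vandermonde. A total-degree count in $\mathbf{c}_v$ — each $c$-column of $J_{\Phi_v}$ is a polynomial of degree $\leq\delta(v)-1$, the $a_v$-column is constant — yields $\deg\det(J_{\Phi_v})\leq(\delta(v)-1)(\delta(v)-2)$, matching the degree of the squared Vandermonde, so they agree up to a constant; nonvanishing of $C_{\delta(v)}$ is checked by explicit evaluation (trivially $C_2=1$, and the computations for $\delta(v)\in\{3,4\}$ already give $-3$, $-8$ respectively).

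Assembling the three stages yields the second form of (\ref{JW}); the first form follows by reindexing over ordered pairs $(v,k)\neq(v',k')$ with $v'=\sigma^j(v)$, $0\leq j<r_v$, and absorbing the signs $(-1)^{\binom{\delta(v)-1}{2}}$ coming from $\prod_{k\neq k'}(c_{v,k'}-c_{v,k})$ into the overall constant. For the concluding ``in particular'' statement, a standard theorem in several complex variables asserts that an injective holomorphic map between equidimensional complex manifolds is automatically a biholomorphism onto its image; hence $W$ is locally one-to-one at $\mathbf{x}$ if and only if $\det(J_W)(\mathbf{x})\neq 0$, which by (\ref{JW}) is equivalent to the absence of free critical relations. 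I expect the main obstacle to be Stage 3, specifically carrying out the degree count and divisibility argument rigorously while accounting for the centering constraint $c_{v,1}+\cdots+c_{v,\delta(v)-1}=0$.
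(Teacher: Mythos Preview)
Your approach is essentially the paper's: block-triangularize $J_W$ via an ordering on $\Tfn$ by $r_v$, peel off the derivative factors $\prod_{j=1}^{r_v-1}f'_{\sigma^j(v)}(w^j_{v,k})$ by the chain rule, and reduce to the Jacobian of the single-vertex critical-value map $\Phi_v$. The paper does exactly this (its Lemma~\ref{lemma-JV} is your Stage~3). Your one genuine variation is in proving $(c_{v,k_1}-c_{v,k_2})^2\mid\det(J_{\Phi_v})$: the paper argues by symmetry (swapping $c_{k_1}\leftrightarrow c_{k_2}$ permutes rows and columns so that $\det(J_{\Phi_v})$ is invariant, hence the simple factor $(c_{k_1}-c_{k_2})$ must occur squared), whereas you use the integral representation to get $(c_{k_1}-c_{k_2})^3\mid f_v(c_{k_1})-f_v(c_{k_2})$ and then differentiate. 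Your route is arguably cleaner because it handles the case $k_2=\delta(v)-1$ uniformly, while the paper's swap argument needs a separate column manipulation there; both finish with the same degree count.

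There is one genuine gap. Your justification that $C_{\delta(v)}\neq 0$ (``checked by explicit evaluation'', with values given only for $\delta(v)\leq 4$) is not a proof for general $\delta(v)$. The paper's write-up of Lemma~\ref{lemma-JV} is also terse on this point, but the nonvanishing follows immediately from Lemma~\ref{proper-holo-W}: for the mapping scheme with a single nonperiodic vertex of degree $\delta(v)$ and $r_v=1$, the map $W$ coincides with $\Phi_v$ and is proper holomorphic $\mathbb{C}^{\delta(v)-1}\to\mathbb{C}^{\delta(v)-1}$, hence has finite fibers and generically nonvanishing Jacobian. You should invoke this rather than a case check.
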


\begin{example}
\label{example-W-1}
Let $T = (|T|, \sigma, \delta)$ satisfy $|T| = \{ v_0,v_1 \}$, $\sigma(v_0) = v_0$, $\sigma(v_1) = v_0$, $\delta(v_0) = 2$ and $\delta(v_1) =3$. 
Then we can identify $\widehat{\mathcal{P}}_0^{T}$ with $\widehat{\mathcal{P}}^3$. 
For $(\mathbf{c},a)\in \widehat{\mathcal{P}}^3$, we have 
$$ f_{\mathbf{c},a}(z) = z^3 + 3 c_1  c_2 z + a = z^3 - 3 c_1 ^2 z + a.$$
It follows that $W = (f_{\mathbf{c},a}(c_1), f_{\mathbf{c},a}(c_2)) = (-2 c_1^3+a, 2 c_1^3+a)$ and 
$$\renewcommand\arraystretch{1.5}
\det(J_W) = \left|\begin{array}{cc}
\frac{\partial f_{\mathbf{c},a}(c_1)}{\partial c_1} & \frac{\partial f_{\mathbf{c},a}(c_1)}{\partial a}\\
\frac{\partial f_{\mathbf{c},a}(c_2)}{\partial c_1} & \frac{\partial f_{\mathbf{c},a}(c_2)}{\partial a}
\end{array}\right|
=\left|\begin{array}{cc}
-6 c_1^2 & 1\\
6 c_1^2 & 1
\end{array}
\right| = -12 c_1^2 = -3(c_1-c_2)^2.$$
Then $\crit(W) = \{(0,0,a){;~} a\in\mathbb{C}\}$, and $\deg(W,\mathbf{x}) = 3 $ for any $\mathbf{x} \in \crit(W)$. 
\end{example}

\begin{example}
\label{example-W-2}
Let $T = (|T|, \sigma, \delta)$ satisfy $|T| = \{0,1,2 \}$, $\sigma(0) = 0$, $\sigma(1) = 0$, $\sigma(2) = 1$ and $\delta(0) = \delta(1) = \delta(2)=2$. 
For $(a_1, a_2) \in \widehat{\mathcal{P}}_0^{T}$, we have $f_1(z)=z^2+a_1$ and $f_2(z)=z^2+a_2$. 
It follows from $c_1 = c_2 =0$ that $W = (f_1(c_1), f_1\circ f_2(c_2)) = (a_1, a_2^2+a_1)$ and 
$$\renewcommand\arraystretch{1.5}
\det(J_W) = \left|\begin{array}{cc}
\frac{\partial f_1(c_1)}{\partial a_1} & \frac{\partial f_1(c_1)}{\partial a_2}\\
\frac{\partial f_1\circ f_2(c_2)}{\partial a_1} & \frac{\partial f_1\circ f_2(c_2)}{\partial a_2}
\end{array}\right|
=\left|\begin{array}{cc}
1 & 0\\
1 & 2 a_2
\end{array}
\right| = 2 a_2 = -2(c_1-f_2(c_2)).$$
Then $\crit(W) = \{(a_1,0){;~} a_1\in\mathbb{C}\}$, and $\deg(W,\mathbf{x}) = 2 $ for any $\mathbf{x}\in \crit(W)$. 
\end{example}

Before proving Proposition \ref{prop-JW}, let us consider a simple case. 

\begin{lemma}
\label{lemma-JV}
Let $\delta\geq3$, and let $V:\widehat{\mathcal{P}}^\delta \rightarrow \mathbb{C}^{\delta-1}$ be the \emph{critical value map} defined by $V(\mathbf{c},a) = (f_{\mathbf{c},a}(c_k))_{1\leq k\leq \delta-1}$. 
We regard $c_{\delta-1} = -(c_{1}+\cdots+c_{\delta-2})$ as a function of $(c_{1},\dots,c_{\delta-2})$. 
Let $J_{V}$ denote the Jacobian matrix of $V$ with respect to $(c_{1},\dots,c_{\delta-2},a)$. 
Then we have 
$$\det(J_V) = C\prod_{1\leq k_1< k_2\leq \delta-1} (c_{k_1}-c_{k_2})^2,$$
where $C=C(\delta)$ is a nonzero constant.
\footnote{
As comparison, let $J_\pi$ denote the Jacobian matrix of the projection 
$\pi:\widehat{\mathcal{P}}^\delta\rightarrow\mathcal{P}^\delta$, 
$(\mathbf{c},a)\mapsto f_{\mathbf{c},a}$ with respect to $(c_{1},\dots,c_{\delta-2},a)$. 
Then we have $\det(J_\pi) = C\prod_{1\leq k_1< k_2\leq \delta-1} (c_{k_1}-c_{k_2})$. 
}
\end{lemma}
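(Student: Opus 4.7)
The plan is to prove the factorization by a combination of explicit divisibility and degree-matching. First I would compute the entries of $J_V$ using the chain rule together with the identity $f_{\mathbf{c},a}'(c_k)=0$: the last column is identically $(1,\ldots,1)^T$ since $\partial V_k/\partial a = 1$, while for $j\leq\delta-2$ the entry $\partial V_k/\partial c_j = (\partial f_{\mathbf{c},a}/\partial c_j)(c_k)$ is a polynomial in $c_1,\ldots,c_{\delta-1}$ of degree $\delta-1$ (obtained by integrating $\partial P/\partial c_j$ from $0$ to $c_k$, where $P(\zeta)=\prod_i(\zeta-c_i)$). This shows $\det(J_V)$ is a polynomial in $c_1,\ldots,c_{\delta-2},a$ which is homogeneous of degree $(\delta-2)(\delta-1)$ in $c_1,\ldots,c_{\delta-1}$; this matches $\deg\prod_{k_1<k_2}(c_{k_1}-c_{k_2})^2 = 2\binom{\delta-1}{2} = (\delta-1)(\delta-2)$ exactly.

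The key algebraic input is the divisibility $(c_{k_1}-c_{k_2})^3 \mid V_{k_1}-V_{k_2}$ for every pair $k_1<k_2$. To prove this, I would use the integral representation
\[
V_{k_1} - V_{k_2} = \int_{c_{k_2}}^{c_{k_1}} f_{\mathbf{c},a}'(\zeta)\,d\zeta = \delta\int_{c_{k_2}}^{c_{k_1}}(\zeta-c_{k_1})(\zeta-c_{k_2})Q(\zeta)\,d\zeta,
\]
with $Q(\zeta) = \prod_{i\neq k_1,k_2}(\zeta-c_i)$, and then substitute $\zeta = c_{k_2}+s(c_{k_1}-c_{k_2})$, which pulls out a factor $(c_{k_1}-c_{k_2})^3$ and leaves behind $-\delta\int_0^1 s(1-s)Q(\zeta(s))\,ds$, a polynomial in the $c_i$'s. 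Differentiating the identity $V_{k_1}-V_{k_2} = (c_{k_1}-c_{k_2})^3 R_{k_1,k_2}$ shows that row $k_1$ minus row $k_2$ of $J_V$ is divisible by $(c_{k_1}-c_{k_2})^2$, and a row operation then extracts $(c_{k_1}-c_{k_2})^2$ from $\det(J_V)$.

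After eliminating $c_{\delta-1}$ via $\sum_i c_i=0$, the linear forms $c_{k_1}-c_{k_2}$ become pairwise non-proportional irreducibles in $\mathbb{C}[c_1,\ldots,c_{\delta-2}]$, so the squares $(c_{k_1}-c_{k_2})^2$ are pairwise coprime. Hence $\prod_{k_1<k_2}(c_{k_1}-c_{k_2})^2$ divides $\det(J_V)$, and the degree count above forces $\det(J_V) = C\prod_{k_1<k_2}(c_{k_1}-c_{k_2})^2$ for some scalar $C$. To check $C\neq 0$, I would exhibit a single configuration at which $V$ is a local biholomorphism: at any $(\mathbf{c},a)$ where the $\delta-1$ marked critical values are pairwise distinct, one recovers $f_{\mathbf{c},a}$ from its ordered critical values by an implicit function argument, so $V$ is locally invertible there and $\det(J_V)\neq 0$. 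The only genuinely nontrivial step is the order-$3$ vanishing of $V_{k_1}-V_{k_2}$ along $\{c_{k_1}=c_{k_2}\}$; the explicit integral representation makes this almost formal, which is why I expect the proof to be short overall.
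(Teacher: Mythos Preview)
Your argument is correct and takes a genuinely different route to the key divisibility $(c_{k_1}-c_{k_2})^2\mid\det(J_V)$. The paper argues by symmetry: it shows that swapping $c_{k_1}\leftrightarrow c_{k_2}$ leaves $\det(J_V)$ invariant (via a case split on whether $k_2<\delta-1$ or $k_2=\delta-1$, the latter requiring column manipulations since $c_{\delta-1}$ is not an independent variable), and a polynomial invariant under this swap that vanishes on $\{c_{k_1}=c_{k_2}\}$ must vanish to even order there. Your approach instead proves the sharper fact $(c_{k_1}-c_{k_2})^3\mid V_{k_1}-V_{k_2}$ directly via the substitution $\zeta=c_{k_2}+s(c_{k_1}-c_{k_2})$ in the integral, and then differentiates; this avoids the case analysis entirely and is arguably cleaner. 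Both proofs finish with the same degree match $(\delta-1)(\delta-2)=2\binom{\delta-1}{2}$.

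Two minor points to tighten. First, your formula $\partial V_k/\partial c_j=(\partial f_{\mathbf{c},a}/\partial c_j)(c_k)$ omits the chain-rule contribution $-(\partial f_{\mathbf{c},a}/\partial c_{\delta-1})(c_k)$ coming from $c_{\delta-1}=-(c_1+\cdots+c_{\delta-2})$; this does not affect the degree count, but the entry should be written correctly. Second, your justification of $C\neq0$ via ``an implicit function argument'' is circular as stated: the inverse function theorem requires precisely the Jacobian condition you want. A non-circular fix is to show that $V$ has a discrete fiber---for instance $V^{-1}(0)=\{(\mathbf{0},0)\}$, since a monic centered degree-$\delta$ polynomial with all critical values zero must be $z^\delta$---whence $\det(J_V)\not\equiv0$. (The paper's own proof also leaves $C\neq0$ implicit.)
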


\begin{proof}
Note that $f_{\mathbf{c},a}$ takes the form $f_{\mathbf{c},a}(z)=\sum_{j=0}^{\delta-1}a_{j}z^{\delta-j}+a$, where $a_{j}$ is a $j$-th symmetric function of $\mathbf{c}$. 
For each $1\leq k\leq \delta-1$ and $1\leq n\leq \delta-2$, since $f'_{\mathbf{c},a}(c_k)=0$, we have 
\begin{align*}
\frac{\partial f_{\mathbf{c},a}(c_k)}{\partial c_n} 
&= \left.\frac{\partial f_{\mathbf{c},a}(z)}{\partial c_n}\right|_{z=c_k} 
+ \left.\frac{\partial f_{\mathbf{c},a}(z)}{\partial c_{\delta-1}}\right|_{z=c_k}
\frac{\partial c_{\delta-1}}{\partial c_n}
+ \left.\frac{\partial f_{\mathbf{c},a}(z)}{\partial z}\right|_{z=c_k}
\frac{\partial c_k}{\partial c_n}
\\
&= \left.\frac{\partial f_{\mathbf{c},a}(z)}{\partial c_n}\right|_{z=c_k} 
- \left.\frac{\partial f_{\mathbf{c},a}(z)}{\partial c_{\delta-1}}\right|_{z=c_k},  \\
\frac{\partial f_{\mathbf{c},a}(c_k)}{\partial a} 
&= 1, 
\end{align*}
where we regard $f_{\mathbf{c},a}(c_k)$ as a function of $(c_1,\dots,c_{\delta-2},a)$, and regard $f_{\mathbf{c},a}(z)$ as a function of $(c_1,\dots,c_{\delta-1},a,z)$. 
Let $1\leq k_1<k_2\leq \delta-1$. 
If $c_{k_1}=c_{k_2}$, then the $k_1$-th row and the $k_2$-th row of $J_V$ are same.
It follows that $c_{k_1}-c_{k_2}\mid \det(J_V)$. 
If we exchange $c_{k_1}$ and $c_{k_2}$, when $k_2<\delta-1$, the $k_1$-th row and the $k_2$-th row of $J_V$ exchange, and the $k_1$-th column and the $k_2$-th column of $J_V$ also exchange, so $\det(J_V)$ does not change;  
when $k_2=\delta-1$, after subtracting the $k_1$-th column from the $n$-th column for every $1\leq n\leq \delta-2$ with $n\neq k_1$, the $k_1$-th row and the $(\delta-1)$-th row of $J_V$ exchange, and the $k_1$-th column changes sign, 
so $\det(J_V)$ does not change. 
This implies $(c_{k_1}-c_{k_2})^2\mid \det(J_V)$. 
As multivariate polynomials of $(c_1,\dots,c_{\delta-2})$, 
we have $\deg{\left(\frac{\partial f_{\mathbf{c},a}(c_k)}{\partial c_n}\right)}\leq \delta-1$ for each $1\leq k\leq \delta-1$ and $1\leq n\leq \delta-2$, so  
$$\deg(\det(J_V))\leq (\delta-1)(\delta-2) = \deg \Bigg(\prod_{1\leq k_1< k_2\leq \delta-1} {\left(c_{k_1}-c_{k_2}\right)}^2\Bigg).$$
We remark that $c_{k_1}-c_{\delta-1} = c_{k_1}+c_1+\cdots+c_{\delta-2}$. 
It follows that $\det(J_V)$ and $\prod_{1\leq k_1< k_2\leq \delta-1} {\left(c_{k_1}-c_{k_2}\right)}^2$ are mutually divisible. 
\end{proof}

\begin{proof}
[Proof of Proposition \ref{prop-JW}]
Let $v_1,\dots,v_{n_1}$ denote the vertices $v\in\Tfn$ with $r_v=1$; let $v_{n_1+1},\dots,v_{n_2}$ denote the vertices $v\in\Tfn$ with $r_v=2$; $\dots$
Finally, we obtain an order $(\Tfn,\prec)$ of $\Tfn$.  
For each $v\in\Tfn$ with $\delta(v)\geq 2$, let $J_v$ denote the Jacobian matrix of $(w^{r_v}_{v,1},\dots, w^{r_v}_{v,\delta(v)-1})$ with respect to $(c_{v,1},\dots,c_{v,\delta(v)-2},a_v)$.  
According to the choice of the order $(\Tfn,\prec)$, we see that 
$J_W$ is a block lower triangular matrix with diagonal $(J_v)_{v\in\Tfn,\ \delta(v)\geq2}$. 
It follows that 
$\det(J_W) = \prod_{v\in\Tfn,\ \delta(v)\geq2}\det(J_v)$. 
To prove (\ref{JW}), we just need to show that 
\begin{equation}
\label{Jv}
\det(J_v) = C_v 
\prod_{1\leq k_1< k_2\leq \delta(v)-1} {\left(c_{v,k_1}-c_{v,k_2}\right)}^2
\prod_{
\begin{subarray}{c}
1\leq k<\delta(v),\\
1\leq j<r_v,\\
1\leq k'<\delta(\sigma^j(v))
\end{subarray}
} (c_{\sigma^j(v),k'}-w^{j}_{v,k}), 
\end{equation}
where $C_v$ is a nonzero constant. 

If $\delta(v)=2$ and $r_v=1$, then $J_v=1$. 
If $\delta(v)\geq3$ and $r_v=1$, then (\ref{Jv}) follows from Lemma \ref{lemma-JV}. 
In the following, we assume $r_v\geq 2$. 
Let $F = f_{\sigma^{r_v - 1}(v)}\circ\cdots\circ f_{\sigma(v)}$. 

Assume $\delta(v)\geq3$. 
Let $1\leq k\leq \delta(v)-1$.
Since $F$ is independent of $(\mathbf{c}_v,a_v)$, we have 
\begin{align*}
   \frac{\partial w^{r_v}_{v,k}}{\partial c_{v,n}} 
&= \frac{\partial F(w_{v,k}^1)}{\partial c_{v,n}} 
 = F'(w_{v,k}^1) \frac{\partial w_{v,k}^1}{\partial c_{v,n}},~1\leq n\leq \delta(v)-2,\\
   \frac{\partial w^{r_v}_{v,k}}{\partial a_v} 
&= \frac{\partial F(w_{v,k}^1)}{\partial a_v} 
 = F'(w_{v,k}^1) \frac{\partial w_{v,k}^1}{\partial a_v}.
\end{align*} 
Therefore the $k$-th row of $J_v$ is 
\begin{equation}
\label{k-row}
F'(f_v(c_{v,k}))
{{\left(\frac{\partial f_v(c_{v,k})}{\partial c_{v,1}},\dots,
\frac{\partial f_v(c_{v,k})}{\partial c_{v,\delta(v)-2}},\frac{\partial f_v(c_{v,k})}{\partial a_{v}}\right)}},  
\end{equation}
where 
\begin{equation}
\label{fvcvk}
F'(f_v(c_{v,k})) = \prod_{j=1}^{r_v-1} f_{\sigma^j(v)}'(w^j_{v,k}) 
= \prod_{j=1}^{r_v-1} \Bigg(\delta(\sigma^j(v))
\prod_{1\leq k'<\delta(\sigma^j(v))} (w^j_{v,k} - c_{\sigma^j(v),k'})\Bigg).  
\end{equation}
Then (\ref{Jv}) follows from (\ref{k-row}), (\ref{fvcvk}) and Lemma \ref{lemma-JV}. 

Now assume $\delta(v) = 2$. 
Then $$J_v = \frac{\partial w^{r_v}_{v,1}}{\partial a_v} 
= F'(f_v(c_{v,1})) \frac{\partial f_v(c_{v,1})}{\partial a_{v}} = F'(f_v(c_{v,1})). $$
Letting $k=1$ in (\ref{fvcvk}) gives (\ref{Jv}). 
This proves (\ref{JW}). 

By \cite[Theorem 15.1.8]{Rudin}, the map $W$ is locally one-to-one near $\mathbf{x}$ if and only if $\det(J_W(\mathbf{x}))\neq 0$. 
It follows from (\ref{JW}) that $\det(J_W(\mathbf{x}))\neq 0$ if and only if  $\mathbf{x}$ has no free critical relation. 
\end{proof}

By Proposition \ref{continuity of Ufv}, for $f\in\overline{\mathcal{H}}$ and $(v,k)\in I$, 
we have $f^{r_v}(c_{v,k}(f))\in \overline{U_{f,\sigma^{r_v}(v)}}$. 
Define 
\begin{equation}
\label{crit-val-H}
\widetilde W:\begin{cases} 
\Hbar \rightarrow \overline{\mathbb{D}}^{\Cdim (\mathcal H)}, \\ 
f \mapsto (B_{f,\sigma^{r_v}(v)}(f^{r_v}(c_{v,k}(f))))_{(v,k)\in I}. 
\end{cases}
\end{equation}
Recall that $\chi:\Hbar\rightarrow \modelf$ is the straightening map defined by (\ref{straightening map}). 
Then $\widetilde W = W\circ\chi$. That is, the diagram 
$$\xymatrix{
  \Hbar   \ar[rd]_-{\widetilde W} \ar[r]^-{\chi}
& \modelf \ar@{^(->}[r]
& \widehat{\mathcal{P}}_0^{T(f_0)}  \ar[d]^{W} \\
& \overline{\mathbb{D}}^{\Cdim (\mathcal H)} \ar@{^(->}[r]
& \mathbb{C}^{\Cdim (\mathcal H) }
}$$ is commutative. 

As an application of Proposition \ref{prop-JW}, we have the following lemma, 
which will be used in the proof of Proposition \ref{transversality-prop}. 

\begin{lemma}
\label{tilde-W-local-homeo}
Let $f\in\overline{\mathcal{H}}$. 
Then $f$ has no internal free critical relation if and only if $\widetilde W$ is locally one-to-one near $f$.
\end{lemma}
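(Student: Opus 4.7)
The plan is to transfer the question to the model space via the straightening map $\chi:\overline{\mathcal{H}}\rightarrow \modelf$ and then apply Proposition \ref{prop-JW}. Two identities are central: the factorization $\widetilde{W}=W\circ\chi$, noted after (\ref{crit-val-H}), and $\modelf = W^{-1}\big(\overline{\mathbb{D}}^{\Cdim(\mathcal{H})}\big)$, established in the proof of Proposition \ref{fiberwise connectedness locus C(P0T)}. Since $\chi$ is a homeomorphism by Theorem \ref{top-boundary}, $\widetilde{W}$ is locally one-to-one near $f$ if and only if the restriction $W|_{\modelf}$ is locally one-to-one near $\chi(f)$.

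First I translate internal free critical relations of $f$ into free critical relations of $\chi(f)=\mathbf{P}_{f}$ in the sense of the definition preceding Proposition \ref{prop-JW}. By the formula $\chi(f)=(\mathbf{P}_{f},(\psi_{f,v}(c_{v,k}(f)))_{(v,k)\in I_{0}})$ from (\ref{straightening map}), one has $c_{v,k}(\chi(f))=\psi_{f,v}(c_{v,k}(f))$; and since $g_{f,v}|_{K_{f,v}}=f|_{K_{f,v}}$, iterating the semiconjugacy $\psi_{f,\sigma(v)}\circ g_{f,v}|_{K_{f,v}}=P_{f,v}\circ\psi_{f,v}|_{K_{f,v}}$ yields $\psi_{f,\sigma^{j}(v)}(f^{j}(c_{v,k}(f)))=w^{j}_{v,k}(\chi(f))$ for $0\leq j\leq r_{v}$. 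When $v'=\sigma^{j}(v)$, both $c_{v',k'}(f)$ and $f^{j}(c_{v,k}(f))$ lie in $K_{f,v'}$, so applying the injection $\psi_{f,v'}$ turns the equality $c_{v',k'}(f)=f^{j}(c_{v,k}(f))$ into $c_{v',k'}(\chi(f))=w^{j}_{v,k}(\chi(f))$; the converse is immediate. Hence $f$ has an internal free critical relation if and only if $\chi(f)$ has a free critical relation, and Proposition \ref{prop-JW} reduces the lemma to showing that $W|_{\modelf}$ is locally injective near $\chi(f)$ precisely when $\det J_{W}(\chi(f))\neq 0$.

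The ``if'' direction is the inverse function theorem: a nonvanishing Jacobian makes $W$ a local biholomorphism at $\chi(f)$, and any restriction of a locally injective map is locally injective. The main obstacle is the converse, where I must produce a collision of $W$ inside $\modelf$ (not merely somewhere in $\widehat{\mathcal{P}}_{0}^{T(f_{0})}$) out of a Jacobian degeneracy. Since an injective holomorphic map between open subsets of $\mathbb{C}^{n}$ is a local biholomorphism, $\det J_{W}(\chi(f))=0$ forces the local degree $\deg(W,\chi(f))\geq 2$. Consequently, for any neighborhood $\mathcal{U}$ of $\chi(f)$ and any regular value $\mathbf{y}$ of $W$ sufficiently close to $W(\chi(f))$, the fiber $W^{-1}(\mathbf{y})\cap \mathcal{U}$ contains at least two distinct points. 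The regular values of the nonconstant proper holomorphic map $W$ form an open dense subset of $\mathbb{C}^{\Cdim(\mathcal{H})}$, and $W(\chi(f))=\widetilde{W}(f)\in \overline{\mathbb{D}}^{\Cdim(\mathcal{H})}$ is accumulated by points of $\mathbb{D}^{\Cdim(\mathcal{H})}$, so I may choose such a regular value $\mathbf{y}$ inside $\overline{\mathbb{D}}^{\Cdim(\mathcal{H})}$. By the identity $\modelf=W^{-1}\big(\overline{\mathbb{D}}^{\Cdim(\mathcal{H})}\big)$, both preimages in $\mathcal{U}$ automatically lie in $\modelf$, so $W|_{\modelf}$ is not locally injective near $\chi(f)$, completing the plan.
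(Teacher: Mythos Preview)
Your proof is correct and follows essentially the same route as the paper's. Both arguments reduce to the equivalence between local injectivity of $W|_{\modelf}$ near $\chi(f)$ and nonvanishing of $\det J_W(\chi(f))$, exploiting the identity $\modelf=W^{-1}\big(\overline{\mathbb{D}}^{\Cdim(\mathcal{H})}\big)$ together with the properness of $W$; you phrase the key step as the contrapositive (if $\det J_W(\chi(f))=0$ then two nearby preimages of a regular value in $\overline{\mathbb{D}}^{\Cdim(\mathcal{H})}$ land in $\modelf$), while the paper proves the direct implication (if $W|_{\modelf}$ is locally one-to-one then the local degree of $W$ is $1$), but the underlying mechanism is identical. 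Your explicit verification that internal free critical relations of $f$ correspond to free critical relations of $\chi(f)$ via the conjugacies $\psi_{f,v}$ is a welcome expansion of what the paper abbreviates as ``$\chi$ preserves the dynamics on the small filled Julia sets.''
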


\begin{proof}
Since the straightening map $\chi$ preserves the dynamics on the small filled Julia sets, 
the polynomial $f$ has no internal free critical relation if and only if $\mathbf{x}:=\chi(f)$ has no free critical relation. 
That is, the map $W$ is locally one-to-one near $\mathbf{x}$ by Proposition \ref{prop-JW}. 
Note that $W:\widehat{\mathcal{P}}_0^{T(f_0)}\rightarrow \mathbb{C}^{\Cdim (\mathcal H)}$ is a proper holomorphic map satisfying 
$W^{-1}\big({\overline{\mathbb{D}}}^{\Cdim (\mathcal H)}\big) = \modelf$. 
Let $W_0$ denote the restriction $W:\modelf \rightarrow \overline{\mathbb{D}}^{\Cdim (\mathcal H)}$. 

\vspace{6pt}
{\bf Claim.}
{\it If the restriction $W_0$ is locally one-to-one near $\mathbf{x}$, 
then $W$ is locally one-to-one near $\mathbf{x}$. }
\vspace{6pt}

Let $\mathcal{N}\subset \mathbb{C}^{\Cdim (\mathcal H)}$ be an open ball centered at $W(\mathbf{x})$, 
and let $\mathcal{M}$ be the connected component of $W^{-1}(\mathcal{N})$ containing $\mathbf{x}$. 
Then $W:\mathcal{M}\rightarrow\mathcal{N}$ is a proper holomorphic map. 
Since $W_0$ is locally one-to-one near $\mathbf{x}$, by choosing $\mathcal{N}$ small enough, we get a homeomorphism 
$$W:\mathcal{M} \cap \modelf \rightarrow 
\mathcal{N}\cap\overline{\mathbb{D}}^{\Cdim (\mathcal H)}.$$
Let $\mathbf{y}\in\big(\mathcal{N} \cap {\overline{\mathbb{D}}}^{\Cdim (\mathcal H)}\big)\setminus W(\crit(W))$. 
By the mapping properties of proper holomorphic maps (see, e.g. \cite[\S15.1]{Rudin}), we have 
$$\deg(W:\mathcal{M}\rightarrow\mathcal{N})=\#(\mathcal{M}\cap W^{-1}(\mathbf{y}))=1.$$
It follows that $W:\mathcal{M}\rightarrow\mathcal{N}$ is one-to-one. 
\vspace{6pt}

The claim implies that $W$ is locally one-to-one near $\mathbf{x}$ if and only if $W_0$ is locally one-to-one near $\mathbf{x}$. 
Because $\widetilde W = W\circ\chi = W_0\circ\chi$, the map $W_0$ is locally one-to-one near $\mathbf{x}$ if and only if $\widetilde W$ is locally one-to-one near $f$. 
The proof is completed. 
\end{proof}

\subsection{External free critical relation} 
For a neighborhood $\mathcal{N}\subset\mathcal{F}$ of $f\in\partial\mathcal{H}$, we define $$\mathcal{N}^* 
= \big\{g\in\mathcal{N}{;~} g^{r_v}(c_{v,k}(g))\in \overline{U_{g,\sigma^{r_v}(v)}},~\forall~(v,k)
\in I\big\}.$$ By Proposition \ref{continuity of Ufv}, we have $\mathcal{N}\cap\overline{\mathcal{H}} 
\subset\mathcal{N}^*$. On the other hand, we have the following result, which is useful in the proofs 
of Proposition \ref{transversality-prop} and Theorem \ref{hd-boundary}. 

\begin{proposition}
\label{capture-separation}
Given $f\in \partial \mathcal{H}\setminus\Sigma(\mathcal{F})$ satisfying $\crit(f)\cap \partial A(f) 
= \emptyset$. If $f$ has no external free critical relation, then there is a neighborhood $\mathcal{N} 
\subset\mathcal{F}$ of $f$ so that $$\mathcal{N}^*\subset\overline{\mathcal{H}}.$$
\end{proposition}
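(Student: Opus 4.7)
The plan is to show that any $g\in\mathcal{N}^*$ sufficiently close to $f$ necessarily lies in $\overline{\mathcal{H}}$, by matching its capture combinatorics with an element of $\overline{\mathcal{H}}$ through the straightening correspondence of Theorem~\ref{top-boundary}. First I would introduce, on $\mathcal{N}^*$, the continuous \emph{capture data map}
$$\mathbf{y}:\mathcal{N}^*\to\overline{\mathbb{D}}^{\Cdim(\mathcal{H})},\qquad \mathbf{y}_{v,k}(g)=B_{g,\sigma^{r_v}(v)}\bigl(g^{r_v}(c_{v,k}(g))\bigr),$$
which is well-defined precisely because $g\in\mathcal{N}^*$ forces the argument into $\overline{U_{g,\sigma^{r_v}(v)}}$. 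The B\"ottcher coordinates $B_{g,v}$ depend continuously on $g\in\mathcal{N}$ via the holomorphic motion $h$, and on $\overline{\mathcal{H}}\cap\mathcal{N}$ the map $\mathbf{y}$ coincides with $\widetilde{W}=W\circ\chi$ of (\ref{crit-val-H}), where $\chi$ is the straightening homeomorphism and $W:\modelf\to\overline{\mathbb{D}}^{\Cdim(\mathcal{H})}$ is the proper holomorphic surjection of (\ref{crit-val-Tf0}).

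Next I would argue by contradiction: assume there is a sequence $g_n\in\mathcal{N}^*\setminus\overline{\mathcal{H}}$ converging to $f$. By the properness and surjectivity of $W$ and the continuity of $\mathbf{y}$, one picks $\mathbf{z}_n\in W^{-1}(\mathbf{y}(g_n))$ with $\mathbf{z}_n\to\chi(f)$; then $\tilde{g}_n:=\chi^{-1}(\mathbf{z}_n)\in\overline{\mathcal{H}}$ satisfies $\tilde{g}_n\to f$ and $\widetilde{W}(\tilde{g}_n)=\mathbf{y}(g_n)=\mathbf{y}(\tilde{g}_n)$. If I can prove $g_n=\tilde{g}_n$ for $n$ large, the contradiction with $g_n\notin\overline{\mathcal{H}}$ is immediate.

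To establish the identification $g_n=\tilde{g}_n$, I would count local mapping degrees over a small polydisk neighborhood of $\mathbf{y}(f)$ in $\overline{\mathbb{D}}^{\Cdim(\mathcal{H})}$. The holomorphic map $g\mapsto(g^{r_v}(c_{v,k}(g)))_{(v,k)\in I}$ on $\mathcal{N}\subset\mathcal{F}$ can, via the holomorphic motion $h$, be viewed as a holomorphic extension of $\mathbf{y}$ to a neighborhood of $f$ in $\mathcal{F}$. The no-external-free-critical-relation hypothesis is an open condition and passes to all $g$ in a smaller neighborhood. By Proposition~\ref{prop-JW}, the local degree of $W$ at $\chi(f)$ is determined solely by the internal free critical relations of $\chi(f)$, which correspond exactly to the internal free critical relations of $f$; since $\chi$ is a homeomorphism, the local degree of $\widetilde{W}|_{\overline{\mathcal{H}}}$ at $f$ equals that of $W$ at $\chi(f)$. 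The key claim is that the local degree of the parameter-space map at $f\in\mathcal{F}$ equals the same quantity, so the fibers match exactly and $g_n$ must coincide with one of the $\tilde{g}_n$.

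The main obstacle is establishing this equality of local degrees. The no-external-free-critical-relation hypothesis is precisely what is needed so that no parameter in $\mathcal{F}\setminus\overline{\mathcal{H}}$ can contribute an extra preimage in the fiber of $\mathbf{y}$: an external free critical relation would correspond to a new degeneration across distinct small Julia sets $K_{f,v'}$ and $K_{f,\sigma^j(v)}$ that is not visible on the model $\widehat{\mathcal{P}}_0^{T(f_0)}$ (where different nodes are disjoint copies of $\mathbb{C}$), and this is the only possible source of excess fibers. I anticipate this matching to be verified through a direct algebraic comparison of the Jacobian of $g\mapsto(g^{r_v}(c_{v,k}(g)))_{(v,k)\in I}$ on $\mathcal{F}$ with the Jacobian formula of Proposition~\ref{prop-JW}, showing that both sides are ramified over $\overline{\mathbb{D}}^{\Cdim(\mathcal{H})}$ with exactly the same local orders at $f$ and $\chi(f)$, respectively.
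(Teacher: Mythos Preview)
Your degree-counting strategy has a genuine gap at its core. The map $\mathbf{y}$ is only defined on $\mathcal{N}^*$, which is a priori just a closed subset of the manifold $\mathcal{N}$ with no analytic structure of its own, so ``local degree of $\mathbf{y}$ at $f$'' has no meaning. You try to remedy this by saying that $g\mapsto(g^{r_v}(c_{v,k}(g)))_{(v,k)\in I}$ gives a holomorphic extension of $\mathbf{y}$ to $\mathcal{N}$ via the holomorphic motion, but this does not work: the B\"ottcher coordinate $B_{g,\sigma^{r_v}(v)}$ is only defined on $\overline{U_{g,\sigma^{r_v}(v)}}$, and there is no way to extend it holomorphically across the Julia set to turn $\mathbf{y}$ into a holomorphic map on an open set containing $\mathcal{N}^*$. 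Without such an extension, there is no local degree to compare with $\deg(W,\chi(f))$. Your final paragraph correctly identifies this comparison as the crux, but the proposed ``direct algebraic comparison of the Jacobian'' cannot be carried out: the Jacobian formula of Proposition~\ref{prop-JW} lives on $\widehat{\mathcal{P}}_0^{T(f_0)}$, whereas on $\mathcal{F}$ you have a different parametrization and a different map, and there is no algebraic relation between the two that does not already presuppose the conclusion. Note also that the transversality statement (Proposition~\ref{transversality-prop}) cannot be invoked here, since its proof \emph{uses} Proposition~\ref{capture-separation}.

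The paper's proof avoids these issues by working dynamically rather than through degree theory. For each $g\in\mathcal{N}^*$ it shows directly that the rational lamination of $f_0$ persists in $g$: first on the periodic part via the holomorphic motion (Claim~1), then on the nonperiodic part of $|T(f_0)|$ (Claim~2), then on the rest of the grand orbit (Claim~3). Claim~2 is where the no-external-free-critical-relation hypothesis enters, and it is genuinely delicate: one must control, under perturbation, how the external rays bounding $K_{f,v}$ behave near a critical point $x\in\partial K_{f,v}$, and the hypothesis guarantees that all such critical points have a well-defined ``home'' vertex $v(x)$, so the local pullback argument goes through. Once $\lambda_{\mathbb{Q}}(f_0)\subset\lambda_{\mathbb{Q}}(g)$ is established, the small filled Julia sets $K_{g,v}$ and the marked lemniscate map are defined for $g$, the straightening produces $\tilde g\in\overline{\mathcal{H}}$, and then $g=\tilde g$ follows from $\lambda_{\mathbb{R}}(g)=\lambda_{\mathbb{R}}(\tilde g)$ together with conformal removability of $J(g)$. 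This last identification step is similar in spirit to what you propose, but it becomes available only after the combinatorial groundwork of Claims~1--3.
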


Combining $\mathcal{N}^*\subset\overline{\mathcal{H}}$ and $\mathcal{N}\cap\overline{\mathcal{H}} 
\subset\mathcal{N}^*$, we have $\mathcal{N}\cap\overline{\mathcal{H}}=\mathcal{N}^*$. We remark that 
if $f$ has an external free critical relation, then the neighborhood $\mathcal{N}$ in Proposition 
\ref{capture-separation} may not exist; see Example \ref{example-FreeCriticalRelation}. 

Recall from (\ref{Kfv}) that $K_{f,v}$ is well-defined for all $f\in\Hbar$ and all $v\in|T(f_0)|$. 

\begin{lemma}
\label{non-repeating orbit}
Let $f\in\Hbar$, $v\in\Tfn$ and $x\in \partial K_{f,v}$. Then $x,f(x),\dots, f^{r_v}(x)$ are pairwise 
different. 
\end{lemma}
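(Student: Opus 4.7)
The plan is to prove the following stronger statement, from which the lemma follows immediately: \emph{for any $u\in\Tfn$, the boundary $\partial K_{f,u}$ contains no periodic point of $f$}. Indeed, if $f^i(x) = f^j(x)$ for some $0\leq i<j\leq r_v$, then $z := f^i(x)$ is $f$-periodic; since $f$ carries each $\partial K_{f,w}$ into $\partial K_{f,\sigma(w)}$ (because $K_{f,w} = g_{f,w}^{-1}(K_{f,\sigma(w)})$ and $g_{f,w}|_{K_{f,w}} = f|_{K_{f,w}}$), we have $z\in\partial K_{f,\sigma^i(v)}$; and $\sigma^i(v)\in\Tfn$ since $i<r_v$ and $v$ is strictly preperiodic in the mapping scheme. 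So the stronger statement gives the desired contradiction.

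To prove the stronger statement, I would assume $z\in\partial K_{f,u}$ is $f$-periodic of period $p$ with $u\in\Tfn$, and translate the periodicity to $f_0$ via external angles. By Proposition \ref{behavior of critical orbits}, $z$ is repelling. Using the straightening conjugacy $\psi_{f,u}$ from Proposition \ref{straightening of marked LM} together with Lemma \ref{corresponding of external rays} and local connectivity of $J(f)$ (Corollary \ref{semi-hyperbolicity}), I would show that every point of $\partial K_{f,u}$ is the landing point of some $R_f(\theta_u^-(t))$ with $t\in\RZ$. Picking such $t$ for $z$ and setting $\eta := \theta_u^-(t)\in\Theta(u) := \{\theta_u^-(s),\theta_u^+(s) : s\in\RZ\}$, the angle $\eta$ is an external angle of $z$. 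Because the (finite) set of external angles of $z$ is $m_d^p$-invariant, $\eta$ is itself $m_d$-periodic. Then in the center $f_0$, the ray $R_{f_0}(\eta)$ lands at an $f_0$-periodic point $z_0$, and $\eta\in\Theta(u)$ forces $z_0\in\partial U_{f_0}(u)$.

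The contradiction will then be a purely $f_0$-statement: \emph{for $u\in\Tfn$, $\partial U_{f_0}(u)$ contains no $f_0$-periodic point}. To prove this, suppose $f_0^{p_0}(z_0)=z_0$ with $z_0\in\partial U_{f_0}(u)$. Since $f_0$ is postcritically finite hyperbolic, $z_0\in J(f_0)$ is repelling and $f_0^{p_0}$ is a local diffeomorphism at $z_0$. The collection $W':=\{U' \text{ bounded Fatou component of } f_0 : z_0\in\partial U'\}$ is finite (only finitely many external rays land at $z_0$) and is mapped into itself by $f_0^{p_0}$. Because each $\overline{U'}$ is a Jordan disk (Lemma \ref{local connectivity}) and so has a unique local branch at $z_0$, and $f_0^{p_0}$ permutes local branches at $z_0$ bijectively, the induced self-map of $W'$ is a bijection of a finite set. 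The orbit of $U_{f_0}(u)$ under this bijection must cycle, giving $f_0^{p_0 q}(U_{f_0}(u)) = U_{f_0}(u)$ for some $q\geq1$, equivalently $\sigma^{p_0 q}(u)=u$, which contradicts $u\in\Tfn$.

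The main technical step that will require care is the external-angle access claim: that each point of $\partial K_{f,u}$ is the landing point of some $R_f(\theta_u^-(t))$. This relies on local connectivity of $\partial K_{\mathbf{P}_f,u}$ in the model (which is inherited from $\partial K_{f,u}$ through the homeomorphism $\psi_{f,u}$, and ultimately from local connectivity of $J(f)$), so that every boundary point of $K_{\mathbf{P}_f,u}$ is a landing point of some external ray, combined with the angle correspondence in Lemma \ref{corresponding of external rays} which transfers this landing structure back to $f$ with angles in $\Theta(u)$.
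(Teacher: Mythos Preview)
Your proposal is correct and follows the same overall architecture as the paper's proof: reduce to showing that $\partial U_{f_0}(u)$ contains no $f_0$-periodic point for nonperiodic $u$, then transfer this to $\partial K_{f,u}$ via the external angles $\theta_u^\pm$. The differences are in the details of both steps.

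For the $f_0$-statement, the paper only needs it for $u=\sigma^{r_v-1}(v)$ (the last nonperiodic vertex) and argues as follows: after passing to an iterate so that $y$ and $\sigma(u)$ are fixed, one has $y\in\partial U_{f_0}(u)\cap\partial U_{f_0}(\sigma(u))$, and since both components map under $f_0$ to $U_{f_0}(\sigma(u))$, the point $y$ must be critical, contradicting hyperbolicity. Your permutation argument on the finite set $W'$ of Fatou components touching $z_0$ is a genuinely different route; it works for every $u\in\Tfn$ at once and is perhaps more robust, while the paper's argument is shorter but exploits the special position $\sigma(u)\in\Tfp$. Your justification that $W'$ is finite is a bit terse---the clean way is to use that $\nu(z_0)$ equals the number of components of $K(f_0)\setminus\{z_0\}$ and that distinct $U',U''\in W'$ in the same component would contradict the tree structure (Lemma~\ref{common part})---but the conclusion is correct.

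For the transfer step, your detour through the straightening map and Lemma~\ref{corresponding of external rays} is heavier than necessary. The paper observes directly that if no $\theta_u^-(t)$ is $m_d$-periodic (which follows from the $f_0$-statement since $R_{f_0}(\theta_u^-(t))$ lands on $\partial U_{f_0}(u)$), then $\partial K_{f,u}$ can contain no periodic point of $f$: any such point would have a periodic external angle, and by the sector decomposition defining $K_{f,u}$ that angle must be some $\theta_u^\pm(t)$. This avoids invoking $\psi_{f,u}$ and local connectivity of the model filled Julia set.
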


\begin{proof}
For each $0\leq j\leq r_v$, we have $f^j(x)\in \partial K_{f,\sigma^j(v)}$. Let $u=\sigma^{r_v-1}(v)$. 
Then $u\in\Tfn$. We first show that $f_0$ has no periodic point in $\partial U_{f_0}(u)$. 

Suppose $f_0$ has a periodic point $y\in \partial U_{f_0}(u)$; we will find a contradiction. Replacing 
$f_0$ by an appropriate iterate, we may assume $y$ and $f_0(u)$ are fixed points of $f_0$. Then $y\in 
\partial U_{f_0}(u)\cap \partial U_{f_0}(f_0(u))$. Since $f_0(U_{f_0}(u)) = U_{f_0}(f_0(u)) = 
f_0(U_{f_0}(f_0(u)))$, we have $y\in\crit(f_0)$. This contradicts $\crit(f_0)\cap J(f_0)=\emptyset$. 

Because $f_0$ has no periodic point in $\partial U_{f_0}(u)$, the angle $\theta^-_u(t)$ is not periodic 
under $m_d$ for any $t\in\RZ$ (see \S\ref{subsection def-straightening-map} for $\theta^-_u$). It 
follows that $f$ has no periodic point in $\partial K_{f,u}$. In particular, the point $f^{r_v-1}(x) 
\in \partial K_{f,u}$ is not periodic under $f$. This implies $x,f(x), \dots, f^{r_v-1}(x), f^{r_v}(x)$ 
are pairwise different. 
\end{proof}

\begin{proof}
[Proof of Proposition \ref{capture-separation}] 
Let $g\in\mathcal{N}^*$. 
We can shrink $\mathcal{N}$ if necessary. 
To prove $g\in\Hbar$, we will first show $\lambda_\mathbb{Q}(f_0)\subset\lambda_\mathbb{Q}(g)$ (see Claims 1, 2 and 3 below).  
This allows us to define $(K_{g,v})_{v\in|T(f_0)|}$ as (\ref{Kfv}). 
After fixing an external marking for $f_0$, the polynomial $g$ induces a marked lemniscate map $(\mathbf{g}_g,K_g,Z_g,\Gamma_g)$ over $T(f_0)$ as (\ref{(gf,Kf,Zf,Gammaf)}). 
By Proposition \ref{straightening of marked LM}, this marked lemniscate map 
and the critical marking $\mathbf{c}(g)$ determine a generalized polynomial $\mathbf{P}\in \modelf$. 
Recall that $\chi:\Hbar \rightarrow \modelf$ is the straightening map defined by (\ref{straightening map}). 
Let $\tilde g = \chi^{-1}(\mathbf{P})$. 
Similar to the proof of Lemma \ref{chi is injective}, we will show $g = \tilde g$. 
This implies $g\in \Hbar$. 

Recall that $|T^\infty(f_0)|$ is the grand orbit of $\crit(f_0)$ under $f_0$. 
For each $v\in|T^\infty(f_0)|$, we define $\sigma(v)=f_0(v)$ and $\delta(v)=\deg(f_0,v)$. 
Choose a collection $(\theta_v)_{v\in|T^\infty(f_0)|}$ of angles in $\RZ$ so that $\theta_v = \theta^-_{f_0,v}(0)$ for each $v\in\Tfp$,  
and $R_{f_0}(\theta_v)$ lands at a point in $\partial U_{f_0}(v)$ and $\theta_{\sigma(v)}\equiv d\cdot\theta_v\modZ$ for each $v\in|T^\infty(f_0)|$.   
For each $v\in|T^\infty(f_0)|$, the maps $\theta^-_v,\theta^+_v:\RZ\rightarrow\RZ$ are well-defined as in \S\ref{subsection def-straightening-map}, and satisfy (\ref{d-delta}).

Let $\mathcal N\subset\mathcal{F}$ be a neighborhood of $f$ so that the holomorphic motion $h:\mathcal N\times \overline{A(f)}\rightarrow \mathbb C$ is well-defined.

\vspace{6pt}
{\bf Claim 1.}
{\it Shrinking $\mathcal{N}$ if necessary, the external rays $R_g(\theta^-_{v}(t))$ and $R_g(\theta^+_{v}(t))$ land at a common point in $\partial U_{g,v}$ for any $g\in\mathcal{N}$, $v\in\Tfp$ and $t\in\RZ$. (Using the holomorphic motion.)}
\vspace{6pt}

Fix $v\in \Tfp$. 
Recall from Lemma \ref{invariant external angles} that $\theta_{f,v}^-=\theta_v^-$ and $\theta_{f,v}^+=\theta_v^+$. 
Define 
\begin{align*}
X_v &= \overline{U_{f,v}}\bigsqcup \bigsqcup_{t\in\RZ} \Big(R_f(\theta^-_{v}(t))\cup R_f(\theta^+_{v}(t))\Big),\\ 
T_v &= \big\{t\in\RZ{;~} \text{$\theta^-_{v}(t)\neq \theta^+_{v}(t)$ and 
$\crit(f) \cap S_{f}(\theta^-_{v}(t),\theta^+_{v}(t))\neq\emptyset$} \big\}. 
\end{align*}
By the assumption $\crit(f)\cap \partial A(f)=\emptyset$, we have 
$$T_v= \big\{t\in\RZ{;~} \text{$\theta^-_{v}(t)\neq \theta^+_{v}(t)$ and 
$\crit(f_0) \cap S_{f_0}(\theta^-_{v}(t), \theta^+_{v}(t))\neq\emptyset$} \big\}.$$
It follows from Lemma \ref{sectors attaching to U} that  $T_v\subset\QZ$. 
Note that $T_v$ is a finite set. 
By the stability of external rays (see Lemma \ref{stability of external rays 0}) and the continuity of $h$, we can shrink $\mathcal{N}$ so that  
whenever $g\in\mathcal{N}$, 
\begin{itemize}
\item the dynamical rays $R_{g,v}(t) = B^{-1}_{g,v}([0,1) e^{2\pi i t})$, $R_g(\theta^-_{v}(t))$ and $R_g(\theta^+_{v}(t))$ land at the same pre-repelling point $B_{g,v}^{-1}(e^{2\pi i t})$ for any $t\in T_v$; 

\item $\{c_{u,k}(g){;~}(u,k)\in I\}\subset \bigcup_{t\in T_v} S_g(\theta^-_{v}(t), \theta^+_{v}(t))$.  
\end{itemize}
Let $g\in\mathcal{N}$ and let $t\in\RZ$. 
According to the locations of critical points, we have  $$\crit(g)\cap \bigcup_{n\in\mathbb{N}} g^n(R_g(\theta^-_v(t)))=\emptyset,$$ so the external ray $R_g(\theta^-_v(t))$  does not bifurcate. 
Now we can extend $B_g$ holomorphically to a neighborhood of the external ray $R_g(\theta^-_v(t))$, and the external ray $R_g(\theta^+_v(t))$ similarly. 
When $z\in R_f(\theta^-_{v}(t))\cup R_f(\theta^+_{v}(t))$, we extend $h$ by $h(g,z) = B_g^{-1}\circ B_f(z)$. 
Then we have a holomorphic motion 
\begin{equation}
\label{extension-h}
h:\mathcal{N}\times \bigcup_{v\in\Tfp} X_v\rightarrow\mathbb{C},
\end{equation} which is continuous by the $\lambda$-lemma \cite{McM}. 
Therefore Claim 1 holds.

\vspace{6pt}
{\bf Claim 2.}
{\it Shrinking $\mathcal{N}$ if necessary, the external rays  $R_g(\theta^-_{v}(t))$ and $R_g(\theta^+_{v}(t))$ land at the same point 
for any $g\in\mathcal{N}^*$, $v\in\Tfn$ and $t\in\RZ$.  (Using the assumption that $f$ has no external free critical relation.)}
\vspace{6pt}

Note that $\Tfn$ is finite and $\RZ$ is compact. 
To prove Claim 2, we just need to show: 

\vspace{6pt}
{\bf Subclaim.} 
{\it Given $v\in\Tfn$ and $x\in\partial K_{f,v}$. 
Shrinking $\mathcal{N}$ if necessary, there is an $\varepsilon>0$ so that $R_g(\theta^-_{v}(t))$ and $R_g(\theta^+_{v}(t))$ land at the same point 
for any $g\in\mathcal{N}^*$ and $t\in T_{v,x}^{\varepsilon}$, where $T_{v,x}^{\varepsilon}$ is the $\varepsilon$-neighborhood of 
$$T_{v,x}=\big\{t\in\RZ{;~} \text{$R_f(\theta^-_v(t))$ lands at $x$}\big\}.$$ }
\vspace{-10pt}

By Lemma \ref{non-repeating orbit}, the points $x,f(x),\dots, f^{r_v}(x)$ are pairwise different. 
By Claim 1, the subclaim is true for the pair $(\sigma^{r_v}(v), f^{r_v}(x))$. 
By induction, we assume the subclaim is true for the pairs $(\sigma^j(v), f^j(x))$ with $1\leq j\leq r_v$.

\vspace{6pt}
{\bf Case 1.} 
{\it $x\notin\crit(f)$. }
\vspace{6pt}

Shrinking $\mathcal{N}$ if necessary, there is an $r>0$ so that $g|_{\mathbb{D}(x,r)}$ is univalent for any $g\in \mathcal{N}$. 
Furthermore, there is an $\varepsilon>0$ so that the landing points of $R_g(\theta^-_{v}(t))$ and $R_g(\theta^+_{v}(t))$ are in $\mathbb{D}(x,r/2)$ for any $g\in\mathcal{N}^*$ and $t\in T_{v,x}^{\varepsilon}$. 
Then the subclaim follows from the induction and the pullback of $g|_{\mathbb{D}(x,r)}$.

\vspace{6pt}
{\bf Case 2.} 
{\it $x\in\crit(f)$.}
\vspace{6pt}

Let 
\begin{equation}
\label{Ix}
I(x) = \big\{(w,k)\in I{;~} c_{w,k}(f)=x\big\}.
\end{equation}
Since $f$ has no external free critical relation, for any $(w,k),(w',k')\in I(x)$, we have $w=w'$, which will be denoted by $v(x)$. 
(If $f$ has an external free critical relation, then there may exist $(w,k),(w',k')\in I(x)$ so that $w\neq w'$; see Example \ref{example-FreeCriticalRelation}.) 
We remark that both $v(x)=v$ and $v(x)\neq v$ are possible. 
The following discussion will focus on $u:= v(x)$. 


Let $0=j_0<j_1<\cdots<j_N<r_v$ be all integers so that $f^{j_0}(x),f^{j_1}(x),\dots,f^{j_N}(x) $ $\in$ $\crit(f)$. 
Then $r_{u}\geq j_N+1$. 
Let $t_0\in\RZ$ be the internal angle of $f^{r_{u}}(x)$ with respect to $U_{f,\sigma^{r_{u}}(u)}$. 
For $\varepsilon_0>0$ small enough, we define an arc 
\begin{align*}
\alpha_f =~ &B_{f,\sigma^{r_u}(u)}^{-1}\Big( (1-\varepsilon_0) e^{2\pi i[t_0-\varepsilon_0,t_0+\varepsilon_0]} \Big) \\
&\cup B_{f,\sigma^{r_u}(u)}^{-1}\Big( [1-\varepsilon_0, 1] e^{2\pi i(t_0 - \varepsilon_0)} \Big)
\cup B_{f,\sigma^{r_u}(u)}^{-1}\Big( [1-\varepsilon_0, 1] e^{2\pi i(t_0 + \varepsilon_0)} \Big)  \\
&\cup B_f^{-1}\Big( (1,1+\varepsilon_0] e^{2\pi i \theta_u^-(t_0-\varepsilon_0)} \Big)
 \cup B_f^{-1}\Big( (1,1+\varepsilon_0] e^{2\pi i \theta_u^-(t_0+\varepsilon_0)} \Big).
\end{align*}
Choose an arc $\beta_f\subset \mathbb{C}\setminus \overline{U_{f,\sigma^{r_{u}}(u)}}$ connecting the end points of $\alpha_f$ so that they are disjoint elsewhere 
and the bounded connected component of $\mathbb{C}\setminus(\alpha_f\cup\beta_f)$ contains $f^{r_{u}}(x)$. 
Let $D_{f,N+1}$ denote this connected component. 
For $0\leq n\leq N$, let $D_{f,n}$ be the connected component of $f^{-(r_u-j_n)}(D_{f,N+1})$ containing $f^{j_n}(x)$. 
By decreasing $\varepsilon_0$ and choosing $\beta_f$ properly, the diameter of $D_{f,N+1}$ can be arbitrarily small. 
By Lemma \ref{non-repeating orbit} again, the points $x,f(x),\dots, f^{r_u}(x)$ are pairwise different. 
Now fix $\varepsilon_0>0$ so that 
\begin{itemize}
\item $\deg(f^{r_u}|_{D_{f,0}}) = \deg(f^{r_u},x)$ and $\crit(f)\cap\partial D_{f,n}=\emptyset$ for any $0\leq n\leq N$; 
\item the closures of $D_{f,0},D_{f,1},\dots,D_{f,N+1}$ are pairwise disjoint. 
\end{itemize}

For $g\in \mathcal{N}$, we define $\alpha_g = h(g,\alpha_f)$, where $h$ is given by (\ref{extension-h}). 
Similarly, we choose an arc $\beta_g\subset \mathbb{C}\setminus \overline{U_{g,\sigma^{r_{u}}(u)}}$ connecting the end points of $\alpha_g$, 
and let $D_{g,N+1}$ be the bounded connected component of $\mathbb{C}\setminus(\alpha_g\cup\beta_g)$. 
By shrinking $\mathcal{N}$ and choosing $\beta_g$ properly, the arc $\alpha_g$ is close to $\alpha_f$, 
and the arc $\beta_g$ is close to $\beta_f$ in Hausdorff topology. 
Set $X_{g,N+1} = D_{g,N+1} \cap \overline{U_{g,\sigma^{r_u}(u)}}$. 
For $0\leq n\leq N$, let $D_{g,n}$ be the connected component of $g^{-(r_u-j_n)}(D_{g,N+1})$ close to $D_{f,n}$, 
and let $X_{g,n} = g^{-(r_u-j_n)}(X_{g,N+1}) \cap D_{g,n}$. 
See Figure \ref{figure Dgn}. 
All in all, we can shrink $\mathcal{N}$ so that the perturbations of the boundaries $\partial D_{g,0}, \dots, \partial D_{g,N+1}$, the sets $X_{g,0}, \dots, X_{g,N+1}$ and 
the critical marking $\mathbf{c}(g)$ in Hausdorff topology are much less than $\min\{{\diam}(\partial D_{f,n}){;~}0\leq n\leq N+1\}$.  

\begin{figure}[ht]
\centering
\includegraphics{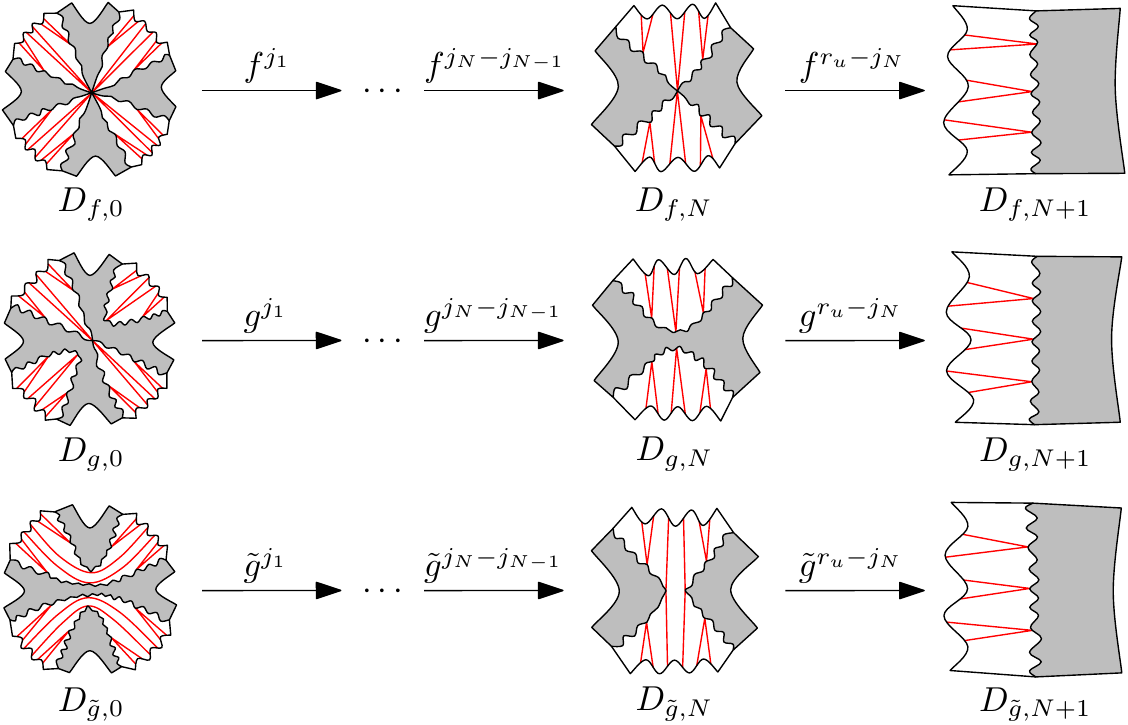}
\caption{A sketch of $D_{g,n}$, with $X_{g,n}$ colored gray and external rays colored red. 
The second row illustrates $g\in\mathcal{N}^*$. 
The third row illustrates the possible situation for some $\tilde g\in \mathcal{N}\setminus\mathcal{N}^*$.}
\label{figure Dgn}
\end{figure}

Let $(w,k)\in I(f^{j_n}(x))$ for some $0\leq n\leq N$; see (\ref{Ix}) for $I(\cdot)$. 
Since $f$ has no external free critical relation, 
we have $w = v(f^{j_n}(x)) = \sigma^{j_n}(v(x)) = \sigma^{j_n}(u)$. 
It follows that $r_w = r_u - j_n$. 
Let $g\in\mathcal{N}^*$. Since 
$$g^{r_u - j_n}(c_{w,k}(g)) = g^{r_w}(c_{w,k}(g))\in\overline{U_{g,\sigma^{r_w}(w)}} = \overline{U_{g,\sigma^{r_u}(u)}},$$
we can shrink $\mathcal{N}$ so that $g^{r_u - j_n}(c_{w,k}(g))\in X_{g,N+1}$. 
By induction on $n=N,N-1,\dots,0$, we see that $X_{g,n}$ is connected (compare Lemma \ref{connectedness-crit}), 
and $g^{r_u - j_n}$ is univalent on each connected component of $D_{g,n}\setminus X_{g,n}$. 

Since $f$ has no external free critical relation, we have $X_{f,n} = D_{f,n} \cap K_{f,\sigma^{j_n}(u)}$ for each $0\leq n\leq N$. 
It follows that $\#(T_{u,x}) = \deg(f^{r_u},x)$. 
Each angle in $T_{u,x}$ corresponds to a connected component of $D_{f,0}\setminus X_{f,0}$; see Figure \ref{figure Dgn}. 
Recall that $t_0$ is the internal angle of $f^{r_{u}}(x)$ with respect to $U_{f,\sigma^{r_{u}}(u)}$. 
By Claim 1, there is an $\varepsilon>0$ so that 
$R_g(\theta^-_{\sigma^{r_u}(u)}(t))$ and $R_g(\theta^+_{\sigma^{r_u}(u)}(t))$ land at a common point in $\partial X_{g,N+1}$ 
for any $g\in \mathcal{N}$ and $t\in (t_0-\varepsilon,t_0+\varepsilon)$. 
Since $g^{r_u}$ is univalent on each connected component of $D_{g,0}\setminus X_{g,0}$, we can decrease $\varepsilon$ if necessary 
so that $R_g(\theta^-_{u}(t))$ and $R_g(\theta^+_{u}(t))$ land at a common point in $\partial X_{g,0}$ 
for any $g\in\mathcal{N}^*$ and $t\in T_{u,x}^\varepsilon$. 

If $u=v$, the subclaim is clear. 

Now assume $u\neq v$. 
By (\ref{Kfv}), we have $K_{f,u}\cap K_{f,v}=\{x\}$, 
and there is a unique $\tau\in T_{u,x}$ such that $K_{f,v}\subset \overline{S_{f}(\theta^-_u(\tau),\theta^+_u(\tau))}$.  
For $g\in \mathcal{N}^*$, let $Y_g$ denote the connected component of $D_{g,0}\setminus X_{g,0}$ containing the tail of the external ray $R_g(\theta_u^-(\tau))$. 
Note that $g$ is univalent on $Y_g$. 
The subclaim follows from the induction and the pullback of $g|_{Y_g}$ (compare Case 1). 



This shows the subclaim and Claim 2.

\vspace{6pt}
{\bf Claim 3.}
{\it The external rays $R_g(\theta^-_{v}(t))$ and $R_g(\theta^+_{v}(t))$ land at the same point 
for any $g\in \mathcal{N}^*$, $v\in|T^\infty(f_0)|\setminus|T(f_0)|$ and $t\in\RZ$. (Making a combinatorial discussion of rational lamination.)}
\vspace{6pt}


Given $v\in|T^\infty(f_0)|\setminus|T(f_0)|$ with $\sigma(v)\in|T(f_0)|$. 
Let $u\in \sigma^{-1}(\sigma(v))\setminus\{v\}$. 
For the allowable open arc $(v,u)\subset K(f_0)$, 
we have $(v,u)\cap \crit(f_0)\neq \emptyset$ by Lemma \ref{allowable arcs in K(f0)}(\ref{allowable arcs in K(f0)-1}). 
Choose $c_u\in (v,u)\cap \crit(f_0)\subset |T(f_0)|$. 
Then there is a unique $t_u\in\RZ$ such that $u\in S_{f_0}(\theta^-_{c_u}(t_u), \theta^+_{c_u}(t_u))$. 

Let $g\in \mathcal{N}^*$. 
By Claims 1 and 2, the external rays $R_g(\theta^-_{c_u}(t_u))$ and $R_g(\theta^+_{c_u}(t_u))$ land at the same point. 
Define 
$$X_g = \bigcup_{u\in \sigma^{-1}(\sigma(v))\setminus\{v\}} \overline{S_g(\theta^-_{c_u}(t_u), \theta^+_{c_u}(t_u))}.$$
Let $t\in\RZ$ satisfy $\theta^-_v(t)\neq \theta^+_v(t)$. 
By (\ref{d-delta}) and $v\notin\crit(f_0)$, we have 
\begin{align*}
d\cdot\theta^-_v(t)&\equiv\theta^-_{\sigma(v)}(\delta(v)\cdot t) = \theta^-_{\sigma(v)}(t) \modZ,\\ 
d\cdot\theta^+_v(t)&\equiv\theta^+_{\sigma(v)}(\delta(v)\cdot t) = \theta^+_{\sigma(v)}(t) \modZ. 
\end{align*}
It follows that 
\begin{align}
\label{inverse of external rays-1}
& g^{-1}(R_g(\theta^-_{\sigma(v)}(t)))\setminus X_g = R_g(\theta^-_{v}(t)), \\
\label{inverse of external rays-2}
& g^{-1}(R_g(\theta^+_{\sigma(v)}(t)))\setminus X_g = R_g(\theta^+_{v}(t)). 
\end{align}
By Claims 1 and 2 again, the external rays $R_g(\theta^-_{\sigma(v)}(t))$ and $R_g(\theta^+_{\sigma(v)}(t))$ land at the same point. 
Let $x$ and $x'$ denote the landing points of $R_g(\theta^-_{v}(t))$ and $R_g(\theta^+_{v}(t))$ respectively. 

If $x\notin X_g$, then there is an angle $\theta\in m_d^{-1}(\theta^+_{\sigma(v)}(t))$ such that $R_g(\theta)$ lands at $x$. 
It follows from (\ref{inverse of external rays-2}) that $\theta=\theta^+_{v}(t)$, hence $x=x'$. 

Now assume $x\in X_g$. 
Let $Y$ denote the connected component of $X_g$ containing $x$. 
Then $\partial Y\cap K(g)=\{x\}$ and 
$$
\#\big\{\theta\in m_d^{-1}(\theta^-_{\sigma(v)}(t)){;~} R_g(\theta)\subset Y\big\}
= \#\big\{\theta\in m_d^{-1}(\theta^+_{\sigma(v)}(t)){;~} R_g(\theta)\subset Y\big\}. 
$$
By (\ref{inverse of external rays-1}) and $x\in Y$, we have 
$$\#\big\{\theta\in m_d^{-1}(\theta^-_{\sigma(v)}(t)){;~} x_g(\theta)\in Y\big\}
= \#\big\{\theta\in m_d^{-1}(\theta^-_{\sigma(v)}(t)){;~} R_g(\theta)\subset Y\big\}+1, 
$$
where $x_g(\theta)$ is the landing point of $R_g(\theta)$. 
It follows that 
\begin{align*}
\#\big\{\theta\in m_d^{-1}(\theta^+_{\sigma(v)}(t)){;~} x_g(\theta)\in Y\big\}
& = \#\big\{\theta\in m_d^{-1}(\theta^-_{\sigma(v)}(t)){;~} x_g(\theta)\in Y\big\}\\
& = \#\big\{\theta\in m_d^{-1}(\theta^+_{\sigma(v)}(t)){;~} R_g(\theta)\subset Y\big\}+1. 
\end{align*}
Combining this with (\ref{inverse of external rays-2}), we have $x'\in Y$. 
Then $\partial Y\cap K(g)=\{x\}$ gives $x=x'$. 

Thus Claim 3 is true for $v\in|T^\infty(f_0)|\setminus|T(f_0)|$ with $\sigma(v)\in|T(f_0)|$. 
By induction, Claim 3 is true for any $v\in|T^\infty(f_0)|\setminus|T(f_0)|$.

\vspace{6pt}

Let $g\in\mathcal{N}^*$. 
By Claims 1 and 2, we can define $(K_{g,v})_{v\in|T(f_0)|}$ by (\ref{Kfv}). 
Using the external marking $(\theta_v)_{v\in |T(f_0)|}$ for $f_0$, the polynomial $g$ 
induces a marked lemniscate map $(\mathbf{g}_g,K_g,Z_g,\Gamma_g)$ over $T(f_0)$ as (\ref{(gf,Kf,Zf,Gammaf)}). 
By Proposition \ref{straightening of marked LM}, 
there exists a unique $\mathbf{P}\in \mathcal{C}\big(\mathcal{P}_0^{T(f_0)}\big)$ 
such that $(\mathbf{g}_g,K_g,Z_g,\Gamma_g)$ is conjugate to $(\mathbf{P},K(\mathbf{P}),Z',\Gamma')$, where $(Z',\Gamma')$ is the standard marking of $(\mathbf{P},K(\mathbf{P}))$. 
Furthermore, we fix a conjugacy $(\psi_{g,v})_{v\in|T(f_0)|}$ from $(\mathbf{g}_g,K_g,Z_g,\Gamma_g)$ to $(\mathbf{P},K(\mathbf{P}),Z',\Gamma')$. 
Let $(v,k)\in I$. If $c_{v,k}(f)\in K_{f,v}^\circ$, we can shrink $\mathcal{N}$ so that $c_{v,k}(g)\in K_{g,v}^\circ$. 
If $c_{v,k}(f)\in \partial K_{f,v}$, by the discussion of Case 2 in Claim 2, we have $c_{v,k}(g)\in K_{g,v}$. 
Moreover, for each $v\in\Tfn$ with $\delta(v)\geq2$, the ordered list $(c_{v,k}(g))_{1\leq k<\delta(v)}$ is a critical marking of $g_{g,v}$. 
It follows that $\mathbf{c} :=(\psi_{g,v}(c_{v,k}(g)))_{(v,k)\in I_0}$ is a critical marking of $\mathbf{P}$, 
i.e. $(\mathbf{P}, \mathbf{c})\in\modelf$. 
Recall that $\chi:\Hbar \rightarrow \modelf$ is the straightening map defined by (\ref{straightening map}). 
Let $\tilde g = \chi^{-1}(\mathbf{P}, \mathbf{c})$. 
By Corollary \ref{inclusion relation of real laminations}, we have $\lambda_\mathbb{R}(f_0)\subset\lambda_\mathbb{R}(\tilde g)$. 
To prove $g=\tilde g$, we will first show $\lambda_\mathbb{R}(f_0)\subset\lambda_\mathbb{R}(g)$. 

Let $$\Theta=\big\{(\theta^-_v(t),\theta^+_v(t)){;~}
\text{$v\in|T^\infty(f_0)|$, $t\in\RZ$ and $\theta^-_v(t)\neq\theta^+_v(t)$}\big\}.$$ 
By Lemma \ref{sectors attaching to U}, we have $\Theta\subset\lambda_{\mathbb{Q}}(f_0)$. 
By Proposition \ref{two sequences of external angles 2}, we have $\langle\Theta\rangle_{\mathbb{Q}}=\lambda_\mathbb{Q}(f_0)$. 
By Claims 1, 2 and 3, we have $\Theta\subset\lambda_\mathbb{Q}(g)$. 
Taking $\langle\cdot\rangle_\mathbb{Q}$ on both sides of $\Theta\subset\lambda_\mathbb{Q}(g)$ gives  $\lambda_\mathbb{Q}(f_0)\subset\lambda_\mathbb{Q}(g)$. 
By the behavior of critical orbits, the polynomial $g$ is semi-hyperbolic, so the real lamination $\lambda_{\mathbb{R}}(g)$ of $g$ makes sense. 
Taking $\langle\cdot\rangle_\mathbb{R}$ on both sides of $\lambda_\mathbb{Q}(f_0)\subset\lambda_\mathbb{R}(g)$ gives $\lambda_\mathbb{R}(f_0)\subset\lambda_\mathbb{R}(g)$ by Lemma \ref{connection between rational lamination and real lamination}. 

Similar to Lemma \ref{corresponding of external rays}, for any $v\in|T(f_0)|$ and $t\in\RZ$, 
the external ray $R_{\mathbf{P},v}(t)$ lands at $\psi_{g,v}(x)$, where $x$ is the landing point of $R_g(\theta_v^-(t))$. 
Note that $\lambda_\mathbb{R}(f_0)\subset\lambda_\mathbb{R}(g)$ and $\lambda_\mathbb{R}(f_0)\subset\lambda_\mathbb{R}(\tilde g)$. 
Repeating the discussion in the proof of Lemma \ref{chi is injective}, we see that $\lambda_\mathbb{R}(g) = \lambda_\mathbb{R}(\tilde g)$. 
By Lemma \ref{continuous extension on the closure of infinity basin}, 
the map $B_{\tilde g}^{-1}\circ B_g:U_{g,\infty}\rightarrow U_{\tilde g,\infty}$ extends to 
a homeomorphism $\varphi:\overline{U_{g,\infty}}\rightarrow\overline{U_{\tilde g,\infty}}$. 
Furthermore, we extend $\varphi$ to $\Phi:\mathbb{C}\rightarrow\mathbb{C}$ by 
$$\Phi(z) = \begin{cases}
\varphi(z), &z\in \overline{U_{g,\infty}}, \\
\psi_{\tilde g,v}^{-1}\circ\psi_{g,v}(z), &\text{$z\in K_{g,v}$ for some $v\in|T^\infty(f_0)|$.}
\end{cases}$$
We remark that the notations $K_{g,v}$ and $\psi_{g,v}$ are still valid for $v\in|T^\infty(f_0)|$. 
Then $\Phi$ is well-defined and $\Phi\circ g = \tilde g\circ\Phi$. 
Since $J(g)$ and $J(\tilde g)$ are conformally removable, the map $\Phi$ is a conformal conjugacy between $g$ and $\tilde g$. 
It follows that $\Phi=\id$. 
Therefore $(\mathbf{c}(g), g(0)) =  (\mathbf{c}(\tilde g) ,\tilde g(0))$, i.e. $g=\tilde g \in\Hbar$. 
This proves $\mathcal{N}^*\subset \Hbar$. 
\end{proof}

\subsection{Isolated zero}
\begin{lemma} 
\label{alg-uni}
Let $\Omega\subset \mathbb{C}^n$ be a domain, and let $F=(F_1, \dots, F_n): \Omega\rightarrow 
\mathbb{C}^n$ be a holomorphic map. Assume that $F$ has an isolated zero at $z_0\in \Omega$. Then 
there is a univalent map $$\phi: \mathbb{D}\rightarrow\{z\in \Omega{;~} F_2(z)=\cdots=F_n(z)=0\}$$ 
with $\phi(0)=z_0$.
\end{lemma}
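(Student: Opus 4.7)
The plan is to interpret $V:=\{z\in\Omega{;~} F_2(z)=\cdots=F_n(z)=0\}$ as a one-dimensional analytic subvariety near $z_0$, pick an irreducible branch through $z_0$, and parametrize it via the Puiseux theorem.

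First, I would pin down the dimension of $V$ at $z_0$. On the one hand, Krull's height theorem in its analytic form (Ritt's theorem) asserts that the zero locus of $n-1$ holomorphic functions on a domain of $\mathbb{C}^n$ has dimension at least $n-(n-1)=1$ at each of its points, so $\dim_{z_0} V\ge 1$. On the other hand, the hypothesis that $F$ has an isolated zero at $z_0$ means exactly that $F_1|_V$ has an isolated zero at $z_0$. If some irreducible component of the germ $(V,z_0)$ had dimension $\ge 2$, then applying the height theorem on that component would force the zero set of $F_1$ restricted to it to have dimension $\ge 1$, contradicting isolation. Hence every irreducible component of $(V,z_0)$ has dimension exactly $1$.

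Next, I would select any irreducible component $C$ of the germ $(V,z_0)$; by the previous step $C$ is an irreducible one-dimensional analytic germ at $z_0$. The Puiseux parametrization theorem (equivalently, the normalization of a one-dimensional irreducible complex analytic germ) produces, for some $r>0$, an \emph{injective} holomorphic map $\widetilde\phi:\mathbb{D}(0,r)\to\mathbb{C}^n$ with $\widetilde\phi(0)=z_0$ whose image coincides with $C$ in a neighborhood of $z_0$. Since $C\subset V$, the map $\widetilde\phi$ takes values in $V$. Setting $\phi(t)=\widetilde\phi(rt)$ yields the required univalent map $\phi:\mathbb{D}\to V$ with $\phi(0)=z_0$.

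The only point that really needs justification is the injectivity of the Puiseux parametrization of an irreducible one-dimensional germ, but this is classical: after a linear change of coordinates making the projection $\pi:C\to\mathbb{C}$ onto the first coordinate finite of some degree $m$, the Weierstrass preparation theorem reduces the problem to inverting a cyclic cover, and the parametrization takes the form $t\mapsto (t^m,\sum_{k\ge m'} a_k t^k)$, which is manifestly one-to-one. Beyond the two dimension estimates above, no additional ingredients from the earlier parts of the paper are needed; this is the main (and essentially only) obstacle, and it is purely a matter of invoking the right local complex-analytic machinery.
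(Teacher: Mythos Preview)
Your proposal is correct and follows essentially the same route as the paper's proof: show that $V$ is one-dimensional at $z_0$, pick an irreducible component of the germ, and invoke Puiseux's theorem to parametrize it univalently by a disk. The paper gives only a sketch, asserting $\Cdim(X)=1$ without justification and citing Chirka for Puiseux; your argument for the dimension (lower bound via the analytic height theorem, upper bound via the isolated-zero hypothesis applied to $F_1$ on a hypothetical higher-dimensional component) supplies exactly the detail the paper omits.
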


For more details of isolated zeros, see \cite[Appendix B]{Mil-Sing}. 
The univalent map $\phi$ may not be locally surjective, and its image $\phi(\mathbb{D})$ may not 
be smooth at $z_0$; see the following examples. 

\begin{example}
Consider $F=(F_1, F_2): \mathbb{C}^2 \rightarrow \mathbb{C}^2$ defined by $F_1(z_1, z_2) = z_1 + z_2$ 
and $F_2(z_1, z_2) = z_1 z_2$. Clearly, $(z_1,z_2) = (0,0)$ is an isolated zero of $F$. Note that 
$F_2^{-1}(0)=\{(z_1, z_2){;~} z_1 z_2=0\}=(\{0\}\times \mathbb{C})\cup (\mathbb{C}\times \{0\})$, 
which can be thought as the gluing of two copies of $\mathbb{C}$ at $(0,0)$. A holomorphic map $\phi: 
\mathbb{D}\rightarrow F_2^{-1}(0)$ with $\phi(0)=(0,0)$ necessarily takes values in one copy of 
$\mathbb{C}$, hence not locally surjective. 
\end{example}

\begin{example}
Consider $F=(F_1, F_2): \mathbb{C}^2\rightarrow \mathbb{C}^2$ defined by $F_1(z_1, z_2) = z_1$ and 
$F_2(z_1, z_2) = z_1^3 - z_2^2$. Then $F_2^{-1}(0)=\{(z_1, z_2){;~} z_1^3 - z_2^2 = 0\}$ has a cusp 
at $(0,0)$. We can take $\phi: \mathbb{C}\rightarrow F_2^{-1}(0)$, $\zeta\mapsto (\zeta^2, \zeta^3)$. 
\end{example}

\begin{proof}
[Sketch of proof of Lemma \ref{alg-uni}]
Let $X = \bigcap_{2\leq k\leq n}F_k^{-1}(0)$. Since $z_0$ is an isolated zero of $F$, 
we see that $X$ is a complex analytic set with $\Cdim
(X)=1$. Let $(Y,z_0)$ be an irreducible component of the germ $(X,z_0)$ of complex analytic set. By 
Puiseux's theorem \cite[Chapter 1, \S6]{Chirka}, we see that  $(Y,z_0)$ is the univalent image of 
$(\mathbb{D},0)$. 
\end{proof}

\begin{lemma}
\label{local diffeomorphism}
Let $\Omega\subset \mathbb{C}^n$ be a domain, and let $F = (F_1, \dots, F_n): \Omega \rightarrow 
\mathbb{C}^n$ be a holomorphic map with an isolated zero at $z_0\in \Omega$. For each $1 \leq k \leq 
n$, let $X_k = \bigcap_{j\neq k} F_j^{-1}(0)$, and choose a univalent map $\phi_k:\mathbb{D} 
\rightarrow X_k$ so that $\phi_k(0)=z_0$ (see Lemma \ref{alg-uni}). Then $F$ is a local diffeomorphism 
near $z_0$ if and only if $(F_k\circ\phi_k)'(0)\neq 0$ for any $1\leq k\leq n$. 
\end{lemma}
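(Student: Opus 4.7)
The plan is to reduce everything to one elementary computation combined with the inverse function theorem. By the holomorphic inverse function theorem, $F$ is a local diffeomorphism near $z_0$ if and only if $DF(z_0)\in\operatorname{Mat}_{n\times n}(\mathbb{C})$ is invertible. The key identity, obtained by differentiating the relation $F_j\circ\phi_k\equiv 0$ at $t=0$ (which holds for $j\neq k$ since $\phi_k(\mathbb{D})\subset X_k$), is
\begin{equation}
\label{key-identity}
DF(z_0)\,\phi_k'(0) \;=\; (F_k\circ\phi_k)'(0)\cdot e_k \qquad (1\leq k\leq n),
\end{equation}
where $e_k$ is the $k$-th standard basis vector of $\mathbb{C}^n$. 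Both directions will be extracted from \eqref{key-identity}.

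For the forward direction, assume $F$ is a local diffeomorphism. Then $F$ is injective on a neighborhood of $z_0$. Composing with the (injective, holomorphic) map $\phi_k$ shows that $F\circ\phi_k\colon\mathbb{D}\to\mathbb{C}^n$ is injective near $0$. But the image of $F\circ\phi_k$ lies in the $k$-th coordinate axis $\mathbb{C}\cdot e_k$ by the definition of $X_k$, so $F_k\circ\phi_k\colon\mathbb{D}\to\mathbb{C}$ is an injective holomorphic function near $0$. Since a holomorphic function whose derivative vanishes at $0$ is locally $m$-to-$1$ for some $m\geq 2$, the injectivity forces $(F_k\circ\phi_k)'(0)\neq 0$.

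For the reverse direction, assume $(F_k\circ\phi_k)'(0)\neq 0$ for every $k$. Then \eqref{key-identity} shows that for each $k$ the vector $e_k$ lies in the image of the linear map $DF(z_0)$. Consequently $DF(z_0)$ is surjective, and as the source and target both have complex dimension $n$, $DF(z_0)$ is an isomorphism. By the inverse function theorem, $F$ is a local diffeomorphism near $z_0$.

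The only conceptual subtlety is that the univalent parametrization $\phi_k$ supplied by Lemma \ref{alg-uni} need not be an immersion at $0$: if $X_k$ has a singularity (e.g.\ a cusp) at $z_0$, then $\phi_k'(0)$ can vanish. This is not an obstacle here. In the reverse direction the hypothesis $(F_k\circ\phi_k)'(0)\neq 0$ together with the chain rule automatically forces $\phi_k'(0)\neq 0$, so \eqref{key-identity} says precisely what we need. In the forward direction we never need to access $\phi_k'(0)$ at all — we obtain $(F_k\circ\phi_k)'(0)\neq 0$ via the one-variable injectivity argument above. Thus the lemma follows cleanly from \eqref{key-identity} and the inverse function theorem, with no further complex-analytic machinery required.
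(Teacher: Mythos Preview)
Your proof is correct and essentially matches the paper's: both directions hinge on the chain-rule identity $DF(z_0)\,\phi_k'(0)=(F_k\circ\phi_k)'(0)\,e_k$, and your reverse direction is exactly the paper's matrix-product argument rephrased in terms of surjectivity of $DF(z_0)$. For the forward direction the paper instead notes that $X_k$ is smooth (so $\phi_k$ is an immersion) and $F_k|_{X_k}$ is biholomorphic, whereas your one-variable injectivity argument for $F_k\circ\phi_k$ is a mild streamlining of the same idea.
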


\begin{proof}
$\Rightarrow$. 
Shrinking $\Omega$ if necessary, we can assume $F:\Omega\rightarrow F(\Omega)$ is 
biholomorphic. Then for each $1\leq k\leq n$, the set $X_k$ is a smooth complex curve through $z_0$. It 
follows that $\phi_k:\mathbb{D}\rightarrow \phi_k(\mathbb{D})$ and $F_k: X_k\rightarrow F_k(X_k)$ are 
biholomorphic. So $(F_k\circ\phi_k)'(0)\neq 0$. 

$\Leftarrow$. 
For $1\leq j, k\leq n$, we have $(F_j\circ\phi_k)'(0) = \sum_{m=1}^n\frac{\partial F_j}{\partial z_m}(z_0) \phi_{k,m}'(0)$, 
where $\phi_k = (\phi_{k,1}, \dots, \phi_{k,n})$.  
If $j\neq k$, then $F_j\circ\phi_k\equiv 0$, hence $(F_j\circ\phi_k)'(0)=0$. 
It follows that $$
\bigg(\frac{\partial F_j}{\partial z_m}(z_0)\bigg)_{j,m}
\bigg( \phi_{k,m}'(0) \bigg)_{m,k} = 
{\left(\begin{array}{ccc}
(F_1\circ\phi_1)'(0) & &\\
&\ddots&\\
&&(F_n\circ\phi_n)'(0)
\end{array}\right)}.
$$
By the assumption $(F_k\circ\phi_k)'(0)\neq 0$ for any $1\leq k\leq n$, 
the Jacobian matrix ${\left(\frac{\partial F_j}{\partial z_m}(z_0)\right)}_{j,m}$ is invertible. 
Therefore $F$ is a local diffeomorphism near $z_0$. 
\end{proof}

\subsection{Distortion theorems of holomorphic motions}
For a path $\gamma:[0,1]\rightarrow\mathbb{C}$ and a point $z\notin\gamma([0,1])$, let $\Delta(\gamma, 
z)$ denote the  angular increment along the path $\gamma$ around $z$. For example, if we let $\gamma(t) 
= e^{i\pi t}, t\in[0,1]$, then $\Delta(\gamma,-i) = \pi/2$ and $\Delta(\gamma,\sqrt{3}i) = -\pi/3$. 

\begin{lemma} 
[Argument distortion]
\label{argument-distortion} 
Let $f:\mathbb{C}\rightarrow\mathbb{C}$ be a $K$-quasiconformal mapping. Suppose $z_0,z_1,z_2$ are 
successively three points on a straight line. Then there is a constant $C=C(K)>0$ such that $$|\Delta
(f\circ\gamma,f(z_0))|\leq C\log\frac{z_2-z_0}{z_1-z_0},$$ where $\gamma(t) = (1-t)z_1+t z_2, t\in 
[0,1]$ is the path from $z_1$ to $z_2$. 
\footnote{
For example, consider $f(z)=e^{i\log|z|}z = z^{1+i/2}\bar{z}^{i/2}$.
}
\end{lemma}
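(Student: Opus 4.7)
The plan is to pass to logarithmic coordinates, where angular variation around the reference point becomes a linear displacement that can be controlled by Mori's theorem (Lemma \ref{Hol-contin of qcm}). Translating both domain and range, and rotating the domain---all conformal operations that change neither $K$ nor $\Delta$---I may assume $z_0 = 0$, $f(0) = 0$, and $z_1 = r$, $z_2 = R$ are positive reals with $0 < r < R$. Writing $\rho = R/r > 1$, the claim becomes $|\Delta(f|_{[r,R]}, 0)| \leq C(K) \log \rho$. Since $f$ is a homeomorphism of $\mathbb{C}$ with $f^{-1}(0) = \{0\}$, I would lift the restriction $f|_{\mathbb{C}^*}$ along the exponential universal covering $\pi(w) = e^w$ to a homeomorphism $\tilde f\colon \mathbb{C} \to \mathbb{C}$ satisfying $\pi\circ\tilde f = f\circ\pi$. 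Because $\pi$ is locally conformal, $\tilde f$ is again $K$-quasiconformal, and the single-valuedness of $f$ on $\mathbb{C}^*$ forces the $2\pi i$-periodicity $\tilde f(w+2\pi i) = \tilde f(w) + 2\pi i$. Using $\arg(e^z) = \operatorname{Im}(z)$, the angular variation is reformulated as
\[
\Delta(f|_{[r,R]},0) = \operatorname{Im}\bigl(\tilde f(\log R) - \tilde f(\log r)\bigr),
\]
so it suffices to establish the displacement estimate $|\tilde f(\log R) - \tilde f(\log r)| \leq C(K)\log\rho$.

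Next I would partition the real segment $[\log r, \log R]$ into $n = \lceil \log_2 \rho \rceil$ consecutive subintervals $[a_j, a_j + h_j]$ of lengths $h_j \leq \log 2$, and on each piece consider the affine renormalization
\[
H_j(w) := \frac{\tilde f(a_j + 2\pi i\, w) - \tilde f(a_j)}{2\pi i}.
\]
This $H_j$ is $K$-quasiconformal (an affine change of variable in source and target preserves the dilatation), and the $2\pi i$-periodicity of $\tilde f$ immediately yields $H_j(0) = 0$ and $H_j(1) = 1$. Mori's theorem applied to $H_j$ on the closed unit disk gives the H\"older estimate $|H_j(w)| \leq C_0(K) |w|^{1/K}$, and specializing to $w = h_j/(2\pi i)$, whose modulus is at most $\log 2/(2\pi) < 1$, we obtain $|\tilde f(a_j + h_j) - \tilde f(a_j)| \leq 2\pi C_0(K)$. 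Summing across the $n$ subintervals produces
\[
|\tilde f(\log R) - \tilde f(\log r)| \leq 2\pi n C_0(K) \leq C(K)(\log \rho + 1),
\]
which collapses to $C(K)\log\rho$ whenever $\rho$ is bounded away from $1$ (say $\rho \geq 2$).

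The main obstacle is the small-ratio regime $\rho$ close to $1$, where the subdivision-and-Mori estimate above only yields an additive constant on the single piece $[\log r, \log R]$ rather than a bound proportional to $\log\rho$. To close this gap I would exploit that the $2\pi i$-periodic $K$-quasiconformal lift $\tilde f$ descends to a $K$-quasiconformal self-homeomorphism of the cylinder $\mathbb{C}/2\pi i\mathbb{Z}$ fixing its two ends; equivalently, $\tilde f$ corresponds under $\exp$ to a $K$-quasiconformal self-map of $\widehat{\mathbb{C}}$ fixing three points, which admits Mori-type estimates that are \emph{scale invariant} under the full affine group rather than only under the discrete $2\pi i$-translations. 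Applying this scale-invariant estimate after rescaling the pair $(a_j, a_j + h_j)$ to a fixed distance refines the single-piece bound to $|\tilde f(a_j + h_j) - \tilde f(a_j)| \leq C(K) h_j^{1/K}$ and, combined with the large-ratio summation above, yields the uniform conclusion $|\Delta(f|_{[r,R]},0)| \leq C(K)\log\rho$ for every $\rho > 1$. Transferring the scale-invariant cylinder estimate back to the Euclidean picture on the real axis is the technical heart of the argument.
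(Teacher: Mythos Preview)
The paper does not prove this lemma itself; it refers the reader to van Strien \cite[Lemma 4.1]{vStrien}. Your approach via the logarithmic lift $\tilde f$ and Mori's theorem is natural, and your argument for the regime $\rho=\frac{z_2-z_0}{z_1-z_0}\ge 2$ is correct: subdividing $[\log r,\log R]$ into $\lceil\log_2\rho\rceil$ pieces of length at most $\log 2$, normalizing each piece via the $2\pi i$-periodicity so that the renormalized map $H_j$ fixes $0$ and $1$, and summing the resulting uniform Mori bounds yields $|\Delta|\le C(K)\log\rho$.

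The genuine gap is in your small-ratio ``fix''. The bound you call a refinement, $|\tilde f(a+h)-\tilde f(a)|\le C(K)\,h^{1/K}$, is exactly what Mori applied to $H_j$ already gave you; it is not sharper. Since $K\ge 1$, one has $h^{1/K}\ge h$ for $0<h<1$, so on a single short piece you obtain only $|\Delta|\le C(K)(\log\rho)^{1/K}$, which is \emph{weaker} than the asserted $C(K)\log\rho$ when $\rho$ is close to $1$. The appeal to a ``scale-invariant'' Mori estimate on $\widehat{\mathbb C}$ does not rescue this: rescaling the pair $(a,a+h)$ in the lift destroys the $2\pi i$-periodicity that was your only source of a uniform normalization, so after an arbitrary dilation there is no fixed third point and hence no uniform H\"older constant. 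A correct argument for all $\rho>1$ needs a different ingredient; you should consult van Strien's paper directly rather than try to bootstrap from Mori alone.

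For the record, the paper's only use of this lemma (in the proof of Lemma~\ref{distor-holo-motion}) involves ratios $1/|z-z_0|>1/r$ with $r\in(0,1)$ fixed, so your large-ratio argument already suffices for that application.
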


See van Strien \cite[Lemma 4.1]{vStrien} for a proof. The following two lemmas are gathered from 
\cite{vStrien} as well, which will be used in the proof of Proposition \ref{transversality-prop}. 

\begin{lemma}
\label{distor-holo-motion}
Let $h:\mathbb{D}^n\times \mathbb{C}\rightarrow \mathbb C$ be a holomorphic motion with basepoint 
$0\in \mathbb{D}^n$, and let $z_0\in\mathbb{C}$. Define $u_z(\lambda) = h(\lambda,z)-h(\lambda,z_0)$. 
Given $\delta\in(0,1)$. Then there are constants $r\in(0,1)$ and $C>0$ such that $$\left| 
\frac{ \partial u_z(\lambda) / \partial \lambda_j}{u_z(\lambda)}\right|\leq C\log\frac{1}{|z-z_0|}$$
for any $1\leq j\leq n$, $z\in \mathbb{D}^*(z_0,r)=\{\zeta{;~} 0<|\zeta-z_0|<r\}$ and $\lambda\in\mathbb{D}^n(\delta)$. 
\footnote{
For example, consider $n=1$, $h(\lambda,z) = z|z|^{\frac{2\lambda}{1-\lambda}} 
= ze^{\frac{2\lambda}{1-\lambda}\log|z|}$ and $z_0=0$. 
Then $\left|\frac{u_z'(\lambda)}{u_z(\lambda)}\right|= \frac{2}{|1-\lambda|^2}\log\frac{1}{|z|}$ for any $0< |z|<1$ and $|\lambda|<1$. 
}
\end{lemma}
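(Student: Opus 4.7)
The plan is to control the logarithmic derivative of $u_z$ by a three-point normalization: divide $u_z$ by a reference difference $u_w$ for some fixed $w\neq z_0$, show the resulting function avoids two values, and apply the hyperbolic metric on $\mathbb{C}\setminus\{0,1\}$ near its puncture at $0$. The H\"older estimate from Mori's theorem (Lemma \ref{Hol-contin of qcm}), applied via S{\l}odkowski's extension (Lemma \ref{dilatation of holomorphic motion}), will translate a small value of $|u_z/u_w|$ into a $\log(1/|z-z_0|)$ bound.

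First I fix $w\in\mathbb{C}\setminus\{z_0\}$ (e.g.\ $w=z_0+1$) and set $\phi_z(\lambda)=u_z(\lambda)/u_w(\lambda)$. Since $h(\lambda,\cdot)$ is injective for every $\lambda\in\mathbb{D}^n$, we have $\phi_z:\mathbb{D}^n\to\mathbb{C}\setminus\{0,1\}$ whenever $z\notin\{z_0,w\}$. Restricting to any complex line in the $\lambda_j$-direction through a chosen point $\lambda_0\in\mathbb{D}^n(\delta)$ and reparametrizing, it suffices to treat $n=1$ with $\lambda$ in a slightly larger disk $\mathbb{D}(\delta')$ and then estimate $|\phi'_z(\lambda_0)|$. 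The Schwarz–Pick contraction from $\mathbb{D}(\delta')$ (Poincar\'e density bounded by some $A=A(\delta)$) to $\mathbb{C}\setminus\{0,1\}$ gives $\rho(\phi_z(\lambda))\,|\partial_{\lambda_j}\phi_z(\lambda)|\le A$, where $\rho$ is the hyperbolic density on $\mathbb{C}\setminus\{0,1\}$. A standard asymptotic for $\rho$ near $0$ yields $\rho(\zeta)\ge c/(|\zeta|\log(1/|\zeta|))$ for $|\zeta|\le 1/2$, hence
\[
\bigl|\partial_{\lambda_j}\log\phi_z(\lambda)\bigr|=\frac{|\partial_{\lambda_j}\phi_z(\lambda)|}{|\phi_z(\lambda)|}\le B\,\log\bigl(1/|\phi_z(\lambda)|\bigr)
\]
whenever $|\phi_z(\lambda)|\le 1/2$.

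Next I turn $\log(1/|\phi_z(\lambda)|)$ into $\log(1/|z-z_0|)$. By Lemma \ref{dilatation of holomorphic motion}, for $\lambda\in\mathbb{D}^n(\delta)$ the slice $h(\lambda,\cdot)$ extends to a $K$-quasiconformal self-map of $\mathbb{C}$ with $K=(1+\delta)/(1-\delta)$; after a normalization using the two distinct base values $h(\lambda,z_0)$ and $h(\lambda,w)$, Mori's theorem (Lemma \ref{Hol-contin of qcm}) yields constants so that
\[
c_1|z-z_0|^{K}\le |u_z(\lambda)|\le c_2|z-z_0|^{1/K}
\]
uniformly in $\lambda\in\mathbb{D}^n(\delta)$ and in $z$ in a bounded neighborhood of $z_0$. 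The same reasoning gives two-sided bounds $0<m\le |u_w(\lambda)|\le M$ on $\mathbb{D}^n(\delta)$. Hence $|\phi_z(\lambda)|\le(c_2/m)|z-z_0|^{1/K}$, so we may choose $r\in(0,1)$ so small that $|\phi_z(\lambda)|\le 1/2$ for all $z\in\mathbb{D}^*(z_0,r)$ and all $\lambda\in\mathbb{D}^n(\delta)$, and moreover $\log(1/|\phi_z(\lambda)|)\le K\log(1/|z-z_0|)+C_0$.

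Finally, writing $\partial_{\lambda_j}\log u_z=\partial_{\lambda_j}\log\phi_z+\partial_{\lambda_j}\log u_w$ and noting that $|\partial_{\lambda_j}\log u_w|$ is bounded on $\mathbb{D}^n(\delta)$ (since $u_w$ is holomorphic and bounded away from $0$ and $\infty$ there, so Cauchy estimates on a slightly larger polydisk apply), I combine the two bounds to obtain
\[
\left|\frac{\partial u_z/\partial\lambda_j}{u_z(\lambda)}\right|\le C\log\frac{1}{|z-z_0|}
\]
for all $z\in\mathbb{D}^*(z_0,r)$, $\lambda\in\mathbb{D}^n(\delta)$ and $1\le j\le n$. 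The main technical point is the passage from the qualitative injectivity of $h(\lambda,\cdot)$ to a quantitative two-sided H\"older bound on $|u_z|$; this is precisely where S{\l}odkowski together with Mori is essential. The hyperbolic-metric step is standard once the omitted values $0$ and $1$ are in place, so the genuine content lies in arranging the normalization $\phi_z=u_z/u_w$ and transporting the H\"older estimate through it.
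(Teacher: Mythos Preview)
Your argument is correct and takes a genuinely different route from the paper's proof. The paper works with the holomorphic logarithm $v_z(\lambda)=\log u_z(\lambda)$ directly: it bounds $\operatorname{Re} v_z$ by the two-sided H\"older estimate (S{\l}odkowski $+$ Mori), and bounds $\operatorname{Im} v_z$ via the separate argument distortion Lemma~\ref{argument-distortion} applied along a radial segment; having controlled $|v_z|$ by $C\log(1/|z-z_0|)$ on a slightly larger polydisk $\mathbb{D}^n((1+\delta)/2)$, it finishes with the Cauchy integral formula to pass from $v_z$ to $\partial_{\lambda_j} v_z$. Your approach instead introduces the three-point normalization $\phi_z=u_z/u_w$, observes that injectivity forces $\phi_z$ to omit $0$ and $1$, and applies Schwarz--Pick from a coordinate disk into $\mathbb{C}\setminus\{0,1\}$ together with the standard cusp asymptotic $\rho(\zeta)\asymp 1/(|\zeta|\log(1/|\zeta|))$; the H\"older bound is then only needed to convert $\log(1/|\phi_z|)$ into $\log(1/|z-z_0|)$ and to ensure $|\phi_z|\le 1/2$. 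The upshot is that you bypass Lemma~\ref{argument-distortion} entirely and never need to integrate over a larger polydisk, at the cost of invoking the hyperbolic density of the thrice-punctured sphere. Both proofs rest on the same quantitative input (S{\l}odkowski $+$ Mori), but yours packages the complex-analytic step more cleanly, while the paper's version is closer to van Strien's original treatment and keeps all ingredients elementary.
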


\begin{proof}
Let $K = \frac{1+\delta}{1-\delta}$. 
By S{\l}odkowski's theorem (Lemma \ref{dilatation of holomorphic motion}), for all $\lambda\in \mathbb{D}^n(\delta)$, the map $h(\lambda, \cdot)$ is a $K$-quasiconformal mapping. 
By Lemma \ref{Hol-contin of qcm}, there are constants $C_1,C_2$ depending on $K$ such that  
\begin{equation}
\label{estimate of u}
C_1|z-z_0|^{K}\leq |u_z(\lambda)| \leq C_2|z-z_0|^{1/K} 
\end{equation}
for any $z\in \mathbb{D}(z_0,1)$ and $\lambda\in \mathbb{D}^n(\delta)$. 
Here we remark that although $C_1,C_2$ also depend on the normalization of $h(\lambda, \cdot)$, they can be chosen uniformly for all $\lambda\in \mathbb{D}^n(\delta)$.

When $z\in\mathbb{C}\setminus\{z_0\}$, the function $u_z:\mathbb{D}^n\rightarrow\mathbb{C}\setminus\{0\}$ has a holomorphic logarithm $$v_z(\lambda)=\log(u_z(\lambda))$$ 
with $\operatorname{Im}(v_z(0)) = \arg(z-z_0)\in [0, 2\pi)$, since $\mathbb{D}^n$ is simply connected. 
Now let $z\in \mathbb{D}^*(z_0,1)$ and $\lambda\in \mathbb{D}^n(\delta)$. 
By (\ref{estimate of u}), we have 
\begin{equation}
\label{estimate of Re v}
\log(C_1) + K \log|z-z_0|\leq  \operatorname{Re}(v_z(\lambda))\leq \log(C_2)+\frac{1}{K}\log|z-z_0|.
\end{equation}
Let $z' = z_0+\frac{z-z_0}{|z-z_0|}$. 
Then $z_0,z,z'$ are successively three points on a straight line. 
Since the map $w\mapsto u_w(\lambda)$ is $K$-quasiconformal, 
applying Lemma \ref{argument-distortion} to the triple $(z_0,z,z')$, 
there is a constant $C_3$ depending on $K$ such that 
$|\Delta(u_{\gamma(\cdot)}(\lambda),0)| \leq C_3 \log\frac{1}{|z-z_0|}$, 
where $\gamma(t)=(1-t)z+tz',t\in[0,1]$ is the path from $z$ to $z'$. 
Note that $|z'-z_0|=1$ and $\lambda\in \mathbb{D}^n(\delta)\Subset\mathbb{D}^n$. By the continuity of $h$, there is a constant $C_4>0$ independent of $z'$ and $\lambda$ so that 
$|\operatorname{Im}(v_{z'}(\lambda))| \leq C_4$. 
It follows that 
\begin{equation}
\label{estimate of Im v}
|\operatorname{Im}(v_z(\lambda))|
=|\operatorname{Im}(v_{z'}(\lambda)) - \Delta(u_{\gamma(\cdot)}(\lambda),0)|
\leq C_4 + C_3\log\frac{1}{|z-z_0|}. 
\end{equation}
Combining (\ref{estimate of Re v}) with (\ref{estimate of Im v}), there are constants $r\in(0,1)$ 
and $C_5>0$ so that $|v_z(\lambda)| \leq C_5\log\frac{1}{|z-z_0|}$ for any $z\in\mathbb{D}^*(z_0,r)$ 
and $\lambda\in \mathbb{D}^n(\delta)$. Similarly, we can decrease $r$ if necessary and choose a constant 
$C_6>0$ so that $|v_z(\lambda)| \leq C_6\log\frac{1}{|z-z_0|}$ for any $z\in\mathbb{D}^*(z_0,r)$ and 
$\lambda\in \mathbb{D}^n{\left(\frac{1+\delta}{2}\right)}$. By Cauchy's integral formula, for any 
$z\in\mathbb{D}^*(z_0,r)$ and $\lambda\in \mathbb{D}^n(\delta)$, we have 
\begin{align*}
\left|\frac{ \partial u_z / \partial \lambda_j }{u_z}\right|
&= \left| \frac{\partial v_z}{\partial \lambda_j}\right|= \left| \frac{1}{(2\pi i)^n} 
\int_{|\zeta_n-\lambda_n|=\frac{1-\delta}{2}} \cdots \int_{|\zeta_1-\lambda_1|=\frac{1-\delta}{2}}
 \frac{v_z(\zeta) d\zeta_1 \cdots d\zeta_n}{(\zeta_j-\lambda_j) \prod_{k=1}^n (\zeta_k-\lambda_k)}  \right| \\
&\leq \frac{2}{1-\delta}C_6\log\frac{1}{|z-z_0|}. 
\end{align*}
Taking $C=\frac{2}{1-\delta}C_6$ completes the proof. 
\end{proof}


\begin{lemma}
\label{simple zeros}
For a holomorphic motion $h:\mathbb{D}\times \mathbb{C}\rightarrow \mathbb C$ with basepoint $0\in \mathbb D$ 
and a holomorphic function $a:\mathbb{D} \rightarrow \mathbb C$, 
we define $\psi_z(\lambda) = a(\lambda)-h(\lambda, z)$. 
Suppose $\lambda = 0$ is an isolated zero of $\psi_{z_0}$ with multiplicity $m\geq1$ for some $z_0\in\mathbb{C}$. 
Then for any $\delta>0$ small enough, there is a constant $r=r(\delta)>0$ such that $\psi_z$ has $m$ simple zeros in $\mathbb{D}^*(\delta)$ 
and no other zero in $\mathbb{D}(\delta)$ for any $z\in \mathbb{D}^*(z_0,r)$. 
\end{lemma}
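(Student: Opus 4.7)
My approach is to combine a Rouch\'e count with the logarithmic derivative estimate from Lemma~\ref{distor-holo-motion}, applied to the decomposition
\[
\psi_z(\lambda)=\psi_{z_0}(\lambda)-u_z(\lambda),\qquad u_z(\lambda):=h(\lambda,z)-h(\lambda,z_0).
\]
Since $\psi_{z_0}$ has an isolated zero of order $m$ at $\lambda=0$, I would first fix $\delta>0$ so small that $\psi_{z_0}$ does not vanish on $0<|\lambda|\leq\delta$, set $M:=\min_{|\lambda|=\delta}|\psi_{z_0}(\lambda)|>0$, and use the continuity of $h$ on compact sets to find $r_0>0$ with $|u_z(\lambda)|<M$ on $|\lambda|=\delta$ for all $z\in\mathbb{D}(z_0,r_0)$. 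By Rouch\'e's theorem, $\psi_z$ then has exactly $m$ zeros (counted with multiplicity) in $\mathbb{D}(\delta)$. The value $\psi_z(0)=a(0)-h(0,z)=z_0-z$ (using $h(0,\cdot)=\mathrm{id}$ and $\psi_{z_0}(0)=0$, which forces $a(0)=z_0$) is nonzero for $z\neq z_0$, so none of these zeros sits at $\lambda=0$.

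Next I would locate the zeros of $\psi_z$ quantitatively. Write $\psi_{z_0}(\lambda)=c\lambda^m+O(\lambda^{m+1})$ with $c\neq 0$; shrinking $\delta$, I may assume
\[
\tfrac{|c|}{2}|\lambda|^{m}\leq|\psi_{z_0}(\lambda)|\leq 2|c|\,|\lambda|^{m},\qquad |\psi'_{z_0}(\lambda)|\geq\tfrac{m|c|}{2}|\lambda|^{m-1}
\]
on $\overline{\mathbb{D}(\delta)}$. At any zero $\lambda^\star\in\mathbb{D}^\ast(\delta)$ of $\psi_z$ one has $u_z(\lambda^\star)=\psi_{z_0}(\lambda^\star)$. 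The upper H\"older bound from Lemma~\ref{Hol-contin of qcm}, applied to the $K$-quasiconformal map $h(\lambda,\cdot)$ with $K=(1+\delta)/(1-\delta)$, gives $|u_z(\lambda^\star)|\leq C_2|z-z_0|^{1/K}$, so
\[
|\lambda^\star|\leq C\,|z-z_0|^{1/(Km)}.
\]

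The central step is ruling out multiple zeros. By Lemma~\ref{distor-holo-motion} there exist $r>0$ and $C_1>0$ such that $|u_z'(\lambda)/u_z(\lambda)|\leq C_1\log(1/|z-z_0|)$ for all $z\in\mathbb{D}^\ast(z_0,r)$ and all $\lambda\in\mathbb{D}(\delta)$. Evaluating at a zero $\lambda^\star$ of $\psi_z$ and using $u_z(\lambda^\star)=\psi_{z_0}(\lambda^\star)$,
\[
|u'_z(\lambda^\star)|\leq C_1\,|\psi_{z_0}(\lambda^\star)|\log\tfrac{1}{|z-z_0|}\leq 2C_1|c|\,|\lambda^\star|^{m}\log\tfrac{1}{|z-z_0|}.
\]
Dividing by the lower bound on $|\psi'_{z_0}(\lambda^\star)|$ and plugging in $|\lambda^\star|\leq C|z-z_0|^{1/(Km)}$,
\[
\frac{|u'_z(\lambda^\star)|}{|\psi'_{z_0}(\lambda^\star)|}\leq\frac{4C_1}{m}\,|\lambda^\star|\log\tfrac{1}{|z-z_0|}\leq\frac{4C_1 C}{m}\,|z-z_0|^{1/(Km)}\log\tfrac{1}{|z-z_0|}\longrightarrow 0
\]
as $z\to z_0$. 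Hence, after possibly shrinking $r$, one has $|u'_z(\lambda^\star)|<|\psi'_{z_0}(\lambda^\star)|$, and
\[
\psi'_z(\lambda^\star)=\psi'_{z_0}(\lambda^\star)-u'_z(\lambda^\star)\neq 0,
\]
so $\lambda^\star$ is a simple zero. Combined with the Rouch\'e count this yields exactly $m$ simple zeros in $\mathbb{D}^\ast(\delta)$ and no other zero in $\mathbb{D}(\delta)$, as required.

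The main obstacle is this simplicity claim: a bare Hurwitz argument controls only the total multiplicity of zeros clustering at $0$, and at a putative multiple zero $\lambda^\star$ the two functions $\psi_{z_0}$ and $u_z$ agree and have comparable $\lambda$-derivatives a priori, so one genuinely needs the quantitative interplay of two previously established estimates. The decisive point is that the bound $|\lambda^\star|\leq C|z-z_0|^{1/(Km)}$ forces any positive power of $|z-z_0|$ to dominate the logarithmic factor $\log(1/|z-z_0|)$ coming from the S\l odkowski--based control in Lemma~\ref{distor-holo-motion}, which is exactly what defeats the naive comparison.
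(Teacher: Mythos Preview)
Your argument is correct and follows essentially the same route as the paper: the same decomposition $\psi_z=\psi_{z_0}-u_z$, the same Rouch\'e count, and the same decisive use of Lemma~\ref{distor-holo-motion} together with the Mori--type H\"older bound on $u_z$ to show a power of the small parameter kills the logarithm. The only cosmetic difference is the bookkeeping at the end: the paper assumes a multiple zero $\lambda_0$, rewrites both sides of the logarithmic-derivative estimate purely in terms of $|\lambda_0|$ (using $|u_z(\lambda_0)|=|\lambda_0^m\varphi(\lambda_0)|$ to convert $\log(1/|z-z_0|)$ into $\log(1/|\lambda_0|)$), and obtains $|\lambda_0|\log(1/|\lambda_0|)\geq \text{const}$, which forces $\delta$ to be small; you instead keep $|z-z_0|$ as the parameter, bound $|\lambda^\star|\leq C|z-z_0|^{1/(Km)}$, and shrink $r$ to make $|z-z_0|^{1/(Km)}\log(1/|z-z_0|)<1$. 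Both packagings lead to the same conclusion and are interchangeable.
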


\begin{proof}
Choose $\delta_0\in(0,1)$ so that $\psi_{z_0}(\lambda)\neq 0$ for any $0<|\lambda|<\delta_0$. 
Let $K=\frac{1+\delta_0}{1-\delta_0}$. 
Define $$u_z(\lambda) = h(\lambda,z)-h(\lambda,z_0).$$
By the same argument as (\ref{estimate of u}), there is a constant $C_1$ depending on $K$ such that 
\begin{equation}
\label{upper bound of uzlambda}
|u_z(\lambda)| \leq C_1|z-z_0|^{1/K} 
\end{equation}
for any $z\in \mathbb{D}(z_0,1)$ and $\lambda\in \mathbb{D}(\delta_0)$. 
Let $\delta\in(0,\delta_0)$. 
Choose $r = r(\delta)\in(0,1)$ so that $C_1 r^{1/K}<\min_{|\xi|=\delta}|\psi_{z_0}(\xi)|$. 
When $z\in \mathbb{D}(z_0,r)$ and $|\lambda|=\delta$, we have 
\begin{align*} 
|\psi_z(\lambda)- \psi_{z_0}(\lambda)|
=    |u_z(\lambda)|
\leq C_1|z-z_0|^{1/K}
<    C_1 r^{1/K} 
<    \min_{|\xi|=\delta}|\psi_{z_0}(\xi)|
\leq |\psi_{z_0}(\lambda)|.
\end{align*}
By Rouch\'e's theorem, the function $\psi_z$ has $m$ zeros (counted with multiplicity) in $\mathbb D(\delta)$ for any $z\in \mathbb{D}(z_0,r)$. 
If $z\neq z_0$, then $\psi_z(0) = z_0 - z\neq0$, so $\lambda=0$ is not a zero of $\psi_z$. 

\vspace{6 pt}
{\bf Claim.}
{\it When $\delta$ is small enough, all zeros of $\psi_z$ in $\mathbb{D}^*(\delta)$ are simple for any $z\in \mathbb{D}^*(z_0,r)$.}
\vspace{6 pt}

In the following, we always assume $\delta$ is small enough, and let $C_2,C_3,\dots$ denote suitable positive constants. 
Write $\psi_{z_0}(\lambda) = \lambda^m \varphi(\lambda)$ for some holomorphic function $\varphi$. 
It follows that $\psi_z(\lambda) = -u_z(\lambda) + \lambda^m\varphi(\lambda)$. 
If the claim is not true for some $z\in \mathbb{D}^*(z_0,r)$, there is  $\lambda_0\in\mathbb{D}^*(\delta)$ so that $\psi_z(\lambda_0)=0$ and $\psi'_z(\lambda_0)=0$. 
That is, 
\begin{align*}
&u_z(\lambda_0)=\lambda_0^m \varphi(\lambda_0),\\ &u'_z(\lambda_0)=m\lambda_0^{m-1} \varphi(\lambda_0)+\lambda_0^m\varphi'(\lambda_0). 
\end{align*}
It follows that 
\begin{equation*}
\label{1 over lambda0}
\left|\frac{u'_z(\lambda_0)}{u_z(\lambda_0)}\right|
=\left|\frac{m}{\lambda_0}+\frac{\varphi'(\lambda_0)}{\varphi(\lambda_0)}\right|
\geq \frac{C_2}{|\lambda_0|}. 
\end{equation*}
On the other hand, by Lemma \ref{distor-holo-motion} and  (\ref{upper bound of uzlambda}), we have 
\begin{align*}
\left|\frac{u_z'(\lambda_0)}{u_z(\lambda_0)}\right|
&\leq C_3\log\frac{1}{|z-z_0|}
\leq C_3 K \log\frac{C_1}{|u_z(\lambda_0)|}
= C_3 K \log\frac{C_1}{|\lambda_0^m \varphi(\lambda_0)|}
\\
&=  C_3 K{\left(m\log\frac{1}{|\lambda_0|} + \log\frac{C_1}{|\varphi(\lambda_0)|}\right)}
\leq C_4 \log\frac{1}{|\lambda_0|}.
\end{align*}
Therefore $\delta\log(\delta)\leq |\lambda_0|\log|\lambda_0|\leq -C_2/C_4<0$, where $\delta\in(0,1/e]$. 
This is impossible because $\delta\log(\delta)\rightarrow 0$ as $\delta\rightarrow0$, and the constants $C_k$'s do not depend on $\delta$. 
This proves the claim. 
\end{proof}

%

\subsection{Proof of Proposition \ref{transversality-prop}}
\begin{proof}
[Proof of Proposition \ref{transversality-prop}]
For $g\in\mathcal{F}$ and $(v,k)\in I$, let $w_{v,k}(g) = g^{r_v}(c_{v,k}(g))$ be the postcritical value of $c_{v,k}(g)$. 
Then $G_{v,k}(g) = w_{v,k}(g) - h(g,w_{v,k}(f))$ if $g\in\mathcal{N}$, and 
$\widetilde W_{v,k}(g) = B_{g,\sigma^{r_v}(v)}(w_{v,k}(g))$ if $g\in \Hbar$. 
See (\ref{crit-val-H}) for $\widetilde W = (\widetilde W_{v,k})_{(v,k)\in I}$. 
Recall that the map $W = (W_{v,k})_{(v,k)\in I}$ defined by (\ref{crit-val-Tf0}) and the straightening map $\chi:\Hbar\rightarrow 
\modelf$ defined by (\ref{straightening map}) satisfy $\widetilde W = W\circ\chi$. 

(1) 
First, we show that $f$ is an isolated zero of $G$. 
If not, then there is a non-repeating sequence $\{g_n\}_{n\geq1}\subset G^{-1}(0)$ such that $g_n\rightarrow f$ as $n\rightarrow\infty$. 
Note that $w_{v,k}(g_n) = h(g_n, w_{v,k}(f)) \in\overline{U_{g_n,\sigma^{r_v}(v)}}$ for any $(v,k)\in I$. 
Since $f$ has no external free critical relation, when $n$ is large enough, we have $g_n\in \Hbar$ by Proposition \ref{capture-separation}, hence $\widetilde W(g_n)=\widetilde W(f)$. 
Because $W$ is a proper holomorphic map (see Lemma \ref{proper-holo-W}) and $\widetilde W = W\circ\chi$, the set $\widetilde W^{-1}(\widetilde W(f))$ is finite. 
It follows that $g_n=f$ for $n$ large enough. 
This is a contradiction. 

By Lemma \ref{alg-uni}, for each $(v,k)\in I$, there is a univalent map  
$$\phi_{v,k}:\mathbb D\rightarrow\bigcap_{(v',k')\neq(v,k)}G_{v',k'}^{-1}(0)$$
with $\phi_{v,k}(0)= f$. 
By Lemma \ref{local diffeomorphism}, to prove $G$ is a local diffeomorphism near $f$, 
we just need to show $(G_{v,k}\circ \phi_{v,k})'(0)\neq 0$ for any $(v,k)\in I$. 

Fix $(v,k)\in I$. 
Let $g_\lambda = \phi_{v,k}(\lambda)$. 
Clearly $g_0 = f$. 
By S{\l}odkowski's theorem, the map $h(g_{\lambda}, z)$ can be extended to a holomorphic motion defined on $\mathbb{D}\times\mathbb{C}$. 
Define $$\psi_z(\lambda) = w_{v,k}(g_{\lambda})-h(g_{\lambda}, z). $$
Clearly $\lambda=0$ is an isolated zero of $$\psi_{w_{v,k}(f)}(\lambda) = w_{v,k}(g_\lambda)-h(g_{\lambda}, w_{v,k}(f)) 
= G_{v,k}\circ \phi_{v,k}(\lambda).$$ 
Let $m\geq 1$ denote its multiplicity. 
By Lemma \ref{simple zeros}, for any $\varepsilon>0$ small enough, there is a constant $r=r(\varepsilon)>0$ such that $\psi_z$ has 
$m$ simple zeros $\lambda_1,\dots, \lambda_m\in \mathbb{D}^*(\varepsilon)$ for any $z\in \mathbb{D}^*(w_{v,k}(f), r)\cap \overline{U_{f,\sigma^{r_v}(v)}}$. 
Then for $1\leq j\leq m$, we have 
$$\begin{cases}
w_{v',k'}(g_{\lambda_j}) = h(g_{\lambda_j}, w_{v',k'}(f)) \in\overline{U_{g_{\lambda_j},\sigma^{r_{v'}}(v')}}, ~ (v',k')\neq (v,k), \\
w_{v,k}(g_{\lambda_j})   = h(g_{\lambda_j}, z)            \in\overline{U_{g_{\lambda_j},\sigma^{r_v}(v)}}. 
\end{cases}$$
Since $f$ has no external free critical relation, when $\varepsilon$ is small enough, we have $g_{\lambda_j} \in\Hbar$ by Proposition \ref{capture-separation}. 
It follows from 
$$\begin{cases}
\widetilde W_{v',k'}(g_{\lambda_j}) = \widetilde W_{v',k'}(f), ~ (v',k')\neq (v,k), \\
\widetilde W_{v,k}  (g_{\lambda_j}) = B_{f,\sigma^{r_v}(v)}(z)
\end{cases}$$
that $\widetilde W(g_{\lambda_1})=\cdots=\widetilde W(g_{\lambda_m})$.  
Since $f$ has no internal free critical relation, 
the map $\widetilde W$ is locally one-to-one near $f$ by Lemma \ref{tilde-W-local-homeo}. 
Since $\lambda_1,\dots, \lambda_m\in \mathbb{D}^*(\varepsilon)$, when $\varepsilon$ is small enough, 
we have $g_{\lambda_1} = \cdots=g_{\lambda_m}$, 
i.e. $\phi_{v,k}(\lambda_1) = \cdots = \phi_{v,k}(\lambda_m)$. 
Since $\phi_{v,k}$ is univalent, we have $m=1$, i.e. $(G_{v,k}\circ \phi_{v,k})'(0)\neq 0$. 

(2) 
Because $W$ is a proper holomorphic map (see Lemma \ref{proper-holo-W}), the point $\chi(f)$ is an isolated zero of $W-W(\chi(f))$ (i.e. $W-\widetilde W(f)$). 
By Lemma \ref{alg-uni}, for each $(v,k)\in I$, there is a univalent map  
$$\Phi_{v,k}:\mathbb D\rightarrow\bigcap_{(v',k')\neq(v,k)}W_{v',k'}^{-1}( \widetilde W_{v',k'}(f) )$$
with $\Phi_{v,k}(0)= \chi(f)$. 
Clearly $W_{v,k}\circ \Phi_{v,k}(\zeta)\not\equiv \widetilde W_{v,k}(f)$ for $\zeta\in\mathbb{D}$. 
Since $f$ has an internal free critical relation, the generalized polynomial $\chi(f)$ has a free critical relation. 
By Proposition \ref{prop-JW}, the proper holomorphic map $W$ is not a local diffeomorphism near $\chi(f)$. 
By Lemma \ref{local diffeomorphism}, we can choose $(v,k)\in I$ so that 
\begin{equation}
\label{zero-deriva-vk}
(W_{v,k}\circ \Phi_{v,k})'(0)=0. 
\end{equation}

Assume $G$ is a local diffeomorphism near $f$; we will find a contradiction. 
By the assumption on $G$, shrinking $\mathcal{N}$ if necessary, we can choose a biholomorphic map $$\phi_{v,k}:\mathbb D\rightarrow \bigcap_{(v',k')\neq(v,k)}G_{v',k'}^{-1}(0)$$ so that $\phi_{v,k}(0)= f$. 
Let $g_\lambda = \phi_{v,k}(\lambda)$ and define $$\psi_z(\lambda) = w_{v,k}(g_{\lambda})-h(g_{\lambda}, z)$$ as before. 
By the assumption on $G$ again, we have $(G_{v,k}\circ \phi_{v,k})'(0)\neq 0$ (see Lemma \ref{local diffeomorphism}). 
It follows that $\lambda=0$ is a simple zero of $$\psi_{w_{v,k}(f)}(\lambda) = w_{v,k}(g_\lambda)-h(g_{\lambda}, w_{v,k}(f)) 
= G_{v,k}\circ \phi_{v,k}(\lambda).$$ 
By Lemma \ref{simple zeros}, to deduce a contradiction, we just need to show for any given $\varepsilon\in(0,1)$, 
there is a constant $r>0$ such that $\psi_z$ has two distinct zeros in $\mathbb{D}(\varepsilon)$ for any $z\in \mathbb{D}^*(w_{v,k}(f), r)\cap \overline{U_{f,\sigma^{r_v}(v)}}$.

For $\zeta\in \Phi_{v,k}^{-1}(\chi(\mathcal{N}\cap\Hbar))$, 
it follows from $$\widetilde W_{v',k'}( \chi^{-1}( \Phi_{v,k}(\zeta)) )= W_{v',k'}( \Phi_{v,k}(\zeta)) = \widetilde W_{v',k'}(f),~(v',k')\neq (v,k)$$
that $\chi^{-1}( \Phi_{v,k}(\zeta))\in \bigcap_{(v',k')\neq(v,k)}G_{v',k'}^{-1}(0) = \phi_{v,k}(\mathbb{D})$. 
This shows $$\chi^{-1}\Big( \Phi_{v,k}(\mathbb{D})\cap\chi(\mathcal{N}\cap\Hbar) \Big) \subset \phi_{v,k}(\mathbb{D}).$$
Fix $\varepsilon\in(0,1)$. 
By the continuity of $\Phi_{v,k}$ and $\chi^{-1}$, there is an $\varepsilon'\in(0,1)$ such that 
$$\chi^{-1}\Big( \Phi_{v,k}(\mathbb{D}(\varepsilon'))\cap\chi(\Hbar) \Big) \subset \phi_{v,k}(\mathbb{D}(\varepsilon)).$$
According to $W_{v,k}\circ \Phi_{v,k}\not\equiv \widetilde W_{v,k}(f)$ and $(W_{v,k}\circ \Phi_{v,k})'(0)=0$ (see (\ref{zero-deriva-vk})), there is a constant $r>0$ 
such that the equation $$W_{v,k}\circ \Phi_{v,k}(\zeta) = B_{f,\sigma^{r_v}(v)}(z)$$ of $\zeta$ 
has two distinct solutions $\zeta_1,\zeta_2\in\mathbb{D}(\varepsilon')$ for any $z\in \mathbb{D}^*(w_{v,k}(f), r)\cap \overline{U_{f,\sigma^{r_v}(v)}}$. 
For $j=1,2$, it follows from $$\begin{cases}
W_{v',k'}( \Phi_{v,k}(\zeta_j)) =  \widetilde W_{v',k'}(f) \in \overline{\mathbb{D}}, ~ (v',k')\neq (v,k), \\
W_{v,k}( \Phi_{v,k}(\zeta_j))   = B_{f,\sigma^{r_v}(v)}(z) \in \overline{\mathbb{D}}
\end{cases}$$
that $\Phi_{v,k}(\zeta_j)\in\modelf = \chi(\Hbar)$. 
Now let $\lambda_j = \phi_{v,k}^{-1}(\chi^{-1}(\Phi_{v,k}(\zeta_j)))\in \mathbb{D}(\varepsilon)$, i.e. $g_{\lambda_j} = \chi^{-1}(\Phi_{v,k}(\zeta_j))$. 
Then 
\begin{align*}
w_{v,k}(g_{\lambda_j}) 
&= B_{g_{\lambda_j},\sigma^{r_v}(v)}^{-1}( \widetilde W_{v,k}( \chi^{-1}(\Phi_{v,k}(\zeta_j)) ))
= B_{g_{\lambda_j},\sigma^{r_v}(v)}^{-1}( W_{v,k}(\Phi_{v,k}(\zeta_j)) )\\
&= B_{g_{\lambda_j},\sigma^{r_v}(v)}^{-1}( B_{f,\sigma^{r_v}(v)}(z) )
= h(g_{\lambda_j},z).  
\end{align*}
Therefore $\psi_z(\lambda_1) = \psi_z(\lambda_2) = 0$. 
Since $\Phi_{v,k}$, $\chi^{-1}$ and $\phi_{v,k}^{-1}$ are injective, we have $\lambda_1\neq \lambda_2$. 
So $\lambda_1$ and $\lambda_2$ are two distinct zeros of $\psi_z$ in $\mathbb{D}(\varepsilon)$. 
This completes the proof. 
\end{proof}

\subsection{Examples}
To illustrate these results, we present (without proofs) several examples of free critical relations. 

\begin{example}
\label{example-FreeCriticalRelation}
Consider $\mathcal{F}=\big\{f\in\widehat{\mathcal{P}}^5{;~} f(c_1(f))=c_1(f), \ f(c_2(f))=c_2(f)\big\}$. 
By Example \ref{example of F}, we have $\Sigma(\mathcal{F})=\emptyset$. 
Let $f_*\in\mathcal{F}$ satisfy $f_*^3(c_3(f_*)) = f_*^2(c_3(f_*))$ and 
$c_3(f_*)=c_4(f_*)$ as in Figure \ref{figure fstar}.  
\footnote{
$f_* = (\mathbf{c}(f_*),f_*(0)) \approx (-0.71909970+0.28778226i,~    -0.79091944-0.43455062i,~\\     0.75500957+0.07338418i,~     0.75500957+0.07338418i,~     0.44279049+0.31992373i)$.

$f_1 \approx (-0.6957+0.2889i,~    -0.7758-0.4464i,~     0.7434-0.0889i,~     0.7281+0.2463i,~\\     0.3830+0.3179i)$.

$f_2 \approx (-0.7301+0.2652i,~    -0.8101-0.4424i,~     0.9004+0.1774i,~     0.6399-0.0002i,~\\     0.4763+0.3878i)$.

$f_3 \approx (-0.7400+0.3071i,~    -0.7929-0.4103i,~     0.6140+0.1160i,~     0.9189-0.0128i,~\\     0.5016+0.2576i)$.

$f_4 \approx (-0.7244+0.2922i,~    -0.7908-0.4286i,~     0.7576+0.0682i,~     0.7576+0.0682i,~\\     0.4617+0.3021i)$.

$f_5 \approx (-0.7195+0.2823i,~    -0.7948-0.4372i,~     0.7571+0.0775i,~     0.7571+0.0775i,~\\     0.4450+0.3405i)$.

$g_1 \approx (-0.71785439+0.28759365i,~    -0.79025587-0.43535917i,~     0.75196467+0.03761302i,~\\     0.75614558+0.11015250i,~     0.43929268+0.32056723i)$.

$g_2 \approx (-0.71952246+0.28717907i,~    -0.79151609-0.43468631i,~     0.77968300+0.08691225i,~\\     0.73135555+0.06059499i,~     0.44405294+0.32160325i)$.

$g_3 \approx (-0.71942183+0.28854670i,~    -0.79069082-0.43389820i,~     0.73053354+0.08859262i,~\\     0.77957911+0.05675889i,~     0.44364010+0.31771958i)$.

$g_4 \approx (-0.71909970+0.28778226i,~    -0.79091481-0.43455287i,~     0.75877977+0.07829465i,~\\     0.75123474+0.06847596i,~     0.44279444+0.31989123i)$.

$g_5 \approx (-0.71909970+0.28778226i,~    -0.79092259-0.43454701i,~     0.75109910+0.07789205i,~\\     0.75892319+0.06887270i,~     0.44279896+0.31995299i)$.
}

\begin{figure}[ht]
\centering
\includegraphics{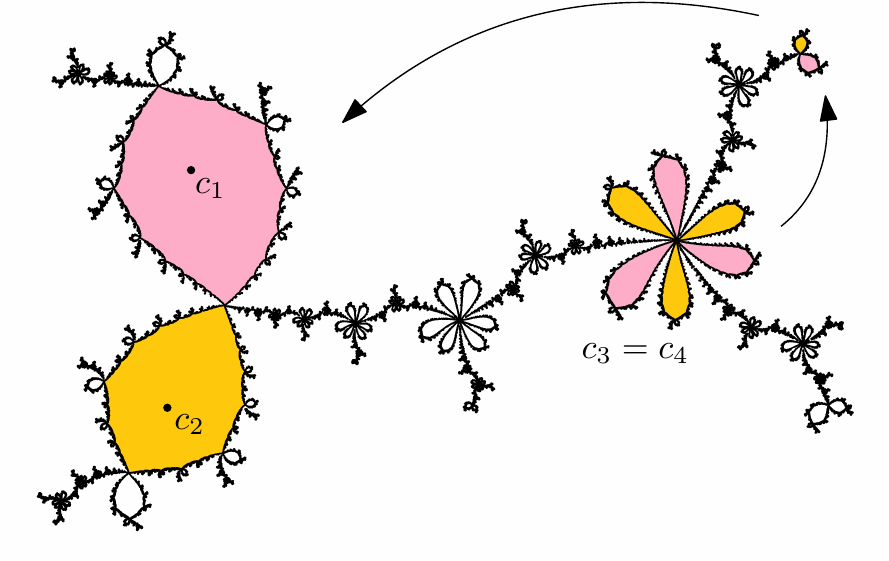}
\caption{The Julia set of $f_*$.}
\label{figure fstar}
\end{figure}

\begin{figure}[htbp]
\centering
\includegraphics{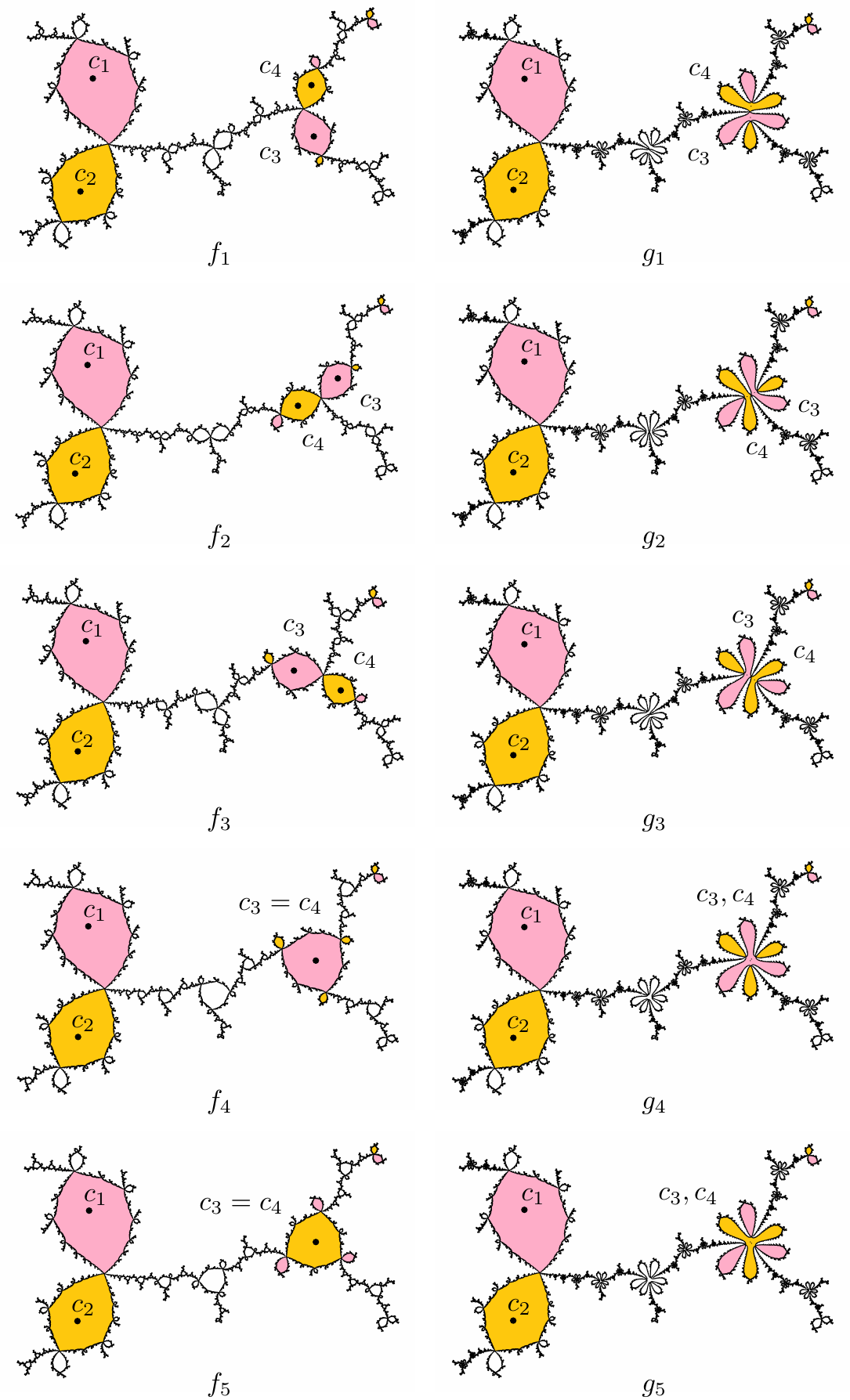}
\caption{Perturbations of $f_*$.}
\label{figure perturbations of fstar}
\end{figure}

For $k=1,2,3$, let $f_k\in\mathcal{F}$ satisfy $f_k^2(c_3(f_k))=c_1(f_k)$ and  
$f_k^2(c_4(f_k))=c_2(f_k)$ as in Figure \ref{figure perturbations of fstar}. 
For $k=4,5$, let $f_k\in\mathcal{F}$ satisfy $f_k^2(c_3(f_k))=c_{k-3}(f_k)$ and  
$c_3(f_k)=c_4(f_k)$ as in Figure \ref{figure perturbations of fstar}.  
For $1\leq k\leq 5$, let $\mathcal{H}_k\subset\mathcal{F}$ denote the capture hyperbolic component with center $f_k$. 
Then $f_*\in\partial\mathcal{H}_k$. 
Choose $g_k\in\mathcal{H}_k$ close to $f_*$ as in Figure \ref{figure perturbations of fstar}. 
For $k=1,2,3$, if we exchange the free critical points $c_3(f_k)$ and $c_4(f_k)$, we will get another capture hyperbolic component in $\mathcal{F}$, 
whose boundary contains $f_*$. 

The free critical relation $c_3(f_*)=c_4(f_*)$ of $f_*$ is external with respect to $\mathcal{H}_1$. 
For any neighborhood $\mathcal{N}\subset\mathcal{F}$ of $f_*$, 
since 
$${\left\{f\in\mathcal{N}{;~} 
f^2(c_3(f))\in \overline{U_f(c_1(f))}, \ f^2(c_4(f))\in \overline{U_f(c_2(f))}\right\}}
\cap\mathcal{H}_2\neq\emptyset,$$
we have 
$${\left\{f\in\mathcal{N}{;~} 
f^2(c_3(f))\in \overline{U_f(c_1(f))}, \ f^2(c_4(f))\in \overline{U_f(c_2(f))}\right\}}
\not\subset\overline{\mathcal{H}_1}.$$

The free critical relation $c_3(f_*)=c_4(f_*)$ of $f_*$ is internal with respect to $\mathcal{H}_4$. 
But $f_*$ has no external free critical relation with respect to $\mathcal{H}_4$. 
By Proposition \ref{capture-separation}, there is a neighborhood $\mathcal{N}\subset\mathcal{F}$ of $f_*$ so that 
$${\left\{f\in\mathcal{N}{;~} 
f^2(c_3(f)), f^2(c_4(f))\in \overline{U_f(c_1(f))}\right\}}
\subset\overline{\mathcal{H}_4}.$$
\end{example}

\begin{example}
Let \begin{align*}
\mathcal{F} &= \big\{(\mathbf{c},a)\in\mathbb{C}^4{;~} c_1+c_2+c_3=0, \ f_{\mathbf{c},a}(c_1)=c_1\big\}\\
&= \big\{(c_1,c_2,-c_1-c_2, c_1^4 - 2 c_1^3 c_2 - 2 c_1^2 c_2^2 + c_1) {;~} c_1,c_2\in\mathbb{C}  \big\}\cong \mathbb{C}^2. 
\end{align*}
Let $f = (-\sqrt[3]{2}, \sqrt[3]{2}, 0, \sqrt[3]{2})\in\mathcal{F}$. 
Then $f(z) = z^4 - \sqrt[3]{32}z^2 + \sqrt[3]{2}$, $f(\sqrt[3]{2}) = -\sqrt[3]{2}$ and $f(0) = \sqrt[3]{2}$. 
See Figure \ref{figure QuarticAirplane}. 
Define a holomorphic map $G = (G_2,G_3):\mathcal{F} \rightarrow \mathbb{C}^2$ by 
$$G_2(g)=g(c_2(g)) - c_1(g), \quad G_3(g) = g^2(c_3(g)) - c_1(g).$$
Note that each $g = (\mathbf{c},a)\in\mathcal{F}$ can be represented by $(c_1,c_2)\in\mathbb{C}^2$. 
Let $J_G$ denote the Jacobian matrix of $G$ with respect to $(c_1,c_2)$. 
A direct calculation shows 
$$\renewcommand\arraystretch{1.5}
J_G(f) = {\left(\begin{array}{cc}
\frac{\partial G_2}{\partial c_1} & \frac{\partial G_2}{\partial c_2}\\
\frac{\partial G_3}{\partial c_1} & \frac{\partial G_3}{\partial c_2}
\end{array}\right)}_{g=f}
={{\left(\begin{array}{cc}
-16 & -16\\
-16 & -16
\end{array}
\right)}}.$$
So $G$ is not a local diffeomorphism near $f$. 

\begin{figure}[ht]
\centering
\includegraphics{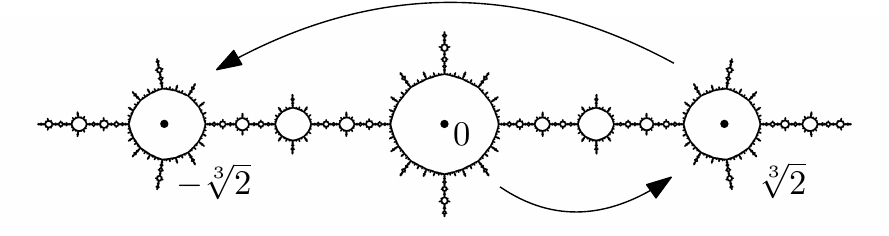}
\caption{The Julia set of $f(z) = z^4 - \sqrt[3]{32}z^2 + \sqrt[3]{2}$.}
\label{figure QuarticAirplane}
\end{figure}

Let $\mathcal{H}\subset\mathcal{F}$ denote the capture hyperbolic component with center $f$. 
Then $f(c_3(f)) = c_2(f)$ (i.e. $f(0) = \sqrt[3]{2}$) is an internal free critical relation
of $f$ with respect to $\mathcal{H}$. 
Again, we see that $G$ is not a local diffeomorphism near $f$ by Proposition \ref{transversality-prop}. 
\end{example}

\section{Continuity of Hausdorff dimension}
\label{continuity-hd}
This section is the third step to prove Theorem \ref{hd-boundary}. 
Let $\mathcal{H}\subset\mathcal{F}$ be a capture hyperbolic component with center $f_0$. 
As is known that for all $f\in \Hbar$ and all $v\in\Tfp$, the Fatou component $U_{f,v}$ is well-defined. 

\begin{proposition} 
\label{continuous-hdim} 
For any  $v\in\Tfp$, the Hausdorff dimension $\Hdim(\partial U_{f,v})$ is continuous with respect to $f\in\Hbar$.
\end{proposition}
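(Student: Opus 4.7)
The plan is to prove Proposition \ref{continuous-hdim} by identifying $\Hdim(\partial U_{f,v})$ with a dynamically defined invariant that is well behaved under perturbation, and then establishing upper and lower semi-continuity separately. First I would replace $f_0$ by the iterate $f_0^p$, where $p$ is the $\sigma$-period of $v$; this leaves every Julia set and every periodic Fatou component in $\Hbar$ unchanged, and reduces to $\sigma(v)=v$, so that for each $f \in \Hbar$ the map $f$ fixes $U_{f,v}$ and $f|_{\partial U_{f,v}}$ is a semi-hyperbolic self-map of the quasicircle $\partial U_{f,v}$ (Corollary \ref{semi-hyperbolicity}). The identity
$$\Hdim(\partial U_{f,v}) = \dPoin(f,v) = \hypdim(f,v) = \dconf(f,v)$$
promised in the introduction, which I would establish first via Patterson--Sullivan/Denker--Urba\'nski conformal measures combined with Bowen's formula applied to the natural Markov structure given by the log-B\"ottcher coordinate on $\partial U_{f,v}$, then lets me work with $\dPoin$ for the upper bound and with $\hypdim$ for the lower bound.

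For the upper semi-continuity, fix $f \in \Hbar$ and $\varepsilon > 0$. I would pick finitely many rational angles $\theta_1,\dots,\theta_m \in \QZ$ whose $f$-external rays land at pre-repelling points of $\partial U_{f,v}$, arranged so that the landing points cut $\partial U_{f,v}$ into pieces of a prescribed small diameter. Proposition \ref{expansion-e-r}, applied at each $\theta_j$, supplies a common neighborhood $\mathcal N \subset \Pd$ of $f$ and an integer $n$ such that the relevant lift has derivative at least $2$ on each $\overline{R_g(\theta_j)}$ for every $g \in \mathcal{C}(\Pd) \cap \mathcal N$. Transferring this uniform expansion to $\partial U_{g,v}$ through the B\"ottcher coordinate $B_{g,v}$, and using the bounded-degree preimage control in Proposition \ref{equiv-condi semi-hyper}(3), produces a Markov-like covering of $\partial U_{g,v}$ whose contraction ratios converge to those of $f$ as $g \to f$. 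A standard Bowen-series comparison then gives $\dPoin(g,v) \le \dPoin(f,v) + \varepsilon$ for $g$ sufficiently close to $f$.

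For the lower semi-continuity, given $\varepsilon > 0$ I would choose a compact $f$-invariant hyperbolic set $\Lambda_f \subset \partial U_{f,v}$ disjoint from $\crit(f)$ with $\Hdim(\Lambda_f) \ge \hypdim(f,v) - \varepsilon$, obtained by approximating the support of a Denker--Urba\'nski conformal measure of exponent $\Hdim(\partial U_{f,v})$ by hyperbolic repellers. Since $\Lambda_f$ is uniformly expanding and avoids the critical set, the Ma\~n\'e--Sad--Sullivan $\lambda$-lemma supplies a holomorphic motion $H : \mathcal N \times \Lambda_f \to \mathbb{C}$ over a neighborhood $\mathcal N$ of $f$ so that $\Lambda_g := H(g,\Lambda_f) \subset \partial U_{g,v}$ is an invariant hyperbolic set of $g$. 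By S{\l}odkowski's theorem (Lemma \ref{dilatation of holomorphic motion}) $H(g,\cdot)$ is $K_g$-quasiconformal with $K_g \to 1$ as $g \to f$, and Mori's theorem (Lemma \ref{Hol-contin of qcm}) then yields $\Hdim(\Lambda_g) \ge \Hdim(\Lambda_f)/K_g$. Hence $\Hdim(\partial U_{g,v}) \ge \Hdim(\Lambda_g) \to \Hdim(\Lambda_f)$ as $g \to f$, and letting $\varepsilon \to 0$ concludes the lower semi-continuity.

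The main obstacle will be the upper semi-continuity step at those $f \in \partial \mathcal{H}$ whose critical points meet $\partial U_{f,v}$: there the B\"ottcher pullback of the uniform expansion on $\overline{R_g(\theta_j)}$ to a Markov system on $\partial U_{g,v}$ degenerates combinatorially as $g$ moves past a critical value, and new rays may have to be introduced under perturbation. Overcoming this requires Proposition \ref{expansion-e-r} in its full strength, providing uniform expansion on the entire \emph{closed} external ray rather than only near its landing point, together with the uniform quasicircle property of $\partial U_{f,v}$ proved in Appendix \ref{appendix u-qc} to produce distortion estimates independent of $f$ and to patch Markov pieces across bifurcating critical orbits.
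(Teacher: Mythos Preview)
Your lower semi-continuity argument via hyperbolic subsets and holomorphic motion is correct and standard. The difficulty, as you identify, lies entirely in upper semi-continuity at maps $f$ with critical points on $\partial U_{f,v}$, and here your outline has a genuine gap.

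The step ``transferring this uniform expansion to $\partial U_{g,v}$ through the B\"ottcher coordinate $B_{g,v}$'' does not work as stated: Proposition~\ref{expansion-e-r} gives expansion of the lift $g_g$ along the \emph{external} ray $\overline{R_g(\theta_j)}\subset \overline{U_{g,\infty}}$, whereas $B_{g,v}$ is the B\"ottcher coordinate of the \emph{bounded} Fatou component $U_{g,v}$. These live on opposite sides of $\partial U_{g,v}$, so there is no direct transfer. Expansion at the common landing point alone is not enough, because the issue is exactly that for $g\to f$ the critical value may approach that landing point and destroy uniform distortion control in any neighbourhood of it. Consequently your Markov-like covering cannot be built with uniformly bounded distortion pieces that vary continuously in $g$, and the ``Bowen-series comparison'' has no input to work with.

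The paper avoids constructing a uniform Markov partition altogether. Instead it takes weak limits of conformal measures: if $\mu_n$ is an $s_n$-conformal measure for $\partial U_{f_n,v}$ with $s_n=\dconf(\partial U_{f_n,v})$ and $\mu_n\to\mu$, $s_n\to s$, then $\mu$ is an $s$-conformal measure for $\partial U_{h,v}$. The entire content is then to show $\mu$ has no atom at any critical point $c\in\partial U_{h,v}$, so that $\mu$ lives on the conical set and Corollary~\ref{mu supported on conical set} forces $s=\dconf(\partial U_{h,v})$. To rule out atoms, one builds a Jordan domain $Y_g$ around the landing point $x_g$ near $c$ whose boundary is a concatenation of a piece of external ray, a piece of internal ray, and two equipotential arcs. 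Proposition~\ref{expansion-e-r} gives $|(g_g^m)'|\ge 4$ on the external-ray portion of $\partial Y_g$; stability of internal rays gives a comparable bound on the remaining portion; and the maximum modulus principle pushes the bound $|(g_g^m)'|\ge 2$ across all of $\overline{Y_g}$. This yields $\mu_n(Y_{f_n}^k)\le 2^{-k}$ for the $k$-th pullback of $Y_{f_n}$, uniformly in $n$, which is exactly the no-atom statement. The external-ray expansion is thus used not to build a coding of $\partial U_{g,v}$ but to propagate expansion across a two-sided neighbourhood of the critical landing point via the maximum principle---this is the missing idea in your outline.
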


Given $f\in\Hbar$ and $v\in\Tfp$. 
Suppose $\crit(f)\cap  \bigcup_{n\geq0}\partial U_{f,\sigma^{n}(v)}=\emptyset$. 
Then the Fatou component $U_{g,v}$ can be defined for $g$ in an open neighborhood $\mathcal{N}(f)\subset\widehat{\mathcal{P}}^d$ of $f$. 
Furthermore, the boundary $\partial U_{g,v}$ admits a holomorphic motion for $g\in \mathcal{N}(f)$. 
According to a result of Ruelle \cite{Ruelle}, the function
$$  
\begin{cases} \mathcal{N}(f) \rightarrow  \mathbb{R}, \\
g \mapsto   \Hdim(\partial U_{g,v}) 
\end{cases}$$
is real analytic. 
Thus $\lim_{g\rightarrow f}\Hdim(\partial U_{g,v})=\Hdim(\partial U_{f,v})$. 
 
For general $f\in\Hbar$, there may be critical points in $\bigcup_{n\geq0}\partial U_{f,\sigma^{n}(v)}$ (see Example \ref{example F2}). 
The method of holomorphic motion fails. 
However, we can transfer the convergence of Hausdorff dimension ($\Hdim$) into the convergence of conformal dimension ($\dconf$) by the following identity:  

\begin{proposition}
\label{equalities for Hdim of partial-U}
Let $f$ be a semi-hyperbolic polynomial of degree $d\geq2$, 
and suppose $U$ is a bounded periodic Fatou component of $f$. 
Then 
$$\hypdim(\partial U)=\Hdim(\partial U)=\Hdim(\Con(\partial U))=\dconf(\partial U) = \dPoin(\partial U).$$
\end{proposition}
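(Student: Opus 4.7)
The plan is to localize the standard semi-hyperbolic thermodynamic formalism to the forward-invariant compact set $\partial U$. First I would reduce to the case $f(U)=U$ by replacing $f$ with $f^{p}$, where $p$ is the period of $U$; this leaves $\partial U$ unchanged and does not affect any of the five quantities. Writing $\delta=\deg(f|_{U})\ge 2$ and $g=f|_{\partial U}$, Corollary \ref{semi-hyperbolicity} says $\partial U$ is a quasicircle, so $g$ is a $\delta$-to-$1$ branched covering of a quasicircle. By semi-hyperbolicity together with Proposition \ref{behavior of critical orbits}(3), any critical point of $g$ on $\partial U$ is preperiodic to a repelling periodic point, so no critical orbit of $g$ accumulates on itself; this gives the Carleson--Jones--Yoccoz shrinking lemma and Koebe-type distortion estimates along backward branches of $g$ that avoid fixed neighbourhoods of $\crit(g)$.

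Two of the equalities are essentially formal. From the definitions one has
\[
\hypdim(\partial U)\le \Hdim(\Con(\partial U))\le \Hdim(\partial U),
\]
since hyperbolic subsets of $\partial U$ sit inside the conical limit set. The Patterson--Sullivan construction on $\partial U$, carried out with the normalized sums
\[
\mu_{s}=\frac{1}{P(s,w)}\sum_{n\ge 0}\sum_{z\in g^{-n}(w)}|(g^{n})'(z)|^{-s}\,\delta_{z}
\]
for a reference point $w\in\partial U$ and a weak-$*$ limit as $s\searrow \dPoin(\partial U)$ (with Patterson's trick if needed to force divergence at the critical exponent) produces a $\dPoin$-conformal measure on $\partial U$, giving $\dconf(\partial U)\le \dPoin(\partial U)$. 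The reverse inequality is immediate from writing the Poincaré series in terms of such a conformal measure. Hence $\dconf=\dPoin$, and what remains to prove is
\[
\dPoin(\partial U)\le \hypdim(\partial U)\qquad\text{and}\qquad \Hdim(\partial U)\le \hypdim(\partial U).
\]

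For the first, I would restrict the conformal measure $\mu$ at exponent $\dPoin(\partial U)$ to the compact $g$-invariant set $X_{\varepsilon}\subset\partial U$ of points whose entire backward orbit stays $\varepsilon$-away from $\crit(g)\cup\{w\}$. Semi-hyperbolicity makes $X_{\varepsilon}$ a uniformly expanding hyperbolic Cantor set on which $\mu$ is comparable to $\dPoin$-dimensional Hausdorff measure, so $\Hdim(X_{\varepsilon})\to \dPoin$ as $\varepsilon\to 0$, which yields $\dPoin\le \hypdim$. For the second, I would adapt Przytycki's proof that $\Hdim(J(f))\le \hypdim(J(f))$ for semi-hyperbolic polynomials: at $\mathcal{H}^{\,\Hdim(\partial U)-\eta}$-typical $x\in\partial U$, one extracts a sequence of scales at which a fixed ball around $x$ pulls back via a controlled branch of $g^{n}$ with bounded distortion, and Frostman's lemma then transfers a lower bound $\Hdim(\partial U)-\eta$ to the hyperbolic dimension; sending $\eta\to 0$ closes the chain.

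The hardest step will be executing Przytycki's pull-back argument uniformly when $\crit(g)\cap\partial U\ne \emptyset$, which does happen in our setting (cf.\ Example \ref{example F2}); there, some inverse branches of $g^{n}$ have arbitrarily large local degree, and naive Koebe control fails. I would handle this by partitioning backward branches according to their pattern of critical-point passages, using the crucial input from Proposition \ref{behavior of critical orbits}(3) that each critical point on $\partial U$ is preperiodic with a fixed local degree. This reduces the combinatorics of bad branches to a finite symbolic problem and keeps the shrinking estimates of Carleson--Jones--Yoccoz effective on the remaining branches, which suffices to close the Przytycki transfer argument. Once this is done, all five quantities are equal.
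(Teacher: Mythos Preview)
Your chain of inequalities does not close. Granting everything you propose, you obtain $\hypdim=\Hdim(\Con)=\Hdim$ (from $\hypdim\le\Hdim(\Con)\le\Hdim$ together with the planned $\Hdim\le\hypdim$) and, separately, $\dconf=\dPoin\le\hypdim$; but nothing in the outline forces the reverse bound $\hypdim\le\dconf$, so the two blocks are never tied together. The paper supplies exactly this missing link: for any $t$-conformal measure $\mu$ on $\partial U$ and any $\rho$-conical point $x$ there is a sequence $r_k\to0$ with $\mu(\mathbb{D}(x,r_k))\asymp r_k^{\,t}$ (Lemma~\ref{a sequence of radii}), and a Vitali covering using the lower bound gives $\Hdim(\Con(\partial U))\le t$, hence $\le\dconf(\partial U)$ (Corollary~\ref{upper bound of Hausdorff dimension}). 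Since $\partial U\setminus\Con(\partial U)$ is countable by~\eqref{conical set formula}, one gets $\Hdim(\partial U)\le\dconf(\partial U)$, and the one-way cycle
\[
\hypdim\le\Hdim=\Hdim(\Con)\le\dconf\le\dPoin\le\hypdim
\]
closes.

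With that link in place your ``hardest step'' --- a direct Frostman/Przytycki transfer proving $\Hdim\le\hypdim$ --- becomes redundant; and the arguments you presumably have in mind (e.g.\ \cite{Przy05}) themselves route that inequality through conformal measures, so the plan would be circular anyway. Two smaller points: Proposition~\ref{behavior of critical orbits}(3) applies only to $f\in\partial\mathcal{H}$, whereas Proposition~\ref{equalities for Hdim of partial-U} is stated for arbitrary semi-hyperbolic $f$, so critical points on $\partial U$ need not be preperiodic --- only non-recurrence is used throughout \S\ref{subsection Poincare sequence}--\S\ref{subsection hyperbolic set}. And your $X_\varepsilon$ construction for $\dPoin\le\hypdim$ is looser than what is actually required; the paper (Lemma~\ref{Poin-expo vs hyp-dim}) builds, for each large $n$, an explicit iterated function system from the full set of univalent inverse branches of $f^{m+n}$ over a fixed disk and compares its Bowen pressure directly with the Poincar\'e sum $P_n(x,t)$.
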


The notations will be defined later in this section. 
Since $f$ is semi-hyperbolic, it is well-known that the corresponding quantities for $J(f)$ agree (see the survey \cite{U03}). 
Proposition \ref{equalities for Hdim of partial-U} might be known to experts, but we did not find the result in literature. For this reason, we will give the poof in this section.


To prove the convergence of conformal dimension, we need the following result, 
which has an independent interest in polynomial dynamics. 


\begin{proposition}
[Uniform expansion on external rays]
\label{expanding on external rays}
Let $d\geq2$ be an integer, and let $\theta\in\QZ$ have preperiod $q\geq1$ and period $p\geq 1$ 
under $m_d: \RZ\rightarrow \RZ$, $t\mapsto dt$. 
\footnote{
The \emph{preperiod} of $\theta\in\QZ$ under $m_d$ is defined to be the minimal integer $q\geq0$ so that $d^q\theta$ is periodic under $m_d$. 
}
That is, $d^{p+q} \theta\equiv d^q\theta \modZ$.
For $f\in\mathcal{C}(\mathcal{P}^d)$,
let $$g_f=f^{-q}\circ f^p\circ f^q: \overline{R_{f}(\theta)}\rightarrow \overline{R_{f}(\theta)}$$
be the lift of $f^p:\overline{R_{f}(d^q\theta)}\rightarrow \overline{R_{f}(d^q\theta)}$.
Given $h\in\mathcal{C}(\mathcal{P}^d)$ and
suppose $R_h(\theta)$ lands at a pre-repelling point.
Then there exists a neighborhood $\mathcal{N}\subset \mathcal{P}^d$ of $h$ and a constant $n>0$ such that
$$\big|(g_f^n)'(z)\big|\geq 2$$
for every $f\in\mathcal{C}(\mathcal{P}^d)\cap\mathcal{N}$ and every $z\in\overline{R_{f}(\theta)}$.
\end{proposition}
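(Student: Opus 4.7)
The plan is to work in log-B\"ottcher coordinates. Parameterize the closed ray by $\gamma_f(s) = B_f^{-1}(e^{s+2\pi i\theta})$ for $s \in [0, \infty)$, so $x_f := \gamma_f(0)$ is the landing point and $\gamma_f(s) \to \infty$ as $s \to \infty$. From $B_f \circ f^p = B_f^{d^p}$ and the preperiodicity $d^{p+q}\theta \equiv d^q\theta \modZ$, one verifies the functional identity $g_f \circ \gamma_f(s) = \gamma_f(d^p s)$; differentiating and iterating yields, at every interior point of the ray,
\begin{equation*}
\left|(g_f^n)'(\gamma_f(s))\right| = d^{pn}\,\frac{|\gamma_f'(d^{pn}s)|}{|\gamma_f'(s)|}.
\end{equation*}
The proof estimates this ratio in three regimes---near $\infty$, near $x_f$, and on a compact middle interval---and combines them by a suitable choice of $n$.

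For the regime $s \geq S_0$ large, B\"ottcher asymptotics $B_f^{-1}(w) = w + O(1)$ give $|\gamma_f'(s)| = e^s(1+o(1))$, whence $|(g_f^n)'(\gamma_f(s))| = d^{pn}e^{(d^{pn}-1)s}(1+o(1))$, uniformly in $f$ for a bounded neighborhood $\mathcal{N}$. For the regime $s \in [0, s_1]$ near the landing point, the crucial observation is that $y_f := f^q(x_f)$ is a repelling fixed point of $f^p$ with multiplier $\lambda_f = (f^p)'(y_f)$, $|\lambda_h|>1$, and the chain rule gives $g_f'(x_f) = \lambda_f$. The shifted log-B\"ottcher $\tilde\Psi_f(z) := \log B_f(z) - 2\pi i\theta$ satisfies $\tilde\Psi_f(x_f) = 0$ and the functional equation $\tilde\Psi_f \circ g_f = d^p \tilde\Psi_f$; comparing this with the Koenigs linearizer of $g_f$ at $x_f$ (multiplier $\lambda_f$) yields the asymptotic $\gamma_f(s) - x_f \sim c_f\, s^{\mu_f}$ with $\mu_f = \log\lambda_f / (p \log d)$. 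Substituting into the derivative formula gives the uniform limit $|(g_f^n)'(\gamma_f(s))| \to |\lambda_f|^n$ as $s \to 0^+$; choosing $n$ with $|\lambda_f|^n \geq 4$ provides the bound $\geq 2$ on some interval $[0, s_1]$.

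These estimates are glued as follows. Enlarge $n$ further so that $d^{pn} s_1 > S_0$; then for $s \in [s_1, \infty)$ we have $d^{pn} s \in [S_0, \infty)$, and combining the infinity asymptotic for $|\gamma_f'(d^{pn}s)|$ with a uniform lower bound for $|\gamma_f'(s)|$ on the compact interval $[s_1, S_0]$ (existing by continuity and the fact that $B_f^{-1}$ is a local conformal isomorphism on $\mathbb{C}\setminus\overline{\mathbb{D}}$) forces $|(g_f^n)'(\gamma_f(s))| \geq 2$ throughout $[s_1, \infty)$. Uniformity over $f \in \mathcal{N}\cap \mathcal{C}(\Pd)$ rests on four facts: $(i)$ Lemma \ref{stability of external rays} gives continuity of $\gamma_f(s)$ in $(f, s)$; $(ii)$ for $\mathrm{Re}(s) > 0$, $\gamma_f(s)$ is holomorphic in $s$ via $B_f^{-1}$, so Cauchy's formula upgrades continuity to $\gamma_f'(s)$ on compact subsets of $(0, \infty)$; $(iii)$ the multiplier $\lambda_f$ depends continuously on $f$ by the implicit function theorem, keeping $|\lambda_f|$ above $1 + \varepsilon$ on $\mathcal{N}$; $(iv)$ the Koenigs coordinate at $x_f$, hence $c_f$ and $\mu_f$, varies continuously in $f$.

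The main obstacle is the singular behavior of $\gamma_f$ at the landing point: $|\gamma_f'(s)|$ may blow up or vanish as $s \to 0^+$ depending on whether $\mathrm{Re}(\mu_f) < 1$ or $\mathrm{Re}(\mu_f) > 1$, and $x_f$ may lie in the forward orbit of $\crit(f)$, in which case $g_f$ is not ambient-holomorphic at $x_f$. The log-B\"ottcher identity $\tilde\Psi_f \circ g_f = d^p \tilde\Psi_f$ is the decisive tool: it is intrinsic to the basin of infinity and bypasses any branching of $f^{-q}$, so the ratio $|\gamma_f'(d^{pn}s)|/|\gamma_f'(s)|$ admits a clean limit dictated solely by the repelling multiplier $\lambda_f$, yielding the sharp uniform expansion factor $|\lambda_f|^n$.
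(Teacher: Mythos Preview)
Your log-B\"ottcher setup and the exact identity $|(g_f^n)'(\gamma_f(s))| = d^{pn}\,|\gamma_f'(d^{pn}s)|/|\gamma_f'(s)|$ are correct and match the paper's framework; the treatment near $\infty$ and on the compact middle interval is also essentially right (though on $[s_1,S_0]$ you need an \emph{upper} bound for $|\gamma_f'(s)|$, not a lower one). The genuine gap is at the landing point, in your claim (iv). When the orbit $\{h^j(x_h)\}_{0\leq j<q}$ meets $\crit(h)$---which is precisely the situation driving the application in \S\ref{continuity-h-d}, where $x_h$ is a preperiodic \emph{critical} point on $\partial U_{h,v}$---the local degree $D(f):=\deg(f^q,x_f)$ drops from $D(h)>1$ to $1$ for generic perturbations $f$. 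One actually has $g_f'(x_f)=\lambda_f^{1/D(f)}$, not $\lambda_f$ (compare~\eqref{derivative at xf} in the case $p=q=1$); hence $g_f'(x_f)$, the exponent $\mu_f$, and the constant $c_f$ in your asymptotic are all \emph{discontinuous} at $f=h$. Concretely, $|c_f|$ blows up like $1/|(f^q)'(x_f)|$ and the linearization radius of $g_f$ at $x_f$ shrinks to zero as $f\to h$, so the interval $[0,s_1]$ on which your Koenigs estimate is valid cannot be chosen independently of $f$.

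The paper's remedy is to abandon the linearizer of $g_f$ at $x_f$ altogether and instead linearize $f^p$ at the \emph{periodic} point $y_f$, where the Koenigs data are stable. One builds a log-B\"ottcher sector $S_f(r)$ at $y_f$ via holomorphic motion (with $\partial S_f(r)$ a uniform quasicircle, using Lemma~\ref{criterion for quasiarcs}) and then pulls it back to a sector $V_f(r)$ containing the tail of $R_f(\theta)$. The step replacing your (iv) is Corollary~\ref{preimage of quasidisk}: since $S_f(r)\subset U_{f,\infty}$ contains no critical values, each component of $f^{-1}(S_f(r))$ is a quasidisk with turning bounded solely in terms of $d$ and the turning of $\partial S_f(r)$---\emph{regardless of how close a critical point sits to the tip $x_f$}. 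Thus $\{\partial V_f(r)\}_f$ are uniform quasicircles, and the derivative-distortion Lemma~\ref{derivative distortion of quasidisks} converts this into a uniform power-law $|\psi_f'(r_2)/\psi_f'(r_1)|\geq C(r_2/r_1)^{C-1}$, hence $|(g_f^k)'|\geq C_4\,d^{kC_4}$ near $x_f$ with $C_4>0$ depending only on the quasicircle constant. This bound is weaker than the $|\lambda_f|^n$ you aimed for, but it is exactly the uniform estimate your argument is missing.
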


The proof of Proposition \ref{expanding on external rays} relies on various distortion estimates for quasicircles, and will be given in \S\ref{distortion-thm}. 
After that, we will complete the proof of Proposition \ref{continuous-hdim} in \S\ref{continuity-h-d}. 
 
Now let us introduce some notations and the Koebe distortion theorem. 
Let $\dist(x,A)=\inf\{|x-y| {;~}  y\in A\}$ be the distance between a point $x$ and a subset $A$ of $\mathbb{C}$. 
Let ${\diam}(A)=\sup\{|x-y| {;~}  x,y\in A\}$ be the diameter of $A\subset\mathbb{C}$. 
For $x\in A$, recall that $\Comp_x(A)$ is the connected component of $A$ containing $x$. 
For $z\in\mathbb{C}$ and $r>0$, let $\mathbb{D}(z,r) = \{w\in\mathbb{C}{;~}|w-z|<r\}$. 
For two subsets $A$ and $B$ of $\mathbb{C}$, the notation $A\Subset B$ means $\overline{A}\subset B^\circ$.

\begin{lemma}
[Koebe distortion theorem]
\label{Koebe-distortion}
Define $\CK(\varepsilon)=(1+\varepsilon)^4/(1-\varepsilon)^4$ for $0<\varepsilon<1$. 
Then for every univalent function $g:\mathbb{D}(z,r)\rightarrow\mathbb{C}$, we have 
$$\mathbb{D}(g(z),{|g'(z)|\varepsilon r}/{\CK(\varepsilon)})\subset g(\mathbb{D}(z,\varepsilon r))
\subset \mathbb{D}(g(z),{|g'(z)|\varepsilon r}\cdot{\CK(\varepsilon)}),$$
and $|g'(z_1)|/|g'(z_2)|\leq \CK(\varepsilon)$ for every $z_1,z_2\in \mathbb{D}(z,\varepsilon r)$. 
\end{lemma}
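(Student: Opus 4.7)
The plan is to deduce both the inclusion and the derivative ratio from the classical Koebe growth and distortion estimates for a normalized univalent function on the unit disk, by a standard renormalization. Concretely, given a univalent $g:\mathbb{D}(z,r)\to\mathbb{C}$, define
$$h(w)=\frac{g(z+rw)-g(z)}{r\,g'(z)},\qquad w\in\mathbb{D},$$
which is univalent on $\mathbb{D}$ with $h(0)=0$ and $h'(0)=1$. All statements of the lemma then become statements about $h$ after the affine substitution $w=(\zeta-z)/r$, so it suffices to verify the corresponding estimates for $h$ on $\mathbb{D}(0,\varepsilon)$ and then scale back.

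First I would invoke the classical Koebe growth theorem for $h$: for every $w\in\mathbb{D}$,
$$\frac{|w|}{(1+|w|)^{2}}\;\le\;|h(w)|\;\le\;\frac{|w|}{(1-|w|)^{2}}.$$
The upper estimate with $|w|\le\varepsilon$ gives $|h(w)|\le \varepsilon/(1-\varepsilon)^{2}\le \varepsilon\,\CK(\varepsilon)$, which after scaling yields the outer inclusion $g(\mathbb{D}(z,\varepsilon r))\subset \mathbb{D}(g(z),|g'(z)|\varepsilon r\cdot\CK(\varepsilon))$. For the inner inclusion, I would observe that $\partial h(\mathbb{D}(0,\varepsilon))=h(\{|w|=\varepsilon\})$ by univalence, so the lower growth estimate gives
$$\dist\bigl(0,\partial h(\mathbb{D}(0,\varepsilon))\bigr)\;\ge\;\frac{\varepsilon}{(1+\varepsilon)^{2}}\;\ge\;\frac{\varepsilon}{\CK(\varepsilon)},$$
where the last inequality is just $(1+\varepsilon)^{2}(1-\varepsilon)^{-4}\ge 1$. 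Scaling back then gives $\mathbb{D}(g(z),|g'(z)|\varepsilon r/\CK(\varepsilon))\subset g(\mathbb{D}(z,\varepsilon r))$.

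For the derivative ratio, I would apply the classical Koebe distortion theorem for $h$: for every $w\in\mathbb{D}$,
$$\frac{1-|w|}{(1+|w|)^{3}}\;\le\;|h'(w)|\;\le\;\frac{1+|w|}{(1-|w|)^{3}}.$$
Given $z_1,z_2\in\mathbb{D}(z,\varepsilon r)$, set $w_i=(z_i-z)/r\in\mathbb{D}(0,\varepsilon)$; the chain rule yields $g'(z_i)=g'(z)\,h'(w_i)$, and hence
$$\frac{|g'(z_1)|}{|g'(z_2)|}\;=\;\frac{|h'(w_1)|}{|h'(w_2)|}\;\le\;\frac{(1+\varepsilon)/(1-\varepsilon)^{3}}{(1-\varepsilon)/(1+\varepsilon)^{3}}\;=\;\frac{(1+\varepsilon)^{4}}{(1-\varepsilon)^{4}}\;=\;\CK(\varepsilon).$$

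There is no genuine obstacle here once the Koebe growth and distortion theorems are invoked as black boxes; the entire content of the proof is the normalization $h=(g\circ\phi-g(z))/(rg'(z))$ and the elementary comparison $(1+\varepsilon)^{2}\le\CK(\varepsilon)(1-\varepsilon)^{2}$ used to absorb the sharp growth constant into the explicit $\CK(\varepsilon)$ that the paper has chosen for uniform bookkeeping in the subsequent distortion arguments.
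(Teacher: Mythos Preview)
Your proof is correct and is exactly the standard derivation of this form of the Koebe distortion theorem from the classical growth and distortion estimates on the class $S$. The paper does not actually prove this lemma; it is stated as a known result and used as a black box throughout \S\ref{continuity-hd}, so there is nothing to compare against.
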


From \S\S\ref{subsection Poincare sequence} to \ref{subsection hyperbolic set}, we fix $f$ and $U$ as in Proposition \ref{equalities for Hdim of partial-U}. 
Replacing $f$ by a suitable iterate $f^p$, we always assume $f(U)=U$.

\subsection{Poincar\'e sequence}
\label{subsection Poincare sequence}

In this part, we will consider the Poincar\'e sequence for $\partial U$. 
This is an analogy of the Poincar\'e sequence for rational maps; see, e.g. \cite{Przy-Poin}. 

\begin{definition}
[Poincar\'e sequence for $\partial U$]
We call a point $x\in\partial U$ \emph{admissible} if it does not belong to $\overline{\bigcup_{j\geq1}f^j(\crit(f)\cap\partial U)}$.  
For an admissible point $x\in\partial U$ and $t\geq0$, we define the \emph{Poincar\'e sequence} by 
\begin{equation}
\label{Poin-sequ}
P_n(x,t)= \sum_{y\in f^{-n}(x)\cap\partial U}|(f^n)'(y)|^{-t}, ~n\in\mathbb{N}.
\end{equation}
\end{definition}

Because $f$ is semi-hyperbolic, the set $$\overline{\bigcup_{j\geq1}f^j(\crit(f)\cap\partial U)}
= \bigcup_{c\in \crit(f)\cap\partial U}\overline{\{f^j(c){;~}j\geq1\}}$$
is totally disconnected (or empty). 
Therefore the set of all admissible points in $\partial U$ is open and dense in $\partial U$.

\begin{lemma}
\label{admis-pt-part-U}
Let $x\in \partial U$ be  admissible. 
Then there exists an $r>0$, a $\lambda>1$ and an $M>0$ 
such that for every $n\geq1$ and every $y\in f^{-n}(x)\cap \partial U$, 
we have \begin{align}
\label{disjoint with crit}
&\crit(f)\cap\overline{\Comp_y f^{-n}(\mathbb{D}(x,r))}=\emptyset, \\
&\diam(\Comp_y f^{-n}(\mathbb{D}(x,r)))\leq M\lambda^{-n}. \notag
\end{align}
\end{lemma}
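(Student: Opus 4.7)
The plan is to derive both assertions from semi-hyperbolicity (Lemma~\ref{properties semi-hyper}) together with the admissibility of $x$, splitting the analysis according to whether $n$ is large or bounded. Throughout, I write $V_y^{(n)} := \Comp_y f^{-n}(\mathbb{D}(x,r))$ and $P := \overline{\bigcup_{j \geq 1} f^j(\crit(f) \cap \partial U)}$; admissibility of $x$ says precisely that $\dist(x,P) > 0$, and $f(P) \subset P$ by construction.

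First, I would fix the diameter constants once and for all by invoking Lemma~\ref{properties semi-hyper}(2): there exist $r_0 > 0$, $\lambda > 1$, $M > 0$ so that every component $W$ of $f^{-n}(\mathbb{D}(z,r_0))$ with $z \in J(f)$ satisfies $\diam(W) \leq M\lambda^{-n}$. Any $r \leq r_0$ then immediately secures the diameter assertion of the lemma, since $x \in \partial U \subset J(f)$.

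For the $\crit(f)$-free assertion, the guiding observation is that if $c \in \crit(f) \cap \overline{V_y^{(n)}}$ then $f^n(c) \in \overline{\mathbb{D}(x,r)}$. Decompose $\crit(f) = (\crit(f) \cap \partial U) \sqcup (\crit(f) \setminus \partial U)$; the finite set $\crit(f) \setminus \partial U$ has positive distance $\delta > 0$ from $\partial U$. I then shrink $r$ below $\min\{r_0, \dist(x,P)\}$ and pick $N$ so large that $M\lambda^{-n} < \delta$ for all $n > N$. For such $n$, if $c \in \crit(f) \cap \overline{V_y^{(n)}}$, the diameter bound forces $|c - y| \leq M\lambda^{-n} < \delta$, hence $c \in \crit(f) \cap \partial U$; but then $f^n(c) \in P$ by forward invariance, contradicting $\overline{\mathbb{D}(x,r)} \cap P = \emptyset$.

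For the remaining (finitely many) values $n \leq N$, each $y \in f^{-n}(x) \cap \partial U$ has orbit segment $y, f(y), \dots, f^{n-1}(y)$ lying in $\partial U \setminus \crit(f)$: if some $f^j(y) \in \crit(f) \cap \partial U$ with $0 \leq j < n$, forward invariance would give $x = f^n(y) \in P$, contradicting admissibility. Hence $f^n$ is a local homeomorphism at each such $y$, and by shrinking $r$ further to meet these finitely many conditions, each $V_y^{(n)}$ is confined to a neighborhood of $y$ disjoint from the finite set $\crit(f)$. The delicate point is the large-$n$ regime: the degree bound of Proposition~\ref{equiv-condi semi-hyper}(3) alone is insufficient (it permits critical points inside $V_y^{(n)}$), and it is the interplay between the diameter collapse (eliminating off-$\partial U$ critical points) and admissibility $\dist(x,P) > 0$ (eliminating on-$\partial U$ ones) that closes the argument.
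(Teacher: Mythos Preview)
Your proof is correct and follows essentially the same approach as the paper: invoke the semi-hyperbolic diameter bound (Lemma~\ref{properties semi-hyper}), then for large $n$ use the diameter collapse to exclude critical points off $\partial U$ and admissibility to exclude those on $\partial U$, and for the finitely many small $n$ shrink $r$ directly. Your write-up is in fact a bit more explicit than the paper's in justifying the small-$n$ case (via the orbit segment $y,\dots,f^{n-1}(y)$ avoiding $\crit(f)$), which the paper leaves implicit.
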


\begin{proof}
By Lemma \ref{properties semi-hyper}, there exists an $r>0$,  a $\lambda>1$ and an $M>0$ such that for every $n\geq1$ 
and every connected component $V$ of $f^{-n}(\mathbb{D}(x,r))$, we have ${\diam}(V)\leq M\lambda^{-n}$. 
Decrease $r$ if necessary so that 
$\overline{\mathbb{D}(x,r)} \cap \overline{\bigcup_{j\geq1}f^j(\crit(f) \cap \overline{U})} = \emptyset$. 
Choose $N>0$ so that $M\lambda^{-N}<\min_{c\in\crit(f)\setminus\partial U}\dist(c,\partial U)$. 
Then for every $n\geq N$ and every $y\in f^{-n}(x)\cap \partial U$, the property (\ref{disjoint with crit}) holds. 
Decrease $r$ if necessary so that (\ref{disjoint with crit}) holds for every $1\leq n<N$ and every $y\in f^{-n}(x)\cap \partial U$. 
This completes the proof. 
\end{proof}

\begin{lemma}
\label{tree-pressure}
For an admissible point $x\in\partial U$ and $t\geq0$, define 
\begin{equation}
\label{Cauchy-sequ}
C(x,t) = \limsup_{n\rightarrow\infty}\frac{1}{n}\log P_n(x,t).
\end{equation}
Then $C(x,t)$ has the following properties. 
\begin{enumerate}
\item There are $\Lambda>\lambda>1$ so that $\log \delta-t\log \Lambda \leq C(x,t)\leq \log \delta-t\log \lambda$, where $\delta = \deg(f|_U)$. 
\item $C(x,t)$ is a Lipschitz continuous function of $t\geq0$. 
\item $C(x,t)$ is strictly decreasing with respect to $t\geq0$. 
\item $C(x,t)$ is independent of the admissible point $x$. 
\end{enumerate}
\end{lemma}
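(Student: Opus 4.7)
The essential input is a uniform two--sided control on $|(f^n)'(y)|$ for $y\in f^{-n}(x)\cap\partial U$, combined with the fact that $f|_{\partial U}$ is a degree-$\delta$ covering (since, after the reduction $f(U)=U$, the restriction $f|_{\partial U}$ is topologically conjugate to $m_\delta$), so that $\#(f^{-n}(x)\cap\partial U)=\delta^n$. For the \emph{upper} bound on derivatives, since $\partial U$ is compact and $f$ is a polynomial, $\Lambda:=\max_{z\in\partial U}|f'(z)|<\infty$ and by the chain rule $|(f^n)'(y)|\le\Lambda^n$; up to replacing $f$ by $f^m$ if necessary we have $\Lambda>1$, which can always be arranged because $f|_{\partial U}$ is expanding (by semi-hyperbolicity and the fact that there are no parabolic cycles on $\partial U$, see Corollary \ref{semi-hyperbolicity}). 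For the \emph{lower} bound, I would apply Lemma \ref{admis-pt-part-U} at the admissible point $x$ to obtain $r,\lambda,M$ such that each component $V_y:=\Comp_y f^{-n}(\mathbb{D}(x,r))$ is disjoint from $\crit(f)$ and has diameter $\le M\lambda^{-n}$. Then $f^n\colon V_y\to\mathbb{D}(x,r)$ is univalent, so the Koebe $1/4$ theorem gives $V_y\supset\mathbb{D}(y,\,r/(4|(f^n)'(y)|))$, hence $|(f^n)'(y)|\ge r/(8M)\cdot\lambda^n$.

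Plugging these bounds into $P_n(x,t)=\sum_y|(f^n)'(y)|^{-t}$ and using $\#f^{-n}(x)\cap\partial U=\delta^n$ yields
\[
\delta^n\Lambda^{-nt}\;\le\;P_n(x,t)\;\le\;\delta^n(8M/r)^t\lambda^{-nt},
\]
so that $\tfrac{1}{n}\log P_n(x,t)$ is asymptotically pinched between $\log\delta-t\log\Lambda$ and $\log\delta-t\log\lambda$, proving (1). For (2) and (3), I would compare termwise for $0\le t_1<t_2$: writing $|(f^n)'(y)|^{-t_2}=|(f^n)'(y)|^{-(t_2-t_1)}|(f^n)'(y)|^{-t_1}$ and applying the two-sided bound on $|(f^n)'(y)|$, I get
\[
\Lambda^{-n(t_2-t_1)}\,P_n(x,t_1)\;\le\;P_n(x,t_2)\;\le\;C^{t_2-t_1}\lambda^{-n(t_2-t_1)}\,P_n(x,t_1),
\]
with $C=8M/r$ a constant independent of $n$. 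Taking $\frac{1}{n}\log$ and letting $n\to\infty$ gives
\[
-(t_2-t_1)\log\Lambda\;\le\;C(x,t_2)-C(x,t_1)\;\le\;-(t_2-t_1)\log\lambda,
\]
which is Lipschitz continuity (with constant $\log\Lambda$) and strict monotonicity (since $\log\lambda>0$).

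For (4), the main point is a bounded-distortion transfer between preimage sets. Given two admissible points $x_1,x_2\in\partial U$ with $x_2\in\mathbb{D}(x_1,r/2)\cap\partial U$, where $r$ is the scale provided by Lemma \ref{admis-pt-part-U} at both $x_1$ and $x_2$, the Koebe distortion theorem applied to the univalent map $f^n\colon V_{y_1}\to\mathbb{D}(x_1,r)$ shows that $|(f^n)'|$ is comparable with multiplicative constant $\CK(1/2)$ on $V_{y_1}$; in particular, the unique point $y_2\in V_{y_1}\cap f^{-n}(x_2)$ (uniqueness coming from univalence of $f^n$ on $V_{y_1}$) satisfies $|(f^n)'(y_2)|\asymp|(f^n)'(y_1)|$. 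Since the components $\{V_{y_1}\}_{y_1\in f^{-n}(x_1)\cap\partial U}$ are pairwise disjoint and each $V_{y_1}\cap\partial U$ is an arc through $y_1$ (shrinking $r$ so that $\partial U\cap\mathbb{D}(x_1,r)$ is a single subarc, using that $\partial U$ is a Jordan curve), the map $y_1\mapsto y_2$ is a bijection from $f^{-n}(x_1)\cap\partial U$ onto $f^{-n}(x_2)\cap\partial U$; here I use the cardinality equality $\delta^n=\delta^n$. This gives $P_n(x_1,t)\asymp P_n(x_2,t)$ with constants independent of $n$, hence $C(x_1,t)=C(x_2,t)$. Finally, since $\partial U$ is a connected quasicircle and the admissible points form a dense open subset, any two admissible points can be joined by a chain of such local comparisons, covered by finitely many balls by compactness of $\partial U$; thus $C(x,t)$ is independent of the admissible $x$.

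The main potential obstacle is the bijection step in (4): one must ensure that the components $V_{y_1}\cap\partial U$ are genuine single arcs (so that each contains a unique preimage of $x_2$), and that the correspondence exhausts $f^{-n}(x_2)\cap\partial U$. I expect to handle this by shrinking $r$ so that $\mathbb{D}(x_1,r)\cap\partial U$ is a single arc (local connectivity of $\partial U$), and then using that the diameter bound $M\lambda^{-n}$ from Lemma \ref{admis-pt-part-U} forces each $V_{y_1}$ to be univalent under $f^n$ and hence to contribute exactly one preimage, combined with the cardinality count to conclude surjectivity.
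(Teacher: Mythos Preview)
Your arguments for (1)--(3) are correct and essentially match the paper's: the same two--sided derivative bound via Lemma~\ref{admis-pt-part-U} and Koebe distortion (the paper uses the full Koebe distortion theorem where you invoke the quarter theorem, but the effect is identical), and the same termwise comparison for Lipschitz continuity and strict monotonicity. One minor point: the claim that ``$f|_{\partial U}$ is expanding'' is not literally true when $\crit(f)\cap\partial U\neq\emptyset$; but you only need $\Lambda>1$, and this follows from the existence of a repelling fixed point on $\partial U$, so no iterate is required.

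For (4) your approach differs from the paper's and has a genuine gap. Your chain argument shows that $C(\cdot,t)$ is locally constant on the set of admissible points in $\partial U$, but this set is $\partial U$ minus the finite forward orbit of $\crit(f)\cap\partial U$; when that orbit has at least two points, the admissible set is \emph{disconnected} (a Jordan curve minus finitely many points). The radius $r$ from Lemma~\ref{admis-pt-part-U} is chosen so that $\overline{\mathbb D(x_1,r)}$ avoids the postcritical set on $\overline U$, so your ball $\mathbb D(x_1,r/2)$ can never contain a non-admissible point $p$; hence you cannot compare admissible points $x_1,x_2$ lying on opposite sides of $p$, and the finite cover by compactness does not bridge the arcs.

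The paper's fix is short and worth knowing: extend the definition of $P_n(x,t)$ and $C(x,t)$ to interior points $x\in U\setminus\overline{\bigcup_{j\ge1}f^j(\crit(f)\cap U)}$ (taking preimages in $U$). The same Koebe argument makes $C(\cdot,t)$ locally constant on $\overline U\setminus\overline{\bigcup_{j\ge1}f^j(\crit(f)\cap\overline U)}$, and this set \emph{is} connected (a closed topological disk minus finitely many points), so locally constant implies constant. Alternatively, within your framework you could use the relation $P_{n+m}(x,t)=\sum_{y\in f^{-m}(x)\cap\partial U}|(f^m)'(y)|^{-t}P_n(y,t)$ to show $C(x,t)=\max_{y\in f^{-m}(x)\cap\partial U}C(y,t)$; since preimages become dense in $\partial U$ as $m\to\infty$, for large $m$ they meet every admissible arc, forcing all the arc-values to coincide. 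Either route closes the gap, but the paper's detour through the interior is the cleanest.
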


\begin{proof}
Given an admissible point $x\in\partial U$. 
Choose $r>0$, $\lambda>1$ and $M>0$ as in Lemma \ref{admis-pt-part-U}. 
By the Koebe distortion theorem (Lemma \ref{Koebe-distortion}), for every $n\geq1$ and every $y\in f^{-n}(x)\cap\partial U$, we have $|(f^n)'(y)|\geq \frac{r}{\CK(1/2)}M^{-1} \lambda^n=:a \lambda^n$, since ${\diam}(\Comp_y f^{-n}(\mathbb{D}(x,r)))\leq M\lambda^{-n}$. 
Let $\Lambda= \max_{z\in \partial U} |f'(z)|$. 
Then the first property follows from  $\delta^n(\Lambda^n)^{-t}\leq P_n(x,t)\leq \delta^n( a \lambda^n)^{-t}$. 
For $\varepsilon>0$, we have $(\Lambda^n)^{-\varepsilon}\leq |(f^n)'(y)|^{-\varepsilon}\leq (a \lambda^n)^{-\varepsilon}$. 
It follows that $\varepsilon \log\lambda\leq C(x,t)-C(x,t+\varepsilon)\leq  \varepsilon \log\Lambda$, since $|(f^n)'(y)|^{-(t+\varepsilon)} = |(f^n)'(y)|^{-t}\cdot |(f^n)'(y)|^{-\varepsilon}$. 
This implies the second and the third properties. 

For $x\in U\setminus\overline{\bigcup_{j\geq1}f^j(\crit(f)\cap U)}$ and $t\geq0$, we can similarly define the Poincar\'e sequence $\{P_n(x,t)\}_{n\in\mathbb{N}}$ and the corresponding limit $C(x,t)$ by taking $y\in f^{-n}(x)\cap U$. 
Now fix $t\geq0$. 
By the Koebe distortion theorem, the function $C(x,t)$ of $x$ is locally constant on $\overline{U}\setminus \overline{\bigcup_{j\geq1}f^j(\crit(f)\cap\overline{U})}$, hence  constant. 
This shows (4). 
\end{proof}

Given an admissible point $x\in\partial U$. 
According to Lemma \ref{tree-pressure}, 
we have $C(x,0)=\log(\deg(f|_U))>0$ and $\lim_{t\rightarrow\infty}C(x,t)=-\infty$. 
Moreover, the continuity and the monotonicity imply that 
there is a unique zero of $C(x,t)$, independent of the choice of $x$. 
This zero is called the \emph{Poincar\'e exponent} of $\partial U$, 
and denoted by $\dPoin(\partial U)$.

\subsection{Conformal measure}

\begin{definition}
[Conformal measure for $\partial U$]
Let $t\geq0$. 
A \emph{$t$-conformal measure} for $\partial U$ is a Borel probability measure $\mu$ supported on $\partial U$ satisfying
\begin{equation}
\label{transition formula for mu} 
\mu(f(E))=\int_E|f'|^t d\mu 
\end{equation}
for every Borel set $E\subset\partial U$ such that $f|_E$ is injective. 
(If $f'(z)=0$ and $t=0$, we set $0^0=0$.)
\end{definition}

The \emph{conformal dimension} of $\partial U$ is defined to be 
$$\dconf(\partial U)=\inf\{t\geq0 {;~}  \text{$\exists$ a $t$-conformal measure for $\partial U$} \}.$$

\begin{remark}
\label{Borel subsets of the plane}
Let $\mu$ be a $t$-conformal measure for $\partial U$. 
Note that $f(\partial U)=\partial U$ holds, but $f^{-1}(\partial U) = \partial U$ may not hold. 
Let $E_0 = f^{-1}(\partial U)\setminus \partial U$. 
Let $E$ be a Borel subset of $\mathbb{C}$ with $f|_E$ injective. 
Then we have 
$$\mu(f(E\setminus E_0)) = \mu(f(E\cap \partial U))=\int_{E\cap \partial U}|f'|^t d\mu=\int_{E}|f'|^t d\mu.$$
It follows that 
\begin{equation}
\label{transition inequality for mu} 
\mu(f(E)) = \mu(f(E\setminus E_0))+\mu(f(E\cap E_0)) 
\geq \int_{E}|f'|^t d\mu. 
\end{equation}
\end{remark}

It is worth observing that
\begin{fact}  
\label{conf-md} 
$\dconf(\partial U)>0$ and  $\dconf(\partial U)$-conformal measure for $\partial U$ exists.
\end{fact}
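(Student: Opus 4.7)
The plan is to carry out a Patterson--Sullivan construction at the Poincar\'e exponent $s := \dPoin(\partial U)$, which is strictly positive by Lemma 8.2(1), to produce an $s$-conformal probability measure on $\partial U$; this realizes the infimum defining $\dconf(\partial U)$ and gives $\dconf(\partial U) \le s < \infty$. Strict positivity of $\dconf(\partial U)$ is then established by a separate direct argument ruling out any $0$-conformal measure.

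For the existence, I would fix an admissible base point $x_0 \in \partial U$. For each $t > s$ the Poincar\'e series $\Sigma(t) := \sum_{n \geq 0} P_n(x_0, t)$ converges, so the probability measure
$$\nu_t := \frac{1}{\Sigma(t)} \sum_{n=0}^\infty \sum_{y \in f^{-n}(x_0) \cap \partial U} |(f^n)'(y)|^{-t}\, \delta_y$$
is well-defined on $\partial U$, and I would take a weak-$*$ limit $\mu$ of $\nu_{t_k}$ along some sequence $t_k \searrow s$. If $\Sigma(s) < \infty$, I would apply the standard Patterson trick: insert slowly varying weights $a_n$ into the definition so that the modified series diverges at $t = s$ while $a_{n+j}/a_n \to 1$ uniformly on bounded $j$, preventing $\mu$ from collapsing onto a finite part of the backward orbit. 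To verify the $s$-conformal relation $\mu(f(E)) = \int_E |f'|^s\, d\mu$, it suffices by a standard covering argument to check it on small pieces $V \cap \partial U$, where $V$ is a component of $f^{-1}(\mathbb{D}(y,r))$ with $y \in \partial U$ admissible and $r > 0$ chosen as in Lemma 8.2. On each such $V$, Koebe distortion yields bounded distortion of $f|_V$, so the $\nu_t$-weight of a preimage $z \in V$ matches the weight of $f(z)$ times $|f'(z)|^{-t}(1+o(1))$; passing to the limit $t_k \searrow s$ then produces the exact $s$-conformal identity.

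For the positivity, suppose toward contradiction that a $0$-conformal probability measure $\mu$ on $\partial U$ existed. By Lemma 4.2(1), $\partial U$ is a quasicircle, and the internal B\"ottcher coordinate on $U$ conjugates $f|_{\partial U}$ topologically to $z \mapsto z^{\delta}$ on $\partial \mathbb{D}$, where $\delta := \deg(f|_U) \geq 2$. Pick an admissible point $a \in \partial U$ whose forward orbit is infinite; such $a$ exists because the admissibles form a dense open set and the preperiodic points are countable. Applying the $0$-conformal relation to singletons gives $\mu(\{f^k(a)\}) = \mu(\{a\})$ for all $k \geq 0$, and infinitude of the orbit forces $\mu(\{a\}) = 0$; the same reasoning applied to $f^{-1}$ yields $\mu(\{a_j\}) = 0$ for the $\delta$ preimages $a_1, \dots, a_\delta$ of $a$ in $\partial U$. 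These preimages cut $\partial U$ into arcs $I_1, \dots, I_\delta$, each mapped homeomorphically by $f$ onto $\partial U \setminus \{a\}$, so the conformal relation yields $\mu(I_j) = \mu(f(I_j)) = 1$ for every $j$. Summing, $\delta = \sum_{j=1}^\delta \mu(I_j) \leq \mu(\partial U) = 1$, contradicting $\delta \geq 2$.

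The hardest step is ensuring that the weak-$*$ limit $\mu$ satisfies the conformal relation \emph{exactly}, and not merely up to bounded factors, in the presence of critical points $c \in \crit(f) \cap \partial U$ (which can indeed occur, cf.\ Example 3.1); at such $c$ one has $|f'(c)| = 0$ and the local preimage structure is ramified. Semi-hyperbolicity mitigates this on three fronts: Proposition 4.1(3) bounds the degrees of inverse branches uniformly, Lemma 4.2(1) guarantees $\partial U$ is a quasicircle, and Lemma 8.2 produces scales at which all relevant preimage components avoid $\crit(f)$. Combined with the finiteness of $\crit(f) \cap \partial U$, these ingredients allow the ramification to be absorbed into the $(1+o(1))$ factors of the distortion estimates and the conformal identity to survive the passage to the limit $t_k \searrow s$.
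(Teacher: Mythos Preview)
Your Patterson--Sullivan construction and your argument ruling out $0$-conformal measures are both along the lines of what the paper does (the paper is much terser on positivity, simply asserting that $\deg(f|_{\partial U}) \geq 2$ prevents a $0$-conformal measure). However, your existence claim has a genuine gap: the Patterson--Sullivan machinery produces a conformal measure at the Poincar\'e exponent $s = \dPoin(\partial U)$, which shows only that $\dconf(\partial U) \le s$. It does \emph{not} show that the infimum defining $\dconf(\partial U)$ is attained, because at this stage of the argument the equality $\dconf(\partial U) = \dPoin(\partial U)$ (part of Proposition~\ref{equalities for Hdim of partial-U}) is not yet available and in fact relies on Fact~\ref{conf-md}. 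Your phrase ``this realizes the infimum'' is precisely the unjustified step.

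The paper closes this gap with a separate closedness argument: once \emph{some} $t$-conformal measure is known to exist (via Patterson--Sullivan, as in the proof of Lemma~\ref{Patterson-Sullivan construction}), one chooses $t_n$-conformal measures $\mu_n$ with $t_n \to \dconf(\partial U)$, extracts a weak-$*$ limit $\mu$, and verifies directly (cf.\ Appendix~\ref{appendix weak topology on measures}) that $\mu$ is $\dconf(\partial U)$-conformal. This compactness-plus-closedness step is short but indispensable, and your outline omits it. Incidentally, the hard work you anticipate in passing to the limit near critical points on $\partial U$ is exactly the content of that verification, so you have identified the right difficulty---it just belongs to the limiting sequence $t_n \to \dconf$, not only to the Patterson--Sullivan limit $t_k \searrow s$.
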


To prove the fact, first note that there is no $0$-conformal measure for  $\partial U$,  since $\deg(f|_{\partial U})\geq2$.
By the Patterson-Sullivan construction of a conformal measure for $\partial U$ (see the proof of Lemma \ref{Patterson-Sullivan construction}), the set $\{t>0 {;~}  \text{$\exists$ a $t$-conformal measure for $\partial U$} \}$ is nonempty. 
By the definition of $\dconf(\partial U)$, 
we can choose a $t_n$-conformal measure $\mu_n$ for $\partial U$ ($n\in\mathbb{N}$) such that $t_n\rightarrow \dconf(\partial U)$ as $n\rightarrow\infty$. 
By passing to a subsequence, we may assume $\mu_n\rightarrow\mu$ in \emph{weak topology}. 
As the usual conformal measures of a rational map, we can check that $\mu$ is a $\dconf(\partial U)$-conformal measure for $\partial U$ (see Appendix \ref{appendix weak topology on measures}). 
Since there is no $0$-conformal measure for  $\partial U$, 
we have $\dconf(\partial U)>0$. 

\begin{lemma}
\label{Patterson-Sullivan construction}
$\dconf(\partial U)\leq \dPoin(\partial U)$. 
\end{lemma}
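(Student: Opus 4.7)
The plan is to show that for every $t>\dPoin(\partial U)$ there exists a $t$-conformal measure for $\partial U$; this gives $\dconf(\partial U)\le t$, and letting $t\downarrow \dPoin(\partial U)$ yields the claim. The construction will be the Patterson–Sullivan averaging procedure adapted to the tree-valued Poincar\'e series $P_n(x,t)$ introduced in \S\ref{subsection Poincare sequence}.

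Fix an admissible $x\in\partial U$ and $t>\dPoin(\partial U)$. By the strict monotonicity in Lemma \ref{tree-pressure}, $C(x,t)<0$, so $L_0(x):=\sum_{n\ge 0}P_n(x,t)<\infty$. I am therefore in the \emph{convergent} case of the Patterson construction: a naive averaging of the natural atomic measures would leave a residual boundary atom at $x$ that obstructs the conformality equation. To kill this defect I apply Patterson's trick. Classical arguments on slowly varying functions supply a nondecreasing positive sequence $b(n)$ with $b(n{+}1)/b(n)\to 1$ such that $\tilde L_s(x):=\sum_{n\ge 0}b(n)\,P_n(x,t)\,e^{-sn}\to\infty$ as $s\to 0^+$. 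For $s>0$ set
$$\mu_s=\frac{1}{\tilde L_s(x)}\sum_{n\ge 0}b(n)\,e^{-sn}\sum_{y\in f^{-n}(x)\cap\partial U}|(f^n)'(y)|^{-t}\,\delta_y,$$
a Borel probability measure supported on the compact set $\partial U$. Extract a weakly convergent subsequence $\mu_{s_k}\to\mu$ as $s_k\to 0^+$.

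The key computation, valid for any Borel $E\subset\partial U$ on which $f|_E$ is injective, decomposes according to $n=0$ and $n\ge 1$. Since $x$ is admissible, $f^{-n}(x)\cap\crit(f)=\emptyset$ for all $n\ge 1$, so the chain-rule identity $|(f^n)'(y)|^{-t}|f'(y)|^t=|(f^{n-1})'(f(y))|^{-t}$ holds at every $y$ in the support, and the substitution $z=f(y)$ (bijective on $E$) yields
$$\int_E|f'|^t\,d\mu_s = e^{-s}\mu_s(f(E)) + \frac{b(0)\,[x\in E]\,|f'(x)|^t}{\tilde L_s(x)} + \frac{1}{\tilde L_s(x)}\sum_{m\ge 0}\bigl(b(m{+}1)-b(m)\bigr)e^{-s(m+1)}\!\!\!\sum_{z\in f(E)\cap f^{-m}(x)}\!\!\!|(f^m)'(z)|^{-t}.$$
The second term vanishes as $s\to 0^+$ because $\tilde L_s(x)\to\infty$, and the third term vanishes by the slowly varying property $b(m{+}1)/b(m)\to 1$ (split the sum at a large $m_0$ and bound the tail by $\varepsilon\cdot e^{-s}$). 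Thus $\int_E|f'|^t\,d\mu_s-e^{-s}\mu_s(f(E))\to 0$.

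The main obstacle is to pass this identity through the weak limit, since weak convergence does not directly test characteristic functions. I will resolve this exactly as in the weak-convergence machinery set up in Appendix \ref{appendix weak topology on measures}: cover $\partial U\setminus\crit(f)$ by finitely many relatively open sets $V_i$ on which $f|_{V_i}$ is a homeomorphism onto $f(V_i)$, use inner/outer regularity of $\mu$ to approximate Borel $E\Subset V_i$ by compacta and open sets, and verify that the mass of $\mu_s$ on any $\varepsilon$-neighborhood of $\crit(f)\cap\partial U$ is $o(1)$ as $s\to 0^+$ (a consequence of Lemma \ref{admis-pt-part-U}: preimages of $\mathbb{D}(x,r)$ under $f^n$ contract exponentially and avoid critical points). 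This produces $\int_E|f'|^t\,d\mu=\mu(f(E))$ for every Borel $E\subset\partial U$ on which $f|_E$ is injective, so $\mu$ is a $t$-conformal measure and $\dconf(\partial U)\le t$, completing the argument.
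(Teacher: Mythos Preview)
Your approach is a legitimate variant of the Patterson--Sullivan construction, though it differs from the paper's in parameterization. The paper lets the exponent $t$ descend to $\dPoin(\partial U)$ and constructs a $\dPoin$-conformal measure directly, splitting into the divergent/convergent cases at the critical exponent and invoking Patterson's slowly-varying function $h(|(f^n)'(y)|)$ only in the convergent case. You instead fix $t>\dPoin(\partial U)$ (so the series always converges), introduce an auxiliary discrete weight $b(n)$ together with the damping $e^{-sn}$, and send $s\to 0^+$. Your version avoids the case distinction and yields $t$-conformal measures for every $t>\dPoin(\partial U)$, from which the inequality follows by taking the infimum; the paper's version is more direct and also exhibits a conformal measure at the critical exponent itself. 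Your approximate-conformality identity and the uniform-in-$E$ error control are correct.

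One step is misstated, however. The claim that ``the mass of $\mu_s$ on any $\varepsilon$-neighborhood of $\crit(f)\cap\partial U$ is $o(1)$ as $s\to 0^+$'' is false in general: the atoms of $\mu_s$ can accumulate near a critical point $c\in\partial U$, and in the limit $\mu$ may well assign positive mass to every neighborhood of $c$. Fortunately this claim is not what is needed. To handle the conformality at $c$ you should instead mimic Case~2 of Lemma~\ref{weak limit of conformal measure} using your own approximate conformality: since $f|_{\partial U}$ is a local homeomorphism, a short arc $E\subset\partial U$ through $c$ has $f|_E$ injective and $f(E)\supset\mathbb{D}(f(c),r)\cap\partial U$, and your identity gives $e^{-s}\mu_s(f(E))\le\int_E|f'|^t\,d\mu_s\le (\max_E|f'|)^t\le C r^{t(k-1)/k}$ where $k=\deg(f,c)$. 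Passing to the weak limit and then letting $r\to 0$ yields $\mu(\{f(c)\})=0$, which is exactly the conformality condition at $c$ since $t>0$. With this correction the argument goes through.
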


\begin{proof}
We will recall the Patterson-Sullivan construction of a conformal measure for $\partial U$; 
see, e.g. \cite[Theorem 4.1]{McM2000}, \cite[Theorem 3]{Sul1983}. 

Let $x$ be an admissible point in $\partial U$. 
For $t\geq0$, consider the \emph{Poincar\'e series} defined by 
$$P(x,t)=\sum_{n=0}^\infty P_n(x,t).$$ 
By Cauchy's root test, if $t<\dPoin(\partial U)$, then $P(x,t)=\infty$ since $C(x,t)>0$; if $t>\dPoin(\partial U)$, then $P(x,t)<\infty$ since $C(x,t)<0$. 
The construction breaks into two cases, depending on $P(x,\dPoin(\partial U))=\infty$ or $P(x,\dPoin(\partial U))<\infty$. 

\vspace{6pt}
{\bf Case 1.}
{\it $P(x,\dPoin(\partial U))=\infty$.}
\vspace{6pt}

For $t>\dPoin(\partial U)$, define an atomic probability measure 
\begin{equation*}
\mu_t=\frac{1}{P(x,t)}  \sum_{n=0}^\infty\sum_{y\in f^{-n}(x)\cap \partial U}|(f^{n})'(y)|^{-t} \delta_y,
\end{equation*}
where $\delta_y$ is the Dirac measure at $y$. 
By checking singletons, for any Borel set $E\subset\partial U$ with $f|_E$ injective, we have 
\begin{equation}
\label{estimation 1 of mu}
\mu_t(f(E))=\begin{cases}
\int_E|f'|^t d\mu_t-|f'(x)|^t/P(x,t), & x\in E,\\
\int_E|f'|^t d\mu_t, & x\notin E. 
\end{cases}
\end{equation}

\vspace{6pt}
{\bf Case 2.}
{\it $P(x,\dPoin(\partial U))<\infty$.}
\vspace{6pt}

By \cite[Lemma 3.1]{Patter}, 
there is a homeomorphism $h:[0,\infty)\rightarrow[0,\infty)$ such that 
\begin{itemize}
\item $\widetilde{P}(x,t)=\infty$ when $0\leq t\leq\dPoin(\partial U)$ and $\widetilde{P}(x,t)<\infty$ when $t>\dPoin(\partial U)$, 
where $$\widetilde{P}(x,t) = 
\sum_{n=0}^\infty\sum_{y\in f^{-n}(x)\cap \partial U}  h(|(f^{n})'(y)|) \cdot |(f^{n})'(y)|^{-t};$$ 
\item if $\varepsilon>0$ is given, there is an $a_0>0$ such that for all $a>a_0$ and $b>0$, 
\begin{equation}
\label{estimation of slowly oscillating function}
\min\{b^\varepsilon,1\}\leq\frac{h(b a)}{h(a)}\leq\max\{b^\varepsilon,1\}.
\end{equation}
\end{itemize}
For $t>\dPoin(\partial U)$, define
\begin{equation*}
\mu_t=\frac{1}{\widetilde{P}(x,t)} \sum_{n=0}^\infty\sum_{y\in f^{-n}(x)\cap \partial U} 
h(|(f^{n})'(y)|) \cdot |(f^{n})'(y)|^{-t} \delta_y. 
\end{equation*}
Let $E$ be a Borel subset of $\partial U$ with $f|_E$ injective and $\overline{E}\cap\crit(f)=\emptyset$. 
Then by (\ref{estimation of slowly oscillating function}) 
and $\widetilde{P}(x,t)\rightarrow\infty$ as $t\searrow\dPoin(\partial U)$, 
for any $\varepsilon>0$, there is a $t_0>\dPoin(\partial U)$ so that for any $t\in(\dPoin(\partial U),t_0)$, 
\begin{equation}
\label{estimation 2 of mu}
\int_E |f'|^t d\mu_t -\varepsilon \leq \mu_t(f(E)) \leq \int_E |f'|^t d\mu_t +\varepsilon. 
\end{equation}

In either case, choose a sequence $\{t_j\}_{j\geq1}$ such that 
$t_j\searrow\dPoin(\partial U)$ and $\mu_{t_j}\rightarrow\mu$ in weak topology as $j\rightarrow\infty$. 
Then $\mu$ is supported on $\partial U$. 
According to (\ref{estimation 1 of mu}) (resp. (\ref{estimation 2 of mu})), 
by an explicit computation 
(similar to the proof of Lemma \ref{weak limit of conformal measure}), 
the measure $\mu$ is a $\dPoin(\partial U)$-conformal measure for $\partial U$.
It follows that $\dconf(\partial U)\leq \dPoin(\partial U)$.  
\end{proof}

\subsection{Conical points}
\label{subsection conical points}

In this subsection, we will introduce the conical points and give some properties. 
See, e.g. \cite[\S1]{DMNU}, \cite[\S2]{McM2000}. 

\begin{definition}
[Conical point]
For $\rho>0$, a point $x\in J(f)$ is called \emph{$\rho$-conical} (or \emph{$\rho$-radial}) for $f$ 
if there exists a sequence of integers $n_k\rightarrow\infty$ such that 
each $f^{n_k}$ is injective on 
$\Comp_x f^{-n_k}(\mathbb{D}(f^{n_k}(x),\rho))$. 
\end{definition}

We denote the set of $\rho$-conical points by $\Con(f,\rho)$, 
and the set of conical points by $\Con(f)=\bigcup_{\rho>0}\Con(f,\rho)$. 
Let $\Con(\partial U, \rho)=\Con(f,\rho)\cap \partial U$ 
and $\Con(\partial U)=\Con(f)\cap \partial U$. 

Since $f$ is semi-hyperbolic, by \cite[Proposition 6.1]{U94}, we have 
$$\Con(f) = J(f) \setminus \bigcup_{n\geq0} f^{-n}(\crit(f)).$$
It follows that 
\begin{equation}
\label{conical set formula}
\Con(\partial U) = \partial U \setminus \bigcup_{n\geq0} f^{-n}(\crit(f) ).
\end{equation}

\begin{lemma}
\label{a sequence of radii}
Given $\rho>0$, a $t$-conformal measure $\mu$ for $\partial U$ and $x\in\Con(\partial U,\rho)$. 
Then there exists a constant $C=C(\rho,\mu)>1$ and a sequence of positive numbers $r_k=r_k(\rho,x)\rightarrow0$ such that 
\begin{equation}
\label{estimate of mu-measure}
C^{-1}r_k^t \leq {\mu(\mathbb{D}(x,r_k))}\leq C r_k^t. 
\end{equation}
\end{lemma}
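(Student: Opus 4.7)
The plan is to combine the univalent inverse branches guaranteed by the $\rho$-conical hypothesis with the Koebe distortion theorem to transport a fixed-scale estimate for $\mu$ on $\partial U$ down to scale $r_k$ at $x$, via the conformal transition formula (\ref{transition formula for mu}). Concretely, the conical condition supplies $n_k \to \infty$ such that $f^{n_k}$ is univalent on $V_k := \Comp_x f^{-n_k}(\mathbb{D}(y_k,\rho))$ for $y_k := f^{n_k}(x) \in \partial U$; write $\psi_k := (f^{n_k}|_{V_k})^{-1}: \mathbb{D}(y_k, \rho) \rightarrow V_k$. Two applications of Lemma \ref{Koebe-distortion} to $\psi_k$ at scale $\varepsilon = 1/2$ and at a smaller explicit scale $s = s(\rho) > 0$ yield the sandwich
\begin{equation*}
\psi_k\bigl(\mathbb{D}(y_k,s)\bigr) \ \subseteq\ \mathbb{D}(x, r_k) \ \subseteq\ \psi_k\bigl(\mathbb{D}(y_k, \rho/2)\bigr)
\end{equation*}
for the choice $r_k := \rho/\bigl(2\CK(1/2)\,|(f^{n_k})'(x)|\bigr)$. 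Semi-hyperbolicity of $f$ together with $x \in \Con(\partial U, \rho)$ forces $|(f^{n_k})'(x)| \to \infty$, so $r_k \to 0$.

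The second step is to invoke conformality. Iterating (\ref{transition formula for mu}) along the injective branch $f^{n_k}|_{V_k}$ gives $\mu(\mathbb{D}(y_k, r)) = \int_{\psi_k(\mathbb{D}(y_k, r))} |(f^{n_k})'|^t\, d\mu$ for $r \in \{s, \rho/2\}$. Koebe distortion traps $|(f^{n_k})'|$ within a bounded ratio of $|(f^{n_k})'(x)|$ on $\psi_k(\mathbb{D}(y_k, \rho/2))$, so
\begin{equation*}
\mu\bigl(\psi_k(\mathbb{D}(y_k, r))\bigr) \ \asymp\ |(f^{n_k})'(x)|^{-t}\, \mu\bigl(\mathbb{D}(y_k, r)\bigr),
\end{equation*}
with constants depending only on $\rho$ and $t$. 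Combined with the sandwich and the identity $|(f^{n_k})'(x)|^{-t} \asymp r_k^t$, this yields
\begin{equation*}
C_1\, r_k^t\, \mu\bigl(\mathbb{D}(y_k, s)\bigr) \ \leq\ \mu\bigl(\mathbb{D}(x, r_k)\bigr)\ \leq\ C_2\, r_k^t\, \mu\bigl(\mathbb{D}(y_k, \rho/2)\bigr),
\end{equation*}
and the upper bound $\mu(\mathbb{D}(x, r_k)) \leq C r_k^t$ follows immediately from $\mu \leq 1$.

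The main obstacle is the lower bound, which requires the uniform positivity $\inf_k \mu(\mathbb{D}(y_k, s)) > 0$. I would derive this from $\operatorname{supp}(\mu) = \partial U$ together with compactness of $\partial U$. For full support, Lemma \ref{properties semi-hyper}(1) gives that $\partial U$ is a quasicircle, and the Carath\'eodory extension of the uniformizing coordinate of $U$ produces a continuous surjection $\partial\mathbb{D} \to \partial U$ that semiconjugates $m_\delta$ (with $\delta = \deg(f|_U) \geq 2$) to $f|_{\partial U}$; exactness of $m_\delta$ on $\partial\mathbb{D}$ descends to $f|_{\partial U}$, so every nonempty open $V \subseteq \partial U$ satisfies $\bigcup_{n=0}^{N} f^n(V) = \partial U$ for some $N$. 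If $\mu(V) = 0$, partitioning $V$ into countably many Borel pieces on which $f$ is injective and applying (\ref{transition formula for mu}) gives $\mu(f(V)) = 0$, hence inductively $\mu(f^n(V)) = 0$, contradicting $\mu(\partial U) = 1$. Compactness of $\partial U$ then furnishes finitely many balls $\mathbb{D}(z_i, s/2)$ covering $\partial U$, each of $\mu$-mass at least some $m = m(\rho, \mu) > 0$; every $y \in \partial U$ lies within $s/2$ of some $z_i$, giving $\mu(\mathbb{D}(y, s)) \geq m$. Applied to $y = y_k$, this closes the lower bound and completes the proof.
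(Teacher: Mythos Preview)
Your approach is essentially the paper's: Koebe distortion produces the sandwich $\psi_k(\mathbb{D}(y_k,s)) \subset \mathbb{D}(x,r_k) \subset \psi_k(\mathbb{D}(y_k,\rho/2))$ with $r_k \asymp |(f^{n_k})'(x)|^{-1}$, and conformality of $\mu$ converts this into the two-sided estimate; your full-support argument via topological exactness of $f|_{\partial U}$ is also the paper's (phrased for arcs rather than balls).

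There is, however, a genuine gap in your claimed equality $\mu(\mathbb{D}(y_k,r)) = \int_{\psi_k(\mathbb{D}(y_k,r))}|(f^{n_k})'|^t\,d\mu$. Formula~(\ref{transition formula for mu}) requires $E \subset \partial U$, so with $E = \psi_k(\mathbb{D}(y_k,r)) \cap \partial U$ you get $\mu(f^{n_k}(E)) = \int_E |(f^{n_k})'|^t\,d\mu$; but $f^{n_k}(E)$ need not be all of $\mathbb{D}(y_k,r) \cap \partial U$, because in general $f^{-1}(\partial U) \supsetneq \partial U$ (cf.\ Remark~\ref{Borel subsets of the plane}): a point of $\mathbb{D}(y_k,r)\cap\partial U$ may pull back under $\psi_k$ to the boundary of some other preimage component of $U$ that meets $V_k$, hence lie in $J(f)\setminus\partial U$. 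For the upper bound this is harmless, since $f^{n_k}(E) \subset \mathbb{D}(y_k,\rho/2)$ and $\mu\le 1$ already suffice. For the lower bound it bites: you need $\mu(f^{n_k}(E))$ bounded below, not $\mu(\mathbb{D}(y_k,s))$, and your ball-covering compactness argument only controls the latter. The paper closes this by observing that $f^{n_k}(D_k(\eta\rho)\cap\partial U)$ always contains the \emph{arc} $\Comp_{y_k}(\mathbb{D}(y_k,\eta\rho)\cap\partial U)$ --- because the component of $\partial U \cap D_k(\eta\rho)$ through $x$ is an arc whose $f^{n_k}$-image is an arc in $\partial U$ through $y_k$ with endpoints on $\partial\mathbb{D}(y_k,\eta\rho)$ --- and then takes the infimum $m(\rho,\mu)$ over arcs of diameter $\ge\eta\rho$ rather than over balls. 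Running your compactness argument for arcs instead of balls fixes the gap.
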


\begin{proof}
If $0<\rho_1<\rho_2$, then $\Con(\partial U,\rho_1)\supset\Con(\partial U,\rho_2)$.  
So we can assume $\Con(\partial U,\rho)\neq\emptyset$ 
and $\rho<{\diam}(\partial U)/10$ without loss of generality. 
Let $x\in\Con(\partial U,\rho)$. 
Then there exists a sequence of integers $n_k\rightarrow\infty$ such that each $f^{n_k}$ is injective on 
$$D_k(\rho) := \Comp_x f^{-n_k}(\mathbb{D}(f^{n_k}(x),\rho)).$$
Applying the Koebe distortion theorem (Lemma \ref{Koebe-distortion}) to $g=f^{- n_k}:\mathbb{D}(f^{n_k}(x),\rho)\rightarrow D_k(\rho)$, we have 
$$ D_k(\eta\rho)\subset \mathbb{D}(x,r_k)\subset D_k(\rho/2),$$
where $\eta\in(0,1/2)$ satisfies $\eta\cdot \CK(\eta)=1/(2\CK(1/2))$, 
and $$r_k=\frac{\rho}{2 \CK(1/2) \cdot |(f^{n_k})'(x)| }.$$
By the topological exactness of $f|_{\partial U}$ 
(for any open arc $\gamma\subset\partial U$, there is an $n>0$ such that $f^{n}(\gamma)=\partial U$), 
we have $\lim_{k\rightarrow\infty}\dist(x,\partial D_k(\rho))=0$. 
Therefore $\lim_{k\rightarrow\infty}|(f^{n_k})'(x)|=\infty$ and $\lim_{k\rightarrow\infty} r_k=0$. 

By (\ref{transition inequality for mu}) and the Koebe distortion theorem, we have 
\begin{align*}
1&\geq \mu(f^{n_k}(\mathbb{D}(x,r_k)))\geq\int_{\mathbb{D}(x,r_k)}|(f^{n_k})'|^t d\mu\\
&\geq \mu(\mathbb{D}(x,r_k)){\left(\frac{|(f^{n_k})'(x)|}{\CK(1/2)}\right)}^t
=\frac{\mu(\mathbb{D}(x,r_k))}{r_k^t} \frac{\rho^t}{2^t \CK(1/2)^{2t}}.
\end{align*}
On the other hand, we need to give a lower bound of $\mu(\mathbb{D}(x,r_k))$. 
Let $\phi:\partial U\rightarrow\partial\mathbb{D}$ be a topological conjugacy such that $\phi(f(z))=\phi(z)^{\deg(f|_{\partial U})}$. 
By the uniform continuity of $\phi^{-1}$ on $\partial {\mathbb{D}}$, for every open arc $\gamma\subset\partial U$ with ${\diam}(\gamma)\geq\eta\rho$, 
there is an $n_\rho>0$ independent of $\gamma$ such that $f^{n_\rho}(\gamma)=\partial U$. 
By (\ref{transition formula for mu}), for every Borel set $E\subset\partial U$, we have $\mu(f(E))\leq\int_E|f'|^t d\mu$. 
It follows that 
\begin{align*}
m(\rho,\mu):=\inf\{\mu(\gamma) {;~} \text{$\gamma\subset\partial U$ is an open arc with ${\diam}(\gamma)\geq\eta\rho$}\}>0.
\end{align*}
By the Koebe distortion theorem and (\ref{transition formula for mu}), we have 
\begin{align*}
\frac{\mu(\mathbb{D}(x,r_k))}{r_k^t} \frac{\rho^t}{2^t}
&=\mu(\mathbb{D}(x,r_k))\cdot( |(f^{n_k})'(x)|\cdot \CK(1/2) )^t\\
&\geq  \int_{D_k(\eta\rho)\cap\partial U} |(f^{n_k})'|^t d\mu
=  \mu(f^{n_k}( D_k(\eta\rho)\cap\partial U ))\\
&\geq  \mu( \Comp_{f^{n_k}(x)}(\mathbb{D}(f^{n_k}(x),\eta\rho)\cap\partial U) )
\geq m(\rho,\mu). 
\end{align*}
Letting $C=\max{\left\{ \frac{2^t \CK(1/2)^{2t}}{\rho^t} , \frac{\rho^t}{2^t m(\rho,\mu)} \right\}}$ gives (\ref{estimate of mu-measure}). 
\end{proof}

For a subset $E$ of $\mathbb{C}$, we define $\chi_E:\mathbb{C}\rightarrow\{0,1\}$ by $\chi_E(x)=1$ if $x\in E$ and $\chi_E(x)=0$ if $x\notin E$. 
The following inequality is an analogue of \cite[Corollary 2.4]{McM2000}. 

\begin{corollary}
\label{upper bound of Hausdorff dimension}
$\Hdim(\Con(\partial U))\leq\dconf(\partial U)$. 
\end{corollary}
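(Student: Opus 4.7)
The plan is to deduce the corollary from Lemma \ref{a sequence of radii} by a standard mass-distribution / covering argument, using the existence of a $\dconf(\partial U)$-conformal measure guaranteed by Fact \ref{conf-md}.

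First I would set $t = \dconf(\partial U)$ and fix a $t$-conformal measure $\mu$ for $\partial U$, whose existence is ensured by Fact \ref{conf-md}. Since $\Con(\partial U) = \bigcup_{n\geq 1}\Con(\partial U,1/n)$ and Hausdorff dimension is countably stable, it suffices to show $\Hdim(\Con(\partial U,\rho))\leq t$ for each fixed $\rho>0$. Lemma \ref{a sequence of radii} gives, for every $x\in\Con(\partial U,\rho)$, a constant $C=C(\rho,\mu)>1$ and a sequence of radii $r_k(x)\searrow 0$ such that
$$\mu(\mathbb{D}(x,r_k(x)))\leq C\, r_k(x)^t.$$

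Next I would run a Besicovitch-type covering argument. Fix $\varepsilon>0$. The family $\mathcal F_\varepsilon=\{\mathbb{D}(x,r_k(x)){;~}x\in\Con(\partial U,\rho),\,r_k(x)<\varepsilon\}$ is a fine cover of $\Con(\partial U,\rho)$. Since $\partial U$ is bounded, we may work inside a fixed ball of $\mathbb{R}^2$, where the Besicovitch covering theorem produces a countable subfamily $\{\mathbb{D}(x_i,r_i)\}_{i\geq 1}\subset\mathcal F_\varepsilon$ covering $\Con(\partial U,\rho)$ with multiplicity bounded by an absolute constant $N$. Combining the estimate above with the finiteness of $\mu$ gives
$$\sum_{i}(2r_i)^t\leq 2^t C\sum_i\mu(\mathbb{D}(x_i,r_i))\leq 2^tCN\,\mu(\partial U)=2^tCN,$$
with the bound independent of $\varepsilon$. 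Letting $\varepsilon\rightarrow 0$ shows that the $t$-dimensional Hausdorff measure of $\Con(\partial U,\rho)$ is finite, hence $\Hdim(\Con(\partial U,\rho))\leq t$.

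Taking the countable union over $\rho=1/n$ yields $\Hdim(\Con(\partial U))\leq t=\dconf(\partial U)$, which is the desired inequality. There is no serious obstacle here: all of the delicate work has been absorbed into Lemma \ref{a sequence of radii} and Fact \ref{conf-md}, and what remains is a routine application of the mass distribution principle; the only mild subtlety is to make sure the subsequence of radii furnished by the conical condition is adequate as input to a Vitali/Besicovitch covering, which is handled by the fineness of $\mathcal F_\varepsilon$.
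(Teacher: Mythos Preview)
Your argument is essentially the same as the paper's: use the radius control from Lemma~\ref{a sequence of radii} together with a covering lemma to bound the $t$-dimensional Hausdorff measure of $\Con(\partial U,\rho)$, then take the countable union over $\rho=1/n$. The paper applies the Vitali $5r$-covering lemma (extracting a disjoint subfamily whose $5$-times enlargements cover), whereas you invoke Besicovitch for a bounded-multiplicity cover; both are routine and equivalent for this purpose. One minor difference: the paper works with an arbitrary $t$-conformal measure and only specializes to $t=\dconf(\partial U)$ at the end, while you fix $t=\dconf(\partial U)$ from the start via Fact~\ref{conf-md}; either is fine.

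There is, however, a slip in what you wrote down. You display the \emph{upper} bound $\mu(\mathbb{D}(x,r_k))\leq C\,r_k^t$ from Lemma~\ref{a sequence of radii}, but the chain of inequalities
\[
\sum_i (2r_i)^t \leq 2^t C \sum_i \mu(\mathbb{D}(x_i,r_i)) \leq 2^t C N
\]
actually requires the \emph{lower} bound $C^{-1}r_k^t \leq \mu(\mathbb{D}(x,r_k))$, i.e.\ $r_k^t \leq C\,\mu(\mathbb{D}(x,r_k))$. So you quoted the wrong half of the two-sided estimate, though you used the correct one. With that correction your proof goes through.
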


\begin{proof}
Given $\rho>0$ with $\Con(\partial U,\rho)\neq\emptyset$ and a $t$-conformal measure $\mu$ for $\partial U$. 
We claim that $\Hdim(\Con(\partial U,\rho))\leq t$. 

Let $\varepsilon>0$, and let $C=C(\rho,\mu)$ as in Lemma \ref{a sequence of radii}. 
Then for every $x\in\Con(\partial U,\rho)$, 
there is an $r_x\in(0,\varepsilon]$ so that $ r_x^t/C \leq \mu(\mathbb{D}(x,r_x))$. 
By the Vitali covering lemma (see, e.g. \cite[\S2]{Mattila}), 
there exists a countable subset $E$ of $\Con(\partial U,\rho)$ such that 
$$\sum_{x\in E}\chi_{\mathbb{D}(x,r_x)}\leq1, \quad \chi_{\Con(\partial U,\rho)}\leq\sum_{x\in E}\chi_{\mathbb{D}(x,5r_x)}.$$
It follows that 
$$\sum_{x\in E}(\diam \mathbb{D}(x,5r_x))^t= \sum_{x\in E} (10r_x)^t
\leq 10^t C \sum_{x\in E}\mu(\mathbb{D}(x,r_x))\leq 10^t C,$$
and $\{\mathbb{D}(x,5r_x) {;~}  x\in E\}$ is a $10\varepsilon$-cover of $\Con(\partial U,\rho)$. 
Letting $\varepsilon\rightarrow0$ gives the claim. 

Since $\Con(\partial U)=\bigcup_{n\geq1}\Con(\partial U,1/n)$, we have $\Hdim(\Con(\partial U))\leq t$.  
It follows that $\Hdim(\Con(\partial U))\leq \dconf(\partial U)$, 
since the pair $(\mu,t)$ is arbitrary.
\end{proof}

%

\begin{corollary}
\label{mu supported on conical set}
Let $\mu$ be a $t$-conformal measure for $\partial U$ with $\mu(\Con(\partial U))>0$.  
Then $t = \dconf(\partial U)$. 
\end{corollary}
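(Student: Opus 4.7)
The plan is to show $t=\dconf(\partial U)$ by squeezing $t$ between two inequalities coming from Lemma \ref{a sequence of radii} and Corollary \ref{upper bound of Hausdorff dimension}. The direction $\dconf(\partial U)\le t$ is immediate from the definition of conformal dimension, since by hypothesis $\mu$ itself is a $t$-conformal measure. So the real content is proving $t\le \dconf(\partial U)$, and the bridge between the two is the conical set.

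To bound $t$ from above, I would exploit the \emph{upper} bound in Lemma \ref{a sequence of radii}: for each $\rho>0$ and each $x\in\Con(\partial U,\rho)$ there is a constant $C=C(\rho,\mu)$ and a sequence $r_k(x)\searrow 0$ with $\mu(\mathbb{D}(x,r_k(x)))\le C r_k(x)^t$. This is precisely the hypothesis of the mass distribution / Frostman-type lemma: if a finite Borel measure $\mu$ satisfies $\liminf_{r\to 0}\mu(\mathbb{D}(x,r))/r^t<\infty$ on a Borel set $E$ with $\mu(E)>0$, then $\Hdim(E)\ge t$. The standard proof is a Vitali $5r$-covering argument applied at each fixed scale $\varepsilon$: cover $E$ by the balls $\mathbb{D}(x,r_k(x))$ with $r_k(x)<\varepsilon$, extract a disjoint subfamily whose $5$-enlargement still covers $E$, and use $\sum (10 r_k(x))^t\le 10^t C\sum \mu(\mathbb{D}(x,r_k(x)))\le 10^tC\mu(\mathbb{C})$ to bound the $t$-dimensional Hausdorff premeasure uniformly in $\varepsilon$.

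Applying this, first use $\Con(\partial U)=\bigcup_{n\ge 1}\Con(\partial U,1/n)$ together with $\mu(\Con(\partial U))>0$ to pick $n$ with $\mu(\Con(\partial U,1/n))>0$. The paragraph above then gives $\Hdim(\Con(\partial U,1/n))\ge t$, and hence $\Hdim(\Con(\partial U))\ge t$. On the other hand Corollary \ref{upper bound of Hausdorff dimension} provides $\Hdim(\Con(\partial U))\le \dconf(\partial U)$, so $t\le \dconf(\partial U)$. Combined with the trivial inequality $\dconf(\partial U)\le t$, this yields the desired equality.

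The only subtle point is the mass distribution step: Lemma \ref{a sequence of radii} produces the inequality $\mu(\mathbb{D}(x,r))\le Cr^t$ only along a specific sequence $r_k(x)\to 0$ depending on $x$, rather than for all small $r$. This is exactly what the $\liminf$-version of the Frostman lemma needs, and the Vitali covering argument (which is already the covering tool used in Corollary \ref{upper bound of Hausdorff dimension}) absorbs this without modification, so I do not expect any genuine obstacle.
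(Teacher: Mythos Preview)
Your argument has the direction of the density inequality reversed. The Vitali $5r$-covering computation you describe --- cover $E=\Con(\partial U,\rho)$ by balls $\mathbb{D}(x,r_x)$ with $\mu(\mathbb{D}(x,r_x))\le Cr_x^t$, pass to a disjoint subfamily, and bound $\sum(10r_x)^t\le 10^tC\,\mu(\mathbb{C})$ --- bounds the $t$-dimensional Hausdorff premeasure of $E$ \emph{from above}, giving $H^t(E)<\infty$ and hence $\Hdim(E)\le t$. This is literally the proof of Corollary~\ref{upper bound of Hausdorff dimension}, not the reverse inequality $\Hdim(E)\ge t$ that your route needs. The mass-distribution principle for \emph{lower} bounds on Hausdorff dimension requires $\limsup_{r\to0}\mu(\mathbb{D}(x,r))/r^t<\infty$, i.e.\ control for \emph{all} small radii; a bound valid only along a sequence $r_k(x)\to0$ (a $\liminf$ condition) controls packing dimension rather than Hausdorff dimension, and sets with $\dim_P>\Hdim$ abound.

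The paper bypasses Hausdorff dimension entirely and compares $\mu$ directly against a second conformal measure $\mu_0$ of exponent $t_0=\dconf(\partial U)$. The key observation is that the radii $r_k$ in Lemma~\ref{a sequence of radii} depend only on $\rho$ and $x$, not on the measure; so at the \emph{same} radius $r_x\le\varepsilon$ one has simultaneously $\mu(\mathbb{D}(x,r_x))\le Cr_x^t$ and $r_x^{t_0}\le C\,\mu_0(\mathbb{D}(x,r_x))$. A Besicovitch covering with bounded overlap $M$ then gives
\[
\mu(\Con(\partial U,\rho))\le C\sum r_x^t\le C\varepsilon^{\,t-t_0}\sum r_x^{t_0}\le C^2\varepsilon^{\,t-t_0}\sum\mu_0(\mathbb{D}(x,r_x))\le C^2M\varepsilon^{\,t-t_0},
\]
and letting $\varepsilon\to0$ forces $t\le t_0$ since the left-hand side is positive.
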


\begin{proof} By Fact \ref{conf-md},  let $\mu_0$ be a $t_0$-conformal measure for $\partial U$, where $t_0=\dconf(\partial U)$. 
Clearly $t\geq t_0$. 
Since $\Con(\partial U)=\bigcup_{n\geq1}\Con(\partial U,1/n)$ and $\mu(\Con(\partial U))>0$, there is a $\rho>0$ such that $\mu(\Con(\partial U,\rho))>0$ (one may verify that $\Con(\partial U,\rho)$ is  Borel). 
 
Let $\varepsilon>0$, and let $C=\max\{C(\rho,\mu_0),C(\rho,\mu)\}$ as in Lemma \ref{a sequence of radii}.  
Then for every $x\in\Con(\partial U,\rho)$, 
there is an $r_x\in(0,\varepsilon]$ so that $\mu(\mathbb{D}(x,r_x))\leq C r_x^t$ and $ r_x^{t_0}/C \leq \mu_0(\mathbb{D}(x,r_x))$. 
By the Besicovitch covering theorem (see, e.g. \cite[\S2]{Mattila}), 
there exists a countable subset $E$ of $\Con(\partial U,\rho)$ and a constant $M>0$ such that 
$$\chi_{\Con(\partial U,\rho)}\leq\sum_{x\in E}\chi_{\mathbb{D}(x,r_x)}\leq M.$$
It follows that 
\begin{align*}
\mu(\Con(\partial U,\rho))
& \leq \sum_{x\in E}\mu(\mathbb{D}(x,r_x))
  \leq C\sum_{x\in E} r_x^t\\
& \leq C\varepsilon^{t-t_0}\sum_{x\in E} r_x^{t_0}
  \leq C^2\varepsilon^{t-t_0}\sum_{x\in E} \mu_0(\mathbb{D}(x,r_x))\\
& \leq C^2\varepsilon^{t-t_0} M\mu_0\Bigg(\bigcup_{x\in E}\mathbb{D}(x,r_x)\Bigg)
  \leq C^2\varepsilon^{t-t_0} M. 
\end{align*}
Letting $\varepsilon\rightarrow 0$ gives $t=t_0 = \dconf(\partial U)$, because $\mu(\Con(\partial U,\rho))>0$. 
\end{proof}

\subsection{Hyperbolic set}
\label{subsection hyperbolic set}

We say a compact set $X\subset J(f)$ is \emph{hyperbolic} if 
$f(X)\subset X$ and there exists an $n\geq1$ such that $|(f^{n})'(x)|>1$ for all $x\in X$. 
The \emph{hyperbolic Hausdorff dimension} of $\partial U$ is defined to be 
$$\hypdim(\partial U)=\sup\{\Hdim(X) {;~} \text{$X$ is a hyperbolic subset of $\partial U$}\}.$$

Let $N\geq2$, and let $B,D_1,\dots, D_N$ be Jordan domains in $\mathbb{C}$ so that $\overline{D_1},\dots,\overline{D_N}$ are pairwise disjoint, and $\bigcup_{n=1}^N D_n\Subset B$. 
Let $F :\bigcup_{n=1}^N D_n\rightarrow B$ be a holomorphic map so that $F$ maps $D_n$ conformally onto $B$ for each $1\leq n\leq N$. 
Let $X = \bigcap_{k=0}^\infty F^{-k}(B)$. 
Then $f:X\rightarrow X$ is topologically conjugate to the shift map on $\{1,2,\dots, N\}^{\mathbb{N}}$. 
Let $\Comp(A)$ be the collection of all connected components of $A\subset\mathbb{C}$. 
For $t\geq0$, the \emph{topological pressure} is defined by 
\begin{equation}
\label{topological-pressure}
P(F|_X,-t\log|F'|) = \lim_{k\rightarrow \infty}\frac{1}{k}
\log \Bigg( \sum_{D\in\Comp(F^{-k}(B))} \max_{z\in D\cap X}|(F^k)'(z)|^{-t} \Bigg). 
\end{equation}
See, e.g. \cite[\S3.3]{Zinsmeister}, \cite[\S3.9]{dede}. 
By Bowen's formula, the topological pressure $P(F|_X,-t\log|F'|)$ is a continuous and strictly decreasing function of $t$, with a unique zero $\Hdim(X)$; see, e.g. \cite[Theorem 5.12]{Zinsmeister}, \cite[Theorem 3.9.11]{dede}.

\begin{lemma}
\label{Poin-expo vs hyp-dim}
$\dPoin(\partial U) \leq \hypdim(\partial U)$. 
\end{lemma}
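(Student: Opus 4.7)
The plan is to show $\dPoin(\partial U) \leq \hypdim(\partial U)$ by producing, for each $t < \dPoin(\partial U)$, a hyperbolic subset of $\partial U$ of Hausdorff dimension strictly larger than $t$; taking $t \nearrow \dPoin(\partial U)$ will then yield the lemma. I will build such a set as a cookie-cutter of the form recalled just before the lemma, using an iterate $F = f^n$ together with finitely many univalent inverse branches of $F$ on a round disk centered at an admissible point of $\partial U$, and I will convert positivity of the associated topological pressure into the dimension bound via Bowen's formula.

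First I would fix an admissible $x \in \partial U$ and a radius $r_0 > 0$ so that Lemma \ref{admis-pt-part-U} applies on $\mathbb{D}(x, 4r_0)$, and set $B = \mathbb{D}(x, r_0)$, $B' = \mathbb{D}(x, r_0/2)$. For each $y \in f^{-n}(x) \cap \partial U \cap B'$ there is a univalent inverse branch $g_y$ of $f^n$ on $\mathbb{D}(x, 4r_0)$ with $g_y(x) = y$. By Lemma \ref{properties semi-hyper} the component $\Comp_y f^{-n}(\mathbb{D}(x, 4r_0))$, which contains $D_y := g_y(B)$, has diameter at most $M\lambda^{-n}$; for $n$ large every $D_y$ is therefore contained in $B$, distinct $D_y$ are disjoint as distinct components of $f^{-n}(B)$, and $|F'| > 1$ on $\bigcup_y D_y$. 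Since $\partial U$ is a Jordan curve on which $f$ acts as a degree-$\delta$ covering (semi-hyperbolicity), each $g_y$ sends $\partial U \cap B$ into $\partial U \cap D_y$, and the maximal invariant set $X := \bigcap_{k \geq 0} F^{-k}\bigl(\bigsqcup_y D_y\bigr)$ lies in $\partial U$ and is hyperbolic for $F$.

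The remaining work is to arrange the IFS sum $S_n := \sum_{y \in f^{-n}(x) \cap \partial U \cap B'} |(f^n)'(y)|^{-t}$ to be larger than a fixed constant. Topological exactness of $f|_{\partial U}$ supplies some $n_0$ with $f^{n_0}(\partial U \cap B') \supset \partial U$, so each $y \in f^{-n}(x) \cap \partial U$ admits an $f^{n_0}$-inverse image $y' \in f^{-(n+n_0)}(x) \cap \partial U \cap B'$; the chain rule together with $|(f^{n_0})'| \leq M_0$ on $\partial U$ yields $|(f^{n+n_0})'(y')|^{-t} \geq M_0^{-t} |(f^n)'(y)|^{-t}$, and the injectivity of $y \mapsto y'$ gives $S_{n+n_0} \geq M_0^{-t} P_n(x,t)$. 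Because $C(x,t) > 0$ for $t < \dPoin(\partial U)$, the right-hand side grows exponentially along a subsequence, so $S_n$ can be made arbitrarily large. Finally, univalence of each $g_y$ on $\mathbb{D}(x, 4r_0)$ combined with the Koebe distortion theorem on $B$ (ratio $1/4$) produces a uniform constant $K_0 = \CK(1/4)$ such that every composition $g_{\vec{\imath}} = g_{y_{i_1}} \circ \cdots \circ g_{y_{i_k}}$ satisfies $|g_{\vec{\imath}}'(x)|^t \geq K_0^{-kt}\prod_j |g_{y_{i_j}}'(x)|^t$; summing over $\vec{\imath}$ gives $\sum_{\vec{\imath}} |g_{\vec{\imath}}'(x)|^t \geq K_0^{-kt} S_n^k$, and inserting this into the defining display of topological pressure just above the lemma yields $P(F|_X, -t\log|F'|) \geq \log S_n - t\log K_0$. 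Choosing $n$ with $S_n > K_0^t$ forces the pressure to be positive, and Bowen's formula then gives $\Hdim(X) > t$, as required. The decisive step is the combination of topological exactness with Koebe distortion, the former transferring the exponential growth from the full Poincar\'e sum $P_n(x,t)$ to the restricted sum $S_n$, and the latter keeping the multiplicative constants uniform in the depth $k$; once both are in hand, the cookie-cutter dimension estimate proceeds by standard arguments.
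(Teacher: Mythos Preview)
Your argument follows the same strategy as the paper's: build a conformal cookie-cutter inside $\partial U$ from univalent inverse branches of an iterate over a disk centered at an admissible point, then feed the Poincar\'e-sum growth into Bowen's formula. The organization differs only cosmetically: the paper first shifts by a fixed $m$ (from topological exactness) so that \emph{every} $y\in f^{-n}(x)\cap\partial U$ acquires a preimage $y'\in\gamma(r'/4)$ in a short arc near $x$, and builds the IFS from those $y'$; you instead restrict to preimages already in $B'$ and use the shift afterwards to show $S_{n+n_0}\ge M_0^{-t}P_n(x,t)$. The pressure estimate via iterated Koebe is correct and equivalent to the paper's single-step Koebe bound.

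There is one imprecision worth tightening. The assertion ``each $g_y$ sends $\partial U\cap B$ into $\partial U\cap D_y$'' is not justified as written: $g_y$ is an inverse branch of the polynomial $f^n$, and $f^{-n}(\partial U)$ can strictly contain $\partial U$, so a point on a secondary arc of $\partial U\cap B$ need not land in $\partial U$. What \emph{is} true (and what the covering property gives) is that $g_y$ carries the arc of $\partial U$ through $x$ to the arc of $\partial U$ through $y$. The paper handles this by working with $\gamma(r)=\Comp_x(\partial U\cap\mathbb{D}(x,r))$, setting $r'=\dist(x,\partial U\setminus\gamma(r))$, and arranging $y'\in\gamma(r'/4)$ with $D_{y'}\subset\mathbb{D}(x,r'/2)$, so that $\partial U\cap D_{y'}\subset\gamma(r)$; the IFS then preserves $\overline{\gamma(r)}$, forcing the attractor into $\partial U$. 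Your version is repaired either by this device or, more simply, by shrinking $r_0$ so that $\partial U\cap\mathbb{D}(x,4r_0)$ is a single arc (possible since $\partial U$ is a Jordan curve); then every $y\in\partial U\cap B'$ lies on that arc and the IFS preserves it.
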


\begin{proof}
The proof is inspired by \cite{Przy05}. 
See also \cite[Proposition 2.1]{PRLS-pressures}. 

Fix an admissible point $x\in\partial U$. 
Choose $r>0$, $\lambda>1$ and $M>0$ as in Lemma \ref{admis-pt-part-U}. 
For $\rho\in(0,r]$, let $\gamma(\rho)$ denote the open arc  $\Comp_x(\partial U\cap \mathbb{D}(x,\rho))$. 
Let $r' = \dist(x,\partial U\setminus \gamma(r))\in(0,r]$. 
Choose $m\geq1$ so that $f^m(\gamma(r'/4))\supset \partial U$. 
For every $y\in f^{-n}(x)\cap\partial U$ ($n\geq1$), we find $y'\in f^{-m}(y) \cap \gamma(r'/4)$. 
Let $D_{y'} = \Comp_{y'} f^{-(m+n)}(\mathbb{D}(x,r))$. 
Note that ${\diam}(D_{y'})\leq M\lambda^{-(m+n)}$. 
Choose $N\geq1$ so that $M\lambda^{-(m+N)}<r'/4$. 
Then $D_{y'}\subset \mathbb{D}(x,r'/2)\subset \mathbb{D}(x,r/2)$ for any $n\geq N$. 
In the following, we always consider $n\geq N$. 
Define 
$$F_n = f^{m+n}:\bigcup_{y\in f^{-n}(x)\cap\partial U} D_{y'}\rightarrow \mathbb{D}(x,r).$$
Let $X_n = \bigcap_{k=0}^\infty F_n^{-k}(\mathbb{D}(x,r))$. 
Then $X_n$ is a hyperbolic subset of $J(f^{m+n}) = J(f)$ with respect to $f^{m+n}$. 
By Bowen's formula as before, the topological pressure $P(F_n|_{X_n},-t\log|F'_n|)$ is a cotinuous and strictly decreasing function of $t\geq0$, with a unique zero $\Hdim(X_n)$. 

Note that $\partial U\cap D_{y'}\subset\partial U\cap \mathbb{D}(x,r'/2)\subset \gamma(r)$. 
The compact set $X_n$ is equal to the attractors of 
the following two \emph{iterated function systems} \cite{Falconer}:  
\begin{align*}
&\big\{f^{-(m+n)}: \overline{\mathbb{D}(x,r)}\rightarrow \overline{D_{y'}}
 \big\}_{y\in f^{-n}(x)\cap\partial U};\\
&\big\{f^{-(m+n)}: \overline{\gamma(r)}\rightarrow \overline{\Comp_{y'}(\partial U\cap D_{y'})} 
 \big\}_{y\in f^{-n}(x)\cap\partial U}.
\end{align*}
It follows that $X_n\subset \partial U$. 

Note that $F_n^{-1}(\mathbb{D}(x,r))\subset \mathbb{D}(x,r/2)$. 
By (\ref{topological-pressure}) and the Koebe distortion theorem (Lemma \ref{Koebe-distortion}), we have 
\begin{align*}
P(F_n|_{X_n},-t\log|F'_n|)
&=    \lim_{k\rightarrow \infty}\frac{1}{k}\log \Bigg( \sum_{D\in\Comp(F_n^{-k}(\mathbb{D}(x,r)))} \max_{z\in D\cap X_n}|(F_n^k)'(z)|^{-t}\Bigg)\\
&\geq \lim_{k\rightarrow \infty}\frac{1}{k}\log \left( \Bigg( \sum_{y\in f^{-n}(x)\cap\partial U}{\left(\CK(1/2)\cdot|F'_n(y')|\right)}^{-t} \Bigg)^k \right)\\
&=    \log \Bigg(\CK(1/2)^{-t}\sum_{y\in f^{-n}(x)\cap\partial U}|(f^{m+n})'(y')|^{-t} \Bigg)\\
&\geq \log \Bigg(\CK(1/2)^{-t} \Lambda^{-mt}\sum_{y\in f^{-n}(x)\cap\partial U}|(f^{n})'(y)|^{-t} \Bigg), 
\end{align*}
where $\Lambda = \max_{z\in \partial U} |f'(z)|$. 

Let $X'_n = \bigcup_{j=0}^{m+n-1}f^j(X_n)$. 
Then $X'_n$ is a hyperbolic subset of $\partial U$, with Hausdorff dimension $\Hdim(X'_n)=\Hdim(X_n)$. 
Let $t_0=\hypdim(\partial U)$. 
Recall that $P_n(x,t_0)$ is the Poincar\'e sequence defined by (\ref{Poin-sequ}), 
and $C(x,t_0)$ is defined by (\ref{Cauchy-sequ}). 
Since $t_0\geq \Hdim(X_n)$, we have 
$$\frac{1}{n} \log \Big(\CK(1/2)^{-t_0} \Lambda^{-mt_0}P_n(x,t_0) \Big)
\leq \frac{1}{n}P(F_n|_{X_n},-t_0\log|F'_n|)
\leq 0.$$ 
Letting $n\rightarrow\infty$ gives $C(x,t_0)\leq 0$. 
By Lemma \ref{tree-pressure}, we have $\dPoin(\partial U)\leq t_0 = \hypdim(\partial U)$. 
\end{proof}

\begin{proof}
[Proof of Proposition \ref{equalities for Hdim of partial-U}]
As before, we assume $f(U)=U$. 
Clearly $\hypdim(\partial U)\leq\Hdim(\partial U)$. 
By (\ref{conical set formula}), we know that $\partial U\setminus\Con(\partial U)$ is countable, 
so $\Hdim(\partial U)=\Hdim(\Con(\partial U))$. 
By Corollary \ref{upper bound of Hausdorff dimension}, we have $\Hdim(\Con(\partial U))\leq\dconf(\partial U)$. 
By Lemma \ref{Patterson-Sullivan construction}, we have 
$\dconf(\partial U)\leq \dPoin(\partial U)$. 
By Lemma \ref{Poin-expo vs hyp-dim}, we have $\dPoin(\partial U) \leq \hypdim(\partial U)$. 
Therefore all these quantities agree. 
\end{proof}

%
%

\subsection{Uniform expansion on external rays} 
\label{distortion-thm} 
The aim of this subsection is to prove Proposition \ref{expanding on external rays}. 
Before that, let us introduce several distortion theorems for quasicircles. 

Let $\gamma$ be a Jordan curve in $\mathbb{C}$, and let $1\leq M<\infty$. 
We say $\gamma$ is a \emph{quasicircle} with turning $M$ if for $z_1,z_2\in \gamma$ with $z_1\neq z_2$ we have 
$$\min_{j=1,2} \diam(\gamma_j)\leq M|z_1-z_2|,$$
where $\gamma_1,\gamma_2$ are the two components of $\gamma\setminus\{z_1,z_2\}$. 

Let $K$ be a connected and compact subset of $\mathbb{C}$ containing at least two points. 
For any $z_1,z_2\in K$, define the \emph{turning} of $K$ about $z_1$ and $z_2$ by 
$$\Delta(K;z_1,z_2)=\frac{{\diam}(K)}{|z_1-z_2|}.$$
It is easy to see that $1\leq \Delta(K;z_1,z_2)\leq\infty$, 
and $\Delta(K;z_1,z_2)=\infty$ if and only if $z_1=z_2$. 

For a Riemann surface $A$ biholomorphic to the standard annulus $\{z\in\mathbb{C}{;~}1<|z|<R\}$ for some $R\in(1,\infty)$, the \emph{modulus} of $A$ is defined by $$\operatorname{mod}(A)=\frac{\log R}{2\pi}.$$

\begin{lemma}
[Turning distortion {\cite[Lemma 6.1.2]{QWY}}]
\label{turning distortion}
Let $g:V_1\rightarrow V_2$ be a proper holomorphic map of degree $d$ between Jordan domains. 
Let $U_2\Subset V_2$ be a Jordan domain satisfying $\operatorname{mod}(V_2\setminus\overline{U_2})\geq m>0$, 
and let $U_1$ be a connected component of $g^{-1}(U_2)$. 
Then there is a constant $C(d,m)>0$ such that
$$ \Delta(K;z_1,z_2) \leq C(d,m) \Delta(g(K);g(z_1),g(z_2))$$
for any connected and compact subset $K$ of $U_1$ containing at least two points and any $z_1,z_2\in K$. 
\end{lemma}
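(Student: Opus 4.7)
The plan is to prove the lemma by a normal families / contradiction argument, exploiting the modulus hypothesis $\operatorname{mod}(V_2\setminus\overline{U_2})\geq m$ in two ways: first, via the Gr\"otzsch-type inequality for branched coverings, to obtain a companion bound $\operatorname{mod}(V_1\setminus\overline{U_1})\geq m/d$ on the source side; and second, to ensure compactness of the family of configurations after an affine normalization. Both $\Delta(K;z_1,z_2)$ and $\Delta(g(K);g(z_1),g(z_2))$ are invariant under pre- and post-composition of $g$ by affine maps of $V_1$ and $V_2$ respectively, so I would normalize $\diam(V_2)=1$ with a fixed boundary point at $0$, and analogously for $V_1$. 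The modulus bounds then confine $\overline{U_i}$ to a definite sub-disk of $V_i$, with control depending only on $d$ and $m$.

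Next, suppose for contradiction that no constant $C(d,m)$ works. There would exist sequences $(V_{1,n},V_{2,n},U_{1,n},U_{2,n},g_n)$ satisfying the hypotheses, together with connected compact sets $K_n\subset U_{1,n}$ and points $z_{1,n},z_{2,n}\in K_n$, such that the ratio $r_n:=\Delta(K_n;z_{1,n},z_{2,n})/\Delta(g_n(K_n);g_n(z_{1,n}),g_n(z_{2,n}))$ tends to infinity. After the affine normalizations above, Montel's theorem together with Carath\'eodory kernel convergence lets me pass to a subsequence so that $V_{i,n}\to V_{i,\infty}$ and $U_{i,n}\to U_{i,\infty}$ in the Carath\'eodory sense, $g_n$ converges locally uniformly to a proper holomorphic map $g_\infty:V_{1,\infty}\to V_{2,\infty}$ of some degree $d_\infty\leq d$, $K_n\to K_\infty$ in Hausdorff distance, and $z_{i,n}\to z_{i,\infty}\in K_\infty$.

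I then analyze the limit to extract a contradiction. Rescaling so that $\diam(K_n)$ is of order $1$, the blow-up $r_n\to\infty$ forces $|z_{1,n}-z_{2,n}|\to 0$, while $\diam(g_n(K_n))/|g_n(z_{1,n})-g_n(z_{2,n})|$ is comparatively small. Hence $z_{1,\infty}=z_{2,\infty}=:z_\infty\in K_\infty$. Either $z_\infty$ is a regular point of $g_\infty$, in which case Koebe distortion, applied on a Euclidean disk of definite radius around $z_\infty$ whose existence is guaranteed by the modulus gap between $U_1$ and $\partial V_1$, gives a uniform bilipschitz constant $L=L(d,m)$ for $g_n$ there, yielding $|g_n(z_{1,n})-g_n(z_{2,n})|\leq L|z_{1,n}-z_{2,n}|$ and $\diam(g_n(K_n))\leq L\diam(K_n)$ and thus $r_n\leq L^2$, a contradiction; or $z_\infty$ is a critical point of $g_\infty$ of local degree $k$ with $2\leq k\leq d$, in which case the Puiseux expansion of $g_\infty$ at $z_\infty$ gives $|g_n(z_{1,n})-g_n(z_{2,n})|=O(|z_{1,n}-z_{2,n}|^k)$, so the image-side turning actually grows \emph{faster} than the source-side turning by a factor of order $|z_{1,n}-z_{2,n}|^{-(k-1)}\to\infty$, again contradicting $r_n\to\infty$.

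The hardest step will be making the compactness argument quantitatively precise: verifying that after affine normalization the Jordan domains $V_{i,n}$ do not degenerate to a slit or a point, that the branched covers $g_n$ of bounded degree $d$ converge to a genuine proper holomorphic map $g_\infty$ of degree $d_\infty\leq d$ between the limiting Jordan domains, and that the modulus bound $\operatorname{mod}(V_i\setminus\overline{U_i})\geq m/d$ survives the limit so that Koebe distortion near regular points of $g_\infty$ is available with constants depending only on $d$ and $m$. The modulus bounds themselves are closed conditions under Carath\'eodory convergence, which is the crucial tool making the contradiction rigorous, and once this is established the trichotomy of the previous paragraph closes the argument.
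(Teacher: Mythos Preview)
The paper gives no proof of this lemma; it is quoted from \cite{QWY} without argument, so there is no ``paper's proof'' to compare against and I can only assess your sketch on its own terms.

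Your argument has a genuine gap. The two normalizations you invoke are incompatible: you first fix $\diam(V_{i,n})=1$ in order to extract Carath\'eodory limits of the domains and a limit map $g_\infty$, and later ``rescale so that $\diam(K_n)$ is of order $1$''. Nothing prevents the adversary from choosing $K_n$ shrinking to a single point $z_\infty\in\overline{U_{1,\infty}}$ in the first normalization; after your second rescaling $V_{1,n}$ blows up and the compactness you set up is destroyed. When $\diam(K_n)$ stays bounded below in the $V$-normalization the argument is indeed easy (then $K_\infty$ is a nondegenerate continuum, $g_\infty(K_\infty)$ is nondegenerate since $g_\infty$ is nonconstant, and the uniform Lipschitz bound $|g_n(z_{1,n})-g_n(z_{2,n})|\le L|z_{1,n}-z_{2,n}|$ on the compact $\overline{U_{1,\infty}}$ already bounds $r_n$ --- no dichotomy regular/critical is needed). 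It is precisely the degenerate case $\diam(K_n)\to0$ that carries the content, and that is where your sketch breaks.

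Even granting the setup, your critical-point estimate is wrong as stated. Near a point with $g_\infty(z)-g_\infty(z_\infty)\sim c(z-z_\infty)^k$ one has
\[
|g(z_1)-g(z_2)|\ \asymp\ |z_1-z_2|\cdot\max\bigl(|z_1-z_\infty|,|z_2-z_\infty|\bigr)^{k-1},
\]
not $O(|z_1-z_2|^k)$: for $g(z)=z^2$, $z_1=1$, $z_2=1+\epsilon$ one gets $|g(z_1)-g(z_2)|\sim2\epsilon$, while $|z_1-z_2|^2=\epsilon^2$. The local behavior depends on the positions of $z_1,z_2$ and of $K$ \emph{relative to the critical point}, not merely on $|z_1-z_2|$, and you would also need a matching lower bound on $\diam(g(K))$, which is absent. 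A compactness proof can be made to work, but it requires an honest iterated blow-up (pass to a limit at the $V$-scale; if $K_n$ collapses to $z_\infty$, blow up there to obtain a polynomial limit of degree $\le d$; repeat) together with an explanation of why the degree drops so that the iteration terminates. That is substantially more than what you have written.
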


\begin{corollary}
\label{preimage of quasidisk}
Let $f$ be a polynomial of degree $d\geq1$, let $D$ be a Jordan domain and let $\Omega$ be a connected component of $f^{-1}(D)$. 
Suppose $f(\crit(f))\cap D=\emptyset$ and $\partial D$ is a quasicircle with turning $M$. 
Then there is a constant $L(d,M)\geq1$ such that $\partial\Omega$ is a quasicircle with turning $L(d,M)$. 
\end{corollary}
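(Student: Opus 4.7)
The plan is to apply the turning distortion lemma (Lemma~\ref{turning distortion}) with $g=f$, choosing for the ambient Jordan domains a pair of large concentric round disks, so that the modulus $m$ separating them is an \emph{absolute} constant independent of $D$, $f$, and $M$. Concretely, I would first fix $R>0$ so large that $\overline{D}\subset\mathbb{D}(0,R/2)$ and $f(\crit(f))\subset\mathbb{D}(0,R)$, and then take $V_2=\mathbb{D}(0,R)$, $U_2=\mathbb{D}(0,R/2)$. These are concentric Jordan disks with $U_2\Subset V_2$ and $\operatorname{mod}(V_2\setminus\overline{U_2})=\log(2)/(2\pi)=:m$, a universal constant.

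Since $V_2$ contains every finite critical value of $f$, a standard equipotential argument (the level set $\{|f|=R'\}$ remains a single Jordan curve as $R'$ decreases from $+\infty$ until it first meets a critical value) shows that $V_1:=f^{-1}(V_2)$ is a single Jordan domain and $f:V_1\to V_2$ is proper holomorphic of degree $d$. Because $\overline{D}\subset U_2$ and $\overline{\Omega}$ is connected, $\overline{\Omega}$ lies in a single connected component $U_1$ of the open set $f^{-1}(U_2)$. Moreover, the hypothesis $f(\crit(f))\cap D=\emptyset$ gives $\crit(f)\cap\Omega=\emptyset$, so $f:\Omega\to D$ is an unbranched proper cover of a simply connected target, hence a biholomorphism. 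As $\partial D$ is locally connected, Carath\'eodory's theorem extends its inverse continuously to $\overline{D}\to\overline{\Omega}$, producing a homeomorphism $f:\overline{\Omega}\to\overline{D}$; in particular $\partial\Omega$ is a Jordan curve and $f|_{\partial\Omega}:\partial\Omega\to\partial D$ is a homeomorphism.

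With these preparations in place, given any two points $z_1,z_2\in\partial\Omega$, set $w_j=f(z_j)\in\partial D$. The arcs $\gamma_1,\gamma_2$ of $\partial\Omega\setminus\{z_1,z_2\}$ map homeomorphically onto the arcs $\beta_j=f(\gamma_j)$ of $\partial D\setminus\{w_1,w_2\}$, and the turning hypothesis provides a $j$ with $\diam(\beta_j)\leq M|w_1-w_2|$. For that $j$ the set $K:=\overline{\gamma_j}$ is a compact connected subset of $\overline{\Omega}\subset U_1$ containing $z_1,z_2$, and Lemma~\ref{turning distortion} yields
\[
\frac{\diam(\gamma_j)}{|z_1-z_2|}=\Delta(K;z_1,z_2)\leq C(d,m)\,\Delta(\overline{\beta_j};w_1,w_2)=C(d,m)\frac{\diam(\beta_j)}{|w_1-w_2|}\leq C(d,m)\,M.
\]
Thus $\partial\Omega$ is a quasicircle with turning $L(d,M):=C(d,m)\,M$, where $m=\log(2)/(2\pi)$ is absolute, so $L$ depends only on $d$ and $M$.

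The main obstacle is the Jordan domain property of $V_1$: it is essential that $V_1$ be a Jordan domain, not merely a connected open set, for the application of Lemma~\ref{turning distortion} to go through, and this relies on the classical Morse-theoretic / Riemann--Hurwitz consequence of the choice $f(\crit(f))\subset V_2$ and should be stated explicitly. Once that is in hand, every other ingredient (the biholomorphism $f:\Omega\to D$, the Carath\'eodory extension, the containment $\overline{\Omega}\subset U_1$, and the scale-invariance of modulus) is routine, and the constant $C(d,m)$ supplied by Lemma~\ref{turning distortion} depends only on $d$, giving the desired uniform bound.
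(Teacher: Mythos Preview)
Your proposal is correct and follows essentially the same approach as the paper: choose large concentric disks $U_2\Subset V_2$ with an absolute lower bound on the modulus, note that $V_1=f^{-1}(V_2)$ is a Jordan domain since it contains all critical values, and apply Lemma~\ref{turning distortion} to the arcs of $\partial\Omega$. The paper uses $U_2=\mathbb{D}(0,R)\supset f(\crit(f))\cup\overline D$ and $V_2=\mathbb{D}(0,e^{2\pi}R)$ (so $m=1$ and $U_1=f^{-1}(U_2)$ is itself a single Jordan domain), while you use $U_2=\mathbb{D}(0,R/2)$ and $V_2=\mathbb{D}(0,R)$; both choices work equally well with Lemma~\ref{turning distortion} as stated.

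One small imprecision: your invocation of Carath\'eodory to extend $f^{-1}:D\to\Omega$ to $\overline D$ is in the wrong direction---Carath\'eodory requires local connectivity of the \emph{target} boundary $\partial\Omega$, which is exactly what you have not yet established. The paper simply asserts that $f:\overline\Omega\to\overline D$ is a homeomorphism. A clean justification is: the Riemann map $\phi:\mathbb{D}\to\Omega$ composed with $f$ gives a biholomorphism $\mathbb{D}\to D$, which \emph{does} extend to a homeomorphism $\Psi:\overline{\mathbb{D}}\to\overline D$ by Carath\'eodory; then the cluster set of $\phi$ at any $\zeta\in\partial\mathbb{D}$ is a connected subset of the finite fiber $f^{-1}(\Psi(\zeta))$, hence a single point, so $\phi$ extends continuously (and then injectively, since $\Psi$ is injective). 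This fills the gap without altering your strategy.
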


\begin{proof}
Choose $R>0$ large enough such that $f(\crit(f))\cup\overline{D}\subset \mathbb{D}(0,R)$. 
Let $U_2=\mathbb{D}(0,R)$ and $V_2=\mathbb{D}(0,e^{2\pi}R)$. 
Then $\operatorname{mod}(V_2\setminus\overline{U_2})=1$. 
Let $U_1=f^{-1}(U_2)$ and $V_1=f^{-1}(V_2)$. 
Then $U_1$ and $V_1$ are Jordan domains, since $f(\crit(f))\subset U_2$. 
It follows from $\overline{D}\subset U_2$ that $\overline{\Omega}\subset U_1$. 
Let $z_1,z_2\in\partial \Omega$ with $z_1\neq z_2$, and let $\gamma_1,\gamma_2$ denote the two components of $\partial\Omega\setminus\{z_1,z_2\}$. 
Note that $f:\overline{\Omega}\rightarrow\overline{D}$ is a homeomorphism, because $f(\crit(f))\cap D=\emptyset$. 
By Lemma \ref{turning distortion}, we have 
$$\min_{j=1,2}\Delta(\gamma_j;z_1,z_2)\leq C(d,1)\min_{j=1,2}\Delta(f(\gamma_j);f(z_1),f(z_2))
\leq C(d,1)M.$$
Let $L(d,M)=C(d,1)M$. 
Then $\partial\Omega$ is a quasicircle with turning $L(d,M)$. 
\end{proof}

For $r_0>0$ and $0<t_0\leq\pi/2$, let $$S(r_0,t_0)=\{re^{i t} {;~} 0<r<r_0,-t_0<t<t_0\}$$ be a \emph{sector}. 
The following result provides a derivative distortion for quasidisks (quasicircles), 
which is an analogy of \cite[Theorem 5.2]{Pom}.

\begin{lemma}
[Derivative distortion]
\label{derivative distortion of quasidisks}
Let $r_0>0$, and let $f$ map $S(r_0,\pi/4)$ conformally onto a Jordan domain $\Omega$. 
Suppose $\partial \Omega$ is a quasicircle with turning $M_1$ and $\dist(f(r_0/2),\partial \Omega)\geq M_2 \diam(\Omega)$. 
Then there is a constant $C=C(M_1,M_2)>0$ such that for $0< r_1\leq r_2\leq r_0/2$,
\begin{equation}
\label{deri-distor-0}
\left|\frac{f'(r_2)}{f'(r_1)}\right|\geq C{\left(\frac{r_2}{r_1}\right)}^{C-1}.
\end{equation}
\end{lemma}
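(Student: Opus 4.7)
The plan is to deduce the estimate from Pommerenke's distortion theorem for conformal maps onto John domains (Theorem 5.2 of \emph{Boundary Behaviour of Conformal Maps}), applied after transferring the sector $S(r_0,\pi/4)$ to the unit disk via an explicit conformal uniformization whose boundary asymptotics at the apex are universal.

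First, the turning hypothesis on $\partial\Omega$ implies, via the Ahlfors three-point characterization, that $\Omega$ is a $K$-quasidisk with $K=K(M_1)$; in particular, $\Omega$ is a John disk with John constant depending only on $M_1$. Let $\psi\colon\mathbb{D}\to S(r_0,\pi/4)$ be the conformal uniformization that is symmetric under complex conjugation and sends $1\in\partial\mathbb{D}$ to the apex $0\in\partial S(r_0,\pi/4)$ (and $-1$ to $r_0$). Because the interior angle at the apex is $\pi/2$, near $z=1$ the map has the Schwarz--Christoffel expansion $\psi(z)\sim c_0\sqrt{r_0}\,(1-z)^{1/2}$ with a universal constant $c_0$, so along the real radius of $\mathbb{D}$, writing $r=\psi(\rho)$,
$$(1-\rho)\asymp r^2/r_0,\qquad |\psi'(\rho)|\asymp r_0/r\qquad\text{for }0<r\leq r_0/2,$$
with universal comparability constants.

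The main step is to apply Pommerenke's Theorem 5.2 to $F=f\circ\psi\colon\mathbb{D}\to\Omega$: there exist $C_0=C_0(M_1)>0$ and $\beta=\beta(M_1)\in(0,1]$ with
$$\frac{|F'(\rho_1)|}{|F'(\rho_2)|}\leq C_0{\left(\frac{1-\rho_2}{1-\rho_1}\right)}^{\beta-1}\qquad\text{for }0\leq\rho_2\leq\rho_1<1$$
on the real radius. Writing $|F'(\rho)|=|f'(\psi(\rho))|\cdot|\psi'(\rho)|$ and substituting the scale comparisons (so that $\rho_1\geq\rho_2$ corresponds to $r_1\leq r_2$), the estimate rearranges into
$$\frac{|f'(r_2)|}{|f'(r_1)|}\geq C_0^{-1}{\left(\frac{r_2}{r_1}\right)}^{1-2\beta}\qquad\text{for }0<r_1\leq r_2\leq r_0/2,$$
which is the claimed inequality after setting the single constant in the statement equal to $\min\{C_0^{-1},\,2-2\beta\}$.

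The role of the hypothesis on $M_2$ is to keep the multiplicative constants uniform in $(M_1,M_2)$ alone. Pommerenke's theorem supplies $C_0$ and $\beta$ depending only on the John constant of $\Omega$, hence only on $M_1$, but the pullback through $\psi$ must match the interior reference scale $\psi^{-1}(r_0/2)\in\mathbb{D}$ with the Euclidean scale of $\Omega$. The assumption $\dist(f(r_0/2),\partial\Omega)\geq M_2\diam(\Omega)$ forces $f(r_0/2)$ to sit at definite relative distance from $\partial\Omega$, which, combined with Koebe applied on the disk $\mathbb{D}(r_0/2,\,r_0/(2\sqrt{2}))\subset S(r_0,\pi/4)$, pins $|f'(r_0/2)|\asymp\diam(\Omega)/r_0$ with constants depending only on $(M_1,M_2)$. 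The main technical obstacle is tracking these constants cleanly through the change of variables $\psi$; this is manageable because the local asymptotics of $\psi$ near the apex are universal, determined only by the opening angle $\pi/2$, so no further dependence on $f$ or $r_0$ enters the final bound.
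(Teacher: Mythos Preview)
Your approach is valid and genuinely different from the paper's. The paper uses only the implication (i)\,$\Rightarrow$\,(ii) of Pommerenke's Theorem~5.2 to obtain the diameter estimate $\diam f(S(r,\pi/4))\le C_1\dist(f(r),\partial\Omega)$, and then runs a self-contained area argument: setting $\varphi(r)=\int_0^r \rho\,|f'(\rho)|^2\,d\rho$, Koebe together with the diameter bound yields $\varphi(r)\le C_2\,r\,\varphi'(r)$ and $\varphi(r)\ge C_3\,r^2|f'(r)|^2$, which integrate to the claim. Your route---uniformize the sector by $\psi$ and apply (i)\,$\Rightarrow$\,(iii) directly to $F=f\circ\psi$---is more direct and sidesteps the area computation; the paper's route has the modest advantage of quoting only the geometric form (ii) and keeping all remaining estimates elementary.

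One point needs correction. The constants $C_0,\beta$ in Pommerenke's condition (iii) do \emph{not} depend only on the John constant of $\Omega$: they also depend on where $F(0)$ sits inside $\Omega$. (Take $\Omega=\mathbb{D}$ and $F$ a M\"obius automorphism with $F(0)$ pushed toward $\partial\mathbb{D}$; the constant in (iii) blows up, as you can check on the radius toward $-F(0)/|F(0)|$.) This is exactly where $M_2$ enters: since $\psi(0)$ and $r_0/2$ lie at universal hyperbolic distance in $S(r_0,\pi/4)$, the hypothesis $\dist(f(r_0/2),\partial\Omega)\ge M_2\diam\Omega$ forces $\dist(F(0),\partial\Omega)\gtrsim_{M_2}\diam\Omega$, which bounds the John constant of $\Omega$ \emph{relative to the center $F(0)$} and hence the Pommerenke constants in terms of $(M_1,M_2)$. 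Your last paragraph misplaces the role of $M_2$. There are also cosmetic slips---the leading asymptotic of $\psi$ near the apex is $c_0\,r_0\,(1-z)^{1/2}$, not $c_0\sqrt{r_0}\,(1-z)^{1/2}$, and the final exponent needs recomputation---but the $r_0$'s cancel in the ratio $|f'(r_2)|/|f'(r_1)|$ and the resulting exponent still exceeds $-1$, so the conclusion survives.
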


\begin{proof}
Choose $t_0\in (0, \pi/4)$ so that $4\sin(t_0/2) = \sin(\pi/4)$. 
Fix $r\in (0, r_0/2]$. 
By \cite[Theorem 5.2, (i) $\Rightarrow$ (ii)]{Pom}, there is a constant $C_1=C_1(M_1,M_2)>0$ such that 
$$\diam f(S(r, \pi/4))\leq C_1 \dist(f(r), \partial \Omega).$$
Define $$\varphi(r)=\int_0^r \rho|f'(\rho)|^2 d\rho.$$
For any $\rho e^{it}\in S(r, t_0)$, 
by the Koebe distortion theorem (Lemma \ref{Koebe-distortion}),
we have $|f'(\rho e^{it})|\geq |f'(\rho)|/\CK(1/2)$, 
hence
\begin{equation}
\label{area1}
\area f(S(r, t_0))=\int_{-t_0}^{t_0} \int_{0}^{r} \rho |f'(\rho e^{it})|^2 d\rho d t \geq \frac{2 t_0}{\CK(1/2)^2} \varphi(r).
\end{equation}
Applying the Koebe quarter theorem to $g=f^{-1}:\mathbb{D}(f(r), \dist(f(r), \partial \Omega))\rightarrow S(r_0, \pi/4)$, we have $\dist(f(r), \partial \Omega)\leq   4 |f'(r)|\dist(r, \partial S(r_0, \pi/4))= 2\sqrt{2} r |f'(r)|$.
It follows that 
\begin{equation}
\label{area2}
\begin{split}
\area f(S(r, t_0))&
\leq \pi (\diam f(S(r, t_0)))^2
\leq \pi (\diam f(S(r, \pi/4)))^2\\
&\leq \pi C_1^2 \dist(f(r), \partial \Omega)^2\leq 8\pi C_1^2 r^2 |f'(r)|^2.
\end{split}
\end{equation}
Then (\ref{area1}) and (\ref{area2}) give $\varphi(r)\leq C_2 r^2 |f'(r)|^2=C_2 r \varphi'(r) $. 
On the other hand, for any $\rho\in (r(1-\sin t_0),r)$, by the Koebe distortion theorem again, we have $|f'(\rho)|\geq |f'(r)|/\CK(1/2)$, hence
$$\varphi(r)\geq\int_{r(1-\sin t_0)}^r  \rho|f'(\rho)|^2 d\rho\geq C_3 r^2|f'(r)|^2.$$
For $0< r_1\leq r_2\leq r_0/2$, we have 
$$\log \frac{\varphi(r_2)}{\varphi(r_1)}=\int_{r_1}^{r_2} \frac{\varphi'(\rho)}{\varphi(\rho)} d\rho \geq \int_{r_1}^{r_2} \frac{1}{C_2 \rho} d\rho=\frac{1}{C_2} \log\frac{r_2}{r_1}.$$
It follows that 
$${\left( \frac{r_2}{r_1} \right)}^{1/C_2}
\leq \frac{\varphi(r_2)}{\varphi(r_1)}
\leq \frac{C_2 r_2^2 |f'(r_2)|^2}
          {C_3 r_1^2 |f'(r_1)|^2} 
~\Longleftrightarrow~
     \left|\frac{f'(r_2)}{f'(r_1)}\right|
\geq \sqrt{\frac{C_3}{C_2}} 
  {\left(\frac{r_2}{r_1} \right)}^{1/(2C_2)-1}.$$
Taking $C=\min\{\sqrt{C_3/C_2}, 1/(2 C_2)\}$ completes the proof. 
\end{proof}

An arc $\gamma\subset \mathbb{C}$ is a \emph{quasiarc} if there is a constant $C>0$ such that 
for $0\leq x\leq y\leq z\leq 1$, 
$$|\xi(x)-\xi(z)|\geq C|\xi(x)-\xi(y)|,$$
where $\xi:[0,1]\rightarrow\gamma$ is a parameterization of $\gamma$. 
Let $\gamma$ be a Jordan curve in $\mathbb{C}$. 
It is easy to check that $\gamma$ is a quasicircle if and only if there are finitely many subarcs $\{\gamma_j\}$ of $\gamma$ such that:  
each $\gamma_j$ is a quasiarc; every $x\in\gamma$ is contained in some $\gamma_j$ and $x$ is not an end point of $\gamma_j$. 

The following result provides a criterion for quasiarcs. 

\begin{lemma}
[{\cite[Theorem 2.12]{BHOT}}]
\label{criterion for quasiarcs}
Let $\gamma_1,\gamma_2\subset\mathbb{C}$ be two arcs with a common endpoint $0$ and disjoint elsewhere. 
Suppose $\gamma_1\cup\gamma_2$ is smooth except possibly at $0$. 
Let $\lambda\in\mathbb{C}$ with $|\lambda|>1$. 
Furthermore, suppose $\lambda\gamma_1\supset\gamma_1$ and $\lambda\gamma_2\supset\gamma_2$. 
Then $\gamma_1\cup\gamma_2$ is a quasiarc. 
\end{lemma}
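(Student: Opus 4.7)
The plan is to exploit the one-sided self-similarity $\lambda^{-1}\gamma_i\subseteq\gamma_i$ (equivalent to $\lambda\gamma_i\supset\gamma_i$) to reduce the quasiarc test to a fixed compact piece where smoothness alone suffices. After fixing base points $z_i\in\gamma_i\setminus\{0\}$ and the compact fundamental subarc $\alpha_0^{(i)}\subset\gamma_i$ from $z_i$ to $\lambda^{-1}z_i$, the part of $\gamma_i$ inside $\{|w|\leq|z_i|\}$ is tiled by the scaled copies $\alpha_n^{(i)}:=\lambda^{-n}\alpha_0^{(i)}$ for $n\geq 0$, each a similar image of the fixed smooth arc $\alpha_0^{(i)}$, shrinking to $0$ as $n\to\infty$. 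Thus the geometry of $\gamma_i$ near $0$ is completely determined by that of $\alpha_0^{(i)}$ together with the scaling.

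The heart of the proof is a separation estimate: there is $C_0>0$ with $|a-b|\geq C_0\max(|a|,|b|)$ for every $a\in\gamma_1\setminus\{0\}$ and $b\in\gamma_2\setminus\{0\}$. I would argue by contradiction with sequences $(a_k,b_k)$ whose ratio tends to $0$. Smoothness and disjointness of $\gamma_1$ and $\gamma_2$ away from $0$ preclude accumulation at any nonzero point, so $a_k,b_k\to 0$; the ratio tending to zero forces $|a_k|\asymp|b_k|$, hence $a_k\in\alpha_{n_k}^{(1)}$ and $b_k\in\alpha_{m_k}^{(2)}$ with $m_k-n_k$ bounded. Rescaling by $\lambda^{\min(n_k,m_k)}$ keeps both rescaled points inside $\gamma_1$ and $\gamma_2$ respectively (this is where the one-sided inclusion $\lambda^{-1}\gamma_i\subseteq\gamma_i$ is essential) and inside a fixed compact annulus bounded away from $0$. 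Since the ratio $|a-b|/\max(|a|,|b|)$ is scale invariant, subsequential limits $a_\infty\in\gamma_1$, $b_\infty\in\gamma_2$ must coincide at a nonzero point, contradicting $\gamma_1\cap\gamma_2=\{0\}$.

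Granting the separation estimate, the quasiarc criterion (bounded turning) follows by case analysis. For three successive points on one $\gamma_i$ contained in a bounded union of adjacent pieces $\alpha_n^{(i)}$, self-similarity reduces to the fixed smooth compact arc $\alpha_0^{(i)}$, where turning is controlled by smoothness; if the three points span many pieces, the subarc $\diam$ is dominated by the outer scale $|\lambda|^{-\min n}|z_i|$, which is comparable to the chord of the two outermost points by the same smoothness-on-a-fundamental-domain argument. For points $x\in\gamma_1$, $z\in\gamma_2$ on opposite branches, the subarc $\gamma[x,z]$ passes through $0$ with $\diam(\gamma[x,z])\asymp\max(|x|,|z|)$ (again from the self-similar smooth structure), while the separation estimate yields $|x-z|\geq C_0\max(|x|,|z|)$, closing the bound.

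The main obstacle is the separation estimate when $\lambda\in\mathbb{C}$ has nonreal argument, since $\lambda^{-1}$ both shrinks and rotates and hence the limiting picture in $\alpha_0^{(i)}$ depends on the subsequence of rescalings. The saving point is that only boundedness, not convergence, of $m_k-n_k$ is needed, so a compactness/diagonal argument on the discrete rescaling works; a secondary technicality is patching the self-similar tail near $0$ with the smooth bulk away from $0$, handled by covering a fixed collar neighborhood with finitely many fundamental pieces.
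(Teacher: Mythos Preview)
Your approach is correct and close in spirit to the paper's, but organized differently. Both arguments exploit the self-similar tiling $\gamma_i\setminus\{0\}=\bigcup_{n\ge0}\lambda^{-n}\alpha_0^{(i)}$ and reduce to a fixed smooth compact piece via rescaling. The difference is in the case decomposition. The paper rescales so that one of the two outer points, say $a$, lies in the fixed fundamental arc $[x_0,x_1)_{\gamma}$, and then runs a four-case dichotomy on the position of the other outer point $c$: either $c$ is ``close'' (within two fundamental domains on the same side, so $a,b,c$ lie on a fixed smooth compact arc and smoothness alone gives the bound) or $c$ is ``far'' (lies in the complementary subarc $[x_2,y_0]_\gamma$, which is a compact set disjoint from $[x_0,x_1]_\gamma$, so $|a-c|$ is bounded below by a fixed positive number while $|a-b|\le\diam(\gamma)$). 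This handles same-branch and cross-branch configurations simultaneously, with no auxiliary lemma.

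You instead split first into same-branch versus cross-branch, and for the latter prove a standalone separation estimate $|a-b|\ge C_0\max(|a|,|b|)$ via a contradiction-and-rescaling compactness argument. This is correct (your boundedness of $|m_k-n_k|$ is the right observation, and your worry about nonreal $\lambda$ is unfounded since only compactness of a finite union of fundamental pieces is used), but it is more than is needed: after the paper's single rescaling the separation is immediate from the positive distance between two fixed disjoint compact subarcs. Your ``patching'' concern is also moot, since one may take $z_i$ to be the non-zero endpoint of $\gamma_i$, so the tiling covers all of $\gamma_i\setminus\{0\}$. In short, both proofs are valid; the paper's is shorter because it normalizes once and lets a single compact-disjointness observation absorb what you packaged as a separate lemma.
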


\begin{proof}
Let $\gamma=\gamma_1\cup\gamma_2$, and let $x_0,y_0$ be the end points of $\gamma$ so that $x_0\in\gamma_1,y_0\in\gamma_2$. 
For any $a,b\in\gamma$ with $a\neq b$, let $[a,b]_{\gamma}$ denote the subarc of $\gamma$ with end points $a,b$, and let 
$[a,b)_{\gamma}=[a,b]_{\gamma}\setminus\{b\}$. 
For $n\in\mathbb{N}$, let $x_n=x_0/\lambda^n$ and $y_n=y_0/\lambda^n$. 

Let $a,b,c\in\gamma$ so that $x_0,a,b,c,y_0$ are successively five points in $\gamma$. 
For convenience, we assume $a,b,c$ are pairwise different. 
Note that there exists a unique $n\in\mathbb{N}$ so that $\{\lambda^n a, \lambda^n b, \lambda^n c\}\subset\gamma$; $\lambda^n a\in[x_0,x_1)_\gamma$ or $\lambda^n c\in[y_0,y_1)_\gamma$. 
We just need to consider $a\in[x_0,x_1)_\gamma$ or $c\in[y_0,y_1)_\gamma$. 
If $a\in[x_0,x_1)_\gamma$ and $c\in[x_2,y_0]_\gamma$, then $$\frac{|a-c|}{|a-b|}
\geq\frac{\min\{|x-y|{;~} x\in[x_0,x_1]_\gamma, \ y\in[x_2,y_0]_\gamma\}}{{\diam}(\gamma)}=:M_1.$$ 
Similarly, we obtain a constant $M_2$ if $c\in[y_0,y_1)_\gamma$ and $a\in[x_0,y_2]_\gamma$. 
If $a\in[x_0,x_1)_\gamma$ and $c\in[x_0,x_2]_\gamma$, by the smoothness of the open arc  $\lambda\cdot([x_0,x_4]_\gamma\setminus\{x_0,x_4\})$, the subarc $[x_0,x_2]_\gamma$ is a quasiarc, so there is a constant  $M_3>0$ independent of $a,b,c$ such that $\frac{|a-c|}{|a-b|}\geq M_3$.
Similarly, we obtain a constant $M_4$ if $c\in[y_0,y_1)_\gamma$ and $a\in[y_0,y_2]_\gamma$. 
Finally, let $M=\min_{1\leq k\leq 4}M_k$. 
\end{proof}

\begin{proof}
[Proof of Proposition \ref{expanding on external rays}]
Without loss of generality, we assume $\theta=1/d$. Then $g_f=f^{-1}\circ f\circ f: \overline{R_{f}(\theta)}\rightarrow \overline{R_{f}(\theta)}$
is the lift of $f:\overline{R_{f}(0)}\rightarrow \overline{R_{f}(0)}$. 
We need to find a neighborhood $\mathcal{N}\subset \mathcal{P}^d$ of $h$ and a constant $n>0$ such that
$$\big|(g_f^n)'(z)\big|\geq 2$$
for every $f\in\mathcal{C}(\mathcal{P}^d)\cap\mathcal{N}$ and every $z\in\overline{R_{f}(\theta)}$.
Let $C_1,C_2,\dots$ denote suitable positive constants. 

Let $y_h$ denote the landing point of $R_h(0)$. 
Then $y_h$ is repelling. 
By the linearization of $h$ near $y_h$, we can choose $R>0$ such that $h$ is injective on $\overline{\mathbb{D}(y_h,R)}$, 
and $\mathbb{D}(y_h,R)\Subset h(\mathbb{D}(y_h,R))$. 
For $r>0$, let $S(r)=S(r,\pi/4)$, and let $S_h(r) = (B_h^{-1}\circ \exp)(S(r))$ be a \emph{log-B\"ottcher sector} with vertex $y_h$. 
(For more applications of the log-B\"ottcher sector, see \cite[\S8]{BF}.) 
Fix $r\in(0,1)$ small enough such that $S_h(r)\subset \mathbb{D}(y_h,R)$. 
Then $h: S_h(r/d)\rightarrow S_h(r)$ is conformally conjugate to 
$$\widetilde m_d:  
\begin{cases} 
S(r/d)\rightarrow S(r), \\
z\mapsto d\cdot z. 
\end{cases}$$
By the linearization of $h|_{\mathbb{D}(y_h,R)}$ and Lemma \ref{criterion for quasiarcs}, 
the boundary of $S_h(r)$ is a quasicircle. 

For $k\in\mathbb{N}$, let $A_k=\exp( S(r/d^k)\setminus S(r/d^{k+1}) )$. 
Following the idea of Douady and Hubbard \cite[Proposition 8.5]{Orsay} (see also \cite[Lemma 3.1]{Wang21}), 
since $y_h$ is repelling, there is an open ball $\mathcal{N}\subset\mathcal{P}^d$ centered at $h$ so that for every $f\in\mathcal{N}$, 
\begin{itemize}
\item after perturbation in $\mathcal{N}$, the repelling fixed point $y_h$ of $h$ becomes a repelling fixed point $y_f$ of $f$, 
with a common linearization neighborhood $\mathbb{D}(y_h,R)$; 

\item the inverse $B_f^{-1}$ of the B\"ottcher coordinate is well-defined and univalent on $\{z\in\mathbb{C} {;~}  |z|>\rho\}$, 
where $\rho=(1+\inf_{z\in A_0}|z|)/2>1$. 
\end{itemize}
Now we can define
$$\phi:
\begin{cases} 
\mathcal{N}\times S_h(r)\rightarrow\mathbb{C}, \\
(f,z)\mapsto (f|_{\mathbb{D}(y_h,R)})^{-k}\circ B_f^{-1}\circ B_h\circ h^k(z), ~z\in B_h^{-1}(A_k). 
\end{cases}$$
Then $\phi$ is a holomorphic motion. 
For $f\in\mathcal{N}$, let $S_f(r)=\phi(f,S_h(r))$ be a log-B\"ottcher sector with vertex $y_f$. 
It is easy to extend $\phi$ to a holomorphic motion defined on $\mathcal{N}\times \overline{S_h(r)}$ 
(or by the $\lambda$-lemma \cite{McM}). 
Since $\partial S_h(r)$ is a quasicircle, shrinking $\mathcal{N}$ if necessary, 
the collection $\{\partial S_f(r)\}_{f\in\mathcal{N}}$ consists of uniform quasicircles. 
That is, there is a constant $C_1\geq 1$ so that $\partial S_f(r)$ is a quasicircle with turning $C_1$ for every $f\in\mathcal{N}$. 

From now on, let us consider $g_f$ for $f\in\mathcal{C}(\mathcal{P}^d)$. 
Let $x_f$ denote the landing point of $R_f(\theta)$. 
Since $g_f$ can extend holomorphically to a neighborhood of $\overline{R_{f}(\theta)}$, 
the derivative of $g_f$ along $\overline{R_{f}(\theta)}$ makes sense. 
For example, we have 
\begin{equation}
\label{derivative at xf}
\big|g'_f(x_f)\big|=\big|f'(y_f)\big|^{1/\deg(f,x_f)},
\end{equation} 
where $y_f=f(x_f)$ is the landing point of $R_f(0)$. 

For $f\in\mathcal{C}(\mathcal{P}^d)\cap\mathcal{N}$, let $V_f(r)$ be the connected component of $f^{-1}(S_f(r))$ 
containing the tail of $R_f(\theta)$. 
Then $g_f$ has a holomorphic extension on $V_f(r/d)$. 
Let $\varphi_f=\log\circ B_f\circ f:V_f(r)\rightarrow S(r)$ and let $\psi_f=\varphi_f^{-1}$, where we take $\log(1)=0$.  
Then $g_f(z)=\psi_f\circ \widetilde m_d\circ \psi_f^{-1}(z)$ for $z\in V_f(r/d)$. 
By Corollary \ref{preimage of quasidisk}, there is a constant $C_2\geq 1$ so that 
the collection $\{\partial V_f(r)\}_{f\in\mathcal{C}(\mathcal{P}^d)\cap\mathcal{N}}$ consists of quasicircles with turning $C_2$. 
It is easy to see that there is a constant $C_3>0$ such that $\dist(\psi_f(r/2),\partial V_f(r))\geq C_3\diam(V_f(r))$ 
for every $f\in\mathcal{C}(\mathcal{P}^d)\cap\mathcal{N}$. 
Now applying Lemma \ref{derivative distortion of quasidisks} to $\psi_f: S(r)\rightarrow V_f(r)$, 
there is a constant $C_4>0$ such that for 
every $f\in\mathcal{C}(\mathcal{P}^d)\cap\mathcal{N}$, every $k>0$ and every $z\in R_f(\theta)\cap V_f(r/d^{k+1})$, we have 
\begin{align*}
\big|(g^k_f)'(z)\big| = \big|(\psi_f\circ \widetilde m_d^k \circ \psi_f^{-1})'(z)\big|
 = d^k \left|\frac{\psi'_f(d^k\psi_f^{-1}(z))}{\psi'_f(\psi_f^{-1}(z))}\right|
 \geq d^k C_4 (d^k)^{C_4-1}
 = C_4 d^{k C_4}, 
\end{align*}
where $0<\psi_f^{-1}(z)\leq d^k\psi_f^{-1}(z)\leq r/d\leq r/2 $. 

Let $f\in\mathcal{C}(\mathcal{P}^d)\cap\mathcal{N}$. 
We will divide $\overline{R_f(\theta)}$ into several parts. 
Shrinking $\mathcal{N}$ if necessary, there is a constant $C_5>1$ such that $\big|f'(y_f)\big|\geq C_5>1$. 
By (\ref{derivative at xf}), we have $\big|g'_f(x_f)\big|\geq C_5^{1/d}>1$. 
So we can choose $n_1>0$ such that $\big|(g^{n_1}_f)'(x_f)\big|\geq 2$ and $C_4 d^{n_1 C_4}\geq 2$. 
Let $s_1=r/d^{n_1+2}$, which is the $G_f$-value of the only point in $R_f(\theta)\cap \partial V_f(r/d^{n_1+1})$. 
Then 
$$\inf{\left\{\big|(g^{n_1}_f)'(z)\big|{;~} f\in\mathcal{C}(\mathcal{P}^d)\cap\mathcal{N}, 
\ z\in \overline{R_f(\theta)}, 
\ 0\leq G_f(z)\leq s_1\right\}}\geq2.$$
By the behavior of $g_f$ near $\infty$, we can choose $s_2\gg s_1$ so that 
$$\inf{\left\{\big|(g^{n_1}_f)'(z)\big|{;~} f\in\mathcal{C}(\mathcal{P}^d)\cap\mathcal{N}, 
\ z\in \overline{R_f(\theta)}, 
\ G_f(z)\geq s_2\right\}}\geq2.$$
Shrinking $\mathcal{N}$ if necessary, we have 
$$C_6 := \inf{\left\{\big|(g^{n_1}_f)'(z)\big|{;~} f\in\mathcal{C}(\mathcal{P}^d)\cap\mathcal{N}, 
\ z\in \overline{R_f(\theta)}, 
\ s_1\leq G_f(z)\leq s_2\right\}}>0.$$
Note that $G_f(g_f(z))=d\cdot G_f(z)$. 
Choose $n_2>0$ such that $d^{n_1 n_2}s_1>s_2$, and choose $n_3>0$ such that $2^{n_3}C_6^{n_2}\geq 2$. 
Let $n=n_1(n_2+n_3)$. 
Then $\big|(g_f^n)'(z)\big|\geq 2$ for every $f\in\mathcal{C}(\mathcal{P}^d)\cap\mathcal{N}$ and every $z\in\overline{R_{f}(\theta)}$. 
This completes the proof. 
\end{proof}

\subsection{Proof of Proposition \ref{continuous-hdim}} 
\label{continuity-h-d} 

\begin{proof}
[Proof of Proposition \ref{continuous-hdim}] 
By Corollary \ref{semi-hyperbolicity}, every map in $\Hbar$ is semi-hyperbolic. 
Given $v\in\Tfp$. 
Without loss of generality, we assume $\sigma(v)=v$. 
Let $\{f_n\}_{n\geq1}\subset\Hbar$ satisfy $\lim_{n\rightarrow\infty} f_n = h$.

{\bf Step 1.}   By Fact \ref{conf-md}, for each $n\geq1$, choose an $s_n$-conformal measure $\mu_n$ for $\partial U_{f_n,v}$, 
where $s_n=\dconf(\partial U_{f_n,v})$. 
After passing to a subsequence, we can assume $\mu_n\rightarrow\mu$ in weak topology and $s_n\rightarrow s$ as $n\rightarrow\infty$. 
By Proposition \ref{continuity of Ufv}, we see that  $\partial U_{f_n,v}\rightarrow \partial U_{h,v}$ in Hausdorff topology as $n\rightarrow\infty$, 
so $\mu$ is supported on $\partial U_{h,v}$. 
By an explicit computation 
(similar to the proof of Lemma \ref{weak limit of conformal measure}), 
the measure $\mu$ is an $s$-conformal measure for $\partial U_{h,v}$. 

\vspace{6 pt}
{\bf Claim.} 
{\it Suppose $\{f_n\}_{n\geq1}\subset \mathcal{H}$. 
Then for any $c\in\crit(h)\cap\partial U_{h,v}$ (if any), we have $\mu(\{c\})=0$.}
\vspace{6 pt}

First, we consider the special case $\{f_n\}_{n\geq1}\subset \mathcal{H}$. 
By the claim and (\ref{conical set formula}), we have  $\mu(\Con(\partial U_{h,v}))=1$. 
It follows that $s=\dconf(\partial U_{h,v})$ by Corollary \ref{mu supported on conical set}. 
According to Proposition \ref{equalities for Hdim of partial-U}, 
$$\lim_{n\rightarrow\infty}\Hdim(\partial U_{f_n,v})=\lim_{n\rightarrow\infty}s_n 
= s =\dconf(\partial U_{h,v}) = \Hdim(\partial U_{h,v}).$$

For general $\{f_n\}_{n\geq1}\subset\Hbar$, 
assume $s\neq\Hdim(\partial U_{h,v})$; we will find a contradiction. 
By the above case, for each $n\geq1$, there is a polynomial  $\widetilde{f}_n\in \mathcal{H}$ so that $\|\widetilde{f}_n-f_n\|_2<1/n$ and 
\begin{equation}
\label{f in H}
\big|\Hdim(\partial U_{\widetilde{f}_n,v})-\Hdim(\partial U_{f_n,v})\big|<\big|\Hdim(\partial U_{h,v})-s\big|/2, 
\end{equation}
where $\|\cdot\|_2$ is the $2$-norm of the $(d-1)$-dimensional complex affine space $\widehat{\mathcal{P}}^d$. 
Then $\{\widetilde{f}_n\}_{n\geq1} \subset  \mathcal{H} \rightarrow h$, so 
$\Hdim(\partial U_{\widetilde{f}_n,v})\rightarrow\Hdim(\partial U_{h,v})$ as $n\rightarrow\infty$ by the above case again. 
It follows that (\ref{f in H}) does not hold when $n$ is large enough; a contradiction. 
This shows the continuity of $\Hdim(\partial U_{f,v})$ with respect to $f\in\Hbar$.

{\bf Step 2.}  
To complete the proof, we show the claim (compare \cite[Theorem 11.2]{McM2000}). 
Given $c\in\crit(h)\cap\partial U_{h,v}$. 
Then $c$ is pre-repelling by Proposition \ref{behavior of critical orbits}. 
Replacing $h$ by an iterate, we can assume $h(c)$ is a fixed point of $h$. 
Let $t$ be the internal angle of $c$ with respect to $U_{h,v}$. 
Set $\theta=\theta^-_{v}(t)$ and $\theta'=\theta^+_{v}(t)$ 
(see Lemma \ref{invariant external angles} for $\theta^-_v$ and $\theta^+_v$). 
Let $x_f$ denote the same landing point of $R_{f}(\theta)$ and $R_{f}(\theta')$ for $f\in\Hbar$. 
In particular, $x_h=c$. 

To continue our discussion, we associate each $f\in\Hbar$ with the sets $L_f$ and $Y_f$. 
Let $L_f = R_{f}(\theta)\cup\{x_f\}\cup R_{f}(\theta')$ and let 
$$g_f=f^{-1}\circ f\circ f: L_f \rightarrow L_f$$
be the lift of the restriction of $f$ on $f(L_f)$. 
Let $t_1,t_2\in\QZ$ so that $t_1,t,t_2$ are in positive cyclic order, and let $\rho>1$. 
Let $Y_f$ be the connected component of 
\begin{align*}
\mathbb{C}\setminus \Big(&
L_f\cup  \overline{
R_{f,v}(t_1)\cup R_f(\theta^-_v(t_1))  \cup
R_{f,v}(t_2)\cup R_f(\theta^-_v(t_2))  } \\
&\cup B_f^{-1}(\rho\cdot \partial\mathbb{D}) \cup
B_{f,v}^{-1}(1/\rho\cdot \partial\mathbb{D})
\Big)
\end{align*}
containing $\{z\in R_{f,v}(t){;~} |B_{f,v}(z)| > 1/\rho\}$ 
(in the notations of \S\ref{subsection internal rays}).  
See Figure \ref{figure Yf}.  
The constants $t_1,t_2,\rho$ are to be determined, and so is  $Y_f$. 

\begin{figure}[ht]
\centering
\includegraphics{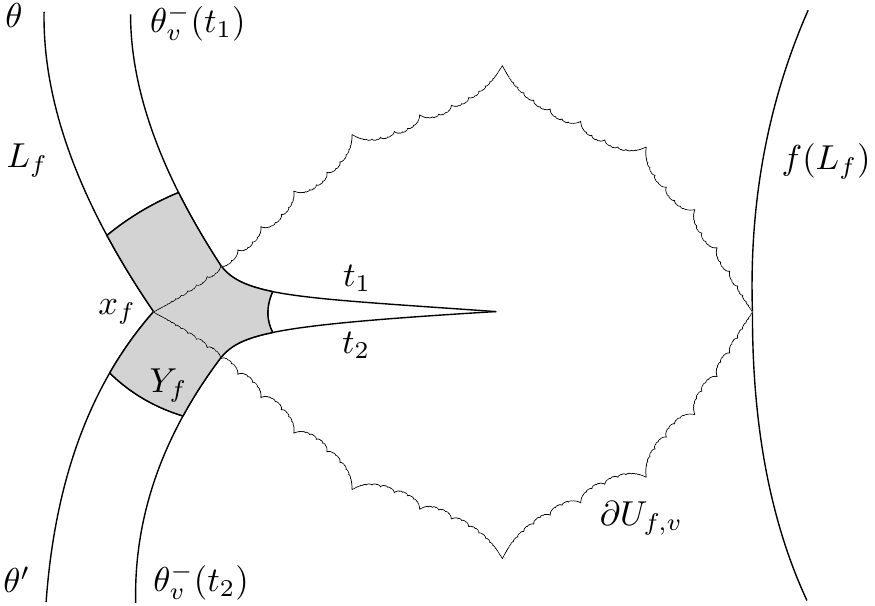}
\caption{A sketch of $Y_f$.}
\label{figure Yf}
\end{figure}

By Proposition \ref{expanding on external rays}, there exists a neighborhood $\mathcal{N}\subset \widehat{\mathcal{P}}^d$ of $h$ and a constant $m>0$ such that  
\begin{equation}
\label{derivative estimate on Lf}
\text{$\big|(g_f^m)'(z)\big|\geq 4$ for $f\in\Hbar\cap\mathcal{N}$ and $z\in L_f$.}
\end{equation}
In particular, $\big|(g_h^m)'(c)\big|\geq 4$. 
Therefore we can choose $R>0$ so that: 
$g_h^m$ is injective on $\overline{\mathbb{D}(c,R)}$; 
$\mathbb{D}(c,R)\Subset g_h(\mathbb{D}(c,R))\Subset\cdots\Subset g_h^m(\mathbb{D}(c,R))$; 
and $\big|(g_h^m)'(z)\big|\geq3$ for every $z\in \mathbb{D}(c,R)$. 
Now fix $t_1,t_2$ close to $t$ and $\rho$ close to $1$ so that 
\begin{itemize}
\item $Y_h\Subset \mathbb{D}(c,R)$; 
\item $f$ is univalent on $Y_f$ and $f(Y_f)$ respectively for any $f\in\Hbar\cap\mathcal{N}$ (by the stability of dynamical rays: Lemmas \ref{stability of external rays} and \ref{stability of internal rays}); 
\item  $g_f^m$ has a univalent extension on a neighborhood of $\overline{Y_f}$ for any $f\in\Hbar\cap\mathcal{N}$. 
\end{itemize}
Since $\big|(g_h^m)'(z)\big|\geq3$ for every $z\in \overline{Y_h}$, shrinking $\mathcal{N}$ if necessary, we have 
\begin{equation}
\label{derivative estimate on the boundary of Yf}
\text{$\big|(g_f^m)'(z)\big|\geq 2$ for $f\in\Hbar\cap\mathcal{N}$ and $z\in \overline{(\partial Y_f) \setminus L_f}$,}
\end{equation}
where $\overline{(\partial Y_f) \setminus L_f}$ is close to $\overline{(\partial Y_h) \setminus L_h}$ in Hausdorff topology. 
Combining (\ref{derivative estimate on Lf}) and (\ref{derivative estimate on the boundary of Yf}), 
by the maximum modulus principle, 
\begin{equation}
\label{derivative estimate on Yf}
\text{$\big|(g_f^m)'(z)\big|\geq 2$ for $f\in\Hbar\cap\mathcal{N}$ and $z\in \overline{Y_f}$.}
\end{equation}

Let $f\in\Hbar\cap\mathcal{N}$. 
Then $$Y_f\cap \partial U_{f,v} = B^{-1}_{f,v}(e^{2\pi i(t_1,t_2)})\setminus \{x_f\}.$$ 
By Fact \ref{conf-md}, let $\mu_f$ be an $s_f$-conformal measure for $\partial U_{f,v}$, where $s_f=\dconf(\partial U_{f,v})$. 
For $k\geq0$, let $Y_f^{k}=g_f^{-m k}(Y_f)$. 
Since $f$ is univalent on $Y_f$ and $f(Y_f)$ respectively, 
by (\ref{transition formula for mu}) and $g_f=f^{-1}\circ f\circ f$, we have 
\begin{equation}
\label{transition formula for gf} 
\mu_f(Y_f)=\int_{Y_f^{k}}\big|(g^{m k}_f)'\big|^{s_f} d\mu_f. \end{equation}
Note that $s_f = \Hdim(\partial U_{f,v})\geq1$. 
Then by (\ref{derivative estimate on Yf}) and (\ref{transition formula for gf}), 
$$\mu_f(Y_f)\geq 2^k \mu_f(Y_f^k).$$

{\bf Step 3.} 
By the assumption $\{f_n\}_{n\geq1}\subset \mathcal{H}$ in the claim and (\ref{transition formula for mu}), for each $n\geq1$, 
we have $\mu_n(\{x_{f_n}\})=\mu_n(\{f_n(x_{f_n})\})=0$. 
(Since $h(c)$ is a repelling fixed point of $h$, we have $\mu(\{h(c)\})=0$, 
but we can not deduce $\mu(\{c\})=0$ by $\mu(\{h(c)\})=|h'(c)|^s \mu(\{c\})$.) 
Without loss of generality, we can assume $\{f_n\}_{n\geq1}\subset\mathcal{N}$. 

Given $k\geq 0$. 
Choose $r_k>0$ so that $\mathbb{D}(c,r_k)\Subset Y_h^k\cup\overline{{{S}}_h(\theta,\theta')}$. 
By the stability of dynamical rays and equipotential curves, there exists an $N_k>0$ so that for every $n> N_k$, 
we have $\mathbb{D}(c,r_k)\Subset Y_{f_n}^k\cup\overline{{{S}}_{f_n}(\theta,\theta')}$, 
hence $\mathbb{D}(c,r_k)\cap \partial U_{f_n,v}\subset Y_{f_n}^k\cup\{x_{f_n}\}$. 
For every $n>N_k$, 
because $\mu_n(\{x_{f_n}\})=0$ and $\mu_n(Y_{f_n})\leq 1$, we have 
$$\mu_n(\mathbb{D}(c,r_k)) \leq \mu_n(Y_{f_n}^k)+\mu_n(\{x_{f_n}\}) \leq 2^{-k}\mu_n(Y_{f_n})+\mu_n(\{x_{f_n}\})\leq 2^{-k}.$$
It follows that $\mu(\{c\})\leq \mu(\mathbb{D}(c,r_k))\leq \liminf_{n\rightarrow\infty}\mu_n(\mathbb{D}(c,r_k))\leq 2^{-k}$ (see (W4) in Appendix \ref{appendix weak topology on measures}). 
Letting $k\rightarrow\infty$ gives $\mu(\{c\})=0$. 
This proves the claim. 
\end{proof}

\section{The Hausdorff dimension formula}
\label{section the hd-formula}

Let $\mathcal{H}\subset\mathcal{F}$ be a capture hyperbolic component with center $f_0$. 
With the same notations as \S\ref{transver}, for $f \in\partial \mathcal{H}$, let 
\begin{align*}
& I^J(f) = \big\{(v,k)\in I{;~} c_{v,k}(f)\in J(f)\big\} \subset I, \\
& A^J(f) = \bigcup_{(v,k)\in I^J(f)} U_{f,\sigma^{r_v}(v)}\subset A(f). 
\end{align*}
Here $A^J(f)$ is the union of the bounded attracting Fatou components of $f$ associated with the free critical points in the Julia set $J(f)$.  
The aim of this section is to prove Theorem \ref{hd-boundary}: 
$${\Hdim}(\partial \mathcal H)= 2 \Cdim (\mathcal H)-2+\max_{f\in\partial\mathcal{H}}{\Hdim}(\partial A^J(f)).$$

Combining Proposition \ref{continuity of Ufv} and Proposition \ref{continuous-hdim}, 
$\Hdim(\partial A^J(f))$ is upper semi-continuous with respect to $f\in\partial \mathcal H$, hence the maximum in the identity is achieved. 
See also Lemma \ref{taking-maximum}.

\subsection{Proof of the Hausdorff dimension formula} 
\label{proof-hdf}

To prove the Hausdorff dimension formula, we first study the properties of some subsets of $\partial\mathcal{H}$.  

Let $\partial_T\mathcal{H}\subset \partial \mathcal H$ consist of all $f\in\partial \mathcal H$ such that (see Proposition \ref{transversality-prop})
\begin{itemize}
\item $f\notin\Sigma(\mathcal{F})$; 
\item $\crit(f)\cap \partial A(f)=\emptyset$; and 
\item $f$ has no free critical relation.
\end{itemize}

\begin{lemma} 
\label{partial-T-H}
The set $\partial_T\mathcal H$ is open and dense in $\partial\mathcal H$, 
with Hausdorff dimension $\Hdim(\partial_T\mathcal H) = \Hdim(\partial\mathcal H)$. 
\end{lemma}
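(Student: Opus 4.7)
The plan is to split the complement $E:=\partial\mathcal{H}\setminus\partial_T\mathcal{H}$ into three exceptional pieces, bound each by $2\Cdim(\mathcal{H})-2$ in Hausdorff dimension, and then extract both the density of $\partial_T\mathcal{H}$ and the dimension identity from the topological lower bound $\Hdim(\partial\mathcal{H})\geq\topdim(\partial\mathcal{H})=2\Cdim(\mathcal{H})-1$ supplied by Theorem \ref{top-boundary}.

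Openness of $\partial_T\mathcal{H}$ in $\partial\mathcal{H}$ is routine. The set $\mathcal{F}\setminus\Sigma(\mathcal{F})$ is open by definition of the singular locus; the condition $\crit(f)\cap\partial A(f)=\emptyset$ is open thanks to the joint continuity of $f\mapsto\crit(f)$ and $f\mapsto\overline{A(f)}$ from Proposition \ref{continuity of Ufv}; and the absence of a free critical relation is the intersection of the negations of finitely many closed conditions of the form $c_{v',k'}(f)=f^j(c_{v,k}(f))$.

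For the dimension estimate I would write $E=E_1\cup E_2\cup E_3$ with $E_1=\partial\mathcal{H}\cap\Sigma(\mathcal{F})$, $E_3=\{f\in\partial\mathcal{H}: f\text{ has a free critical relation}\}$, and $E_2=\{f\in\partial\mathcal{H}:\crit(f)\cap\partial A(f)\neq\emptyset\}$. The bounds for $E_1$ and $E_3$ are immediate: by Appendix \ref{appendix smooth points}, $\Sigma(\mathcal{F})$ is a complex analytic subvariety of $\mathcal{F}$ of complex dimension at most $\Cdim(\mathcal{H})-1$, and since $\mathcal{F}$ is irreducible with independently varying free critical points, each of the finitely many candidate free critical relations cuts out a proper complex analytic subvariety of $\mathcal{F}$; both sets therefore have Hausdorff dimension at most $2\Cdim(\mathcal{H})-2$. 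For $E_2$, the key point is that by Proposition \ref{behavior of critical orbits}(3) every free critical point lying on $\partial A(f)$ is preperiodic, hence must equal $B_{f,v'}^{-1}(e^{2\pi it})$ for some $v'\in\Tfp$ and some preperiodic $t\in\QZ$. Away from $\Sigma(\mathcal{F})$, the stability of internal rays (Lemma \ref{stability of internal rays}) makes both sides depend holomorphically on $f$ in a smooth local chart, so for each quadruple $(v,k,v',t)$ the equation defines a proper complex analytic subset, hence a set of Hausdorff dimension at most $2\Cdim(\mathcal{H})-2$; the countable union over all such quadruples preserves this bound.

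Combining these, $\Hdim(E)\leq 2\Cdim(\mathcal{H})-2<2\Cdim(\mathcal{H})-1\leq\Hdim(\partial\mathcal{H})$, which immediately yields $\Hdim(\partial_T\mathcal{H})=\Hdim(\partial\mathcal{H})$; since $E$ is closed in $\partial\mathcal{H}$ with Hausdorff dimension strictly less than $\topdim(\partial\mathcal{H})$, it contains no nonempty relatively open subset of the $(2\Cdim(\mathcal{H})-1)$-sphere $\partial\mathcal{H}$, and density of $\partial_T\mathcal{H}$ follows. The main obstacle will be the dimension estimate for $E_2$: converting the non-algebraic condition ``a free critical point lies on $\partial A(f)$'' into a countable family of holomorphic equations requires simultaneously exploiting the preperiodicity established in Proposition \ref{behavior of critical orbits}(3) and the holomorphic dependence of B\"ottcher coordinates of superattracting Fatou components away from the singular locus $\Sigma(\mathcal{F})$.
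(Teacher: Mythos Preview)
Your architecture matches the paper's exactly: the same three-piece decomposition of $E=\partial\mathcal{H}\setminus\partial_T\mathcal{H}$, the same openness argument, and the same extraction of density and the dimension identity from the bound $\Hdim(E)\le 2\Cdim(\mathcal{H})-2$ together with Theorem~\ref{top-boundary}. Your treatment of $E_1$ and $E_3$ is essentially identical to the paper's.

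The gap is in $E_2$. You propose to encode ``$c_{v,k}(f)\in\partial A(f)$'' by the equation $c_{v,k}(f)=B_{f,v'}^{-1}(e^{2\pi it})$ and then invoke Lemma~\ref{stability of internal rays} for holomorphic dependence. But that lemma only yields \emph{continuity} of the landing point, and only on a neighborhood in $\Hbar$, not on an open subset of $\mathcal{F}$; to cut out a complex-analytic subvariety you need holomorphicity on an open set of $\mathcal{F}\setminus\Sigma(\mathcal{F})$. If you try to upgrade via the holomorphic motion of $\partial A(f)$ described before Proposition~\ref{transversality-prop}, that motion exists precisely when $\crit(f)\cap\partial A(f)=\emptyset$, i.e.\ away from $E_2$, so the argument becomes circular. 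And if you try to continue the landing point as a pre-repelling point via the implicit function theorem, its backward orbit passes through the critical point $c_{v,k}(f)$ exactly at the parameters in $E_2$, so holomorphic continuation breaks down right where you need it.

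The paper's fix is to replace your transcendental equation by a purely algebraic one. Proposition~\ref{behavior of critical orbits}(3) (combined with Lemma~\ref{invariant external angles} to control the relevant preperiods and periods) shows that any $f\in E_2$ satisfies $f^{q+p}(c_{v,k}(f))=f^{q}(c_{v,k}(f))$ for some $(v,k)\in I$ and some $p,q\ge1$ drawn from a fixed finite list. Each such relation is a polynomial equation in the coordinates of $\mathcal{F}$, defines a proper algebraic subset of the irreducible component $\mathcal{F}_0\supset\mathcal{H}$ (it fails for generic $f\in\mathcal{H}$), and hence has Hausdorff dimension at most $2\Cdim(\mathcal{H})-2$. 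This is both simpler and sidesteps all domain-of-definition issues with the B\"ottcher coordinate.
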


\begin{proof} 
Let 
\begin{align*}
&X_1 = \big\{f\in\partial\mathcal{H}{;~} \crit(f)\cap \partial A(f)\neq\emptyset\big\}, \\
&X_2 = \big\{f\in\partial\mathcal{H}{;~} \text{$f$ has at least one free critical relation}\big\}.
\end{align*}
By Proposition \ref{continuity of Ufv}, the boundary $\partial A(f)$ is 
continuous with respect to $f\in\Hbar$ in Hausdorff topology, hence $X_1$ is closed. 
It is clear that $\Sigma(\mathcal{F})$ and $X_2$ are closed. 
So $\partial_T{\mathcal{H}} = \partial\mathcal{H}\setminus(\Sigma(\mathcal{F})\cup X_1\cup X_2)$ is open in $\partial\mathcal{H}$. 

By Proposition \ref{behavior of critical orbits} and Lemma \ref{invariant external angles}, 
the set $X_1$ is contained in the union $Y_1$ of finitely many algebraic subsets of $\mathcal{F}$: 
each is defined by the critical relation of the form
$f^{q+p}(c_{v,k}(f))=f^{q}(c_{v,k}(f))$ for some $p,q\geq 1$ and some $(v,k)\in I$. 
According to Definition \ref{defi-free-crit-rela}, 
the set $X_2$ is contained in the union $Y_2$ of finitely many algebraic subsets of $\mathcal{F}$: 
each is defined by the free critical relation of the form 
$c_{v',k'}(f) = f^j(c_{v,k}(f))$ for some $(v,k)\neq(v',k')\in I$ and some $0\leq j<r_{v}$. 
Now let $Y=\Sigma(\mathcal{F})\cup Y_1\cup Y_2$. 
Then $Y$ is an algebraic subset of $\mathcal{F}$ and $\partial\mathcal{H}\setminus\partial_T\mathcal{H} \subset Y\cap\partial\mathcal{H}$.  

As the proof of Lemma \ref{sing-in-part-H}, 
let $\mathcal{F}_0$ be the irreducible component of $\mathcal{F}$ containing $\mathcal{H}$. 
Since $\mathcal{H}$ is not contained in $Y$, 
we have $Y\cap\mathcal{F}_0\subsetneqq \mathcal{F}_0$.
Combining this with the irreducibility of $\mathcal{F}_0$, we have 
$\Cdim(Y\cap\mathcal{F}_0)
<\Cdim(\mathcal{F}_0) = \Cdim(\mathcal{H})$. 
It follows that  
\begin{align*}
\Hdim(\partial\mathcal{H}\setminus\partial_T\mathcal{H})
& \leq \Hdim(Y\cap\partial\mathcal{H})
\leq \Hdim(Y\cap\mathcal{F}_0)\\
& = 2\Cdim(Y\cap\mathcal{F}_0)
\leq 2\Cdim(\mathcal{H})-2.
\end{align*}
Note that for a subset of a Euclidean space, 
its Hausdorff dimension is not less than its topological dimension \cite[Theorem VII 2]{Hurewicz-Wallman}. 
Since $\partial\mathcal{H}$ is homeomorphic to the sphere $S^{2\Cdim (\mathcal H)-1}$ 
(Theorem \ref{top-boundary}), 
the set $\partial_T{\mathcal{H}}$ is dense in $\partial\mathcal{H}$; 
otherwise, its complement  $\partial\mathcal{H}\setminus\partial_T\mathcal{H}$ includes a subset with Hausdorff dimension at least $2\Cdim(\mathcal{H})-1$, 
which contradicts $\Hdim(\partial\mathcal{H} \setminus \partial_T\mathcal{H}) \leq 2\Cdim(\mathcal{H})-2$. 
Similarly, we have  $\Hdim(\partial_T\mathcal H) = \Hdim(\partial\mathcal H)$. 
The proof is completed. 
\end{proof}

For each $(v,k)\in I$, we define 
$$\partial_{v,k}\mathcal{H}=\big\{f\in \partial\mathcal{H}{;~} I^J(f)=\{(v,k)\}\big\}.$$

\begin{lemma}
\label{partial-vk-H}
For any $f\in\partial\mathcal{H}$, we have $f\in \bigcap_{(v,k)\in I^J(f)}\overline{\partial_{v,k} \mathcal H}$. 
\end{lemma}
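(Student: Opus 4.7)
The plan is to transfer the analogous statement in the model space (Lemma~\ref{partial-vk-model}) to $\partial\mathcal{H}$ via the straightening map $\chi:\overline{\mathcal{H}}\rightarrow \modelf$ constructed in Section~\ref{section straightening map}. By Theorem~\ref{top-boundary}, $\chi$ is a homeomorphism, and in particular restricts to a homeomorphism $\chi:\partial\mathcal{H}\rightarrow \partial\mathcal{C}$, where $\mathcal{C}=\modelf$. Since $\chi^{-1}$ is continuous, it suffices to show that for each $(v,k)\in I$,
\begin{equation*}
\chi(\partial_{v,k}\mathcal{H}) = \partial_{v,k}\mathcal{C} \quad \text{and} \quad I^J(f) = I^J(\chi(f)) \text{ for all } f\in\partial\mathcal{H};
\end{equation*}
once this is established, Lemma~\ref{partial-vk-model} applied to $\mathbf{x}=\chi(f)$ yields $\chi(f)\in\bigcap_{(v,k)\in I^J(\chi(f))}\overline{\partial_{v,k}\mathcal{C}}$, and pulling back through $\chi^{-1}$ gives $f\in\bigcap_{(v,k)\in I^J(f)}\overline{\partial_{v,k}\mathcal{H}}$.

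The key verification is that $I^J(f) = I^J(\chi(f))$. First, we would use the straightening conjugacies $(\psi_{f,v})_{v\in|T(f_0)|}$ together with the semi-conjugacy relation $\psi_{f,\sigma(v)}\circ g_{f,v} = P_{f,v}\circ\psi_{f,v}$ on $K_{f,v}$ (where $g_{f,v}=f$ on $K_{f,v}$) to compute, for $\mathbf{x}=\chi(f)$ and $(v,k)\in I$,
\begin{equation*}
w_{v,k}(\mathbf{x}) \;=\; \psi_{f,\sigma^{r_v}(v)}\bigl(f^{r_v}(c_{v,k}(f))\bigr).
\end{equation*}
Since $\psi_{f,\sigma^{r_v}(v)}:\overline{U_{f,\sigma^{r_v}(v)}}\rightarrow\overline{\mathbb{D}}$ is a homeomorphism sending $\partial U_{f,\sigma^{r_v}(v)}$ onto $\partial\mathbb{D}$, and since $f^{r_v}(c_{v,k}(f))\in\overline{U_{f,\sigma^{r_v}(v)}}$ by Proposition~\ref{continuity of Ufv}, we get $|w_{v,k}(\mathbf{x})|=1$ if and only if $f^{r_v}(c_{v,k}(f))\in\partial U_{f,\sigma^{r_v}(v)}$. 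Because $c_{v,k}(f)$ is either in a bounded Fatou component (forcing its $r_v$-th image into $U_{f,\sigma^{r_v}(v)}$) or on $J(f)$ (forcing its image onto $\partial U_{f,\sigma^{r_v}(v)}$), this is equivalent to $c_{v,k}(f)\in J(f)$, i.e.\ $(v,k)\in I^J(f)$. This gives $I^J(f)=I^J(\chi(f))$.

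From this equality of index sets we immediately get $\chi(\partial_{v,k}\mathcal{H})=\partial_{v,k}\mathcal{C}$ by comparing definitions, and the proof then concludes as indicated above. The main technical point is the computation of $w_{v,k}(\chi(f))$ in terms of the dynamics of $f$, which requires being careful that the conjugacy $\psi_{f,v}$ intertwines $g_{f,v}$ (not $f$ globally) with $P_{f,v}$, and then observing that $g_{f,v}$ coincides with $f$ precisely on the set $K_{f,v}$ containing the orbit of $c_{v,k}(f)$ under the first $r_v$ iterates; no further real obstacle appears, since the behavior of critical orbits already developed in Proposition~\ref{continuity of Ufv} and Proposition~\ref{behavior of critical orbits} guarantees that all the quantities involved are well-defined and continuous.
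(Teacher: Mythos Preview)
Your proposal is correct and follows essentially the same route as the paper: both transfer Lemma~\ref{partial-vk-model} from the model space via the homeomorphism $\chi$. The only difference is packaging: the paper invokes the identity $\widetilde W = W\circ\chi$ (already recorded after (\ref{crit-val-H})), which immediately gives $I^J(f)=I^J(\chi(f))$ since $\widetilde W_{v,k}(f)=B_{f,\sigma^{r_v}(v)}(f^{r_v}(c_{v,k}(f)))$ has modulus $1$ exactly when $c_{v,k}(f)\in J(f)$; your argument re-derives this identity by tracing the conjugacy $\psi_{f,\sigma^{r_v}(v)}\circ f^{r_v}(c_{v,k}(f))=w_{v,k}(\chi(f))$ and noting $\psi_{f,u}=B_{f,u}$ for $u\in\Tfp$.
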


\begin{proof} 
By the proof of Theorem \ref{top-boundary}, the straightening map $\chi:\Hbar\rightarrow \modelf$ defined by (\ref{straightening map}) is a homeomorphism. 
Recall that $W: \widehat{\mathcal{P}}_0^{T(f_0)}\rightarrow \mathbb{C}^{\Cdim (\mathcal H)}$ 
and $\widetilde W: \Hbar \rightarrow {\overline{\mathbb{D}}}^{\Cdim (\mathcal H)}$ are maps defined by (\ref{crit-val-Tf0}) and (\ref{crit-val-H}) respectively. 
Then the result follows from $\widetilde W = W\circ\chi$ and Lemma \ref{partial-vk-model}. 
\end{proof}

\begin{lemma}
\label{taking-maximum}
We have the following equality 
$$\max_{f\in \partial \mathcal H}{\Hdim}(\partial A^J(f)) = 
\max_{(v,k)\in I} \max_{f\in\overline{\partial_{v,k}\mathcal{H}}} 
{\Hdim}(\partial U_{f,\sigma^{r_v}(v)}).$$
\end{lemma}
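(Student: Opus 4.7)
\smallskip

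My plan is to verify both inequalities separately, using Lemma~\ref{partial-vk-H} for one direction and Proposition~\ref{continuous-hdim} for the other.

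For the direction $\leq$, I would fix $f\in\partial\mathcal H$ and observe that, since $A^J(f)=\bigcup_{(v,k)\in I^J(f)}U_{f,\sigma^{r_v}(v)}$ is a \emph{finite} union of attracting Fatou components of $f$, the boundary decomposes as
\[
\partial A^J(f)=\bigcup_{(v,k)\in I^J(f)}\partial U_{f,\sigma^{r_v}(v)}.
\]
By finite stability of Hausdorff dimension,
\[
\Hdim(\partial A^J(f))=\max_{(v,k)\in I^J(f)}\Hdim(\partial U_{f,\sigma^{r_v}(v)}).
\]
Now I would invoke Lemma~\ref{partial-vk-H}, which gives $f\in\overline{\partial_{v,k}\mathcal H}$ for every $(v,k)\in I^J(f)$. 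Each individual term in the maximum is therefore bounded by $\max_{g\in\overline{\partial_{v,k}\mathcal H}}\Hdim(\partial U_{g,\sigma^{r_v}(v)})$, and taking the supremum over $f$ yields the desired inequality.

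For the reverse direction $\geq$, I would fix $(v,k)\in I$ and $g\in\overline{\partial_{v,k}\mathcal H}$. Note first that, via the homeomorphism $\chi$ of Theorem~\ref{top-boundary} together with Lemma~\ref{partial-vk-model}, the set $\partial_{v,k}\mathcal H$ is nonempty, so such a $g$ exists. Choose a sequence $g_n\in\partial_{v,k}\mathcal H$ with $g_n\to g$; since $I^J(g_n)=\{(v,k)\}$, we have $\partial A^J(g_n)=\partial U_{g_n,\sigma^{r_v}(v)}$, so
\[
\Hdim(\partial U_{g_n,\sigma^{r_v}(v)})=\Hdim(\partial A^J(g_n))\le\max_{f\in\partial\mathcal H}\Hdim(\partial A^J(f)).
\]
Applying Proposition~\ref{continuous-hdim} (continuity of $\Hdim(\partial U_{\cdot,\sigma^{r_v}(v)})$ on $\overline{\mathcal H}$) to pass to the limit in $n$ gives $\Hdim(\partial U_{g,\sigma^{r_v}(v)})\le \max_{f}\Hdim(\partial A^J(f))$. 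Taking a maximum over $(v,k)$ and $g\in\overline{\partial_{v,k}\mathcal H}$ finishes the argument.

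Neither half presents a real obstacle, since all the serious work has already been done upstream. If anything, the mildly delicate point is the passage to the limit in the second part: one really does need the full continuity of $\Hdim(\partial U_{\cdot,v})$ granted by Proposition~\ref{continuous-hdim}, not merely upper semi-continuity, because one is approximating a general $g\in\overline{\partial_{v,k}\mathcal H}$ (possibly with $\crit(g)\cap\partial A(g)\neq\emptyset$) by maps $g_n$ whose dimension data one controls; without continuity the inequality could go the wrong way. Given this, the lemma reduces to bookkeeping on the index set $I^J(f)$ and the elementary fact that Hausdorff dimension of a finite union is the maximum of the pieces.
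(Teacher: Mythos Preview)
Your proof is correct and uses the same two ingredients as the paper (Lemma~\ref{partial-vk-H} for $\leq$, Proposition~\ref{continuous-hdim} for $\geq$), only organized slightly differently: you prove the two inequalities separately, while the paper first picks a maximizer $(v_*,k_*,f_*)$ of the right-hand side, observes that $f_*\in\overline{\partial_{v_*,k_*}\mathcal H}$ forces $(v_*,k_*)\in I^J(f_*)$, and then sandwiches. The only substantive difference is that the paper's route directly exhibits the left-hand supremum as a maximum (attained at $f_*$), whereas in your version this is left implicit; but it follows immediately from your two inequalities together with the fact that the right-hand maximum is attained, so nothing is missing.
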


\begin{proof}
By the continuity of Hausdorff dimension (Proposition \ref{continuous-hdim}), there is an index $(v_*,k_*)\in I$ and a map $f_*\in\overline{\partial_{v_*,k_*}\mathcal{H}}$ such that 
\begin{equation}
\label{taking-maximum-eq1}
\max_{(v,k)\in I} \max_{f\in\overline{\partial_{v,k}\mathcal{H}}} 
{\Hdim}(\partial U_{f,\sigma^{r_v}(v)})
= \Hdim(\partial U_{f_*,\sigma^{r_{v_*}}(v_*)}). 
\end{equation}
Since $f_*\in\overline{\partial_{v_*,k_*}\mathcal{H}}$, we have $(v_*,k_*)\in I^J(f_*)$, hence 
\begin{equation}
\label{taking-maximum-eq2}
\Hdim(\partial U_{f_*,\sigma^{r_{v_*}}(v_*)})
\leq \max_{(v,k)\in I^J(f_*)} {\Hdim}(\partial U_{f_*,\sigma^{r_v}(v)})
= {\Hdim}(\partial A^J(f_*)).
\end{equation}
For any $f\in \partial\mathcal{H}$, we have 
\begin{align*}
{\Hdim}(\partial A^J(f))
&= \max_{(v,k)\in I^J(f)} {\Hdim}(\partial U_{f,\sigma^{r_v}(v)})\\
&\leq \max_{(v,k)\in I^J(f)} \max_{g \in\overline{\partial_{v,k}\mathcal{H}}} 
{\Hdim}(\partial U_{g,\sigma^{r_v}(v)}) 
\quad(\text{by Lemma \ref{partial-vk-H}})\\
&\leq \max_{(v,k)\in I} \max_{g \in\overline{\partial_{v,k}\mathcal{H}}} 
{\Hdim}(\partial U_{g,\sigma^{r_v}(v)})\\
&\leq {\Hdim}(\partial A^J(f_*))
\quad(\text{by (\ref{taking-maximum-eq1}) and (\ref{taking-maximum-eq2})}). 
\end{align*}
It follows that 
$$
\sup_{f \in \partial \mathcal H}{\Hdim}(\partial A^J(f)) 
= \max_{(v,k)\in I} \max_{f \in\overline{\partial_{v,k}\mathcal{H}}} 
{\Hdim}(\partial U_{f,\sigma^{r_v}(v)})
= {\Hdim}(\partial A^J(f_*)). 
$$
Hence the supremum in the left side can be attained at $f_*$, and can be replaced by maximum.
\end{proof}

\begin{proof} 
[Proof of Theorem \ref{hd-boundary}]
Let $f\in\partial_T\mathcal{H}$, and let $\mathcal N_r(f)\subset\mathcal{F}$ be an $r$-neighborhood of $f$ (in a suitable metric for parameters) so that 
\begin{itemize} 
\item the holomorphic motion $h: \mathcal N_r(f)\times \overline{A(f)}\rightarrow \mathbb C$ is well-defined; 
\item $g^{r_v}(c_{v,k}(g))\in U_{g,\sigma^{r_v}(v)}$ for any $g \in\mathcal N_r(f)$ and $(v,k)\in I\setminus I^J(f)$. 
\end{itemize}  
For each $(v,k)\in I^J(f)$, let 
$$\mathcal{X}_r^{v,k}(f) = \big\{g \in\mathcal N_r(f){;~} 
g^{r_v}(c_{v,k}(g))\in\partial U_{g,\sigma^{r_v}(v)} \big\}.$$
By Proposition \ref{continuity of Ufv}, we have 
\begin{equation}
\label{AJf-leq-1}
\mathcal N_r(f)\cap\partial\mathcal{H}
\subset\bigcup_{(v,k)\in I^J(f)} \mathcal{X}_r^{v,k}(f).
\end{equation}

Fix $\varepsilon>0$. 
Let $(v,k)\in I^J(f)$. 
Since $f \in\partial_T\mathcal{H}$, by the transversality (Proposition \ref{transversality-prop}), we know that
$$d(g^{r_v}(c_{v,k}(g))-h(g,f^{r_v}(c_{v,k}(f))))|_{g = f}\neq 0.$$
Applying Proposition \ref{HD-trans} to the holomorphic motion $h: \mathcal N_r(f)\times \partial U_{f,\sigma^{r_v}(v)}\rightarrow \mathbb C$, there is an $r(f)>0$ independent of $(v,k)$ so that 
\begin{equation}
\label{AJf-leq-2}
{\Hdim}{\left(\mathcal{X}_{r(f)}^{v,k}(f)\right)}
\leq 2\Cdim (\mathcal H)-2+{\Hdim}(\partial U_{f,\sigma^{r_v}(v)}) + \varepsilon.
\end{equation}
Here we use implicitly the homogeneity of $\partial U_{f,\sigma^{r_v}(v)}$: for any $x\in \partial U_{f,\sigma^{r_v}(v)}$ and any $\rho>0$, we have ${\Hdim}(\mathbb{D}(x,\rho)\cap \partial U_{f,\sigma^{r_v}(v)})={\Hdim}(\partial U_{f,\sigma^{r_v}(v)})$.
By (\ref{AJf-leq-1}) and (\ref{AJf-leq-2}), we have 
\begin{align*} 
{\Hdim}(\mathcal N_{r(f)}(f)\cap\partial\mathcal{H})
&\leq \max_{(v,k)\in I^J(f)} {\Hdim}{\big(\mathcal{X}_{r(f)}^{v,k}(f)\big)}\\
&\leq 2\Cdim (\mathcal H)-2+\max_{(v,k)\in I^J(f)}{\Hdim}(\partial U_{f,\sigma^{r_v}(v)}) + \varepsilon\\
&= 2\Cdim (\mathcal H)-2+{\Hdim}(\partial A^J(f)) + \varepsilon.
\end{align*}
Since $\partial_T \mathcal H$ is an open subset of  $\partial\mathcal H$ (Lemma \ref{partial-T-H}), it is the union of a sequence of increasing compact sets. 
Note that $\big\{\mathcal N_{r(f)}(f){;~} f\in  \partial_T \mathcal H\big\}$ is an open covering of $\partial_T \mathcal H$. 
There is a countable covering 
$\big\{\mathcal N_{r(f_n)}(f_n){;~} n\geq 1\big\}\subset \big\{\mathcal N_{r(f)}(f){;~} f \in  \partial_T \mathcal H\big\}$ of $\partial_T \mathcal H$.  
By the property of Hausdorff dimension, we have 
\begin{align*} 
{\Hdim}(\partial_T\mathcal H)
&\leq {\Hdim}{\bigg(\bigcup_{n\geq1} (\mathcal N_{r(f_n)}(f_n)\cap \partial\mathcal{H})\bigg)}\\
&=\sup_{n\geq 1} \Hdim (\mathcal N_{r(f_n)}(f_n)\cap \partial\mathcal{H})\\
&\leq  2\Cdim (\mathcal H)-2+ \sup_{n\geq 1} {\Hdim}(\partial A^J(f_n)) + \varepsilon\\
&\leq  2\Cdim (\mathcal H)-2+\max_{f \in \partial \mathcal H} {\Hdim}(\partial A^J(f)) + \varepsilon.
\end{align*}
By Lemma \ref{partial-T-H}, letting $\varepsilon\rightarrow0$ gives 
$$\Hdim(\partial\mathcal{H}) 
= \Hdim(\partial_T\mathcal{H})
\leq 2\Cdim (\mathcal H)-2+\max_{f \in \partial \mathcal H} {\Hdim}(\partial A^J(f)).$$
This proves one part of the Hausdorff dimension formula.  

To prove the other part, let $(v,k)\in I$ and let $f \in \partial_{v,k}\mathcal{H} \cap \partial_T\mathcal{H}$. 
By Proposition \ref{capture-separation}, we can choose $r>0$ small enough so that
$$\mathcal{X}_r^{v,k}(f)\subset\partial\mathcal{H}.$$
By Propositions \ref{HD-trans} and \ref{transversality-prop} again, we have 
$$ {\Hdim}(\partial \mathcal H)
\geq {\Hdim}(\mathcal{X}_r^{v,k}(f))
\geq  2\Cdim (\mathcal H)-2+{\Hdim}(\partial U_{f,\sigma^{r_v}(v)}).$$
It is clear that $\partial_{v,k}\mathcal H$ is open in $\partial\mathcal{H}$. 
By Lemma \ref{partial-T-H}, we have $\overline{\partial_{v,k}\mathcal{H} \cap \partial_T\mathcal{H}}
=\overline{\partial_{v,k}\mathcal{H}}$. 
Then by Proposition \ref{continuous-hdim} and Lemma \ref{taking-maximum}, we have 
\begin{align*}
{\Hdim}(\partial \mathcal H)
&\geq 2\Cdim (\mathcal H)-2+
\max_{(v,k)\in I} \sup_{f
\in\partial_{v,k}\mathcal{H}\cap\partial_T\mathcal{H}} 
{\Hdim}(\partial U_{f,\sigma^{r_v}(v)})\\
&= 2\Cdim (\mathcal H)-2+
\max_{(v,k)\in I} \max_{f
\in \overline{\partial_{v,k}\mathcal{H} \cap \partial_T\mathcal{H}}} 
{\Hdim}(\partial U_{f,\sigma^{r_v}(v)})\\
&= 2\Cdim (\mathcal H)-2+
\max_{(v,k)\in I} \max_{f
\in\overline{\partial_{v,k}\mathcal{H}}} 
{\Hdim}(\partial U_{f,\sigma^{r_v}(v)})\\
&= 2\Cdim (\mathcal H)-2+ 
\max_{f \in \partial \mathcal H}
{\Hdim}(\partial A^J(f)). 
\end{align*}
This completes the proof. 
\end{proof}

\subsection{A concluding example}

Let $2\leq m\leq d-1$ and $\mathcal F_m$ be the family of polynomials 
$$f(z)=a_m z^m + \cdots + a_{d-1} z^{d-1} + z^d, ~(a_m,\dots, a_{d-1})\in \mathbb C^{d-m}.$$
For each $f\in\mathcal{F}_m$, let $U_f(0)$ denote the immediate attracting basin of $0$, and let $A_f(0)$ denote the whole attracting basin of $0$. 
A \emph{capture hyperbolic component} in $\mathcal F_m$ is a hyperbolic component with a representative 
polynomial $f$ satisfying $\deg(f|_{U_f(0)})=m$ and $\crit(f)\subset A_f(0)$. 


\begin{corollary} 
Let $\mathcal H$ be a capture hyperbolic component in $\mathcal F_m$. 
Then $\partial\mathcal H$ is homeomorphic to the sphere $S^{2d-2m-1}$, 
with Hausdorff dimension 
$${\Hdim}(\partial \mathcal H)= 2(d-m-1)+\max_{f\in \partial \mathcal H}{\Hdim}(\partial U_f(0)).$$
\end{corollary}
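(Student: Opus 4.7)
The plan is to recognize $\mathcal{F}_m$ as (an irreducible component of) an algebraic family $\mathcal{F}\subset\widehat{\mathcal{P}}^d$ defined by periodic critical relations, and then to apply Theorems \ref{top-boundary} and \ref{hd-boundary} directly.

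First I would normalize via the affine conjugacy $\psi(z)=z-a_{d-1}/d$, which turns each $f\in\mathcal{F}_m$ into a monic centered critically marked polynomial $\tilde f\in\widehat{\mathcal{P}}^d$ having a superattracting fixed point of local degree $m$ at $c^\ast=a_{d-1}/d$. Marking this point as $c_1(\tilde f)=\cdots=c_{m-1}(\tilde f)=c^\ast$ embeds $\mathcal{F}_m$ holomorphically into
\begin{equation*}
\mathcal{F}=\big\{f\in\widehat{\mathcal{P}}^d{;~} f(c_j(f))=c_j(f),\ 1\leq j\leq m-1\big\}.
\end{equation*}
Both $\mathcal{F}_m$ and the slice $\{f\in\mathcal{F}{;~} c_1(f)=\cdots=c_{m-1}(f)\}$ have complex dimension $d-m$, and the slice is irreducible (being a biholomorphic image of $\mathbb{C}^{d-m}\cong\mathcal{F}_m$), so it is an irreducible component of $\mathcal{F}$. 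Take this component as the paper's irreducible $\mathcal{F}$, with $\ell=m-1$ and $\Cdim(\mathcal{F})=d-m$.

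Next I would match dynamical definitions. The mapping scheme $T(f_0)$ of any capture hyperbolic component $\mathcal{H}\subset\mathcal{F}$ has a unique periodic vertex $v_0$, the superattracting fixed point, with $\sigma(v_0)=v_0$ and $\delta(v_0)=m$; every nonperiodic vertex $v$ satisfies $\sigma^{r_v}(v)=v_0$. The conditions $\deg(f|_{U_f(0)})=m$ and $\crit(f)\subset A_f(0)$ that define a capture component in $\mathcal{F}_m$ force the $d-m$ free critical points into strictly preperiodic Fatou components of $A_f(0)$, matching the paper's definition. Theorem \ref{top-boundary} therefore yields $\partial\mathcal{H}\cong S^{2\Cdim(\mathcal{H})-1}=S^{2d-2m-1}$.

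Finally, for any $f\in\partial\mathcal{H}$ and any $(v,k)\in I^J(f)$, the equality $\sigma^{r_v}(v)=v_0$ gives $U_{f,\sigma^{r_v}(v)}=U_f(0)$, hence $A^J(f)=U_f(0)$ whenever $I^J(f)\neq\emptyset$. The latter must hold for every $f\in\partial\mathcal{H}$: otherwise all of $\crit(f)$ would lie in the Fatou set, making $f$ hyperbolic, but no hyperbolic map can lie on the boundary of a distinct hyperbolic component (by openness and disjointness of hyperbolic components). Theorem \ref{hd-boundary} then delivers
\begin{equation*}
\Hdim(\partial\mathcal{H})=2(d-m)-2+\max_{f\in\partial\mathcal{H}}\Hdim(\partial U_f(0))=2(d-m-1)+\max_{f\in\partial\mathcal{H}}\Hdim(\partial U_f(0)),
\end{equation*}
as claimed. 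The main point needing care is the identification in the first step: one must verify cleanly that the slice $\{c_1=\cdots=c_{m-1}\}$ is Zariski closed of the same dimension as $\mathcal{F}$, so that it is genuinely an irreducible component rather than a proper subvariety, and that this biholomorphism intertwines hyperbolic components on the two sides (which is essentially automatic, since the affine conjugation is a canonical change of coordinates on the level of polynomial dynamics).
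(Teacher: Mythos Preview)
Your overall strategy --- pass to a critically marked setting where the main theorems apply, then read off both statements --- matches the paper's in spirit, but the key identification in your first step is wrong, and this breaks the argument. The map you describe does \emph{not} embed $\mathcal{F}_m$ into $\widehat{\mathcal{P}}^d$: to land in $\widehat{\mathcal{P}}^d$ you must order \emph{all} $d-1$ critical points, and while the $m-1$ copies of the superattracting fixed point can be marked canonically, the remaining $d-m$ free critical points cannot be ordered globally (monodromy around the discriminant locus permutes them). What you actually get is the reverse map: the slice $\{c_1=\cdots=c_{m-1}\}\cap\mathcal{F}$ (which is indeed irreducible and an irreducible component of $\mathcal{F}$, as you argue) projects onto $\mathcal{F}_m$ as a branched cover of generic degree $(d-m)!$, not as a biholomorphism. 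Consequently, Theorems \ref{top-boundary} and \ref{hd-boundary} applied to this slice yield statements about a component $\widetilde{\mathcal H}$ of the preimage of $\mathcal H$, not about $\mathcal H$ itself, and you still owe both a topological and a metric transfer step.

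This is exactly where the paper's proof diverges from yours. For the topology of $\partial\mathcal H$, the paper does \emph{not} go through the cover at all: it builds a straightening map $\chi:\overline{\mathcal H}\to\mathcal{C}(\mathcal{P}_0^T)$ directly in $\mathcal{F}_m$ (the construction of \S\ref{section straightening map} works verbatim since it only uses the dynamics on marked Fatou components, and for the model one invokes Remark \ref{remark on C(P0T)} on the unmarked space $\mathcal{P}_0^T$). For the Hausdorff dimension, the paper \emph{does} pass to the critically marked cover $\pi:\mathbb{C}^{d-m}\to\mathcal{F}_m$, $\mathbf{c}\mapsto f_{\mathbf{c}}$, proves the formula for $\partial\widetilde{\mathcal H}$ by rerunning the argument of \S\ref{section the hd-formula}, and then transfers it back using the observation that $\pi$ is proper holomorphic with $\Hdim(\crit(\pi))\le 2(d-m-1)$, so that $\pi(\partial\widetilde{\mathcal H})=\partial\mathcal H$ forces $\Hdim(\partial\widetilde{\mathcal H})=\Hdim(\partial\mathcal H)$. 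Your identification of $A^J(f)$ with $U_f(0)$ and your argument that $I^J(f)\neq\emptyset$ on $\partial\mathcal H$ are correct and are the right way to simplify the dimension formula once it is established.
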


\begin{proof}
[Sketch of proof]
Similar to the proof of Theorem \ref{top-boundary}, we can define a straightening map $\chi:\Hbar\rightarrow \mathcal{C}(\mathcal{P}_0^T)$, 
and show it is a homeomorphism. 
Then $\partial\mathcal{H}$ is homeomorphic to $\partial \mathcal{C}(\mathcal{P}_0^T)$, 
which is homeomorphic to the sphere $S^{2d-2m-1}$ by Remark \ref{remark on C(P0T)}. 

For each $\mathbf{c} = (c_1,\dots,c_{d-m})\in\mathbb C^{d-m}$, let 
$$f_{\mathbf{c}}(z)=d\int_{0}^{z}
\zeta^{m-1}(\zeta-c_{1})\cdots(\zeta-c_{d-m})d\zeta.$$
Define $\pi: \mathbb C^{d-m}\rightarrow \mathcal F_m$ by $\pi(\mathbf{c})=f_{\mathbf{c}}$. 
Let $\widetilde{\mathcal{H}}$ be a connected component of $\pi^{-1}(\mathcal{H})$. 
With the same proof as the case $\mathcal{F}\subset\widehat{\mathcal{P}}^d$, the boundary  $\partial \widetilde{\mathcal{H}}$ is homeomorphic to the sphere $S^{2d-2m-1}$, 
with Hausdorff dimension 
$${\Hdim}(\partial \widetilde{\mathcal{H}})= 2(d-m-1)+\max_{\mathbf{c}\in \partial\widetilde{\mathcal{H}}}{\Hdim}(\partial U_{f_{\mathbf{c}}}(0)).$$  
Let $\crit(\pi)$ denote the critical set of the proper holomorphic map $\pi$. 
By \cite[\S15.1]{Rudin}, we have $\Hdim(\crit(\pi))\leq 2(d-m-1)$. 
Then $\pi(\partial \widetilde{\mathcal{H}})=\partial{\mathcal{H}}$ gives $\Hdim(\partial \widetilde{\mathcal{H}}) = \Hdim(\partial{\mathcal{H}})$. 
The Hausdorff dimension formula of $\partial\mathcal{H}$ follows from that of $\partial \widetilde{\mathcal{H}}$. 
\end{proof}

\addtocontents{toc}{\protect\contentsline{section}{Appendices}{\pageref*{appendix smooth points}}{appendix.A}}
\addtocontents{toc}{\protect\setcounter{tocdepth}{0}}

\appendix
\section{Smooth points and singular points} 
\label{appendix smooth points}

%

Let $\mathbb{C}[z_1,\dots,z_n]$ be a polynomial ring in $n$-letters. 
An \emph{algebraic set} in $\mathbb{C}^n$ is the set of all common zeros of 
a finite number of polynomials in $\mathbb{C}[z_1,\dots,z_n]$. 
Let $X$ be a nonempty algebraic set in $\mathbb{C}^n$. 

A point $x\in X$ is called \emph{smooth}, of dimension $k$, 
if there is an open neighborhood $\mathcal{N}$ of $x$ such that 
$\mathcal{N}\cap X$ is a complex submanifold of $\mathbb{C}^n$ of dimension $k$. 
(The degenerate case $k=0$ corresponds to that $x$ is an isolated point in $X$.) 
The \emph{dimension} $\Cdim(X)$ of $X$ is the largest $k$ such that $X$ has a smooth point of dimension $k$. 
We set $\Cdim(\emptyset)=0$. 

A point $x\in X$ is called \emph{singular}, if it is not smooth. 
The \emph{singular locus} $\Sigma(X)$ is defined to be the set of all singular points of $X$. 
Note that $\Sigma(X)$ is also an algebraic set in $\mathbb{C}^n$. 
If $\Cdim(X)\geq1$, then $\Cdim(\Sigma(X))<\Cdim(X)$. 
So we can inductively divide $\Sigma(X)$ 
into a finite disjoint union of complex submanifolds of $\mathbb{C}^n$ with dimension less than 
$\Cdim(X)$ (see \cite[Theorem 2.4]{Mil-Sing}). 
Therefore $\Hdim(X)=2\Cdim(X)$. 

For more details of algebraic sets, we refer to 
\cite[\S2]{Mil-Sing}, \cite[\S{II}]{Lang}.



Given $d\geq 3$. 
Let $1\leq\ell\leq d-2$ and $n_1,\dots,n_\ell\geq1$. 
Recall that 
\begin{align*}
\mathcal{F}&=\big\{f\in \widehat{\mathcal{P}}^d{;~}
f^{n_j}(c_j(f))=c_j(f), ~\forall~1\leq j\leq\ell  \big\}\\
&=\big\{(f,\mathbf{c})\in \widehat{\mathcal{P}}^d{;~}
f^{n_j}(c_j)=c_j, ~\forall~1\leq j\leq\ell  \big\}\\ 
&\cong \big\{(\mathbf{c},a)\in \mathbb{C}^d{;~}
\text{$c_1+\cdots+c_{d-1}=0$ 
and $f_{\mathbf{c},a}^{n_j}(c_j)=c_j, ~\forall~ 1\leq j\leq\ell$}  \big\}.
\end{align*}
So we can regard $\mathcal{F}$ as an algebraic set in $\mathbb{C}^d$. 

\begin{example}
\label{example of F}
Let $$\mathcal{F}=\big\{(\mathbf{c},a)\in \mathbb{C}^5{;~}c_1+c_2+c_3+c_4=0,
\  f_{\mathbf{c},a}(c_1)=c_1, 
\  f_{\mathbf{c},a}(c_2)=c_2\big\}.$$ 
If $c_1=c_2$, then $c_1,c_2,a$ can be represented by  $(c_3,c_4)\in\mathbb{C}^2$ as follows: 
\begin{align*}
&c_1=c_2=-(c_3+c_4)/2;\\
&a = a - f_{\mathbf{c},a}(c_1) + c_1 = (c_3 + c_4)(3 c_3^4 + 32 c_3^3 c_4 + 58 c_3^2 c_4^2 + 32 c_3 c_4^3 + 3 c_4^4 - 48)/96. 
\end{align*}
If $c_1\neq c_2$, then $c_4 = -(c_1+c_2+c_3)$ and 
$\frac{(f_{\mathbf{c},a}(c_1)-c_1)-(f_{\mathbf{c},a}(c_2)-c_2)}{c_1-c_2}=0$ give 
$$G(c_1,c_2,c_3) := (c_1-c_2)^2 (5 c_3^2 + 5 (c_1+c_2) c_3 -(4c_1^2 + 7c_1c_2+4c_2^2)) - 6 = 0. $$
A direct computation shows that the equation ${\left(\frac{\partial G}{\partial c_1},\frac{\partial G}{\partial c_2},\frac{\partial G}{\partial c_3}\right)} = (0,0,0)$ has no solution with $c_1\neq c_2$. 
It follows that $\mathcal{F} = \{(\mathbf{c},a)\in\mathcal{F}{;~}c_1 = c_2\}
\sqcup \{(\mathbf{c},a)\in\mathcal{F}{;~}c_1\neq c_2\}$ has no singular point, i.e. $\Sigma(\mathcal{F})=\emptyset$. 
\end{example}
%

\begin{lemma}
\label{basic facts of H}
Let $\mathcal{H}\subset \mathcal F$ be a capture hyperbolic component.  
Then $\mathcal{H}$ is a complex submanifold of $\mathbb{C}^d$ with dimension $\Cdim(\mathcal{H}) = d-1-\ell$.   
\end{lemma}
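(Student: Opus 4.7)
The plan is to exhibit $\mathcal{F}$ locally as a smooth complex submanifold of $\widehat{\mathcal{P}}^d$ of dimension $d-1-\ell$ at every point of $\mathcal{H}$, and then observe that $\mathcal{H}$, being open in $\mathcal{F}$, inherits this structure.

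First I would frame $\mathcal{F}$ locally as the zero set of the holomorphic map
\[
G = (G_1,\dots,G_\ell):\widehat{\mathcal{P}}^d\longrightarrow\mathbb{C}^\ell, \qquad G_j(f)=f^{n_j}(c_j(f))-c_j(f).
\]
Since $\widehat{\mathcal{P}}^d\cong\mathbb{C}^{d-1}$, verifying that the differential $dG|_f$ has rank $\ell$ for every $f\in\mathcal{H}$ allows the implicit function theorem to conclude that $\mathcal{F}$ is a complex submanifold of complex dimension $(d-1)-\ell=d-1-\ell$ near $f$. Hyperbolicity is an open condition in $\widehat{\mathcal{P}}^d$, and the critical relations are preserved under small perturbation, so $\mathcal{H}$ is an open subset of $\mathcal{F}$, and hence a complex submanifold of the same dimension. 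The coincidence between complex dimension and the half of the (real) Hausdorff dimension then follows from the general fact about algebraic sets recalled at the beginning of this appendix.

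The heart of the matter is the transversality $\mathrm{rank}(dG)|_f=\ell$ at every $f\in\mathcal{H}$. The key observation is that at such a hyperbolic polynomial, each marked critical point $c_j$ lies in a superattracting cycle whose period divides $n_j$, so the $\ell$ superattracting cycles carry mutually disjoint immediate basins. I would argue that these basins can be deformed independently, in the sense that given any target vector $(w_1,\dots,w_\ell)\in\mathbb{C}^\ell$ there is a holomorphic tangent vector $\dot f\in T_f\widehat{\mathcal{P}}^d$ with $dG_j(\dot f)=w_j$ for each $j$. The natural route is quasiconformal surgery applied basin by basin in the spirit of Douady--Hubbard: on the $j$-th superattracting basin of $f$, precompose the normalizing B\"ottcher coordinate with a small holomorphic perturbation realizing the prescribed first-order change, glue through a holomorphic motion of $J(f)$ supplied by Ma\~n\'e--Sad--Sullivan and the $\lambda$-lemma (available since $f$ is hyperbolic), and solve the associated Beltrami equation. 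Because the $\ell$ basins are disjoint, the surgeries are independent, which gives surjectivity of $dG|_f$. An equivalent shortcut is to invoke Milnor's theorem (see \cite{MilHyperCompo}): his straightening realizes $\mathcal{H}$ as biholomorphic to the interior of $\mathcal{C}\big(\widehat{\mathcal{P}}_0^{T(f_0)}\big)$, whose complex dimension is $\sum_{v\in T_{\mathrm{np}}(f_0)}(\delta(v)-1)=d-1-\ell$ (the total multiplicity of the free critical points), so $\mathcal{H}$ is a complex manifold of the claimed dimension and the transversality of $G$ follows a posteriori.

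The main obstacle is precisely this transversality step; once it is in hand, the lemma follows from the implicit function theorem and the openness of hyperbolicity. I expect the surgery version to be strictly easier than Proposition \ref{transversality-prop}, since the entire argument stays in the hyperbolic interior where all the relevant B\"ottcher coordinates, holomorphic motions, and Beltrami coefficients depend holomorphically and uniformly on the parameter. No boundary pathology, critical collision, or indifferent-cycle snag can arise.
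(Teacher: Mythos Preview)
Your shortcut via Milnor's theorem is essentially what the paper does, though the paper phrases it in a way that yields the \emph{submanifold} structure directly rather than just the manifold structure: it takes the ambient hyperbolic component $\widetilde{\mathcal{H}}\subset\widehat{\mathcal{P}}^d$ containing $\mathcal{H}$, invokes Milnor's biholomorphism $\chi:\widetilde{\mathcal{H}}\to\mathcal{H}_0\subset\widehat{\mathcal{P}}^{T(f_0)}$, and observes that $\mathcal{H}=\chi^{-1}\big(\mathcal{H}_0\cap\widehat{\mathcal{P}}_0^{T(f_0)}\big)$ is the preimage of a linear slice under a biholomorphism between open sets of $\mathbb{C}^{d-1}$. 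This immediately gives $\mathcal{H}$ as a complex submanifold of the correct dimension, without having to recover the rank of $dG$ a posteriori.

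Your direct transversality route is a legitimate alternative, but the sketch contains an imprecision worth flagging: the phrase ``$\ell$ superattracting cycles carry mutually disjoint immediate basins'' is not literally correct, since several of the marked periodic critical points $c_1,\dots,c_\ell$ may lie in the same cycle (or even coincide). The independence you need is not of $\ell$ cycles but of the $\ell$ marked critical points within the mapping-scheme model; this is exactly what Milnor's parameterization of $\widetilde{\mathcal{H}}$ by $\widehat{\mathcal{P}}^{T(f_0)}$ encodes, and it is why the paper's formulation via the linear slice is cleaner than setting up the surgery by hand.
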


\begin{proof}
There is a unique bounded hyperbolic component $\widetilde{\mathcal{H}}$ 
of $\widehat{\mathcal{P}}^d$ containing $\mathcal{H}$. 
By \cite{MilHyperCompo}, there is a unique $f_0\in\widetilde{\mathcal{H}}$ 
so that $f_0$ is postcritically finite. 
Let $\mathcal{H}_0$ denote the hyperbolic component of $\widehat{\mathcal{P}}^{T(f_0)}$ containing $0$. 
By \cite[\S8]{MilHyperCompo}, there is a biholomorphic map $\chi: \widetilde{\mathcal{H}} \rightarrow \mathcal{H}_0$, which 
preserves the dynamics on bounded Fatou components of $f\in \widetilde{\mathcal{H}}$.  
Note that $\widehat{\mathcal{P}}_0^{T(f_0)}$ is a complex affine space of dimension $d-1-\ell$. 
The capture hyperbolic component $\mathcal{H} = \chi^{-1}(\mathcal{H}_0\cap \widehat{\mathcal{P}}_0^{T(f_0)})$ 
is a complex submanifold of $\mathbb{C}^d$ with the same dimension. 
\end{proof}

\begin{lemma}
\label{sing-in-part-H}
Let $\mathcal{H}\subset \mathcal F$ be a capture hyperbolic component. 
Then 
$$\Hdim(\Sigma(\mathcal{F})\cap\partial\mathcal{H})
\leq 2(\Cdim(\mathcal{H})-1).$$
\end{lemma}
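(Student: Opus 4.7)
The plan is to reduce everything to a dimension count on the irreducible component of $\mathcal{F}$ containing $\mathcal{H}$. By Lemma \ref{basic facts of H}, $\mathcal{H}$ is a complex submanifold of $\mathbb{C}^d$, so every point of $\mathcal{H}$ is a smooth point of $\mathcal{F}$; in particular $\mathcal{H}\cap\Sigma(\mathcal{F})=\emptyset$, so $\Sigma(\mathcal{F})\cap\partial\mathcal{H}\subset \Sigma(\mathcal{F})\cap\overline{\mathcal{H}}$. This lets me focus on how large $\Sigma(\mathcal{F})$ can be near $\overline{\mathcal{H}}$.

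Next I would let $\mathcal{F}_0$ be the (unique) irreducible component of $\mathcal{F}$ containing $\mathcal{H}$. Since $\mathcal{H}$ is a connected open subset of $\mathcal{F}$ and algebraic sets are closed, $\overline{\mathcal{H}}\subset\mathcal{F}_0$, hence $\partial\mathcal{H}\subset\mathcal{F}_0$. Because $\mathcal{H}$ is a $(d-1-\ell)$-dimensional complex submanifold of $\mathbb{C}^d$ sitting inside $\mathcal{F}_0$, and since every irreducible component of $\mathcal{F}$ has dimension at least $d-1-\ell$ (as $\mathcal{F}$ is cut out from the $(d-1)$-dimensional affine space $\widehat{\mathcal{P}}^d$ by $\ell$ equations), I conclude $\Cdim(\mathcal{F}_0)=d-1-\ell=\Cdim(\mathcal{H})$.

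Then I would invoke the standard fact that $\Sigma(\mathcal{F})\cap\mathcal{F}_0$ is an algebraic subset of $\mathcal{F}_0$, and that it is proper: it cannot equal $\mathcal{F}_0$ since $\mathcal{F}_0$ does contain smooth points of $\mathcal{F}$, witnessed for instance by any point of $\mathcal{H}$. Irreducibility of $\mathcal{F}_0$ then forces
\[
\Cdim\bigl(\Sigma(\mathcal{F})\cap\mathcal{F}_0\bigr)<\Cdim(\mathcal{F}_0)=\Cdim(\mathcal{H}),
\]
so $\Cdim(\Sigma(\mathcal{F})\cap\mathcal{F}_0)\leq \Cdim(\mathcal{H})-1$.

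Finally I would combine this with the identity $\Hdim(X)=2\Cdim(X)$ for algebraic sets (recorded at the start of this appendix, which follows from the standard stratification of an algebraic set into finitely many complex submanifolds) to obtain
\[
\Hdim\bigl(\Sigma(\mathcal{F})\cap\partial\mathcal{H}\bigr)\leq \Hdim\bigl(\Sigma(\mathcal{F})\cap\mathcal{F}_0\bigr)=2\Cdim\bigl(\Sigma(\mathcal{F})\cap\mathcal{F}_0\bigr)\leq 2(\Cdim(\mathcal{H})-1).
\]
There is no serious obstacle here; the only points requiring a moment of care are verifying that $\mathcal{H}$ is contained in a single irreducible component of $\mathcal{F}$ (forced by connectedness of $\mathcal{H}$, since $\mathcal{F}$ is a finite union of irreducibles and $\mathcal{H}$ is open in $\mathcal{F}$), and verifying the dimension equality $\Cdim(\mathcal{F}_0)=\Cdim(\mathcal{H})$ from the lower bound on dimensions of components of $\mathcal{F}$.
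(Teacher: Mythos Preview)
Your proposal is correct and follows essentially the same route as the paper: pass to the unique irreducible component $\mathcal{F}_0$ containing $\mathcal{H}$, use $\Sigma(\mathcal{F})\cap\mathcal{H}=\emptyset$ to see that $\Sigma(\mathcal{F})\cap\mathcal{F}_0$ is a proper algebraic subset, invoke irreducibility to drop the complex dimension by at least one, and convert to Hausdorff dimension via $\Hdim=2\Cdim$. The only difference is cosmetic: you supply an explicit justification for $\Cdim(\mathcal{F}_0)=\Cdim(\mathcal{H})$ (via the $\ell$-equation lower bound), whereas the paper simply asserts it.
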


\begin{proof}
Because $\mathcal{H}$ is open in $\mathcal{F}$ and 
it is a complex submanifold of $\mathbb{C}^d$ (Lemma \ref{basic facts of H}),  
there exists a unique \emph{irreducible component} $\mathcal{F}_0$ 
of $\mathcal{F}$ containing $\mathcal{H}$ 
(see, e.g. \cite[\S{II}.2, Theorem 3]{Lang}). 
Since $\Sigma(\mathcal{F})\cap\mathcal{H}=\emptyset$,  
we have $\Sigma(\mathcal{F})\cap\mathcal{F}_0\subsetneqq \mathcal{F}_0$.
Combining this with the irreducibility of $\mathcal{F}_0$, we have 
$\Cdim(\Sigma(\mathcal{F})\cap\mathcal{F}_0)
<\Cdim(\mathcal{F}_0) = \Cdim(\mathcal{H})$ 
(see, e.g. \cite[\S{II}.3, Corollary 1]{Lang}). 
It follows that \begin{align*}\Hdim(\Sigma(\mathcal{F})\cap\partial\mathcal{H})&\leq 
\Hdim(\Sigma(\mathcal{F})\cap\mathcal{F}_0)
= 2\Cdim(\Sigma(\mathcal{F})\cap\mathcal{F}_0)\\
&\leq 2(\Cdim(\mathcal{H})-1).\end{align*}
The proof is completed. 
\end{proof}

\section{Weak topology on measures}
\label{appendix weak topology on measures}

The $\sigma$-algebra of Borel subsets of $\widehat{\mathbb{C}}$ will be denoted by $\mathscr{B}(\widehat{\mathbb{C}})$. 
We shall denote by $\mathscr{M}(\widehat{\mathbb{C}})$ the collection of all probability measures 
defined on the measurable space $(\widehat{\mathbb{C}},\mathscr{B}(\widehat{\mathbb{C}}))$. 
Then $\mathscr{M}(\widehat{\mathbb{C}})$ is a compact metric space as a topological space with the \emph{weak topology}. 
For the definition and more facts of the weak topology in the space of measures, we refer to \cite[\S2.6]{Partha}. 
Let $\mathscr{C}(\widehat{\mathbb{C}})$ be the space of all real valued continuous functions on $\widehat{\mathbb{C}}$. 
For $\mu_n,\mu\in \mathscr{M}(\widehat{\mathbb{C}})$ ($n\in\mathbb{N}$), the following equivalent conditions are useful in computations. 
\begin{enumerate}[\indent (W1)]
\item $\mu_n\rightarrow\mu$ in the weak topology;
\item For all $g\in \mathscr{C}(\widehat{\mathbb{C}})$, $\lim_{n\rightarrow\infty}\int_{\widehat{\mathbb{C}}} g d\mu_n=\int_{\widehat{\mathbb{C}}} g d\mu$;
\item For every closed set $K$, $\limsup_{n\rightarrow\infty}\mu_n(K)\leq\mu(K)$; 
\item For every open set $G$, $\liminf_{n\rightarrow\infty}\mu_n(G)\geq\mu(G)$; 
\item For every $A\in\mathscr{B}(\widehat{\mathbb{C}})$ with $\mu(\partial A)=0$, $\lim_{n\rightarrow\infty}\mu_n(A)=\mu(A)$. 
\end{enumerate}

The following lemma is an analogue of the closeness of conformal measures for rational maps. 
We give a proof as that of \cite[Lemma]{Przy05}. 

\begin{lemma}
\label{weak limit of conformal measure}
Let $f$ be a semi-hyperbolic polynomial of degree $d\geq2$, and suppose $U$ is a bounded invariant Fatou component of $f$. 
Suppose $\mu_n$ is a $t_n$-conformal measure for $\partial U$ ($n\in\mathbb{N}$) such that 
$t_n\rightarrow t$ and $\mu_n\rightarrow\mu$ in weak topology as $n\rightarrow\infty$. 
Then $\mu$ is a $t$-conformal measure for $\partial U$. 
\end{lemma}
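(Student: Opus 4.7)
The plan is to verify in turn that $\mu$ is a probability measure supported on $\partial U$ and that it satisfies the conformal equation. The first part is immediate: taking the constant function $1$ in (W2) forces $\mu(\mathbb{C})=1$, and since each $\mu_n$ is supported on the closed set $\partial U$, (W3) applied to $\partial U$ yields $\mu(\partial U)\geq\limsup_{n\to\infty}\mu_n(\partial U)=1$.

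For the conformal equation $\mu(f(E))=\int_E|f'|^t\,d\mu$ on Borel $E\subset\partial U$ with $f|_E$ injective, I would first establish it on a rich family of ``nice'' model sets and then extend by measure theory. The key observation is that $f^{-1}(\partial U)$ is a finite disjoint union of Fatou component boundaries --- namely $\partial U$ together with the boundaries of the finitely many other bounded Fatou components mapped onto $U$ by $f$ --- so I can choose a finite open cover $\{V_1,\dots,V_N\}$ of $\partial U$ in $\mathbb{C}$ where each $V_i$ is a thin neighborhood of an arc of $\partial U$ enjoying three properties: (a) $f|_{\overline{V_i}}$ is injective; (b) $V_i\cap f^{-1}(\partial U)=V_i\cap\partial U$, which forces $f(V_i)\cap\partial U=f(V_i\cap\partial U)$; (c) both $\mu(\partial V_i)=0$ and $\mu(\partial f(V_i))=0$, arranged generically since $\mu$ has at most countably many atoms.

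On such a $V_i$, the conformal equation for $\mu_n$ rewrites as
\begin{equation*}
\mu_n(f(V_i))=\mu_n(f(V_i)\cap\partial U)=\mu_n(f(V_i\cap\partial U))=\int_{V_i\cap\partial U}|f'|^{t_n}\,d\mu_n=\int_{V_i}|f'|^{t_n}\,d\mu_n.
\end{equation*}
Taking $n\to\infty$, the leftmost expression converges to $\mu(f(V_i))=\mu(f(V_i\cap\partial U))$ by (W5) applied to the $\mu$-continuity set $f(V_i)$, while the rightmost expression converges to $\int_{V_i}|f'|^t\,d\mu$ using uniform convergence of $|f'|^{t_n}$ to $|f'|^t$ on $\overline{V_i}$ --- which rests on the elementary bound $\sup_{x\in[0,M]}|x^{t_n}-x^t|\to 0$ as $t_n\to t$ for fixed $M$, valid across zeros of $f'$ --- together with the bounded-continuous version of (W5) for the continuity set $V_i$. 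This yields $\mu(f(V_i\cap\partial U))=\int_{V_i\cap\partial U}|f'|^t\,d\mu$, and exactly the same reasoning applies to any open $W\Subset V_i$ satisfying (a)--(c).

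To conclude, for a general Borel $E\subset\partial U$ with $f|_E$ injective, I would disjointify $E=\bigsqcup_i(E\cap B_i)$ with $B_i=V_i\cap\partial U\setminus\bigcup_{j<i}V_j$. The injectivity of $f|_E$ makes the sets $f(E\cap B_i)$ pairwise disjoint, reducing the problem to Borel subsets of each $V_i\cap\partial U$. On this space, both $B\mapsto\mu(f(B))$ and $B\mapsto\int_B|f'|^t\,d\mu$ are finite Borel measures (countable additivity of the former uses injectivity of $f|_{V_i\cap\partial U}$), and they agree on the $\pi$-system of traces $W\cap\partial U$ of open $W\Subset V_i$ satisfying (a)--(c); Dynkin's $\pi$-$\lambda$ theorem then forces equality everywhere. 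The main obstacle is therefore the combinatorial bookkeeping needed to simultaneously secure the thin-neighborhood condition (b) and the generic continuity condition (c); once these are in place, the analytic content reduces to Portmanteau plus an elementary uniform estimate, and critical points of $f$ on $\partial U$ pose no real difficulty.
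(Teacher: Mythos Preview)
Your approach away from critical points is essentially the paper's Case~1, with a $\pi$-$\lambda$ argument replacing the paper's inner/outer regularity sandwich; that part is fine. The gap is at critical points on $\partial U$, which do occur for semi-hyperbolic $f$ (see Example~\ref{example F2}).

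If $c\in\crit(f)\cap\partial U$, no open $V_i\ni c$ can satisfy your condition~(a), and condition~(b) fails as well: near $c$ the map $f$ is $k$-to-$1$ with $k=\deg(f,c)\ge 2$, so the local preimage of the arc of $\partial U$ through $f(c)$ consists of $k$ arcs through $c$, only one of which lies in $\partial U$; the others belong to boundaries $\partial U_j$ of the other preimage components of $U$, and hence $f^{-1}(\partial U)\setminus\partial U$ accumulates at $c$. Your uniform estimate on $|f'|^{t_n}$ is correct (for $t>0$) but addresses only the integrand --- it does not rescue the identification $\mu_n(f(V_i))=\int_{V_i}|f'|^{t_n}\,d\mu_n$, which is precisely where (a) and (b) are used. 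The paper therefore decomposes $E$ into pieces away from $\crit(f)$ (handled as you do) together with the finitely many singletons $\{c\}$ with $c\in\crit(f)\cap\partial U$, and for the latter proves $\mu(\{f(c)\})=0$ by a separate estimate: applying the $\mu_n$-conformal equation on the small arc $\Comp_c f^{-1}(\mathbb D(f(c),r))\cap\partial U$, where $|f'|^{t_n}\le C\,|z-c|^{(k-1)t_n}$, gives $\mu_n(\mathbb D(f(c),r))\le b\,r^a$ uniformly in large $n$, whence (W4) yields $\mu(\{f(c)\})=0$. This is exactly the step your final sentence dismisses.
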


\begin{proof}
Let $E$ be a Borel subset of $\partial U$ such that $f|_E$ is injective. 
By decomposing $E$ into finitely many critical points and 
countably many Borel sets with positive distance to $\crit(f)$, 
we just need to consider the following two cases.  
In either case, we will check that $\mu(f(E))=\int_{E}|f'|^t d\mu$. 

\vspace{6pt}
{\bf Case 1.}
{\it 
The set $E$ is contained in a small disk $D$ with the following properties: 
the restriction $f:\overline{D}\rightarrow f(\overline{D})$ is a homeomorphism; 
$\overline D\cap \crit(f)=\emptyset$; 
and $\overline{D}\cap (f^{-1}(\overline{U})\setminus \overline{U})=\emptyset$. 
}
\vspace{6pt}

By the regularity of measures, for every $\varepsilon>0$, there exists a compact set $K$ and an open set $G$ such that 
$K\subset E\subset G\subset D$, $\mu(G)-\mu(K)<\varepsilon$ and $\mu(f(G))-\mu(f(K))<\varepsilon$. 
Choose a continuous function $\chi:\widehat{\mathbb{C}} \rightarrow [0,1]$ such that $\chi$ is $1$ on $K$ and $0$ on $\widehat{\mathbb{C}}\setminus G$. 
Since $\partial \chi^{-1}([s,1]), \partial \chi^{-1}((s,1])\subset \chi^{-1}(s)$, 
there exist $0<s_1<s_2<1$ such that for $\widetilde G:=\chi^{-1}((s_1,1])$ and $\widetilde K:=\chi^{-1}([s_2,1])$, 
$$\mu(\partial\widetilde K)=\mu(\partial\widetilde G)=\mu(\partial f(\widetilde K))=\mu(\partial f(\widetilde G))=0.$$
Then by (W5), we have $\lim_{n\rightarrow\infty}\mu_n(f(\widetilde K))=\mu(f(\widetilde K))$. 
At the same time, it follows from $\overline{D}\cap (f^{-1}(\overline{U})\setminus \overline{U})=\emptyset$ 
that $\mu_n(f(\widetilde K))=\int_{\widetilde K} |f'|^{t_n} d\mu_n$  
(see Remark \ref{Borel subsets of the plane}).  

Consider another continuous function $\widetilde\chi:\widehat{\mathbb{C}} \rightarrow [0,1]$ such that 
$\widetilde\chi$ is $1$ on $\widetilde K$ and $0$ on $\widehat{\mathbb{C}} \setminus\widetilde G$. 
Choose $M>0$ such that $|f'(z)|^{t_n},|f'(z)|^t\leq M$ for any $z\in \overline{D}$ and $n\in\mathbb{N}$. 
Because $\mu(\partial(\widetilde G\setminus\widetilde K))=\mu(\partial\widetilde G\cup \partial\widetilde K)=0$, 
by (W5) again, when $n$ is large enough, we have $\mu_n(\widetilde G\setminus\widetilde K)\leq2\mu(\widetilde G\setminus\widetilde K)$, hence 
\begin{align*}
\left| \int_{\widetilde K} |f'|^{t_n} d\mu_n - \int_{\mathbb{C}} \widetilde\chi|f'|^{t_n} d\mu_n \right|
& =    \int_{\widetilde G\setminus\widetilde K} \widetilde\chi|f'|^{t_n} d\mu_n
  \leq \mu_n(\widetilde G\setminus\widetilde K)M\\
& \leq 2\mu(\widetilde G\setminus\widetilde K)M
  <    2\varepsilon M.
\end{align*}
Since $\overline{D}\cap\crit(f)=\emptyset$, the function $\widetilde\chi|f'|^{t_n}$ 
converges to $\widetilde\chi|f'|^{t}$ uniformly on $\overline{D}$ (even if $t=0$). 
By (W2), we have $\lim_{n\rightarrow\infty}\int_{\mathbb{C}} \widetilde\chi|f'|^t d\mu_n=\int_{\mathbb{C}} \widetilde\chi|f'|^t d\mu$. 
For any $\varepsilon'>0$, when $n$ is large enough, 
\begin{align*}
&{\left|\mu(f(E))-\int_E |f'|^t d\mu\right|}\\
\leq~ & {\left|\mu(f(E))-\mu(f(\widetilde K))\right|}
+ {\left|\mu(f(\widetilde K))-\mu_n(f(\widetilde K))\right|}\\
      &+ {\left| \int_{\widetilde K} |f'|^{t_n} d\mu_n - \int_{\mathbb{C}} \widetilde\chi|f'|^{t_n} d\mu_n \right|}
+ {\left| \int_{D} \widetilde\chi|f'|^{t_n} - \widetilde\chi|f'|^{t} d\mu_n \right|}\\
      &+ {\left| \int_{\mathbb{C}} \widetilde\chi|f'|^{t} d\mu_n   - \int_{\mathbb{C}} \widetilde\chi|f'|^{t} d\mu   \right|}
+ {\left| \int_{\mathbb{C}} \widetilde\chi|f'|^{t} d\mu     - \int_E |f'|^{t} d\mu \right|} \\
\leq~ & \varepsilon + \varepsilon' + 2\varepsilon M + \varepsilon' + \varepsilon' + \varepsilon M. 
\end{align*}
Letting $\varepsilon,\varepsilon'\rightarrow0$ gives $\mu(f(E))=\int_E |f'|^t d\mu$. 

\vspace{6pt}
{\bf Case 2.}
{\it $E=\{c\}$, where $c$ is a critical point in $\partial U$ (if it exists).}
\vspace{6pt}

Clearly $\mu(\{f(c)\})=\int_{\{c\}}|f'|^t d\mu$ is equivalent to $\mu(\{f(c)\})=0$. 

Firstly, assume $t\neq0$. 
Note that $f|_{\overline{U}}$ is locally injective at any point in $\partial U$. 
There are $a,b>0$ such that for $r>0$ small enough and $n$ large enough, 
\begin{align*}
\mu_n(\mathbb{D}(f(c),r))=\int_{\Comp_c f^{-1}( \mathbb{D}(f(c),r) )} |f'|^{t_n} d\mu_n\leq b r^a,
\end{align*}
where $\Comp_c$ means the connected component containing $c$. 
Then $\mu(\mathbb{D}(f(c),r))\leq b r^a$ by (W4). 
Letting $r\rightarrow0$ gives $\mu(\{f(c)\})=0$. 

Secondly, assume $t=0$. 
Without loss of generality, we assume there is no critical point in 
$S:=\partial U \cap \bigcup_{j\geq1}f^{-j}(c)$; otherwise, we consider a critical point in $S$ first. 
By Case 1 and the assumption $t=0$, every point in $S\cup\{c\}$ has the same $\mu$-measure, thus $\mu(\{c\})=0$. 
For every $\varepsilon>0$, there is an $r_1>0$ such that $\mu(\overline{\mathbb{D}(c,r_1)})<\varepsilon$. 
Then $\mu_n(\overline{\mathbb{D}(c,r_1)})<\varepsilon$ for $n$ large enough by (W3). 
For $r_2>0$ small enough and $n$ large enough, 
\begin{align*}
\mu_n(\mathbb{D}(f(c),r_2))=\int_{\Comp_c f^{-1}( \mathbb{D}(f(c),r_2) )} |f'|^{t_n} d\mu_n\leq \mu_n(\mathbb{D}(c,r_1)) M\leq \varepsilon M,
\end{align*}
where $M$ is a positive constant. 
By (W4) again, $\mu(\mathbb{D}(f(c),r_2))\leq \varepsilon M$. 
Letting $\varepsilon \rightarrow0$ gives $\mu(\{f(c)\})=0$. 
This completes the proof. 
\end{proof}


\section{Uniform quasicircles}
\label{appendix u-qc}
Let $\alpha$ be an arc in $\mathbb{C}$, and let $z_1,z_2$ denote its end points. 
The turning of $\alpha$ about $z_1,z_2$ is defined by 
$\Delta(\alpha)=\frac{{\diam}(\alpha)}{|z_1-z_2|}$. 
We say a Jordan curve $\gamma$ in $\mathbb{C}$ a quasicircle with turning $M<\infty$ if 
$\min_{j=1,2} \Delta(\gamma_j) \leq M$ for any $z_1,z_2\in \gamma$ with $z_1\neq z_2$, 
where $\gamma_1,\gamma_2$ are the closures of the connected components of $\gamma\setminus\{z_1,z_2\}$. 
A family of quasicircles is called \emph{uniform}, if these quasicircles have a common finite turning. 
By \cite[Chapter IV, E]{Ahlfors-qcm06}, 
this is equivalent to that these quasicircles are the images of the unit circle 
under a family of quasiconformal homeomorphisms of $\mathbb{C}$, 
whose dilatations are bounded by a common constant smaller than $1$. 
Based on the turning distortion lemma (see \cite[Lemma 6.1.2]{QWY} or Lemma \ref{turning distortion}), we will prove: 

\begin{proposition} 
\label{capture-qc} 
Let $\mathcal{H}\subset\mathcal{F}$ be a capture hyperbolic component with center $f_0$. 
Then the boundaries $\{\partial U_{f,v}{;~} f\in \Hbar, v\in\Tfp\}$ are uniform quasicircles. 
\end{proposition}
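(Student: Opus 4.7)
The plan is to combine the turning distortion lemma (Lemma \ref{turning distortion}) with a compactness argument on $\overline{\mathcal{H}}$ and a uniform construction of ``collars'' around each $\overline{U_{f,v}}$. By Theorem \ref{top-boundary} the space $\overline{\mathcal{H}}$ is compact, and by Proposition \ref{continuity of Ufv} the pointed Jordan domains $(\overline{U_{f,v}},\zeta_v(f))$ vary continuously in Hausdorff topology with $f\in\overline{\mathcal{H}}$. Thus it suffices to show that for each reference map $f_*\in\overline{\mathcal{H}}$ there is a neighborhood $\mathcal{N}(f_*)\subset\overline{\mathcal{H}}$ on which $\{\partial U_{g,v}{;~}g\in\mathcal{N}(f_*),~v\in|T(f_0)|_\mathrm{p}\}$ are uniform quasicircles. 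After replacing $f_*$ (and correspondingly all $g\in\overline{\mathcal{H}}$) by a suitable common iterate via Lemma \ref{an iterate of f0}, I may assume every $v\in|T(f_0)|_\mathrm{p}$ is fixed by $\sigma$, so that $g(\overline{U_{g,v}})=\overline{U_{g,v}}$ with degree $\delta(v)$.

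Next I would construct, for each $f_*$ and $v$, an ``outer collar'': a Jordan domain $V_{f_*,v}\supset \overline{U_{f_*,v}}$ whose boundary is a finite union of equipotential arcs and external ray segments of $f_*$, selected so that (a) $\partial V_{f_*,v}$ avoids $\crit(f_*)$, (b) the annulus $V_{f_*,v}\setminus \overline{U_{f_*,v}}$ has modulus bounded below by some $m>0$, and (c) the component of $f_*^{-1}(V_{f_*,v})$ containing $\overline{U_{f_*,v}}$ sits compactly inside $V_{f_*,v}$. The root points of the finitely many maximal sectors attaching to $\partial U_{f_*,v}$ are preperiodic (Lemma \ref{sectors attaching to U}), and their external rays land stably for nearby $g\in\overline{\mathcal{H}}\cap\mathcal{N}(f_*)$ (Lemmas \ref{stability of external rays}, \ref{stability of internal rays}), so the collar can be perturbed holomorphically to a family $V_{g,v}$ with uniformly bounded-below modulus and uniformly smooth boundary.

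Then I would apply Lemma \ref{turning distortion} iteratively. Let $U_{g,v}^{(n)}$ be the component of $g^{-n}(V_{g,v})$ containing $\overline{U_{g,v}}$, so that $g^n:U_{g,v}^{(n)}\to V_{g,v}$ is a proper map of degree $\delta(v)^n$, but with bounded ``local degree at any single inverse branch of the collar'' (this is the key quantity entering the turning distortion estimate, and it is controlled by $\deg(g)=d$). The curves $\partial U_{g,v}^{(n)}$ converge in Hausdorff topology to $\partial U_{g,v}$, and $\partial U_{g,v}^{(n+1)}$ lies in the bounded annulus cut out by $\partial U_{g,v}^{(n)}$ and $g^{-(n+1)}(\partial V_{g,v})\cap U_{g,v}^{(n)}$. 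Iterated application of Lemma \ref{turning distortion} with the uniform modulus $m$ and the uniform degree bound $d$ then shows that the turnings of $\partial U_{g,v}^{(n)}$ are bounded by a constant $L=L(d,m)$ independent of $n$ and of $g\in\mathcal{N}(f_*)$; passing to the Hausdorff limit preserves the turning bound, giving that $\partial U_{g,v}$ is a quasicircle with turning at most $L$.

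The main obstacle I anticipate is ensuring the modulus of the collar stays bounded below uniformly across the whole of $\overline{\mathcal{H}}$, especially for those boundary polynomials $f\in\partial\mathcal{H}$ where critical points of $f$ may land on $\partial U_{f,v}$ (cf.\ Example \ref{example F2}). At such $f$, inverse branches of $f^n$ near the critical boundary points are non-conformal, and one must argue that the parabolic-like degeneration does not occur because of the semi-hyperbolicity (Corollary \ref{semi-hyperbolicity}) and because all periodic points on $\partial U_{f,v}$ are repelling (Lemma \ref{key lemma}); the uniform expansion on external rays (Proposition \ref{expanding on external rays}) and the stability of the finitely many distinguished external rays constructed via Proposition \ref{behavior of critical orbits} will let me keep the collar moduli from collapsing as $g\to f$. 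Once this uniform collar exists, the rest of the argument reduces to a mechanical application of Lemma \ref{turning distortion}.
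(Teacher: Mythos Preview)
Your iterative use of the turning distortion lemma does not go through. You apply Lemma \ref{turning distortion} to $g^n:U_{g,v}^{(n)}\to V_{g,v}$, but that proper map has degree at least $\delta(v)^n$, and the constant $C(d,m)$ in Lemma \ref{turning distortion} depends on the \emph{global} degree of the proper map, not on any ``local degree at a single inverse branch''. So the turning bound you get for $\partial U_{g,v}^{(n)}$ blows up with $n$, and there is nothing uniform to pass to the Hausdorff limit. Iterating one step at a time is no better: the constant $L(\delta(v),M)$ in Corollary \ref{preimage of quasidisk} satisfies $L(\delta(v),M)\ge M$, so the bounds are non-decreasing. There is also a construction issue: a Jordan domain $V_{f_*,v}\supset\overline{U_{f_*,v}}$ bounded by finitely many external rays and equipotential arcs cannot separate $\overline{U_{f_*,v}}$ from the rest of $K(f_*)$, because in general infinitely many limbs attach to $\partial U_{f_*,v}$.

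The paper avoids both problems by working \emph{locally} rather than with a global collar. For each internal angle $t\in\RZ$ it builds small nested Jordan domains $D^1_{g,t}\Subset D^2_{g,t}$ around the boundary point $x_{g,t}$: when $t$ is away from the finite set $A$ of ``critical'' angles these are cut out by two internal rays, two external rays, and two equipotential arcs; when $t\in A$ they are pullbacks of linearization disks at the associated repelling fixed points. The crucial property (P2) is that for every $n$ and every component $V$ of $g^{-n}(D^2_{g,t})$ meeting $\partial U_{g,v}$, one has $\deg(g^n|_V)\le d_0$ for a constant $d_0$ independent of $n$ and $g$. One then covers $\RZ$ by finitely many arcs $X_{t_k}$, takes a Lebesgue number $\varepsilon$, and for any small arc $L_g(t_1,t_2)\subset\partial U_{g,v}$ iterates $g$ until the image has angular span in $[\varepsilon/\delta(v),\varepsilon)$ and hence sits inside some $D^1_{g,t_k}$. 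A \emph{single} application of Lemma \ref{turning distortion} to $g^n:(V^2_g,V^1_g)\to(D^2_{g,t_k},D^1_{g,t_k})$ with the fixed degree bound $d_0$ and modulus bound $m_0$ then controls the turning of the original arc uniformly. The missing idea in your outline is precisely this: bound the pullback degree on a small local piece intersecting $\partial U_{g,v}$, not on the whole component containing $\overline{U_{g,v}}$.
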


\begin{proof} 
Given $v\in\Tfp$ and $f\in\Hbar$. 
Since $\Tfp$ is finite and $\Hbar$ is compact, it suffices to find a neighborhood $\mathcal{N}\subset\Hbar$ of $f$ 
so that $\{\partial U_{g,v}{;~} g\in \mathcal{N}\}$ are uniform quasicircles. 

By Lemma \ref{an iterate of f0}, replacing $f_0$ by an iterate of $f_0$ (every polynomial in $\Hbar$ need to be iterated the same number of times), 
we can make the following assumptions without loss of generality:  
every point in $\Tfp$ is fixed by $f_0$; 
for any $u\in\Tfp$ and any sector $S$ attaching to $U_{f_0}(u)$, if ${{{r}}}(S)$ is periodic, 
then it is fixed by $f_0$. 

Recall from Lemma \ref{invariant external angles} that 
the functions $\theta^-_{g,v}$ and $\theta^+_{g,v}$ are independent of 
$g\in\Hbar$, which are denoted by $\theta^-_v$ and $\theta^+_v$ respectively.
Let $$A = \bigcup_{n\geq1}m^n \big\{t\in\RZ{;~} \text{$\theta^-_{v}(t)\neq \theta^+_{v}(t)$ and  
$\crit(f_0) \cap S_{f_0}(\theta^-_{v}(t), \theta^+_{v}(t)) \neq\emptyset$} \big\},$$ 
where $m = m_{\delta(v)} :\RZ\rightarrow \RZ$, $t\mapsto \delta(v)t$. 
By Lemma \ref{sectors attaching to U}, we have $A\subset\QZ$. 
Furthermore, the set $A$ is finite. 
Let $A_{\mathrm{p}}$ consist of all periodic points in $A$ under $m$. 
By the assumptions on $f_0$, every angle in $A_{\mathrm{p}}$ is fixed by $m$. 
For each $t\in m^{-1}(A)$, let $r_t\geq 0$ be the minimal integer so that $m^{r_t}(t)\in A_{\mathrm{p}}$. 

For $g\in\Hbar$ and $t\in\RZ$, let $x_{g,t} = B_{g,v}^{-1}(e^{2\pi i t}) \in \partial U_{g,v} $. 
Define a homeomorphism $q: \RZ\rightarrow \partial \mathbb{D}$ by $q(t) = e^{2\pi i t}$. 
For $g$ in a small neighborhood $\mathcal{N}\subset\Hbar$ of $f$ and $t\in\RZ$, we will construct two Jordan domains $D^1_{g,t}$ and $D^2_{g,t}$ 
with the following properties. 
\begin{enumerate}[\indent (P1)]
\item We have $x_{g,t}\in D^1_{g,t} \Subset D^2_{g,t}$, and there is an open arc $X_t\subset \RZ$ independent of $g\in\mathcal{N}$ so that 
 $t\in  X_t \subset q^{-1}( B_{g,v}( D^1_{g,t}\cap \partial U_{g,v}) )$. 

\item For every $n\geq1$ and every connected component $V$ of
$g^{-n}(D^2_{g,t})$ intersecting with $\partial U_{g,v}$, 
$$\deg(g^n: V\rightarrow D^2_{g,t}) \leq d_0:=\max_{s\in m^{-1}(A)} \deg(f^{r_s}, x_{f,s}).$$

\item There is a constant $m_t>0$ independent of $g\in\mathcal{N}$ so that $\operatorname{mod}(D^2_{g,t}\setminus \overline{D^1_{g,t}}) \geq m_t$. 
\end{enumerate}
The construction will be divided into two cases according to whether $t\notin A$ or $t\in A$. 

Firstly, assume $t\notin A$. 
Choose $t_1, t'_1, t_2, t'_2\in\QZ$ so that $t_2,t_1,t,t'_1,t'_2$ are in positive cyclic order and $[t_2,t'_2]\cap A = \emptyset$. 
For $g\in \Hbar$ and $j=1,2$, let $D^j_{g,t}$ be the connected component of 
\begin{align*}
\mathbb{C}\setminus \Big(& \overline{
R_{g,v}(t_j)\cup R_g(\theta^-_v(t_j))  \cup
R_{g,v}(t'_j)\cup R_g(\theta^-_v(t'_j))  } \\
&\cup B_g^{-1}((1+j) \partial\mathbb{D}) \cup
B_{g,v}^{-1} ({\textstyle\frac{1}{1+j}} \partial\mathbb{D} )
\Big)
\end{align*}
containing $x_{g,t}$. 
Let $X_t = (t_1,t'_1)$. 
Then (P1) holds for all $g\in\Hbar$. 
If $c_j(f_0) \in S_{f_0}(\theta_v^-(s),\theta_v^+(s))$ for some $1\leq j\leq d-1$ and some $s\in m^{-1}(A)$, 
then $c_j(g)\in \overline{S_{g}(\theta_v^-(s),\theta_v^+(s))}$ for any $g\in \Hbar$ 
by the stability of external rays (see Lemmas \ref{key lemma} and \ref{stability of external rays}). 
Thus for every $n\geq1$ and every connected component $V$ of
$g^{-n}(D^2_{g,t})$ intersecting with $\partial U_{g,v}$, we have $\deg(g^n: V\rightarrow D^2_{g,t}) = 1$. 
This implies (P2) holds for all $g\in\Hbar$. 
By the stability of dynamical rays and equipotential curves, the modulus $\operatorname{mod}(D^2_{h,t}\setminus \overline{D^1_{h,t}})$ is continuous with respect to $h\in\Hbar$.  
Taking $m_t=\min_{h\in \Hbar} \operatorname{mod}(D^2_{h,t}\setminus \overline{D^1_{h,t}})$ gives (P3). 

Secondly, assume $t\in A$. 
Since $\Hbar$ is compact and $A_{\mathrm{p}}$ is finite, there is a $\rho>0$ so that 
$\mathbb{D}(x_{g,s},\rho) = \{ z\in\mathbb{C}{;~} |z-x_{g,s}|<\rho \}$ is a linearization neighborhood of $g$ at the repelling fixed point $x_{g,s}$ for any $g\in\Hbar$ and $s\in A_{\mathrm{p}}$. 
For each $s\in m^{-1}(A)$, we have $g^{r_s}(x_{g,s}) = x_{g,m^{r_s}(s)}$, so we can define 
$E^j_{g,s}$ to be the connected component of $g^{-r_s}( \mathbb{D}(x_{g, m^{r_s}(s)}, \frac{j}{2}\rho) )$ containing $x_{g,s}$, where $j=1,2$. 
We can decrease $\rho$ so that 
\begin{itemize}
\item the closures of $E^2_{f,s}$ for $s\in m^{-1}(A)$ are pairwise disjoint; 
\item if $s\in m^{-1}(A)\setminus A_{\mathrm{p}}$, then 
$\deg( f: E^2_{f,s}\rightarrow  E^2_{f,m(s)} ) = \deg( f, x_{f,s} )$ and $\crit(f)\cap \partial E^2_{f,s} = \emptyset$; 
\item if $s\in m^{-1}(A)\setminus A$, then $\overline{E^2_{f,s}}\subset D^2_{f,s}$. 
\end{itemize}
Shrinking $\mathcal{N}$ if necessary, for any $g\in\mathcal{N}$, we have: 
\begin{itemize}
\item the closures of $E^2_{g,s}$ for $s\in m^{-1}(A)$ are pairwise disjoint; 
\item if $c_j(f) = x_{f,s}$ for some $1\leq j\leq d-1$ and some $s\in m^{-1}(A)$, then $c_j(g) \in E^1_{g,s}$; 
if $c_j(f) \notin \{ x_{f,s} {;~} s\in m^{-1}(A) \}$ for some $1\leq j\leq d-1$, then $c_j(g)\notin \bigcup_{s\in m^{-1}(A)}\overline{E^2_{g,s}}$; 
\item if $s\in m^{-1}(A)\setminus A$, then $\overline{E^2_{g,s}}\subset D^2_{g,s}$. 
\end{itemize}
For $g\in\mathcal{N}$ and $j=1,2$, let $D^j_{g,t} = E^j_{g,t}$. 
Then (P1) follows from Proposition \ref{continuity of Ufv}. 
By the locations of $c_j(g)$ for $1\leq j\leq d-1$, we have $\deg(g:E^2_{g,s}\rightarrow E^2_{g,m(s)})=\deg(f,x_{f,s})$ for each $s\in m^{-1}(A)\setminus A_{\mathrm{p}}$. 
Combining this with $\overline{E^2_{g,s}}\subset D^2_{g,s}$ for $s\in m^{-1}(A)\setminus A$ gives (P2). 
Since $g^{r_t}: E^2_{g,t}\setminus \overline{E^1_{g,t}} \rightarrow E^2_{g,m^{r_t}(t)}\setminus \overline{E^1_{g,m^{r_t}(t)}}$ is a covering map of degree $\deg(f^{r_t},x_{f,t})$, 
$${\operatorname{mod}}{\left(E^2_{g,t}\setminus \overline{E^1_{g,t}}\right)} 
= \frac{{\operatorname{mod}}{\left(E^2_{g,m^{r_t}(t)}\setminus \overline{E^1_{g,m^{r_t}(t)}}\right)}}{\deg(f^{r_t},x_{f,t})}
= \frac{\log(2)}{2\pi \deg(f^{r_t},x_{f,t})}.$$
Letting $m_t = \frac{\log(2)}{2\pi \deg(f^{r_t},x_{f,t})}$ gives (P3). 
This completes the construction.

Since $\RZ$ is compact, we can extract a finite subcover $\{X_{t_k}\}_{k=1}^N$ of $\{X_t\}_{t\in\RZ}$. 
Let $\varepsilon\in(0,\frac{1}{2})$ be a Lebesgue number of $\{X_{t_k}\}_{k=1}^N$. 
That is, every subset of $\RZ$ having diameter less than $\varepsilon$ is contained in some member of $\{X_{t_k}\}_{k=1}^N$. 
Recall from Proposition \ref{continuity of Ufv} that the function $$\Psi_v:\begin{cases} \Hbar\times\overline{\mathbb{D}} 
\rightarrow \mathbb{C}, \\ (g,z)\mapsto B_{g,v}^{-1}(z) \end{cases}$$ is continuous. 
Recall also that $\operatorname{frac}:\RZ\rightarrow[0,1)$ is the fractional part function which 
maps each point of $\RZ$ to its unique representative in $[0,1)$. 
Let $$M_0 = \frac{\max\big\{{\diam}(\partial U_{g,v}){;~} g\in\Hbar\big\}}
{\min\big\{|\Psi_v(g, q(t_2)) - \Psi_v(g, q(t_1))|{;~} 
\text{$g\in\Hbar$ and $\operatorname{frac}(t_2-t_1) \in [\frac{\varepsilon}{\delta(v)}, \frac{1}{2}]$} \big\}}.$$
Then $M_0<\infty$.

Let $t_1,t_2\in\RZ$ with $\operatorname{frac}(t_2-t_1) \in (0, \frac{1}{2}]$.
If $\operatorname{frac}(t_2-t_1) \in [\frac{\varepsilon}{\delta(v)}, \frac{1}{2}]$, then 
$$\Delta(L_g(t_1,t_2)) \leq M_0$$
for any $g\in\Hbar$, where $L_g(t_1,t_2) = \Psi_v( g, e^{2\pi i[t_1,t_2]} )$. 
Now assume $\operatorname{frac}(t_2-t_1) \in (0,\frac{\varepsilon}{\delta(v)})$. 
Let $n\geq1$ be the minimal integer so that 
$$\operatorname{frac}(m^n(t_2)-m^n(t_1)) \in \Big[\frac{\varepsilon}{\delta(v)}, \varepsilon\Big).$$
By the choice of $\varepsilon$, we can choose $1\leq k\leq N$ so that $[m^n(t_1),m^n(t_2)] \subset X_{t_k}$. 
Then (P1) gives $$g^n( L_g(t_1,t_2) ) = L_g(m^n(t_1),m^n(t_2)) \subset D^1_{g,t_k}$$ for any $g\in\mathcal{N}$. 
For $j=1,2$, let $V^j_g$ denote the connected component of $g^{-n}(D^j_{g,t_k})$ containing $L_g(t_1,t_2)$. 
Then (P2) gives $\deg(g^n:V^2_g\rightarrow D^2_{g,t_k})\leq d_0$, and (P3) gives $\operatorname{mod}(D^2_{g,t_k}\setminus\overline{D^1_{g,t_k}})\geq m_0$, 
where $m_0 = \min_{1\leq j\leq N} m_{t_j}$. 
Applying Lemma \ref{turning distortion} to 
$g^n: (V^2_g,V^1_g)\rightarrow (D^2_{g,t_k}, D^1_{g,t_k})$, there is a constant $C(d_0,m_0)>0$ such that 
\begin{align*}
\Delta(L_g(t_1,t_2)) &\leq C(d_0,m_0) \Delta( g^n( L_g(t_1,t_2) ) ) \\
&= C(d_0,m_0) \Delta( L_g(m^n(t_1),m^n(t_2)) )\\
&\leq C(d_0,m_0) M_0.
\end{align*}
Thus for any  $g\in \mathcal{N}$ and any $t_1,t_2\in\RZ$ with $\operatorname{frac}(t_2-t_1) \in (0, \frac{1}{2}]$, 
the turning $\Delta(L_g(t_1,t_2))$ is bounded by $\max\{M_0, C(d_0,m_0) M_0\}$. 
This completes the proof. 
\end{proof}

\section{Uniform convergence of univalent functions}
\label{appendix uniform-convergence}
For a full nondegenerate continuum $K\subset\mathbb C$, by the Riemann mapping theorem, 
there is a unique conformal isomorphism $\phi_K:\mathbb{C}\setminus\overline{\mathbb{D}}
\rightarrow \mathbb{C}\setminus K$ such that $\lim_{z\rightarrow\infty} \phi_K(z)/z >0$. 
Let $K$ and $K_n$ ($n\in\mathbb{N}$) be full nondegenerate continua in $\mathbb C$ 
so that $K_n\rightarrow K$ in Hausdorff topology. 
By the Carath\'eodory kernel convergence theorem, the map $\phi_{K_n}$ converges to 
$\phi_K$ uniformly on compact subsets of $\mathbb{C}\setminus\overline{\mathbb{D}}$. 

In general, the convergence may not be uniform on $\{1<|z|\leq 2\}$, 
even if $\partial K$ and $\partial K_n$ ($n\in\mathbb{N}$) are locally connected. 
One example is the following: 
$$K_n=\{x+i\sin(n\pi x)\in \mathbb C{;~} -1\leq x\leq 1\}, \quad K=[-1,1]\times [-1,1].$$
At the same time, the convergence may not be uniform on $\{2\leq|z|<\infty\}$. 
For example, consider $\phi_{K_n}(z) = (1+\frac{1}{n})z$. 
However, what we really care about is the uniform convergence on $\{1<|z|\leq 2\}$. 
 
In order to get the uniform convergence, we need to pose some conditions to control the boundary behavior 
of the sequence of Riemann mappings. 

Let $\phi:\mathbb{C}\setminus\overline{\mathbb{D}} \rightarrow \mathbb{C}$ be a univalent function 
with $\phi(z)\rightarrow\infty$ as $z\rightarrow\infty$. 
For $t_1\neq t_2\in\RZ$, we define 
\begin{equation}
\label{Iphi-bigcap}
I_\phi^{t_1,t_2} = \bigcap_{r>1}  \overline{\phi(S_r^{t_1,t_2})}, 
\end{equation}
where $S_r^{t_1,t_2}=\{\rho e^{2\pi it} {;~} \rho\in(1,r), t\in (t_1,t_2)\}$. 
It is easy to see that 
\begin{equation}
\label{Iphi-sqcup}
\partial \phi(S_r^{t_1,t_2}) = I_\phi^{t_1,t_2}\sqcup \phi((1,r)e^{2\pi i t_1})\sqcup \phi(r e^{2\pi i [t_1,t_2]})\sqcup\phi((1,r)e^{2\pi i t_2}).
\end{equation}
If $\phi$ can be extended to $\mathbb{C}\setminus\mathbb{D}$ continuously, then $I_\phi^{t_1,t_2} = \phi(e^{2\pi i [t_1,t_2]})$. 
Recall that ${\diam}(X) = \max\{|x_1-x_2|{;~} x_1,x_2\in X\}$ is the diameter of the nonempty compact set $X$ in $\mathbb{C}$. 

\begin{proposition}
\label{non-oscillation condition}
Let $\{\phi_n\}_{n\in\mathbb{N}}$ be a sequence of univalent functions on $\mathbb{C}\setminus\overline{\mathbb{D}}$ 
with $\phi_n(z)\rightarrow\infty$ as $z\rightarrow\infty$. 
Assume $\phi_n$ converges to a nonconstant function $\phi$ uniformly on compact subsets of $\mathbb{C}\setminus\overline{\mathbb{D}}$. 
Then the following two conditions are equivalent. 
\begin{enumerate}
\item The function $\phi$ can be extended to $\mathbb{C}\setminus\mathbb{D}$ continuously, and $\phi_n$ converges to $\phi$ uniformly on $\{1<|z|\leq 2\}$. 
\item $\displaystyle \lim_{m\rightarrow\infty}~ \max_{0\leq j\leq m-1}\limsup_{n\rightarrow \infty}
~{\diam} \Big( I_{\phi_n}^{\frac{j}{m},\frac{j+1}{m}} \Big) = 0.$ 
\end{enumerate}
\end{proposition}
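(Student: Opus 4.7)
The direction $(1)\Rightarrow(2)$ is direct. Given $\varepsilon>0$, I would use uniform continuity of the continuously extended $\phi$ on the compact annulus $\{1\leq|z|\leq 2\}$ to choose $m$ large and $r=1+1/m$ small enough that $\diam\phi(\overline{S_r^{j/m,(j+1)/m}})<\varepsilon/3$ for every $j\in\{0,\ldots,m-1\}$. Since $\phi_n\to\phi$ uniformly on $\{1<|z|\leq 2\}\supset S_r^{j/m,(j+1)/m}$, for $n$ large each image $\phi_n(S_r^{j/m,(j+1)/m})$ has diameter below $\varepsilon$, and the same holds for its subset $I_{\phi_n}^{j/m,(j+1)/m}\subset\overline{\phi_n(S_r^{j/m,(j+1)/m})}$. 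Taking $\limsup_{n\to\infty}$ and then $m\to\infty$ yields $(2)$.

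The reverse direction $(2)\Rightarrow(1)$ is the substantive part and splits into proving continuous extension of $\phi$ and then uniform convergence. For the extension I would apply Carath\'eodory's theorem to the compact continuum $K:=\mathbb{C}\setminus\phi(\mathbb{C}\setminus\overline{\mathbb{D}})$, so it suffices to show $K$ is locally connected. As a preliminary lemma I would verify that, for every fixed $m\geq 1$,
\[
K_n:=\mathbb{C}\setminus\phi_n(\mathbb{C}\setminus\overline{\mathbb{D}})
=\bigcup_{j=0}^{m-1}I_{\phi_n}^{j/m,(j+1)/m},
\]
by representing any point of $K_n$ as a limit of a sequence $\phi_n(z_\ell)$ with $|z_\ell|\to 1^+$ and passing to a subsequence along which $\arg(z_\ell)$ converges to an angle in some $[j/m,(j+1)/m]$. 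Carath\'eodory kernel convergence (which follows from the uniform convergence on compact subsets of $\mathbb{C}\setminus\overline{\mathbb{D}}$) then gives $K_n\to K$ in Hausdorff topology. Given $\varepsilon>0$, condition $(2)$ supplies $m$ with $\limsup_n\diam I_{\phi_n}^{j/m,(j+1)/m}<\varepsilon$ uniformly in $j$, and a diagonal subsequence $\{n_k\}$ makes every $I_{\phi_{n_k}}^{j/m,(j+1)/m}$ converge in Hausdorff topology to a compact connected set $\widetilde I^j$ with $\diam\widetilde I^j\leq\varepsilon$, so that $K=\bigcup_j\widetilde I^j$. Since this decomposition into finitely many connected subcontinua of diameter at most $\varepsilon$ holds for every $\varepsilon>0$, the standard criterion for local connectedness of a continuum forces $K$ to be locally connected, and Carath\'eodory's theorem delivers the continuous extension of $\phi$ to $\mathbb{C}\setminus\mathbb{D}$.

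For the uniform convergence on $\{1<|z|\leq 2\}$, uniform convergence on any compact $\{1+\delta\leq|z|\leq 2\}$ is immediate from the hypothesis, so only the sliver $\{1<|z|<1+\delta\}$ requires attention. Given $\varepsilon>0$ I would choose $m$ so large that $\diam I_\phi^{j/m,(j+1)/m}<\varepsilon/10$ (using the established continuity of $\phi$, which identifies $I_\phi^{j/m,(j+1)/m}$ with $\phi(e^{2\pi i[j/m,(j+1)/m]})$) and that $\limsup_n\diam I_{\phi_n}^{j/m,(j+1)/m}<\varepsilon/10$. The key intermediate step will be to upgrade this to piecewise Hausdorff convergence $I_{\phi_n}^{j/m,(j+1)/m}\to I_\phi^{j/m,(j+1)/m}$ for each fixed $j$; I plan to match pieces across the covers $K_n=\bigcup_j I_{\phi_n}^{j/m,(j+1)/m}$ and $K=\bigcup_j I_\phi^{j/m,(j+1)/m}$ by tracking anchor points $(1+\delta')e^{2\pi i(j+1/2)/m}$ inside each sector, at which $\phi_n\to\phi$ by uniform convergence on compact subsets. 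Then for $z$ in the sliver with $\arg(z)/2\pi\in[j/m,(j+1)/m]$, a routine boundary-piece estimate using the decomposition (\ref{Iphi-sqcup}) applied to the sector $S_{1+\delta}^{j/m,(j+1)/m}$ places both $\phi(z)$ and $\phi_n(z)$ within $\varepsilon/4$ of $I_\phi^{j/m,(j+1)/m}$ and $I_{\phi_n}^{j/m,(j+1)/m}$ respectively, and the piecewise Hausdorff convergence closes the gap to yield $|\phi_n(z)-\phi(z)|<\varepsilon$.

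The main obstacle I anticipate is the piecewise Hausdorff convergence $I_{\phi_n}^{j/m,(j+1)/m}\to I_\phi^{j/m,(j+1)/m}$ in Step B: the bare Hausdorff convergence $K_n\to K$ only recovers the union $\bigcup_j\widetilde I^j=K$ and does not automatically identify $\widetilde I^j$ with the $j$-th arc impression $I_\phi^{j/m,(j+1)/m}$, so I must leverage the conformal correspondence with arcs on $\partial\mathbb{D}$ through concrete anchor points. A secondary but delicate point is justifying that each Hausdorff-subsequential limit $\widetilde I^j$ of the connected sets $I_{\phi_{n_k}}^{j/m,(j+1)/m}$ remains connected; this uses the standard fact that a Hausdorff limit of continua is a continuum, but has to be applied carefully after the diagonal subsequence is extracted.
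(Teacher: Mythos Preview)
Your argument for $(1)\Rightarrow(2)$ is fine, and your Step~A strategy for the continuous extension of $\phi$ (covering $K$ by small subcontinua via Hausdorff limits of the $I_{\phi_n}^{j/m,(j+1)/m}$, then invoking property~S) is a reasonable alternative to the paper's route. One caution: the claim that Carath\'eodory kernel convergence alone yields Hausdorff convergence $K_n\to K$ is not immediate. Kernel convergence gives $K_n\subset K_\varepsilon$ eventually, but the reverse inclusion $K\subset (K_n)_\varepsilon$ needs exactly the content of the paper's Lemma~\ref{lower semi-continuity of boundaries} (a Koebe-distortion argument ruling out that the local inverses $\phi_{n}^{-1}$ degenerate to a constant). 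You would need to invoke or reprove that lemma here.

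The real problem is Step~B. Your ``routine boundary-piece estimate'' asserts that for $z$ in the sliver sector $S_{1+\delta}^{j/m,(j+1)/m}$ one has $d(\phi_n(z),I_{\phi_n}^{j/m,(j+1)/m})<\varepsilon/4$. This amounts to bounding $\diam\big(\phi_n(S_{1+\delta}^{j/m,(j+1)/m})\big)$, and by the decomposition~(\ref{Iphi-sqcup}) that diameter is controlled by the impression (small by hypothesis~(2)), the outer arc at radius $1+\delta$ (small by u.c.c.\ convergence), and the two \emph{radial segments} $\phi_n((1,1+\delta)e^{2\pi i j/m})$. These radial segments are not contained in any compact subset of $\mathbb{C}\setminus\overline{\mathbb{D}}$, so u.c.c.\ convergence says nothing about them, and neither does condition~(2). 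The anchor-point device and the piecewise Hausdorff convergence $I_{\phi_n}^{j/m,(j+1)/m}\to I_\phi^{j/m,(j+1)/m}$---even if established---do not bound these radial images. Without such a bound there is no reason $\phi_n(z)$ stays near $I_{\phi_n}^{j/m,(j+1)/m}$: the sector image could be a long thin region whose boundary impression and outer arc are both tiny while the connecting radial pieces wander.

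This is precisely the obstacle the paper's proof is designed to overcome. The key extra ingredient is Lemma~\ref{converge uniformly on some external rays} (built on the length/area Cauchy--Schwarz estimate of Lemma~\ref{converge in measure of length function}): it produces a full-measure set $E\subset\RZ$ and a subsequence along which $\phi_{n_k}\to\phi$ \emph{uniformly on the radial segment} $[1,2]e^{2\pi i\theta}$ for every $\theta\in E$. The paper then cuts the circle at angles $\theta_j\in E\cap(j/m,(j+1)/m)$ rather than at $j/m$; with this choice the radial boundary pieces of each sector are under control, and the estimate you call ``routine'' becomes genuinely routine. Your plan is missing an analogue of this radial-control mechanism.
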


To prove Proposition \ref{non-oscillation condition}, we will use 
Lemma \ref{converge uniformly on some external rays}
and a basic property of univalent functions as follows. 

Define a distance function between any point $x$ and any nonempty compact set $Y$ in $\mathbb{C}$ by 
$d(x,Y) = \min_{y\in Y} |x-y|$. 
Define a distance function between any two nonempty compact sets $X$ and $Y$ in $\mathbb{C}$ by 
$d(X,Y) = \max_{x\in X} d(x,Y)$. 
Then $d(X,Y)\leq \varepsilon$ means $X$ is contained in the $\varepsilon$-neighborhood of $Y$, i.e. 
$$X\subset Y_\varepsilon:=\{z\in\mathbb{C}{;~} d(z,Y)\leq \varepsilon\}.$$

\begin{lemma}
\label{lower semi-continuity of boundaries}
Let $\{\phi_n\}_{n\in\mathbb{N}}$ be a sequence of uniformly bounded univalent functions on $\mathbb{D}$. 
Assume $\phi_n$ converges to a nonconstant function $\phi$ uniformly on compact subsets of $\mathbb{D}$. 
Then $$\lim_{n\rightarrow\infty} d(\partial\phi(\mathbb{D}),\partial\phi_n(\mathbb{D})) =0.$$
\end{lemma}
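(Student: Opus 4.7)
The plan is a proof by contradiction. First observe that Hurwitz's theorem applied to the nonconstant locally uniform limit of univalent maps makes $\phi$ itself univalent, and the uniform boundedness of $\{\phi_n\}$ guarantees that $\phi(\mathbb{D})$ and each $\phi_n(\mathbb{D})$ are bounded simply connected domains with compact boundaries.

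Suppose for contradiction that the conclusion fails. After passing to a subsequence, I obtain $\varepsilon > 0$ and points $w_n \in \partial\phi(\mathbb{D})$ with $d(w_n, \partial\phi_n(\mathbb{D})) \geq \varepsilon$; by compactness of $\partial\phi(\mathbb{D})$, I may further assume $w_n \to w_\infty \in \partial\phi(\mathbb{D})$. The triangle inequality then gives $\mathbb{D}(w_\infty, \varepsilon/2) \cap \partial\phi_n(\mathbb{D}) = \emptyset$ for all sufficiently large $n$. Since $\phi(\mathbb{D})$ is open and $w_\infty$ lies on its boundary, I can choose $z_0 \in \mathbb{D}$ with $|\phi(z_0) - w_\infty| < \varepsilon/4$; the locally uniform convergence $\phi_n \to \phi$ then places $\phi_n(z_0)$ inside $\mathbb{D}(w_\infty, \varepsilon/2)$ for all large $n$. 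Since each $\phi_n(\mathbb{D})$ is simply connected (its complement in $\widehat{\mathbb{C}}$ is connected), the connected set $\mathbb{D}(w_\infty, \varepsilon/2)$, which avoids $\partial\phi_n(\mathbb{D})$ and meets $\phi_n(\mathbb{D})$, must lie entirely inside $\phi_n(\mathbb{D})$.

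The heart of the argument will be to upgrade the inclusions $\mathbb{D}(w_\infty, \varepsilon/2) \subset \phi_n(\mathbb{D})$ into $\mathbb{D}(w_\infty, \varepsilon/2) \subset \phi(\mathbb{D})$, which contradicts $w_\infty \notin \phi(\mathbb{D})$. I would consider the univalent inverses $\phi_n^{-1}: \mathbb{D}(w_\infty, \varepsilon/2) \to \mathbb{D}$, uniformly bounded by $1$; by Montel's theorem, a subsequence converges locally uniformly to a holomorphic $\psi: \mathbb{D}(w_\infty, \varepsilon/2) \to \overline{\mathbb{D}}$. A local Rouch\'e argument applied to $\phi_n - \phi(z_0)$, using that $\phi - \phi(z_0)$ has a simple zero at $z_0$, yields $\phi_n^{-1}(\phi(z_0)) \to z_0$, and hence $\psi(\phi(z_0)) = z_0 \in \mathbb{D}$; the maximum modulus principle then forces $|\psi| < 1$ throughout. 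Passing to the limit in the identity $\phi_n \circ \phi_n^{-1} = \mathrm{id}$ (legitimate since $\psi$ takes values in compact subsets of $\mathbb{D}$ on which $\phi_n \to \phi$ uniformly) gives $\phi \circ \psi = \mathrm{id}$ on $\mathbb{D}(w_\infty, \varepsilon/2)$, as required. The main subtlety in implementing this will be the pointwise convergence $\phi_n^{-1}(\phi(z_0)) \to z_0$, since $\phi(z_0)$ is not literally a value of $\phi_n$; the local Hurwitz/Rouch\'e count cleanly resolves this.
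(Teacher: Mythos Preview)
Your proof is correct and follows essentially the same contradiction scheme as the paper: locate a disk near $\partial\phi(\mathbb{D})$ that is eventually contained in every $\phi_n(\mathbb{D})$, extract a limit $\psi$ of the inverses, and deduce that the disk lies in $\phi(\mathbb{D})$. The only variation is in showing $\psi$ takes values in the open disk: the paper bounds $|(\phi_n^{-1})'|$ from below via Koebe distortion to see $\psi$ is nonconstant, while you use a Rouch\'e argument plus the maximum modulus principle---a perfectly good alternative.
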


\begin{proof}
If not, we can choose $\varepsilon>0$ and pass to a subsequence such that 
$d(\partial\phi(\mathbb{D}),$ $\partial\phi_n(\mathbb{D}))> 2\varepsilon$ for any $n\in\mathbb{N}$. 
That is, $\partial\phi(\mathbb{D})\not\subset (\partial\phi_n(\mathbb{D}))_{2\varepsilon}$. 
Choose $x_n\in \partial\phi(\mathbb{D})\setminus (\partial\phi_n(\mathbb{D}))_{2\varepsilon}$. 
By passing to a further subsequence, we may assume $x_n$ tends to $x$. 
Then $x\in \partial\phi(\mathbb{D})$ and 
$\mathbb{D}(x,\varepsilon)\cap \partial\phi_n(\mathbb{D})=\emptyset$ for $n$ large enough. 

Take $y\in \mathbb{D}(x,\frac{\varepsilon}{2})\cap \phi(\mathbb{D})$. 
Then $\phi_n(\phi^{-1}(y)) \in \mathbb{D}(x,\varepsilon)$ for $n$ large enough. 
Combining this with $\mathbb{D}(x,\varepsilon)\cap \partial\phi_n(\mathbb{D})=\emptyset$, 
we have $\mathbb{D}(x,\varepsilon) \subset \phi_n(\mathbb{D})$ for $n$ large enough. 
By passing to a further subsequence, we may assume $\phi_n^{-1}|_{\mathbb{D}(x,\varepsilon)}$ converges to 
$\psi$ uniformly on compact subsets of $\mathbb{D}(x,\varepsilon)$. 

In the following, we always assume $n$ is large enough so that $\phi_n(\phi^{-1}(y)) \in \mathbb{D}(x,\frac{\varepsilon}{2})$. 
Let $ M = \sup_{n\in\mathbb{N}} |(\phi_n)'(\phi^{-1}(y))|< \infty$. 
Then $|(\phi_n^{-1})'(\phi_n(\phi^{-1}(y)))|\geq \frac{1}{M}$. 
By the Koebe distortion theorem (Lemma \ref{Koebe-distortion}), we have 
$|(\phi_n^{-1})'(x)|\geq \frac{1}{\CK(1/2) M}>0$. 
Thus $\psi$ is nonconstant. 
It follows that $\psi(\mathbb{D}(x,\varepsilon))\subset\mathbb{D}$. 
Let $r=\frac{1-|\psi(x)|}{2}$. 
Because $\phi_n|_{\mathbb{D}(\psi(x),r)}\rightarrow \phi|_{\mathbb{D}(\psi(x),r)}$  
uniformly and $\phi_n^{-1}(x)\rightarrow \psi(x)$, we have 
$$|\phi_n(\phi_n^{-1}(x))- \phi(\psi(x))|\leq 
|\phi_n(\phi_n^{-1}(x))- \phi(\phi_n^{-1}(x))|+|\phi(\phi_n^{-1}(x))- \phi(\psi(x))| \rightarrow0.$$
This implies $x = \phi(\psi(x))\in\phi(\mathbb{D})$, which contradicts $x\in \partial\phi(\mathbb{D})$. 
\end{proof}

\begin{proof} 
[Proof of Proposition \ref{non-oscillation condition}]
It is clear that  $(1)\Rightarrow(2)$. 
We now show $(2)\Rightarrow(1)$. 

By Lemma \ref{converge uniformly on some external rays}, 
we get a full-measure subset $E$ of $\RZ$, 
and a subsequence $\{\phi_{n_k}\}_{k\in\mathbb{N}}$ of $\{\phi_n\}_{n\in\mathbb{N}}$, 
such that $\phi_{n_k}$ converges to $\phi$ uniformly on $[1,2]e^{2\pi i t}$ for any $t\in E$.

Given $\varepsilon >0$. Choose $m\geq 2$ so that $$\max_{0\leq j\leq m-1} \limsup_{k\rightarrow \infty} ~ 
{\diam} \Big( I_{\phi_{n_k}}^{\frac{j}{m},\frac{j+1}{m}} \Big) \leq \varepsilon.$$ Choose 
$\theta_j\in E\cap(\frac{j}{m},\frac{j+1}{m})$ for $j=0,\dots,m-1$, and set $\theta_m = \theta_0$. Note that $\phi_{n_k}\rightarrow \phi$ uniformly on compact subsets of $S_2
^{\theta_j, \theta_{j+1}}$. By Lemma \ref{lower semi-continuity of boundaries}, we have 
$$\lim_{k\rightarrow\infty} d \Big( \partial \phi(S_2^{\theta_j,\theta_{j+1}}), \partial \phi_{n_k}
(S_2^{\theta_j,\theta_{j+1}}) \Big)=0.$$ Then by $\theta_j,\theta_{j+1}
\in E$ and (\ref{Iphi-sqcup}), we have $$\lim_{k\rightarrow \infty} d \Big( I_\phi^{\theta_j, \theta_{j+1}}, 
I_{\phi_{n_k}}^{\theta_j, \theta_{j+1}} \Big) =0.$$ Thus $${\diam} \Big( I_\phi^{\theta_j, 
\theta_{j+1}} \Big) \leq \liminf_{k\rightarrow \infty} ~ {\diam} \Big( I_{\phi_{n_k}}
^{\theta_j,\theta_{j+1}} \Big) \leq \liminf_{k\rightarrow \infty} ~ {\diam} \Big( I_{
\phi_{n_k}}^{\frac{j}{m},\frac{j+2}{m}} \Big) \leq 2\varepsilon.$$ By (\ref{Iphi-bigcap}), there is an 
$r\in(1,2)$ sufficiently close to $1$ such that $$
{\diam} \Big( \partial \phi(S_r^{\theta_j,\theta_{j+1}}) \Big) =
{\diam} \Big( \overline{ \phi(S_r^{\theta_j,\theta_{j+1}}) } \Big)
\leq {\diam} \Big( I_\phi^{\theta_j,\theta_{j+1}} \Big) + \varepsilon \leq 3\varepsilon$$ for any $j=0,\dots,m-1$. 
This implies $\phi$ can be extended to $\mathbb{C}\setminus\mathbb{D}$ continuously. 

By the choice of $m$, there is an $N\geq 0$ independent of $j$ such that 
$${\diam} \Big( I_{\phi_{n_k}}^{\theta_j,\theta_{j+1}} \Big) \leq {\diam} \Big( I_{\phi_{n_k}}^{\frac{j}{m},\frac{j+2}{m}} \Big)\leq 2\varepsilon$$ 
for any $k\geq N$. 
By $\theta_j,\theta_{j+1}\in E$ and (\ref{Iphi-sqcup}) again, there is an $N'\geq N$ independent of $j$ such that 
$${\diam} \Big( \partial \phi_{n_k}(S_r^{\theta_j,\theta_{j+1}}) \setminus I_{\phi_{n_k}}
^{\theta_j,\theta_{j+1}} \Big)
\leq {\diam} \Big( \partial \phi(S_r^{\theta_j,\theta_{j+1}}) \setminus I_\phi
^{\theta_j,\theta_{j+1}} \Big) + \varepsilon\leq 4\varepsilon$$ 
for any $k\geq N'$. 
It follows that 
\begin{align*}
      {\diam} \Big( \phi_{n_k}(S_r^{\theta_j,\theta_{j+1}}) \Big)
&=    {\diam} \Big( \partial \phi_{n_k}(S_r^{\theta_j,\theta_{j+1}}) \Big) \\
&\leq {\diam} \Big( I_{\phi_{n_k}}^{\theta_j,\theta_{j+1}} \Big) 
+     {\diam} \Big( \partial \phi_{n_k}(S_r^{\theta_j,\theta_{j+1}}) 
                    \setminus I_{\phi_{n_k}}^{\theta_j,\theta_{j+1}} \Big)\\
&\leq 2\varepsilon + 4\varepsilon = 6\varepsilon
\end{align*}
for any $k\geq N'$. 
Choose $N''\geq N'$ such that $|\phi_{n_k}(z)-\phi(z)|\leq \varepsilon$ for any $r\leq |z|\leq 2$ and any $k\geq N''$. 
Then for any $z\in \overline{S_r^{\theta_j,\theta_{j+1}}} \setminus \partial\mathbb{D}$ and any $k\geq N''$, we have 
\begin{align*} 
&|\phi_{n_k}(z)- \phi(z)|\\
\leq~& |\phi_{n_k}(z)-\phi_{n_k}(r e^{2\pi i \theta_j})|
     + |\phi_{n_k}(r e^{2\pi i \theta_j})-\phi(r e^{2\pi i \theta_j})| 
     + |\phi(r e^{2\pi i \theta_j})-\phi(z)| \\
\leq~& {\diam} \Big( \phi_{n_k}(S_r^{\theta_j,\theta_{j+1}}) \Big) + \varepsilon + 
       {\diam} \Big( \phi(S_r^{\theta_j,\theta_{j+1}}) \Big) \\
\leq~& 6\varepsilon+\varepsilon+3\varepsilon=10\varepsilon.
\end{align*}
Therefore $|\phi_{n_k}(z)-\phi(z)|\leq 10\varepsilon$ for any $1< |z|\leq 2$ and any $k\geq N''$. 
Thus $\phi_{n_k}$ converges to $\phi$ uniformly on $\{1<|z|\leq 2\}$. 

The above discussion shows any subsequence of $\{\phi_n\}_{n\in\mathbb{N}}$ has a subsequence converging to $\phi$
uniformly on $\{1<|z|\leq 2\}$, hence $\phi_n$ converges to $\phi$ uniformly on $\{1<|z|\leq 2\}$. 
\end{proof}

\addtocontents{toc}{\protect\setcounter{tocdepth}{1}}

\addtocontents{toc}{\protect\setcounter{tocdepth}{2}} 
\end{document}